\date{} 
\title{Liouville quantum gravity as a mating of trees}
\author{Bertrand Duplantier, Jason Miller and Scott Sheffield}
\def\@rst #1 #2other{#1}
\newcommand\MR[1]{\relax\ifhmode\unskip\spacefactor3000 \space\fi
  \MRhref{\expandafter\@rst #1 other}{#1}}
\newcommand{\MRhref}[2]{\href{http://www.ams.org/mathscinet-getitem?mr=#1}{MR#2}}
\newcommand{\CA}{{\mathcal A}}
\newcommand{\CM}{{\mathcal M}}
\newcommand{\CN}{{\mathcal N}}
\newcommand{\CU}{{\mathcal U}}
\newcommand{\CT}{{\mathcal T}}
\newcommand{\CW}{{\mathcal W}}
\newcommand{\CF}{{\mathcal F}}
\newcommand{\CS}{{\mathcal S}}
\newcommand{\CH}{{\mathcal H}}
\newcommand{\CP}{{\mathcal P}}
\newcommand{\dirichlet}{{\mathrm D}}
\newcommand{\free}{{\mathrm F}}
\newcommand{\qwedge}{\CW}
\definecolor{antiquefuchsia}{rgb}{0.57, 0.36, 0.51}
\newif\ifhyper\IfFileExists{hyperref.sty}{\hypertrue}{\hyperfalse}
\ifhyper\usepackage{hyperref}\fi
\newif\ifdraft
\long\def\comment#1{}
\numberwithin{equation}{section}
\numberwithin{figure}{section}
\numberwithin{table}{section}
\newtheorem{theorem}{Theorem}
\numberwithin{theorem}{section}
\newtheorem{corollary}[theorem]{Corollary}
\newtheorem{lemma}[theorem]{Lemma}
\newtheorem{proposition}[theorem]{Proposition}
\newtheorem{problem}[theorem]{Question}
\theoremstyle{remark}\newtheorem{definition}[theorem]{Definition}
\theoremstyle{remark}\newtheorem{remark}[theorem]{Remark}
\newcommand{\R}{\mathbf{R}}
\newcommand{\C}{\mathbf{C}}
\newcommand{\D}{\mathbf{D}}
\newcommand{\Z}{\mathbf{Z}}
\newcommand{\T}{\mathcal{T}}
\newcommand{\N}{\mathbf{N}}
\newcommand{\W}{\mathbf{W}}
\newcommand{\HH}{\mathbf{H}}
\newcommand{\h}{\HH}
\definecolor{purple}{rgb}{0.7,0,0.7}
\definecolor{gray}{rgb}{0.6,0.6,0.6}
\definecolor{dgreen}{rgb}{0.0,0.4,0.0}
\definecolor{dblue}{rgb}{0.0,0.0,0.5}
\newcommand{\re}{{\mathrm {Re}}}
\newcommand{\im}{{\mathrm {Im}}}
\newcommand{\Fh}{{\mathfrak h}}
\newcommand{\Fg}{{\mathfrak g}}
\newcommand{\CC}{{\mathcal C}}
\newcommand{\ol}{\overline}
\newcommand{\ul}{\underline}
\newcommand{\wh}{\widehat}
\newcommand{\wt}{\widetilde}
\newcommand{\cov}{\mathrm{cov}}
\newcommand{\var}{\mathrm{var}}
\newcommand{\Q}{{\mathbf Q}}
\newcommand{\CG}{{\mathcal G}}
\newcommand{\CX}{{\mathcal X}}
\newcommand{\CZ}{{\mathcal Z}}
\newcommand{\CB}{{\mathcal B}}
\newcommand{\CV}{{\mathcal V}}
\newcommand{\CD}{{\mathcal D}}
\newcommand{\CL}{{\mathcal L}}
\newcommand{\CE}{{\mathcal E}}
\newcommand{\s}{{\mathbf S}}
\newcommand{\giv}{\,|\,}
\newcommand{\bes}{\mathrm{BES}}
\newcommand{\besq}{\mathrm{BESQ}}
\newcommand{\fan}{{\mathbf F}}
\newcommand{\cyl}{\mathscr{C}}
\newcommand{\strip}{{\mathscr{S}}}
\newcommand{\bubble}{\CB}
\newcommand{\one}{{\mathbf 1}}
\def\H{\mathbf{H}}
\def\Var{\mathop{\mathrm{Var}}}
\def\diam{\mathop{\mathrm{diam}}}
\def\dist{\mathop{\mathrm{dist}}}
\def\Im{{\rm Im}\,}
\def\Re{{\rm Re}\,}
\newcommand{\QLE}{{\rm QLE}}
\newcommand{\SLE}{{\rm SLE}}
\newcommand{\CLE}{{\rm CLE}}
\newcommand{\confrad}{{\mathrm {CR}}}
\newcommand{\qnt}{{\mathfrak q}} 
\newcommand{\qlt}{{\mathfrak l}}
\newcommand{\diskmeasure}{\CM}
\newcommand{\spheremeasure}{\CN}
\newcommand{\ppp}{p.p.p.}
\def \eps {\varepsilon}
\def \P {{\bf P}}
\def \p {{\P}}
\def \E {{\bf E}}
\begin{document} \maketitle

\begin{abstract}
There is a simple way to ``glue together'' a coupled pair of continuum random trees (CRTs) to produce a topological sphere.  The sphere comes equipped with a measure and a space-filling curve (which describes the ``interface'' between the trees).   We present an explicit and canonical way to embed the sphere in $\C \cup \{ \infty \}$.  In this embedding, the measure is a form of Liouville quantum gravity (LQG) with parameter $\gamma \in (0,2)$, and the curve is space-filling SLE$_{\kappa'}$ with $\kappa' = 16/\gamma^2$.

Achieving this requires us to develop an extensive suite of tools for working with LQG surfaces.  We explain how to conformally weld so-called ``quantum wedges'' to obtain new quantum wedges of different weights.  We construct finite-volume quantum disks and spheres of various types, and give a Poissonian description of the set of quantum disks cut off by a boundary-intersecting SLE$_{\kappa}(\rho)$ process with $\kappa \in (0,4)$.  We also establish a {\em L\'evy tree} description of the set of quantum disks to the left (or right) of an  SLE$_{\kappa'}$ with $\kappa' \in (4,8)$.  We show that given two such trees, sampled independently, there is a.s.\ a canonical way to ``zip them together'' and recover the SLE$_{\kappa'}$. 

The law of the CRT pair we study was shown in an earlier paper to be the scaling limit of the discrete tree/dual-tree pair associated to an FK-decorated random planar map (RPM).  Together, these results imply that FK-decorated RPM scales to CLE-decorated LQG in a certain ``tree structure'' topology.
\end{abstract}

\newpage
\setlength{\parskip}{0.01cm plus1mm minus1mm}
\tableofcontents
\newpage

\parindent 0 pt
\setlength{\parskip}{0.25cm plus1mm minus1mm}

\medbreak {\noindent\bf Acknowledgments.}  We have benefited from discussions with many individuals, including (and not limited to) Nathanael Berestycki, Dmitry Chelkak, Nicolas Curien, Hugo Duminil-Copin, Christophe Garban, Richard Kenyon, Greg Lawler, Jean-Fran\c{c}ois Le Gall, Gr\'egory Miermont, Asaf Nachmias, R\'emi Rhodes, Steffen Rohde, Oded Schramm, Stanislav Smirnov, Xin Sun, Vincent Vargas, Sam Watson, Wendelin Werner, and Hao Wu.  We also thank Ewain Gwynne for helpful comments on an earlier version of this article and Ken Stephenson for helping us use his circle packing software. We also thank participants of a 2017 Oberwolfach seminar about this paper who gave us additional feedback on the manuscript, including (among others) Juhan Aru, Ellen Powell, Lukas Schoug, and Avelio Sepulveda.  B.D.\ was partially supported by the CNRS grant PICS06769 and by the ANR grant GRAAL ANR-14-CE25-0014.  J.M.\ was partially supported by the NSF grant DMS-1204894. S.S.\ was partially supported by NSF grants DMS-1712862 and DMS-1209044 and Simons Fellowship with award number 306120.

We also thank several anonymous referees for feedback which led to substantial improvements to the exposition.

\section{Introduction}
\label{sec::introduction}

This paper studies connections between the Schramm-Loewner evolution ($\SLE$) \cite{S0} and the ``random surfaces'' associated with Liouville quantum gravity (LQG).  We derive an extensive collection of results about weldings and decompositions of surfaces of various types (spheres, disks, wedges, chains of disks, trees of disks, etc.)\ along with fundamental structural results about LQG.  These results comprise a sort of ``calculus of random surfaces'' that combines the conformal welding theory of \cite{she2010zipper} with the imaginary geometry theory of \cite{ms2012imag1,ms2012imag2,ms2012imag3,ms2013imag4}.

The LQG surfaces we consider involve a parameter $\gamma \in (0,2)$, while the SLE curves we consider involve parameters $\kappa<4$ and $\kappa'>4$ which are related to $\gamma$ via $\kappa = \gamma^2$ and $\kappa'=16/\kappa$. In Section~\ref{subsec::surfaces}, we will recall the definitions of certain random $\gamma$-LQG surfaces known as quantum wedges, quantum cones, quantum disks, and quantum spheres. Quantum wedges and cones are in some sense the most natural infinite volume LQG surfaces, while quantum disks and spheres are finite volume variants. Quantum wedges and cones are both marked by two special points, an origin and an infinity point, denoted $\infty$. Bounded neighborhoods of the origin contain a finite amount of $\gamma$-LQG mass while neighborhoods of $\infty$ contain an infinite amount of mass.  A quantum wedge additionally comes with ``left'' and ``right'' boundary arcs, each of which has the origin and $\infty$ as endpoints, and each of which comes endowed with a boundary length measure. The law of a quantum wedge depends on a \emph{weight} parameter which indicates in some sense how ``thick'' it is. Our three main types of results can be stated roughly as follows.

\begin{enumerate}
\item Consider a $\gamma$-LQG quantum wedge $\CW$ of weight $W$ decorated by an independent $\SLE_{\kappa}(\rho_1; \rho_2)$ curve from the origin point to the $\infty$ point, so that $\eta$ divides $\CW$ into two surfaces $\CW_1$ and $\CW_2$ (a left side and a right side). Then (as long as $\rho_1$ and $\rho_2$ satisfy a certain constraint) $\CW_1$ and $\CW_2$ are {\em independent} quantum wedges of some weights $W_1$ and $W_2$ satisfying $W_1+W_2 =W$. Moreover, given just $\CW_1, \CW_2$, it is a.s.\ possible to reconstruct the original path-decorated wedge $(\CW,\eta)$.  In fact, using standard complex analysis terminology, as discussed in~\cite{she2010zipper} or in Section~\ref{subsec::conformalmating}, $\CW$ is a {\em conformal welding} of $\CW_1$ and $\CW_2$, where the boundaries of $\CW_1, \CW_2$ are identified to each other in a boundary-length-preserving way. See Theorems~\ref{thm::welding}, \ref{thm::paths_determined} and~\ref{thm::zip_up_wedge_rough_statement}.

\item Consider a $\gamma$-LQG quantum cone $\CC$ decorated by an appropriate {\em space-filling} $\SLE_{\kappa'}$ curve $\eta'$, where $\eta'$ is understood as tracing the interface between a continuum tree $\CT_1$ and a continuum dual tree $\CT_2$. Then each of $\CT_1$, $\CT_2$ inherits a metric structure and a measure, and the joint law of $\CT_1,\CT_2$ is that of a {\em correlated} pair of continuum random trees (CRTs).  Moreover, given just $\CT_1, \CT_2$, it is a.s.\ possible to reconstruct the original path-decorated surface $(\CC,\eta')$.  Borrowing terminology from the complex dynamics literature, see Section~\ref{subsec::conformalmating}, we say that $\CC$ is a {\em conformal mating} of the metric trees, where again the boundaries of $\CT_1,\CT_2$ are glued to each other in a certain boundary-length-measure-preserving way.  See Theorems~\ref{thm::quantum_cone_bm_rough_statement} and~\ref{thm::trees_determine_embedding}.
\item Consider a $\gamma$-LQG quantum wedge $\CW$ decorated by a {\em non-space-filling} $\SLE_{\kappa'}$ (with $\kappa' \in (4,8)$) curve $\eta'$.  Then $\eta'$ cuts $\CW$ into a left side and a right side, which have the law of two {\em independent} ``L\'evy trees of quantum disks.'' Moreover, given the two trees, it is a.s.\ possible to recover the original path-decorated surface $(\CW,\eta')$. Again borrowing terminology from complex dynamics, we say that $\CW$ is a {\em conformal mating} of the two L\'evy trees of quantum disks, where the boundaries of these L\'evy trees are glued together in a boundary-measure-preserving way. See Theorems~\ref{thm::sle_kp_on_wedge} and~\ref{thm::quantum_cone_sle_kp}.

\end{enumerate}
In all three settings, one uses an SLE curve (which is respectively simple, space-filling, or non-simple-non-space-filling) to ``cut'' a type of random surface into two ``pieces.'' And in all three settings, we identify the joint law of the pair of pieces, and show that given the pieces, we can a.s.\ recover the original surface along with the SLE curve. The process of ``gluing the two pieces together'' is what we call {\em conformal welding} in the case of a simple curve, or {\em conformal mating} more generally.

The first item on the above list of results is a generalization of the main result of \cite{she2010zipper}, which concerns the welding of two independent quantum wedges of weight $2$. In this paper, we treat quantum wedges of {\em arbitrary} positive weights, including (most interestingly) small-weight ``thin wedges'' that have cut points, and are defined for the first time in this paper. We will show that if one glues together multiple wedges (or glues a wedge to itself to produce a cone) then the {\em seams} between the wedges become $\SLE_\kappa(\rho)$ curves coupled with each other in such a way that their joint law agrees with the law of certain rays in the imaginary geometry of the Gaussian free field (GFF) as developed in \cite{ms2012imag1, ms2012imag2, ms2012imag3, ms2013imag4}.

The second and third items listed above concern the ``matings of trees'' that the title of this paper refers to. These are the biggest results of this paper and are completely different from those that appear in \cite{she2010zipper} (except that they also involve LQG surfaces decorated by SLE curves). The results in this paper build on each other in a sequential fashion. After recalling the basic result of \cite{she2010zipper}, we will use it to derive general-weight generalizations of  \cite{she2010zipper}, and we will use these generalizations --- along with their imaginary geometry interpretations --- in the proofs of our main tree-mating theorems.

We remark that the second and third items have ``finite volume'' analogs that are discussed in a follow up paper \cite{quantum_spheres}, where a finite-volume quantum sphere is a produced as a mating of two finite continuum random trees. Although the bulk of this paper concerns the infinite volume setting, the finite-volume quantum sphere itself is {\em constructed} in this paper in Section~\ref{subsec::disks_and_spheres} and Appendix~\ref{app::disks_spheres}, and that construction is used in  \cite{quantum_spheres}. The construction of the quantum sphere presented in Section~\ref{subsec::disks_and_spheres} and Appendix~\ref{app::disks_spheres} is equivalent to a construction given by David, Kupiainen, Rhodes and Vargas in \cite{lqg_sphere} (which was posted to the arXiv about the same time as this paper) although the equivalence was far from obvious when the paper was first posted, and was only established later in a work by Aru, Huang and Sun \cite{twoperspectives}.  The construction in \cite{lqg_sphere} follows more closely the constructions given in the physics literature, beginning with Polyakov's seminal 1981 paper \cite{MR623209}, and aims to present a mathematically rigorous version of that approach. The main difference between the construction in \cite{lqg_sphere} and the one in the present paper concerns how one handles marked points: in \cite{lqg_sphere} one parameterizes the quantum sphere by $\C$ and positions three marked points fixed at $0$, $1$ and $\infty$, while in this paper we parameterize the quantum sphere by a cylinder and position two marked points at its two ends.  A ``one marked point'' quantum sphere construction involving limits was also suggested in \cite{she2010zipper} and is proved in Appendix~\ref{app::disks_spheres} to be equivalent to the approaches mentioned above.

The remainder of this section is structured as follows. In Section~\ref{subsec::surfaces}, we give an overview of the different types of quantum surfaces that appear in this article.  (The reader who wishes to get to the main results quickly can skip or skim this section; in particular, the charts relating wedge and cone parameters are given for reference and do not need to be internalized on a first read.) Section~\ref{subsec::welding} gives our main results about weldings of quantum wedges (the first item listed above).  As a warmup, Section~\ref{subsec::easy} briefly describes what it means to {\em topologically mate} random trees to produce a {\em topological} sphere, and Section~\ref{subsec::matingsandloops} then describes our main results about {\em conformal matings} of such trees (the second and third items listed above).  The conformal matings described in Section~\ref{subsec::matingsandloops} can be understood as a way to endow the topological sphere described in Section~\ref{subsec::easy} with a conformal structure (i.e., to parameterize the sphere by $\C \cup \{\infty \}$ in a way that is canonical up to  M\"obius transformation; in this paper, we focus on the infinite volume version of this construction when we discuss conformal matings).  Finally, Section~\ref{subsec::outline} contains an outline of the remainder of the article.  In order to keep the introduction focused on the main results, we defer some discussion of background and motivation to Section~\ref{sec::motivation}.  We will also assume familiarity with the GFF in the introduction and refer the reader to Section~\ref{subsec::gffs} for a more in depth review.

We refer the reader to the survey paper by Gwynne, Holden and Sun \cite{gwynne2019mating} (see also \cite{gwynne2019random}) for an overview of more recent developments in this subject.

\subsection{Quantum wedges, cones, disks, and spheres}
\label{subsec::surfaces}

This section describes the construction of the fundamental random surfaces we study: quantum wedges, cones, disks, and spheres. The reader who wants to get to the main results quickly can skim Section~\ref{subsec::surfaces} on a first read.
\subsubsection{Basic definitions}

A {\bf quantum surface}, as in \cite{ds2011kpz, she2010zipper}, is formally represented by a pair $(D,h)$ where $D$ is planar domain and $h$ is an instance of (some form of) the GFF on $D$.  Intuitively, the quantum surface is the manifold conformally parameterized by $D$, with a metric tensor given by $e^{\gamma h(z)}$ times the Euclidean one, where $\gamma \in [0,2)$ is a fixed constant.  Since $h$ is a distribution and not a function, this notion requires interpretation.

We can use a regularization procedure to define an area measure on $D$:
\begin{equation}
\label{e.mudef}
\mu = \mu_h := \lim_{\eps \to 0} \eps^{\gamma^2/2} e^{\gamma h_\eps(z)}dz,
\end{equation}
where $dz$ is Lebesgue measure on $D$, $h_\eps(z)$ is the mean value of $h$ on the circle~$\partial B(z,\epsilon)$ and the limit represents weak convergence in the space of measures on~$D$.  (The limit is shown to exist a.s.\ when $\eps$ is restricted to powers of two in \cite{ds2011kpz}, and without the power of two restriction in \cite{sheffieldwang}.)  We interpret $\mu_h$ as the area measure of a random surface conformally parameterized by $D$.  When $x \in \partial D$, we let $h_\eps(x)$ be the mean value of $h$ on $D \cap \partial B(x,\epsilon)$.  Similarly, on a linear segment of $\partial D$, where $h$ has free boundary conditions, we may define a boundary length measure by
\begin{equation}
\label{e.nudef}
\nu = \nu_h := \lim_{\eps \to 0} \eps^{\gamma^2/4}e^{\gamma h_\eps(x)/2}dx,
\end{equation}
where $dx$ is Lebesgue measure on $\partial D$ \cite{ds2011kpz}.

One can parameterize the same quantum surface with a different domain $\wt D$, and our regularization procedure implies a simple rule for changing coordinates.  Suppose that $\psi$ is a conformal map from a domain $\wt D$ to $D$ and write $\wt h$ for the
distribution on $\wt D$ given by $h \circ \psi + Q \log |\psi'|$ where
\[ Q := \frac{2}{\gamma} + \frac{\gamma}{2},\]
as in Figure~\ref{fig:qmapfig}.  By \cite[Proposition~2.1]{ds2011kpz}, $\mu_h$ is a.s.\ the image under $\psi$ of the measure $\mu_{\wt h}$. That is, $\mu_{\wt h}(A) = \mu_h(\psi(A))$ for $A \subseteq \wt D$.  (In fact, it is shown further in \cite{sheffieldwang} that if circle averages are replaced with bump function averages then one obtains the same measure a.s.\ and that furthermore the analogous relationship a.s.\ holds for {\em all} conformal maps $\psi$ simultaneously, rather than just holding a.s.\ for a fixed $\psi$.) Similarly, $\nu_h$ is a.s.\ the image under $\psi$ of the measure $\nu_{\wt h}$ in the case that $\nu_h$ and $\nu_{\wt h}$ are both defined and $\psi$ extends to be a homeomorphism on the closure of the domain (in the Riemann sphere).  The invariance of $\nu_h$ under~\eqref{eqn::Qmap} actually yields a definition of the quantum boundary length measure $\nu_h$ when the boundary of $D$ is not piecewise linear --- i.e., in this case, one simply maps to the upper half plane (or any other domain with a piecewise linear boundary) and computes the length there.

Following \cite{ds2011kpz} (motivated by the change of coordinates formula for the LQG measure), we have the following definition.

\begin{definition}
\label{def::q_surface}
A {\bf quantum surface} is an equivalence class of pairs $(D,h)$ under the equivalence relation (see Figure~\ref{fig:qmapfig})
\begin{equation}
\label{eqn::Qmap}
(D,h) \to \psi^{-1}(D,h) := (\psi^{-1}(D), h \circ \psi + Q \log |\psi'|) = (\wt D, \wt h),
\end{equation}
An {\bf embedding} of a quantum surface is a choice of representative $(D,h)$ from the equivalence class, and a transformation of the kind described in \eqref{eqn::Qmap} is called a {\bf coordinate change}.

A {\bf quantum surface with $k$ marked points} is an equivalence class of elements of the form $(D,h,x_1,x_2, \ldots, x_k)$, with each $x_i \in \ol{D}$, under maps of the form~\eqref{eqn::Qmap}, where~\eqref{eqn::Qmap} is understood to map the $x_i\in \overline D$ to $\psi^{-1}(x_i)$.

\end{definition}

In order to specify a quantum surface, one only needs to specify the form of $h$ for a specific embedding.  We emphasize that the same quantum surface can be represented by different distributions $h$.  It can be convenient to make different choices for the embedding of a given quantum surface and we will see several examples of that in this article.  

We also emphasize that the definition of equivalence for quantum surfaces does not require that $D$ be simply connected or even connected.

\begin {figure}[htbp]
\begin {center}
\includegraphics [scale=0.85,page=1]{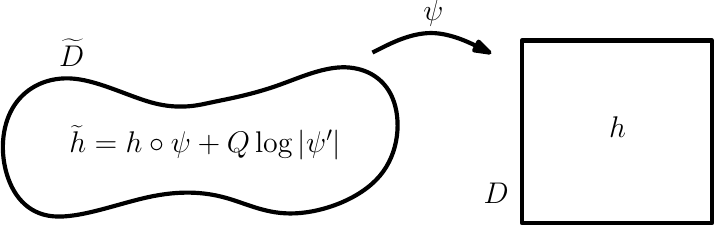}
\caption {\label{fig:qmapfig} A quantum surface coordinate change.}
\end {center}
\end {figure}

\begin {figure}[htbp]
\begin {center}
\includegraphics [scale=0.75,page=1]{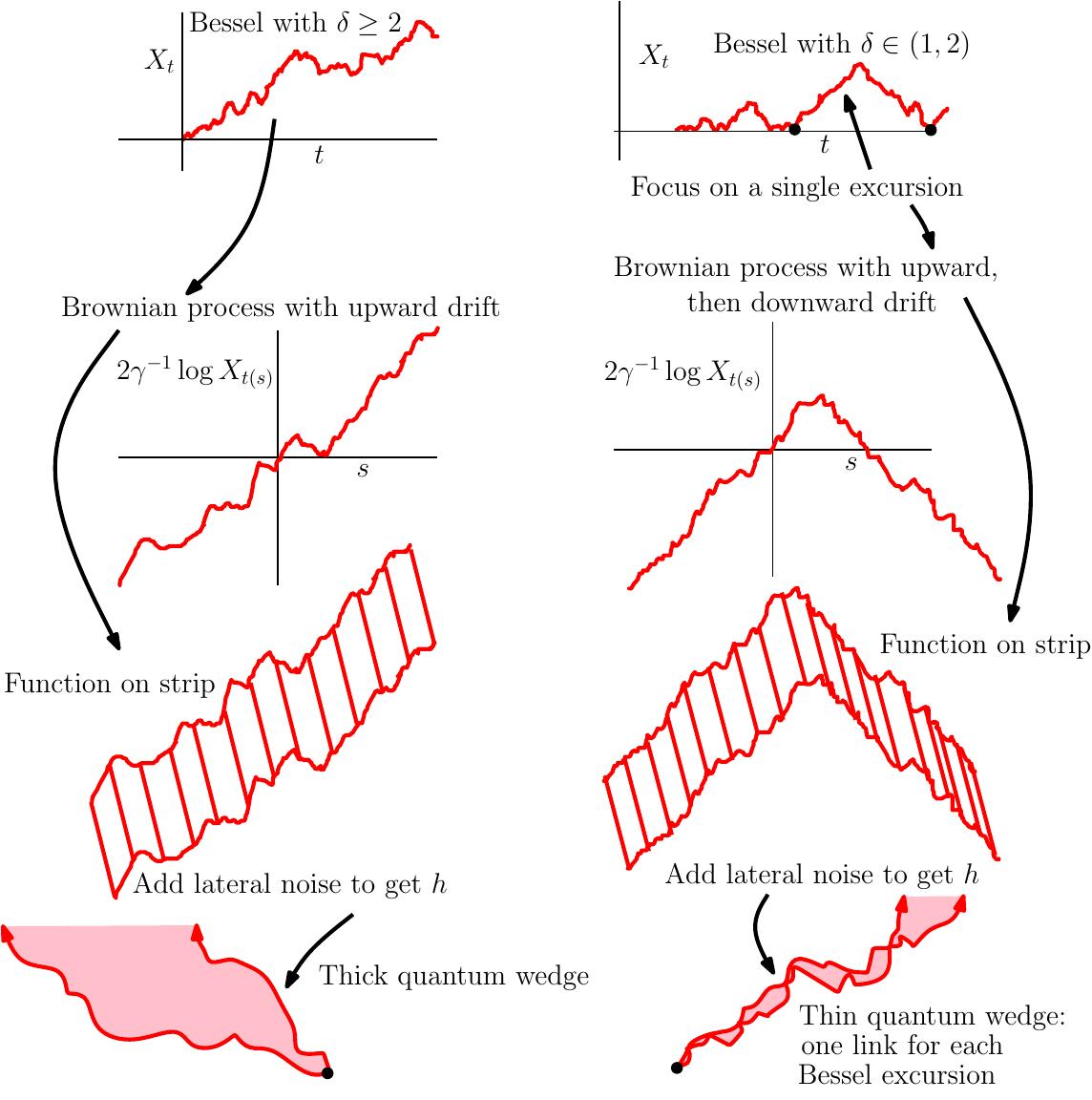}
\caption {\label{fig::besseltowedge} If $X_t$ is a Bessel process with $\delta \geq 2$, then $2 \gamma^{-1} \log X_t$ can be reparameterized by a parameter $s = s(t)$ so that it has quadratic variation $2ds$ and hence evolves as $B_{2s} + as$ where $B$ is a standard Brownian motion and $a$ is a constant.  The reparameterized graph is determined up to horizontal translation. (To define the quantum wedge, it suffices to define a distribution $h$ on the strip modulo horizontal translation --- i.e., modulo the equivalence in which $h(\cdot)$ and $h(a + \cdot)$ are equivalent for real $a$ --- since a quantum surface is defined modulo the equivalence~\eqref{eqn::Qmap} and for any translation $\psi(z) = z+a$ we have $Q\log|\psi'| = 0$.)  When $\delta \in (1,2)$, the restriction of $2 \gamma^{-1}\log X_t$ to a single Bessel excursion can be similarly reparameterized to obtain a Brownian process with a single maximum and a positive/negative drift on the left/right of that maximum (conditioned not to exceed the maximum); we obtain a map from the infinite strip $\strip = \R \times [0, \pi]$ to $\R$ by applying this function to the first coordinate.  After adding ``lateral noise'' obtained by $(\cdot,\cdot)_\nabla$-orthogonally projecting the free boundary GFF on $\strip$ onto the subspace of functions with mean zero on all vertical segments, one obtains a field $h$.  If $\delta \geq 2$ then $(\strip, h)$ is a {\bf thick quantum wedge}.  If $\delta \in (1,2)$, one generates a quantum disk this way for each Bessel excursion; the set of excursions, indexed by zero local time of the Bessel process, forms a Poisson point process, so this procedure generates a Poisson point process of disks. The ``concatenation'' of these disks, as shown in the figure, is called a {\bf thin quantum wedge}.}
\end {center}
\end {figure}

\subsubsection{Quantum wedges}
\label{subsubsec::intro_qwedges}

The quantum wedge is the most natural kind of infinite-volume quantum surface with two marked points. Let $D$ be the infinite wedge $\W_\vartheta = \{z \in \C : \arg(z) \in [0,\vartheta]\}$ for some $\vartheta > 0$. (If $\vartheta \geq 2\pi$, then we view $\W_\vartheta$ as a Riemann surface in the natural way.) The reader first hearing the term ``quantum wedge'' might assume that it refers to the quantum surface $(D, h)$ where $h$ is an instance of the free boundary GFF on $D$.  The trouble is that an instance $h$ of the free boundary GFF is only defined modulo additive constant.  Replacing $(D,h)$ by $(D, h + C)$ corresponds to multiplying the area measure on $D$ by the factor $e^{\gamma C}$.  This means that if $h$ is only defined modulo additive constant then the area measure associated with $(D,h)$ is only defined modulo multiplicative constant. We refer to $(D,h)$ as an {\bf unscaled quantum wedge}. Two equivalent ways to define the unscaled quantum wedge are as follows:

\begin{enumerate}
\item Parameterize instead by the upper half plane $\h$.  If we conformally map $\h$ to $\W_\vartheta$ via the map $\psi_\vartheta(\wt z) = \wt{z}^{\vartheta/\pi}$, then the coordinate change rule~\eqref{eqn::Qmap} for quantum surfaces implies that we can represent this as $\wt{D} = \h$ and let $\wt{h}$ be an instance of the free boundary GFF on $\h$ {\em plus} the deterministic function $Q \log|\psi_\vartheta'|$, which up to additive constant is $Q \log |\wt{z}^{\vartheta/\pi - 1}| = Q(\tfrac{\vartheta}{\pi} - 1) \log|\wt z| = - \alpha \log|\wt z|$ for $\alpha := Q(1 - {\vartheta}/{\pi})$.
\item Parameterize instead by the infinite strip $\strip = \R \times [0, \pi]$.  This can be mapped to $\W_\vartheta$ via the map $\phi_\vartheta(\wt z) = e^{(\vartheta/\pi) \wt{z}}$, and then the coordinate change rule~\eqref{eqn::Qmap} for quantum surfaces implies that we can represent the quantum surface by $\strip$ using an instance of the free boundary GFF on $\strip$ plus the deterministic function $Q \log |\phi_\vartheta'| = Q  (\vartheta/\pi) \Re(\wt{z})=(Q-\alpha)\Re (\wt{z})$ (modulo additive constant).
\end{enumerate}

The strip approach is convenient because it turns out that an instance of the free boundary GFF on $\strip$ can be represented as $B_{2\Re{z}}$ (where $B$ is standard Brownian motion defined up to additive constant --- this function is constant on vertical line segments of the strip) plus a ``lateral noise'' given by the $(\cdot,\cdot)_\nabla$-orthogonal projection ($(f,g)_\nabla = (2\pi)^{-1} \int \nabla f(x) \cdot \nabla g(x) dx$ is the usual Dirichlet inner product) of the free boundary GFF on $\strip$ onto the subspace of functions on $\strip$ that have mean zero on each vertical segment (see Section~\ref{subsubsec::stripandcylinder}).   One obtains an unscaled quantum wedge by replacing $B_{2t}$ with $B_{2t} + at$ where $a = Q-\alpha \geq 0$ (again defined modulo additive constant).  We remark that the parameter space $\W_\vartheta$ is actually less convenient to work with than either the strip or the half plane, and it will not be used outside of this subsection.  

Now that we have defined an {\em unscaled} quantum wedge, we would like to go further and define a quantum wedge, which is defined as an actual random quantum surface --- as opposed to merely being defined modulo scaling. We will do this by defining a random pair $(D, h)$ where $D$ is the horizontal strip $\strip$ defined above.  We emphasize from the start that it suffices to define $h$ modulo horizontal translation since a quantum surface is defined modulo the equivalence~\eqref{eqn::Qmap} and for any translation map $\psi(z) = z+a$ we have that $Q\log|\psi'| = 0$.

The definition of a quantum wedge on $\strip$ is the same as the definition of an unscaled quantum wedge on $\strip$, as described above, except that $B_{2t}+at$ is replaced by a closely related process $A_t$ whose graph is defined modulo horizontal translation, instead of modulo vertical translation.

What is the most natural way to construct a process $A_t$ whose graph looks like the graph of a Brownian motion with positive drift, except that it is defined modulo horizontal translation instead of modulo vertical translation? It turns out that there is only one answer that makes sense in our context, but it takes a bit of thought to explain why, so before we proceed to that, let us consider another question.  Why can we not simply take $A_t = B_{2t} + at$, and consider its graph defined modulo horizontal translation?  (This would be equivalent to taking an unscaled quantum wedge parameterized by $\strip$ and then fixing the additive constant by requiring the mean value of $h$ on the vertical line through zero to be zero.) The basic reason is that if we defined $A_t$ this way, then the corresponding random surface would not be scale invariant.

Furthermore, for reasons that will become clear later, we would like $A_t$ to have the property that if we condition on the process $A_t$ up until the first time it reaches some constant value $C$ (since we are working modulo horizontal translation, we can imagine that we translate the graph horizontally so that this occurs at time zero) then the conditional law of the remainder of $A_t$ is that of $B_{2t} + at$ starting from $C$. (It will turn out that the 180-degree rotated process $-A_{-t}$, also understood modulo horizontal translation, has the same law as $A_t$, and hence also has this property.) 

One approach is to start by defining a process $\wt A^C_t$ for $t\in[0,\infty)$ by setting $\wt A^C_0 = C$ for some very negative value of $C$, and then letting $\wt A^C_t$ evolve over time $t>0$ as ordinary Brownian motion with positive drift. We may horizontally translate the graph of each $\wt A^C_t$ instance to the left in order to obtain the graph of a process that reaches $0$ for the first time at $t=0$, and we may define the law of $A_t$ to be the limit of the laws of these translated processes (in the topology of convergence in law w.r.t.\ the $L^\infty$ metric on compact intervals, say). The $A_t$ thus defined is a process with the property that $A_0 = 0$, and $A_t$ evolves as $B_{2t} + at$ for time $t>0$, and $A_{-t}$ evolves (for $t>0$) as Brownian motion with negative drift $-a t$, in some sense {\em conditioned} on the probability zero event that it never returns to zero. Since we only care about the process modulo horizontal translation, the choice of horizontal translation is irrelevant.

Another way to construct a process $A_t$ with the properties we desire is to note that readers familiar with Bessel processes are {\em already} familiar with such a process: namely, the $\log$ of a Bessel process parameterized by quadratic variation.  More precisely, the reader may recall (and we review this in Section~\ref{subsec::bessel_processes}) that if $X_t$ is a Bessel process started at $X_0 = x$ for some $x>0$, then $2 \gamma^{-1} \log X_t$ evolves as a time change of a Brownian motion with drift. To be precise, if the Bessel dimension is $\delta =2+\frac{2}{\gamma}a$, with $\delta\geq 2$, and the process $2 \gamma^{-1} \log X_t$ is monotonically reparameterized by a parameter $u=u(t)$ chosen so that the process has quadratic variation $2du$ and $u(0) = 0$, then it becomes equivalent in law to the process $B_{2u} + au$ where $B$ is a standard Brownian motion starting at $B_0 = 2\gamma^{-1} \log x$. It is also possible to make sense of a Bessel process started at $X_0=0$, but in this case the amount of quadratic variation time elapsed while $t \in [0, \epsilon]$ is a.s.\ infinite if $\epsilon > 0$, and hence $\log X_t$ --- reparameterized by quadratic variation ---  can be understood as a Brownian motion with drift parameterized by {\em all} of $\R$, whose $-\infty$ limit is $-\infty$ and whose $+\infty$ limit is $+\infty$ (as explained in Proposition~\ref{prop::bessel_exponential_bm}) but the choice of how to translate the graph horizontally is somewhat arbitrary (and was chosen in Figure~\ref{fig::besseltowedge} to make the process reach $0$ for the first time at time $0$).

One reason to use Bessel processes is that doing so leads to an intriguing generalization: namely, we can now also consider Bessel processes of dimension $\delta \in(1,2)$, and define a distinct quantum surface for {\em each} excursion of the Bessel process away from zero, as illustrated in Figure~\ref{fig::besseltowedge} --- this idea will be used in the definition of ``thin quantum wedge'' given just below.

The quantum surfaces obtained for $\delta \geq 2$, or equivalently, $\alpha\leq Q$, are called {\bf thick quantum wedges}.  A thick quantum wedge is a random quantum surface with two marked points (called the origin and $\infty$) whose law is invariant under the operation of multiplying its area measure by a constant.  Each quantum wedge --- when parameterized by $\strip$ as discussed above --- has an infinite amount of quantum mass a.s.\ but only a finite amount corresponding to any particular bounded subset of $\strip$.  (In particular, the law of a quantum wedge is not symmetric under reversing the two marked points, since every neighborhood of its second point has infinite mass, and this is not true of the first point.)  A {\bf thin quantum wedge}, obtained for $\delta\in (1,2)$,  or equivalently, $\alpha\in (Q,Q+\frac{\gamma}{2})$, is (informally) an infinite Poissonian ``chain'' (concatenation) of finite volume quantum surfaces, each with two marked boundary points; there is one quantum surface for each excursion of a Bessel process from $0$, as Figure~\ref{fig::besseltowedge} illustrates. In other words, an instance of a thin quantum wedge is a countable collection of doubly marked disk-homeomorphic quantum surfaces (one for each excursion) whose boundary is homeomorphic to the circle together with a total ordering on the set of such surfaces.

To be a bit more formal, we recall (see Chapters XI and XII of \cite{ry99martingales} for more details) that the excursions of a Bessel process, for $\delta \in (1,2)$, form a Poisson point process, indexed by the local time of the zero set of the Bessel process. More precisely, if the Bessel process $X_t$ has an excursion over the interval $(a,b)$ (i.e., $X_a =X_b = 0$ but $X_t > 0$ for $t \in (a,b)$) then there is a {\em translated (starting at time zero) excursion} given by $X_{t+a}$ for $t \in [0, b-a]$. We then can define a point process by $E = \sum_i \delta_{(s_i, e_i)}$ in which the $e_i$ range over the countable set of excursions of the Bessel process (each translated so that it starts at time zero) and the quantity $s_i$ describes the zero local time accumulated by the process $X_t$ during the corresponding $[0,a]$ interval (i.e., before the start of the excursion). It is well known (see e.g.\ Chapters XI and XII of \cite{ry99martingales}) that the $E$ thus defined is a Poisson point process with intensity $ds \otimes N$, where $ds$ is Lebesgue measure on $[0,\infty)$ and $N$ is the so-called It\^o excursion measure of the Bessel process, and moreover that $E$ a.s.\ uniquely determines the process $X_t$ (and vice versa). Then we can define a thin quantum wedge to be a Poisson point process taking values in $\R \otimes U \otimes H$ (where $U$ is the space of Bessel excursions and $H$ is the space of distributions defined on a strip) as $\sum_i \delta_{(t_i, e_i, h_i)}$ where given $E$, the $h_i$ are independent with a law depending only on $e_i$ as follows: to get from a Bessel excursion $e_i$ to $h_i$, one takes the process $A_t$ indexed by $t \in \R$ --- defined to be the log of the Bessel excursion, parameterized by quadratic variation --- and adds lateral noise as described above; for concreteness, one can fix the horizontal translation by requiring that $A_t$ achieve its maximum at $t=0$. As we explain below, we will also need to endow a thin quantum wedge with some additional structure (a topology, and left and right boundary lengths).

We define a {\bf beaded quantum surface} to be a pair $(D,h)$, modulo the equivalence relation described after \eqref{e.mudef}, except that $D$ is now a closed set (not necessarily homeomorphic to a disk) such that each component of its interior together with its prime-end boundary is homeomorphic to the closed disk, and $h$ is only defined as a distribution on each of these components, and $\psi$ is allowed to be any homeomorphism from $D$ to another closed set $\wt D$ that is conformal on each component of the interior of $D$.

There is a natural way to interpret a thin quantum wedge as a beaded quantum surface. Let $D$ to be the region bounded between the positive real axis and the Bessel process $X_t$ described above, i.e., the region $$S = \{(t,y) : t \geq 0 \, ,\,\,\, 0 \leq y \leq X_t \},$$ endowed with the ordinary Euclidean topology. The interior of $S$ a.s.\ has countably many components (there is one such component for each excursion of the Bessel process away from zero) each of which is topologically homeomorphic to a disk and has a boundary homeomorphic to a circle. Each of these topological disks also comes with two special boundary points, corresponding to points where the real axis and the graph of $X_t$ intersect.  Within any component of $S$ corresponding to an excursion $e_i$, we can define $h$ via $h = h_i \circ \psi (x) + Q \log |\psi'|$ where $\psi$ is any conformal map from that component to the infinite strip, taking the left and right endpoints to the left and right endpoints of the strip. (Again, recall Figure~\ref{fig::besseltowedge}.) In particular, this induces an area measure and a boundary length measure on each component of $S$. The two marked points on a disk-homeomorphic region separate its boundary into a ``left boundary arc'' and a ``right boundary arc.''  It is not hard to see that for any fixed $t > 0$, the sum of the quantum boundary lengths of all components corresponding to excursions that occur before time $t$ is a.s.\ finite.  Thus the ``left boundary'' of $S$ (corresponding to the graph of $X_t$) and the ``right boundary'' (corresponding to the positive real axis) can each a.s.\ be continuously parameterized by this boundary length measure.

Note that although we can use the set $S$ above to parameterize a thin quantum wedge, it might be more natural to change coordinates and replace $S$ by (for example) a set $\wt S$ which looks like $S$ except that each disk homeomorphic component is replaced by an actual disk (centered on the real line, and covering the same real interval as the corresponding component of $S$).  We also remark that it is not actually crucial to insist on parameterizing a beaded quantum surface by a subset of the plane --- in general we can let the parameterizing space $D$ be any closed topological space as long as each component of its interior is homeomorphic to the disk and comes with a ``conformal structure'' (i.e., a distinguished homeomorphism to the ordinary unit disk defined up to M\"obius itransformation).

The thick-quantum wedge condition $\alpha\leq Q$ corresponds in the physics literature on Liouville quantum theory to the so-called  {\it Seiberg bound} \cite{MR1182173}.\footnote{\label{fn::seiberg}For an introduction, see, e.g., \cite{Ginsparg-Moore,MR1320471,MR2073993}.  The thin wedges correspond to values of $\alpha$ above this bound; however, each of the concatenated finite volume quantum surfaces which form the thin wedge locally looks like an $\wt{\alpha}$-wedge with the reflected value $\wt{\alpha} = 2Q-\alpha$ near each of its endpoints, i.e.,\ a thick wedge with $\wt{\alpha} \in(\frac{2}{\gamma},Q)$.  See Definition~\ref{def::skinny_wedge_bessel}.}

\begin{table}[ht!]
\rowcolors{1}{lightgray}{white}
\begin{center}
{\small
\renewcommand{\tabcolsep}{0.15cm}
\begin{tabular}{c|cccccc}
				&$\alpha$		&$W$	 &$\theta$		&$\delta$		 &$a$		& $\Delta$	\\
\hline
 $\alpha$	& --- & $\tfrac{\gamma}{2} +Q - \tfrac{1}{\gamma}W$  & $\tfrac{\gamma}{2}+Q-\chi \tfrac{\theta}{\pi}$ & $Q + \tfrac{\gamma}{2}(2-\delta)$ & $Q-a$ & $\gamma(1-\Delta)$\\
 $W$ & $\gamma(\tfrac{\gamma}{2}+Q-\alpha)$ & --- & $ \chi\gamma\tfrac{\theta}{\pi}$ & $\tfrac{\gamma^2}{2}(\delta-1)$ & $\gamma a + \tfrac{\gamma^2}{2}$ & $2+\gamma^2 \Delta$ \\
 $\theta$ & $\tfrac{\pi}{\chi} (\frac{\gamma}{2}+Q-\alpha)$ & $\tfrac{\pi}{\chi\gamma}W$ & --- & $\tfrac{\pi\gamma}{2\chi} (\delta-1)
 $ &  $\tfrac{\pi}{\chi} (a+\frac{\gamma}{2}) $ & $\tfrac{\pi}{\chi}(\tfrac{2}{\gamma} + \gamma \Delta)$ \\
 $\delta$ & $2 + \tfrac{2}{\gamma}(Q-\alpha)$ & $1+\tfrac{2}{\gamma^2}W$ & $1+\chi\tfrac{2}{\gamma}\tfrac{\theta}{\pi}$ & --- & $2+\tfrac{2}{\gamma}a$ & $2(\Delta + \tfrac{1}{\gamma}Q)$ \\
 $a$ & $Q-\alpha$ & $\tfrac{1}{\gamma}W - \tfrac{\gamma}{2}$ & $\chi\tfrac{\theta}{\pi}-\tfrac{\gamma}{2}$ & $\tfrac{\gamma}{2}(\delta-2)$ & --- & $\chi +  \gamma\Delta$ \\
 $\Delta$ & $1-\tfrac{\alpha}{\gamma}$ & $\tfrac{1}{\gamma^2}(W-2)$ & $\tfrac{\chi}{\gamma}\tfrac{\theta}{\pi} - \tfrac{2}{\gamma^2}$ & $\tfrac{1}{2}\delta - \tfrac{1}{\gamma}Q$ & $\tfrac{1}{\gamma}(a-\chi)$ & ---
\end{tabular}
}
\end{center}
\caption{\label{tab::wedge_parameterization} We can parameterize the space of {\bf quantum wedges} using a number of different variables that are affine transformations of each other once $\gamma$ is fixed.  Shown are six such possibilities which are important for this article: multiple ($\alpha$) of $-\log|\cdot|$ used when parameterizing by $\h$, weight ($W$), angle gap in imaginary geometry ($\theta$), dimension of Bessel process ($\delta$), drift for the corresponding Brownian motion ($a$) when parameterizing by the strip $\strip = \R \times [0,\pi]$, and the quantum scaling exponent ($\Delta$) of a fractal on the boundary of a quantum surface such that the wedge represents the local behavior at a quantum-typical point on that fractal.  ({\bf Important:} The $\theta$ here is {\em not the same} as $\vartheta$ used to define $\W_\vartheta$ above.) Each element of the table gives the variable corresponding to the row as a function of the variable which corresponds to the column.  Here, $Q=2/\gamma+\gamma/2$ and $\chi = 2/\gamma-\gamma/2$ is a constant from imaginary geometry \cite{ms2012imag1,ms2012imag2,ms2012imag3,ms2013imag4}.  Recall that Theorem~\ref{thm::welding} gives the additivity of weights under the welding operation; by combining this with the table above, we can see how the other ways of parameterizing a wedge transform under the welding operation.  In particular, the only other parameterization given above which is additive is $\theta$.
}
\end{table}

\begin{table}[ht!]
\rowcolors{1}{lightgray}{white}
\begin{center}
{\small
\renewcommand{\tabcolsep}{0.15cm}
\begin{tabular}{c|cccccc}
				&$\alpha$		& $W$ 		&$\theta$	 &$\delta$		&$a$			& $\Delta$		\\
\hline
 $\alpha$	& ---		& $Q- \tfrac{1}{2\gamma} W$    & $Q - \chi \tfrac{\theta}{2\pi}$& $Q - \tfrac{\gamma}{4}(\delta-2)$ &	$Q-a$			&	 $\gamma(1-\Delta)$\\
 $W$			&	$2\gamma(Q-\alpha)$	&	---				&	$\gamma \chi \tfrac{\theta}{\pi}$	&	$\tfrac{\gamma^2}{2}(\delta-2)$&	$2 a \gamma$	&		$\gamma^2(2\Delta-1)\!+\!4$			\\
 $\theta$	&$\tfrac{2\pi}{\chi}(Q-\alpha)$ & $\tfrac{\pi}{\gamma \chi} W$  &---				& $\gamma \tfrac{\pi}{2 \chi}(\delta-2)$			&	$\tfrac{2\pi}{\chi}a$		&	$2\pi(1+\tfrac{\gamma}{\chi}\Delta)$\\
$\delta$	&$2+\tfrac{4}{\gamma}(Q-\alpha)$ & $2 + \tfrac{2}{\gamma^2}W$ &	$2 + \chi\tfrac{2}{\gamma}\tfrac{\theta}{\pi}$				&---				&$2+\tfrac{4}{\gamma}a$ & $4\Delta + \tfrac{8}{\gamma^2}$	\\
 $a$			&$Q-\alpha$& $\tfrac{1}{2\gamma} W$ 	&	$\chi\tfrac{\theta}{2\pi}$			&	$\tfrac{\gamma}{4}(\delta-2)$ &---			& $\chi + \gamma \Delta$	\\
$\Delta$ & $1-\tfrac{\alpha}{\gamma}$& $\tfrac{1}{2}\!+\!\tfrac{1}{2\gamma^2}(W-4)$ & $\tfrac{\chi}{\gamma}(\tfrac{\theta}{2\pi}-1)$ & $\tfrac{1}{4} \delta- \tfrac{2}{\gamma^2}$ & $\tfrac{1}{\gamma}(a-\chi)$  & ---
\end{tabular}
}
\end{center}
\caption{\label{tab::cone_parameterization} As in the case of quantum wedges (Table~\ref{tab::wedge_parameterization}), we can parameterize the space of {\bf quantum cones} using a number of different variables.  Shown are six such possibilities which are important for this article: multiple of $-\log|\cdot|$ ($\alpha$) when parameterizing by~$\C$, weight~($W$), ``space of angles'' in imaginary geometry~($\theta$), dimension of Bessel process ($\delta$), and drift for the corresponding Brownian motion~($a$) when parameterizing by the cylinder $\cyl = \R \times [0,2\pi]$ (with $\R \times \{0\}$ and $\R \times \{2\pi\}$ identified), and the quantum scaling exponent ($\Delta$) of a fractal on the interior of a quantum surface such that the cone represents the local behavior at a quantum-typical point on that fractal.  Each element of the table gives the variable corresponding to the row as a function of the variable which corresponds to the column.  Here, $Q=2/\gamma+\gamma/2$ and $\chi = 2/\gamma-\gamma/2$ is a constant from imaginary geometry \cite{ms2012imag1,ms2012imag2,ms2012imag3,ms2013imag4}.}
\end{table}

We will assign a ``weight'' parameter to each quantum wedge and show that these weights are additive under gluing operations.  In particular, a wedge of weight~$W$ can be produced by welding together~$n$ independent wedges of weight $W/n$.  Taking the $n \to \infty$ limit, we will obtain a way to construct a wedge by gluing together countably many quantum surfaces that can in some sense be interpreted as wedges of weight zero.  Explicitly, the {\bf weight} of an $\alpha$-quantum wedge is the number defined from $\alpha$ as follows:
\begin{equation}
\label{eqn::wedge_weight}
W := \gamma \left(\gamma + \frac{2}{\gamma} - \alpha\right) = \gamma\left(\frac{\gamma}{2} + Q - \alpha\right).
\end{equation}
We will usually describe an $\alpha$-quantum wedge in terms of either $\alpha$ or its weight $W$ depending on the context.  Table~\ref{tab::wedge_parameterization} summarizes the relationships between several ways of parameterizing the space of wedges (all equivalent up to affine transformation, once $\gamma$ is fixed).  Two of these will be introduced at later points in this paper.   We have already introduced $\alpha$, $W$, $\delta$, and $a$.  The value $\Delta$ is a ``quantum scaling exponent.''  It turns out that at a point conditioned to intersect a random fractal of quantum scaling exponent $\Delta$ (boundary or interior) in the KPZ framework, the surface typically has a logarithmic singularity of magnitude $\alpha = \gamma -
\gamma \Delta$, see \cite[Equation (63)]{ds2011kpz} and Appendix~\ref{subapp::kpz}.  The value $\theta$ is an imaginary geometry angle, see Proposition~\ref{prop::slice_wedge_many_times}.

Taking $\alpha \leq Q$ corresponds to taking $W \geq \tfrac{\gamma^2}{2}$, which thus corresponds to a thick wedge.  If we extend the formula relating $W$ to $\alpha$ beyond this range, we find that taking $\alpha \in (Q,Q+\tfrac{\gamma}{2})$ formally corresponds to taking $W \in (0,\tfrac{\gamma^2}{2})$, which thus corresponds to a thin wedge.

For the reader who has read \cite{she2010zipper}, we briefly mention a few of the special wedge types discussed there:
\begin{enumerate}
\item Wedge obtained by zooming in at a boundary-measure-typical point: $\alpha = \gamma$, $W=2$.  (See Lemma~\ref{lem::weighted_quantum_surface_boundary} for a calculation of this type; recall that the Green's function for the GFF with free boundary conditions behaves like $-2\log|z-w|$ for $z$ in the boundary).
\item Wedge obtained by zooming in near the origin of a capacity invariant $\SLE_{\kappa}$ quantum zipper: $\alpha = -2/\gamma$, $W=4 + \gamma^2$.
\item Wedge obtained by zooming in near the origin of a quantum-length invariant $\SLE_\kappa$ quantum zipper: $\alpha = -2/\gamma + \gamma$, $W=4$. ($W=4$ arises as the weight is additive under the welding operation and the interface between two independent $W=2$ quantum wedges is an $\SLE_\kappa$.)
\end{enumerate}

We will indicate a quantum wedge $\CW$ parameterized by $D$ and described by a distribution $h$ with the notation $(D,h)$.  If we also wish to emphasize the distinguished origin (call it $z_1$) and~$\infty$ point (call it $z_2$) we will denote the wedge by $(D,h,z_1,z_2)$.

We remark that the weight $W$ of a quantum wedge has a geometric meaning in that $\pi/(\gamma \chi) W$ (where $\chi = 2/\gamma - \gamma/2$ is the constant from imaginary geometry \cite{ms2012imag1,ms2012imag2,ms2012imag3,ms2013imag4}) gives the angle gap between GFF flow lines (of $e^{i h/\chi}$) necessary to cut a wedge with weight $W' > W$ to get a wedge of weight $W$.  Indeed, this will follow from Theorem~\ref{thm::welding}.  Using weight rather than angle is often more convenient because it is more directly related to the $\rho$-values associated with the GFF flow lines.

\subsubsection{Quantum cones}

Roughly, an $\alpha$-{\bf quantum cone} is a quantum surface obtained by taking $e^{\gamma h(z)} dz$ where $\gamma \in (0,2)$ is a fixed constant and $h$ is an instance of the GFF on the infinite cone with opening angle $\vartheta$, i.e.\ the surface that arises by starting with $\W_\vartheta$ and then identifying its left and right sides according to Lebesgue measure.  Analogous to the case of a quantum wedge, by performing a change of coordinates via the map $\psi_\vartheta(\wt{z}) = \wt{z}^{\vartheta/2\pi}$ and applying~\eqref{eqn::Qmap}, we can represent an unscaled quantum cone in terms of the sum of an instance $\wt{h}$ of the whole-plane GFF plus the deterministic function $-\alpha \log|\wt{z}|$ where $\alpha = Q(1-\tfrac{\vartheta}{2\pi})$.  A quantum cone can be defined formally and precisely using the analog of Figure~\ref{fig::besseltowedge} in which the strip is replaced by the cylinder, see Section~\ref{sec::surfaces}.  As in the case of quantum wedges, there are a number of different ways of parameterizing the space of quantum cones; see Table~\ref{tab::cone_parameterization}.

We define the {\bf weight} $W$ of an $\alpha$-quantum cone to be the number
\begin{equation}
\label{eqn::quantum_cone_weight}
W := 2\gamma(Q-\alpha).
\end{equation}
Theorem~\ref{thm::zip_up_wedge_rough_statement} below will show that one can ``glue'' the two sides of a weight $W$ quantum wedge to obtain a weight $W$ quantum cone.

As in the case of wedges, we will indicate a quantum cone~$\CC$ parameterized by a domain~$D$ and described by a distribution~$h$ with the notation $(D,h)$.  If we also wish to emphasize the distinguished origin (say $z_1$) and~$\infty$ (say $z_2$) we will denote the cone by $(D,h,z_1,z_2)$.

Let us mention two of the special types of quantum cones:
\begin{enumerate}
\item Quantum cone obtained by zooming in at a quantum typical point: $\alpha=\gamma$, $W = 4-\gamma^2$.  (See Lemma~\ref{lem::weighted_quantum_surface} for a calculation of this type.)
\item Quantum cone obtained by welding together the positive and negative rays of a weight $W=2$ quantum wedge: $\alpha=\gamma/2 + 1/\gamma$, $W=2$.
\end{enumerate}
As we mentioned just above, we more generally have that the quantum cone obtained by welding together the positive and negative rays of a weight $W$ quantum wedge has weight $W$.

\subsubsection{Quantum disks and spheres}
\label{subsubsec:int_disk_sphere}

We conclude this subsection by pointing out that the Bessel process construction as described in Figure~\ref{fig::besseltowedge} gives us a way to define a family of natural {\it infinite} measures on the space of {\it finite}-volume quantum disks (or spheres).  Recall that for any $\delta < 2$ (which corresponds to a slope $a < 0$) one can define the {\bf Bessel excursion measure} $\nu_\delta^\bes$ (i.e., the It\^o excursion measure associated with the excursions that a Bessel process makes from $0$), which is an infinite measure on the space of continuous processes $X_t$ indexed by $[0,T]$ (for some random $T$) satisfying $X_0 = X_T = 0$ and $X_t > 0$ for $t \in (0,T)$.  The construction of this measure is recalled in Remark~\ref{rem::bessel_ito_excursion}.

When $\delta \in (0,2)$, one can consider a Bessel process $X_t$ and let $\ell_t$ denote the local time the process has spent at $0$ between times $0$ and $t$.  If we parameterize time by the right-continuous inverse of $\ell_t$, then the excursions appear as a Poisson point process (\ppp) on $\R_+ \times \CE$ where $\CE$ is the space of continuous functions $\phi \colon \R_+ \to \R_+$ such that $\phi(0) = 0$ and $\phi(t) = 0$ for all $t \geq T$, some $T$, and it is possible to recover the Bessel process from the \ppp\ from concatenating the countable collection of excursions.  When $\delta \leq 0$, the corresponding \ppp\ is still well-defined, but it a.s.\ assigns, to any finite time interval, a countable collection of excursions whose lengths sum to $\infty$ (so that it is not possible to define the reflecting Bessel process in the same way, essentially because there are ``too many small excursions'').  We will discuss all of these facts in more detail in Section~\ref{subsec::reversing_markov} and Section~\ref{subsec::bessel_processes}.

Given any Bessel excursion measure $\nu_\delta^\bes$, with $\delta < 2$, one can construct a doubly marked quantum surface using the procedure described in Figure~\ref{fig::besseltowedge}.  Observe that this surface looks like a thick quantum wedge with $a$ value given by $-a$ (or $\delta$ value given by $4-\delta$) near each of the two endpoints.

This suggests a way of parameterizing the family of measures on disks (those induced by the measures $\nu_\delta^\bes$ and the procedure from Figure~\ref{fig::besseltowedge}).  We define a {\bf quantum disk of weight $W$} to be a sample from the infinite measure $\diskmeasure_W$ on quantum disks induced by $\nu_\delta^\bes$ with $\delta = 3-\tfrac{2}{\gamma^2}W$.  With this choice, the surface looks like a quantum wedge of weight $W$ near each of its two endpoints.  We define parameters $\alpha$ and $\theta$ similarly.  A similar procedure allows us to define a {\bf quantum sphere of weight $W$} to be a sample from the infinite measure $\spheremeasure_W$ on quantum spheres induced by $\nu_\delta^\bes$ with $\delta$ chosen so that the surface looks like a weight $W$ quantum cone near each of its two endpoints.  We similarly define the parameters $\alpha$ and $\theta$ for quantum disks and spheres to be those of the corresponding wedges/cones.  (We will describe these constructions in more detail in Section~\ref{subsec::surfaces_strips_cylinders}.)

Note that by definition, a thin quantum wedge of weight $W$ (recall that the wedge being thin means $W \in (0,\tfrac{\gamma^2}{2}), \alpha\in (Q,Q+\frac{\gamma}{2})$) is obtained as a Poissonian concatenation of disks of weight $\wt W$, where the $a$ corresponding to $W$ is $-1$ times the $a$ corresponding to $\wt W$.  That is, $\wt W = \gamma^2 - W$, and $\wt  \alpha=2Q-\alpha$ (Table~\ref{tab::wedge_parameterization}). Thus, as mentioned in Footnote~\ref{fn::seiberg}, these marked disks locally look like thick wedges near their marked points and obey the Seiberg bound, $\wt \alpha\leq Q$.

The measure $\diskmeasure_W$ has special significance when $W=2$ so that $\alpha = \gamma$ and $\theta = 2\pi/(2 - \tfrac{\gamma^2}{2})$ and the measure $\spheremeasure_W$ has special significance when $W = 4 - \gamma^2$, so that the cone value for $\alpha$ is $\gamma$ and $\theta = 2\pi$.  In these cases, the two marked points look like ``typical'' points chosen from the boundary or bulk quantum measures (see e.g., Lemma~\ref{lem::weighted_quantum_surface_boundary} and Lemma~\ref{lem::weighted_quantum_surface}).  We use the terms {\bf quantum disk} or {\bf quantum sphere} (without specifying weight) and the symbols $\diskmeasure$ and $\spheremeasure$ to denote the infinite measures with these special weights.  See Remark~\ref{rem::bessel_ito_excursion} at the end of Section~\ref{subsec::bessel_processes} for additional detail on these constructions.

Let us explain a bit further why the definitions of $\diskmeasure$ and $\spheremeasure$ are natural.  Recall that a thick quantum wedge parameterized by $\strip$ can be constructed by taking the average of the field on vertical lines to be given (roughly) by $A_t = B_{2t} + at$ where $B$ is a standard Brownian motion and $a > 0$.  Moreover, the particular case $a = Q-\gamma$ (i.e., $W=2$) corresponds to describing the local behavior of a quantum surface with boundary near a point chosen from the boundary measure.  A quantum disk can be constructed from such a quantum wedge by ``pinching off'' a bubble of mass.  There are various ways of making sense of what this could mean and it is possible to show that they are equivalent (see \cite{quantum_spheres} for some discussion of this in the case of quantum cones and spheres).  One way of doing this is as follows.  We can take the $A_t$ process and then condition on it hitting $-r$ after the first time that it hits $0$.  Assume that the horizontal translation is taken so that $A$ first hits $0$ at time $t=0$.  If we then send $r \to \infty$, then for positive times we will get a Brownian motion but with drift $-a$ (see Lemma~\ref{lem::condition_bm_negative_drift_large} for details).  The law of the process thus obtained can equivalently be sampled from by first sampling a Bessel excursion $Z$ with dimension $3-\tfrac{4}{\gamma^2}$ conditioned on having supremum at least $1$, taking $\tfrac{2}{\gamma} \log Z$, and reparameterizing it to have quadratic variation $2 dt$.  This leads us to the natural infinite measure $\diskmeasure$ on quantum disks with the infinite measure on Bessel excursions as the starting point.  The definition of a quantum sphere is similarly motivated but starting with a weight $4-\gamma^2$ quantum cone in place of a weight $2$ quantum wedge.

One may obtain a {\bf unit boundary length quantum disk} by sampling from $\diskmeasure$ conditioned to have total boundary length $1$.  (This is now a sample from a finite measure, which can be normalized to be a probability measure.)  Similarly, a {\bf unit area quantum sphere} can be defined from $\spheremeasure$.  We define a quantum disk with an arbitrary fixed boundary length (or a quantum sphere with a fixed area) similarly.  Recall that adding a constant $C$ to the field scales quantum boundary lengths (resp.\ quantum areas) by the factor $e^{\gamma C/2}$ (resp.\ $e^{\gamma C}$).  One can therefore define a quantum disk with arbitrary boundary length $\ell > 0$ by taking the unit boundary length quantum disk and then scaling it by adding the constant $\tfrac{2}{\gamma} \log \ell$ to the field to obtain a quantum surface with boundary length $\ell$.  One can similarly define a quantum disk or sphere, respectively, with area $a > 0$ by taking the unit area sphere, respectively, and then adding $\tfrac{1}{\gamma} \log a$ to the field.

\subsection{Welding and cutting quantum wedges and cones}
\label{subsec::welding}

The current subsection and Section~\ref{subsec::matingsandloops} present several theorems involving the ``gluing together'' of infinite volume objects, including quantum wedges, infinite-volume continuum random trees (c.f.\ Section~\ref{subsec::easy}) and infinite-volume trees of disks.  We include the following chart (whose entries will all be explained later) to help the reader keep track of some of these statements.  In this chart, the parameters given for the quantum cones and wedges indicate {\em weight}.

\begin{table}[ht!]
\rowcolors{1}{lightgray}{white}
\begin{center}
{\small
\begin{tabular}{c|c| c }
			{\bf Theorem }	& {\bf Objects to be glued}		& {\bf New object/new interface}\\
\hline
\ref{thm::welding},~\ref{thm::paths_determined} & $W_1$-wedge and $W_2$-wedge & $(W_1\! +\! W_2)$-wedge/$\SLE_{\kappa}(W_1 -2 ; W_2 - 2)$ \\
\ref{thm::zip_up_wedge_rough_statement} & $W$-wedge, self & $W$-cone/whole-plane $\SLE_{\kappa}(W-2)$ \\
\ref{thm::quantum_cone_bm_rough_statement},~\ref{thm::trees_determine_embedding} & coupled pair of CRTs & $2 \gamma \chi$-cone/space-filling $\SLE_{\kappa'}$  \\
\ref{thm::sle_kp_on_wedge} & forested $W_1$-, $W_2$-wedges &  $(W_1 \! +\!  \gamma \chi \!  +\!  W_2)$-wedge/$\SLE_{\kappa'}(\rho_1; \rho_2)$ \\
\ref{thm::quantum_cone_sle_kp} & bi-forested $W$-wedge, self & $(W \! +\!  \gamma \chi)$-cone/whole-plane $\SLE_{\kappa'}(\rho)$
\end{tabular}
}
\end{center}
\end{table}

Note that for cones, the weight $2 \gamma \chi = 4 - \gamma^2$ that appears in the third row corresponds to $\alpha = \gamma$ and $\theta = 2 \pi$.  The value $\gamma \chi$ that appears in the last two rows corresponds to $\theta = \pi$.  The so-called ``forested quantum wedges'' are later introduced and explored through Theorems~\ref{thm::gluingtwoforestedlines}--\ref{thm::quantum_cone_sle_kp} and Corollary~\ref{cor::stablelength}.  These theorems describe the structure of the pair of trees of disks produced by cutting a quantum surface with a form of $\SLE_{\kappa'}$ with $\kappa' \in (4,8)$, which are then related to (an infinite time version of) the pair of L\'evy processes described in Figure~\ref{levytreegluing}.  The values of the $\rho_i$ and $\rho$ in the last two rows of the table appear in the theorem statements.

\begin{figure}[ht!]
\begin{center}
\includegraphics[scale=0.75]{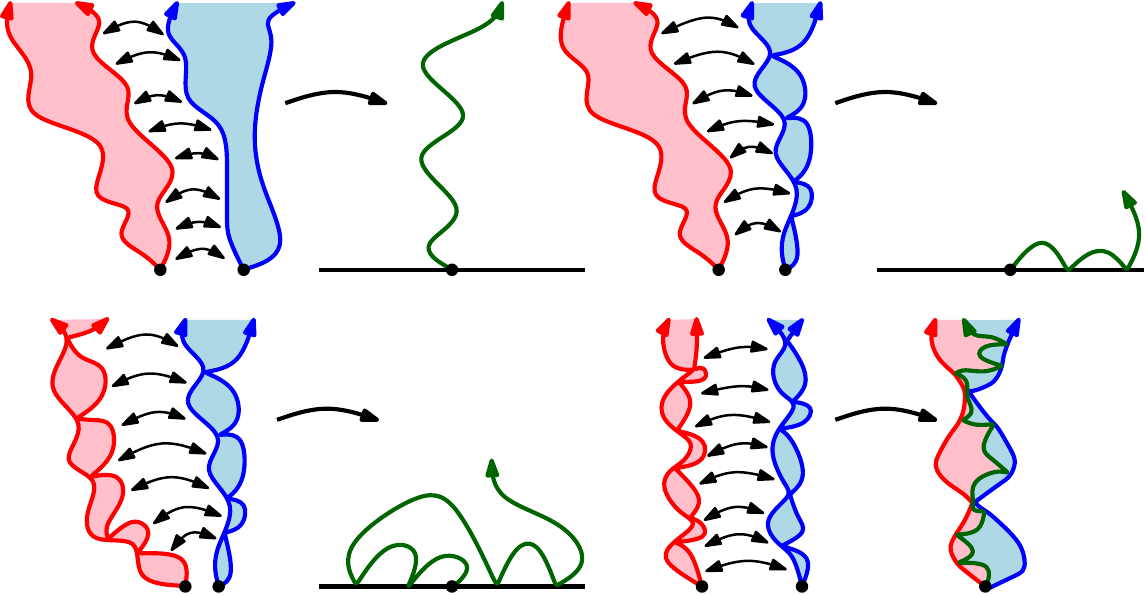}
\end{center}
\caption{\label{fig::wedge_gluing} Suppose that $\CW_1, \CW_2$ are independent quantum wedges with respective weights $W_1,W_2 > 0$.  Recall that if $W_i \geq \tfrac{\gamma^2}{2}$ then $\CW_i$ (together with its prime-end boundary) is homeomorphic to (the closure of) $\h$ and if $W_i \in (0,\tfrac{\gamma^2}{2})$ then $\CW_i$ consists of an ordered, countable sequence of beads each of which is topologically a disk.  Illustrated are the four possible scenarios considered in Theorem~\ref{thm::welding}.  Let $W = W_1+W_2$.  {\bf Top left:} $W_1,W_2 \geq \tfrac{\gamma^2}{2}$ so that both wedges are homeomorphic to $\h$.  Their conformal welding is a wedge $\CW$ of weight $W$ and the interface between them is an $\SLE_\kappa(W_1-2;W_2-2)$ process independent of $\CW$.  {\bf Top right:} The same is true if $W_1 \geq \tfrac{\gamma^2}{2}$, $W_2 \in (0,\tfrac{\gamma^2}{2})$.  Since $W_2 \in (0,\tfrac{\gamma^2}{2})$, the interface intersects the right boundary of $\CW$.  {\bf Bottom left:} The same is true if $W_1,W_2 \in (0,\tfrac{\gamma^2}{2})$ and $W \geq \tfrac{\gamma^2}{2}$.  In this case, the interface intersects both the left and right boundaries of $\CW$.  {\bf Bottom right:} If $W \in (0,\tfrac{\gamma^2}{2})$, the welding of $\CW_1$ and $\CW_2$ is still a wedge $\CW$ of weight $W$.  In this case, $\CW$ is not homeomorphic to $\h$.  Nevertheless, the interface between $\CW_1$ and $\CW_2$ is independently an $\SLE_\kappa(W_1-2;W_2-2)$ in each of the beads of $\CW$.
}
\end{figure}

\begin{figure}[ht!]
\begin{center}
\includegraphics[scale=0.85]{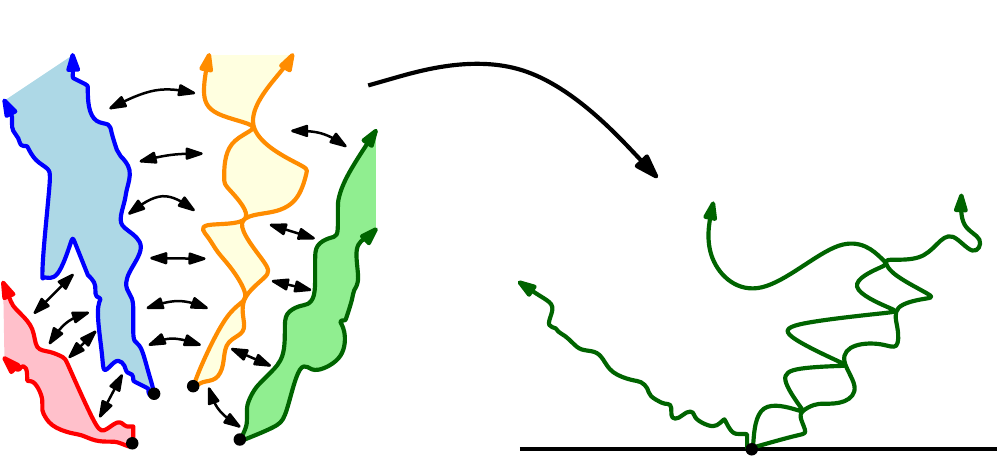}
\caption{\label{fig::wedge_series} Four quantum wedges with respective weights $W_1,\ldots,W_4$ conformally welded along their boundaries and conformally mapped to $\h$.  By Proposition~\ref{prop::slice_wedge_many_times} (the generalization of Theorem~\ref{thm::welding} to the setting of a finite number of independent wedges), the resulting surface is a wedge of weight $W_1 + \cdots W_4$.  In the illustration, $W_1,W_2,W_4 \geq \tfrac{\gamma^2}{2}$ and $W_3 \in (0,\tfrac{\gamma^2}{2})$.  The images of the interfaces are coupled $\SLE_\kappa(\rho_1;\rho_2)$ processes, corresponding to rays in a so-called imaginary geometry \cite{ms2012imag1,ms2012imag2,ms2012imag3,ms2013imag4}.  Since $W_3 \in (0,\tfrac{\gamma^2}{2})$, the middle interface intersects the rightmost interface.}
\end{center}
\end{figure}

Our first main result describes the welding of wedges with general positive weights.  (See Figure~\ref{fig::wedge_gluing} and Figure~\ref{fig::wedge_series} for illustrations.)

\begin{theorem}[Welding and cutting for quantum wedges]
\label{thm::welding}
Fix $\gamma \in (0,2)$ and choose a quantum wedge $\qwedge$ of weight $W > 0$.  Suppose $W = W_1 + W_2 \geq \tfrac{\gamma^2}{2}$ for $W_1,W_2 > 0$, $\qwedge$ is represented by $(\h,h,0,\infty)$, and then independently choose an $\SLE_\kappa(\rho_1; \rho_2)$, for $\rho_i = W_i-2$ and $\kappa = \gamma^2 \in (0,4)$, from~$0$ to~$\infty$ with force points located at $0^-$ and $0^+$.  Let $D_1$ and $D_2$ denote left and right components of $\h \setminus \eta$.  Then the quantum surfaces $\qwedge_1 = (D_1,h, 0, \infty)$ (with $h$ restricted to $D_1$) and $\qwedge_2 = (D_2,h, 0, \infty)$ (with $h$ restricted to $D_2$) are independent.  Moreover, each $\qwedge_i$ has the law of a quantum wedge with weight $W_i$.

In the case that $W \in (0,\tfrac{\gamma^2}{2})$, the same statement holds except we take $\eta$ to be a concatenation of independent $\SLE_\kappa(\rho_1;\rho_2)$ processes (one from the opening point to the closing point of each of the beads of $\qwedge$ with the force points located immediately to the left and right of the opening point) and we take $D_1$ (resp.\ $D_2$) to be the chain of surfaces which are to the left (resp.\ right) of $\eta$.
\end{theorem}

\begin{remark}
\label{rem::linearity_of_wedges}
We remark that this ``linearity of wedge weights under gluing'' (which can be extended to multiple wedges, see Figure~\ref{fig::wedge_series}) is pre-figured by certain results on non-intersection exponents that have appeared in the physics and math literatures.  For example, suppose that on a random infinite planar triangulation of the half plane, one starts $n$ simple random walks at far away locations and conditions on having them all reach the same single boundary point without intersecting each other.  One expects (as we will explain in Appendix~\ref{app::kpz}) that the (infinite volume, fine mesh) scaling limit should consist of $(2n+1)$ independent quantum wedges, with $n$ of them corresponding to a region between the left and right boundaries of a single path, and $n+1$ corresponding to a region between two paths, or between a path and the boundary.  The $n+1$ of the latter type should all have weight $2$ (essentially because this is the weight of a wedge obtained by zooming in at a ``typical boundary measure'' point, as mentioned above --- see Appendix~\ref{app::kpz}).  The $n$ wedges of the former type should all have some weight $W_0$ (which we do not specify for now). Thus the {\em total weight} of the wedge obtained by gluing these individual wedges together should be $W = n W_0 + 2(n+1) = (W_0 + 2) n + 2$.  In particular, this implies that $W-2$ should be a linear function of $n$.  Equivalently, if we take the formula $W=2+\gamma^2\Delta$ (from Table~\ref{tab::wedge_parameterization}) this means that $\Delta$ (the so-called ``boundary quantum scaling exponent'' of the non-intersecting path event, see  Appendix~\ref{app::kpz}) is a linear function of $n$.  The fact that this {\em should} be the case  was predicted and advocated by the first co-author using properties of discrete quantum gravity models and discrete analogs of the wedge weldings discussed above \cite{dup1998rwk,dup2000fractals,dup2004qg,LH2005}.  The KPZ relation expresses that Euclidean exponents are given by a quadratic function of their quantum analogs.  In particular, this suggests that an inverse quadratic function of the analogous Euclidean exponents should be a linear function of $n$; this latter fact was obtained (and termed the ``cascade relation'') in the rigorous work by Lawler and Werner on Brownian intersection exponents \cite{MR1742883,MR1796962} (See Appendix~\ref{app::kpz}.)
\end{remark}

 We also have the following:
\begin{theorem}
\label{thm::paths_determined}
In the construction of Theorem~\ref{thm::welding}, both $\qwedge$ and the interface $\eta$ are uniquely determined by the $\qwedge_i$ and may be obtained by a {\em conformal welding} of the right side of $\qwedge_1$ with the left side of $\qwedge_2$, where each is parameterized by $\gamma$-LQG boundary length.
\end{theorem}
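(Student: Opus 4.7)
The plan is to reduce the theorem to the conformal removability of $\SLE_\kappa$ paths for $\kappa \in (0,4)$, as formalized in Proposition~\ref{prop::collection_of_sles_removable}. First I would check that the welding operation on the right side of $\qwedge_1$ and the left side of $\qwedge_2$ is unambiguously defined as a map on abstract quantum surfaces: each of these two boundary arcs is a.s.\ isometric to $[0,\infty)$ under the $\gamma$-LQG boundary length measure $\nu_h$, so pairing the two arcs by matching quantum length gives a canonical boundary-length preserving identification. This produces a topological space $\qwedge_1 \sqcup \qwedge_2 / \cong$ carrying a distinguished interface curve together with inherited area and length measures.

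The existence direction --- that this welded object can be realized as a wedge of weight $W_1+W_2$ with interface an $\SLE_\kappa(\rho_1;\rho_2)$ --- is the content of Theorem~\ref{thm::welding} read backwards: starting from $(\qwedge,\eta)$ and restricting $h$ to the two sides of $\eta$, the quantum length measure on $\eta$ induced from $\qwedge_1$ agrees with that induced from $\qwedge_2$ because both equal $\nu_h$ restricted to $\eta$ (a consequence of the fact that $\SLE_\kappa$ has a well-defined quantum natural length parameterization on each side), so the two wedges $\qwedge_i$ recover $\qwedge$ via the canonical welding.

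For uniqueness, suppose that $(\qwedge,\eta)$ and $(\qwedge',\eta')$ are two pairs obtained via Theorem~\ref{thm::welding} from the same $\qwedge_1$ and $\qwedge_2$. The welding construction provides, for each of them, a homeomorphism from $\qwedge_1 \sqcup \qwedge_2 / \cong$ onto the welded surface which restricts to a conformal (equivalently, $\gamma$-LQG preserving) map on each piece. Composing one such map with the inverse of the other yields a homeomorphism $\Phi\colon \qwedge \to \qwedge'$ sending $\eta$ to $\eta'$ and conformal on $\qwedge \setminus \eta$. In the thick case $W \geq \gamma^2/2$, the path $\eta$ is an $\SLE_\kappa(\rho_1;\rho_2)$ with $\kappa\in(0,4)$ and $\rho_i=W_i-2>-2$, so by Proposition~\ref{prop::collection_of_sles_removable} it is a.s.\ removable; hence $\Phi$ is globally conformal. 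Since a conformal automorphism of $\h$ fixing $0$ and $\infty$ is a positive scaling, and the law of a quantum wedge is by construction invariant under such scalings modulo the equivalence~\eqref{eqn::Qmap}, we conclude $(\qwedge,\eta) = (\qwedge',\eta')$ as marked quantum surfaces with a distinguished curve, which is the desired measurability of $(\qwedge,\eta)$ with respect to the pair $(\qwedge_1,\qwedge_2)$.

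The main obstacle is the thin wedge case $W \in (0,\gamma^2/2)$, where $\qwedge$ is a Poissonian chain of beads and $\eta$ is a concatenation of independent $\SLE_\kappa(\rho_1;\rho_2)$ segments indexed by these beads. Here one must first argue that the beads of the welded object appear in a canonical cyclic order determined by $\qwedge_1,\qwedge_2$ alone (so that the two candidate weldings produce the \emph{same} sequence of beads), then apply the removability argument bead by bead to assemble a global conformal identification. The delicate point is to verify that the accumulation points of beads at the two marked points are controlled sufficiently that the piecewise-conformal map extends continuously and conformally across these limit points --- this is exactly the kind of discreteness-of-accumulation hypothesis built into the second assertion of Proposition~\ref{prop::collection_of_sles_removable}, and I would reduce to it by a local argument at each marked endpoint.
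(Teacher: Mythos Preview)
Your approach is correct and matches the paper's: the paper does not give a separate proof but simply states that the result follows from the removability results of Section~\ref{subsec::removability} together with the argument of \cite[Theorems~1.3,~1.4]{she2010zipper}, which is precisely the removability-based uniqueness argument you outline (construct a homeomorphism conformal off $\eta$, invoke removability of $\SLE_\kappa(\rho)$ for $\kappa<4$ and $\rho>-2$, conclude global conformality). Your more detailed treatment, including the bead-by-bead analysis in the thin case via the second clause of Proposition~\ref{prop::collection_of_sles_removable}, fills in exactly the details the paper defers to those references.
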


We will not give a separate proof of Theorem~\ref{thm::paths_determined} since it follows from the same argument used to prove \cite[Theorems~1.3, 1.4]{she2010zipper} and the removability results described in Section~\ref{subsec::removability}.  (The proof of \cite[Theorem~1.4]{she2010zipper} is given in \cite[Section~1.4]{she2010zipper}.)

As mentioned briefly above, there is a natural generalization of Theorem~\ref{thm::welding} in which one cuts a quantum wedge~$\CW$ with $n$ $\SLE$ processes (as opposed to cutting with a single path) coupled together as flow lines with varying angles of a GFF \cite{ms2012imag1,ms2012imag2,ms2012imag3,ms2013imag4} which is independent of $\CW$.  In this case, one obtains $n+1$ independent wedges and the sum of their weights is equal to the weight of $\CW$.  This result is illustrated in Figure~\ref{fig::wedge_series} and is stated precisely in Proposition~\ref{prop::slice_wedge_many_times}.  If one lets the number of paths tend to $\infty$ (with the spacing between the angles going to zero), then the collection of paths converges to the so-called {\bf fan} $\fan$.  (See Figures~1.2--1.5 of \cite{ms2012imag1} for simulations of $\fan$.)  This is a random closed set which a.s.\ has zero Lebesgue measure \cite[Proposition~7.33]{ms2012imag1}.  As a consequence of Proposition~\ref{prop::slice_wedge_many_times}, it should be possible to deduce an exact Poissonian structure of the countable collection of surface ``beads'' parameterized by the complement of $\fan$.  In particular, this would make it possible to interpret $\fan$ as describing the interface that arises when one glues together a Poissonian collection of wedges with weight $0$.

\begin{figure}[ht!]
\begin{center}
\includegraphics[scale=0.85]{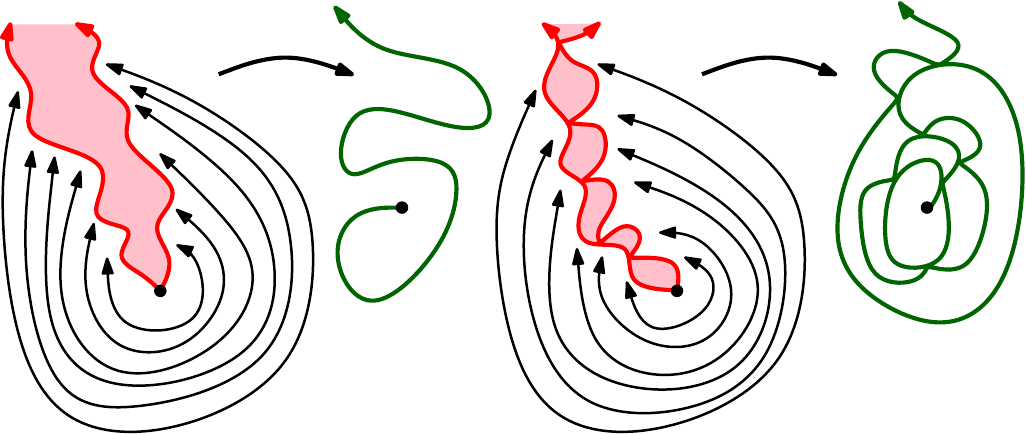}
\end{center}
\caption{\label{fig::fat_wedge_zipped_up} Illustration of Theorem~\ref{thm::zip_up_wedge_rough_statement}.  {\bf Left:} A quantum wedge of weight $W \geq \tfrac{\gamma^2}{2}$ (hence together with its prime-end boundary is homeomorphic to $\ol{\h}$).  If we weld together its left and right sides according to $\gamma$-LQG boundary length, then the resulting surface is a quantum cone of the same weight $W$ (right side) decorated with an independent whole-plane $\SLE_\kappa(W-2)$ process.  {\bf Right:}  The same statement holds in the case that  $W \in (0,\tfrac{\gamma^2}{2})$ so that the wedge is not homeomorphic to~$\h$ but rather is given by a Poissonian sequence of disks; in this case, $W-2 \in (-2,\tfrac{\kappa}{2}-2)$ so that the $\SLE_\kappa(W-2)$ process is self-intersecting.}
\end{figure}

\begin{table}[ht!]
\rowcolors{1}{lightgray}{white}
\begin{center}
{\small
\subfloat[Conversion from wedge to cone parameterizations when zipping up a quantum wedge to obtain a quantum cone.]{
\begin{tabular}{c|c c c c c}
				&$\alpha$		&$W$	 &$\theta$		&$\delta$		 &$a$			\\
\hline
 $\alpha'$	& $\tfrac{1}{2}\alpha + \tfrac{1}{\gamma}$ & $Q - \tfrac{1}{2\gamma} W$  & $Q - \chi \tfrac{\theta}{2\pi}$ & $Q+ \tfrac{\gamma}{4}(1-\delta)$ & $Q-\tfrac{\gamma}{4} - \tfrac{1}{2}a$  \\
 $\theta'$ & $\tfrac{\pi}{\chi} (\frac{\gamma}{2}+Q-\alpha)$ & $\tfrac{\pi}{\chi\gamma}W$ & $\theta$ & $\tfrac{\pi\gamma}{2\chi} (\delta-1)
 $ &  $\tfrac{\pi}{\chi} (a+\frac{\gamma}{2}) $\\
$\delta'$ & $3 + \tfrac{2}{\gamma}(Q-\alpha)$ & $2 + \tfrac{2}{\gamma^2}W$ & $2 +  \chi \tfrac{2}{\gamma}\tfrac{\theta}{\pi}$ & $\delta+1$ & $3+\tfrac{2}{\gamma}a$\\
 $a'$ & $Q-\tfrac{1}{\gamma} - \tfrac{1}{2}\alpha$ & $\tfrac{1}{2\gamma} W$ & $\chi \tfrac{\theta}{2\pi}$ & $ \tfrac{\gamma}{4}(\delta-1)$ &  $\tfrac{\gamma}{4} + \tfrac{1}{2}a$\\
 $\rho'$ & $\gamma^2 - \alpha \gamma$ & $W-2$ & $\chi \gamma \tfrac{\theta}{\pi} -2$ &  $\tfrac{\gamma^2}{2} (\delta-1) -2$ & $\gamma a+\tfrac{\gamma^2}{2}-2$
\end{tabular}}

\vspace{0.025\textheight}

\rowcolors{1}{lightgray}{white}
\subfloat[Conversion from cone to wedge parameterizations when cutting a quantum cone to obtain a quantum wedge.]{\begin{tabular}{c|c c c c c c}
	& $\alpha'$ & $\theta'$ & $\delta'$ & $a'$ & $\rho'$\\
\hline
$\alpha$ & $2\alpha' - \tfrac{2}{\gamma}$ & $Q + \tfrac{\gamma}{2} - \chi\tfrac{\theta'}{\pi}$ & $Q + \tfrac{\gamma}{2}(3-\delta')$ & $Q + \tfrac{\gamma}{2} - 2a'$ & $\gamma - \tfrac{1}{\gamma}\rho'$  \\
$W$  & $2\gamma(Q-\alpha')$ & $\gamma \chi \tfrac{\theta'}{\pi}$ & $\tfrac{\gamma^2}{2} (\delta'-2)$ & $2\gamma a'$ &  $2+\rho'$  \\
$\theta$  & $\tfrac{2\pi}{\chi}(Q-\alpha')$ & $\theta'$ & $\tfrac{\pi \gamma}{2\chi}(\delta'-2)$ & $\tfrac{2\pi}{\chi} a' $ & $\tfrac{\pi}{\chi \gamma}(\rho'+2)$  \\
$\delta$  & $1 + \tfrac{4}{\gamma}(Q-\alpha')$ & $1+\chi\tfrac{2}{\gamma}\tfrac{\theta'}{\pi}$ & $\delta'-1$ & $1 + \tfrac{4}{\gamma} a'$ & $1+\tfrac{2}{\gamma^2}(\rho'+2)$\\
$a$  & $\tfrac{2}{\gamma} + Q - 2\alpha'$ & $\chi \tfrac{\theta'}{\pi} - \tfrac{\gamma}{2}$ & $\tfrac{\gamma}{2}(\delta'-3)$ & $2a' - \tfrac{\gamma}{2}$ & $\tfrac{1}{\gamma}(\rho'+2)-\frac{\gamma}{2}$
\end{tabular}}
}
\end{center}
\caption{\label{tab::cone_wedge_values} If we zip up the left and right sides of a quantum wedge according to $\gamma$-LQG boundary length, then by Theorem~\ref{thm::zip_up_wedge_rough_statement} we get a quantum cone decorated with an independent whole-plane $\SLE_\kappa(\rho)$ process, and, conversely, if we cut a quantum cone with an independent whole-plane $\SLE_\kappa(\rho)$ then we get a quantum wedge.  {\bf a)}  Each of the entries in the first row gives a way of parameterizing the wedge and each entry in the first column gives a way of parameterizing the resulting cone.  The variable $\rho'$ refers to the $\rho$-value for the whole-plane $\SLE_\kappa(\rho)$ process which is the image of $\R$ under the zipping up map.  Each entry of the table gives the type of cone that one gets by zipping up a wedge where the cone is described using the parameterization corresponding to the row of the entry and the wedge is described using the parameterization of the column of the entry.  Note that the row corresponding to $\theta'$ is identical to the row corresponding to $\theta$ in Table~\ref{tab::wedge_parameterization}.  This follows because the ``space of angles'' in the imaginary geometry for a given wedge does not change under the zipping up operation.  {\bf b)} Each entry of the table gives the type of wedge one gets by cutting a cone with an appropriate $\SLE$ process where the cone is described using the parameterization from the column and the wedge is parameterized using the variable from the corresponding row.}
\end{table}

Our next main result implies that a quantum cone can be constructed by identifying the left and right sides of a quantum wedge according to $\gamma$-LQG boundary length.

\begin{theorem}[Welding and cutting for quantum cones]
\label{thm::zip_up_wedge_rough_statement}
Fix $\gamma \in (0,2)$, let $\kappa=\gamma^2\in(0,4)$, and suppose that $\CC = (\C,h,0,\infty)$ is a quantum cone of weight $W > 0$.  Let $\rho = W-2$ and suppose that $\eta$ is a whole-plane $\SLE_\kappa(\rho)$ process independent of $\CC$ starting from~$0$.  Then the quantum surface $\CW$ described by $(\C \setminus \eta,h,0,\infty)$ is a quantum wedge of weight $W$.  Moreover, the pair consisting of $\CC$ and $\eta$ is a.s.\ determined by $\CW$ and can be obtained by conformally welding the left boundary of $\CW$ with its right boundary according to $\gamma$-LQG boundary length. 
\end{theorem}

See Figure~\ref{fig::fat_wedge_zipped_up} for an illustration of Theorem~\ref{thm::zip_up_wedge_rough_statement}.  This result is stated in the case of simple $\SLE_\kappa(\rho)$ processes ($W \geq \tfrac{\gamma^2}{2}$ so that $\rho \geq \tfrac{\kappa}{2}-2$) in Proposition~\ref{prop::cone_divide} and in the case of self-intersecting $\SLE_\kappa(\rho)$ processes ($W \in (0,\tfrac{\gamma^2}{2})$ so that $\rho \in (-2,\tfrac{\kappa}{2}-2)$) in Proposition~\ref{prop::cone_divide_self_intersecting}.  Moreover, in Proposition~\ref{prop::slice_cone_many_times} it is explained that slicing a quantum cone with a collection of whole-plane $\SLE_\kappa(\rho)$ processes coupled together as flow lines of a whole-plane GFF starting from the origin \cite{ms2013imag4} yields a collection of independent wedges.  

If we parameterize our cone in terms of $\alpha$ rather than $W$, then it follows from~\eqref{eqn::quantum_cone_weight} that the value of $\rho$ from Theorem~\ref{thm::zip_up_wedge_rough_statement} is given by
\begin{equation}
\label{eqn::cone_rho}
\rho = 2+\gamma^2-2\alpha \gamma.
\end{equation}
Theorem~\ref{thm::zip_up_wedge_rough_statement} combined with~\eqref{eqn::wedge_weight} and~\eqref{eqn::quantum_cone_weight} tells us that zipping up the left and right sides of an $\alpha'$-quantum wedge yields an $\alpha$-quantum cone with
\begin{equation}
\label{eqn::cone_wedge_value}
\alpha = \frac{\alpha'}{2}+\frac{1}{\gamma}.
\end{equation}
See Table~\ref{tab::cone_wedge_values} for the conversion between several parameterizations for quantum wedges and cones when performing the welding or cutting operation from Theorem~\ref{thm::zip_up_wedge_rough_statement}.

\subsection{Warmup: topological matings of trees}
\label{subsec::easy}

For the reader who has not seen similar constructions before, the idea that one can ``glue together'' or ``mate'' two continuum random trees to produce a sphere may seem surprising.  Such matings appear in the complex dynamics literature, as we recall in Section~\ref{subsec::conformalmating}, where in some sense one ``mates together'' two tree-like Julia sets to obtain a sphere decorated by space-filling curve.  They also appear in the construction of a random surface called the {\em Brownian map} --- see the pioneering works of Cori and Vauquelin \cite{cori1981planar}, Chassaing and Schaeffer \cite{chassaing2004random}, Marckert and Mokkadem \cite{marckert2006limit}, Le Gall \cite{topological2007LeGall}, and Le Gall and Paulin \cite{homeomorphic2008legallpaulin}.  The Brownian map can be understood as a mating of two trees constructed from the {\em Brownian snake}. The two trees in the Brownian map construction (unlike those of the current paper) have very different laws from one another, and can be understood as the geodesic and dual geodesic trees on the resulting surface. Both the complex dynamics and Brownian map literatures (in particular \cite{homeomorphic2008legallpaulin}) make use of a classical result called {\em Moore's theorem} to show that the object obtained in the end is a topological sphere.

The purpose of this brief warmup section is to introduce continuum random trees (CRTs) and explain how Moore's theorem enables us to produce a {\em topological} sphere from a pair of CRTs with relatively little work. The main results of this paper are much stronger than the statements briefly described in this subsection, because the main results endow the mated trees with a {\em conformal structure} (not merely a topological structure) and relate them to Liouville quantum gravity and SLE (although, to be precise, this paper defines the conformal structure for an infinite volume version of the mating of trees; finite volume variants are treated in a follow up paper \cite{quantum_spheres}). The compatibility of the topological mating of trees and the conformal mating of trees is explained in Section~\ref{subsec::conftopcompatibility}, where it is shown that the two constructions are a.s.\ homeomorphic in the obvious way. The reader who is familiar with CRTs and wishes to proceed to the statements of these stronger results may skip or skim this brief subsection.

Let $X_t$ and $Y_t$ be independent Brownian excursions, both indexed by $t \in [0,T]$.  Thus $X_0 = X_T = 0$ and $X_t > 0$ for $t \in (0,T)$ (and similarly for $Y_t$).  Once~$X_t$ and~$Y_t$ are chosen, choose $C$ large enough so that the graphs of $X_t$ and $C-Y_t$ do not intersect.  (The precise value of $C$ does not matter.)  Let $R = [0,T] \times [0,C]$, viewed as a Euclidean metric space.

Let $\cong$ denote the finest equivalence relation (i.e., has the smallest equivalence classes) on $R$ that makes two points equivalent if they lie on the same vertical line segment with endpoints on the graphs of $X_t$ and $C-Y_t$, or they lie on the same horizontal line segment that never goes above the graph of $X_t$ (or never goes below the graph of $C-Y_t$).  Maximal segments of this type are shown in Figure~\ref{fig::lamination}.

\begin {figure}[htbp!]
\begin {center}
\includegraphics[scale=0.85]{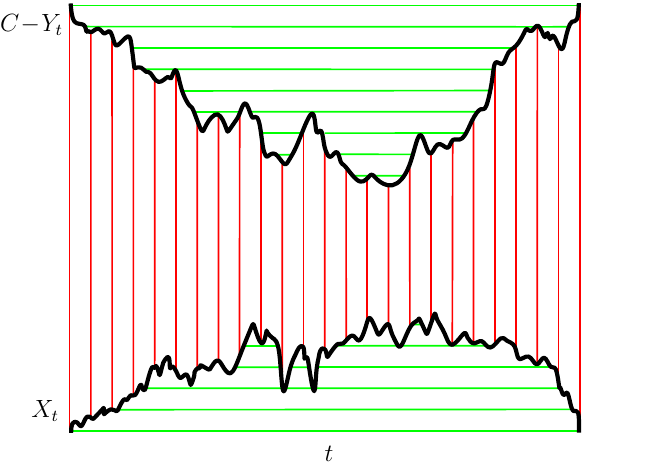}
\caption {\label{fig::lamination}  Points on the same vertical (or horizontal) line segment are equivalent.}
\end{center}
\end{figure}

Observe that the vertical line segment corresponding to time $t \in [0,T]$ shares its lower (resp.\ upper)  endpoint with a horizontal line segment if and only if $X_s > X_t$ (resp.\ $Y_s > Y_t$) for all $s$ in a neighborhood of the form $(t,t+\epsilon)$ or $(t-\epsilon, t)$ for $\epsilon > 0$.  It is easy to see that (aside from the $t=0$ and $t=T$ segments) there a.s.\ exists no vertical line segment that shares both an upper and a lower endpoint with a horizontal line segment.\footnote{It can be shown the set of record minima of a one dimensional Brownian motion agrees in law with the zero set of a one dimensional Brownian motion; the fact that no such minima occur simultaneously is related to the fact that planar Brownian motion a.s.\ does not hit any specific point.  More general statements, related to so-called ``cone times,'' appear in \cite{EVANS_CONE_TIMES} and will be discussed in Section~\ref{sec::brownian_boundary_length}.}  It is also easy to see that each of $X_t$ and $Y_t$ a.s.\ has at most countably many local minima, and that a.s.\ the values obtained by $X_t$ or $Y_t$ at these minima are distinct.  It follows that a.s.\ each of the maximal horizontal segments of Figure~\ref{fig::lamination} intersects the graph of $X_t$ or $C-Y_t$ in two or three places (the two endpoints plus at most one additional point).

Thus the equivalence classes a.s.\ all have one of the following types:
\begin{enumerate}
\item[Type 0:] The outer boundary rectangle $\partial R$.
\item[Type 1:] A single vertical segment that does not share an endpoint with a horizontal segment.
\item[Type 2:] A single maximal horizontal segment beneath the graph of $X_t$ or above the graph of $C-Y_t$, together with the two vertical segments with which it has an endpoint in common.
\item[Type 3:] A single maximal horizontal segment beneath the graph of $X_t$ or above the graph of $C-Y_t$, together with the two vertical segments with which it has an endpoint in common, and one additional vertical segment with an endpoint in the interior of the horizontal segment.
\end{enumerate}
\begin{proposition} \label{prop::topclosed}
The relation $\cong$ is a.s.\ {\em topologically closed}.  That is, if $x_j \to x$, and $y_j \to y$, and $x_j \cong y_j$ for all $j$, then $x \cong y$.
\end{proposition}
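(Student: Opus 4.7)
The plan is to leverage the almost-sure classification of equivalence classes into Types 0--3 just established. Working on the full-measure event where this classification holds, let $(x_j, y_j)$ be a sequence with $x_j \to x$, $y_j \to y$ and $x_j \cong y_j$. By the pigeonhole principle, extract a subsequence along which the type of the equivalence class $[x_j] = [y_j]$ is constant; this does not change the limits $x$ and $y$, so it suffices to establish $x \cong y$ on each case.

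Type 0 is immediate since $\partial R$ is closed. In Type 1, the points $x_j$ and $y_j$ share a first coordinate $t_j$, and the vertical segment in question connects $(t_j, X_{t_j})$ to $(t_j, C - Y_{t_j})$. After extracting a further subsequence so that $t_j \to t$, continuity of $X$ and $Y$ shows that $x$ and $y$ both lie on the vertical segment at time $t$, so $x \cong y$.

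In Type 2, the equivalence class $[x_j]$ is composed of a maximal horizontal segment beneath the graph of $X$ (the case of a segment above the graph of $C - Y$ being symmetric) with endpoints $(a_j, h_j)$ and $(b_j, h_j)$, together with the two vertical segments at times $a_j$ and $b_j$. Extracting further so that $(a_j, b_j, h_j) \to (a, b, h)$, continuity of $X$ yields $X_a = X_b = h$ and $X_s \ge h$ for all $s \in [a, b]$; thus the horizontal segment from $(a, h)$ to $(b, h)$ still satisfies the ``never goes above the graph of $X$'' condition, and together with the vertical segments at $a$ and $b$ lies within a single equivalence class of $\cong$ (which might be of Type 1, 2, or 3 in the classification, and which could even be larger than the limiting set if the limiting horizontal segment is not maximal). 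Since $x$ and $y$ lie in this limit configuration, we obtain $x \cong y$. Type 3 is handled in exactly the same way, with the additional step of extracting a subsequence along which the interior attachment point $c_j \in (a_j, b_j)$ converges to some $c \in [a, b]$.

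The one place where genuine care is required --- and the only real potential obstacle --- is in Types 2 and 3: we must verify that the limiting horizontal segment still satisfies the ``never goes above'' requirement, and that the vertical segments attached at its endpoints do not detach in the limit. Both are resolved by continuity of $X$ and $Y$ together with the fact that ``never goes above'' is a non-strict inequality and so is preserved under limits. Once this observation is in hand, the proof in each case reduces to routine continuity arguments, and the topological closedness of $\cong$ follows.
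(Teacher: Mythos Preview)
Your proof is correct and follows essentially the same approach as the paper: pass to a subsequence of constant type, use compactness to extract a further subsequence along which the finitely many coordinates describing the equivalence class converge, and then invoke continuity of $X$ and $Y$ to verify that the limiting configuration lies within a single equivalence class (possibly a proper subset of one). The paper's version is terser, handling all types at once by noting that each class is described by a finite tuple of numbers, whereas you spell out the cases explicitly; your observation that the limit segment need not be maximal matches the paper's parenthetical ``or a subset of an equivalence class.''
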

\begin{proof}
By passing to a subsequence, one may assume that the type of the equivalence class of $x_j \cong y_j$ is the same for all $j$.  The equivalence class can be described by a finite collection of numbers (the endpoint coordinates for the line segments), and compactness implies that we can find a subsequence along which this collection of coordinates converges to a limit.  We then observe that these limiting numbers describe an equivalence class (or a subset of an equivalence class) comprised of zero or one horizontal lines that lie either below the graph of $X_t$ or above the graph of $C-Y_t$, and one or more incident vertical lines that lie between these graphs.  Since $x$ and $y$ necessarily belong to this limiting equivalence class, we have $x \cong y$. \end{proof}

Let $\wt R$ be the topological quotient $R / \cong$.  Then we have the following:
\begin{proposition} \label{prop::topsphere}
The space $\wt R$ is a.s.\ topologically equivalent to the sphere $\s^2$.  The map $\phi$ that sends $t$ to the equivalence class containing $(t,X_t)$ and $(t,C-Y_t)$ is a continuous surjective map from $[0,T]$ to $\wt R$. 
\end{proposition}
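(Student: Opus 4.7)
The plan is to invoke R.\,L.\ Moore's theorem on upper semicontinuous decompositions of the sphere. The rectangle $R$ is a closed topological disk, and its outer boundary $\partial R$ is exactly the Type 0 equivalence class. Collapsing $\partial R$ to a point yields a topological sphere $S$, and $\wt R$ is naturally the further quotient of $S$ by the Type 1--3 identifications, together with trivial singleton classes at all remaining points. I would then verify the hypotheses of Moore's theorem on $S$ to conclude that $\wt R \cong \s^2$.

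To verify the hypotheses, first each equivalence class must be a continuum: this is immediate from the Type 0--3 classification, since each class is either a single point, a single segment, or a finite union of at most four segments meeting only at endpoints, hence compact and connected. Second, the decomposition must be upper semicontinuous, which follows from the closedness of $\cong$ (Proposition~\ref{prop::topclosed}) combined with compactness of $R$: for any class $K$ and any open $U \supset K$, the set $\{x \in R : [x] \subset U\}$ is the complement in $R$ of the projection onto the first coordinate of the closed set $\{(x,y) \in R \times R : x \cong y\} \cap (R \times (R \setminus U))$, and this projection is closed by compactness. Third, no equivalence class may separate $S$: each Type 1--3 class is a finite acyclic union of line segments --- a \emph{tree} --- and a finite tree embedded in $S^2$ has connected complement.

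For $\phi$, continuity is immediate: $t \mapsto (t, X_t)$ is a continuous map from $[0,T]$ into $R$, and the quotient projection $\pi : R \to \wt R$ is continuous by definition, so $\phi = \pi \circ (t \mapsto (t, X_t))$ is continuous. Surjectivity follows from the observation that every Type 1--3 class contains the vertical segment $\{t\} \times [X_t, C - Y_t]$ for some $t \in (0,T)$ (so it equals $\phi(t)$), while the Type 0 class is $\phi(0) = \phi(T) = \pi(\partial R)$. The main obstacle is the non-separation hypothesis in Moore's theorem; once one has the Type 0--3 classification (which itself depends on the a.s.\ fact, noted in the paragraph preceding the classification, that each maximal horizontal segment meets the graphs of $X$ and $C-Y$ in at most three places), the non-separation condition reduces to the standard topological statement that finite trees in $S^2$ do not separate.
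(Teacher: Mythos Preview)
Your proof is correct and follows essentially the same approach as the paper: collapse $\partial R$ to a point to obtain a sphere, then apply Moore's theorem (stated as Proposition~\ref{prop::moore}) using the closedness of $\cong$ from Proposition~\ref{prop::topclosed} together with the Type 0--3 classification. You fill in more detail than the paper does---in particular the explicit verification that each class is a finite tree (hence non-separating) and the upper semicontinuity via compactness---and you also supply the surjectivity argument for $\phi$, which the paper's proof leaves implicit.
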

\begin{proof}
Let $R'$ be obtained from $R$ by identifying the outer boundary of $R$ with a single point.  Then $R'$ is topologically a sphere, and $\cong$ induces a topologically closed relation on $R'$.  The fact that $\wt R$ is topologically a sphere is immediate from the properties observed above and Proposition~\ref{prop::moore}, stated just below.  The continuity of $\phi$ is immediate from the fact that the quotient map from $R$ to $\wt R$ is continuous.
\end{proof}

The following was established by R.L.\ Moore in 1925 \cite{moore1925concerning} (the formulation below is lifted from \cite{milnor2004pasting}):

\begin{proposition}
\label{prop::moore}
Let $\cong$ be any topologically closed equivalence relationship on the sphere $\s^2$.  Assume that each equivalence class is connected, but is not the entire sphere.  Then the quotient space $\s^2 / \cong$ is itself homeomorphic to $\s^2$ if and only if no equivalence class separates the sphere into two or more connected components.
\end{proposition}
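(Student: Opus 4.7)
The plan is to verify that the quotient $\wt{\s}^2 := \s^2/{\cong}$ satisfies a topological characterization of the sphere, and then to prove the converse implication by a separation argument. First, I would check that $\wt{\s}^2$ is a compact, connected, Hausdorff space. Compactness and connectedness follow immediately from those of $\s^2$ together with continuity of the quotient map $\pi : \s^2 \to \wt{\s}^2$. For Hausdorffness, the closedness of $\cong$ combined with the compactness of $\s^2$ implies that $\pi$ is a closed map; since equivalence classes are compact, standard point-set topology then gives that $\wt{\s}^2$ is Hausdorff (and in fact metrizable).

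Next, the heart of the argument is to show that $\wt{\s}^2$ is a topological $2$-manifold. Fix an equivalence class $E$; since $E$ is compact, connected, and does not separate $\s^2$, its complement $\s^2 \setminus E$ is connected. By classical planar topology (e.g.\ Zoretti's theorem and Schoenflies), one can produce a nested sequence of Jordan curves $\gamma_n \subset \s^2 \setminus E$ shrinking down to $E$ whose complementary Jordan domains $D_n$ satisfy $E \subset D_n$ and $\bigcap_n D_n = E$. The delicate step — and the main obstacle — is to choose the curves $\gamma_n$ to be \emph{saturated}, i.e.\ each $\gamma_n$ must be a union of entire equivalence classes, so that $\pi(\gamma_n)$ is still a Jordan curve in $\wt{\s}^2$. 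One achieves this by an inductive ``pushing off'' procedure: starting with any approximating Jordan curve, one enlarges it minimally to absorb each equivalence class it partially meets, using the no-separation hypothesis to ensure the enlargement stays in a small neighborhood of $E$ and remains a Jordan curve. Applying the Schoenflies theorem in $\wt{\s}^2$ to the Jordan curves $\pi(\gamma_n)$ then furnishes a neighborhood basis of $\pi(E)$ consisting of topological disks, so $\wt{\s}^2$ is locally Euclidean at every point.

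Having established that $\wt{\s}^2$ is a compact connected $2$-manifold, I would finish the ``if'' direction by checking simple connectedness: any loop in $\wt{\s}^2$ can be homotoped, using the disk neighborhoods produced above, into the image of a loop in $\s^2$, which is contractible; pushing the contraction through $\pi$ shows $\pi_1(\wt{\s}^2)$ is trivial. The classification of closed surfaces then forces $\wt{\s}^2 \cong \s^2$. For the converse (``only if'') direction, suppose some equivalence class $E$ separates $\s^2$ into components $U_1, U_2$. Then $\wt{\s}^2 \setminus \{\pi(E)\}$ is the disjoint union of the open sets $\pi(U_1)$ and $\pi(U_2)$ (these are open because $U_i$ are saturated open sets, their union with $E$ being closed and hence their complements being open saturated sets), so removing a single point from $\wt{\s}^2$ disconnects it, contradicting $\wt{\s}^2 \cong \s^2$.

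The main technical obstacle is the construction of the saturated approximating Jordan curves in Step 2; all the other steps are essentially formal consequences of compact Hausdorff quotient theory, the Jordan–Schoenflies theorem, and the classification of surfaces. Since the proposition is attributed to Moore (1925), I would in practice cite \cite{moore1925concerning} for the full technical argument rather than reproduce it.
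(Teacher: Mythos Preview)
The paper does not prove this proposition at all: it is stated as Moore's 1925 theorem and simply cited to \cite{moore1925concerning} (with the formulation attributed to \cite{milnor2004pasting}). Your final sentence, that in practice one would cite Moore rather than reproduce the argument, is exactly what the paper does, so at the level of what actually needs to appear you are in agreement.

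That said, your proof sketch has a genuine circularity in Step~2. You propose to apply the Schoenflies theorem \emph{in the quotient space} $\wt{\s}^2$ to the images $\pi(\gamma_n)$ in order to conclude that $\pi(E)$ has a disk neighborhood basis. But Schoenflies requires the ambient space to already be a surface (indeed $\s^2$ or $\R^2$), which is precisely what you are trying to establish. Applying Schoenflies upstairs in $\s^2$ only tells you that $\gamma_n$ bounds a disk $D_n$ there; it does not tell you that $\pi(D_n)$ is a disk, since the restriction of $\cong$ to $D_n$ may still collapse nontrivial continua, and showing that this quotient is a disk is essentially Moore's theorem again on a smaller domain. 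Moore's actual proof, and modern treatments via decomposition space theory, avoid this by using an intrinsic topological characterization of $\s^2$ (as a nondegenerate locally connected metric continuum in which no pair of points separates and some single point fails to separate) rather than first proving the quotient is a manifold. Your identification of the saturated-curve construction as ``the main technical obstacle'' is correct in spirit, but the step immediately following it does not go through as written.
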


If $R_1$ is the set of points in $R$ on or under the graph of $X$ and $\cong_1$ is the finest equivalence relation on $R_1$ which identifies points if they lie on a horizontal chord which lies entirely below the graph of $X$, then $\wt R_1 = R_1 / \cong_1$ is a random metric space with distance given by the metric quotient with respect to $\cong_1$ of the internal metric on $R_1$ induced by the Euclidean metric.\footnote{This metric is usually described in terms of the pseudometric $d_1(s,t) = X_s + X_t - 2\inf_{r \in [s,t]} X_r$ defined on $[0,T]$.}  This metric space is called a {\em continuum random tree}, constructed and studied by Aldous in the early 1990's \cite{ald1991crt1,ald1991crt2,ald1993crt3}. See also the general construction of metric trees from excursions in \cite{dlg2002trees_levy}.

Similarly, if $R_2$ is the set of points on or above the graph of $C-Y$ and $\cong_2$ is the finest equivalence relation on $R_2$ which identifies points if they lie on a horizontal chord which lies entirely above the graph of $C-Y$, then $\wt R_2 / \cong_2$ is another continuum random tree.  Thus, $\cong$ induces an equivalence relation on the disjoint union of $\wt R_1$ and $\wt R_2$, hence $\wt R$ can be understood as a topological quotient of this pair of metric trees.  We can define a measure $\nu$ on $\wt R$ by letting $\nu(A)$ be the Lebesgue measure of $\phi^{-1}(A)$.    Observe that for Lebesgue almost all times $t$, the value $\phi^{-1} \bigl(\phi(t)\bigr)$ consists of the single point $t$.   Thus $\nu$ is supported on points that are hit by the space-filling path exactly once.  The set of double points (i.e., points in~$\wt{R}$ hit twice by the path) a.s.\ has $\nu$ measure zero but is uncountable.  The set of triple points (i.e. points in~$\wt{R}$ hit three times by the path) a.s.\ has $\nu$ measure zero and is countable.

Throughout most of this paper, we will actually work on a more general case in which $(X_t,Y_t)$ is an affine transformation of a standard two dimensional Brownian motion such that
\begin{enumerate} \item $X_t+Y_t$ is a standard Brownian motion,
\item $X_t - Y_t$ is an independent {\em constant multiple} of a standard Brownian motion.
\end{enumerate}
The constant in question encodes how {\em correlated} the Brownian processes $X_t$ and $Y_t$ are with one another. We remark that, as one may observe from Figure~\ref{fig::lamination}, replacing $X_t$ and $Y_t$ with $aX_t$ and $bY_t$, where $a$ and $b$ are positive constants, does not actually affect the construction.  Thus, if one assumes that $(X_t,Y_t)$ is an affine transformation of a standard two dimensional Brownian motion, the correlation constant is the only parameter that is really relevant.
Theorem~\ref{thm::quantum_cone_bm_rough_statement} gives a relationship between this constant and the parameter $\kappa'= 16/\gamma^2$, a relationship proved to hold at least in the range $\kappa' \in (4,8]$.  (The parameter is identified for $\kappa' > 8$ in \cite{ghms2015covariance}.)

 Although we do not give any details in this subsection, we mention that one can also give a purely topological description of the mating of ``L\'evy trees of quantum disks'' that this paper develops.  The rough idea is illustrated in Figure~\ref{treegluingvstreeofdiskgluing} and Figure~\ref{levytreegluing}.  (If we cut out the grey filled regions from one of the trees in Figure~\ref{levytreegluing}, we obtain a tree of loops; this tree of loops was also analyzed in the context of scaling limits of random planar maps in \cite{ck2013looptrees}.)

\begin {figure}[h!]
\begin {center}
\includegraphics [scale=1]{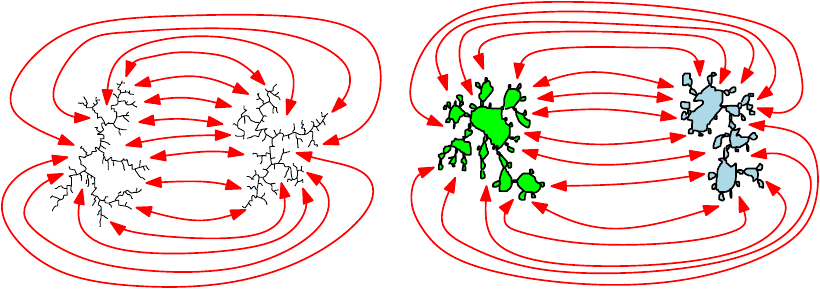}
\caption {\label{treegluingvstreeofdiskgluing} {\bf Left:} If the two trees represented by the upper and lower graphs in Figure~\ref{fig::lamination} are somehow embedded in the plane in a way that preserves the cyclic order around the tree (let us not worry about precisely how) then the vertical lines of Figure~\ref{fig::lamination} correspond to the red lines shown, identifying points along one tree with points along the other.  {\bf Right:} Similar, but the trees are replaced with ``trees of quantum disks'', each somehow embedded in the plane (again, let us not worry about how).  Both left and right identification procedures produce a topological sphere, which we embed canonically in $\C \cup \{\infty \}$.  In both cases the cyclical ordering on the set of red lines describes a loop on the embedded sphere.  In the left scenario, this loop is a space-filling form of $\SLE_{\kappa'}$, with $\kappa' > 4$.   In the right scenario, it is a non-space-filling $\SLE_{\kappa'}$ with $\kappa' \in (4,8)$.}
\end {center}
\end {figure}

\begin {figure}[ht!]
\begin {center}
\includegraphics [scale=.8]{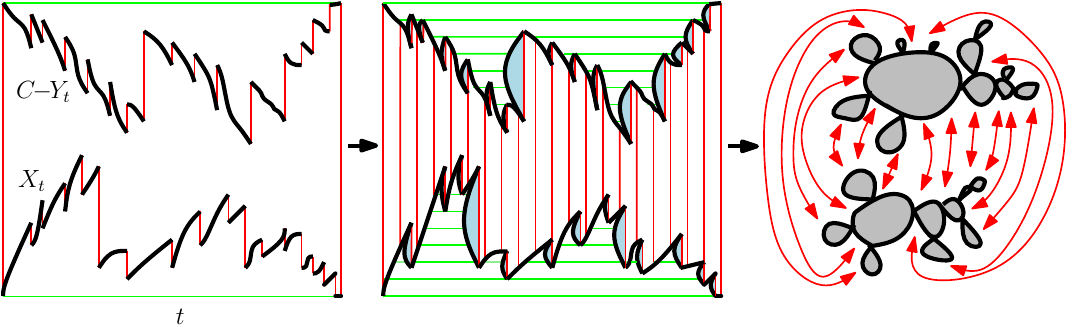}
\caption {\label{levytreegluing} {\bf Left:}  $X_t$ and $Y_t$ are i.i.d.\ L\'evy excursions, each with only negative jumps.  Graphs of $X_t$ and $C-Y_t$ are sketched; red segments indicate jumps. {\bf Middle:} Add a black curve to the left of each jump, connecting its two endpoints; the precise form of the curve does not matter (as we care only about topology for now) but we insist that it intersects each horizontal line at most once and stay below the graph of $X_t$ (or above the graph of $C-Y_t$) except at its endpoints.  We also draw the vertical segments that connect one graph to another, as in Figure~\ref{fig::lamination}, declaring two points equivalent if they lie on the same such segment (or on the same jump segment).  Shaded regions (one for each jump) are topological disks.  {\bf Right:} By collapsing green segments and red jump segments, one obtains two trees of disks with outer boundaries identified, as on the right side of Figure~\ref{treegluingvstreeofdiskgluing}.   }
\end {center}
\end {figure}

\subsection{Conformal matings of trees and trees of disks} 
\label{subsec::matingsandloops}

\subsubsection{Main result on gluing infinite volume CRTs}

In this section, we will see how the Brownian motion pair $(X_t,Y_t)$, as discussed in Section~\ref{subsec::easy}, encodes various objects within LQG and $\SLE$.   Here and throughout much of the rest of the paper, we will use the symbols $(L_t, R_t)$ in place of $(X_t,Y_t)$ in order to highlight the fact that (as we will explain) in many circumstances, $L_t$ and $R_t$ can be interpreted as left and right boundary lengths of the quantum surface parameterized by $\{s : s \leq t \}$, or of the quantum surface parameterized by $\{s : s \geq t \}$.   We include many figures (Figures~\ref{fig::fig1}--\ref{fig::fig10}) which illustrate Theorem~\ref{thm::quantum_cone_bm_rough_statement} and Theorem~\ref{thm::trees_determine_embedding} below.

\begin{figure}[ht!]
\begin{center}
\includegraphics[scale=0.85]{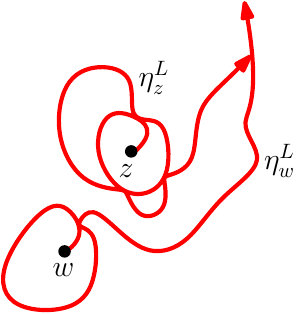}
\end{center}
\caption{\label{fig:space_filling_sle} Shown are flow lines $\eta_z^L,\eta_w^L$ of a whole-plane GFF started at points $z,w$, both with angle $\tfrac{\pi}{2}$.  In the definition of space-filling $\SLE$, we say that $w$ comes before $z$ if $\eta_w^L$ merges into $\eta_z^L$ on its right side.}
\end{figure}

We will now recall from \cite{ms2013imag4} the construction of space-filling $\SLE_{\kappa'}$.  It is a variant of $\SLE_{\kappa'}$ which fills up the components it separates from its target point.  We will begin by focusing on the particular case of {\bf whole-plane space-filling $\SLE_{\kappa'}$ from~$\infty$ to~$\infty$}.  Suppose that~$h$ is a whole-plane GFF with values defined modulo a global additive constant in $2\pi \chi \Z$, where recall that $\chi=2/\sqrt{\kappa} - \sqrt{\kappa}/2$ and $\kappa=16/\kappa'$.  For each $z \in \C$, we let $\eta_z^L$ (resp.\ $\eta_z^R)$ be the flow line of $h$ starting from $z$ with angle $\tfrac{\pi}{2}$ (resp.\ $-\tfrac{\pi}{2}$).  Then~$\eta_z^L$ (resp.\ $\eta_z^R$) has the law of a whole-plane $\SLE_\kappa(2-\kappa)$ process from $z$ to $\infty$ \cite[Theorem~1.1]{ms2013imag4}.  We can use the paths~$\eta_z^L$ (equivalently~$\eta_z^R$) to define an ordering on~$\C$ as follows.  Suppose that $z,w \in \C$.  We note that~$\eta_z^L$ will a.s.\ merge with $\eta_w^L$ \cite[Theorem~1.7]{ms2013imag4}.  We say that~$w$ comes before~$z$ if it is the case that~$\eta_w^L$ merges with~$\eta_z^L$ on its right side.  Equivalently, $w$ comes before $z$ if $\eta_w^R$ merges with $\eta_z^R$ on its left side.  Suppose that $(z_n)$ is a deterministic countable dense set which is ordered in this way.  Space-filling $\SLE_{\kappa'}$ is the unique, continuous, non-self-crossing and non-self-tracing path which visits the $z_n$ according to this ordering \cite{ms2013imag4}.  It is not in fact immediately obvious that such a path exists, but this is proved in \cite{ms2013imag4}.  It is also proved in \cite{ms2013imag4} that the resulting path does not depend on the initial choice of countable, dense set.  Since the $\eta_{z_n}^L$ are a.s.\ determined by $h$, so is $\eta'$.  For each fixed $z \in \C$, we thus have that $\eta_z^L$ (resp.\ $\eta_z^R$) gives the left (resp.\ right) side of the outer boundary of $\eta'$ stopped upon hitting $z$.  Space-filling $\SLE_{\kappa'}$ can be interpreted as tracing (clockwise or counterclockwise) the outside of a (space-filling) tree of GFF flow lines.\footnote{The chordal version of space-filling $\SLE_{\kappa'}$ (i.e., in $\h$ from $0$ to $\infty$) is discussed in detail in \cite{ms2013imag4}.  The whole-plane version from $\infty$ to $\infty$ is not explicitly constructed and shown to be continuous in \cite{ms2013imag4}.  However, it can be easily be constructed and shown to be continuous using chordal space-filling $\SLE_{\kappa'}$.  To accomplish this, one first starts flow and dual flow lines of a whole-plane GFF starting from $0$.  If $\kappa' \geq 8$, these flow lines will partition space into two regions which (with their prime-end boundaries) are each homeomorphic to (the closure of) $\h$.  In this case, a whole-plane space-filling $\SLE_{\kappa'}$ from $\infty$ to $\infty$ can be constructed by splicing together two chordal space-filling $\SLE_{\kappa'}$'s, one for each of the two regions.  The first path is taken to run from $\infty$ to $0$ and the second from $0$ to $\infty$.  If $\kappa' \in (4,8)$, then the flow and dual flow lines started from $0$ will partition space into a countable collection of pockets.  In this case, a whole-plane space-filling $\SLE_{\kappa'}$ from $\infty$ to $\infty$ can be constructed by splicing together a countable collection of chordal space-filling $\SLE_{\kappa'}$'s, one for each of the pockets.}

For fixed $z \in \C$, we can also consider $\eta'$ targeted at $z$.  This means that $\eta'$ does not branch into and fill the components that it separates from $z$.  Equivalently, we can take $\eta'$ and reparameterize it according to capacity as seen from $z$.  Then the resulting path is the (non-space-filling) {\bf counterflow} line of $h$ from $\infty$ to $z$ and has the law of an $\SLE_{\kappa'}(\kappa'-6)$ process.

\begin{figure}[ht!]
\begin{center}
\includegraphics[scale=0.85]{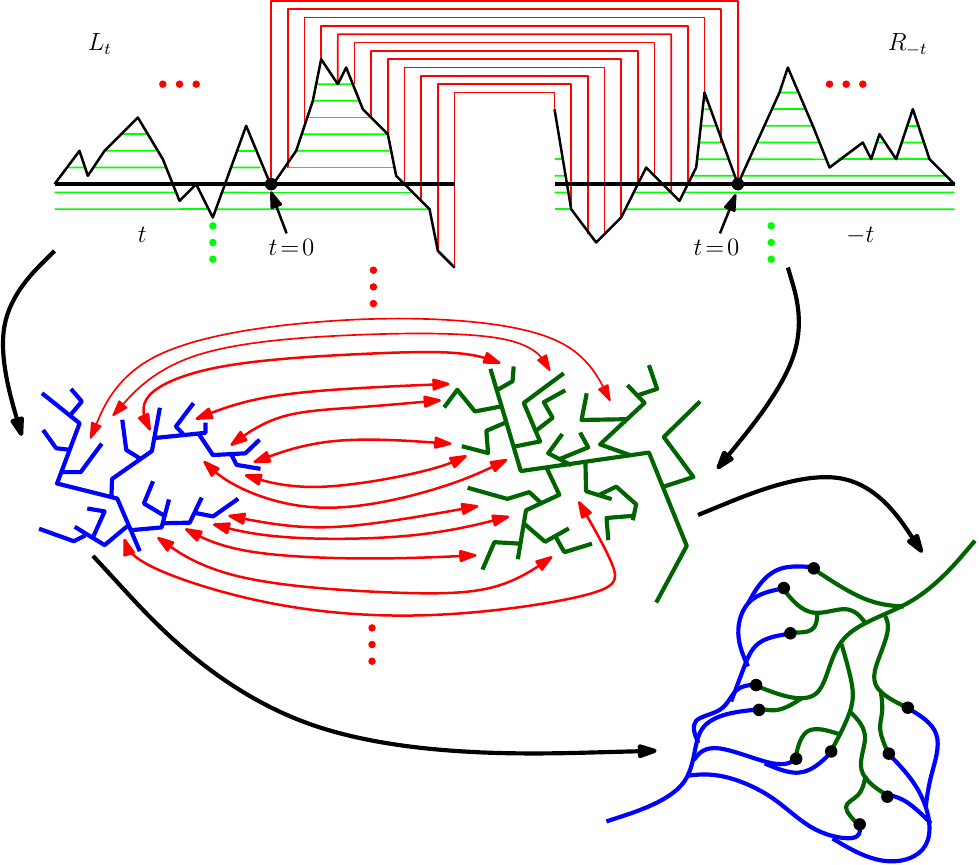}
\end{center}
\caption{\label{fig::fig1}  Gluing together two infinite volume space-filling trees, encoded by correlated Brownian motions $L_t$ and $R_t$, produces a $\gamma$-quantum cone (i.e., $W=4-\gamma^2$ and $\theta=2\pi$).  This cone is decorated by a space-filling $\SLE_{\kappa'}$ from $\infty$ to $\infty$, which in turn encodes the east-going and west-going rays of an imaginary geometry.}
\end{figure}

We will often consider a whole-plane space-filling $\SLE_{\kappa'}$ process $\eta'$ from $\infty$ to $\infty$ on top of an independent $\gamma$-quantum cone $\CC = (\C,h,0,\infty)$.  We take the time parameterization of $\eta'$ so that $\eta'(0) = 0$ and $\mu_h(\eta'([s,t])) = t-s$ for all $s < t$.  In this setting, we can define for each $t \geq 0$ the process $L_t$ (resp.\ $R_t$) which is equal to the change in the length of the left (resp.\ right) boundary of $\eta'$ relative to time $0$.  In other words, recall that the left boundary of $\eta'\bigl((-\infty,t]\bigr)$ and the left boundary of $\eta'\bigl( (-\infty,0]\bigr)$ are both given by flow lines of a common angle that merge, and that come with well defined length measures (recall~\eqref{e.nudef}). Then $L_t$ is the length of the former boundary segment (from $\eta'(t)$ to the merging point) minus the length of the latter (from $\eta'(0)$ to the merging point).  Moreover, $R_t$ is defined in the same manner but with right in place of left.  Our next theorem describes the process $(L,R)$, which we emphasize is defined for all $t \in \R$.

\begin{theorem}
\label{thm::quantum_cone_bm_rough_statement}
Let $\CC = (\C,h,0,\infty)$ be a $\gamma$-quantum cone (which corresponds to $W = 4-\gamma^2$ and $\theta = 2\pi$) together with a space-filling $\SLE_{\kappa'}$ process~$\eta'$ from~$\infty$ to~$\infty$ sampled independently of~$\CC$ and then reparameterized according to~$\gamma$-LQG area.  That is, for $s,t \in \R$ with $s < t$ we have that $\mu_h(\eta'([s,t])) = t-s$.  Let $L_t$ (resp.\ $R_t$) denote the change in the length of the left (resp.\ right) boundary of $\eta'$ relative to time $0$.  Then $(L_t,R_t)_{t \in \R}$ is a correlated two-sided two-dimensional Brownian motion with $L_0 = R_0 = 0$.  If $\kappa' \in (4,8]$, we have (up to a non-random linear reparameterization of time) that
\begin{equation} \label{eqn::crtcovariance} \var(L_t) = |t|, \quad \var(R_t) = |t|,\quad\text{and}\quad \cov(L_t,R_t) = -\cos\left( \frac{4\pi}{\kappa'}\right) |t| \quad\text{for}\quad t \in \R.\end{equation}
Moreover, the joint law of $(h,\eta')$ as a path-decorated quantum surface is invariant under shifting by $t$ units of ($\gamma$-LQG area) time and then recentering.  That is, for each $t \in \R$ we have (as path-decorated quantum surfaces) that
\begin{equation}
\label{eqn::qc_invariance}
 (h,\eta') \stackrel{d}{=} (h(\cdot+\eta'(t)),\eta'(\cdot+t) - \eta'(t)).
\end{equation}
Finally, the quantum surfaces parameterized by $\eta'([0,\infty])$ and $\eta'([-\infty,0])$ are independent quantum wedges, each with parameter $\theta = \pi$, and $W = 2-\tfrac{\gamma^2}{2}$.
\end{theorem}

\begin{remark}In fact, the covariance formula in \eqref{eqn::crtcovariance} also holds for all $\kappa' > 4$, not only for $\kappa' \in (4,8]$. The extension of the formula to the case $\kappa'>8$ will not be presented in this paper, but it has now been established in a follow up paper \cite{ghms2015covariance}.
\end{remark}

\begin{figure}[ht!]
\begin{center}
\subfloat[]{\includegraphics[scale=0.85, page=1]{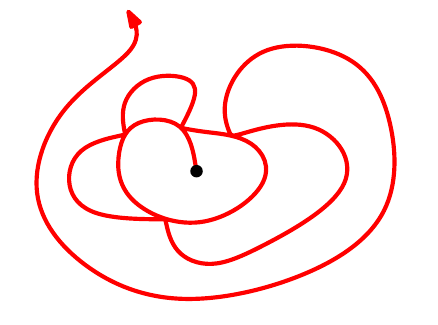}}
\subfloat[]{\includegraphics[scale=0.85, page=2]{figures/cone_skinny_wedge_bm}}
\end{center}
\caption{\label{fig::quantum_cone_bm} Illustration of the key step in the proof of Theorem~\ref{thm::quantum_cone_bm_rough_statement} from Theorem~\ref{thm::welding} and Theorem~\ref{thm::zip_up_wedge_rough_statement}.  {\bf a)} A $\gamma$-quantum cone sliced by an independent whole-plane $\SLE_\kappa(\rho)$ process $\eta_1$ with $\kappa=\gamma^2$ and $\rho=2-\kappa$.  (Whether or not $\eta_1$ is self-intersecting depends on whether $\rho \in  (-2,\tfrac{\kappa}{2}-2)$ or $\rho \geq \tfrac{\kappa}{2}-2$.)  By Theorem~\ref{thm::zip_up_wedge_rough_statement}, the sequence of quantum surfaces corresponding to $\C \setminus \eta_1$ ordered according to when their boundary is first drawn by $\eta_1$ is a wedge of weight $4-\gamma^2$.  {\bf b)}  Conditional on $\eta_1$, we draw in each of the components of $\C \setminus \eta_1$ an independent $\SLE_\kappa(-\tfrac{\kappa}{2};-\tfrac{\kappa}{2})$ process; call their concatenation $\eta_2$.  By the results of \cite{ms2013imag4}, we can view $(\eta_1,\eta_2)$ as flow lines of a common whole-plane GFF with an angle gap of $\pi$ and $(\eta_1,\eta_2)$ give the outer boundary of a space-filling $\SLE_{\kappa'}$, $\kappa'=16/\kappa$, process $\eta'$ stopped upon hitting $0$.  Theorem~\ref{thm::welding} implies that the pair $(\eta_1,\eta_2)$ divides the plane into independent wedges of weight $2-\tfrac{\gamma^2}{2}$.  (These correspond to the regions of $\C$ visited by $\eta'$ before and after it visits $0$ and are respectively colored green and white.)  This is the key observation that leads to the statement that the $\gamma$-LQG length of the left and right boundaries of $\eta'$ (when parameterized by $\gamma$-LQG area) has independent increments and, ultimately, Theorem~\ref{thm::quantum_cone_bm_rough_statement}, which states that they evolve as a certain two-dimensional Brownian motion.}
\end{figure}

\begin{figure}[ht!]
\begin{center}
\includegraphics[scale=0.85]{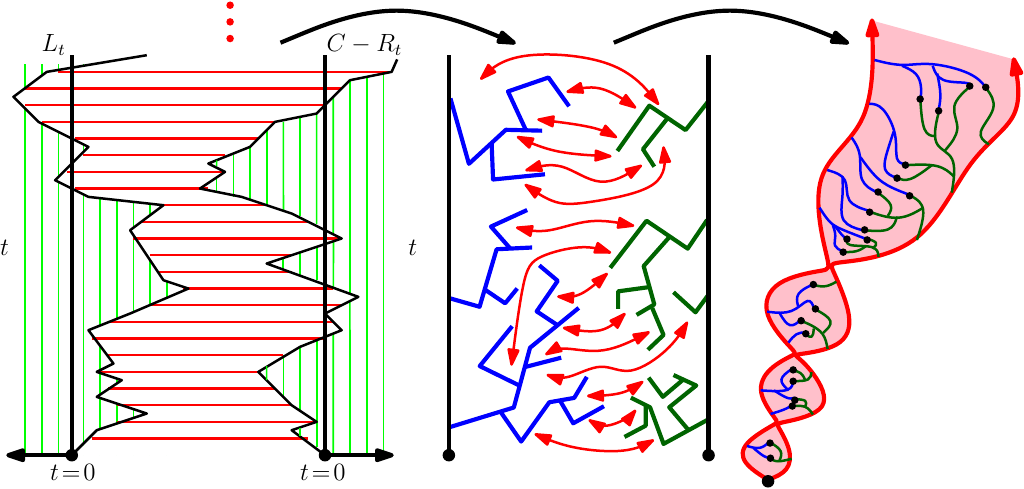}
\end{center}
\caption{\label{fig::fig2}  If we restrict the time in Figure~\ref{fig::fig1} to $t \geq 0$, then the same Brownian motion encodes a $\theta = \pi$ wedge, which corresponds to $W = 2 - \tfrac{\gamma^2}{2}$.  (Note that the vertical and horizontal axes have been swapped from what they were in Figure~\ref{fig::fig1}.  The value of $C$ is chosen so that the two graphs are disjoint up to a fixed, finite time.  To visualize the equivalence relation for all times, one could also make both graphs disjoint by replacing $L_t,R_t$ with $\exp(L_t),\exp(R_t)$.)  The left and right boundaries of the wedge correspond to the record minima of $L_t$ and $R_t$ (see also Figure~\ref{fig::cone_time}, which in turn correspond to the vertical green segments that reach all the way to the bottom $t=0$ line.}
\end{figure}

As we will explain in detail in Section~\ref{sec::brownian_boundary_length}, the main inputs into the proof of Theorem~\ref{thm::quantum_cone_bm_rough_statement} are Theorem~\ref{thm::welding} and Theorem~\ref{thm::zip_up_wedge_rough_statement}.  (See also Figure~\ref{fig::quantum_cone_bm}.)  Indeed, these results imply that drawing a certain pair of whole-plane $\SLE_\kappa(2-\kappa)$ processes coupled together as flow lines of a whole-plane GFF \cite{ms2013imag4} on top of an independent $\gamma$-quantum cone ($W=4-\gamma^2$) yields a pair of independent quantum wedges of weight $2-\tfrac{\gamma^2}{2}$.  These flow lines give the left and right boundaries of~$\eta'$ stopped upon hitting~$0$.  This, combined with the invariance statement~\eqref{eqn::qc_invariance} implies that $(L,R)$ has independent increments and it does not require much additional work to extract from this that $(L,R)$ must be \emph{some} two-dimensional Brownian motion.  In the case that $\kappa' \in (4,8)$, we then compute the a.s.\ Hausdorff dimension of the set of times $t$ for $\eta'$ which are local cut times.  The time parameterization that we take here for $\eta'$ is the $\gamma$-LQG area parameterization (which corresponds to the standard time parameterization so that $(L,R)$ evolves as a Brownian motion).  These local cut times turn out to correspond to so-called ``cone times'' for $(L,R)$, so we are able to determine the covariance matrix for $\kappa' \in (4,8)$ (and for the limiting case $\kappa' = 8$) by matching the dimension that we find with the dimension given in the main result of \cite{EVANS_CONE_TIMES}.  In Section~\ref{sec::brownian_boundary_length}, additional explanation is provided as to how this result relates to the scaling limits of discrete random planar map models \cite{sheffield2011qg_inventory}.

Our next result is that the pair $(L,R)$ from Theorem~\ref{thm::quantum_cone_bm_rough_statement} a.s.\ determines both the space-filling $\SLE_{\kappa'}$ exploration path and the entire LQG surface.

\begin{theorem}
\label{thm::trees_determine_embedding}
In the setting of Theorem~\ref{thm::quantum_cone_bm_rough_statement}, the pair $(L,R)$ a.s.\ determines both $\eta'$ and $h$ (up to a rigid rotation of the complex plane about the origin).
\end{theorem}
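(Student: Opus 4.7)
The plan is to exploit the uniqueness of conformal welding established in Theorems~\ref{thm::welding},~\ref{thm::paths_determined}, and~\ref{thm::zip_up_wedge_rough_statement}, together with the fact that $(L,R)$ directly encodes the $\gamma$-LQG boundary lengths along the arcs traced by $\eta'$. The rough idea is that $(L,R)$ determines the two quantum wedges that $\eta'$ splits $\CC$ into (at any fixed time), as well as the boundary-length matching along their interface, and then the welding theorems pin down the conformal structure uniquely up to a rigid rotation.

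First, fix a deterministic time $t_0 \in \R$. By Theorem~\ref{thm::quantum_cone_bm_rough_statement}, the surfaces $\CW^+ := \eta'([t_0,\infty))$ and $\CW^- := \eta'((-\infty,t_0])$ are independent quantum wedges of weight $2-\gamma^2/2$, and the $\gamma$-LQG boundary length parameterizations of their four boundary arcs (two in each wedge, emanating from $\eta'(t_0)$) are explicit functionals of $L$ and $R$ obtained from the excursions of these coordinates away from their running infima. Moreover, as explained in the sketch around Figure~\ref{fig::quantum_cone_bm}, the common boundary of $\CW^+$ and $\CW^-$ consists of two whole-plane $\SLE_\kappa(2-\kappa)$ curves coupled as flow lines of a single whole-plane GFF.

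Second, I would perform the reconstruction of $(h,\eta')$ from $\CW^\pm$ in two welding steps. Applying Theorem~\ref{thm::welding}, together with the uniqueness clause of Theorem~\ref{thm::paths_determined}, to $\CW^+$ and $\CW^-$ with the boundary-length matching prescribed by $(L,R)$ yields a quantum wedge $\CW$ of weight $4-\gamma^2$, uniquely determined by $\CW^\pm$ and the matching, together with the internal interface curve. Then Theorem~\ref{thm::zip_up_wedge_rough_statement} shows that welding the two sides of $\CW$ along matched boundary length produces the quantum cone $\CC$ together with the whole-plane $\SLE_\kappa(2-\gamma^2)$ curve forming the second boundary of $\eta'$, and that this welding is uniquely determined up to rotation about the origin. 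Concatenating these two weldings reconstructs $(h,\eta')$ up to rotation, the rotation being the only automorphism of $\C\cup\{\infty\}$ fixing $0$ and $\infty$ under which the $\gamma$-quantum cone is invariant in law.

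It then remains to show that $(L,R)$ determines each of $\CW^\pm$ as an LQG surface (not merely as a topological object). By the same argument applied to $\CW^\pm$ in place of $\CC$ (each wedge is itself traced by a restriction of $\eta'$ and has boundary lengths encoded by the corresponding restriction of $(L,R)$), this reduces to showing determination of finite sub-surfaces $\eta'([s,t])$. Subdividing $\R$ along an increasingly fine deterministic partition and applying the welding step above on each piece, together with the removability of the resulting union of $\SLE$-type curves (Proposition~\ref{prop::collection_of_sles_removable}), exhibits $(h,\eta')$ as a measurable limit of welded finite pieces, each measurable with respect to $(L,R)$; a tail $\sigma$-algebra or martingale convergence argument then completes the identification. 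The main obstacle is precisely this recursive/limiting step: one must show that the iterated welding is compatible across scales and that no additional randomness enters in the limit. The removability statements of Section~\ref{subsec::removability} are essential here, as they guarantee that once the interface curves are specified as sets, the conformal structure of the welded surface is determined by that of its pieces, so that the ``up to rotation'' in the conclusion becomes the only remaining ambiguity.
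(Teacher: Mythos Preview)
Your proposal has a genuine gap at exactly the point you flag. Welding uniqueness (Theorems~\ref{thm::paths_determined} and~\ref{thm::zip_up_wedge_rough_statement}) says that \emph{given the pieces as quantum surfaces}, the glued object and interface are determined. It does not say that the Brownian boundary-length data determines the pieces. Your recursion therefore never bottoms out: to determine $\CW^\pm$ from $(L,R)$ you propose to subdivide and weld, but at every scale you again need the sub-surfaces as full LQG surfaces, and $(L,R)$ on a sub-interval gives you only boundary-length increments, not the bulk field. Removability (Proposition~\ref{prop::collection_of_sles_removable}) guarantees that knowing the conformal structure of the pieces determines the conformal structure of the whole; it cannot manufacture the conformal structure of the pieces out of boundary-length data. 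Your suggested tail/martingale argument would need a concrete $(L,R)$-measurable approximation to the embedding, and the proposal does not supply one.

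The paper's proof is structurally different and is where the real content lies. One normalizes the embedding via the conformal map $F\colon\C\setminus\ol\D\to\C\setminus\eta'([-1,1])$ and shows that for each fixed $z$, the conditional variance of $F(z)$ given the outer surface $\CF$ and a discretization $\CB_k$ of $(L,R)|_{[-1,1]}$ (values and running infima on a dyadic mesh $\delta=2^{-k}$) tends to zero as $k\to\infty$. Three ingredients do the work: (i) the Efron--Stein inequality, applied to the $2^{k+1}$ sub-surfaces $\CN_j$, which are conditionally independent given $\CB_k$; (ii) a complex-analytic distortion lemma showing that resampling a single $\CN_j$ of Euclidean diameter $r$ moves $F(z)$ by $O(r^2)$, so contributes variance $O(r^4)$; and (iii) a KPZ-type estimate that the conditional expectation of $\diam(\CN_j)^4$ decays like $\delta^{\Delta}$ with $\Delta>1$ (the positive root of the KPZ relation at $x=2$). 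Summing over $j$ yields conditional variance $\lesssim\delta^{\Delta-1}\to 0$, hence $F(z)$ is $(\CF,\CB)$-measurable. This quantitative averaging argument is exactly what substitutes for the missing base case in your recursion; once one has it for $[-1,1]$, scaling and patching (as in the paper's Corollary following the main estimate) recover the full statement.
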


\begin{remark}
Theorem~\ref{thm::trees_determine_embedding} (along with  Theorem~\ref{thm::quantum_cone_bm_rough_statement}) is one of the most important results of the paper. The reader may find it reminiscent of the construction of level sets of the GFF \cite{SchrammSheffieldGFF2}. In that setting, one has a coupling of an $\SLE_4$ curve $\wt \eta$ with an instance $\wt h$ of the GFF (with appropriate boundary conditions) and aims to show that $\wt h$ a.s.\ determines $\wt \eta$. This is done by first sampling $\wt h$ and then, given this, sampling two conditionally independent copies of $\wt \eta$ (traversed in opposite directions) and using basic properties of local sets to conclude that these two copies a.s.\ agree. (Similar arguments involving paths traversed in opposite directions, and a range of $\kappa$ values, appear in works of Dub\'edat, for example \cite{dub_dual,dub2009part}.)  When we prove Theorem~\ref{thm::trees_determine_embedding} in Section~\ref{sec::trees_determine_embedding}, we will begin with a coupling between $(L,R)$ and $(h,\eta')$ and aim to show that given the pair $(L,R)$, the conditional law of $(h,\eta')$ is a.s.\ deterministic. But the flavor of the argument is rather different from what appears in the GFF level set story. Roughly speaking, we will begin by conditioning on only {\em part} of the information in $(L,R)$ (namely, the values of $L$ and $R$ at times that are multiples of $1/n$) and then use Efron-Stein based variance bounds, along with various results about what happens when one resamples pieces of the decorated surface described by $(h,\eta')$, to show that in the $n\to \infty$ limit, the conditional law of $(h,\eta')$ becomes deterministic. We remark that if space-filling SLE is drawn on a Euclidean domain (instead of a quantum surface) one can define an Euclidean analog of the pair $(L,R)$, and the fact that one can recover the path from the pair $(L,R)$ in this setting has been recently established in a follow up work by Holden and Sun~\cite{euclideanmatingoftrees}.
\end{remark}

\subsubsection{Non-space-filling counterflow lines}
\label{subsubsec::intro_counterflow}

We will now discuss some further consequences and results related to Theorem~\ref{thm::quantum_cone_bm_rough_statement} in the case that $L_t$ and $R_t$ are positively correlated, which corresponds to $\gamma \in (\sqrt{2}, 2)$.  Recall that this is the range for which the corresponding $\CLE_{\kappa'}$ exist with $\kappa' \in (4,8)$, where $\kappa' = 16/\gamma^2$.  This is also the range in which $\SLE_{\kappa'}$ is itself non-space-filling and hence differs from space-filling $\SLE_{\kappa'}$.

Given $0 < s < t$, we say that $s$ is an {\bf ancestor} of $t$, and we write $s \prec t$, if for all $r \in (s,t]$ we have $L_r >   L_s$ and $R_r > R_s$.  The following facts are obvious from this definition:
\begin{enumerate}
\item $s \prec t$ implies $s < t$.
\item $s \prec t$ and $t \prec u$ implies $s \prec u$.
\item $s \prec t$ implies $s \prec u$ for all $u \in (s,t)$.
\end{enumerate}
If $s$ is an ancestor of some $t > s$, then $s$ is called a {\bf cone time} of the Brownian process.  (If $\gamma \in (0,\sqrt{2}]$ so that $L,R$ are non-positively correlated then $(L,R)$ a.s.\ does not have cone times \cite{EVANS_CONE_TIMES}.)  Figure~\ref{fig::cone_time} illustrates one such cone time.  As the figure illustrates, the set of points that have a cone time $s$ as an ancestor is an open set $(s,s')$ for some~$s'$, and between~$s$ and~$s'$ the Brownian path $(L_t, R_t)$ traces out an excursion into the quadrant $[L_s, \infty) \times [R_s, \infty)$ that begins at the corner and ends on one of the two sides.  A point $t \geq 0$ is called {\bf ancestor free} if there is no $s \in (0,t)$ that is an ancestor of $t$.  The properties above imply that if $s$ is ancestor free then a given $t > s$ is ancestor free if and only if $t$ has no ancestor in $(s,t)$.  This implies that the set of ancestor free times is a regenerative process, and by scale invariance, we may conclude that it has the law of the range of a stable subordinator (which agrees in law with the zero set of a certain Bessel process, and which can be parameterized by a local time). See, e.g., the reference texts \cite{bertoin96levy,bertoin1999subordinators} for a general overview of stable subordinators and Chapter XI of \cite{ry99martingales} for the connection with Bessel processes and the construction of the local time measure for the Bessel process, which then corresponds to a natural measure on the set of ancestor free times.  We recall that an $\alpha$-{\bf stable process} is a L\'evy process $X_t$ such that $t^{-1/\alpha} X_t \stackrel{d}{=} X_1$ for all $t>0$, and a {\bf stable subordinator} is a non-decreasing $\alpha$-stable process.  We also recall that an $\alpha$-stable L\'evy process is called {\bf totally asymmetric} if all of its jumps have the same sign.  In this article, the sign of the jumps will be clear from the context.

Write $t(s)$ for the infimum of times $u \geq 0$ at which the local time measure of the set of ancestor free times in $[0,u]$ exceeds $s$ (noting that $t(s)$ is necessarily an ancestor free time itself).  Then we have the following:

\begin{proposition}
\label{prop::iidlevy}
The processes $L_{t(s)}$ and $R_{t(s)}$ parameterized by time $s$ are independent totally asymmetric $\tfrac{\kappa'}{4}$-stable processes.
\end{proposition}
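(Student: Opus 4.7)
The plan is to first show that $s \mapsto (L_{t(s)}, R_{t(s)})$ is a two-dimensional L\'evy process using the regenerative structure of the ancestor-free set $A$ together with the strong Markov property of $(L,R)$, then to upgrade this to stability via Brownian scaling, and finally to analyze the jumps in order to identify the index, verify spectral positivity, and deduce independence.

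\emph{L\'evy property and stability.} Since the complement of $A$ is the disjoint union of the cone-time excursion intervals $(s,s')$ produced by cone times $s$, the shift of $(L,R)$ at any ancestor-free time $t(s_1)$ is independent of the $\sigma$-algebra $\mathcal F_{t(s_1)}$ generated by $(L_r,R_r)_{r \leq t(s_1)}$ and has the law of $(L,R)$; the set $A$ and its local time parameterization thus inherit a regenerative structure which gives the stationary independent increments needed for $(L_{t(s)}, R_{t(s)})_{s \geq 0}$ to be a two-dimensional L\'evy process. Brownian scaling $(L_{ct}, R_{ct}) \stackrel{d}{=} c^{1/2}(L_t, R_t)$ together with the fact that $A$ is defined by scale-invariant path inequalities implies that $t(\cdot)$ is $\alpha'$-stable for some $\alpha' \in (0,1)$ and that
\[
(L_{t(cs)}, R_{t(cs)}) \stackrel{d}{=} c^{1/(2\alpha')}(L_{t(s)}, R_{t(s)});
\]
a self-similar L\'evy process is stable, so each of $L_{t(\cdot)}$ and $R_{t(\cdot)}$ is a stable process of common index $\beta = 2\alpha'$.

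\emph{Jump structure and index.} Each jump of $(L_{t(\cdot)}, R_{t(\cdot)})$ at a local time $s$ corresponds to a maximal cone-time excursion $(t(s^-), t(s))$ of $(L,R)$ on which both coordinates strictly exceed $(L_{t(s^-)}, R_{t(s^-)})$; at the exit time $t(s)$, exactly one coordinate returns to its starting value while the other strictly exceeds it, because by the cone-time remarks in Section~\ref{subsec::easy} the planar Brownian motion a.s.\ does not simultaneously return to both starting values (no ``corner return''). Therefore every jump $(\Delta L_{t(s)}, \Delta R_{t(s)})$ lies in $(\R_+ \times \{0\}) \cup (\{0\} \times \R_+)$, which shows that each coordinate is spectrally positive (hence totally asymmetric) and that $L_{t(\cdot)}$ and $R_{t(\cdot)}$ have no common jumps. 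The almost sure Hausdorff dimension of $A$ equals the stable index $\alpha'$ of $t(\cdot)$; this dimension is precisely what the proof of Theorem~\ref{thm::quantum_cone_bm_rough_statement} computes in matching the covariance $-\cos(4\pi/\kappa')$ to the main result of \cite{EVANS_CONE_TIMES}, and the outcome is $\alpha' = \kappa'/8$, so $\beta = \kappa'/4$.

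\emph{Independence.} Because $\kappa'/4 > 1$, the stable L\'evy process $(L_{t(\cdot)}, R_{t(\cdot)})$ has no Gaussian component, and its L\'evy measure $\nu$ is supported on $(\R_+ \times \{0\}) \cup (\{0\} \times \R_+)$. Writing $\nu = \nu^L \otimes \delta_0 + \delta_0 \otimes \nu^R$ and applying the L\'evy--It\^o decomposition, the contributions to $L_{t(\cdot)}$ and to $R_{t(\cdot)}$ are driven by the restrictions of a single Poisson point process of jumps to the two disjoint positive half-axes, and are therefore independent; the drift compensators required for $\beta > 1$ are deterministic. The main obstacle is the index identification $\beta = \kappa'/4$: it rests on the precise Hausdorff dimension computation for cone points of planar Brownian motion inherited from the proof of Theorem~\ref{thm::quantum_cone_bm_rough_statement}, which in turn depends on the specific covariance of $(L,R)$ and the input from \cite{EVANS_CONE_TIMES}; once that is in hand, the L\'evy, scaling, and independence steps are essentially formal.
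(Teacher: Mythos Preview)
Your overall architecture---regenerative structure for the L\'evy property, Brownian scaling for stability with $\beta = 2\alpha'$, and the Poisson-point-process splitting of jumps along the two half-axes for independence---is correct and is essentially the same skeleton the paper uses (see Proposition~\ref{prop::bm_pinch_times} and the remark that it implies Proposition~\ref{prop::iidlevy}).

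The genuine gap is in your identification of the index. You write that $\dim A = \alpha'$ and that ``this dimension is precisely what the proof of Theorem~\ref{thm::quantum_cone_bm_rough_statement} computes in matching the covariance \ldots\ to the main result of \cite{EVANS_CONE_TIMES}''. That is not what happens there. The argument in Section~\ref{sec::brownian_boundary_length} (Lemmas~\ref{lem::quantum_pinch_times} and~\ref{lem::brownian_covariance}) computes the Hausdorff dimension of the set of \emph{local cut times}, which are $\tfrac{\pi}{2}$-cone times for $(L,R)$, equivalently $\theta$-cone times for the standardized Brownian motion. Evans' theorem gives this dimension as $1-\tfrac{\pi}{2\theta}=1-\tfrac{\kappa'}{8}$. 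The ancestor-free set $A$ is a different set: it is the closed range of the subordinator $t(\cdot)$, i.e.\ the complement of the union of the open cone-excursion intervals, and its dimension is $\alpha'=\tfrac{\kappa'}{8}$. These two numbers are complementary, not equal, and nothing in the proof of Theorem~\ref{thm::quantum_cone_bm_rough_statement} computes $\dim A$ directly. Your chain ``Evans $\Rightarrow \dim A = \kappa'/8$'' is therefore missing a step: you would still need an independent computation relating the cone angle $\theta$ to the subordinator index $\alpha'$ of $t(\cdot)$ (equivalently, to the tail of the cone-excursion durations).

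The paper fills exactly this gap, but not via Hausdorff dimension. In the proof of Proposition~\ref{prop::bm_pinch_times} it performs the linear change of coordinates to a standard planar Brownian motion, applies the conformal map $z\mapsto z^{\pi/\theta}$ from the wedge $\W_\theta$ to the half-plane, and reads off the exit distribution of a cone excursion from the Poisson kernel. This yields the explicit L\'evy measure $u^{-1-\pi/\theta}\,du$ for the displacement, hence $\beta=\pi/\theta=\kappa'/4$ directly. If you want to keep your dimension-based route, you must supply an honest computation of $\alpha'$ (for instance, by converting the displacement tail to a duration tail via Brownian scaling, $T\sim u^{2}$, which gives the duration L\'evy measure $T^{-1-\pi/(2\theta)}\,dT$ and hence $\alpha'=\pi/(2\theta)=\kappa'/8$); simply pointing to Evans and Theorem~\ref{thm::quantum_cone_bm_rough_statement} does not do it.
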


The statement itself is a straightforward observation (it is clear from Figure~\ref{fig::cone_time} that each jump of the stable subordinator corresponds to a positive jump in precisely one of the processes $L_{t(s)}$ and $R_{t(s)}$, and that the measure on jumps has a power law distribution) but the parameter $\tfrac{\kappa'}{4}$ will not be identified until Section~\ref{sec::duality}.  The processes $L_{t(s)}$ and $R_{t(s)}$ are independent, even though $L_t$ and $R_t$ are not, because they are both L\'evy processes without continuous part which a.s.\ do not have a jump at the same time.  Next, it is natural to ask the following: if we know that $s>0$ is an ancestor of $t>0$ what does that tell us about the space-filling curve $\eta'$? As Figure~\ref{fig::cone_time} illustrates, if $s>0$ is an ancestor of $t>0$ then the set $\eta'([s,\infty))$ must separate $\eta'(t)$ from $\eta'(0)$.  In other words, if we parameterize time in reverse (from $t = \infty$ to $t=0$) then $\eta'$ ``disconnects'' $\eta'(t)$ from $\eta'(0)$ before $\eta'(0)$ is reached.  Recall that this means that $\eta'(t)$ is {\em not} part of the corresponding counterflow line.

Conversely (see also~\ref{fig::ancestor_free}) a time $t>0$ is {\em ancestor free} if and only if $\eta'(t)$ lies on the boundary of the component of $\C \setminus \eta'([t,\infty))$ that contains $\eta'(0)$.  While the time-reversal of $\eta'$ is a space-filling SLE$_\kappa'$ curve, the restriction of that time reversal to the set of ancestor free times $t>0$ turns out to correspond to the continuous but non-space-filling curve that we call the {\em counterflow line} from $\infty$ to $0$. The following list summarizes a few special subsets of the wedge parameterized by $[0,\infty)$, which are obtained by restricting to special times in $[0,\infty)$:

\begin{enumerate}
\item {\bf Counterflow line from $\infty$ to $0$:} parameterized by the set of ancestor free times $t$.
\item {\bf Left (resp.\ right) boundary of entire wedge:} parameterized by times at which $L_t$ (resp.\ $R_t$) attains a record minimum.
\item {\bf Cut points of entire wedge:} times at which $L_t$ and $R_t$ simultaneously achieve record minima.  
See Figure~\ref{fig::ancestor_free}.
\end{enumerate}

The processes described in Proposition~\ref{prop::iidlevy} encode two so-called {\bf L\'evy trees} of disks, after the manner outlined in Figure~\ref{levytreegluing}.  (L\'evy trees are studied in detail in \cite{dlg2002trees_levy}.)  The sets obtained by removing the interior of each of these loops (so that one has a ``tree of circles'') are called {\bf stable looptree} in e.g.\ \cite{ck2013looptrees}. A stable looptree is a random space {\em that comes with a topology and some additional structure} (e.g., each loop comes with a defined boundary length measure, and the looptree is a geodesic metric space) --- we refer the reader to \cite{ck2013looptrees} for a more detailed treatment of stable looptrees.

The procedure for obtaining one of these trees is explained in the top row of Figure~\ref{fig::fig11b}.  (The later rows contain related constructions that will be relevant for Theorem~\ref{thm::quantum_natural_zip_unzip_rough_statement}.)  Note that in a L\'evy tree there are in fact a countably infinite number of small loops along the branch connecting any two given loops (i.e., it is a.s.\ the case that no two loops are adjacent).  Each loop comes with a well-defined boundary length, which is the magnitude of the corresponding jump in the stable L\'evy process.  The outer boundary of the tree of loops also comes with a natural time parameterization, which is the time of the corresponding L\'evy process.  Intuitively, if for some tiny $\epsilon$ one keeps track of the number of loops of size between $\epsilon$ and $2 \epsilon$ that are encountered as one traces the boundary of the tree, then that number (times an appropriate power of $\epsilon$) is a good approximation for this natural time.

Suppose that $\gamma \in (\sqrt{2},2)$ so that $\kappa \in (2,4)$ and $\kappa' \in (4,8)$.  A {\bf forested line} $\CL$ is the beaded quantum surface constructed in the following way. First begin with an $\alpha=\kappa'/4$-stable L\'evy process $X$ with only upward jumps as on the top row of Figure~\ref{fig::fig11b} and create the corresponding stable looptree $\wt{\CL}$ (which we think of as a forest of looptrees attached to a single infinite ray, which we will sometimes refer to as the ``line of $\wt{\CL}$ or $\CL$'').  Each loop has a boundary length $L$ (recall~\eqref{e.nudef}).  Then note that if we condition a quantum disk from $\diskmeasure$ (from the infinite quantum disk measure defined at the end of Section~\ref{subsec::surfaces}) to have boundary length $L$, we obtain a probability measure on quantum disks of boundary length $L$.  So given $\wt{\CL}$, we independently sample one quantum disk of boundary length $L$ corresponding to each loop (i.e., each circle illustrated on the top row of Figure~\ref{fig::fig11b}).  We when topologically identify the boundary of each quantum disk with the corresponding loop of $\wt{\CL}$ in a clockwise length-preserving way, with the rotation chosen uniformly at random (i.e., if we take any point on the boundary of the  $\diskmeasure$ sample, then the location along the length $L$ loop that it is identified with is chosen uniformly). The resulting object $\CL$ is a beaded quantum surface than can be understood as a L\'evy tree of quantum disks. We can extend this definition as follows.

\begin{definition}
\label{def::forested_wedge}	
Suppose that $\CW$ is a quantum wedge of weight $W > 0$ and $\CL$ is a forested line.  A {\bf forested quantum wedge of weight $W$} is the beaded random surface which arises by gluing the line of $\CL$ (with $\alpha = \kappa'/4 \in (1,2)$)  to either the left or the right side of $\CW$.  A {\bf doubly forested quantum wedge of weight $W$} is obtained by letting $\CL_1, \CL_2$ be independent forested lines and then gluing the line of $\CL_1$ (resp.\ $\CL_2$) to the left (resp.\ right) side of $\CW$.  Illustrations of doubly forested wedges appear, e.g., in Figures~\ref{fig::fig8} and~\ref{fig::fig10}.
\end{definition}

We emphasize that the only parameter in the definition of a forested quantum wedge is the weight $W$ because the value $\alpha$ for the stable L\'evy process used to define the forested line is determined by~$\gamma$ and always given by $\alpha = \tfrac{\kappa'}{4} = \tfrac{4}{\gamma^2}$.  The same is likewise true in the case of a doubly forested quantum wedge.

\begin{theorem}
\label{thm::gluingtwoforestedlines}
Consider a quantum wedge $\CW$ of weight $W = 2-\tfrac{\gamma^2}{2}$ (which corresponds to $\theta = \pi$) and a concatenation $\eta'$ of independent $\SLE_{\kappa'}(\kappa'/2-4;\kappa'/2-4)$ processes, one for each bead of $\CW$, from $\infty$ to $0$ (which corresponds to a counterflow line, as depicted in Figure~\ref{fig::ancestor_free} and Figure~\ref{fig::fig6}).  Then $\eta'$ divides $\CW$ into two independent forested lines, whose boundaries are identified with one another according to the natural time parameterization of the outer boundary of the corresponding L\'evy trees.  Moreover, given the two forested lines, it is a.s.\ possible to uniquely recover the quantum wedge and $\eta'$.
\end{theorem}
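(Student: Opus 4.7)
The plan is to deduce this theorem directly from Theorem~\ref{thm::quantum_cone_bm_rough_statement} applied to the half-line $t \geq 0$ (which, as explained in Figure~\ref{fig::fig2}, encodes exactly a weight $W = 2 - \gamma^2/2$ wedge) together with the cone-time analysis of Proposition~\ref{prop::iidlevy}. I would first identify the counterflow line from $\infty$ to $0$ with the closure of the set of ancestor-free times of $(L_t,R_t)_{t \geq 0}$, exactly as in the discussion around Figure~\ref{fig::ancestor_free}: ancestor-free times $t$ are precisely those for which the space-filling curve has traced out no bubble separating $\eta'(t)$ from the target $0$, i.e.\ the points currently on the boundary of the unexplored origin-containing component.

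Second, I would decompose the wedge along the counterflow line into bubbles indexed by maximal excursions of $(L_t,R_t)$ into quadrants from cone times. Each such excursion corresponds to a connected component of the complement of the counterflow line, and the quantum surface this component carries is precisely the quantum surface parameterized by $\eta'([s,s'])$ where $[s,s']$ is the excursion interval. The key structural fact I need is that these bubbles are conditionally independent quantum disks given their boundary lengths, and that the ordered sequence of left-side (resp.\ right-side) boundary lengths forms a totally asymmetric $\kappa'/4$-stable process---which is exactly Proposition~\ref{prop::iidlevy} together with an identification of the bubble laws. To identify the bubble laws as quantum disks of the appropriate weight, I would apply a Markov/re-rooting argument to the wedge: given the initial segment of the counterflow line up to a cone time, the law of the bubble just explored is, by the imaginary-geometry coupling and the conformal welding result Theorem~\ref{thm::welding} (in the form of Proposition~\ref{prop::slice_wedge_many_times}), a finite-volume quantum disk with the prescribed boundary length, independent of everything explored before. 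The two sides are then independent because $L_{t(s)}$ and $R_{t(s)}$ are independent stable processes, and jumps of the left (resp.\ right) process correspond only to left (resp.\ right) bubbles.

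Third, having extracted two independent forested lines with the correct boundary-length parameterization, the natural identification along the counterflow line follows automatically: the common parameter is the local time at the ancestor-free set, which by construction agrees on both sides with the $\gamma$-LQG length along the counterflow line.

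For the recovery statement, given the two forested lines I would first read off the stable processes $L_{t(s)}$ and $R_{t(s)}$ from the trunks (the disks encode their own boundary lengths, and the ordering plus boundary identification yields the jumps in order). Conformally welding the two trunks along their common natural parameterization glues the disks into a single surface and produces a candidate counterflow line as the image of the welding seam. Uniqueness of this welding reduces to a removability question for the seam together with the union of all disk boundaries; since the left and right outer boundaries of an $\SLE_{\kappa'}$ counterflow line are $\SLE_\kappa$-type flow lines of a GFF with $\kappa = 16/\kappa' \in (0,4)$, Proposition~\ref{prop::collection_of_sles_removable} applies and gives removability of this union. Thus any two conformal realizations of the glued surface agree up to a conformal automorphism, which is fixed by the two marked points of the wedge, yielding a.s.\ uniqueness.

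The main obstacle I expect is the second step: precisely identifying each excursion bubble as a quantum disk of the correct weight and establishing conditional independence given boundary lengths. This requires combining the Brownian cone-time description of Proposition~\ref{prop::iidlevy} (which only records boundary lengths) with the SLE/LQG Markov structure---i.e., showing that the bubble law is determined by its boundary length and matches the weight predicted by the exponent $\kappa'/4$ appearing in the stable process. Everything else is a relatively routine assembly of Theorem~\ref{thm::quantum_cone_bm_rough_statement}, Proposition~\ref{prop::iidlevy}, and the removability framework of Section~\ref{subsec::removability}.
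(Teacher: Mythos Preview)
Your approach to the first half (the structure of the two forested lines) is essentially the same as the paper's: identify the counterflow line with the ancestor-free times of $(L_t,R_t)$, use the cone-time decomposition of Proposition~\ref{prop::iidlevy} to get independent $\tfrac{\kappa'}{4}$-stable processes, and identify the bubble laws as quantum disks. The paper packages this as Proposition~\ref{prop::glue_forested_lines}, relying on Proposition~\ref{prop::bm_pinch_times}, Lemma~\ref{lem::exposed_cfl_relationship}, and the bubble-law identification from Section~\ref{sec::structure_theorems} (see also Remark~\ref{rem::equivalence_of_disk_measures}, which notes exactly the route you suggest via Proposition~\ref{prop::slice_wedge_many_times}). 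So that part is fine.

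The recovery argument, however, has a genuine gap. You propose to conclude uniqueness of the welding by invoking removability of the seam via Proposition~\ref{prop::collection_of_sles_removable}, arguing that the $\SLE_{\kappa'}$ trace is covered by countably many $\SLE_\kappa$-type disk boundaries. But Proposition~\ref{prop::collection_of_sles_removable} requires that the set of accumulation points of the collection of segments be \emph{discrete}, and for $\kappa'\in(4,8)$ the disk boundaries accumulate at every point of the $\SLE_{\kappa'}$ trace. Removability of $\SLE_{\kappa'}$ for $\kappa'\in(4,8)$ is not established in this paper; the introduction explicitly flags it as an open problem from \cite{she2010zipper} that is \emph{not} resolved here.

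The paper circumvents removability entirely with a measurability trick. It sets $A=$ the pair of forested lines, $B=$ the collection of space-filling $\SLE_{\kappa'}$ paths restricted to the interiors of the individual disks, and $C=$ the welded wedge together with the counterflow line. The pair $(A,B)$ determines the full Brownian processes $(L_t,R_t)$, which by Theorem~\ref{thm::trees_determine_embedding} determine $C$; on the other hand, given $A$, the filling curves $B$ are conditionally independent of the outside surface $C$. The elementary Proposition~\ref{prop::abc} (if $(A,B)$ determines $C$ and $B\perp C$ given $A$, then $A$ determines $C$) then yields that the forested lines alone a.s.\ determine the wedge and the counterflow line. This is the ``another way'' alluded to in the introduction, and it is genuinely different from---and at present strictly more than---what a removability argument can give.
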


As explained above, in the statement of Theorem~\ref{thm::gluingtwoforestedlines} we can view $\eta'$ as arising from the set of ancestor free times associated with an independent space-filling $\SLE$ curve drawn on top of a quantum cone.

\begin{figure}[ht!]
\begin{center}
\includegraphics[scale=0.85]{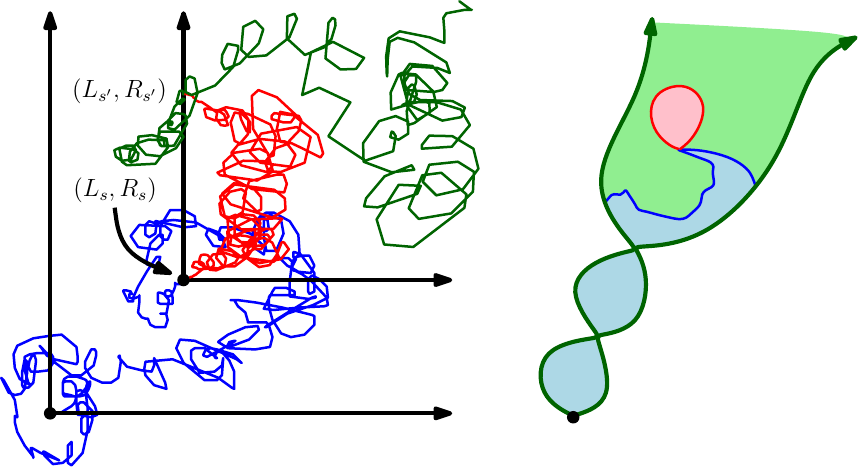}
\end{center}
\caption{\label{fig::cone_time}  When $L_t$ and $R_t$ are positively correlated, we have $\gamma \in (\sqrt{2}, 2)$.  In this case, there a.s.\ exist cone times like the time $s$ illustrated on the left.  As noted on the right, an interval of the type $[s,s']$ (here $s' \in \{t > s : L_t=L_s \, \mathrm{or} \, R_t=R_s \}$) is ``cut off'' by the time-reversal of the space-filling path from $0$ to $\infty$ before it is filled up. (The time-reversal fills first the green region, then the red region, then the blue region.)  At the critical value $\gamma=\sqrt{2}$, $L_t$ and $R_t$ are independent and there a.s.\ do not exist cone times.}
\end{figure}

\begin{figure}[ht!]
\begin{center}
\includegraphics[scale=0.85]{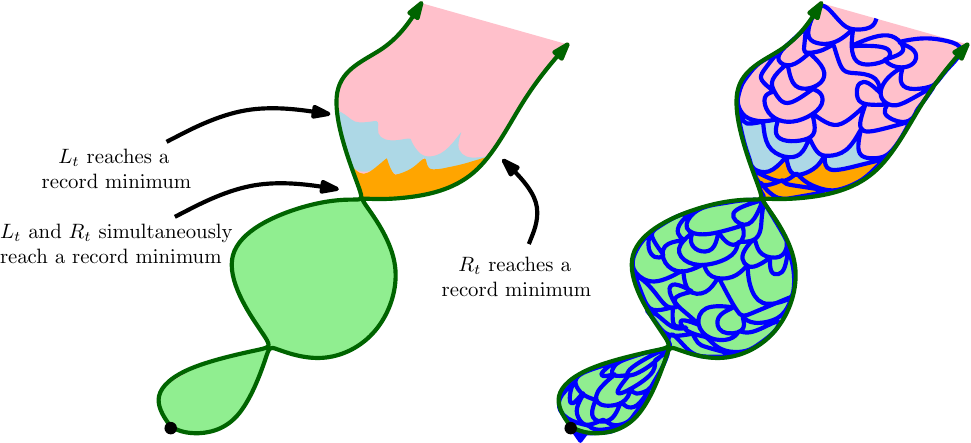}
\end{center}
\caption{\label{fig::ancestor_free}  Shown on the left is a $2-\tfrac{\gamma^2}{2}$ wedge.  When the space-filling $\SLE_{\kappa'}$ process $\eta'$ from $0$ to $\infty$ on this wedge hits a pinch point in the wedge, then $L_t$ and $R_t$ simultaneously hit a record minimum (green).  When $\eta'$ hits the left (resp.\ right) side of the wedge then $L_t$ (resp.\ $R_t$) hits a record minimum.  The points on the blue curve on the right make up the set of ancestor free times which corresponds to the counterflow line from $\infty$ to~$0$.}
\end{figure}

\begin{figure}[ht!]
\begin{center}
\includegraphics[scale=1]{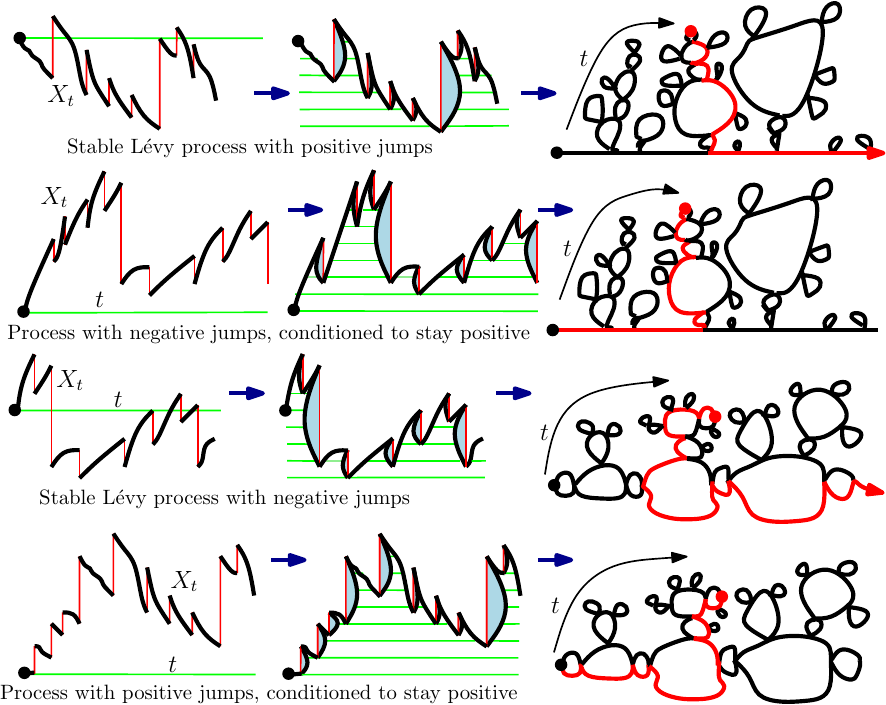}
\end{center}
\caption{\label{fig::fig11b} {\bf First row:} A stable L\'evy process $X_t$ with positive jumps encodes a forested line via the procedure explained in Figure~\ref{levytreegluing} (top right).  Unmatched green segments (record minima of $X_t$) map to points on the line.  As $t$ increases, the red dot on right traces the forest boundary clockwise.  $X_t$ encodes the net change in length of the red path (which traces the right side of the branch of disks containing the point hit at time $t$, and continues right to $\infty$) since time $0$.  A jump occurs when a disk is hit for the first time, with jump size given by the boundary length of the disk.  {\bf Second row:} The same forested line corresponds to a stable L\'evy process with negative jumps conditioned to stay positive.   Unmatched green rays (last hitting times of $X_t$) map to points on the line.  The value of $X_t$ encodes the left boundary length.  A jump occurs when a disk is hit for the last time.  {\bf Third row:} A stable L\'evy process with negative jumps encodes a forested line.   Unmatched green segments map to the underside of the line.  A jump occurs when a disk is hit for the last time.  {\bf Fourth row:} The same forested line corresponds to a positive-jump stable L\'evy process conditioned to stay positive.}
\end{figure}

\begin{figure}[ht!]
\begin{center}
\includegraphics[scale=0.85]{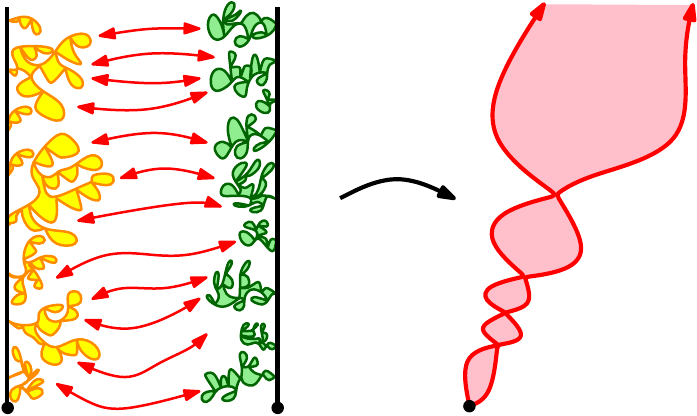}
\end{center}
\caption{\label{fig::fig6}
(Illustration of Theorem~\ref{thm::gluingtwoforestedlines}.) Two forested lines can be welded together according to quantum length to produce a single $\theta = \pi$ quantum wedge.  Recall that $\theta = \pi$ corresponds to $W = 2 - \tfrac{\gamma^2}{2}$.}
\end{figure}

\begin{figure}[ht!]
\begin{center}
\includegraphics[scale=0.85]{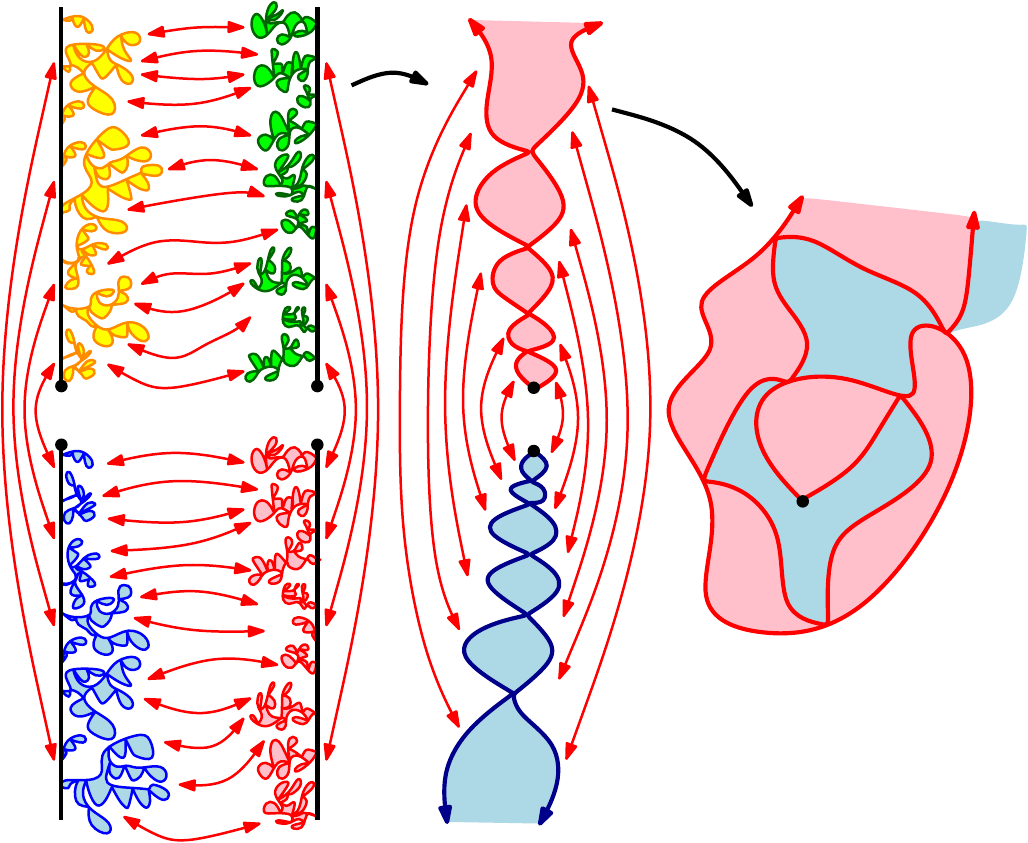}
\end{center}
\caption{\label{fig::fig7}  (Illustration of the combination of Theorem~\ref{thm::zip_up_wedge_rough_statement} and Theorem~\ref{thm::gluingtwoforestedlines}.) Repeating the procedure of Figure~\ref{fig::fig6} for negative time, we can glue together four i.i.d.\ forested wedges to obtain the entire $\theta = 2\pi$ quantum cone.  The interfaces are given by the flow line and dual flow line from $0$ to $\infty$, and by the two counterflow lines coming from $\infty$ to $0$ that have these paths as their boundaries.}
\end{figure}

\begin{figure}[ht!]
\begin{center}
\includegraphics[scale=0.85]{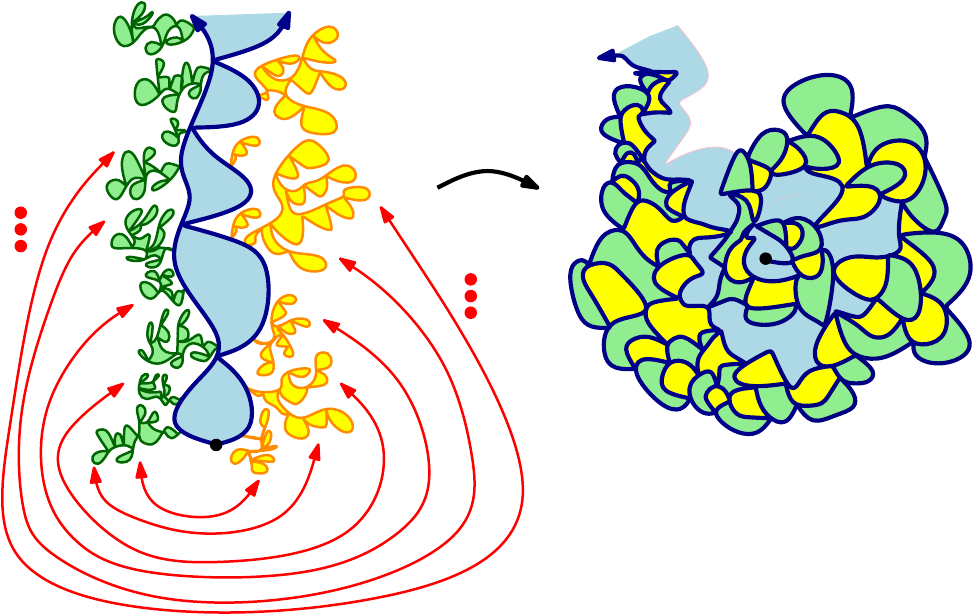}
\end{center}
\caption{\label{fig::fig8}  (Illustration of Theorem~\ref{thm::quantum_cone_sle_kp}.)  Alternatively, one can subdivide only one of the two $\theta = \pi$ wedges within a counterflow line.  In this case, one has a doubly forested $\theta = \pi$ quantum wedge that zips up to become a $\theta = 2\pi$ quantum cone.}
\end{figure}

\begin{figure}[ht!]
\begin{center}
\includegraphics[scale=0.85]{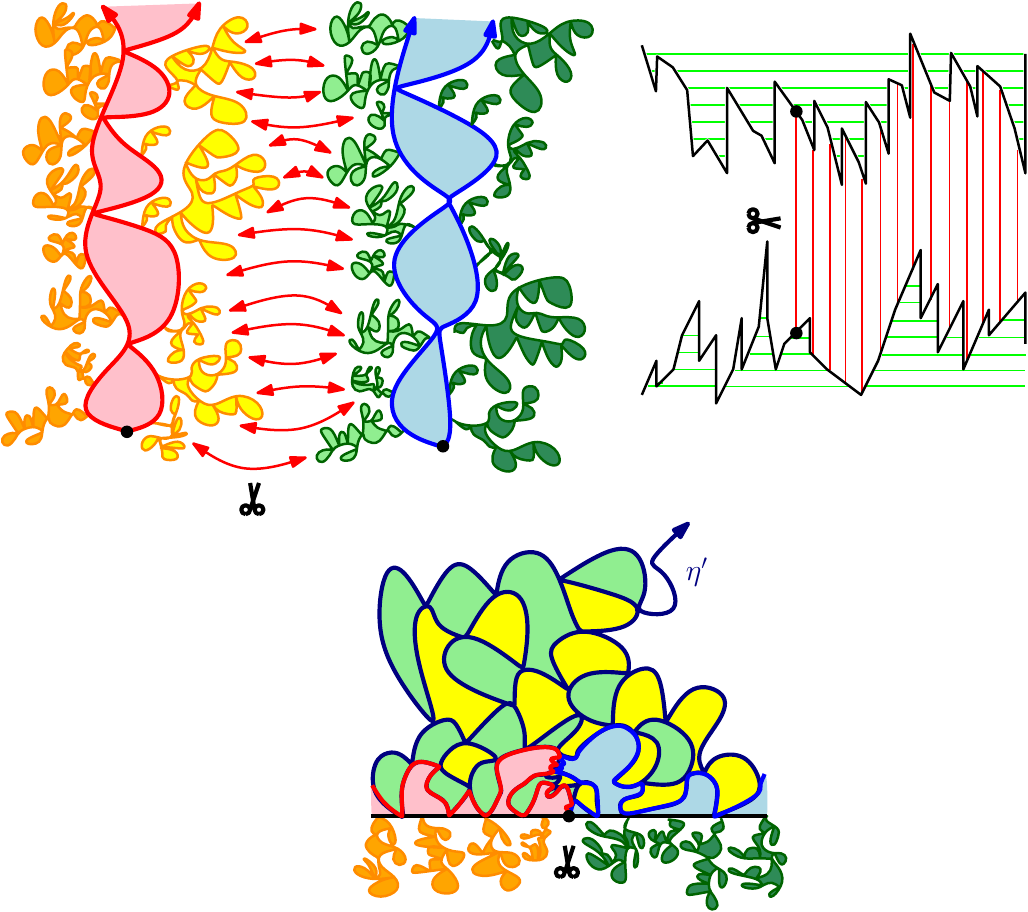}
\end{center}

\caption{\label{fig::fig10}  The quantum length invariant quantum zipper from \cite{she2010zipper} involved two weight $2$ quantum wedges which were zipped together on one side, and could be further zipped or unzipped; in the embedding in the upper half plane, the interface is an $\SLE_{\kappa}$ curve for $\kappa<4$.  In the analog described here for $\kappa' \in (4,8)$ the two weight $2$ quantum wedges are replaced by two doubly forested weight $(\gamma^2-2)$ wedges.  The interface between them is a counterflow line (i.e., $\SLE_{\kappa'}$ curve), and once again the figure is invariant w.r.t.\ zipping or unzipping by a fixed amount of quantum natural time.  Note that the entire collection of loops can be constructed from a pair of stable L\'evy processes with negative jumps indexed by all time $\R$.  The jumps before zero encode the loops in the two forests that have not yet been zipped up.  The jumps after zero encode the loops in the upper half plane.  Those jumps at times $t > 0$ at which $L_t$ (resp.\ $R_t$) achieves a record minimum correspond to the disks that share boundary segments with the left (resp.\ right) real axis.
}
\end{figure}

The following theorem is another fairly easy generalization of Theorem~\ref{thm::gluingtwoforestedlines}, which can be deduced as a consequence of Theorem~\ref{thm::gluingtwoforestedlines} and the additivity of wedges developed in Theorems~\ref{thm::welding} and~\ref{thm::paths_determined}.  It states essentially that one can zip together a right-forested wedge of weight $W_1$ and a left-forested wedge of weight $W_2$ (zipping along the forested sides) in order to obtain a wedge of weight $W_1 + W_2 + 2 - \tfrac{\gamma^2}{2}$ decorated by an appropriate independent $\SLE_{\kappa'}(\rho_1;\rho_2)$ process.  We include this result in order to illustrate that general non-boundary-filling $\SLE_{\kappa'}(\rho_1;\rho_2)$ processes can be obtained by zipping together forested wedges.\footnote{It is also natural to give a description of the structure of the surfaces cut out by an $\SLE_{\kappa'}(\rho_1;\rho_2)$ process, $\kappa' \in (4,8)$, drawn on top of a wedge $\CW = (\h,h,0,\infty)$ of weight $W$ as in~\eqref{eqn::sle_kp_on_wedge_weights} when one or both of $\rho_1,\rho_2$ are in $(-2,\tfrac{\kappa'}{2}-4)$.  (As Theorem~\ref{thm::sle_kp_on_wedge} requires that $\rho_1,\rho_2 \geq \tfrac{\kappa'}{2}-4$, when one or both of $\rho_1,\rho_2$ are in $(-2,\tfrac{\kappa'}{2}-4)$ we are outside of the regime in which Theorem~\ref{thm::sle_kp_on_wedge} applies.)  Recall that if $\rho_1$ (resp.\ $\rho_2$) is in this range then $\eta'$ a.s.\ fills the $\R_-$ (resp.\ $\R_+$).  In this case, the quantum surfaces which are completely surrounded by $\eta'$ and are on its left (resp.\ right) side still have a tree structure.  It is not clear, however, whether these trees are independent of each other.  Describing the law of this pair of trees falls outside of the scope of this article.}

\begin{theorem}
\label{thm::sle_kp_on_wedge}
Fix $\gamma \in (0,2)$ and let $\kappa' = 16/\gamma^2 \in (4,\infty)$.  Fix $\rho_1,\rho_2 \geq \tfrac{\kappa'}{2}-4$.  Let 
\begin{equation}
\label{eqn::sle_kp_on_wedge_weights}
W_i = \gamma^2-2 + \frac{\gamma^2}{4} \rho_i \geq 0 \quad\text{for}\quad i=1,2 \quad\text{and}\quad W = W_1 + W_2 + 2-\tfrac{\gamma^2}{2}.
\end{equation}
In the case that $W \geq \tfrac{\gamma^2}{2}$, let $\CW = (\h,h,0,\infty)$ be a quantum wedge of weight $W$ and let $\eta'$ be an $\SLE_{\kappa'}(\rho_1;\rho_2)$ process in $\h$ from $0$ to $\infty$ with force points located at $0^-,0^+$ which is independent of~$\CW$.  In the case that $W \in (0,\tfrac{\gamma^2}{2})$ so that $\CW$ is not homeomorphic to $\h$, we take $\eta'$ be to be given by a concatenation of independent $\SLE_{\kappa'}(\rho_1;\rho_2)$ processes: one for each of the bubbles of $\CW$ starting at the opening point of the bubble and targeted at the terminal point.  Then the quantum surface $\CW_1$ (resp.\ $\CW_2$) which consists of those components of $\h \setminus \eta'$ which are to the left (resp.\ right) of $\eta'$ is a quantum wedge of weight $W_1$ (resp.\ $W_2$) and the quantum surface $\CW_3$ which is between the left and right boundaries of $\eta'$ is a quantum wedge of weight $2-\tfrac{\gamma^2}{2}$.  (Note that $\CW_1$ is beaded if $\rho_1 \in (\tfrac{\kappa'}{2}-4,\tfrac{\kappa'}{2}-2)$ and likewise $\CW_3$ is beaded if $\rho_2 \in (\tfrac{\kappa'}{2}-4,\tfrac{\kappa'}{2}-2)$.)  Moreover, $\CW_1,\CW_2,\CW_3$ are independent.

Suppose further that $\gamma \in (\sqrt{2},2)$ so that $\kappa' \in (4,8)$.  Then the beaded surface $\wt{\CW}_1$ (resp.\ $\wt{\CW}_2)$ which consists of those components of $\h \setminus \eta'$ whose boundary is drawn by the left (resp.\ right) side of $\eta'$ is a forested wedge of weight $W_1$ (resp.\ $W_2$).  Moreover, $\wt{\CW}_1$ and $\wt{\CW}_2$ are independent and, together, a.s.\ determine both $\CW$ and $\eta'$.
\end{theorem}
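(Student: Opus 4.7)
The plan is to realize $\eta'$ as the counterflow line of an imaginary geometry GFF on $\h$, identify its left and right outer boundaries as coupled $\SLE_\kappa$ flow lines of that GFF, and then use the wedge-slicing theorem (Theorem~\ref{thm::welding} and its multi-curve extension, Proposition~\ref{prop::slice_wedge_many_times}) to carve $\CW$ into three pieces along these $\SLE_\kappa$ curves. Concretely: sample $\CW$ and an independent GFF $\Fh$ on $\h$ with boundary data chosen so that the counterflow line of $\Fh$ from $0$ to $\infty$ is an $\SLE_{\kappa'}(\rho_1;\rho_2)$, and use the fact from imaginary geometry \cite{ms2013imag4} that the left boundary (as a curve from $0$ to $\infty$) and the right boundary of $\eta'$ are $\SLE_\kappa$-type flow lines of angle $\pi/2$ and $-\pi/2$ respectively. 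Since the GFF is independent of $\CW$, so are these two coupled flow lines.

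I would then invoke Theorem~\ref{thm::welding} (resp.~its generalization to two coupled interfaces in Proposition~\ref{prop::slice_wedge_many_times}) to conclude that the two boundaries slice $\CW$ into three \emph{independent} quantum wedges whose weights sum to $W$. To match weights, I use that two flow lines of the GFF with an angle gap of $\pi$ separate a weight $2-\tfrac{\gamma^2}{2}$ wedge between them --- this is exactly the computation underlying Theorem~\ref{thm::quantum_cone_bm_rough_statement} (see also Figure~\ref{fig::quantum_cone_bm}(b)). Thus the middle wedge $\CW_3$ has weight $2-\tfrac{\gamma^2}{2}$, while the remaining weight $W_1+W_2$ is distributed across $\CW_1,\CW_2$. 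The asymmetry is controlled by the force-point parameter $\rho_i$: the imaginary-geometry identification yields the left outer boundary as $\SLE_\kappa(W_1-2;W_2+W_3-2)$ with $W_3=2-\tfrac{\gamma^2}{2}$, so Theorem~\ref{thm::welding} gives $\CW_1$ of weight $W_1$; similarly for $\CW_2$. In the beaded regime $W\in(0,\tfrac{\gamma^2}{2})$ or $W_i\in(0,\tfrac{\gamma^2}{2})$, the same argument is applied bead-by-bead, exactly as formulated in the parenthetical part of Theorem~\ref{thm::welding}.

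For the second half of the theorem (with $\kappa'\in(4,8)$ so that $\eta'$ is non-space-filling), I note that the middle wedge $\CW_3$ of weight $2-\tfrac{\gamma^2}{2}$ is exactly the type of wedge whose counterflow line from $\infty$ to $0$ appears in Theorem~\ref{thm::gluingtwoforestedlines}. Applying that theorem to $\CW_3$ (with $\eta'$ playing the role of the counterflow line) yields two independent forested lines glued along their forested sides according to quantum length. Welding the left forested line onto the right side of $\CW_1$ (and the right forested line onto the left side of $\CW_2$), each along $\gamma$-LQG boundary length, produces the desired forested wedges $\wt\CW_1,\wt\CW_2$ of weights $W_1,W_2$; their independence is inherited from the independence of $\CW_1,\CW_2$ and of the two forested lines. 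Finally, that $\wt\CW_1$ and $\wt\CW_2$ jointly determine $(\CW,\eta')$ follows by conformally welding along their forested sides in the manner of Theorem~\ref{thm::paths_determined} and Theorem~\ref{thm::gluingtwoforestedlines}: the resulting interface $\eta'$ is almost surely removable by the $\SLE_{\kappa'}$ removability argument of Proposition~\ref{prop::collection_of_sles_removable} (applied to $\eta'$ together with the flow-line skeleton of $\CW_3$), which upgrades the topological welding to a conformal one.

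The main obstacle I expect is the first step: verifying that the imaginary-geometry identification of the outer boundaries of $\SLE_{\kappa'}(\rho_1;\rho_2)$ as coupled $\SLE_\kappa$ flow lines carries force-point parameters that translate, via Theorem~\ref{thm::welding}, into precisely the weights $W_1,\,2-\tfrac{\gamma^2}{2},\,W_2$. This is straightforward in the non-boundary-intersecting regime $\rho_i\ge\tfrac{\kappa'}{2}-2$, but requires delicate bookkeeping in the boundary-intersecting regime $\rho_i\in[\tfrac{\kappa'}{2}-4,\tfrac{\kappa'}{2}-2)$ where $\CW_i$ is beaded and the flow lines bounce off $\partial\h$. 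Here one must cross-reference the wedge parameter conversions in Table~\ref{tab::wedge_parameterization} with the imaginary-geometry angle conventions of \cite{ms2013imag4} and handle the bead-by-bead welding statement of Theorem~\ref{thm::welding} carefully, which is where most of the technical work will concentrate.
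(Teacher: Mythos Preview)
Your approach is essentially the same as the paper's: realize the left and right outer boundaries of $\eta'$ as a pair of $\SLE_\kappa$ flow lines of a common GFF (with angle gap $\pi$), invoke Proposition~\ref{prop::slice_wedge_many_times} to split $\CW$ into independent wedges of weights $W_1$, $2-\tfrac{\gamma^2}{2}$, $W_2$, and then apply Theorem~\ref{thm::gluingtwoforestedlines} to the middle wedge. The paper also handles $\kappa'\geq 8$ in one line via Proposition~\ref{prop::slice_wedge_many_times}, and for $\kappa'\in(4,8)$ uses \cite[Proposition~7.30]{ms2012imag1} to identify the conditional law of $\eta'$ given its outer boundaries as $\SLE_{\kappa'}(\tfrac{\kappa'}{2}-4;\tfrac{\kappa'}{2}-4)$ in each bead, which is exactly the input to Theorem~\ref{thm::gluingtwoforestedlines}. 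Your ``main obstacle'' about the bookkeeping of force-point parameters is real but routine; the paper simply writes down the explicit flow-line angles and reads off the weights.

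There is one genuine error in your determinism step. You claim that ``$\eta'$ is almost surely removable by the $\SLE_{\kappa'}$ removability argument of Proposition~\ref{prop::collection_of_sles_removable}.'' That proposition is stated for $\SLE_\kappa(\rho)$ with $\kappa\in(0,4)$; removability of $\SLE_{\kappa'}$ for $\kappa'>4$ is \emph{not} established anywhere in the paper and is explicitly flagged as open in the introduction. Fortunately you do not need it. The correct route (which you partially cite) is: Theorem~\ref{thm::gluingtwoforestedlines} already asserts that the pair of forested lines a.s.\ determines the middle wedge $\CW_3$ together with the counterflow line inside it (this is proved there via the peanosphere machinery of Section~\ref{sec::treenotes} and Proposition~\ref{prop::abc}, not via removability of $\eta'$). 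Then the three wedges $\CW_1,\CW_3,\CW_2$ a.s.\ determine $\CW$ and the $\SLE_\kappa$ interfaces $\eta_1,\eta_2$ by Theorem~\ref{thm::paths_determined}, since those interfaces have $\kappa<4$ and \emph{are} removable. Drop the $\SLE_{\kappa'}$ removability claim and your argument goes through.
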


Theorem~\ref{thm::quantum_cone_sle_kp} below is another easy consequence of Theorem~\ref{thm::gluingtwoforestedlines}, together with Theorem~\ref{thm::zip_up_wedge_rough_statement}.  It states that we can zip up \emph{both} sides of a doubly forested wedge of general weight, as illustrated in Figure~\ref{fig::fig8}, in order to obtain a quantum cone decorated by an appropriate whole-plane $\SLE_{\kappa'}(\rho)$ process.

But before presenting Theorem~\ref{thm::quantum_cone_sle_kp}, let us say a few words about the geometric picture it describes.  The {\em complement} of the trace of whole plane $\SLE_{\kappa'}(\rho)$ (for $\kappa' \in (4,8)$ and $\rho$ in the range described in the theorem statement) is a random countable collection of bounded open sets.  And these sets are of three different types, as illustrated by the three colors in Figure~\ref{fig::fig8}. One way to describe the distinction is to imagine lifting $\eta'$ to the universal cover of $\C \setminus \{0\}$.  Recall that the universal cover of $\C \setminus \{0\}$ can be parameterized by $\C$ with the covering map given by $z \mapsto e^z$.  A lifting $\wt{\eta}'$ of $\eta'$ is a continuous curve so that $\eta'(t) = \exp(\wt{\eta}'(t))$.  Roughly speaking, whenever $\eta'$ winds around $0$, $\wt{\eta}'$ moves vertically by $2\pi$.  Then the bounded components of the complement of $\wt{\eta}'$ within the universal cover will have only two types --- those whose boundary is on the left side of $\wt{\eta}'$ and those whose boundary is on the right side; projecting back down to $\C \setminus \{0\}$, we can view these sets as subsets of $\C$, and there they correspond to the green and yellow regions indicated in Figure~\ref{fig::fig8}.  The other regions (the blue regions in Figure~\ref{fig::fig8}) together form a quantum wedge (which could be a thin or a thick quantum wedge, depending on $\rho$) and each such region has one boundary arc corresponding to the ``left side'' of $\eta'$ and another complementary arc corresponding to the ``right side.''

\begin{theorem}
\label{thm::quantum_cone_sle_kp}
Fix $\gamma \in (\sqrt{2},2)$ and suppose that $\CC = (\C,h,0,\infty)$ is a quantum cone of weight $W \geq 2-\tfrac{\gamma^2}{2}$ (so that $\theta \geq \pi$).  Let $\rho = \tfrac{4W}{\gamma^2} - 2$ (so that $\rho \geq \frac{4(2-\gamma^2/2)}{\gamma^2} -2 = 8/\gamma^2-4)$ and suppose that $\eta'$ is a whole-plane $\SLE_{\kappa'}(\rho)$ process starting from $0$ independent of $\CC$.  Then the beaded surface $\CW_1$ (resp.\ $\CW_2$) which consists of those components of $\C \setminus \eta'$ which are surrounded by $\eta'$ on its left (resp.\ right) side when viewed as a path in the universal cover of $\C \setminus \{0\}$ has the structure of a forested line and the (possibly beaded) surface~$\CW_3$ which consists of the remaining components of $\C \setminus \eta'$ is a quantum wedge of weight 
\[ W - \left(2 - \frac{\gamma^2}{2}\right) = \gamma^2-2 + \frac{\gamma^2}{4}\rho.\]
Moreover, $\CW_1$, $\CW_2$, and $\CW_3$ are independent and together a.s.\ determine both $\CC$ and $\eta'$.
\end{theorem}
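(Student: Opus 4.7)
The plan is to construct the cone $\CC$ together with the curve $\eta'$ from three independent pieces---a wedge $\CW_3$ of weight $W_3 := W - (2-\tfrac{\gamma^2}{2})$ and two forested lines $\CW_1, \CW_2$---and then identify this construction with the $(\CC, \eta')$ appearing in the statement. The three main ingredients are Theorem~\ref{thm::gluingtwoforestedlines} (two forested lines welded along their forest sides form a weight $2-\tfrac{\gamma^2}{2}$ wedge with a counterflow line), Theorem~\ref{thm::welding} (wedge weights add under boundary welding), and Theorem~\ref{thm::zip_up_wedge_rough_statement} (a wedge zips up to a cone of the same weight).

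Concretely, I will sample $\CW_3, \CW_1, \CW_2$ independently, where the two forested lines have stable index $\kappa'/4 \in (1,2)$. Applying Theorem~\ref{thm::gluingtwoforestedlines} to weld $\CW_1$ and $\CW_2$ along their forest sides yields a wedge $\CW_{\mathrm{mid}}$ of weight $2-\tfrac{\gamma^2}{2}$ decorated with a counterflow line $\eta'_{\mathrm{mid}}$. Welding the ``line'' side of one of the $\CW_i$ (which is an outer boundary of $\CW_{\mathrm{mid}}$) to one boundary of $\CW_3$ along quantum length then produces, by Theorem~\ref{thm::welding}, a single wedge of weight $W_3 + (2-\tfrac{\gamma^2}{2}) = W$. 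Welding the two remaining outer sides along quantum length produces, by Theorem~\ref{thm::zip_up_wedge_rough_statement}, a quantum cone $\CC$ of weight $W$ whose flow-line seam is a whole-plane $\SLE_\kappa(W-2)$. The counterflow line $\eta'_{\mathrm{mid}}$ survives these gluings and descends to a curve $\eta'$ on $\CC$.

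The identification of $\eta'$ as a whole-plane $\SLE_{\kappa'}(\rho)$ with $\rho = 4W/\gamma^2 - 2$ proceeds via imaginary geometry: the auxiliary GFF on $\CW_{\mathrm{mid}}$ that produced $\eta'_{\mathrm{mid}}$ as its counterflow line extends, through the welded $\SLE_\kappa$-seams, to a whole-plane GFF on $\CC$ by the coupling results of \cite{ms2013imag4}; the counterflow line of such a whole-plane GFF is a whole-plane $\SLE_{\kappa'}(\rho)$ with $\rho$ determined by the total angle-winding of the GFF around the cone, and a direct calculation using the conversions in Table~\ref{tab::cone_parameterization} and Table~\ref{tab::wedge_parameterization} (applied to the contributions of $\CW_3$ and $\CW_{\mathrm{mid}}$) yields exactly $\rho = 4W/\gamma^2 - 2$. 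The independence of $\eta'$ and $\CC$ follows from the mutual independence of the auxiliary GFFs and the LQG fields on the three starting pieces.

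The mutual determination between $(\CC, \eta')$ and $(\CW_3, \CW_1, \CW_2)$ then follows from the conformal removability of $\SLE_{\kappa'}(\rho)$-type curves established in Proposition~\ref{prop::sle_union_removable_is_removable}, combined with the uniqueness of conformal welding of LQG surfaces. I anticipate the principal technical obstacle to be the imaginary-geometry bookkeeping for the identification of $\rho$: verifying consistently that the extension of the auxiliary GFF across each welded boundary yields a bona fide whole-plane GFF on $\CC$ whose counterflow line carries exactly the predicted $\rho$-parameter. A secondary technical point is to confirm that each successive conformal welding produces a well-defined quantum surface, which reduces to the removability results of Section~\ref{subsec::removability}.
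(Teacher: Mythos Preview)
Your approach is essentially the paper's argument run in reverse: you build the cone from three independent pieces and then try to identify the resulting curve, whereas the paper starts with the pair $(\CC,\eta')$ and cuts. The paper's two-line proof invokes Proposition~\ref{prop::slice_cone_many_times} with two flow lines at angle gap~$\pi$ (these are, by the imaginary geometry of \cite{ms2013imag4}, exactly the left and right outer boundaries of the given whole-plane $\SLE_{\kappa'}(\rho)$ process~$\eta'$), obtaining independent wedges of weights $2-\tfrac{\gamma^2}{2}$ and $W-(2-\tfrac{\gamma^2}{2})$; it then applies Theorem~\ref{thm::gluingtwoforestedlines} to the former. The cutting direction avoids entirely the step you flag as the main obstacle, namely showing that the curve produced by your gluing construction is a whole-plane $\SLE_{\kappa'}(\rho)$ independent of~$\CC$. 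Your route can be completed, but only by going through the cutting argument anyway (to verify the joint law), so it is strictly longer.

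There is one genuine error: your appeal to ``conformal removability of $\SLE_{\kappa'}(\rho)$-type curves established in Proposition~\ref{prop::sle_union_removable_is_removable}'' is wrong. That proposition, and all the removability results of Section~\ref{subsec::removability}, concern $\SLE_\kappa$ with $\kappa\in(0,4)$; removability of $\SLE_{\kappa'}$ for $\kappa'\in(4,8)$ is explicitly listed as open in the paper (see the discussion preceding Proposition~\ref{prop::subsegmentremovable} and the introduction). The determination you need does hold, but for a different reason: the forested lines determine the middle wedge and counterflow line by Theorem~\ref{thm::gluingtwoforestedlines}, and the remaining gluings are along $\SLE_\kappa$ flow lines (the outer boundaries of~$\eta'$), whose removability is what Theorems~\ref{thm::paths_determined} and~\ref{thm::zip_up_wedge_rough_statement} use. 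You should rewrite that step accordingly.
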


Observe that if we express the $\rho$ in the statement of Theorem~\ref{thm::quantum_cone_sle_kp} in terms of $\kappa'$ and $\alpha$ then we get
\begin{equation}
\label{eqn::cone_rho_2}
\rho = 2 + \kappa' - 2\alpha \sqrt{\kappa'}.
\end{equation}
Note that~\eqref{eqn::cone_rho_2} depends on $\kappa'$ and $\alpha$ in the same way that~\eqref{eqn::cone_rho} (with $\gamma^2=\kappa$) from Theorem~\ref{thm::zip_up_wedge_rough_statement} depends on $\kappa$ and $\alpha$.

Theorem~\ref{thm::quantum_natural_zip_unzip_rough_statement} is one of the more interesting and important consequences of Theorem~\ref{thm::gluingtwoforestedlines}.  As illustrated in Figure~\ref{fig::fig10}, it implies a ``quantum zipper'' invariance principle for $\SLE_{\kappa'}$ analogous to the principle established for $\SLE_{\kappa}$ in  \cite{she2010zipper}.

Combining Theorem~\ref{thm::gluingtwoforestedlines} with Theorem~\ref{thm::sle_kp_on_wedge} leads to another notion of time parameterization for an $\SLE_{\kappa'}$ process $\eta'$, namely the time parameterization associated with the pair of independent stable L\'evy processes which encode the boundary lengths of the bubbles cut off by $\eta'$.  We will refer to this time-parameterization as {\bf quantum natural time}.  It is the quantum version of the so-called ``natural'' parameterization for $\SLE$ \cite{ls2011natural_param,lz2013natural_param,lr2012minkowski,law2015minkowski,gps2013pivotal,lv2016lerw,b2017natural,hls2018natural}, see also \cite{DS3}.  (The quantum analog of the natural parameterization for $\kappa \in (0,4)$ is $\gamma$-LQG length and for $\kappa' \geq 8$ is $\gamma$-LQG area.)  We will write $\qnt_u$ for the (random) function which converts from quantum natural time $u$ to capacity time.  That is, if $\eta'$ is parameterized by capacity time then $\eta'(\qnt_u)$ is parameterized by quantum natural time.

\begin{theorem}
\label{thm::quantum_natural_zip_unzip_rough_statement}
Fix $\gamma \in (\sqrt{2},2)$ and let $\CW = (\h,h,0,\infty)$ be a quantum wedge of weight $\tfrac{3\gamma^2}{2}-2$ and let $\eta'$ be an independent $\SLE_{\kappa'}$ process, $\kappa' = 16/\gamma^2 \in (4,8)$, in~$\h$ from~$0$ to $\infty$.  Then $\eta'$ divides $\CW$ into two independent forested wedges both with weight $\gamma^2-2$.  Moreover, the joint law of $(h,\eta')$ is invariant under the operation of cutting along $\eta'$ until a given quantum natural time and then conformally mapping back and applying~\eqref{eqn::Qmap}.  That is, if $(f_t)$ denotes the centered chordal Loewner flow associated with $\eta'$ (with the capacity time parameterization) and $\qnt_u$ is as above, then we have (as path-decorated quantum surfaces) that
\[ (h,\eta') \stackrel{d}{=} ( h \circ f_{\qnt_u}^{-1} + Q\log| (f_{\qnt_u}^{-1})'|, f_{\qnt_u}(\eta')) \quad\text{for each}\quad u \geq 0.\]

Indeed, the entire image shown in Figure~\ref{fig::fig10} is invariant with respect to the operation of zipping/unzipping according to quantum natural time.
\end{theorem}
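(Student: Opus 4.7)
My plan is to first extract the decomposition statement as a direct corollary of Theorem~\ref{thm::sle_kp_on_wedge}, and then prove the invariance by encoding the pair $(h,\eta')$ via the independent Lévy processes that describe the two forested wedges and invoking stationarity of those processes.

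For the first part, I would apply Theorem~\ref{thm::sle_kp_on_wedge} with $\rho_1 = \rho_2 = 0$ (so that $\eta'$ is an ordinary chordal $\SLE_{\kappa'}$). In that case the weight formula~\eqref{eqn::sle_kp_on_wedge_weights} gives $W_1 = W_2 = \gamma^2 - 2$ and
\[
W = 2(\gamma^2-2) + 2 - \tfrac{\gamma^2}{2} = \tfrac{3\gamma^2}{2} - 2,
\]
which is precisely the weight of $\CW$ in the theorem statement. Since $\gamma \in (\sqrt{2},2)$ gives $\kappa' = 16/\gamma^2 \in (4,8)$, we are in the regime where $\eta'$ is not boundary-filling, and the last paragraph of Theorem~\ref{thm::sle_kp_on_wedge} applies to give that the two sides $\wt{\CW}_1,\wt{\CW}_2$ are independent forested wedges of weight $\gamma^2 - 2$ which a.s.\ determine $(\CW, \eta')$.

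For the invariance, the key observation is that by the definition of quantum natural time, the pair of independent stable Lévy processes $(X^L_t, X^R_t)_{t \ge 0}$ (of index $\kappa'/4$) that encode the two forested wedges $\wt{\CW}_1, \wt{\CW}_2$ are parameterized precisely by the quantum natural time of $\eta'$. Cutting $\eta'$ along its initial segment $\eta'([0, \qnt_u])$ corresponds in this encoding to replacing $(X^L_t, X^R_t)_{t \ge 0}$ by the shifted pair $(X^L_{t+u} - X^L_u, X^R_{t+u} - X^R_u)_{t \ge 0}$; each of the surfaces filled in along disks visited up to quantum natural time $u$ becomes attached to the removed initial segment, while the surfaces on either side of the remaining curve $f_{\qnt_u}(\eta' \vert_{[\qnt_u, \infty)})$ are forested wedges encoded by the shifted Lévy processes. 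By the stationary and independent increments property of Lévy processes (together with the fact that the infinite-volume construction of a forested wedge of weight $\gamma^2 - 2$ in Figure~\ref{fig::fig11b} is a deterministic function of the entire Lévy process on $[0,\infty)$ in a translation-covariant way), the shifted pair has the same joint law as the original pair. Combining with the fact that the pair $(\CW, \eta')$ is a.s.\ determined by the pair of forested wedges (the ``moreover'' in Theorem~\ref{thm::sle_kp_on_wedge}, proved via Theorem~\ref{thm::gluingtwoforestedlines}), we conclude that the image under the $Q$-change of coordinates~\eqref{eqn::Qmap} by $f_{\qnt_u}$ has the same joint law as $(h, \eta')$.

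The final assertion about Figure~\ref{fig::fig10} then follows by combining the above with the fact that the picture there is also encoded by the same two Lévy processes, now viewed as indexed by all of $\R$ (jumps before $0$ parameterizing the disks in the two forests that remain to be zipped up, jumps after $0$ parameterizing the bubbles in the upper half plane already zipped up), and invoking stationarity in exactly the same manner.

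The main obstacle I anticipate is justifying rigorously that ``cut along $\eta'$ up to quantum natural time $\qnt_u$ and conformally map back'' really does translate into ``shift the two Lévy processes by $u$,'' with no spurious boundary effects from the finitely many half-disks that are pinched off by $\eta'$ in the interval $[0,\qnt_u]$. This in turn requires a precise description of how the forested wedge of weight $\gamma^2 - 2$ is reconstructed from its encoding Lévy process, and verification that the resulting map from paths to marked quantum surfaces intertwines the time-shift on Lévy paths with the ``cut and conformally map back'' operation on surfaces. Once that identification is in place, the invariance reduces to the elementary fact that a stable Lévy process on $[0,\infty)$ is stationary under the shift $t \mapsto t+u$ followed by subtracting $X_u$.
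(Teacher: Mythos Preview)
Your first part matches the paper exactly: both deduce the forested-wedge decomposition directly from Theorem~\ref{thm::sle_kp_on_wedge} with $\rho_1=\rho_2=0$.

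For the invariance part you take a genuinely different route. The paper simply invokes Theorem~\ref{thm::sle_kp_quantum_local_typical}, which was proved back in Section~\ref{subsec::quantum_typical_zip_unzip} by the ``quantum typical point'' reweighting technique (the $\SLE_{\kappa'}$ analog of the argument in \cite{she2010zipper}): one weights the law of the field by a local time and shows, via a Radon--Nikodym computation and a change-of-field lemma (Lemma~\ref{lem::quantum_local_time_change_field}), that the weighted law acquires an extra log-singularity, from which stationarity under cutting by quantum natural time is read off. This proof is self-contained within Section~\ref{sec::structure_theorems} and does not need the L\'evy encoding or the measurability result of Section~\ref{sec::treenotes}. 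Your argument instead leverages the later-established structural facts: the forested wedges are encoded by stable L\'evy processes parameterized by quantum natural time, L\'evy processes are shift-stationary, and the determinism clause in Theorem~\ref{thm::sle_kp_on_wedge} (which in turn rests on Theorem~\ref{thm::gluingtwoforestedlines} and ultimately on Section~\ref{sec::treenotes}) lets you pull the invariance back to the decorated wedge. This is correct and arguably more conceptual once those results are in hand, but it is logically heavier: it depends on Section~\ref{sec::treenotes} and on the identification of quantum natural time with the L\'evy time parameter (established in Section~\ref{sec::duality}). The paper's route avoids these dependencies, which is presumably why it places the invariance proof in Section~\ref{sec::structure_theorems} rather than deferring it. The obstacle you flag at the end---checking that cutting and mapping back really corresponds to a pure shift of the encoding processes---is exactly the content that the paper instead handles by the reweighting argument.
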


If we start with the top row of Figure~\ref{fig::fig11b}, and ``zoom in'' near a typical non-zero time, then we obtain a stable L\'evy process indexed by all of $\R$.  We can then consider a pair of processes of this type.  This is equivalent to drawing an SLE until a typical quantum time, zooming in, and ``unzipping'' up to the distinguished time, which produces a figure like Figure~\ref{fig::fig10}.  Each of the forested lines hanging off the bottom of the image in  Figure~\ref{fig::fig10} is an independent forested line of the sort described by the top row of  Figure~\ref{fig::fig11b}.  If we focus on the quantum wedge parameterized by the upper half plane in  Figure~\ref{fig::fig10} and then cut this wedge along the illustrated counterflow line, then we obtain a pair of forested quantum wedges, each of which corresponds, by construction, to the quantum wedge illustrated in the third line of Figure~\ref{fig::fig11b}.  In particular, this analysis tells us how the $\gamma$-LQG length of $\R_-$ and $\R_+$ changes when we zip and unzip the image shown in Figure~\ref{fig::fig10}.

\begin{corollary}
\label{cor::stablelength}
In the context of Figure~\ref{fig::fig10}, the net change in the $\gamma$-LQG length of~$\R_-$ and~$\R_+$ as one ``zips up'' is given by an independent pair of totally asymmetric $\tfrac{\kappa'}{4}$-stable L\'evy processes with positive jumps, each like the one illustrated in the first row of Figure~\ref{fig::fig11b}.   In the context of Figure~\ref{fig::fig10}, the net change in left and right boundary lengths as one ``unzips'' is given by an independent pair of totally asymmetric $\tfrac{\kappa'}{4}$-stable L\'evy processes with negative jumps, each like the one illustrated in the third row of Figure~\ref{fig::fig11b}.  
\end{corollary}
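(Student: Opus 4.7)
The plan is to combine the zip/unzip invariance of Theorem~\ref{thm::quantum_natural_zip_unzip_rough_statement} with the identification of the two sides of $\eta'$ as independent forested wedges, and then read off the L\'evy processes directly from the forested wedge construction. Applying Theorem~\ref{thm::sle_kp_on_wedge} in the setting of Theorem~\ref{thm::quantum_natural_zip_unzip_rough_statement}, the (beaded) surfaces to the left and right of $\eta'$ form independent forested wedges $\wt\CW_L, \wt\CW_R$ of weight $\gamma^2-2$. By the very definition of a forested wedge and the forested line construction, the sequence of bubbles cut off along each side of $\eta'$, together with their $\gamma$-LQG boundary lengths, is encoded by an independent totally asymmetric $\tfrac{\kappa'}{4}$-stable L\'evy process with positive jumps of the form shown in the top row of Figure~\ref{fig::fig11b}. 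Crucially, the time parameter of this L\'evy process coincides with the quantum natural time parameter of $\eta'$, since both are defined through the very same stable process of bubble boundary lengths.

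Next I would interpret zipping as accumulation of $\gamma$-LQG boundary length along $\R_\pm$. If $(f_t)$ denotes the centered chordal Loewner flow driving $\eta'$, then zipping $\eta'$ up to quantum natural time $u$ by applying the coordinate change $h \mapsto h \circ f_{\qnt_u}^{-1} + Q \log|(f_{\qnt_u}^{-1})'|$ produces an upper half plane surface whose new $\R_+$ consists of (the image of) the old $\R_+$ together with (the image of) the right side of $\eta'([0,\qnt_u])$, and similarly for $\R_-$. Hence the net change in $\gamma$-LQG length of $\R_+$ over $[0,u]$ equals the $\gamma$-LQG length of the right side of $\eta'([0,\qnt_u])$, which by the previous paragraph equals the value at time $u$ of the stable L\'evy process encoding $\wt\CW_R$; an analogous identity holds for $\R_-$ and $\wt\CW_L$. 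The invariance in Theorem~\ref{thm::quantum_natural_zip_unzip_rough_statement} applied iteratively then gives stationary independent increments, and the independence of the two processes follows from the independence of $\wt\CW_L$ and $\wt\CW_R$. For the unzipping statement, I would either time-reverse the argument or, more conceptually, note that the very same pair of forested wedges is equivalently encoded by a pair of independent totally asymmetric $\tfrac{\kappa'}{4}$-stable L\'evy processes with negative jumps of the form shown in the third row of Figure~\ref{fig::fig11b}, where each jump occurs at the last time $\eta'$ touches the corresponding disk, directly yielding the claimed description of the length change upon unzipping.

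The main technical obstacle is matching the quantum natural time parameterization of $\eta'$ with the natural L\'evy time parameterizing the forested wedges $\wt\CW_L$ and $\wt\CW_R$. This is essentially built into the definitions (both time parameters are determined by the same sequence of stable jumps corresponding to $\gamma$-LQG boundary lengths of bubbles cut off by $\eta'$), but it requires one to verify that the L\'evy process appearing in the definition of quantum natural time preceding Theorem~\ref{thm::quantum_natural_zip_unzip_rough_statement} is literally the same process as the one underlying the forested wedge construction of Section~\ref{subsec::matingsandloops}. Once this identification is established, all remaining assertions are immediate consequences of stationarity and the forested wedge definition.
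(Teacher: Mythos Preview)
Your overall strategy---to read the boundary-length processes off the forested-wedge decomposition provided by Theorem~\ref{thm::sle_kp_on_wedge} combined with the stationarity of Theorem~\ref{thm::quantum_natural_zip_unzip_rough_statement}---is the right one and matches the paper's approach. However, the second paragraph contains a concrete geometric error that invalidates the argument as written.

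You assert that after applying the coordinate change $h\mapsto h\circ f_{\qnt_u}^{-1}+Q\log|(f_{\qnt_u}^{-1})'|$ (which, incidentally, is the \emph{unzipping} operation, not zipping up), ``the new $\R_+$ consists of (the image of) the old $\R_+$ together with (the image of) the right side of $\eta'([0,\qnt_u])$.'' This is false for $\kappa'\in(4,8)$: the process $\eta'$ hits $\R_+$, so portions of the old $\R_+$ are swallowed into bubbles and are \emph{not} part of the new $\R_+$. The preimage of the new $\R_+$ under $f_{\qnt_u}$ is the right \emph{outer} boundary of $\eta'([0,\qnt_u])$ together with $[B_{\qnt_u},\infty)$, where $B_{\qnt_u}$ is the rightmost point of $\R_+$ visited so far. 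Hence the net change in $\R_+$-length is not ``the $\gamma$-LQG length of the right side of $\eta'([0,\qnt_u])$,'' but rather the difference between the right outer boundary length and the length of $[0,B_{\qnt_u}]$---exactly the quantity $Y_u$ appearing in Theorem~\ref{thm::sle_exploration_boundary_length}. The negative jumps in the unzipping direction come precisely from the swallowing you have omitted.

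Relatedly, in your first paragraph you say the forested wedges $\wt\CW_L,\wt\CW_R$ are encoded by the positive-jump process of the \emph{first} row of Figure~\ref{fig::fig11b}, but that row encodes a forested \emph{line}; the forested wedge is encoded by the negative-jump process of the \emph{third} row. The paper's argument keeps these straight by noting that in the full Figure~\ref{fig::fig10} picture there are forested lines hanging below $\R_\pm$ in addition to the forested wedges above: zipping up consumes the forested lines below (row~1, positive jumps), while unzipping exposes the forested wedges above (row~3, negative jumps). Once you separate these two roles and correct the description of the new $\R_+$, your outline becomes essentially the paper's proof.
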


Recall that the first row of Figure~\ref{fig::fig11b} describes a forested line.  Moreover, Corollary~\ref{cor::stablelength} implies that the third row of Figure~\ref{fig::fig11b} encodes a forested wedge of weight $\gamma^2-2$ corresponding to (by Theorem~\ref{thm::sle_kp_on_wedge}) those bubbles which are either to the right but not surrounded by an $\SLE_{\kappa'}$ process (a $\gamma^2-2$ wedge) or those bubbles which are on the right and completely surrounded by an $\SLE_{\kappa'}$ process (a forested line).  One can glue the former forested line to the latter forested wedge to obtain a doubly forested wedge of weight $\gamma^2-2$.  The law of this doubly forested wedge is invariant under the operation of ``sliding the tip'' a fixed amount of quantum time units along the boundary of the forest.  This simply corresponds to the fact that the $\tfrac{\kappa'}{4}$-stable L\'evy process indexed by all of~$\R$ (which encodes this doubly forested wedge) has a stationary law.  The second and fourth rows of  Figure~\ref{fig::fig11b} describe different ways to encode the same process.  Although we will not need this point here, we remark (and leave it to the reader to check) that as one moves from right to left along the processes in either the second or fourth row, one encounters or completes disks (which correspond to jumps) at a Poisson rate, and the processes shown can be understood as stable L\'evy processes conditioned to stay positive.

We next remark that it is not hard to derive analogs of Theorem~\ref{thm::quantum_natural_zip_unzip_rough_statement} and Corollary~\ref{cor::stablelength} that involve radial and whole-plane $\SLE_{\kappa'}$ processes.  For these variants, one considers a single doubly forested wedge of some weight, zipped up like the doubly forested wedge in Figure~\ref{fig::fig8}, so that the interface becomes the $\SLE_{\kappa'}(\rho)$ curve described in Theorem~\ref{thm::quantum_cone_sle_kp}.   We then keep track of what happens as we begin to cut with scissors along the counterflow line, starting from the origin (always conformally mapping the infinite unexplored region conformally to $\C \setminus \D$).  (This is the usual setup used to construct a whole-plane $\SLE_{\kappa'}(\rho)$ curve.)  The case in which the wedge (along which the two forested lines are added) has weight $\gamma^2 - 2$ turns out to be particularly interesting, because in this case, the disks in the wedge turn out to play (in some sense) the same role as the disks in the forests rooted on that wedge.

\subsubsection{Discrete intuition}
\label{subsubsec::discrete_intuition}

\begin{figure}[ht!]
\begin{center}
\includegraphics[scale=0.85,page=3]{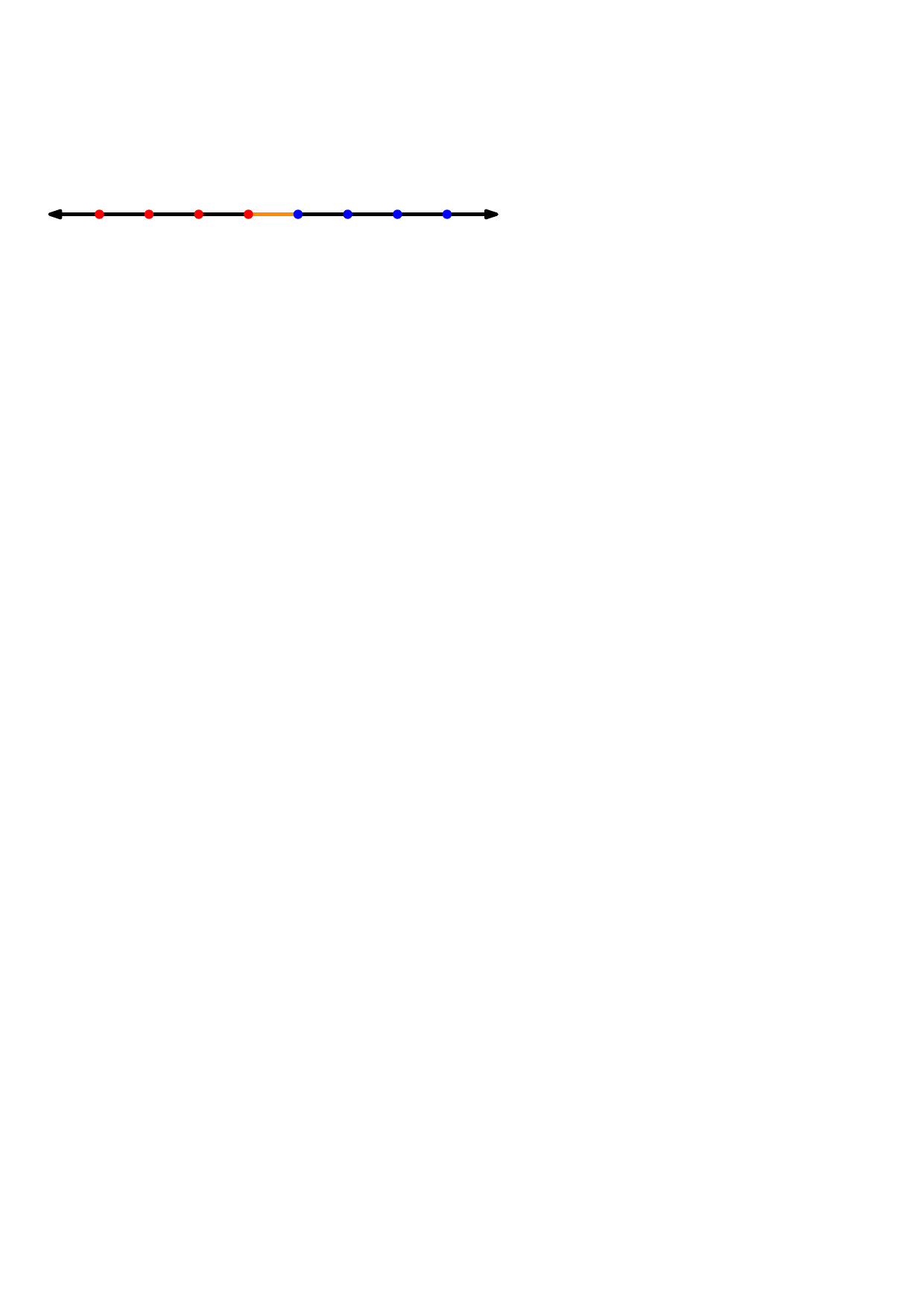}
\end{center}
\caption{\label{fig::half_planar1} The horizontal line represents the (infinite) boundary of a domain Markov half-planar map with a marked boundary edge (orange).  The boundary vertices to the left (resp.\ right) of this marked edge are colored red (resp.\ blue).  The rest of the vertices in the map are colored red or blue i.i.d.\ with probability~$\tfrac{1}{2}$.  Shown is the first step of the percolation exploration starting from the marked edge with red (resp.\ blue) on the left (resp.\ right) side of the path.  Triangles which have an edge on the boundary either have their third vertex on the boundary of the map or in the interior of the map.  The triangle shown is of the latter type.  By the domain Markov property, the conditional law of the map in the unbounded component given the revealed triangle is the same as the law of the original map.}
\end{figure}

\begin{figure}[ht!]
\begin{center}
\includegraphics[scale=0.85,page=2]{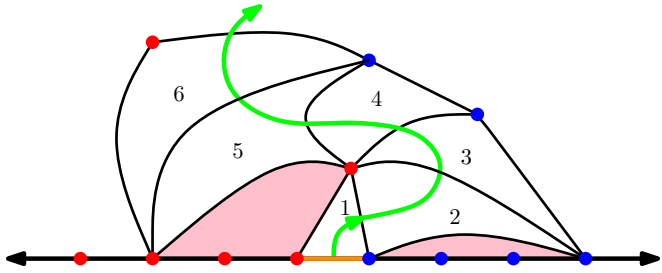}
\end{center}
\caption{\label{fig::half_planar2} (Continuation of Figure~\ref{fig::half_planar1}.)  Shown are five more steps of the percolation exploration (for a total of six steps).  The triangles are numbered according to the order in which they are visited by the exploration path.  When the exploration visits the second and fourth triangles it disconnects regions of the map from $\infty$; these are colored light red in the illustration.  By the domain Markov property, the law of the sequence of these regions is i.i.d.\  One can also keep track of the incremental net changes to the left (resp.\ right) boundary length $L_n$ (resp.\ $R_n$).  Here, $L_0 = R_0 = 0$.  Whenever a triangle is revealed with its third vertex in the interior (as in Figure~\ref{fig::half_planar1}), we have that $L$ (resp.\ $R$) goes up by $1$ if the vertex is red (resp.\ blue) and $R$ (resp.\ $L$) remains unchanged.  Whenever a triangle is revealed with its third vertex on the left (resp.\ right) boundary arc, $L$ (resp.\ $R$) decreases by one less than the boundary length of the region separated from~$\infty$ and~$R$ (resp.\ $L$) remains unchanged.  The domain Markov property implies that the process $(L,R)$ has i.i.d.\ increments.}
\end{figure}

Consider the uniform {\bf half-planar} random planar triangulation as defined in \cite{as2003uipt,angel2003growth,ac2013percolation,ar2013classification}.  This is a simply connected infinite planar map with an infinite boundary and a distinguished ``origin'' edge; it has the {\em domain Markov property}, which means that if we condition on any finite collection of triangles discovered by exploring from the boundary, then the conditional law of the infinite connected component of the unexplored region has the same law as the original map.  (See \cite[Definition~1.1]{ar2013classification} for a more careful definition.)  Suppose that we pick a distinguished edge on the boundary of the map and then color the vertices which are to the left (resp.\ right) of this edge red (resp.\ blue) as illustrated in Figure~\ref{fig::half_planar1}.  We then color the remaining vertices in the map i.i.d.\ red or blue with equal probability $\tfrac{1}{2}$.  Consider the percolation exploration which starts at the marked boundary edge with red vertices on its left side and blue vertices on its right side.  (See Figures~\ref{fig::half_planar1}--\ref{fig::half_planar2}.)  By the domain Markov property, it follows that:
\begin{enumerate}[(i)]
\item\label{it::component_law} The components which are cut off by the exploration form an i.i.d.\ sequence.
\item\label{it::boundary_length_change} The change in the lengths of the left (resp.\ right) boundary of the unbounded component of the map evolve as random walks with independent increments.
\end{enumerate}
For $\kappa'=6$, Theorem~\ref{thm::gluingtwoforestedlines} and Theorem~\ref{thm::sle_kp_on_wedge} give the continuum analog of~\eqref{it::component_law} and Corollary~\ref{cor::stablelength} gives the continuum analog of~\eqref{it::boundary_length_change}.

The stable L\'evy processes described in Proposition~\ref{prop::iidlevy} and Corollary~\ref{cor::stablelength} for $\kappa'=6$ are consistent with a heuristic argument made by Angel just before the statement of \cite[Lemma~3.1]{angel2003growth} for the scaling limit of the boundary length process associated with a percolation exploration on the uniform infinite planar triangulation.  That the scaling limit of these processes are independent $3/2$-stable L\'evy processes in the case of the uniform infinite half-planar triangulation was in fact later proved by Angel and Curien in \cite{ac2013percolation}.  Let us also mention that it is consistent with~\cite[Question~5]{cur2013glimpse}.  The law of the boundary length process associated with the growth of a metric ball on $\sqrt{8/3}$-LQG should be related to the boundary length process associated with a percolation exploration process on $\sqrt{8/3}$-LQG via the following time-change: if $A_t$ denotes the boundary length process associated with the former then the latter should be given by $A_{t(s)}$ where $t(s) = \inf\{ r \geq 0 : \int_0^r A_u du \geq s\}$ (this says that the rate of growth should be proportional to boundary length, as in first passage percolation with i.i.d.\ $\exp(1)$ edge weights).  If~$A$ is a totally asymmetric $\tfrac{3}{2}$-stable process conditioned to be non-negative, then~$A_{t(s)}$ has the law of the time-reversal of a continuous-state branching process with branching mechanism $\psi(u) = u^{3/2}$ \cite{lamp1967csbp}.  This is consistent with a result due to Krikun \cite[Theorem~4]{krikun2005uipq} for the evolution of the length of the boundary of a metric ball as its diameter increases in the setting of the uniform infinite planar quadrangulation.  This is also consistent with a calculation carried out in the continuum for the Brownian plane \cite{cl2012brownianplane} recently announced by Curien and Le Gall in \cite{legall2014icm,cl2014hull}.

The results described in Section~\ref{subsec::matingsandloops} are important ingredients for work by the second two co-authors \cite{qlebm,qle_continuity,qle_determined}  concerning the so-called quantum Loewner evolution ($\QLE$) \cite{ms2013qle}.  In particular, they allow us to define a ``quantum natural time'' version of $\QLE$ \cite{qlebm}.    Since $\QLE(8/3,0)$ is constructed by applying a certain transformation to $\SLE_6$ (so-called ``tip-rerandomization''), Theorem~\ref{thm::quantum_natural_zip_unzip_rough_statement} gives us that the Poissonian structure of the complementary components of a $\QLE(8/3,0)$ exploration and an $\SLE_6$ exploration of $\sqrt{8/3}$-LQG are the same.  Also, Theorem~\ref{thm::sle_exploration_boundary_length} gives us that the evolution of the $\sqrt{8/3}$-LQG length of the outer boundary of a $\QLE(8/3,0)$ is the same as the corresponding boundary length process for a metric ball in the Brownian plane.  Finally, Theorem~\ref{thm::quantum_cone_bm_rough_statement} gives many distributional identities between the Poissonian structure of the complementary components of a $\QLE(8/3,0)$ and a metric ball in the Brownian plane.  As a simple example, it implies that the conditional law of the area of such a component given its boundary length is the same as in the setting of the Brownian plane.

\subsection{Outline}
\label{subsec::outline}

We have mostly organized this paper in a linear fashion (so that results appear in the order they are needed). However, readers who already have some background in this subject may prefer to read the paper in a non-linear order, since the sections after Section~\ref{sec::couplings} are arguably the most interesting. One option for an experienced reader is to jump ahead to either Section~\ref{sec::couplings} or Section~\ref{sec::structure_theorems} and then to refer back to earlier sections for reference as needed. Section~\ref{sec::motivation} contains introductory material that is important for motivation but can also be skipped on a first read. Section~\ref{sec::preliminaries} contains relevant facts about Bessel processes and describes elementary but interesting symmetries enjoyed by forward and reverse $\SLE_\kappa(\rho)$ processes. Section~\ref{sec::surfaces} gives an overview of the different types of GFFs that appear in this article as well as the different types of quantum surfaces (quantum wedges, cones, disks, and spheres). 

Section~\ref{sec::couplings} contains a brief review of the couplings between the GFF and $\SLE$ that are relevant to this article.  We also describe the law of the local behavior of a boundary intersecting $\SLE_\kappa(\rho)$ process at a time when it first cuts off a given boundary point from $\infty$.  In Section~\ref{sec::structure_theorems} we describe the Poissonian structure of the bubbles that one encounters when cutting a $\gamma$-LQG surface along an $\SLE_\kappa(\rho)$ or $\SLE_{\kappa'}$ process; we also introduce quantum natural time.  The main conformal welding results for wedges are established in Section~\ref{sec::quantumwedge}.  We use these results in Section~\ref{sec::brownian_boundary_length} to show that the evolution of the left and right boundary lengths of a space-filling $\SLE_{\kappa'}$ process drawn on top of an LQG surface are indeed given by correlated Brownian motions (matching the continuum limit for random planar maps determined in \cite{sheffield2011qg_inventory}).  Section~\ref{sec::trees_determine_embedding} then shows that the quantum surface and the space-filling $\SLE_{\kappa'}$ can be a.s.\ be recovered from the pair of Brownian motions.  In Section~\ref{sec::duality} we include additional discussion on forested wedges, trees of bubbles defined using L\'evy processes, and quantum surfaces explored by $\SLE_{\kappa'}$ with $\kappa' \in (4,8)$.  These L\'evy trees of disks (and related L\'evy trees of spheres) correspond to objects constructed heuristically in the physics literature, where they are known as $\gamma'$-LQG surfaces with ${\gamma'}^2 \in (4,8)$.  We finish in Section~\ref{sec::questions} with a list of open questions.

Appendix~\ref{app::disks_spheres} provides additional information about finite volume quantum surfaces (spheres and disks) and sets the stage for the follow up paper \cite{quantum_spheres}. Appendix~\ref{app::kpz} makes connections between the results established here and various scaling exponent calculations made by the first co-author via the KPZ formula.

\section{Background and motivation} \label{sec::motivation}
\subsection{Mated CRTs as scaling limits}
\label{subsec::perspective2}
In this section, we say more about how our CRT-mating theorems fit into the context of Liouville quantum gravity, conformal loop ensembles, and random planar maps.  As illustrated in Figure~\ref{papersketch}, this paper can be understood, in some sense, as a culmination of a multi-paper project aimed at establishing a connection between
\begin{enumerate} \item random planar maps ``decorated'' by FK distinguished edge subsets (and the loops forming cluster/dual-cluster interfaces), and
\item Liouville quantum gravity (LQG) random surfaces ``decorated'' by independently sampled conformal loop ensembles (CLE).
\end{enumerate}

\begin {figure}[htbp!]
\begin {center}
\includegraphics [width=5.5in]{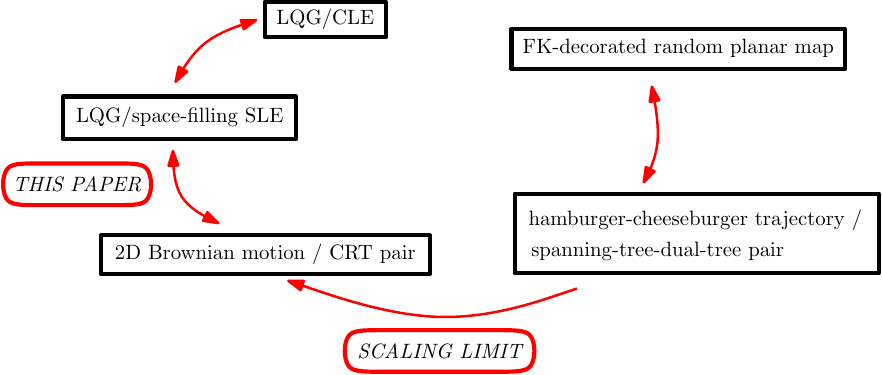}
\caption {\label{papersketch} This paper focuses on the double arrow indicated above.  The LQG/space-filling $\SLE$ box represents a space-filling $\SLE_{\kappa'}$ drawn on top of a $\gamma$-Liouville quantum gravity surface, where $\gamma \in (0,2)$, $\kappa' = 16/\kappa$, and $\kappa = \gamma^2$.  The 2D Brownian motion/CRT pair box represents the coupled pair of CRTs described in Theorems~\ref{thm::quantum_cone_bm_rough_statement} and~\ref{thm::trees_determine_embedding}. }

\end {center}
\end {figure}

This paper will focus narrowly on the double arrow of Figure~\ref{papersketch} as the figure label indicates, and as was discussed in Section~\ref{subsec::easy}.  We will not discuss the other arrows in Figure~\ref{papersketch} outside of a few high level contextual remarks and references, which form the remainder of Section~\ref{subsec::perspective2}.

 \begin{enumerate} \item The double arrow on the right in Figure~\ref{papersketch} is explained in detail in \cite{sheffield2011qg_inventory}.  In that paper, one considers a rooted planar map $M$ with $n$ edges, together with a distinguished subset $E$ of those edges associated to the self-dual FK random cluster model with parameter $q$.  The interfaces between clusters and dual clusters are understood as loops.  Given an $(M,E)$ pair, there is a canonical space-filling path that ``explores'' the loops, and also represents the interface between a special spanning tree $T_1$ and a special dual spanning tree $T_2$.  The structure of these trees (and the way that they are ``glued'' together) is encoded by a walk in $\Z^2$, which is related in  \cite{sheffield2011qg_inventory} to a two-product LIFO (last in first out) inventory management (``hamburger-cheeseburger'') model.    The trees $T_1$ and $T_2$ arising in a computer simulation of this construction (together with the ``space-filling loop'' in between them) appear in Figures~\ref{fig::random_planar_map} and~\ref{fig::circle_packing_animation}.  These figures also illustrate a method of embedding the planar maps in the disk ``conformally'' using circle packing computed with \cite{stephensonCP}.
\item The ``scaling limit'' arrow in Figure~\ref{papersketch} is also explained in \cite{sheffield2011qg_inventory}, where it is shown that the above mentioned walk in $\Z^2$ scales to a two-dimensional Brownian motion, with a diffusion rate depending on $q$.

\item The upper left double arrow in Figure~\ref{papersketch} is explained in more detail in \cite{she2009cle} and \cite{ms2013imag4}.  These works show that there is a space-filling variant of $\SLE_\kappa$ that ``explores'' the entire $\CLE_\kappa$ loop ensemble, and which may be understood as a continuum analog of the loop exploration path discussed in \cite{sheffield2011qg_inventory}.  In this context, the analog of the pair $(T_1, T_2)$ discussed above is a pair $(\T_1, \T_2)$ of space-filling trees, which can be interpreted as flow-line and dual-flow-line trees within an ``imaginary geometry'' based on a GFF.
\end{enumerate}

\begin{figure}[ht!]
\begin{center}
\includegraphics[width=0.48\textwidth]{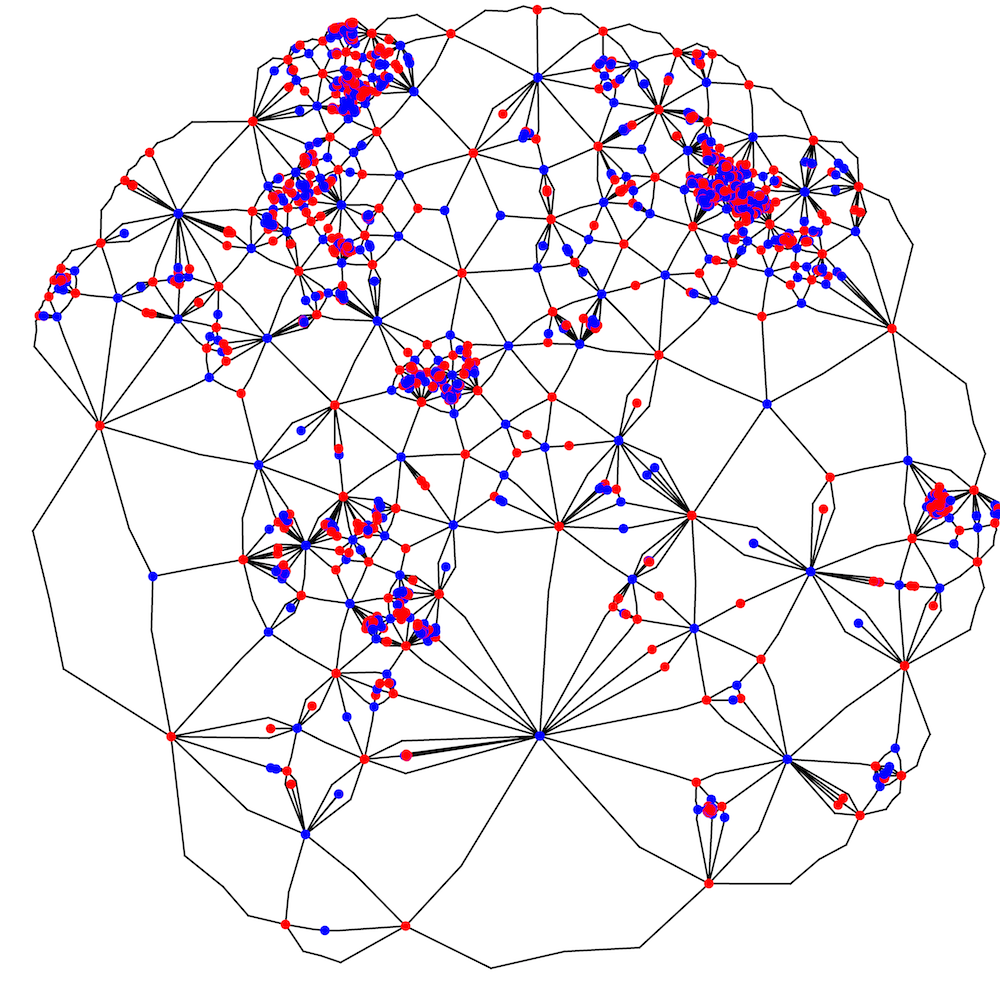}
\hspace{0.02\textwidth}
\includegraphics[width=0.48\textwidth]{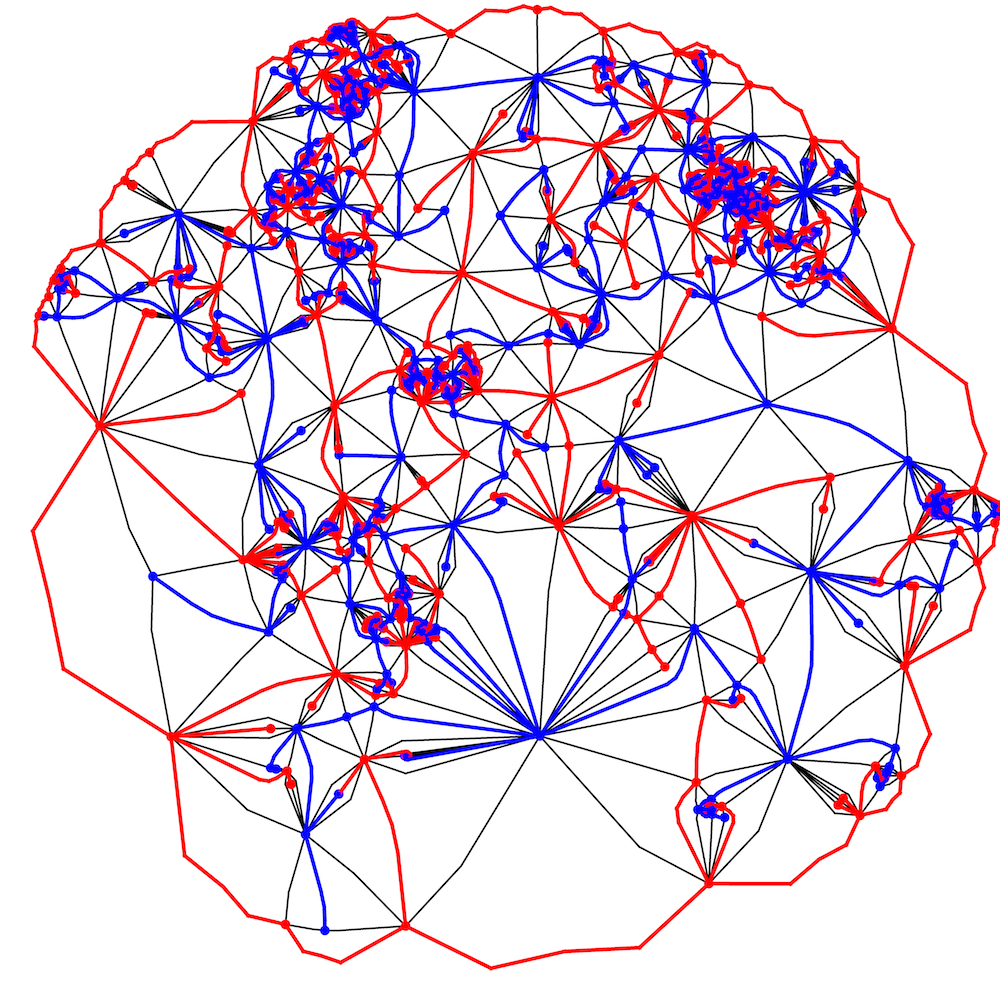}
\vspace{-0.05\textheight}
\includegraphics[width=0.73\textwidth]{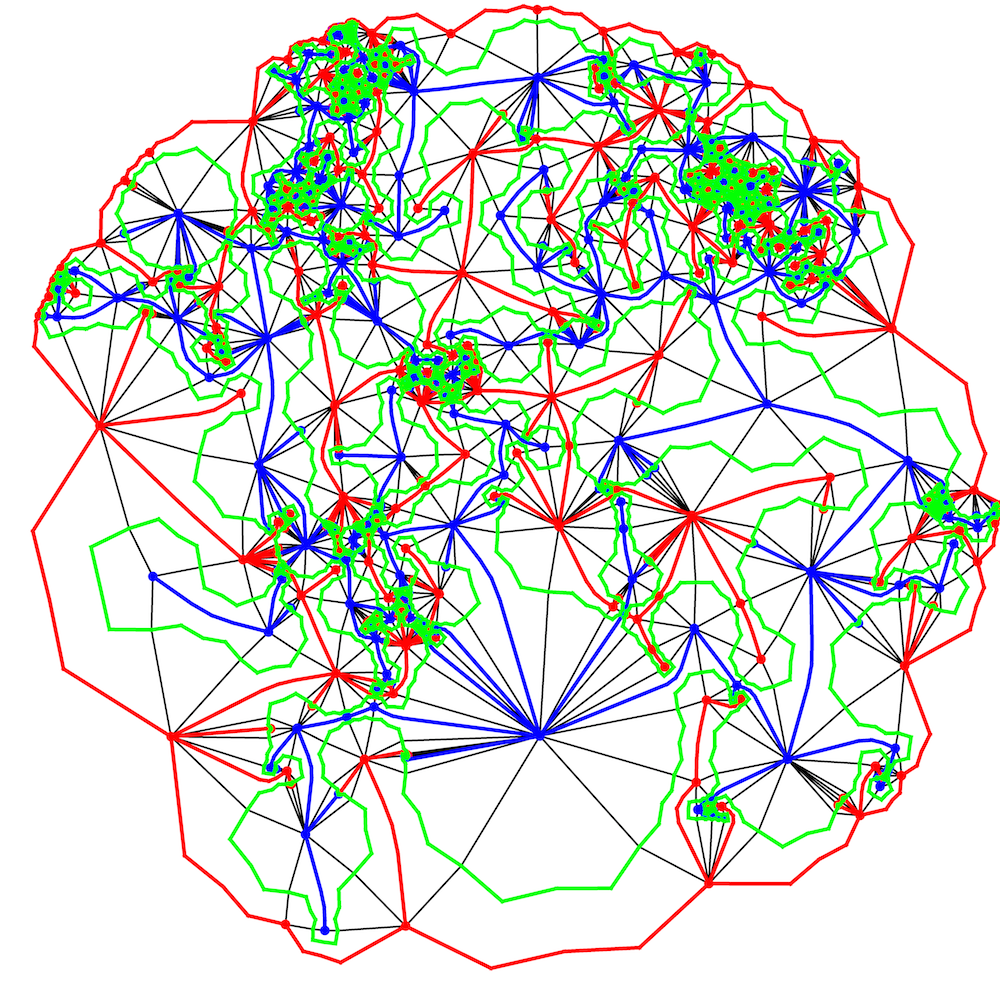}
\end{center}
\caption{\label{fig::random_planar_map} { {\bf Upper left:} random planar map sampled using the bijection from \cite{sheffield2011qg_inventory} with $p=0$ (where $p$ is the parameter from \cite{sheffield2011qg_inventory}) and embedded into $\C$ using \cite{stephensonCP}.  {\bf Upper right:} same map with distinguished tree/dual tree pair.  {\bf Bottom:} map with path which snakes between the trees and visits all of the edges.}}
\end{figure}

\begin{figure}[ht!]
\begin{center}
\subfloat[25\%]{\includegraphics[width=0.48\textwidth]{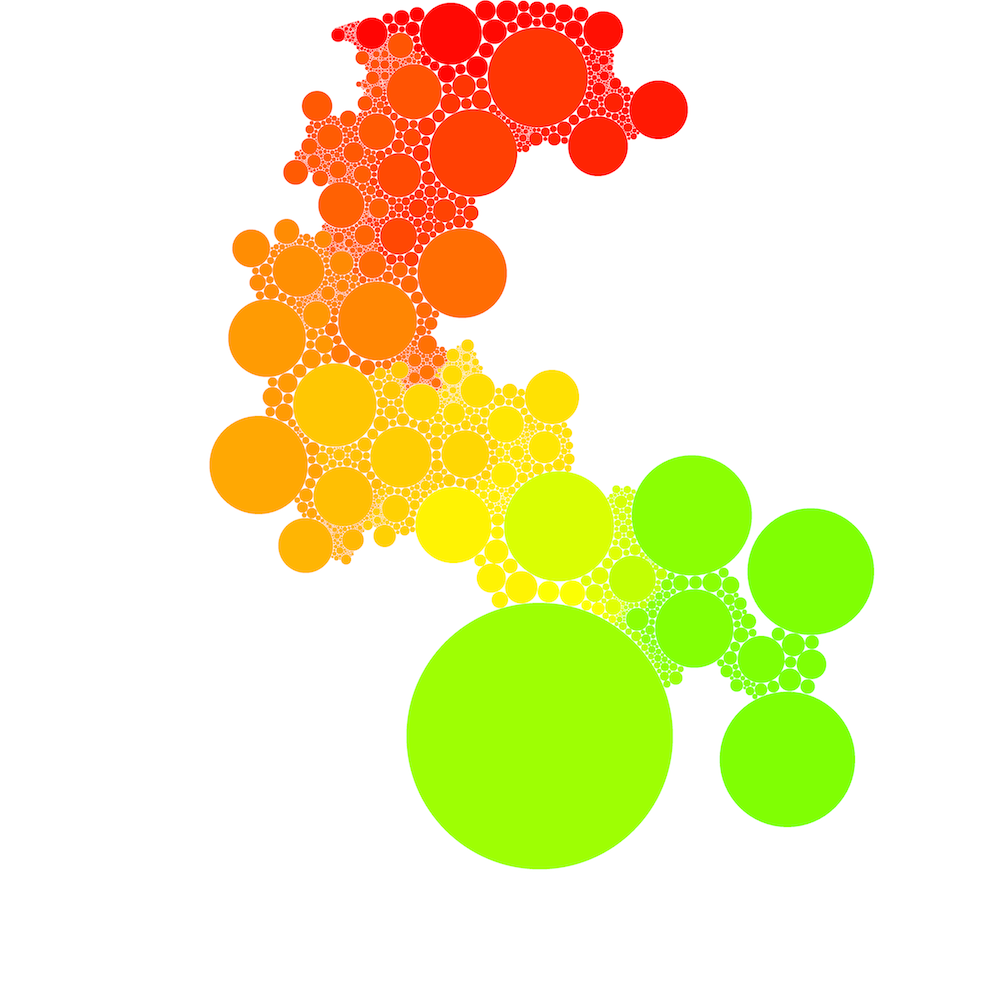}}
\hspace{0.02\textwidth}
\subfloat[50\%]{\includegraphics[width=0.48\textwidth]{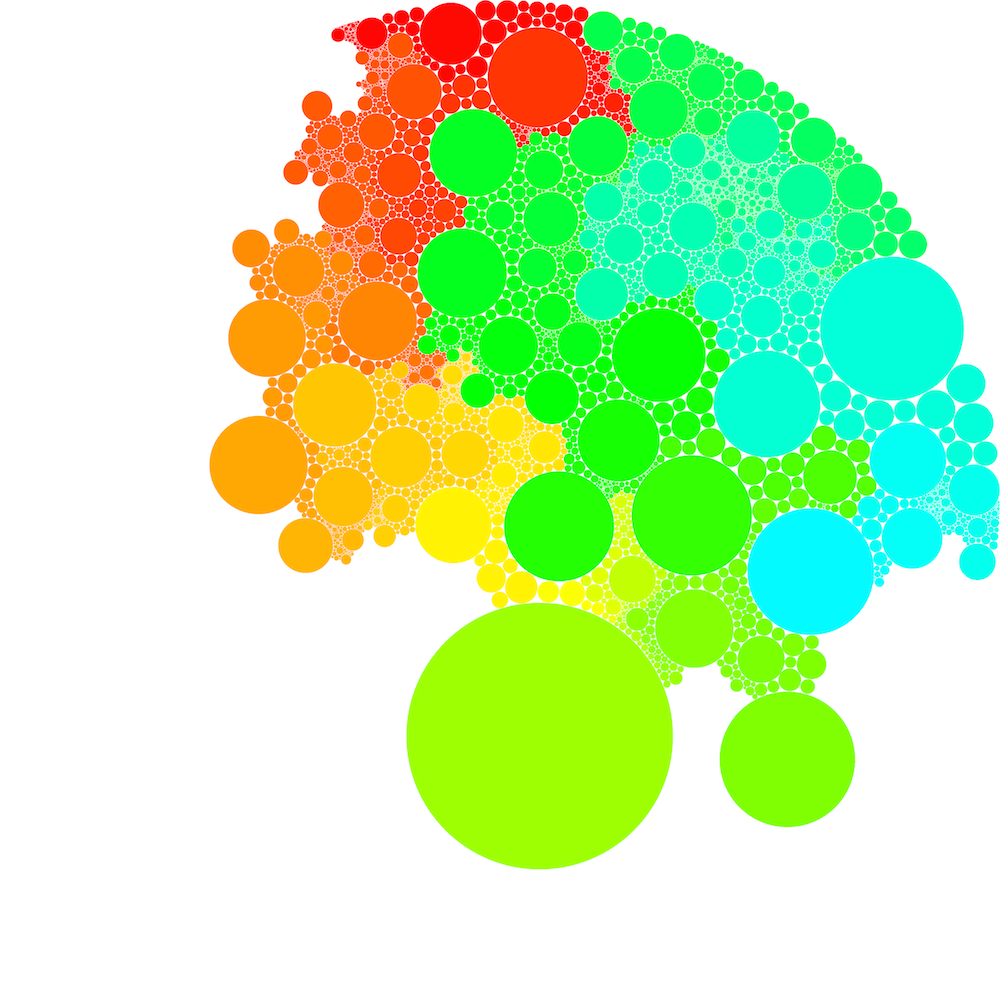}}

\subfloat[75\%]{\includegraphics[width=0.48\textwidth]{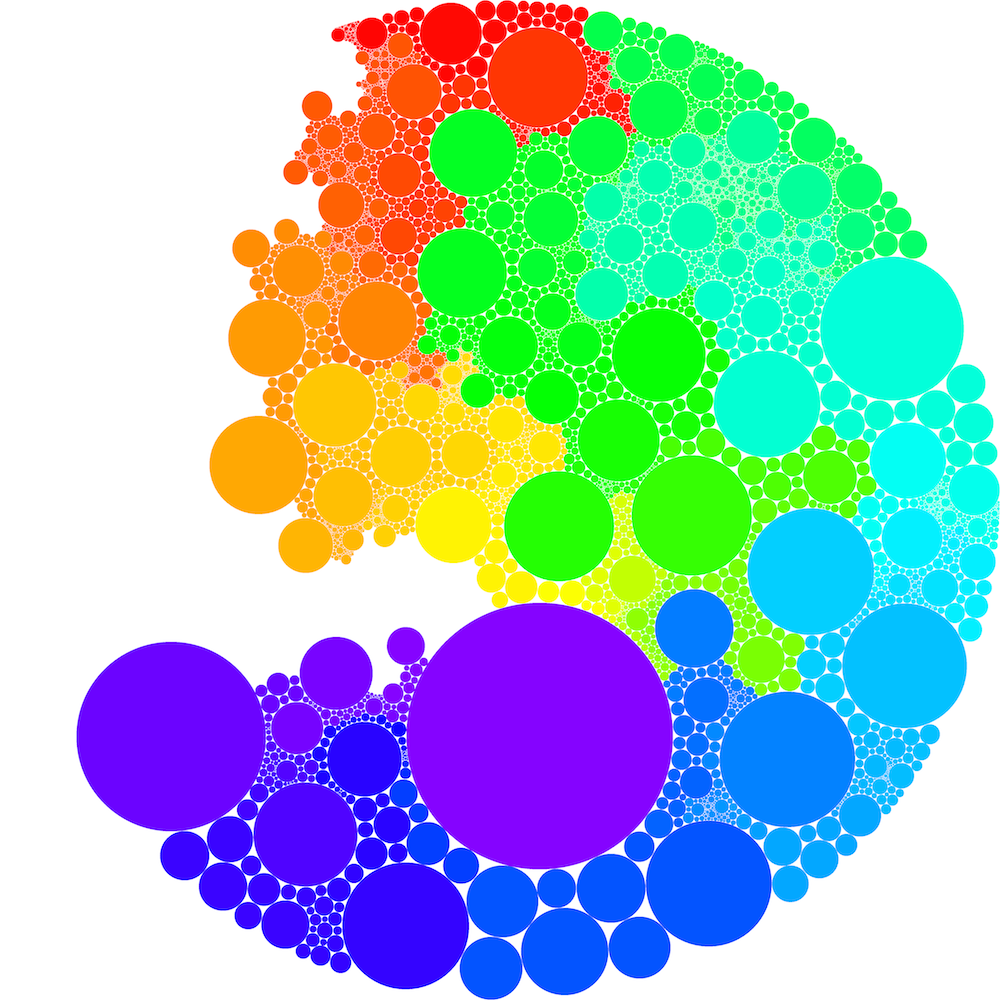}}
\hspace{0.02\textwidth}
\subfloat[\label{fig::circle_packing_big}100\%]{\includegraphics[width=0.48\textwidth]{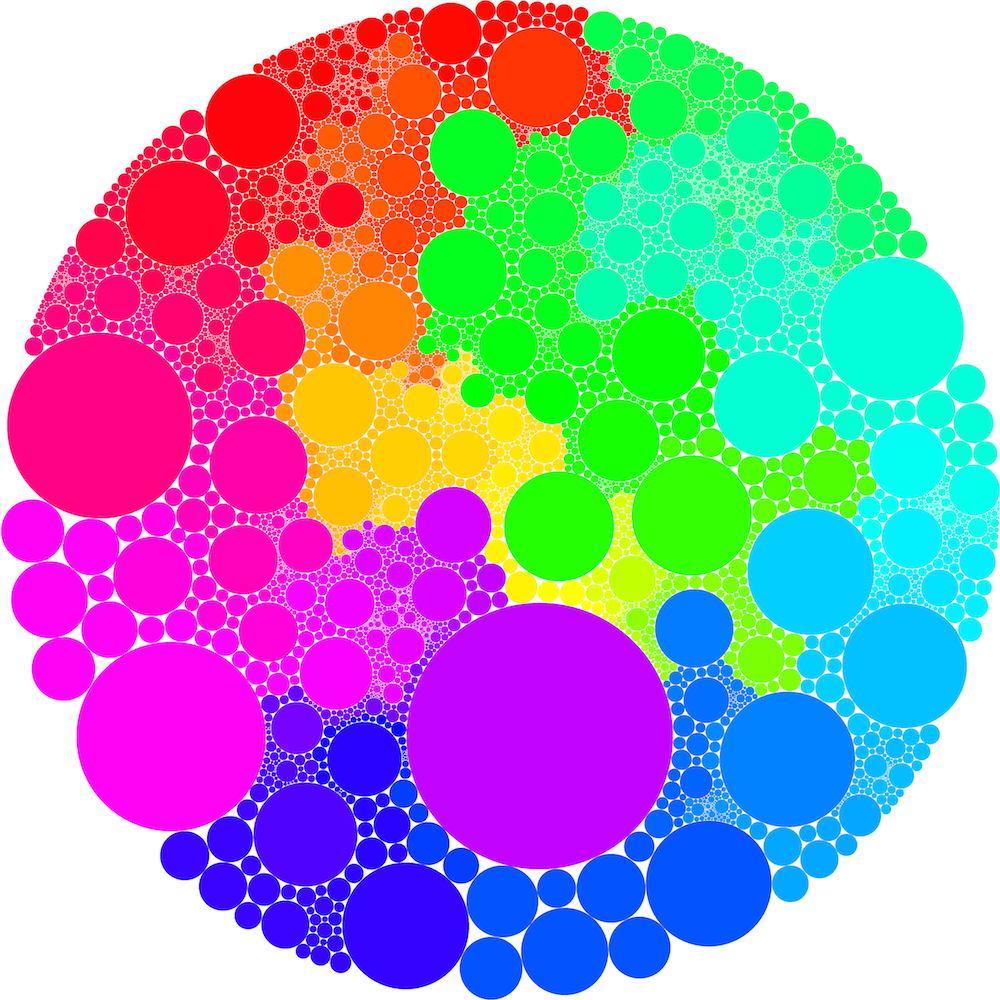}}
\end{center}
\caption{\label{fig::circle_packing_animation} Shown is the circle packing associated with the map from Figure~\ref{fig::random_planar_map}.  The circles are colored according to the order in which they are visited by the space-filling path.  The different panels show the circles visited by the path up to the first time that they cover $25\%$, $50\%$, $75\%$, and $100\%$ of the total area.}
\end{figure}

More references to the physics literature and discussion of the overall project are also found in \cite{sheffield2011qg_inventory, she2009cle, ms2013imag4}.  As discussed in \cite{sheffield2011qg_inventory}, the combination of the results summarized in Figure~\ref{papersketch} implies that certain structures encoded by the discrete models, namely the contour functions of the trees $(T_1, T_2)$, have scaling limits that agree in law with the analogous structures encoded by CLE-decorated LQG. 

\subsubsection{Peanosphere topology}  This brief subsection (which can be skipped on a first read) will explain formally how one can use the equivalences in Figure~\ref{papersketch} to {\em reinterpret} the scaling limit theorem from \cite{sheffield2011qg_inventory} as a theorem about scaling limits of loop-decorated surfaces. Specifically, we will argue that CLE-decorated LQG is the scaling limit of FK-weighted random planar maps in a topology (which we sometimes called the {\em peanosphere topology}, discussed more formally below) where two loop-decorated metric surfaces are considered close when their encoding contour functions are close, or equivalently, when their associated tree/dual-tree pairs are close as measure-endowed metric spaces. (The term ``peanosphere'' was suggested by Richard Kenyon in private communication as an informal name for the sort of surface obtained by mating two continuum random trees---namely, a sphere-homeomorphic surface decorated by space-filling, non-self-crossing ``Peano curve'' that comes with a continuously-varying notion of left and right boundary length.)

In a certain sense this is just a matter of semantics (putting a different spin on Figure~\ref{papersketch}) but we stress this point because it is often the case in this subject that once one has convergence in one topology, it is possible to strengthen the topology of convergence without starting from scratch. For example, Marckert and Mokkadem, building on work by Cori and Vauquelin \cite{cori1981planar} and by Chassaing and Schaeffer \cite{chassaing2004random},
proved a Brownian map convergence result in \cite{marckert2006limit} that is similar in spirit to what we call peanosphere topology convergence; later works by Le Gall and by Miermont used these earlier results to establish stronger forms of convergence in \cite{le2013uniqueness, miermont2013brownian}. Similarly, the recent works \cite{gms2015cone_times,gs2015finite_volume,gs2015finite, gm2016topology} use the convergence results of this paper to establish scaling limit results in topologies that encode the metric structures of loops, among other things. Several more results along these lines are referenced and summarized in the recent survey papers \cite{gwynne2019mating, gwynne2019random}.

Now let us explain the topology of convergence more formally. The quotient construction in Section~\ref{subsec::easy} makes sense if $X_t$ and $Y_t$ are replaced by {\em any} (not necessarily Brownian) continuous excursions. However, the hypotheses of Moore's theorem (Proposition~\ref{prop::moore}) might not be satisfied, so the quotient space might not be a topological sphere.  (For example, if we had $X_t = Y_t$ for all $t$, then the quotient would be a continuum tree --- obtained by ``gluing two identical trees to each other.'') Nonetheless, we would always obtain {\em some} topological space, with a parameterized space filling curve corresponding to traversing the red vertical lines from left to right.  Also, for each $t\in[0,T]$, we would obtain a parameterized path that corresponds to traversing the horizontal green lines (which intersect the line $x=t$) from top to bottom; it has one arc traversed in time $X_t$ and one in time $Y_t$. We interpret $X_t$ and $Y_t$ as left and right ``boundary lengths'' of the ``interface'' between $\eta([0,t])$ and $\eta([t,T])$, even though one cannot expect these to be simple curves in general. The point is that the set of {\em all} pairs $(X_\cdot, Y_\cdot)$ of continuous excursions on $[0,T]$ naturally parameterizes a certain set of topological spaces (each decorated by a parameterized space-filling curve, and certain parameterized ``boundary paths''), and this set comes with a natural topology: namely, the $L^\infty$ topology on maps from $[0,T]$ to $\R^2$. We remark that sphere homeomorphic surfaces (with simple boundary paths) are dense in this topology (since for any given pair of excursions $(L_t, R_t)$ one can always condition on the positive probability event that two Brownian excursions are within $L^\infty$ distance $\epsilon$ of this pair, and we know from Section~\ref{subsec::easy} that the result is a.s.\ homeomorphic to a sphere, with simple boundary paths).

Now, to describe the planar map convergence more formally, we need a topology on a space that includes both probability measures on discrete loop-decorated surfaces {\em and} probability measures on continuum loop-decorated surfaces.  We begin with an infinite volume version of the topology described just above (see \cite{quantum_spheres} for the finite-volume case). Namely, let $C$ be the set of functions from $\R$ to $\R^2$, endowed with the local $L^\infty$ metric. Then let $\mathcal C$ be the corresponding weak topology on the set $S_C$ of {\em probability measures} on $C$. Let $L$ be a set that includes the set of infinite volume discrete loop-decorated planar maps {\em as well as} the set of possible instances of a CLE-decorated $\gamma$-quantum cone. Then let $S_L$ be the set of probability measures on $L$.  In other words, an element of $S_L$ is a weighted average of a measure on discrete loop-decorated maps and a measure whose law is a.s.\ absolutely continuous with respect to the law of a CLE-decorated quantum cone.

Let $g:L \to C$ be the function that takes a discrete (or continuum) loop-decorated infinite volume surface to the corresponding two parameter function that encodes it. (In the discrete case, $g$ takes the FK-decorated map to the corresponding walk described in \cite{sheffield2011qg_inventory}. In the continuum case, it takes an instance of a $\CLE$-decorated $\gamma$-quantum cone to the corresponding pair of Brownian motions (recall Theorem~\ref{thm::quantum_cone_bm_rough_statement}).  In other words, $g$ maps elements of the uppermost two boxes in Figure~\ref{papersketch} to elements of the lowermost two boxes. Note that although we do not define $g$ for {\em every} possible continuum loop decorated surface, $g$ is defined for a.a.\ instances of the appropriate CLE-decorated $\gamma$-quantum cones. Thus $g$ induces a pushforward map $\wt g$ from $S_L$ to $S_C$. Now generally, if we are given a function $f:A\to B$ then for any $B'\subset B$ we write $f^{-1}(B') = \{a \in A : f(a) \in B' \}$.  So $\wt g^{-1}$ can be understood as a map from the collection of subsets of $S_C$ to the collection of subsets of $S_L$. The topology we wish to consider is then  $\wt g^{-1}(\mathcal C)$.  The scaling limit in this topology is by definition equivalent to the main result of \cite{sheffield2011qg_inventory}.

Let us stress that convergence in  $\wt g^{-1}(\mathcal C)$ does {\em not} imply convergence in the weak topology corresponding to the Gromov-Hausdorff topology on metric spaces, or in any topology that encodes the structure of discrete conformal embeddings of planar maps.  In a mathematical sense, $\wt g^{-1}(\mathcal C)$ is neither weaker nor stronger than these other topologies. It simply encodes different information. On the other hand, the type of convergence discussed here (which boils down to proving that functions encoding discrete mated trees converge to correlated Brownian motions) is sometimes the easiest kind of convergence to establish, and it is also extremely useful; see the discussion below.

{\em Update:} As mentioned above this paper suggests that one natural way to study discrete statistical physics models on planar maps is to try to identify a natural pair of trees induced by those models, and then use a combinatorial understanding of the trees to prove a relationship to correlated CRTs, and hence to SLE/LQG. Since the first version of this paper was posted to the arXiv, this approach has been successfully applied by a number of authors to several well-known combinatorial objects that had not previously been studied in the conformal probability context, including bipolar orientations and Schnyder woods as well as certain generalizations of the FK cluster model; see \cite{kmsw2015bipolar,GKMW:active-tree-map} (combined with \cite{ghms2015covariance}) as well as \cite{lsw2017schnyder}.

The new results allow us to add the models described above to the pantheon of canonical SLE/LQG-related models, which already includes loop-erased random walk, percolation, the Ising model, the uniform spanning tree, and the GFF.  In particular, these are the first elements of the pantheon corresponding to $\kappa < 2$ and $\kappa' > 8$. Another recent paper uses the convergence results of this paper to solve open problems about random walks on random planar maps (spectral dimension, return probabilities, and so forth) \cite{gm2017spectral}.  See also \cite{ghs2017distances} for an approach to addressing problems involving graph distances on planar maps using the tools of this paper.  Overall, we expect that these ideas will in time lead to additional bridges between combinatorics and conformal probability --- in part because the mated-CRT convergence results for random planar maps are often more accessible than the analogous results for deterministic lattices, and can be established without a precise understanding of the discrete conformal structure.

\subsection{Continuum implications of tree-mating theory}
When we are given a topological sphere, we use the term ``conformal structure'' to mean a homeomorphism between that topological sphere and the Riemann sphere $\C \cup \{ \infty \}$, defined modulo M\"obius transformation.  A conformal structure on a surface gives a way to define Brownian motion (modulo time change) started from a point on that surface (namely, it is the pullback of Brownian motion on $\C \cup \{\infty \}$). This paper shows that a mated pair of CRTs can a.s.\ be canonically embedded in the plane in a particular way, and that the resulting object has the law of an LQG quantum surface parameterized by $\C \cup \{ \infty \}$ and decorated by a space-filling form of $\SLE$.

One reason that this result is intriguing is that it allows one to convert many questions about $\SLE$ and LQG into questions about the pairs of CRTs that can be asked without reference to conformal structure.  In principle, this could allow future researchers to derive properties of $\SLE$, LQG, and related structures in papers that never mention either Loewner evolution or the GFF.  For example, the major open problem of endowing LQG with a metric space structure for general~$\gamma$ (see the discussion in \cite{ms2013qle}) might turn out to be more easily addressed from the mated-tree perspective than from the conventional LQG perspective.  ({\it Update:} The metric for general $\gamma \in (0,2)$ was constructed in \cite{dddf2019tightness, gm2019local, gm2019confluence, gm2019uniqueness,gm2019conformalcov}.  See also \cite{gp2018components,ghm2015kpz,ghm2016trans} for some examples of this.)

Another such open problem (which we state but will not solve here) is the following.  Consider the graph whose vertices are the components of the complement of a chordal (non-space-filling) $\SLE_{\kappa'}$ trace with $\kappa' \in (4,8)$, where two such components are adjacent if their boundaries intersect; is this graph a.s.\ connected?  ({\it Update:} See \cite{gp2018components} for a solution of this question for $\kappa' \in (4,\kappa_0']$ where $\kappa_0' \approx 5.62$ using tools from the present article.)

Topological properties of $\SLE_{\kappa'}$ (along with some geometric properties like quantum path length) are directly encoded by the CRTs, or by certain stable L\'evy processes derived from these CRTs.  This encoding reduces problems like the one just stated to questions about stable L\'evy processes that can be asked without reference to $\SLE$.

We also stress that, as explained in \cite{she2009cle}, there is a simple procedure for constructing a conformal loop ensemble (CLE) from space-filling $\SLE_{\kappa'}$ when $\kappa' \in (4,8)$, and that the procedure itself is topological, making no use of conformal structure.  Hence the pair of CRTs encodes the CLE loops in a straightforward way, and many properties of CLE (e.g., quantum dimensions of different types of special points, quantum lengths of loops and areas of enclosed regions, properties of the graph whose vertices are loops with two loops adjacent when they intersect, etc.) could also be addressed directly from the mated-CRT construction.

\begin{remark} The procedure for producing a space-filling $\SLE_{\kappa'}$ from a $\CLE_{\kappa'}$ given in \cite{she2009cle} only works when the $\CLE_{\kappa'}$ is non-simple (so that the loops intersect each other), i.e., when $\kappa' > 4$. Although we do not prove this here, it is not too hard to show that as $\kappa' \to 4$ from above, the law of space-filling $\SLE_{\kappa'}$ does not converge to the law of a continuous random path.  (Very roughly speaking, as $\kappa'$ approaches $4$ from above, the path tends to spiral around the boundary of the ``already explored region'' increasingly many times in between each time it ``discovers a macroscopic loop'' --- so any limiting path would have to make infinitely many spirals in between times at which it discovered entire $\CLE$ loops instantaneously.) For this reason, the limiting case $\kappa'=4$ is outside the scope of this paper --- there is simply not a continuous space-filling $\SLE_{\kappa'}$ process when $\kappa'=4$, and thus one does not expect to obtain a {\em continuous} embedding of the pair of CRTs, which become equal to one another in the $\kappa'=4$ limit, since their correlation coefficient tends to $1$. On the other hand, many of the objects in this paper (the LQG measure, the CLEs themselves, the FK model correspondence) {\em do} make sense when $\kappa' = 4$, and it is an interesting open problem to try to construct {\em some} kind of $\kappa'=4$ analog of the results of this paper (even if the corresponding tree embedding is not a continuous one).
\end{remark}

Another reason to study mated CRT maps is that we use the matings mentioned just above, along with several other results from this paper, are used in work about the quantum Loewner evolution (QLE), as introduced in \cite{ms2013qle}, and its relationship to the Brownian map.  

In the time since the first version of this paper was posted to the arXiv, the program to connect LQG surfaces (with $\gamma^2 = 8/3$) to the Brownian map has been completed over a series of papers \cite{quantum_spheres,map_making,qlebm, qle_continuity,qle_determined}.

These papers show that the time-reversals of some of the QLE processes described in \cite{ms2013qle} (the ones constructed using $\SLE_{\kappa'}$ for $\kappa' \in (4,8)$) can be reformulated as ways to construct spheres by gluing trees of disks (more precisely, so-called {\em L\'evy trees} of disks, of the sort described in Figure~\ref{treegluingvstreeofdiskgluing} and~\ref{levytreegluing}) to themselves.  The Brownian map itself is obtained as a sort of ``reshuffling'' of the entire $\SLE_6$ exploration tree.

A final and more speculative reason to study mated CRTs is that developing objects like $\SLE$, $\CLE$ and LQG in a framework that does not explicitly reference conformal structure may help us figure out how to construct interesting analogs of these objects in higher dimensions, where the tools of planar conformal geometry (such as the Riemann mapping theorem) no longer apply.

\subsection{Conformal mating in complex dynamics}
\label{subsec::conformalmating}

The idea of somehow stitching together a pair of trees to obtain a sphere with a conformal structure is not without precedent. Indeed, the term ``conformally mating'' comes from the complex dynamics literature, where in some circumstances it is possible to obtain a conformal sphere by stitching together the Julia set of one polynomial to the Julia set of another polynomial \cite{yampolsky2001mating, milnor2004pasting,aspenberg2009mating,  timorin2010topological}. Milnor presents a particularly clear introduction to the theory in \cite{milnor2004pasting} (see also Milnor's reference text on complex dynamics \cite{milnor2006dynamics}).  In certain cases these Julia sets are ``dendritic'' like the continuum random trees described in Section~\ref{subsec::easy}, or illustrated on the left side of Figure~\ref{treegluingvstreeofdiskgluing}.  In such cases one obtains a space-filling curve and a measure on the sphere, much as in Section~\ref{subsec::easy} \cite{milnor2004pasting}.  In other cases, the Julia set is the boundary of a region with non-empty interior, more like the ``trees of disks'' that the right side of Figure~\ref{treegluingvstreeofdiskgluing} illustrates.\footnote{At the moment, an excellent gallery of related computer animations can be found on Arnaud Ch\'eritat's webpage, illustrating both dendritic and non-dendritic matings.  See also the computer images and algorithmic discussion in \cite{boyd2012medusa}.}  We present a short overview of the theory (which can be skipped on a first read) in the remainder of this subsection.

Let $f$ be a monic complex polynomial of degree $d \geq 2$.  Then the {\em Fatou set} is defined by $\{z : \lim_{n \to \infty}f^{(n)}(z) = \infty \}$, where $f^{(n)}:= f \circ f \circ \ldots \circ f$ is the $n$-fold composition of $f$.  The {\em Julia set} $J(f)$ is the boundary of the Fatou set.  The set $J(f)$ often has an interesting fractal structure.  We consider only cases where $J(f)$ is connected, and we write $K= K(f)$ for the complement of the Fatou set (a.k.a., the {\em filling} of the Julia set).  Then $f$ is a $d$-to-$1$ holomorphic map from $\C \setminus K$ to itself that approximates the power map $z \to z^d$ near infinity.  To make this point more explicit, let $\D \subseteq \C$ be the unit disk; one can show that if $\phi_K : \C \setminus K \to \C \setminus \overline \D$ is the conformal map that approximates the identity near $\infty$, then $f = \phi_K^{-1}\circ g \circ \phi_K$ where $g(z) = z^d$.  We stress that the map $\phi_K^{-1} \circ g \circ \phi_K$ could be defined for any bounded hull $K$, but in general we would not expect it to be a polynomial function (which is smooth on all of $\C$, not just on $\C \setminus K$).  The map $f$ gives a surjective map from $J(f)$ to itself, which is sometimes called the {\em dynamics} of the Julia set $J(f)$.  Also, the map $\phi_K^{-1}$ extends to give a surjective map from $\partial \D$ to $J(f)$, which provides a natural parameterization of (and measure on) $J(f)$.

The $k$-fold composition of $f$ is a $d^k$ to $1$ map from $\C \setminus K$ to itself.  Since $f^{(k)}$ is a polynomial that fixes $K$, it encodes a certain type of self-similarity of the Julia set: a small neighborhood $U$ of a generic point $z \in J(f)$ has $d^k$ pre-images $U_1, \ldots, U_{d^{k}}$ under $f^{(k)}$, and the restrictions $U_i \cap K$ all look like $U \cap K$ (up to distortion by a polynomial map).

Suppose that $J(f_1)$ and $J(f_2)$ are connected Julia sets of polynomials $f_1$ and $f_2$, both of degree $d \geq 2$.  Let $\sigma_i$ be the natural parameterization mapping $\partial \D$ to $J(f_i)$.  We let $\wt J$ be the pair $J_1 \cup J_2$ modulo the equivalence relation that identifies $\sigma_1(t)$ with $\sigma_2(t)$ when $t \in \partial \D$.  Using Moore's theorem, it is often possible to show that $\wt J$ is topologically a sphere.   Note that if each $f_i$ induces a $d$-to-$1$ map from $J(f_i)$ to itself, each of these two maps induces the same map $\psi : \wt J\to \wt J$.  The interesting thing is that, in some cases, there is a unique (up to M\"obius transformation) way to identify $\wt J$ with $\C \cup \{ \infty \}$ in such a way that $\psi$ becomes a rational functional.  This effectively determines the conformal structure of $\wt J$.

In some sense, it is not too surprising that the requirement that $\psi$ be rational, or even just conformal, should determine the conformal structure.  A generic point in $x \in \wt J$ has $d^k$ pre-images under $\psi^{(k)}$; thus, if the conformal structure were fixed in even one small neighborhood of $x$, then the requirement that $\psi$ act conformally would fix the conformal structure in all of the $d^k$ preimages of that neighborhood; and the union of such neighborhoods, taken over all $k$, can be shown to be dense and of full measure in $\wt J$.

We stress, however, that for the random trees described in this paper, we do not have a nice automorphism such as $\psi$.  Although our models are self-similar {\em in law}, an actual instance of a CRT has no {\em exact} self-similarity of the type one encounters for Julia sets, and no natural polynomial or rational dynamics. This paper uses different tools to specify the conformal structure of a glued-together pair of trees or trees of disks. In so doing, this paper also broadens the usage of the term ``conformal mating,'' so that when we say that the gluing of two objects constitutes a {\em conformal mating} we simply mean that the pair of objects a.s.\ uniquely determines {\em in some way} the conformal structure of the surface obtained by gluing the two objects together.

\section{Preliminaries}
\label{sec::preliminaries}

This section assembles a few basic facts about local times, Bessel processes, and $\SLE_\kappa(\rho)$.  The reader who is already well versed with these topics may be able to skim or skip much of this section, referring back as needed for reference. However, one should take note of the forward and reverse symmetries of $\SLE_\kappa(\rho)$ presented in Section~\ref{subsec::forward_reverse_sle_kappa_rho}.  These are relatively straightforward observations, but they are nonetheless interesting, and we have not found them articulated elsewhere in the literature.

We begin with a discussion of the time-reversal of Markov processes using local times in Section~\ref{subsec::reversing_markov}.  We recall that it is true in some generality that the {\em excursions} a Markov process makes away from a fixed value have a Poissonian structure when parameterized by a {\em local time} corresponding to that value, and we formulate a statement of the fact that this remains true when time is reversed.  Next, in Section~\ref{subsec::bessel_processes}, we give a brief overview of some important properties of Bessel processes and derive a few facts about their time reversals.  In Section~\ref{subsec::sle_kappa_rho} we recall the definition of the forward and reverse $\SLE_\kappa(\rho)$ processes in the chordal, radial, and whole-plane settings.  Finally, in Section~\ref{subsec::forward_reverse_sle_kappa_rho} we combine the results of Section~\ref{subsec::reversing_markov} with Section~\ref{subsec::bessel_processes} to establish the interesting forward/reverse symmetries for $\SLE_\kappa(\rho)$ that we mentioned above.

\subsection{Reversing processes using local time}
\label{subsec::reversing_markov}

Recall that a Feller process \cite[Chapter~17]{KAL_FOUND} with semigroup $P^t$ is said to be reversible with respect to a measure $\mu$ if for all bounded and continuous functions $f,g$ we have that
\begin{equation}
\label{eqn::kernel_reversible}
\int f P^t(g) d\mu = \int g P^t(f)d\mu.
\end{equation}
Since we are motivated by the study of radial and chordal $\SLE$ variants, we will be particularly interested in the case that the state space $\CX$ is either the line $\R$ or unit circle $\s^1$.  If this is the case {\em and} $P^t$ admits continuous transition densities $p^t(x,y)$ with respect to Lebesgue measure {\em and} $\mu$ has a continuous density $\pi$ with respect to Lebesgue measure, then we note that~\eqref{eqn::kernel_reversible} is equivalent to
\begin{equation}
\label{eqn::detailed_balance}
\pi(x) p^t(x,y) = \pi(y) p^t(y,x) \quad\text{for all}\quad x,y \in \CX.
\end{equation}

The main result of this section is the following.

\begin{proposition}
\label{prop::markov_local_time_reversal}
Suppose that $X$ is a continuous Feller process on $\CX \in \{\R, \s^1\}$ with a family of continuous transition densities $p^t(x,y)$.  Assume that $X$ is also a semimartingale and is reversible with respect to a measure $\mu$ which has a continuous density with respect to Lebesgue measure on $\CX$.  Fix $a \in \CX$, let $\ell$ be the local time of $X$ at $a$, and assume that $\p[\ell_\infty = \infty] =1$.  For each $u > 0$, let $T_u = \inf\{t > 0 : \ell_t > u\}$ be the right-continuous inverse of $\ell$.  Then we have for each $u > 0$ that
\[  ( X_{T_u - t} : t \in [0,T_u]) \stackrel{d}{=} (X_t : t \in [0,T_u]).\]
\end{proposition}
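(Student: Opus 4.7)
The plan is to work via Itô excursion theory. I first reduce to the case $X_0 = a$: this is the natural setting, since $X_{T_u} = a$ a.s.\ by continuity of $X$ and of $\ell$, and so for the forward and reversed processes to share endpoint distributions one needs the same condition at $t=0$. Under the hypotheses, $X$ is recurrent at $a$ and $\ell$ is a bona fide local time, so Itô's excursion decomposition applies: the excursions of $X$ away from $a$ form a Poisson point process $(s, e_s)$ on $\R_+ \times \CE$ with intensity $ds \otimes n$, where $n$ is the Itô excursion measure. The trajectory $(X_t)_{t \in [0, T_u]}$ is recovered by concatenating the countable collection of excursions indexed by $s \in [0, u]$ in order of increasing $s$.

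The key step will be to show that $n$ is invariant under path-reversal, i.e., $R_* n = n$, where $R \colon e \mapsto \hat e$ with $\hat e(t) := e((\zeta(e) - t)_+)$ and $\zeta(e)$ the lifetime of $e$. I would prove this by approximation: for each $\epsilon > 0$, the restriction of $n$ to excursions of length at least $\epsilon$ is a finite measure, admitting an explicit description in terms of the entrance law for $X$ from $a$ and the law of $X$ killed upon returning to $a$ after time $\epsilon$. The detailed balance identity $\pi(x) p^t(x, y) = \pi(y) p^t(y, x)$ (equivalent to the reversibility hypothesis, by continuity of $\pi$ and $p^t$), combined with the strong Markov property applied at the last exit from a small neighborhood of $a$ before the close of the excursion, translates this description into its own time reversal. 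Letting $\epsilon \downarrow 0$ yields $R_* n = n$.

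Granting this, the Poisson intensity $ds \otimes n$ on $[0, u] \times \CE$ is preserved by the measure-preserving involution $(s, e) \mapsto (u - s, \hat e)$, so the PPP itself is invariant in law under this map. But concatenating the transformed PPP in increasing order of its first coordinate reconstructs precisely $(X_{T_u - t})_{t \in [0, T_u]}$ from the same raw data out of which the forward process $(X_t)_{t \in [0, T_u]}$ is built in its untransformed form. The equality in distribution follows. I expect the main obstacle to be the rigorous verification that $R_* n = n$: because $n$ is only $\sigma$-finite, one cannot invoke reversibility of $X$ directly on the excursion measure, and one must instead execute the entrance-law approximation sketched above (or dually, the analogous description in terms of the last-exit law from $a$).
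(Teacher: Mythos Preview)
Your proposal is correct and follows essentially the same approach as the paper: reduce to the It\^o excursion decomposition, show the excursion measure $n$ is invariant under path reversal, and conclude via the invariance of the PPP under $(s,e)\mapsto(u-s,\hat e)$. The only difference lies in how the key step $R_*n=n$ is carried out. Where you sketch an entrance-law/last-exit approximation, the paper gives a somewhat more direct argument: detailed balance implies that the bridge law of $X$ from $a$ to $a$ of any fixed duration is invariant under time reversal, and then for each $\delta>0$ one realizes the excursion restricted to the interval between its first and last visits to $\{|x-a|=\delta\}$ as a bridge of $X$ from $a\pm\delta$ to $a\pm\delta$ conditioned on the time-reversal-symmetric event of not hitting $a$; sending $\delta\downarrow 0$ gives the claim. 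This sidesteps the need to identify the entrance law explicitly.
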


The reason that we assume that $X$ is a semimartingale is that it implies \cite[Proposition~19.14]{KAL_FOUND} that the structure of the set $\{t : X_t = a\}$ is such that the It\^o excursion decomposition (stated as Theorem~\ref{thm::ito_excursion_decomposition} below) applies.  Proposition~\ref{prop::markov_local_time_reversal} is not stated in maximal generality; the hypotheses above both fit our setting and keep the proof simple.

Assume that we have the same hypotheses as in Proposition~\ref{prop::markov_local_time_reversal}.  For each $\epsilon \geq 0$, let $\CE(\epsilon)$ be the set of continuous functions $\phi \colon [0,T] \to \CX$ with $T \geq \epsilon$ and $\phi(0) = \phi(T) = a$.  Suppose that $X$ is as in Proposition~\ref{prop::markov_local_time_reversal} and let $\nu$ be the It\^o excursion measure associated with the excursions that $X$ makes from $a$.  That is, $\nu$ is the $\sigma$-finite measure on $\CE = \CE(0)$ with the following two properties:
\begin{enumerate}
\item For each $\epsilon > 0$, $\nu(\CE(\epsilon)) < \infty$ and
\item The law of the sequence of excursions $(X^{j,\epsilon})$ that $X$ makes from $a$ with length at least $\epsilon$ (with each such excursion shifted in time so the excursions start at time $0$) is distributed i.i.d.\ according to $\nu(\cdot \giv \CE(\epsilon))$.
\end{enumerate}
(See \cite[Chapter~19]{KAL_FOUND} for additional introduction to the It\^o excursion measure.)  Let $\CE(0;X)$ be the set of all excursions that $X$ makes from $a$ (with time shifted to start from time $0$).  For each excursion $e \in \CE(0;X)$, we let $\ell(e)$ be the value of $\ell$ at the start time of the excursion under $X$.  We now state the It\^o excursion decomposition for $X$.  (See also \cite[Theorem~19.11 and Proposition~19.14]{KAL_FOUND}.)  This is the first ingredient in the proof of Proposition~\ref{prop::markov_local_time_reversal}.  Recall that we will use the abbreviation \ppp\ for Poisson point process.

\begin{theorem}
\label{thm::ito_excursion_decomposition}
Let $du$ be Lebesgue measure on $\R_+$ and let $\nu$ be the It\^o excursion measure of $X$.  Then the process $\{ (\ell(e),e) : e \in \CE(0;X)\}$ is distributed as a \ppp\ on $\R_+ \times \CE$ with intensity measure $du \otimes \nu$.
\end{theorem}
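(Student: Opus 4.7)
The plan is to build the Poisson point process structure from the inverse local time $T_u = \inf\{t : \ell_t > u\}$. First I would verify that $(T_u)_{u \geq 0}$ is a subordinator. Since $X$ is continuous and $\ell$ increases only on the zero set of $X - a$, we have $X_{T_u} = a$ a.s.\ for every $u$. By the strong Markov property of $X$ at $T_u$, together with uniqueness of local time, the post-$T_u$ increment $(T_{u+v} - T_u)_{v \geq 0}$ is independent of $\CF_{T_u}$ and distributed as $(T_v)_{v \geq 0}$. The hypothesis $\p[\ell_\infty = \infty] = 1$ guarantees $T_u < \infty$ a.s.\ for all $u$, and right-continuity of $T$ comes from it being the right-continuous inverse of $\ell$. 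Thus $T$ is a subordinator.

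Next I would apply the L\'evy--It\^o decomposition to $T$: the jump times and magnitudes form a Poisson point process on $\R_+ \times (0,\infty)$ with intensity $du \otimes \Lambda(ds)$ for some L\'evy measure $\Lambda$. The jumps of $T$ are in canonical bijection with the excursions of $X$ away from $a$: a jump of size $s$ at local-time coordinate $u$ corresponds to an excursion of length $s$ beginning at time $T_{u-}$, during which $\ell \equiv u$. (Here the semimartingale hypothesis enters via \cite[Proposition 19.14]{KAL_FOUND}, which ensures the zero set of $X-a$ has no ``hidden'' excursions outside this description.)

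The third step is to enrich the PPP of jump times/sizes to a PPP of jump times and full excursion shapes. Another application of the strong Markov property, this time at the left-endpoints $T_{u-}$ of excursions, shows that conditional on the family of jump times and lengths, the excursion trajectories are independent, with the shape of a length-$s$ excursion governed by a regular conditional kernel $K_s$ on $\CE$ (depending only on $s$, by stationarity). Combining with the jump-time/size PPP, one obtains that $\{(\ell(e), e) : e \in \CE(0;X)\}$ is a PPP on $\R_+ \times \CE$ with intensity $du \otimes \nu'$, where $\nu'(de) := K_{|e|}(de)\, \Lambda(d|e|)$.

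Finally I would identify $\nu' = \nu$. The two defining properties of the It\^o excursion measure, stated in the paragraph preceding the theorem, are inherited immediately by $\nu'$ from the PPP structure: $\nu'(\CE(\epsilon)) = \Lambda([\epsilon,\infty)) < \infty$, and the excursions of length at least $\epsilon$ enumerated by order of occurrence are i.i.d.\ with law $\nu'(\cdot \giv \CE(\epsilon))$. Since these two properties characterize $\nu$ uniquely as a $\sigma$-finite measure on $\CE$, we conclude $\nu' = \nu$, completing the proof. The main obstacle is the first step: subtle care is required to justify that $T$ is a genuine subordinator rather than a merely additive process, and this is exactly where the continuity and semimartingale assumptions on $X$ are used.
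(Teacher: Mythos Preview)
The paper does not give its own proof of this theorem: it is stated as a standard result, with the reference ``(See also \cite[Theorem~19.11 and Proposition~19.14]{KAL\_FOUND})'' placed just before the statement, and the only accompanying remark is that the $\s^1$-valued case works verbatim. Your sketch is essentially the classical proof one finds in Kallenberg (or Revuz--Yor): pass to the inverse local time, recognize it as a subordinator via the strong Markov property at $T_u$, extract the jump PPP from the L\'evy--It\^o decomposition, and then promote jumps to full excursions. So there is nothing to compare against beyond noting that you have written out what the paper merely cites.

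One technical point worth tightening: in your third step you invoke the strong Markov property ``at the left-endpoints $T_{u-}$'', but $T_{u-}$ is in general not a stopping time for the filtration of $X$. The standard way around this is either to work directly with It\^o's excursion-process formalism (where the marking by excursion shape is built into the construction) or to approximate by the stopping times $\inf\{t > T_{u-} : |X_t - a| = \delta\}$ and send $\delta \to 0$. This is routine but should not be glossed over.
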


The equivalence of the semimartingale local time and excursion local time established in \cite[Proposition~19.14]{KAL_FOUND} is stated for processes taking values in $\R$, however the proof works verbatim for processes taking values in $\s^1$.

We emphasize that the It\^o excursion decomposition itself does not require that $X$ satisfy all of the assumptions of Proposition~\ref{prop::markov_local_time_reversal}.  These assumptions are only needed for our proof of Proposition~\ref{prop::markov_local_time_reversal}.  The next important ingredient in the proof of Proposition~\ref{prop::markov_local_time_reversal} is the following lemma regarding the invariance of the It\^o excursion measure under time-reversal.

\begin{lemma}
\label{lem::ito_excursion_reversible}
Assume that $X$ is as in the statement of Proposition~\ref{prop::markov_local_time_reversal}.  Let $\nu$ be the It\^o excursion measure of $X$ associated with the excursions that $X$ makes from~$a$.  Then $\nu$ is invariant under time-reversal.  That is, if $\epsilon > 0$ and $Y$ is sampled according to $\nu(\cdot \giv \CE(\epsilon))$ and $T$ is the length of $Y$ then
\begin{equation}
\label{eqn::y_t_reversible}
(Y_t : t \in [0,T]) \stackrel{d}{=} (Y_{T-t} : t \in [0,T]).
\end{equation}
\end{lemma}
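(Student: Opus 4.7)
The plan is to derive the time-reversibility of $\nu$ from the reversibility of $X$ in stationarity, using the It\^o excursion decomposition of Theorem~\ref{thm::ito_excursion_decomposition} as the bridge. I would first verify that the sub-Markov transition density $p_0^t(x,y)$ of $X$ killed on its first hit of $a$ inherits detailed balance $\pi(x) p_0^t(x,y) = \pi(y) p_0^t(y,x)$ from the reversibility of $X$, by applying reversibility to path functionals that depend on whether the process has already visited $a$. This detailed balance gives, via a standard bridge argument, that a killed path of length $t$ conditioned to start at $x$ and end at $y$ is itself invariant under time reversal.

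The core step is a two-sided stationary reversibility argument. Consider the two-sided $\mu$-stationary extension of $X$ (which exists by Kolmogorov extension since $X$ is Feller and $\mu$ is a stationary reversible measure) and fix a large window $[-S,S]$. By reversibility, $(X_t)_{t \in [-S,S]}$ and $(X_{-t})_{t \in [-S,S]}$ have the same law. Enumerate the excursions of $X$ from $a$ that are fully contained in $[-S,S]$ and have length at least $\epsilon$, in order of appearance, as $Y^{(1)}, \ldots, Y^{(N)}$. By Theorem~\ref{thm::ito_excursion_decomposition} the sequence $(Y^{(i)})$ is i.i.d.\ with common law $\hat\nu := \nu(\cdot \giv \CE(\epsilon))$, up to at most one straddling excursion at each endpoint. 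Under $t \mapsto -t$, the same set of excursions reappears as $\tilde Y^{(N)}, \ldots, \tilde Y^{(1)}$, where $\tilde Y^{(i)}$ denotes the time reversal of $Y^{(i)}$. Matching the joint distributions of these two i.i.d.\ sequences (from the equality in law of the forward and reversed stationary processes) yields $\hat\nu = \hat\nu \circ \mathrm{Rev}$, which is precisely~\eqref{eqn::y_t_reversible}.

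The main obstacle I expect is treating the at most two partial excursions that may straddle $\pm S$: these do not have law $\hat\nu$ and must be isolated before marginals are matched. Since $\mathbf{E}[N] \asymp 2S \cdot \nu(\CE(\epsilon)) \to \infty$ as $S \to \infty$ while the number of partial excursions is bounded by $2$, the identity is recovered by fixing an interior index (say $\lfloor N/2 \rfloor$) and sending $S \to \infty$. An equivalent approach I could take, bypassing stationary reversibility, would be to compute the finite-dimensional distributions of $\nu$ on $\CE(\epsilon)$ directly: by the strong Markov property of the excursion measure, they factor as an entrance density $q_{t_1}(x_1)$ times products of killed transition densities $p_0^{t_{i+1}-t_i}(x_i,x_{i+1})$. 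Time-reversibility then reduces to the detailed-balance relation for $p_0^t$ together with the identity $q_t = \tilde q_t$ between the entrance and exit densities, which can again be derived from the stationary reversibility of $X$.
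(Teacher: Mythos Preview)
Your main approach via two-sided stationarity is a valid strategy when it applies, but it has a gap under the stated hypotheses: the measure $\mu$ is only assumed to have a continuous density with respect to Lebesgue measure on $\CX$, and need not be finite. In the key application to Bessel processes (see the discussion just before the proof of Proposition~\ref{prop::zip_up_to_local_time}), one has $\mu(dx) = x^{\delta-1}\,dx$ on $\R_+$, which is infinite, so there is no stationary probability distribution and the ``two-sided $\mu$-stationary extension'' you invoke is not a probability measure on path space. Your alternative approach via killed transition densities and entrance/exit laws is closer to a workable route, but you again defer the identity $q_t = \tilde q_t$ to ``stationary reversibility of $X$'', so the same issue recurs.

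The paper's proof sidesteps stationarity entirely. It first uses detailed balance~\eqref{eqn::detailed_balance2} to show, by a direct computation on finite-dimensional distributions, that any bridge of $X$ from a point $b$ back to $b$ of fixed length is time-reversal invariant in law. Then, for an excursion $Y$ from $a$, it considers $Y$ between its first hit $\tau_\delta$ and last hit $\sigma_\delta$ of $\{|y-a|=\delta\}$: conditional on the length $\sigma_\delta - \tau_\delta$ and the sign of the excursion, this inner piece is precisely a bridge of $X$ from $a\pm\delta$ to $a\pm\delta$ conditioned on the positive-probability, time-reversal-invariant event of not hitting $a$. Hence the inner piece is reversal-invariant, and sending $\delta \to 0$ gives~\eqref{eqn::y_t_reversible}. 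This bridge-plus-conditioning trick is more elementary than your PPP matching: it uses neither Theorem~\ref{thm::ito_excursion_decomposition} nor any stationary coupling, and it works uniformly for finite and infinite $\mu$.
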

\begin{proof}
Let $\pi$ be the density of $\mu$ with respect to Lebesgue measure on $\CX$.  By reversibility, for all $t > 0$ we then have that (recall~\eqref{eqn::detailed_balance})
\begin{equation}
\label{eqn::detailed_balance2}
p^t(x,y) = \frac{\pi(y)}{\pi(x)}p^t(y,x) \quad\text{for all}\quad x,y \in \CX \quad\text{with}\quad \pi(x),\pi(y) > 0.
\end{equation}
Iterating~\eqref{eqn::detailed_balance2}, for any $x_1,\ldots,x_{k+1} \in \CX$ such that $\pi(x_j) > 0$ for each $1 \leq j \leq k+1$ and for any $t_1,\ldots,t_k > 0$ we have that
\begin{align*}
      \prod_{j=1}^k p^{t_j}(x_j,x_{j+1})
&= \prod_{j=1}^k \frac{\pi(x_{j+1})}{\pi(x_j)} p^{t_j}(x_{j+1},x_j)
  = \frac{\pi(x_{k+1})}{\pi(x_1)} \prod_{j=1}^k p^{t_j}(x_{j+1},x_j).
\end{align*}
In particular, if $x_1 = x_{k+1}$, then
\begin{align*}
    \prod_{j=1}^k p^{t_j}(x_j,x_{j+1}) = \prod_{j=1}^k p^{t_j}(x_{j+1},x_j).
\end{align*}
This implies that the conditional law of $X$ given $X_0 = a$ and $X_t = a$ for $t > 0$ fixed is invariant under time-reversal.

Fix $\epsilon > 0$, sample $Y$ from $\nu(\cdot \giv \CE(\epsilon))$, and let $T$ be the length of $Y$.  We will now show that~\eqref{eqn::y_t_reversible} holds.  For each $\delta > 0$, let $\tau_\delta = \inf\{ s \geq 0: |Y_s-a| = \delta\}$ and let $\sigma_\delta = \sup\{s \leq T : |Y_t-a| = \delta\}$.  Let $S_\delta = \sigma_\delta - \tau_\delta$ and $Y_t^\delta = Y_{t+\tau_\delta}$ for $t \in [0,S_\delta]$.  Note that $\tau_\delta \downarrow 0$ and $\sigma_\delta \uparrow T$ as $\delta \to 0$ by continuity and since $Y_t \neq a$ for $t \in (0,T)$.  Consequently, it suffices to show for each $\delta > 0$ that
\begin{equation}
\label{eqn::y_t_delta_reversible}
(Y_t^\delta : t \in [0,S_\delta]) \stackrel{d}{=} (Y_{S_\delta -t}^\delta : t \in [0,S_\delta]).
\end{equation}
Assume for simplicity that $\CX = \R$; the case that $\CX = \s^1$ is analogous.  Given $S_\delta$ and that $Y_t > a$ for $t \in (0,T)$, we note that we can sample from the conditional law of $Y^\delta$ by generating a sample of $X$ with $X_0 = a+\delta$ and $X_{S_\delta} = a+\delta$ conditioned on the positive probability event that $X_t \neq a$ for $t \in [0,S_\delta]$.  Since this event is invariant under time-reversal and, as explained earlier, the law of a bridge of $X$ from $a+\delta$ to $a+\delta$ of length $S_\delta$ is also invariant under time-reversal, we get that~\eqref{eqn::y_t_delta_reversible} holds in this case.  The case that $Y_t < a$ for $t \in (0,T)$ is analogous.  Therefore~\eqref{eqn::y_t_reversible} follows.
\end{proof}

\begin{proof}[Proof of Proposition~\ref{prop::markov_local_time_reversal}]
We note that we can generate a sample from the law of $X_{T_u-t}$ by sampling first the \ppp\ of excursions associated with $X$ as in Theorem~\ref{thm::ito_excursion_decomposition}, and then running the excursions backwards starting at local time $u$ until reaching local time $0$.  The result follows since Lebesgue measure is invariant under reflection and, by Lemma~\ref{lem::ito_excursion_reversible}, the It\^o excursion law for $X$ is invariant under time-reversal.
\end{proof}

\subsection{Bessel processes}
\label{subsec::bessel_processes}

In this section, we will recall a few facts about Bessel processes which will play an important role in this article.  We direct the reader to \cite[Chapter~XI]{ry99martingales} for an in-depth overview of Bessel processes.

Fix $\delta \in \R$ and $x \geq 0$.  The squared $\delta$-dimensional Bessel process starting from $x^2$ is given by the unique strong solution to the SDE
\begin{equation}
\label{eqn::square_bessel_sde}
dZ_t = 2\sqrt{Z_t} dB_t + \delta dt,\quad Z_0 = x^2
\end{equation}
where $B$ is a standard Brownian motion.  Following \cite{ry99martingales}, we will denote this process by $\besq^\delta$ (and suppress the dependency on the starting point).  When $\delta > 0$, the solution to~\eqref{eqn::square_bessel_sde} exists for all $t \geq 0$, while $0$ is an absorbing state for $\delta \leq 0$.  The $\delta$-dimensional Bessel process $X_t$ starting from $x$ is given by $\sqrt{Z_t}$ where $Z_t$ is as in~\eqref{eqn::square_bessel_sde}.  Following \cite{ry99martingales}, we will denote this process by $\bes^\delta$ (and suppress the dependency on the starting point).  An application of It\^o's formula shows that $X_t$ satisfies the SDE
\begin{equation}
\label{eqn::bessel_sde}
dX_t = dB_t + \frac{\delta-1}{2} \cdot \frac{1}{X_t} dt,\quad X_0 = x
\end{equation}
at least at those times when $X_t \neq 0$.

There are three important ranges of~$\delta$ values which determine how~$X$ interacts with~$0$:
\begin{enumerate}
\item If $\delta \geq 2$, then $X_t$ a.s.\ does not hit $0$ except if $X_0 = 0$ in which case $X_t \neq 0$ for all $t > 0$, so $X_t$ in fact solves~\eqref{eqn::bessel_sde} for all times and is a semimartingale.
\item If $\delta \in (1,2)$, then $X_t$ hits $0$ in finite time a.s.  Nevertheless, $X_t$ satisfies~\eqref{eqn::bessel_sde} in the integrated sense for all $t$ and is a semimartingale.
\item If $\delta \in (0,1]$, then $X_t$ hits $0$ in finite time a.s.  In order to view $X_t$ as a solution of~\eqref{eqn::bessel_sde} for those times when $X$ is interacting with $0$ it is necessary to introduce a principle value correction (see \cite[Exercise~1.26, Chapter~XI]{ry99martingales}).  For $\delta =1$, $X_t$ is a semimartingale while for $\delta \in (0,1)$ it is not.
\end{enumerate}

In the case that $\delta \in (0,2)$, $X_t$ is instantaneously reflecting at $0$.  The interaction of a $\bes^\delta$ with $\delta \in (0,1]$ with $0$ will not play a role in this article, so we will not delve into the technicalities associated with this regime.

We now show that a Bessel process can be viewed as a time-changed geometric Brownian motion (see \cite[Exercise~1.28, Chapter~XI]{ry99martingales}).

\begin{proposition}
\label{prop::bessel_exponential_bm}
Let $X_t$ be a $\bes^\delta$ with $X_0 > 0$.  Then $\log X_t$ reparameterized by its quadratic variation (and stopped the first time $X_t$ reaches $0$ or time reaches $\infty$) is equal in distribution to $B_t + \tfrac{\delta-2}{2} t$ where $B$ is a standard Brownian motion with $B_0 = \log X_0$.  Conversely, if $B$ is a standard Brownian motion and $a \in \R$, then $X_t = \exp(B_t + at)$ reparameterized by its quadratic variation evolves as a $\bes^\delta$ with $\delta = 2a+2$. 
\end{proposition}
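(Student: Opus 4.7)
My plan is to prove both directions by direct application of Itô's formula combined with the Dambis--Dubins--Schwarz time change (i.e., Lévy's characterization of Brownian motion via quadratic variation). Since the proposition stops the process before $X_t$ hits $0$, we never need to deal with the subtle boundary behavior for $\delta < 2$, and we may take $X_t > 0$ throughout, so that \eqref{eqn::bessel_sde} holds classically and $\log X_t$ is well defined.

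For the forward direction, I would apply Itô's formula to $Y_t := \log X_t$ using $dX_t = dB_t + \tfrac{\delta-1}{2}\tfrac{1}{X_t}dt$ and $d[X]_t = dt$. This gives
\[
dY_t = \frac{1}{X_t}\,dB_t + \frac{\delta-1}{2}\cdot\frac{1}{X_t^2}\,dt - \frac{1}{2}\cdot\frac{1}{X_t^2}\,dt = \frac{1}{X_t}\,dB_t + \frac{\delta-2}{2}\cdot\frac{1}{X_t^2}\,dt,
\]
so the quadratic variation is $[Y]_t = \int_0^t X_s^{-2}\,ds$. Letting $\tau(u)$ denote the right-continuous inverse of $[Y]$ and setting $\wt Y_u := Y_{\tau(u)}$, one has $\tau'(u) = X_{\tau(u)}^2$, and therefore
\[
d\wt Y_u = \frac{1}{X_{\tau(u)}}\,dB_{\tau(u)} + \frac{\delta-2}{2}\,du.
\]
The martingale part has quadratic variation exactly $du$, so by Lévy's characterization it is a standard Brownian motion $\wt B_u$, and we obtain $\wt Y_u = \wt B_u + \tfrac{\delta-2}{2}u$, as required.

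For the converse, I would apply Itô's formula to $X_t := \exp(B_t + at)$, yielding $dX_t = X_t\,dB_t + (a+\tfrac{1}{2})X_t\,dt$ and $d[X]_t = X_t^2\,dt$. Letting $\sigma(u)$ be the right-continuous inverse of $\int_0^{\cdot} X_s^2\,ds$ and $\wt X_u := X_{\sigma(u)}$, one computes $\sigma'(u) = \wt X_u^{-2}$ and
\[
d\wt X_u = \wt X_u\,dB_{\sigma(u)} + \frac{a+\tfrac{1}{2}}{\wt X_u}\,du.
\]
The martingale part $\wt X_u\,dB_{\sigma(u)}$ has quadratic variation $\wt X_u^2\cdot\sigma'(u)\,du = du$, so Lévy's theorem again promotes it to a standard Brownian motion $\wt B_u$. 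Matching $a+\tfrac{1}{2} = \tfrac{\delta-1}{2}$ gives $\delta = 2a+2$ and shows that $\wt X$ satisfies the Bessel SDE \eqref{eqn::bessel_sde}, hence is a $\bes^\delta$ by strong uniqueness.

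There is essentially no hard step here; the only point of care is ensuring the time change is well-defined and that we remain strictly positive. In the forward direction this is guaranteed by the stopping at the first hitting time of $0$, and in the reverse direction by the fact that $\exp(B_t+at) > 0$ deterministically, so the resulting $\bes^\delta$ is defined up to its first hitting time of $0$ (which is $+\infty$ if $\delta \geq 2$). Strong uniqueness of \eqref{eqn::bessel_sde} away from $0$ (standard, as the coefficients are locally Lipschitz on $(0,\infty)$) then identifies the law of the time-changed process as $\bes^\delta$ started from $X_0$.
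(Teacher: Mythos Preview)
Your proof is correct and follows exactly the same approach as the paper: apply It\^o's formula to $\log X_t$ (yielding the identity~\eqref{eqn::log_of_bessel}), then reparameterize by quadratic variation and invoke L\'evy's characterization. The paper's proof is more terse---it writes out only the forward computation and dismisses the converse with ``follows from an analogous argument''---so your version is simply a more explicit rendering of the same idea.
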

\begin{proof}
Let $X_t$ be a $\bes^\delta$.  An application of It\^o's formula yields that
\begin{equation}
\label{eqn::log_of_bessel}
d \log X_t = \frac{1}{X_t}dB_t + \frac{\delta-2}{2X_t^2}dt.
\end{equation}
Reparameterizing $\log X_t$ by its quadratic variation $ds = X_t^{-2} dt$ thus yields a standard Brownian motion plus a linear drift term $\frac{\delta-2}{2} s$.  The converse statement follows from an analogous argument.
\end{proof}

It is also natural to consider Proposition~\ref{prop::bessel_exponential_bm} in the context of a $\bes^\delta$ process $X_t$ with $X_0 = 0$.  In this case, one can fix $\epsilon > 0$, take $\tau = \inf\{t \geq 0 : X_t = \epsilon\}$, and then apply Proposition~\ref{prop::bessel_exponential_bm} to the process $X_{t+\tau}$ to obtain a Brownian motion with drift $Y_t = B_t + \tfrac{\delta-2}{2} t$ starting from $\log \epsilon$.  On the event that $\sigma = \inf\{t \geq 0 : Y_t \geq 0\} < \infty$, it is natural to consider the process $Y_{\sigma+t}$ which is defined on the interval $[-\sigma,\infty)$ and takes the value $0$ at $t = 0$.  Taking a limit as $\epsilon \to 0$, one obtains a process which is defined on all of $\R$, takes the value $0$ at $t=0$, and evolves as a Brownian motion with drift $\tfrac{\delta-2}{2} t$ for $t \geq 0$.  The choice of the hitting threshold in the definition of $\sigma$ is natural if $\delta \geq 2$ because then $\sigma < \infty$ a.s., but any other hitting threshold would work equally well.

Proposition~\ref{prop::bessel_exponential_bm} leads to the following important fact regarding the time-reversal of a Bessel process.  This result is originally due to Williams \cite[Theorem~2.5]{w1974pathdecomp}; see also \cite[Chapter~XI, Exercise~1.23]{ry99martingales}.

\begin{proposition}
\label{prop::bessel_time_reversal}
Fix $\delta < 2$ and $x > 0$.  Suppose that $X_t$ is a $\bes^\delta$ started at $x > 0$ and let $\tau = \inf\{t \geq 0 : X_t = 0\}$.  Let $\wt{X}_t$ be a $\bes^{\wt{\delta}}$ started at $0$ with
\begin{equation}
 \label{eqn::wt_delta}
 \wt{\delta} = 4-\delta > 2
  \end{equation}
and let $\wt{\tau} = \sup\{t \geq  0 : \wt{X}_t = x\}$ be the last time that $\wt{X}$ hits $x$ (note $\wt{\tau} < \infty$ a.s.\ since $\wt{\delta} > 2$).  Then $t \mapsto X_{\tau-t}$ for $t \in [0,\tau]$ has the same law as $t \mapsto \wt{X}_t$ for $t \in [0,\wt{\tau}]$.
\end{proposition}
Note that $\delta < 2$ implies $\wt{\delta} > 2$ and that a $\bes^{\wt{\delta}}$ process a.s.\ does not hit~$0$.
\begin{proof}[Proof of Proposition~\ref{prop::bessel_time_reversal}]
Proposition~\ref{prop::bessel_exponential_bm} implies that the time-reversal $\log X_{\tau-t}$ when parameterized by its quadratic variation evolves as a Brownian motion with linear drift $-\tfrac{\delta-2}{2}$.  Thus $X_{\tau-t}$ evolves as a $\bes^{\wt{\delta}}$ where $\wt{\delta}$ satisfies $-\tfrac{\delta-2}{2} = \tfrac{\wt{\delta}-2}{2}$.  Solving for $\wt{\delta}$ yields~\eqref{eqn::wt_delta}, as desired.
\end{proof}

We will explain in Section~\ref{subsec::forward_reverse_sle_kappa_rho} how we can use the results of Section~\ref{subsec::reversing_markov} to time-reverse a $\bes^\delta$ with $\delta \in (0,2)$ stopped when its local time at $0$ first reaches a given value.

As a second application of Proposition~\ref{prop::bessel_exponential_bm}, we will give a description of the law of a standard Brownian motion with negative linear drift conditioned to exceed a given positive value.

\begin{lemma}
\label{lem::condition_bm_negative_drift_large}
Let $B_t$ be a standard Brownian motion starting from $0$, fix $a < 0$, and let $X_t = B_t + at$.  Fix $C > 0$ and let $E_C$ be the event $\{ \sup_{t \geq 0} X_t \geq C\}$.  Then the conditional law of $X$ given $E_C$ can be sampled from as follows.
\begin{enumerate}
\item Sample a standard Brownian motion $X^1$ starting from $0$ with linear drift $-a > 0$ and let $\tau$ be the first time that $X^1$ hits $C$.
\item Sample a standard Brownian motion $X^2$ starting from $C$ with linear drift $a < 0$.
\item Concatenate the processes $X^1|_{[0,\tau]}$ and $X^2(\cdot-\tau)$.
\end{enumerate}
\end{lemma}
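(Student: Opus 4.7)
My plan is to decompose the conditioned process at the stopping time $\tau := \inf\{t \geq 0 : X_t = C\}$ and identify the pre-$\tau$ and post-$\tau$ pieces separately. By path continuity, $E_C = \{\tau < \infty\}$. The first step is to check that $M_t := e^{-2a X_t}$ is a positive martingale (indeed $dM_t = -2a M_t\, dB_t$), apply optional stopping at $\tau \wedge t$, and let $t \to \infty$. Using the bound $M_{t \wedge \tau} \leq e^{-2aC}$ together with the a.s.\ limit $M_{t \wedge \tau} \to e^{-2aC}\, \mathbf{1}_{\{\tau < \infty\}}$ (valid because $-2a > 0$ and $X_t \to -\infty$ on $\{\tau = \infty\}$), dominated convergence will yield $\P[E_C] = e^{2aC} \in (0,1)$, so the conditioning is well defined.

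Next, the post-$\tau$ piece is handled directly. Since $E_C \in \mathcal{F}_\tau$, no further conditioning is needed there; the strong Markov property of $X$ at $\tau$ gives that, conditionally on $\mathcal{F}_\tau$ and on $\{\tau < \infty\}$, the process $(X_{\tau+s} - C)_{s \geq 0}$ is a standard Brownian motion with drift $a$, independent of $(X_u)_{u \leq \tau}$. This matches the description of $X^2(\cdot - \tau)$ in the statement.

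For the pre-$\tau$ piece, I will use a Girsanov change of measure driven by $M$. I will define a new probability measure $Q$ on $\mathcal{F}_\tau$ by setting $dQ/d\P|_{\mathcal{F}_{t \wedge \tau}} = M_{t \wedge \tau}$; since the stopped martingale is uniformly bounded by $e^{-2aC}$, it is uniformly integrable and this definition is consistent. Girsanov's theorem then identifies $(X_s)_{s \leq t \wedge \tau}$ under $Q$ as a Brownian motion with drift $-a > 0$ stopped at $t \wedge \tau$; letting $t \to \infty$ gives $\tau < \infty$ $Q$-a.s.\ and shows that, under $Q$, the process $(X_s)_{s \leq \tau}$ is a Brownian motion with drift $-a$ stopped on first reaching $C$, i.e.\ exactly the law of $X^1|_{[0,\tau]}$. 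Finally, using $M_\tau = e^{-2aC}$ on $\{\tau < \infty\}$, for any $A \in \mathcal{F}_\tau$ I compute
\[
Q(A) \;=\; \E\bigl[M_\tau \, \mathbf{1}_{A \cap \{\tau < \infty\}}\bigr] \;=\; e^{-2aC}\, \P[A \cap E_C],
\]
so that $Q(A) = \P[A \mid E_C]$. Combined with the previous paragraph, this yields the claimed concatenation description.

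The only technical point to watch is the uniform integrability of $M_{\tau \wedge t}$ and the interchange of limits as $t \to \infty$, but both are handled routinely by the uniform bound $M_{t \wedge \tau} \leq e^{-2aC}$; I do not anticipate any genuine obstacle.
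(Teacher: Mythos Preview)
Your proof is correct. Both your argument and the paper's are Girsanov arguments driven by what is ultimately the same martingale, but the framings differ. The paper first passes to the Bessel picture via Proposition~\ref{prop::bessel_exponential_bm}: it sets $Z_t = e^{X_t}$, time-changes so that $Z$ is a $\bes^\delta$ with $\delta = 2a+2$, and then reweights by the local martingale $Z_t^{2-\delta}$ to turn $Z$ into a $\bes^{4-\delta}$ up to hitting $e^C$. Unwinding, $Z_t^{2-\delta} = e^{(2-\delta)X_{t(s)}} = e^{-2a X_{t(s)}}$, which is precisely your $M_t$ up to the time change; so the two changes of measure coincide. Your route is more self-contained (no Bessel correspondence needed, and the uniform bound $M_{t\wedge\tau}\le e^{-2aC}$ makes the limiting steps clean), while the paper's detour is chosen deliberately because the Bessel description is what is actually used downstream in the construction of quantum wedges and disks.
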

\begin{proof} 
There are various ways to prove this, but we find it instructive to use the Bessel process correspondence of Proposition~\ref{prop::bessel_exponential_bm}.  (The connection to Bessel processes is actually our motivation for considering this lemma in the first place.)  Let $Z$ be given by $e^{X_t}$ with time reparameterized by quadratic variation so that $d \langle Z \rangle_t = dt$.  By Proposition~\ref{prop::bessel_exponential_bm}, $Z$ is a $\bes^\delta$ with $\delta = 2a+2$.  Let $\tau_C$ (resp.\ $\tau_0$) be the first time that $Z$ hits $e^C$ (resp.\ $0$).  Then $E_C = \{\tau_C < \tau_0\}$.  Note that $Z_t^{2-\delta}$ is a local martingale for $Z_t$ and that $Z_{\tau_C \wedge \tau_0}^{2-\delta} = e^{(2-\delta)C}\one_{E_C}$.  By the Girsanov theorem \cite{ks91bm,ry99martingales}, weighting the law of $Z_t$ by $Z_t^{2-\delta}$ yields the law of a $\bes^{4-\delta}$; note that $4-\delta=2-2a > 2$.  It thus follows that the law of $Z$ conditional on $E_C$ can be sampled from as follows.
\begin{enumerate}
\item Sample a Bessel process $Z^1$ of dimension $2-2a$ and let $\tau$ be the first time that $Z^1$ hits $e^C$
\item Sample a Bessel process $Z^2$ of dimension $2+2a$ starting from $e^C$
\item Concatenate the processes $Z^1|_{[0,\tau]}$ and $Z^2(\cdot-\tau)$
\end{enumerate}
This proves the result because we can sample from the law of $X$ given $E_C$ by sampling from the law of $Z$ given $E_C$ and then taking $X$ to be $\log Z$ reparameterized according to its quadratic variation.
\end{proof}

\begin{remark}
\label{rem::bessel_ito_excursion}
Fix $\delta \in (0,2)$ and let $\nu_\delta^\bes$ denote the It\^o excursion measure associated with a $\bes^\delta$ process.  We can express $\nu_\delta$ as follows:
\begin{enumerate}
\item Pick a ``lifetime'' $t$ according to the $\sigma$-finite measure
\begin{equation}
\label{eqn::bessel_ito_lifetime}
c_\delta t^{\delta/2-2} dt
\end{equation}
where $c_\delta$ is a constant which depends only on $\delta$ and $dt$ denotes Lebesgue measure on $\R_+$, and then
\item Run a $\bes^\delta$ excursion of length $t$ from $0$ to $0$.
\end{enumerate}
(See \cite{py82bessel} as well as the text just after \cite[Theorem~1]{py96maximum}.)

For each $e \in \CE$, we let $e^*$ be the maximum value taken on by $e$.  Suppose that $\Lambda$ is a \ppp\ chosen from $du \otimes \nu_\delta^\bes$.  Let $\Lambda^* = \{(u,e^*) : (u,e) \in \Lambda\}$.  Then, as explained just after \cite[Theorem~1]{py96maximum}, $\Lambda^*$ is a \ppp\ on $\R_+ \times \R_+$ with intensity measure $du \otimes \nu_\delta^*$ where $\nu_\delta^*(dt) = c_\delta^* t^{\delta-3} dt$ where $dt$ is Lebesgue measure on $\R_+$ and $c_\delta^* > 0$ is a constant.  Moreover, we can use $\nu_\delta^*$ to give a second description of It\^o's excursion law for a $\bes^\delta$ process as follows:
\begin{enumerate}
\item Pick $t$ from the measure $\nu_\delta^*$, and then
\item Join back to back two independent $\bes^{4-\delta}$ processes starting from $0$ and each run until the first time that it hits $t$.
\end{enumerate}

Note that, although $0$ is an absorbing state for a $\bes^\delta$ process with $\delta \leq 0$, we can still make sense of the It\^o excursion measure $\nu_\delta^\bes$ for the ``excursions'' that such a process makes from $0$ by extending the definition described above.  It is just not possible to concatenate the excursions from a \ppp\ $\Lambda$ on $\R_+ \times \CE$ with intensity measure $du \otimes \nu_\delta^\bes$ with $\delta \leq 0$ to form a continuous process as it is a.s.\ the case that for each $u_0 > 0$, the sum of the lengths of the excursions $(u,e) \in \Lambda$ with $u \leq u_0$ is infinite.
\end{remark}

\subsection{$\SLE_\kappa(\rho)$ processes}
\label{subsec::sle_kappa_rho}

\subsubsection{Chordal $\SLE_\kappa(\rho)$}

Chordal $\SLE_\kappa(\rho)$ processes are very natural variants of $\SLE_\kappa$, whose laws depend on the locations of {\bf force points} along the boundary or interior of the domain; such processes arise naturally in many statistical physical settings involving different types of boundary conditions.  We will now review the definitions of forward and reverse $\SLE_\kappa(\rho)$.  In many places in this article, we will be considering simultaneously both forward and reverse Loewner flows.  To keep the direction of time clear, we will typically use a tilde to indicate the latter.

Fix $\rho_1,\ldots,\rho_n \in \R$ and $x_1,\ldots,x_n \in \ol{\H}$.  Recall from \cite{sw2005sle_coordinate_changes} that the SDE which drives a forward $\SLE_\kappa(\ul{\rho})$ process is given by
\begin{align}
\label{eqn::forward_sle_kappa_rho}
\begin{split}
dW_t &= \sum_{i=1}^n \Re\left(\frac{-\rho_i}{f_t(x_i)}\right) dt + \sqrt{\kappa}dB_t,\\
 df_t(x_i) &= \frac{2}{f_t(x_i)} dt - dW_t, \quad f_0(x_i) = x_i \quad\text{for}\quad i=1,\ldots,n.
\end{split}
\end{align}

For $\wt{\rho}_1,\ldots,\wt{\rho}_n \in \R$ and $\wt{x}_1,\ldots,\wt{x}_n \in \ol{\h}$, the SDE for the driving process of a reverse $\SLE_\kappa(\ul{\wt{\rho}})$ is given by
\begin{align}
\label{eqn::reverse_sle_kappa_rho}
\begin{split}
d \wt{W}_t &= \sum_{i=1}^n \Re\left(\frac{-\wt{\rho}_i}{\wt{f}_t(\wt{x}_i)}\right) dt + \sqrt{\kappa}dB_t,\\
 d\wt{f}_t(\wt{x}_i) &= -\frac{2}{\wt{f}_t(\wt{x}_i)} dt - d\wt{W}_t, \quad \wt{f}_0(\wt{x}_i) = \wt{x}_i \quad\text{for}\quad i=1,\ldots,n.
\end{split}
\end{align}
We will often use $V^i$ to denote the processes $f_t(x_i)$ if we do not wish to emphasize the $x_i$.  If there is only one force point, we will often use $V$ to denote the process $f_t(x_1)$.  We will make use of analogous notation in the reverse case.  Note that~\eqref{eqn::forward_sle_kappa_rho} and~\eqref{eqn::reverse_sle_kappa_rho} differ only in that the force points in~\eqref{eqn::forward_sle_kappa_rho} evolve under the forward Loewner flow while in~\eqref{eqn::reverse_sle_kappa_rho} they evolve under the reverse Loewner flow.  In both the forward and reverse evolutions, under our convention, a positive $\rho_i$ corresponds to a force point that pushes $W_t$ away from itself.

The existence and uniqueness of solutions to~\eqref{eqn::forward_sle_kappa_rho} is discussed in detail in \cite[Section~2]{ms2012imag1} in the case that there are only boundary force points.  In particular, a precise notion of a solution to~\eqref{eqn::forward_sle_kappa_rho} is introduced in \cite[Definition~2.1]{ms2012imag1} and it is shown in \cite[Theorem~2.1]{ms2012imag1} that there exists a unique solution to~\eqref{eqn::forward_sle_kappa_rho} up until the {\bf continuation threshold} is hit.  This is the first time $t$ that the sum of the weights of the force points which are immediately to the left of $W_t$ is not more than $-2$ or the sum of the weights of the force points which are immediately to the right of $W_t$ is not more than $-2$.  Combining this result with the Girsanov theorem \cite{ks91bm,ry99martingales}, one gets existence and uniqueness of solutions to~\eqref{eqn::forward_sle_kappa_rho} (with interior force points) until the first time that either the continuation threshold is hit or the imaginary part of one of the interior force points is equal to~$0$.

The arguments given in \cite[Section~2]{ms2012imag1} also lead to existence and uniqueness results for~\eqref{eqn::reverse_sle_kappa_rho} in the case of multiple boundary force points.  In this article, it will be important for us to consider~\eqref{eqn::reverse_sle_kappa_rho} in the case of a force point starting infinitesimally above $0$.  We will establish the existence and uniqueness of such solutions in the following proposition.

\begin{proposition}
\label{prop::reverse_zip_in_solution}
Suppose that $\wt{\rho} < \tfrac{\kappa}{2}+4$.  There exists a unique law on pairs $(\wt{W},\wt{Z})$ of continuous processes such that $\wt{W}_0 = \wt{Z}_0 = 0$, $\p[\im(\wt{Z}_t) >0] =1$ for all $t > 0$, and $(\wt{W},\wt{Z})$ solves~\eqref{eqn::reverse_sle_kappa_rho} for all $t > 0$.
\end{proposition}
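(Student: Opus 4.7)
The idea is to exploit the Brownian scale invariance of~\eqref{eqn::reverse_sle_kappa_rho} to reduce the singular initial condition $Z_0 = 0$ to a stationary-distribution question for a time-homogeneous SDE on $\H$. Concretely, if $(W,Z)$ solves~\eqref{eqn::reverse_sle_kappa_rho} then so does $(\lambda^{-1}W_{\lambda^2\cdot},\lambda^{-1}Z_{\lambda^2\cdot})$ (with a new driving Brownian motion), so $\tilde Z_s := e^{-s/2}Z_{e^s}$ ought to be stationary in $s \in \R$. A direct It\^o computation confirms this: $\tilde Z$ satisfies the time-homogeneous SDE
\[
d\tilde Z_s \;=\; \Bigl[-\tfrac{1}{2}\tilde Z_s - \tfrac{2}{\tilde Z_s} + \tfrac{\rho\,\Re\tilde Z_s}{|\tilde Z_s|^2}\Bigr]\,ds \;-\; \sqrt{\kappa}\,d\tilde B_s
\]
on $\H$, and any solution of the kind described in the proposition corresponds, via this change of variables, to a stationary-in-$s$ solution of the above SDE. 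The proposition therefore reduces to showing that this SDE on $\H$ admits a unique invariant probability measure precisely when $\rho < \tfrac{\kappa}{2} + 4$.

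To analyze the invariant measure, I would pass to polar coordinates $\tilde Z_s = e^{Y_s}e^{i\Theta_s}$ with $\Theta_s \in (0,\pi)$ and perform the (random) time change $d\tau = e^{-2Y_s}\,ds$. After the time change $\Theta$ satisfies the autonomous one-dimensional SDE
\[
d\Theta_\tau \;=\; (\kappa + 4 - \rho)\sin\Theta_\tau\cos\Theta_\tau\,d\tau \;+\; \sqrt{\kappa}\sin\Theta_\tau\,d\hat B_\tau,
\]
and writing out the scale and speed measures of $\Theta$ on $(0,\pi)$ one reads off that both endpoints are Feller-entrance boundaries precisely when $\rho < \tfrac{\kappa}{2} + 4$; in that case $\Theta$ is positive recurrent with a unique invariant probability measure proportional to $(\sin\theta)^{2(\kappa+4-\rho)/\kappa - 2}\,d\theta$. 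Combined with the inward OU drift $-\tfrac{1}{2}\tilde Z_s$ at large $|\tilde Z|$ and the repulsion $-2/\tilde Z_s$ from the origin, this yields a unique invariant probability measure for $\tilde Z$ on $\H$, and hence uniqueness in law in the proposition.

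For existence I would approximate by solving~\eqref{eqn::reverse_sle_kappa_rho} with the force point started at $Z_0^\epsilon = i\epsilon$; here the force point is strictly inside $\H$ and the classical SDE theory (along the lines of what is done for the forward $\SLE_\kappa(\rho)$ SDE in~\cite[Section~2]{ms2012imag1}) produces a unique pathwise solution for all $t \geq 0$. Rescaling each $\epsilon$-approximation to the $\tilde Z$ variable produces a family that is tight by the recurrence established above, and any subsequential limit is a stationary solution of the time-homogeneous SDE on $\H$, giving a solution of the proposition with $Z_0 = 0$ after undoing the rescaling. The main obstacle is the Feller boundary classification at $\{0, \pi\}$ for $\Theta$: one has to verify that the higher-order corrections to the logarithmic drift of $\Theta$ near the endpoints do not spoil the comparison that pinpoints $\rho < \tfrac{\kappa}{2} + 4$ as the sharp threshold, and to upgrade positive recurrence of $\Theta$ (together with the mean-reversion in the radial equation) to genuine tightness of $\tilde Z$ on $\H$, rather than just recurrence.
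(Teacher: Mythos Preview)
Your approach is correct in spirit and in fact lands on exactly the same one-dimensional SDE for the argument that the paper uses: after your time change $d\tau = e^{-2Y_s}\,ds$ one has $d\tau = |Z_t|^{-2}\,dt$ in the original time, and your $\Theta$ coincides with $\arg Z$, so your $\Theta$-equation is precisely the paper's equation~\eqref{eqn::theta_reverse}, with the same reversible measure $\sin^{(8-2\rho)/\kappa}\theta\,d\theta$ and the same threshold $\rho < \tfrac{\kappa}{2}+4$ for integrability.

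The difference is that your Ornstein--Uhlenbeck rescaling $\tilde Z_s = e^{-s/2}Z_{e^s}$ is an unnecessary detour. The paper works directly with the pair $(\log\Im Z_t,\arg Z_t)$ and observes that under the time change $ds = |Z_t|^{-2}\,dt$ one has $d(\log\Im Z) = 2\,ds$: the ``radial'' coordinate grows \emph{deterministically and linearly}, so there is no two-dimensional tightness question at all. The entire problem collapses to the one-dimensional $\theta$-SDE, whose exponential ergodicity immediately gives both existence (the law of $(I,\theta)$ started from $i\epsilon$ converges as $\epsilon\to 0$) and uniqueness (any two solutions can be coupled to coalesce in finite $s$-time). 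By contrast, your route forces you to establish tightness of $\tilde Z$ on $\H$ and to argue that uniqueness of the invariant measure for $\tilde Z$ actually pins down the law of the non-stationary process---the step ``any solution corresponds to a stationary $\tilde Z$'' is not automatic without first knowing scale invariance, which is what you are trying to prove. These obstacles are surmountable, but the paper's framing sidesteps them entirely.
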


In the statement of Proposition~\ref{prop::reverse_zip_in_solution}, $\wt{Z}_t$ plays the role of $\wt{f}_t(x)$ in~\eqref{eqn::reverse_sle_kappa_rho}.

\begin{proof}[Proof of Proposition~\ref{prop::reverse_zip_in_solution}]
Suppose that $z \in \H$.  Then the time evolution $\wt{Z}$ of the force point associated with a centered, reverse $\SLE_\kappa(\wt{\rho})$ with a single force point of weight $\wt{\rho}$ starting from $z \in \ol{\h}$ is given by
\begin{equation}
\label{eqn::singleforce_interior}
d \wt{Z}_t= -\frac{2}{\wt{Z}_t}dt - d \wt{W}_t = -\frac{2}{\wt{Z}_t}dt + \Re \frac{\wt{\rho}}{\wt{Z}_t} dt  - \sqrt{\kappa} dB_t.
\end{equation}
Let $\wt{\theta}_t = \arg \wt{Z}_t$ and $\wt{I}_t = \log \Im \wt{Z}_t$.  Now,
\[ d \Im \wt{Z}_t = -\Im \frac{2}{\wt{Z}_t}dt,\]
so
\[ d \wt{I}_t = d\log \Im \wt{Z}_t = -\frac{1}{\Im \wt{Z}_t} \Im \frac{2}{\wt{Z}_t}dt = \frac{2}{|\wt{Z}_t|^2} dt.\]
Also,
\begin{align*}
 d \log \wt{Z}_t &= - \frac{\sqrt\kappa}{\wt{Z}_t}dB_t + \frac{1}{\wt{Z}_t}\Re\frac{\wt{\rho}}{\wt{Z}_t}dt -\frac{2+\kappa/2}{\wt{Z}_t^2}dt,\\
d \wt{\theta}_t &= - \Im \frac{\sqrt\kappa}{\wt{Z}_t}dB_t + \Im \frac{1}{\wt{Z}_t}\Re\frac{\wt{\rho}}{\wt{Z}_t}dt -\Im \frac{2+\kappa/2}{\wt{Z}_t^2}dt.
\end{align*}

If we reparameterize time by setting $ds = |\wt{Z}_t|^{-2}dt$, then we have $d \wt{I}_{t(s)} = 2ds$ and for a Brownian motion $\wh{B}_s$ we have $d \wh{B}_s := d B_t/|\wt{Z}_t|$.  We then have that,
\begin{align}
d \wt{\theta}_{t(s)} &= \sqrt{\kappa} \sin ( \wt{\theta}_{t(s)} ) d\wh{B}_s + \left(2 + \frac{\kappa}{2} - \frac{\wt{\rho}}{2}\right) \sin(2 \wt{\theta}_{t(s)})ds. \label{eqn::theta_reverse}
\end{align}
It is easy to see from that if we take $z=i\epsilon$ then the law of $(\wt{I}_{t(s)},\wt{\theta}_{t(s)})$ converges weakly with respect to the topology of local uniform convergence as $\epsilon \to 0$.  Indeed, to see this we let $\mu$ be the measure on $[0,\pi]$ given by
\begin{equation}
\label{eqn::theta_reverse_measure} 
 \mu(d\theta) = \sin^a(\theta) d\theta \quad\text{where}\quad a = \frac{8-2\wt{\rho}}{\kappa}
\end{equation}
and $d\theta$ denotes Lebesgue measure.  By noting that the generator for SDE~\eqref{eqn::theta_reverse} is self-adjoint with respect to the space $L^2([0,\pi],\mu)$, it follows that $\mu$ gives a reversible measure for~\eqref{eqn::theta_reverse}.  (The equivalence between reversibility and the generator being self-adjoint is given, for example, in \cite[Proposition~5.3]{LIG_PARTICLES}.)  Note, in particular, that $a > -1$ provided $\wt{\rho} < \tfrac{\kappa}{2}+4$.  In this case, $\mu$ (after normalization) gives the unique stationary distribution to~\eqref{eqn::theta_reverse} to which the law of any solution eventually converges.  Indeed, note that if we set $dr = \kappa \sin^2 \wt{\theta}_{t(s)} ds$, then
\[ d \wt{\theta}_{t(r)} = d \breve{B}_r + \left(\frac{4 + \kappa - \wt{\rho}}{\kappa} \right)\cot(\wt{\theta}_{t(r)}) dr\]
where $\breve{B}$ is a standard Brownian motion.  This process behaves like a Bessel process of dimension $1+(8+2\kappa-2\wt{\rho})/\kappa > 2$ when it is near $0$, hence a.s.\ does not hit $0$.  For the same reason, it a.s.\ does not hit $\pi$.  It therefore follows that in each unit of time, $\wt{\theta}_{t(r)}$ has a positive chance of entering the interval $[\pi/2,\pi)$ uniformly in its starting position in the interval $(0,\pi/2]$.  If $X \sim \bes^\delta$ for $\delta > 2$ with $X_0 = 0$, then it follows from the explicit form of the transition density for the square of a $\bes^\delta$ process starting from $0$ \cite[Chapter~XI, Corollary~1.4]{ry99martingales} that $\int_0^t X_s^{-2} ds$ has a finite expectation.  It therefore follows that in each unit of time, $\wt{\theta}_{t(s)}$ has a positive chance of entering the interval $[\pi/2,\pi)$ uniformly in its starting position in the interval $(0,\pi/2]$.  Consequently, two solutions to~\eqref{eqn::theta_reverse} have a uniformly positive chance of coalescing in each unit of time, regardless of where they start.  This proves the existence component of the proposition since to sample $(\wt{W},\wt{Z})$ we can first sample the infinite time solution $(\wt{I}_{t(s)},\wt{\theta}_{t(s)})$ to the dynamics above and then generate $(\wt{W},\wt{Z})$ from $(\wt{I}_{t(s)},\wt{\theta}_{t(s)})$ by inverting the time change.  Uniqueness follows similarly (we can couple any two solutions $(\wt{W}^1,\wt{Z}^1)$ and $(\wt{W}^2,\wt{Z}^2)$ together onto a common probability space so that in each unit of $t(s)$ time there is a uniformly positive chance that the solutions will coalesce and then stay together). 
\end{proof}

Suppose that $\wt{W}$ is a solution to~\eqref{eqn::reverse_sle_kappa_rho}.  Then the centered reverse chordal $\SLE_\kappa(\ul{\rho})$ process is given by the family of conformal maps $(\wt{f}_t)$ which solve the SDE:
\begin{equation}
\label{eqn::reverse_sle_kappa_rho_bis}
 d \wt{f}_t(z) = -\frac{2}{\wt{f}_t(z)} dt - d \wt{W}_t, \quad \wt{f}_0(z) = z.
\end{equation}
In some cases, it will be convenient to refer to the (uncentered) reverse $\SLE_\kappa(\ul{\rho})$ Loewner flow.  This is given by the family of conformal maps $(\wt{g}_t)$ which solve the ODE
\begin{equation}
\label{eqn::reverse_sle_kappa_rho_loewner}
 d \wt{g}_t(z) = -\frac{2}{\wt{g}_t(z) - \wt{W}_t} dt, \quad \wt{g}_0(z) = z.
\end{equation}
That is, $\wt{g}_t = \wt{f}_t + \wt{W}_t$.  The force points for the uncentered flow have the same relationship with the force points of the centered flow.

We remark that in the centered (resp.\ uncentered) forward chordal $\SLE_\kappa(\ul{\rho})$ evolution is defined in the same way except the minus sign before $2/\wt{f}_t(z)$ (resp.\ $2/(\wt{g}_t(z)-\wt{W}_t)$) in~\eqref{eqn::reverse_sle_kappa_rho_bis} (resp.\ \eqref{eqn::reverse_sle_kappa_rho_loewner}) is not present in the forward case.

Returning to the setting of reverse $\SLE_\kappa(\ul{\rho})$ Loewner flow, in the special case of a single force point $x \in \R$,~\eqref{eqn::reverse_sle_kappa_rho_bis} evaluated at $x$ takes the form
\begin{equation}
\label{eqn::singleforce}
d \wt{f}_t(x) = \frac{-2}{\wt{f}_t(x)}dt - d\wt{W}_t = \frac{-2 + \wt{\rho}}{\wt{f}_t(x)}dt  - \sqrt{\kappa}dB_t.
\end{equation}
Comparing with~\eqref{eqn::bessel_sde}, we see that $\wt{f}_t(x)/\sqrt{\kappa}$ evolves as a $\bes^{\wt{\delta}}$ where $(\wt{\delta}-1)/2 = (\wt{\rho}-2)/\kappa$, i.e.,
\begin{equation}
\label{eqn::besseldim_reverse}
\wt{\delta} = 1 + \frac{2(\wt{\rho}-2)}{\kappa}.
\end{equation}
In particular, if we set $\wt{\tau} = \inf\{t \geq 0 : \wt{f}_t(x) = 0\}$ then $\wt{\tau} < \infty$ a.s.\ if and only if $\wt{\rho} < \tfrac{\wt{\kappa}}{2}+2$ so that $\wt{\delta} < 2$.

In this case, there are two different ways of extending the solution of such a Loewner flow beyond time $\wt{\tau}$ depending on whether we would like the force point to be pushed into $\h$ (i.e., the interior of the domain) or to stay in $\partial \h$ and reflect off $0$.  We will now describe these two possibilities.
\begin{itemize}
\item Suppose that $\wt{\rho} < \tfrac{\wt{\kappa}}{2} - 2$.  Then we can define the solution after time $\wt{\tau}$ to be given by the one constructed in Proposition~\ref{prop::reverse_zip_in_solution}.  That is, if we let $(\wt{h}_t)$ be a reverse Loewner flow as constructed in Proposition~\ref{prop::reverse_zip_in_solution} which is taken to be independent of $(\wt{f}_t)_{t \in [0,\wt{\tau}]}$, for $t > \wt{\tau}$ we set $\wt{f}_t = \wt{h}_{t - \wt{\tau}} \circ \wt{f}_{\wt{\tau}}$.
\item Suppose that $-2-\tfrac{\kappa}{2} < \wt{\rho} < \tfrac{\wt{\kappa}}{2} - 2$ so that $\wt{\delta} \in (0,2)$.  Then we can also define the solution after time $\wt{\tau}$ by requiring that the $\wt{\delta}$-dimensional Bessel process $\wt{f}_t(x)/\sqrt{\kappa}$ evolves as a Bessel process (reflecting off the value $0$ and staying in $\R$) even after time $\wt{\tau}$; in other words, the process $\wt{f}_t(x)$ is constructed from $B_t$ by solving the SDE in~\eqref{eqn::singleforce} in the usual way), which in turn determines $\wt{W}_t$ and hence $\wt{f}_t$ for all $t \geq 0$.
\end{itemize}
We will consider both types of continuations in this article.

Similarly, if $(f_t)$ corresponds to a centered forward Loewner flow with a single force point of weight $\rho$ located at $x$, then $f_t(x)/\sqrt{\kappa}$ evolves as a $\bes^\delta$ where
\begin{equation}
\label{eqn::besseldim_forward}
\delta = 1 + \frac{2(\rho+2)}{\kappa}.
\end{equation}
The reason for the difference from~\eqref{eqn::besseldim_reverse} can be seen by considering the case $\rho = 0$.  In the reverse process, the Loewner drift is pulling $f_t(x)$ toward the origin, while in the forward process the Loewner drift is pushing $f_t(x)$ away from the origin.  In both cases $\rho$ indicates a quantity of additional force pushing $f_t(x)$ away from the origin.

\subsubsection{Radial and whole-plane $\SLE_\kappa(\rho)$}
\label{subsubsec::radial_whole_plane_sle}

Let
\[ \Psi(u,z) = \frac{u+z}{u-z},\quad \Phi(u,z) = z \Psi(u,z), \quad\text{and}\quad \wh{\Phi}(u,z) = \frac{\Phi(u,z) + \Phi(1/\ol{u},z)}{2}.\]
(Note that $\wh{\Phi}(u,z) = \Phi(u,z)$ for $u \in \partial \D$.)  A radial $\SLE_\kappa$ in $\D$ targeted at $0$ is the random growth process $(K_t)$ in $\D$ starting from a point on $\partial \D$ growing towards $0$ which is described by the random family of conformal maps $(g_t)$ which solve the radial Loewner equation:
\begin{equation}
\label{eqn::radial_loewner}
\partial_t g_t(z) = \Phi(U_t,g_t(z)), \quad g_0(z) = z.
\end{equation}
Here, $U_t = e^{i \sqrt{\kappa} B_t}$ where $B_t$ is a standard Brownian motion; $U$ is referred to as the driving function for the radial Loewner evolution.  The set $K_t$ is the complement of the domain of $g_t$ in $\D$ and $g_t$ is the unique conformal transformation $\D \setminus K_t \to \D$ fixing $0$ with $g_t'(0) > 0$.  Time is parameterized by the logarithmic conformal radius as viewed from $0$ so that $\log g_t'(0) = t$ for all $t \geq 0$.

As in the chordal setting, radial $\SLE_\kappa(\rho)$ is a generalization of radial $\SLE$ in which one keeps track of one extra marked point.  We say that a pair of processes $(U,V)$, each of which takes values in~$\s^1$, solves the radial $\SLE_\kappa(\rho)$ equation for $\rho \in \R$ with a single boundary force point of weight~$\rho$ provided that
\begin{equation}
\label{eqn::sle_radial_equation}
\begin{split}
dU_t &= - \frac{\kappa}{2}  U_t dt + i\sqrt{\kappa} U_t dB_t + \frac{\rho}{2} \wh{\Phi}(V_t,U_t) dt\\
dV_t &= \Phi(U_t,V_t) dt.
\end{split}
\end{equation}
A radial $\SLE_\kappa(\rho)$ is the growth process corresponding to the solution $(g_t)$ of~\eqref{eqn::radial_loewner} when $U$ is taken to be as in~\eqref{eqn::sle_radial_equation}.  It is explained in \cite[Section~2.1.2]{ms2013imag4} that~\eqref{eqn::sle_radial_equation} has a unique solution which exists for all time provided $\rho > -2$.

It will often be useful to consider the SDE
\begin{equation}
\label{eqn::theta_equation}
 d\theta_t = \frac{\rho+2}{2} \cot\left(\frac{\theta_t}{2}\right)  dt + \sqrt{\kappa} dB_t
\end{equation}
where $B$ is a standard Brownian motion.  This SDE can be derived formally by taking a solution $(U,V)$ to~\eqref{eqn::sle_radial_equation} and then setting $\theta_t = \arg U_t - \arg V_t$ (see \cite[Equation~4.1]{she2009cle} for the case $\rho=\kappa-6$).

Reverse radial $\SLE_\kappa$ is described in terms of the family of conformal maps $(\wt{g}_t)$ which solve the reverse radial Loewner equation
\begin{equation}
\label{eqn::reverse_radial_loewner}
\partial_t \wt{g}_t(z) = -\Phi(\wt{U}_t,\wt{g}_t(z)), \quad \wt{g}_0(z) = z
\end{equation}
where $\wt{U}_t = e^{i\sqrt{\kappa} B_t}$ and $B$ is a standard Brownian motion.

Reverse radial $\SLE_\kappa(\wt{\rho})$ is a variant of reverse radial $\SLE_\kappa$ in which one keeps track of an extra marked point on $\partial \D$.  It is defined in an analogous way to reverse radial $\SLE_\kappa$ except the driving function $\wt{U}_t$ is taken to be a solution to the SDE:
\begin{align}
\label{eqn::reverse_radial_sle_kappa_rho}
\begin{split}
 d\wt{U}_t &= -\frac{\kappa}{2} \wt{U}_t dt + i \sqrt{\kappa} \wt{U}_t dB_t + \frac{\rho}{2}\wh{\Phi}(\wt{V}_t,\wt{U}_t) dt\\
  d\wt{V}_t &= -\Phi(\wt{U}_t,\wt{V}_t) dt.
  \end{split}
\end{align}
Observe that when $\wt{\rho} = 0$ this is the same as the driving SDE for ordinary reverse radial $\SLE_\kappa$.

Whole-plane $\SLE$ is a variant of $\SLE_\kappa$ which describes a random growth process $K_t$ where, for each $t \in \R$, $K_t \subseteq \C$ is compact with $\C_t = \C \setminus K_t$ simply connected (viewed as a subset of the Riemann sphere).  For each $t$, we let $g_t \colon \C_t \to \C \setminus \D$ be the unique conformal transformation with $g_t(\infty) = \infty$ and $g_t'(\infty) > 0$.  Then $g_t$ solves the whole-plane Loewner equation
\begin{equation}
\label{eqn::whole_plane_loewner}
 \partial_t g_t = \Phi(U_t,g_t(z)), \quad g_0(z) = z.
\end{equation}
Here, $U_t = e^{i\sqrt{\kappa} B_t}$ where $B_t$ is a two-sided standard Brownian motion.  Equivalently, $U$ is given by the time-stationary solution to~\eqref{eqn::sle_radial_equation} with $\rho=0$.  Note that~\eqref{eqn::whole_plane_loewner} is the same as~\eqref{eqn::radial_loewner}.  In fact, for any $s \in \R$, the growth process $1/g_s(K_t \setminus K_s)$ for $t \geq s$ from $\partial \D$ to $0$ is a radial $\SLE_\kappa$ process in $\D$.  Thus, whole-plane $\SLE$ can be thought of as a bi-infinite time version of radial $\SLE$.  Whole-plane $\SLE_\kappa(\rho)$ is the growth process associated with~\eqref{eqn::whole_plane_loewner} where $U$ is taken to be the time-stationary solution of~\eqref{eqn::radial_loewner} (see the discussion in \cite[Section~2]{ms2013imag4} for more on the existence and convergence to the time-stationary solution to~\eqref{eqn::radial_loewner}).

If $(g_t)$ is the Loewner flow associated with a radial $\SLE_\kappa(\ul{\rho})$ or a whole-plane $\SLE_\kappa(\rho)$ with driving function $U_t$, then $f_t = U_t^{-1} g_t$ gives the centered Loewner flow.  The reverse centered radial Loewner flow is defined in the same way but with the reverse Loewner flow in place of the forward Loewner flow.

The continuity of radial and whole-plane $\SLE_\kappa$ for $\kappa \neq 8$ was proved by Rohde and Schramm \cite{rs2005sle} and for $\kappa = 8$ follows by work of Lawler, Schramm, and Werner \cite{lsw2004ust}.  The transience of radial and whole-plane $\SLE_\kappa$ was proved by Lawler \cite{law2013transience}.  The continuity and transience of radial and whole-plane $\SLE_\kappa(\rho)$ for $\rho > -2$ is proved in \cite{ms2013imag4}.

\subsection{Forward/reverse symmetries for $\SLE_\kappa(\rho)$ processes}
\label{subsec::forward_reverse_sle_kappa_rho}

 \begin {figure}[h!]
\begin {center}
\subfloat[\label{fig::forcepoints_origin}Zipping up until the force point reaches the origin.]{\includegraphics [scale=0.85,page=1]{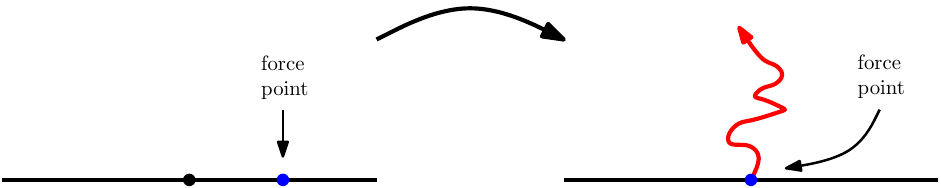}}

\vspace{0.05\textheight}

\subfloat[\label{fig::forcepoints_interior}Zipping a force point at the origin into the interior.]{\includegraphics[scale=0.85,page=2]{figures/forcepoints}}
\caption {\label{fig::forcepoints} Two types of forward/reverse symmetry for $\SLE_\kappa(\rho)$.  In each case, one can generate the path on the right either by starting with the scenario on the left and zipping up (via reverse $\SLE_\kappa(\wt{\rho})$ with the force point as shown) or starting with the scenario on the right and unzipping (i.e., drawing the path on the right as a forward $\SLE_\kappa(\rho)$ with the force point as shown).  In the top setting (see Proposition~\ref{prop::zip_up_to_zero}), we have $\rho = \kappa - \wt{\rho}$.  In the bottom setting (see Proposition~\ref{prop::zip_into_interior}), we have $\rho = \wt{\rho} - 8$.}
\end{center}
\end{figure}

In this section, we are going to establish several types of forward/reverse $\SLE_\kappa(\rho)$ symmetries.  The first of these was also used in \cite{she2010zipper} and justified there using the same argument we present below.  See also Figure~\ref{fig::forcepoints_origin} for an illustration.

\begin{proposition}
\label{prop::zip_up_to_zero}
Suppose that $(\wt{f}_t)$ is the centered reverse Loewner flow associated with an $\SLE_\kappa(\wt{\rho})$ process with a single force point of weight $\wt{\rho} < \tfrac{\kappa}{2}+2$ located at $x > 0$.  Let $\wt{\tau} = \inf\{t \geq 0: \wt{f}_t(x) = 0\}$ and let 
\begin{equation}
\label{eqn::rho_wtrho}
\rho = \kappa-\wt{\rho}.
\end{equation}
Then $\wt{f}_{\wt{\tau}}$ maps $\h$ to $\h \setminus \eta_{\wt{\tau}}$ where $\eta_{\wt{\tau}}$ has the law of the initial segment of a forward $\SLE_\kappa(\rho)$ with a single boundary force point located at $0^+$ of weight $\rho$ stopped at some a.s.\ positive time $\tau$.  In particular, if $\wt{\rho} = \kappa$, then $\eta_{\wt{\tau}}$ has the law of an ordinary $\SLE_\kappa$ stopped at an a.s.\ positive time $\tau$.
\end{proposition}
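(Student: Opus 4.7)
The strategy is to reduce to a Bessel process time-reversal via Proposition~\ref{prop::bessel_time_reversal}; the algebraic identity $\rho = \kappa - \wt\rho$ emerges naturally from matching Bessel dimensions under the $\delta \mapsto 4-\delta$ time-reversal.

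First I would analyze the reverse-flow dynamics at the force point. Writing $\wt V_t := \wt f_t(x) > 0$, equation~\eqref{eqn::singleforce} together with~\eqref{eqn::besseldim_reverse} shows that $\wt V_t/\sqrt\kappa$ is a Bessel process of dimension $\wt\delta = 1 + 2(\wt\rho - 2)/\kappa$. Since $\wt\rho < \tfrac{\kappa}{2}+2$, we have $\wt\delta < 2$, so $\wt\tau$ is a.s.\ finite and $\wt V_{\wt\tau} = 0$. Next, I would pass from the reverse flow to the corresponding forward chain by time-reversing the driving function: set $W_s := \wt W_{\wt\tau} - \wt W_{\wt\tau - s}$ for $s \in [0,\wt\tau]$. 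The standard correspondence between a reverse Loewner map and the inverse of the forward Loewner map driven by the time-reversed driving function implies that the forward Loewner chain driven by $W_s$ traces out precisely the curve $\eta_{\wt\tau}$, with centered conformal maps $f_s$ satisfying $f_s(x) = \wt V_{\wt\tau - s}$ for $s \in [0,\wt\tau]$. It therefore suffices to show that the joint process $(W_s, f_s(x))_{s \in [0,\wt\tau]}$ has the law of the driving function and centered force-point position of a forward $\SLE_\kappa(\rho)$ with boundary force of weight $\rho$ at $0^+$, stopped at the a.s.\ positive time $\tau := \wt\tau$ at which $f_\tau(x) = x$.

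Applying Proposition~\ref{prop::bessel_time_reversal} to $\wt V_t/\sqrt\kappa$, the time-reversed process $f_s(x)/\sqrt\kappa$ has the law of a Bessel process of dimension $\delta = 4 - \wt\delta = 1 + 2(\rho+2)/\kappa$ with $\rho = \kappa - \wt\rho$, stopped at the last hitting time of $x/\sqrt\kappa$; by~\eqref{eqn::besseldim_forward} this is exactly the dimension required for a forward $\SLE_\kappa(\rho)$ force point of weight $\rho$ at $0^+$. The centered Loewner ODE $df_s(x) = (2/f_s(x))\,ds - dW_s$ deterministically links $W_s$ to the semimartingale $f_s(x)$, so matching the Bessel laws propagates to a matching of joint laws, yielding the forward $\SLE_\kappa(\rho)$ SDE $dW_s = -\rho/f_s(x)\,ds + \sqrt\kappa\,dB_s$, where $B$ is the Brownian motion appearing in the Bessel decomposition of $f_s(x)$ in the time-reversed filtration.

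The main technical obstacle is the rigorous justification of this last step: one must verify that the time-reversed process $f_s(x)/\sqrt\kappa$ is not merely a Bessel process in distribution but is a Bessel process driven by a Brownian motion adapted to its own natural filtration, so that the induced driving function $W_s$ is a genuine semimartingale with respect to that filtration. This amounts to checking that the martingale part inherited under time-reversal produces no spurious drift --- a fact one can establish by combining the bridge decomposition of $\wt V_t$ up to $\wt\tau$ with the same strong-Markov/excursion reversibility idea used in the proof of Lemma~\ref{lem::ito_excursion_reversible}. The special case $\wt\rho = \kappa$ yields $\rho = 0$, recovering ordinary $\SLE_\kappa$ as stated.
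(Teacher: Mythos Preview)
Your core idea---applying Proposition~\ref{prop::bessel_time_reversal} to the gap process to obtain $\delta = 4 - \wt\delta$ and hence $\rho = \kappa - \wt\rho$---is exactly the paper's. The difference lies in execution, and your version introduces an obstacle that the paper sidesteps.

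First a notational slip: the forward $\SLE_\kappa(\rho)$ force point sits at $0^+$, not at $x$, so the relevant centered force-point position is $f_s(0^+)$ (equivalently $V_s - W_s$ in the paper's uncentered notation), not $f_s(x)$; the time $\tau$ is then the \emph{last} time this quantity equals $x$, as the paper remarks after the statement.

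More substantively, you correctly flag a ``main technical obstacle'': having defined $W_s$ by time-reversing $\wt W$ at the random time $\wt\tau$, you must check that $W_s$ satisfies the forward $\SLE_\kappa(\rho)$ SDE in its own filtration, which is not automatic since $\wt\tau$ is not a stopping time in the reversed direction. The paper avoids this entirely by reversing the order of construction. It starts with an independent forward $\SLE_\kappa(\rho)$ driving pair $(W,V)$---so the forward SDE holds by definition---and couples only the \emph{gap} processes $V-W$ and $\wt V-\wt W$ via Proposition~\ref{prop::bessel_time_reversal}. It then verifies, by a purely deterministic integration (equation~\eqref{eqn::wtw}), that under this coupling the driving functions are related by $W_t = \wt W_{\wt\tau - t} - \wt W_{\wt\tau}$, and finally checks $f_{\wt\tau}\circ\wt f_{\wt\tau} = \mathrm{id}$ by a second deterministic ODE comparison (equations~\eqref{eqn::vpt_def}--\eqref{eqn::g_g_t_centered}). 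No filtration or semimartingale-under-time-reversal argument is needed: once the Bessel processes are matched, the rest is calculus on sample paths.
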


We note that the time $\tau$ is not a stopping time for the filtration generated by the forward $\SLE_\kappa(\rho)$ process in the statement of Proposition~\ref{prop::zip_up_to_zero}.  As we will see in the proof, it is given by the \emph{last} time that the centered forward Loewner flow sends $0^+$ to $x$.  Note that the assumption $\wt{\rho} < \tfrac{\kappa}{2}+2$ implies that $\p[\wt{\tau} < \infty] = 1$ in the statement of Proposition~\ref{prop::zip_up_to_zero}.  It also implies that $\rho$ from~\eqref{eqn::rho_wtrho} satisfies $\rho > \tfrac{\kappa}{2}-2$ and recall that $\tfrac{\kappa}{2}-2$ is the critical threshold at or above which forward $\SLE_\kappa(\rho)$ a.s.\ does not hit its force point.

\begin{proof}[Proof of Proposition~\ref{prop::zip_up_to_zero}]
Let $(W,V)$ (resp.\ $(\wt{W},\wt{V})$) be the driving process associated with a forward (resp.\ reverse) $\SLE_\kappa(\rho)$ (resp.\ $\SLE_\kappa(\wt{\rho})$) process with a single boundary force point of weight $\rho$ (resp.\ $\wt{\rho}$) located at $0^+$ (resp.\ $x > 0$).  Here, we assume that $V$ (resp.\ $\wt{V}$) is the process which gives the location of the force point for the uncentered Loewner flow.  Then we know that $\kappa^{-1/2}(V-W)$ evolves as a $\bes^\delta$ where $\delta$ is as in~\eqref{eqn::besseldim_forward}.  Similarly, $\kappa^{-1/2}(\wt{V}-\wt{W})$ evolves as a $\bes^{\wt{\delta}}$ where $\wt{\delta}$ is as in~\eqref{eqn::besseldim_reverse}.  Combining~\eqref{eqn::besseldim_reverse} and~\eqref{eqn::besseldim_forward} with~\eqref{eqn::wt_delta} of Proposition~\ref{prop::bessel_time_reversal} gives the relationship between $\rho$ and $\wt{\rho}$.  Namely,
\[ 1 + \frac{2 (\rho+2)}{\kappa} = 4 - \left( 1 + \frac{2(\wt{\rho}-2)}{\kappa} \right),\]
so that $\rho$ and $\wt{\rho}$ satisfy the relationship~\eqref{eqn::rho_wtrho}.  This implies that we can couple together $(W,V)$ and $(\wt{W},\wt{V})$ such that if we let $\wt{\tau} = \inf\{t \geq 0 : \wt{V}_t - \wt{W}_t = 0\}$ then
\begin{equation}
\label{eqn::beq}
V_t - W_t = \wt{V}_{\wt{\tau}-t} - \wt{W}_{\wt{\tau}-t} \quad\text{for all}\quad t \in [0,\wt{\tau}].
\end{equation}

Under this coupling, we have for all $t \in [0,\wt{\tau}]$ that
\begin{align}
      W_t 
&=  V_t + (W_t - V_t) \notag\\
&= \int_0^t \frac{2}{V_s - W_s} ds + (W_t - V_t) \quad\text{(definition of $V$)} \notag\\
&= \int_0^t \frac{2}{\wt{V}_{\wt{\tau}-s} - \wt{W}_{\wt{\tau}-s}} ds + (\wt{W}_{\wt{\tau}-t} - \wt{V}_{\wt{\tau}-t}) \quad\text{(by \eqref{eqn::beq})} \notag\\
&= \int_{\wt{\tau}-t}^{\wt{\tau}} \frac{2}{\wt{V}_s - \wt{W}_s}ds + (\wt{W}_{\wt{\tau}-t} - \wt{V}_{\wt{\tau}-t}) \notag\\
&= \int_0^{\wt{\tau}} \frac{2}{\wt{V}_s - \wt{W}_s} ds - \int_{0}^{\wt{\tau}-t} \frac{2}{\wt{V}_s-\wt{W}_s} ds + (\wt{W}_{\wt{\tau}-t} - \wt{V}_{\wt{\tau}-t}) \notag\\
&= \wt{V}_{\wt{\tau}-t} - \wt{V}_{\wt{\tau}} + (\wt{W}_{\wt{\tau} -t} - \wt{V}_{\wt{\tau}-t}) \quad\text{(definition of $\wt{V}$)} \notag\\
&= \wt{W}_{\wt{\tau}-t} - \wt{V}_{\wt{\tau}}
   = \wt{W}_{\wt{\tau}-t} - \wt{W}_{\wt{\tau}}. \label{eqn::wtw}
\end{align}
In the last step, we used that $\wt{V}_{\wt{\tau}} - \wt{W}_{\wt{\tau}} = 0$.  Since $\wt{W}_0 = 0$, we in particular have that
\begin{equation}
\label{eqn::wt_reversal}
W_{\wt{\tau}} = -\wt{W}_{\wt{\tau}}.
\end{equation}

Let $(g_t)$ (resp.\ $\wt{g}_t$) be the forward (resp.\ reverse) Loewner flow driven by $W$ (resp.\ $\wt{W}$).  We also let $(f_t)$ (resp.\ $(\wt{f}_t)$) be the centered forward (resp.\ reverse) Loewner flow driven by $W$ (resp.\ $\wt{W})$.  Let
\begin{equation}
\label{eqn::vpt_def}
\varphi_t(z) = g_t(\wt{g}_{\wt{\tau}}(z)-\wt{W}_{\wt{\tau}}) + \wt{W}_{\wt{\tau}} = g_t(\wt{f}_{\wt{\tau}}(z)) - W_{\wt{\tau}}.
\end{equation}
(The second equality uses~\eqref{eqn::wt_reversal}.)  Note that
\begin{equation}
\label{eqn::varphi_t_f_t}
\varphi_{\wt{\tau}}(z) = f_{\wt{\tau}}(\wt{f}_{\wt{\tau}}(z)).
\end{equation}
We have that
\begin{align*}
     \varphi_t(z)
&= \wt{g}_{\wt{\tau}}(z)  + \int_0^t \frac{2}{g_s(\wt{f}_{\wt{\tau}}(z)) - W_s} ds \quad\text{(definition of $\varphi_t$ and $g_t$)}\\
&= \wt{g}_{\wt{\tau}}(z) + \int_0^t \frac{2}{\varphi_s(z) - (W_s+\wt{W}_{\wt{\tau}})} ds \quad\text{(definition of $\varphi_t$)}\\
&= \wt{g}_{\wt{\tau}}(z)  + \int_0^t \frac{2}{\varphi_s(z)- \wt{W}_{\wt{\tau}-s}} ds \quad\text{(by \eqref{eqn::wtw})}.
\end{align*}
Since $\wt{g}_{\wt{\tau}-t}$ satisfies the same equation, we must have that
\begin{equation}
\label{eqn::g_g_t_centered}
 f_{\wt{\tau}}(\wt{f}_{\wt{\tau}}(z)) = \varphi_{\wt{\tau}}(z) = \wt{g}_0(z) = z,
\end{equation}
as desired.

Therefore $\wt{f}_{\wt{\tau}}$ maps $\h$ to $\h \setminus \eta_{\wt{\tau}}$ where $\eta_{\wt{\tau}}$ has the law of an initial segment of a forward $\SLE_\kappa(\rho)$, as desired.
\end{proof}

The correspondence between $\rho$ and $\wt{\rho}$ derived in Proposition~\ref{prop::zip_up_to_zero} takes a rather different form if we consider an interior force point instead of a point on $\R$.  See the bottom part of Figure~\ref{fig::forcepoints_interior} for an illustration.

\begin{proposition}
\label{prop::zip_into_interior}
Fix $\wt{\rho} < \tfrac{\kappa}{2}+4$.  Suppose that $(\wt{f}_t)$ is the centered reverse Loewner flow associated with an $\SLE_\kappa(\wt{\rho})$ process with a single interior force point located infinitesimally above~$0$.  Let $\rho = \wt{\rho}-8$ and let $\wt{Z}_t$ denote the evolution of the force point under $\wt{f}_t$.  For each $r > 0$, let $\wt{\tau}_r = \inf\{t > 0 : \im(\wt{Z}_t) = r\}$.  The law of $\h \setminus \wt{f}_{\wt{\tau}_r}(\h) = \eta_{\wt{\tau}_r}$ given $\wt{Z}_{\wt{\tau}_r}$ is that of a forward $\SLE_\kappa(\rho)$ process with a single interior force point of weight $\rho$ located at $\wt{Z}_{\wt{\tau}_r}$.
\end{proposition}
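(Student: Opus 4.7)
The plan is to adapt the coupling strategy of Proposition~\ref{prop::zip_up_to_zero}, replacing the Bessel time-reversal from Proposition~\ref{prop::bessel_time_reversal} by a time-reversal of the angle process associated with the interior force point. First I would apply It\^o's formula to $\log f_t(z)$ and reparameterize by the quadratic-variation clock $ds = dt/|f_t(z)|^2$ to derive the SDE for $\theta_t = \arg f_t(z)$ under the centered forward Loewner flow with a single interior force point of weight $\rho$. The computation exactly parallels the reverse case carried out in the proof of Proposition~\ref{prop::reverse_zip_in_solution} and yields
\[
d\theta_{t(s)} = \sqrt{\kappa}\sin\theta_{t(s)}\, d\wh B_s + \tfrac{1}{2}(\kappa - 4 - \rho)\sin(2\theta_{t(s)})\, ds,
\]
to be compared with the reverse drift coefficient $\tfrac12(4+\kappa-\wt\rho)$; the two coincide exactly when $\rho = \wt\rho - 8$, precisely the relation asserted in the statement. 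The deterministic component $I_t = \log\Im f_t(z)$ satisfies $dI/ds = -2$ under the forward flow and $dI/ds = +2$ under the reverse, consistently with the two evolutions being time-reverses of each other.

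Next I would invoke the reversibility of the angle diffusion established in Proposition~\ref{prop::reverse_zip_in_solution}: under the standing hypothesis $\wt\rho < \tfrac{\kappa}{2} + 4$, the stationary measure $\mu(d\theta) = \sin^{a}\theta\, d\theta$ with $a = (8-2\wt\rho)/\kappa > -1$ is integrable, the generator is self-adjoint with respect to it, and starting from the degenerate initial condition $\wt Z_0 = 0$ the angle process is in stationarity for all positive times. Reversibility together with the coincidence of the forward and reverse angle SDEs when $\rho = \wt\rho - 8$ lets me construct a joint coupling of the two flows satisfying $Z_t = \wt Z_{\wt\tau_r - t}$ as complex processes for $t \in [0,\wt\tau_r]$, while preserving the conditional joint law given $\wt Z_{\wt\tau_r}$. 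Under this coupling an integration analogous to the one leading to~\eqref{eqn::wtw} produces $W_t = \wt W_{\wt\tau_r - t} - \wt W_{\wt\tau_r}$, and the remainder of the argument then follows verbatim from Proposition~\ref{prop::zip_up_to_zero}: setting $\varphi_t(z) = g_t(\wt g_{\wt\tau_r}(z) - \wt W_{\wt\tau_r}) + \wt W_{\wt\tau_r}$, the forward and reverse Loewner ODEs together with the relation between $W$ and $\wt W$ give $\varphi_{\wt\tau_r}(z) = z$, i.e.\ $f_{\wt\tau_r}\circ\wt f_{\wt\tau_r} = \mathrm{id}$, showing that $\eta_{\wt\tau_r}$ is generated by the forward Loewner flow driven by $W$.

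The main obstacle, compared with the boundary-force-point case, is the time-reversal of a diffusion on $(0,\pi)$ rather than of a Bessel process. The key technical input is the instantaneous-stationarity statement from Proposition~\ref{prop::reverse_zip_in_solution}, which is used twice: first to justify that the reverse angle process admits a clean time-reversal description despite the degenerate initial condition $\wt Z_0 = 0$, and second to handle the conditioning on the random endpoint $\wt Z_{\wt\tau_r}$ by viewing the reverse trajectory as a stationary bridge in the angular coordinate paired with the deterministic $I$-dynamics. A secondary, more routine point is verifying that the driving process $W$ produced by the coupling is genuinely that of a forward $\SLE_\kappa(\rho)$ with interior force point at $\wt Z_{\wt\tau_r}$: this reduces to checking that $f_t(\wt Z_{\wt\tau_r}) = Z_t$ evolves with the correct forward $\SLE_\kappa(\rho)$ dynamics, which is automatic from the matching angle SDEs and the sign-reversed deterministic $I$-evolution.
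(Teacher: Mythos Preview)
Your proposal follows essentially the same route as the paper's proof: compute the forward angle SDE (your drift $\tfrac{1}{2}(\kappa-4-\rho)$ matches the paper's $(-2+\tfrac{\kappa}{2}-\tfrac{\rho}{2})$), match it against the reverse SDE from Proposition~\ref{prop::reverse_zip_in_solution} to read off $\rho=\wt\rho-8$, invoke reversibility of the stationary angle diffusion to couple $Z_t=\wt Z_{\wt\tau_r-t}$, and then recycle the Loewner-flow identity argument from Proposition~\ref{prop::zip_up_to_zero}. The paper compresses the last two steps into a single sentence (``the result follows from the argument given at the end of the proof of Proposition~\ref{prop::zip_up_to_zero}''), whereas you spell out the recovery of $W_t=\wt W_{\wt\tau_r-t}-\wt W_{\wt\tau_r}$ and the $\varphi_t$ computation explicitly, but this is the same argument in more detail.
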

\begin{proof}
Repeating the calculations in the proof of Proposition~\ref{prop::reverse_zip_in_solution} using forward $\SLE_\kappa(\rho)$ instead of reverse $\SLE_\kappa(\wt{\rho})$ yields
\begin{equation}
 d \theta_{t(s)} = \sqrt{\kappa} \sin \theta_{t(s)} d\wh{B}_s + \left(-2 + \frac{\kappa}{2} - \frac{\rho}{2}\right) \sin(2 \theta_{t(s)})ds, \label{eqn::theta_forward}
\end{equation}
and $d I_{t(s)} = -2ds$.  We conclude that if we take $\rho = \wt{\rho} - 8$, then~\eqref{eqn::theta_forward} and~\eqref{eqn::theta_reverse} are the same.  

By the reversibility of~\eqref{eqn::theta_reverse} and~\eqref{eqn::theta_forward}, this implies that the following is true.  Suppose that we fix $r > 0$ and then
\begin{enumerate}
\item Sample $Z_r = X + ir$ so that $\arg(Z_r)$ is given by the stationary measure for~\eqref{eqn::theta_reverse},\eqref{eqn::theta_forward} as described in~\eqref{eqn::theta_reverse_measure} in the proof of Proposition~\ref{prop::reverse_zip_in_solution}.
\item Sample a centered forward $\SLE_\kappa(\rho)$ process $(f_t)$ with a single interior force point of weight $\rho$ located at $Z_r$.
\end{enumerate}
Then the evolution of $f_t(Z_r)$ considered in the time-interval from $0$ to $\inf\{t \geq 0 : \im(f_t(Z_r)) = 0\}$ has the same law as $\wt{Z}_{\wt{\tau}_r-t}$ for $t \in [0,\wt{\tau}_r]$.  Therefore the result follows from the argument given at the end of the proof of Proposition~\ref{prop::zip_up_to_zero}.
\end{proof}

Our next two forward/reverse symmetry results describe how one can sample the range of a boundary-intersecting forward $\SLE_\kappa(\rho)$ using a reverse Loewner flow in both the chordal and radial settings.

\begin{proposition}
\label{prop::zip_up_to_local_time}
Fix $\kappa > 0$, $\rho \in (-2,\tfrac{\kappa}{2}-2)$, and let $\wt{\rho} = \rho+4$.  Let $(W,V)$ (resp.\ $(\wt{W},\wt{V})$) be the driving process associated with a forward (resp.\ reverse) $\SLE_\kappa(\rho)$ (resp.\ $\SLE_\kappa(\wt{\rho})$) process in $\h$ from $0$ to $\infty$ with a single boundary force point of weight $\rho$ (resp.\ $\wt{\rho}$) located at $0^+$.  Let $(f_t)$ (resp.\ $(\wt{f}_t)$) denote the centered forward (resp.\ reverse) Loewner flow driven by $W$ (resp.\ $\wt{W}$).  Let $\ell$ (resp.\ $\wt{\ell}$) denote the local time of the Bessel process $\kappa^{-1/2}(V-W)$ (resp.\ $\kappa^{-1/2}(\wt{V}-\wt{W})$) at $0$ and denote by $T$ (resp.\ $\wt{T}$) the right continuous inverse of $\ell$ (resp.\ $\wt{\ell}$).  For each $u > 0$, we have that $f_{T_u}^{-1} \stackrel{d}{=} \wt{f}_{\wt{T}_u}$.
\end{proposition}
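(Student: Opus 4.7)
The plan is to use that, when $\wt\rho = \rho + 4$, the radial parts of the forward and reverse driving processes are the same Bessel process in law, and then to pass from reversibility of that Bessel process at its inverse local time to the claimed identity between centered Loewner flows, by running the algebraic template of Proposition~\ref{prop::zip_up_to_zero}.

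First I would verify the Bessel dimension matching. By~\eqref{eqn::besseldim_forward} the process $\kappa^{-1/2}(V-W)$ evolves as a $\bes^\delta$ with $\delta = 1 + 2(\rho + 2)/\kappa$, and by~\eqref{eqn::besseldim_reverse} the process $\kappa^{-1/2}(\wt V - \wt W)$ evolves as a $\bes^{\wt\delta}$ with $\wt\delta = 1 + 2(\wt\rho - 2)/\kappa$; substituting $\wt\rho = \rho + 4$ gives $\wt\delta = \delta$, and the hypothesis $\rho \in (-2, \kappa/2 - 2)$ puts $\delta \in (1, 2)$, the regime in which the Bessel process is an instantaneously reflecting semimartingale, is reversible with respect to the $\sigma$-finite measure of density $x^{\delta - 1}$ on $(0, \infty)$, and has accumulated local time $\ell_\infty = \infty$ at $0$ almost surely. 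Proposition~\ref{prop::markov_local_time_reversal} (together with Lemma~\ref{lem::ito_excursion_reversible}) therefore gives
\begin{equation*}
(V_t - W_t : t \in [0, T_u]) \stackrel{d}{=} (V_{T_u - t} - W_{T_u - t} : t \in [0, T_u]),
\end{equation*}
and since $\kappa^{-1/2}(V - W)$ and $\kappa^{-1/2}(\wt V - \wt W)$ have the same law as $\bes^\delta$, I may couple them on one probability space so that $T_u = \wt T_u$ and
\begin{equation*}
V_t - W_t \;=\; \wt V_{\wt T_u - t} - \wt W_{\wt T_u - t} \qquad \text{for all } t \in [0, T_u].
\end{equation*}

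Under this coupling I would rerun the computation used to prove Proposition~\ref{prop::zip_up_to_zero}: writing
\begin{equation*}
W_t \;=\; V_t + (W_t - V_t) \;=\; \int_0^t \frac{2}{V_s - W_s}\,ds + (W_t - V_t),
\end{equation*}
substituting the identity above, and changing the variable of integration, one obtains $W_t = \wt W_{\wt T_u - t} - \wt W_{\wt T_u}$ for $t \in [0, T_u]$. Setting $\varphi_t(z) := g_t(\wt f_{\wt T_u}(z)) - W_{\wt T_u}$, where $g_t$ is the uncentered forward flow, and differentiating in $t$ shows that $\varphi_t$ satisfies the reverse Loewner equation driven by $\wt W_{\wt T_u - \cdot}$; since $\wt g_{\wt T_u - t}$ solves the same ODE with the same value at $t = 0$, we conclude $\varphi_{\wt T_u}(z) = z$, i.e.\ $f_{T_u} \circ \wt f_{\wt T_u} = \mathrm{id}$, which is exactly the statement that $f_{T_u}^{-1} = \wt f_{\wt T_u}$ in the coupling, hence in law.

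The genuine obstacle is the invocation of Proposition~\ref{prop::markov_local_time_reversal}: it is phrased for Feller processes on $\R$ or $\s^1$, whereas the Bessel process lives on $[0, \infty)$ and reflects at $0$. One must check that the underlying It\^o-excursion argument of Lemma~\ref{lem::ito_excursion_reversible} carries over, using reflection-invariance of the Bessel excursion measure recalled in Remark~\ref{rem::bessel_ito_excursion} (either by adapting the proof directly or by symmetrizing to a process on $\R$). All the prerequisites --- semimartingale structure, continuous transition densities, a reversible $\sigma$-finite measure with continuous density on $(0, \infty)$, and $\ell_\infty = \infty$ almost surely --- are satisfied for $\delta \in (1, 2)$, so this amounts to bookkeeping rather than a new idea, and the remainder of the argument is a mechanical rerun of Proposition~\ref{prop::zip_up_to_zero}.
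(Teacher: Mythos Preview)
Your proposal is correct and follows essentially the same approach as the paper: match the Bessel dimensions via~\eqref{eqn::besseldim_forward} and~\eqref{eqn::besseldim_reverse}, invoke Proposition~\ref{prop::markov_local_time_reversal} to obtain the time-reversal coupling, and then rerun the algebraic Loewner computation from the proof of Proposition~\ref{prop::zip_up_to_zero}. The ``obstacle'' you flag is handled in the paper by the paragraph immediately preceding the proof, which records that a $\bes^\delta$ with $\delta>1$ is a continuous semimartingale Feller process with continuous transition densities, reversible with respect to $x^{\delta-1}\,dx$ on $\R_+$; combined with the remark after Proposition~\ref{prop::markov_local_time_reversal} that it is ``not stated in maximal generality,'' the paper treats the extension to $[0,\infty)$ as routine, exactly as you suggest.
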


We first note that a $\bes^\delta$ with $\delta > 1$ is a continuous semimartingale, a Feller process, and has continuous transition densities \cite[Chapter~XI]{ry99martingales}.  Such a Bessel process is reversible with respect to the measure $x^{\delta-1}dx$ where $dx$ denotes Lebesgue measure on $\R_+$.  This is a generalization of the observation that Brownian motion is reversible with respect to Lebesgue measure on $\R$.  To see this, one notes that the generator for a Bessel process is self-adjoint with respect to the $L^2$ space induced by the measure $x^{\delta-1}dx$ on $\R_+$.  (Recall \cite[Proposition~5.2]{LIG_PARTICLES} as before.)

\begin{proof}[Proof of Proposition~\ref{prop::zip_up_to_local_time}]
Recall from~\eqref{eqn::besseldim_forward} that $\kappa^{-1/2}(V_t - W_t)$ is a $\bes^\delta$ with $\delta = 1+2(\rho+2)/\kappa$ and from~\eqref{eqn::besseldim_reverse} that $\kappa^{-1/2}(\wt{V}_t - \wt{W}_t)$ is a $\bes^{\wt{\delta}}$ with $\wt{\delta} = 1+2(\wt{\rho}-2)/\kappa$.  By the choice of $\rho$ and $\wt{\rho}$, we have that $\delta = \wt{\delta}$.  Consequently, Proposition~\ref{prop::markov_local_time_reversal} tells us that we can couple $(W,V)$ and $(\wt{W},\wt{V})$ together so that
\begin{equation}
\label{eqn::w_wt_coupling}
T_u = \wt{T}_u \quad\text{and}\quad W_{T_u-t}-V_{T_u-t}  = \wt{W}_t - \wt{V}_t \quad\text{for}\quad t \in [0,T_u].
\end{equation}
The result thus follows from the argument given at the end of Proposition~\ref{prop::zip_up_to_zero}.
\end{proof}

We now give the analog of Proposition~\ref{prop::zip_up_to_local_time} in the setting of radial $\SLE_\kappa(\rho)$.

\begin{proposition}
\label{prop::radial_zip_up_to_local_time}
Fix $\kappa > 0$, $\rho \in (-2,\tfrac{\kappa}{2}-2)$, and let $\wt{\rho} = \rho+4$.  Let $(W,V)$ (resp.\ $(\wt{W},\wt{V})$) be the driving process associated with a forward (resp.\ reverse) radial $\SLE_\kappa(\rho)$ (resp.\ $\SLE_\kappa(\wt{\rho})$) process in $\D$ from $1$ to $0$ with a single boundary force point of weight $\rho$ (resp.\ $\wt{\rho}$) located at $1^+$.  Let $(f_t)$ (resp.\ $(\wt{f}_t)$) denote the centered forward (resp.\ reverse) radial Loewner flow driven by $W$ (resp.\ $\wt{W}$).  Let $\ell$ (resp.\ $\wt{\ell}$) denote the local time of $V_t/W_t$ (resp.\ $\wt{V}_t/\wt{W}_t$) at $1$ and denote by $T$ (resp.\ $\wt{T}$) the right continuous inverse of $\ell$ (resp.\ $\wt{\ell}$).  For each $u > 0$, we have that $f_{T_u}^{-1} \stackrel{d}{=} \wt{f}_{\wt{T}_u}$.
\end{proposition}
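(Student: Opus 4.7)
The strategy mirrors the proof of Proposition~\ref{prop::zip_up_to_local_time}, adapted to the circular setting. First, a direct It\^o computation from~\eqref{eqn::reverse_radial_sle_kappa_rho} shows that $\wt\theta_t := \arg \wt V_t - \arg \wt W_t$ satisfies
\[
d\wt\theta_t \;=\; \frac{\wt\rho - 2}{2}\cot(\wt\theta_t/2)\,dt \;-\; \sqrt{\kappa}\,d\wt B_t,
\]
in analogy with the forward SDE~\eqref{eqn::theta_equation}.  Since $-\wt B$ is also a Brownian motion, $\wt\theta$ has the same law as $\theta$ precisely when $\tfrac{\wt\rho - 2}{2} = \tfrac{\rho+2}{2}$, i.e., when $\wt\rho = \rho + 4$.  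Zooming in at $\theta = 0$ via $\cot(\theta/2) \sim 2/\theta$ and rescaling by $\sqrt\kappa$ shows that $\theta$ behaves locally like a $\bes^\delta$ with $\delta = 1 + 2(\rho+2)/\kappa$; the assumption $\rho \in (-2, \tfrac{\kappa}{2}-2)$ gives $\delta \in (1,2)$, matching~\eqref{eqn::besseldim_forward} and guaranteeing that $\theta$ is a continuous semimartingale with a well-defined local time $\ell$ at $0$.

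Next, I would verify the hypotheses of Proposition~\ref{prop::markov_local_time_reversal} for the $\theta$-diffusion on $\s^1$.  Continuity of transition densities and the Feller property follow from standard hypoelliptic regularity (the generator is uniformly elliptic away from $0$, and the Bessel behavior controls the dynamics near $0$).  A short self-adjointness computation shows that the probability measure on $\s^1$ proportional to $\sin(\theta/2)^{2(\rho+2)/\kappa}\,d\theta$ is reversible for $\theta$; the density is continuous and the measure is finite precisely because $\rho > -2$.  Recurrence of this reversible diffusion on the compact state space $\s^1$ gives $\ell_\infty = \infty$ almost surely.  Proposition~\ref{prop::markov_local_time_reversal} then provides a coupling of $\theta$ with $\wt\theta$ (and hence of $(W,V)$ with $(\wt W, \wt V)$) under which $T_u = \wt T_u$ and
\[
\wt\theta_t \;=\; \theta_{T_u - t} \qquad \text{for all } t \in [0, T_u].
\]

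From this coupling I would derive the radial analog of equation~\eqref{eqn::wtw}.  Reading off from~\eqref{eqn::sle_radial_equation} and~\eqref{eqn::reverse_radial_sle_kappa_rho} that $d\arg V_t = \cot(\theta_t/2)\,dt$ and $d\arg \wt V_t = -\cot(\wt\theta_t/2)\,dt$, and using $\arg W_t = \arg V_t - \theta_t$ together with its tilded version, a change-of-variables calculation exploiting the coupling together with $\theta_{T_u} = 0$ (so $\arg V_{T_u} = \arg W_{T_u}$) yields
\[
\wt W_t \;=\; \frac{W_{T_u - t}}{W_{T_u}} \qquad \text{for all } t \in [0, T_u],
\]
which is the multiplicative $\s^1$-analog of the additive identity~\eqref{eqn::wtw}; in particular $W_{T_u}\,\wt W_{T_u} = 1$.

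Finally, to translate this into $f_{T_u}^{-1} = \wt f_{\wt T_u}$, I would introduce
\[
\varphi_t(z) \;=\; \wt W_{T_u}\, g_t\!\left(\wt f_{T_u}(z)\right),
\]
where $g_t$ is the uncentered forward radial Loewner flow.  The scaling identity $\Phi(au,az) = a\,\Phi(u,z)$, combined with $W_t\,\wt W_{T_u} = W_t/W_{T_u} = \wt W_{T_u - t}$ (the coupling identity), shows that $\partial_t \varphi_t = \Phi(\wt W_{T_u - t}, \varphi_t)$ with $\varphi_0(z) = \wt g_{T_u}(z)$.  This is exactly the ODE satisfied by $t \mapsto \wt g_{T_u - t}(z)$, so ODE uniqueness forces $\varphi_{T_u}(z) = \wt g_0(z) = z$; unpacking and using $\wt W_{T_u} W_{T_u} = 1$ gives $f_{T_u}(\wt f_{T_u}(z)) = z$, hence $f_{T_u}^{-1} = \wt f_{\wt T_u}$ under the coupling, which yields the required equality in distribution.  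The main obstacle is this last step: finding the correct multiplicative analog of the chordal ansatz $\varphi_t = g_t(\wt f_{\wt\tau}(z)) - W_{\wt\tau}$ and recognizing the rescaled driving process $W_t\,\wt W_{T_u}$ as $\wt W_{T_u - t}$; the remaining steps are routine translations of Propositions~\ref{prop::zip_up_to_zero} and~\ref{prop::zip_up_to_local_time} together with standard Bessel/$\SLE$ facts.
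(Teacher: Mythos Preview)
Your proposal is correct and follows essentially the same approach as the paper. The paper's own proof is terse: it declares the argument ``analogous to Proposition~\ref{prop::zip_up_to_local_time}'' and only spells out the one new ingredient, namely verifying the hypotheses of Proposition~\ref{prop::markov_local_time_reversal} for the $\theta$-diffusion on $\s^1$ (semimartingale via Girsanov comparison to a $\bes^\delta$, continuous transition densities, reversibility with respect to $\sin(x/2)^{\delta-1}\,dx$ with $\delta = 1 + 2(\rho+2)/\kappa$). You reproduce exactly this verification, and then go further by making explicit the radial Loewner computation that the paper leaves to the reader: the multiplicative driving identity $\wt W_t = W_{T_u - t}/W_{T_u}$ (the $\s^1$-analog of~\eqref{eqn::wtw}) and the ansatz $\varphi_t(z) = \wt W_{T_u}\,g_t(\wt f_{T_u}(z))$, which via the homogeneity $\Phi(au,az) = a\,\Phi(u,z)$ and ODE uniqueness yields $f_{T_u}^{-1} = \wt f_{T_u}$ under the coupling. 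These details are correct and are precisely what ``analogous'' is meant to convey.
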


The proof of Proposition~\ref{prop::radial_zip_up_to_local_time} is analogous to Proposition~\ref{prop::zip_up_to_local_time}.  The one point that we will explain is why we can apply Proposition~\ref{prop::markov_local_time_reversal} to justify time-reversing the driving process.  First, we write $V_t/W_t = e^{i \theta_t}$ where $\theta_t$ is as in~\eqref{eqn::theta_equation}.  We note that we can view $\theta_t$ as being a time-change of the solution to the SDE
\begin{equation}
d \phi_t = \frac{\delta-1}{4} \cot\left( \frac{\phi_t}{2} \right) dt + dB_t
\end{equation}
with $\delta = 1+2(\rho+2)/\kappa$.  This is the same relationship between $\delta$, $\kappa$, and $\rho$ as in the case of forward chordal $\SLE_\kappa(\rho)$ given in~\eqref{eqn::besseldim_forward}.  Note that $\phi_t$ a semi-martingale for $\delta > 1$ since, by the Girsanov theorem, its law is absolutely continuous with respect to that of a $\bes^\delta$ with $\delta > 1$ when it is interacting with either~$0$ or~$2\pi$.  It is similarly a time-homogeneous Markov process with continuous transition densities.  Moreover, it is reversible with respect to the measure $\sin(x/2)^{\delta-1} dx$ where $dx$ denotes Lebesgue measure on $[0,2\pi]$.  To see this, we note that the generator of $\phi_t$ is self-adjoint with respect to the $L^2$ space associated with the measure $\sin(x/2)^{\delta-1}dx$ on $[0,2\pi]$ (recall \cite[Proposition~5.2]{LIG_PARTICLES} as before).

\subsection{Conformal structure and removability}
\label{subsec::removability}

Given two topological disks with boundary (each endowed with a good area measure in the interior, and a good length measure on the boundary) it is a simple matter to produce a new topological surface by taking a quotient that involves gluing (all or part of) the boundaries to each other in a boundary length preserving way.

The problem of {\em conformally welding} two surfaces is the problem of obtaining a conformal structure on the combined surface, given the conformal structure on the individual surfaces.  (The reader who is completely unfamiliar with conformal weldings may wish to read the introduction of \cite{she2010zipper}, or to see, e.g., \cite{bishop2007conformal} for further discussion and references.)  This is closely related to the problem of defining a Brownian motion (up to monotone reparameterization) on the combined surface, given the definition of a Brownian motion on each of the individual pieces.  We will now briefly (and somewhat informally) describe this connection.  It is well-known that a conformal structure and a choice of initial point determine a Brownian diffusion process $\beta_t$ (up to monotone reparameterization).  Similarly, if one is given the diffusion process (for all starting points), one can recover the conformal structure in a neighborhood of a point as follows: consider any Jordan curve surrounding that point, with three marked points on the curve dividing it into three segments $E_1, E_2, E_3$; then for each $z$ in the region surrounded by the curve, consider the triple $(p_1,p_2,p_3)$ where $p_i$ for $i=1,2,3$ is the probability that the Brownian motion starting from $z$ first hits the curve along $E_i$.  Note that $p_1 + p_2 = p_3 = 1$.  One can then ``conformally map'' this region to a Euclidean triangle by sending each such $z$ to the point in the triangle such that a standard two-dimensional Brownian motion from that point has probability $p_i$ of first hitting the triangle boundary along the $i$th edge.  The image of $\beta_t$ in the triangle will then be two-dimensional Brownian motion (up to a time change).

This recipe can be carried out for any continuous diffusion process $\beta_t$ to produce a map to the triangle; however it is not true that for all diffusion processes the image process in the triangle will be a standard Brownian motion.  For example, it is necessary that the stationary measure for $\beta_t$ (when one allows reflection at the boundary) be an atom-free measure that assigns positive area to open sets, and that $\beta_t$ is reversible with respect to this measure.  The diffusions of {\em Brownian type} --- i.e., diffusions that correspond to time-changed Brownian motions w.r.t.\ {\em some} conformal structure --- are a rather special subset of the set of all of the continuous diffusion processes one might produce on a topological sphere.   ``Conformally welding'' the two conformal surfaces is equivalent to producing a continuous diffusion process of Brownian type that agrees with the Brownian motions on the individual surfaces at times when it is away from the boundary interface.  To make sense of this idea, we will draw from the theory of {\em removability sets}, as explained below.

\begin{definition}
\label{def::conf_removable}
Suppose that $D \subseteq \C$ is a domain.  A compact subset $K$ of $D$ is called (conformally) {\bf removable} if every homeomorphism from $D$ to a subset of $\C$ that is conformal on $D \setminus K$ is also conformal on all of $D$.
\end{definition}

In probabilistic language, the fact that $K$ is removable means that once we are given the Brownian motion behavior off $K$, there is --- among all the possible ways of extending this process to a continuous diffusion on $D$, involving various types of local time pushes along $K$, etc. --- only one of Brownian type.  A Jordan domain $D \subseteq \C$ is said to be a {\bf H\"older domain} if any conformal transformation from~$\D$ to $D$ is H\"older continuous all of the way up to $\partial \D$.  It was shown by Jones and Smirnov \cite{MR1785402} that if~$K \subseteq D$ is the boundary of a H\"older domain, then $K$ is removable; it is also noted there that if a compact set $K$ is removable as a subset of $D$, then it is removable in any domain containing $K$, including all of $\C$.  Thus, at least for compact sets $K$, one can speak of removability without specifying a particular domain $D$.

It was further shown by Rohde and Schramm \cite{rs2005sle} that the following is true.  Suppose that $g_t \colon \h \setminus \eta([0,t]) \to \h$ is the forward Loewner flow associated with an $\SLE_{\kappa}$ curve with $\kappa < 4$.  Then for each $t \geq 0$, the map $g_t^{-1}$ is a.s.\ H\"older continuous. (A more general result, proved in a different way, which implies this is also given in \cite[Section~8.1]{ms2013qle}.)  If $\eta$ is an $\SLE_\kappa$ in $\h$ from $0$ to $1$, then the union of $\eta$ and its reflection across the real axis is the boundary of a bounded H\"older domain, hence is removable by \cite{MR1785402}.  It follows immediately that an $\SLE_{\kappa}$ path $\eta$ from $0$ to $\infty$ is removable in $\h$, and that $\eta([t_1, t_2])$ is removable for any $t_1 < t_2$.  In fact, we can also say the following:

\begin{proposition}
\label{prop::subsegmentremovable}
Suppose that $\eta$ is an $\SLE_\kappa$ curve in $\h$ from $0$ to $1$ with $\kappa \in (0,4)$.  Then it is a.s.\ the case that for a dense set of pairs of times $(t_1, t_2)$, with $0 \leq t_1 < t_2$, the segment $\eta([t_1,t_2])$ is removable in the domain $\h \setminus \eta\bigl( [0,t_1] \cup [t_2, \infty] \bigr)$.
\end{proposition}
\begin{proof}
As explained above, we already have the removability for the overall curve from~$0$ to~$1$.  If we observe the path starting from~$0$ (up to any stopping time) and then observe the path from~$1$ (up to a reverse stopping time) then the conditional law of the remaining path is that of an $\SLE_\kappa$ in the remaining domain \cite{MR2435856,dub_dual,ms2012imag2}, which implies the result.
\end{proof}

Proposition~\ref{prop::subsegmentremovable} also implies the removability of random paths $\eta$ that look locally like $\SLE_\kappa$, such as the $\SLE_{\kappa}(\rho)$ processes with $\rho > -2$ and flow lines of the GFF.  To see why, observe that for any point  $z \in \h$ on such a path, one can find times~$t_1$ and~$t_2$ such that $z \in \eta([t_1,t_2])$ and $\eta([t_1,t_2])$ is removable in $\h \setminus \eta\bigl( [0,t_1] \cup [t_2, \infty] \bigr)$.  It thus follows that any homeomorphism~$\phi$ which is conformal on the complement of~$\eta$ will also be conformal in a neighborhood containing~$\eta((t_1,t_2))$.  Since this holds for any~$z \in \h$, we find that any such map must be conformal in a neighborhood every point  on~$\eta \cap \h$, hence conformal everywhere in $\h$.

To our knowledge, it is not known in general that the union of two (non-disjoint) removable sets is removable.  However, this is not a problem when one of the sets is of $\SLE$ type:

\begin{proposition}
\label{prop::sle_union_removable_is_removable}
Suppose that $\wt{\eta}$ is a random segment of an $\SLE_\kappa$ curve $\eta$ with $\kappa \in (0,4)$. Then it is a.s.\ the case that for all removable sets $K$ (simultaneously) the set $\wt{\eta} \cup K$ is also removable.
\end{proposition}
\begin{proof}
Let $\phi$ be any homeomorphism which is conformal off $\wt{\eta} \cup K$.  Fix $z \in \wt{\eta} \setminus K$.  By Proposition~\ref{prop::subsegmentremovable}, we can find an interval $(t_1,t_2)$ of time such that $z \in \eta((t_1,t_2))$, $\eta([t_1,t_2])$ is removable in $\h \setminus \eta\bigl( [0,t_1] \cup [t_2, \infty] \bigr)$, and $\eta([t_1,t_2])$ is at a positive distance from $K$.  From this one can easily see that $\eta([t_1,t_2])$ is removable in $\h \setminus \left(\eta\bigl( [0,t_1] \cup [t_2, \infty] \bigr) \cup K \right)$ since $K$ has positive distance from $\eta([t_1,t_2])$.  It then follows that $\phi$ must be conformal in a neighborhood of $z$.  Since $z \in \wt{\eta} \setminus K$ was arbitrary, we have that $\phi$ is conformal on the complement of $K$, hence conformal everywhere by the removability of~$K$.
\end{proof}

A similar argument implies the following:

\begin{proposition}
\label{prop::collection_of_sles_removable}
Suppose that $\eta_1, \ldots, \eta_k$ are random curves each of which has the law of an $\SLE_\kappa(\rho)$ with $\kappa \in (0,4)$ and $\rho > -2$ (but with possibly different $\kappa$ and $\rho$ values) defined on the same domain (either~$\H$ or~$\C$).  (The curves may be coupled with each other in an arbitrary way.) Then it is a.s.\ the case that the union of these curves is removable. Furthermore, suppose $\eta_1, \eta_2, \ldots$ is a countably infinite collection of compact $\SLE_\kappa$ segments (coupled in some way) with the property that a.s.\ the set of accumulation points of these segments (i.e., the set of points any neighborhood of which intersects infinitely many segments) is discrete. Then it is again a.s.\ the case that the union of these segments if removable.
\end{proposition}

This implies in particular that flow lines of the GFF (whole plane $\SLE_{\kappa}$ processes started at interior points) are removable.

\begin{remark} The authors in \cite{astala2011random} describe what amounts (in our language) to a conformal welding of a Euclidean disk to a Liouville quantum gravity disk and prove that the resulting curve is removable by studying regularity and symmetry properties of the LQG boundary measure. One might (though this remains far from obvious) be able to adapt the techniques in \cite{astala2011random} to conformally weld two LQG surfaces to each other --- and thereby produce an alternate proof of the welding existence first described in \cite{she2010zipper}. We note that in \cite{she2010zipper} and in the current paper, we obtain more than just the existence of such a welding; we also explicitly establish the law of the interface curve in terms of SLE and give the law of the combined LQG surface. In that sense, the main result in \cite{she2010zipper} is stronger than the main result of  \cite{astala2011random} but the techniques in  \cite{she2010zipper} may be harder to generalize, as they rely on an exact understanding of SLE, LQG, etc.
Finally, let us note that it is currently unknown whether $\SLE_{\kappa'}$ for $\kappa' > 4$  is removable, and that we know no general complex analysis argument (aside from what we do in this paper) that would show that matings of CRTs and/or L\'evy trees of quantum disks are well defined in any canonical sense, though Lin and Rohde have announced some work in progress on the problem of mating a CRT to a Euclidean disk.  {\it Update: See \cite{mmq2018uniqueness} for a proof that $\SLE_{\kappa'}$ for $\kappa' \in (4,8)$ satisfies a weaker version of removability.}
\end{remark}

\section{Quantum surfaces}
\label{sec::surfaces}

The purpose of this section is to give a careful definition of the different types of quantum surfaces which will arise in this article as introduced in Section~\ref{subsec::surfaces}: quantum wedges, cones, disks, and spheres.  We will begin in Section~\ref{subsec::gffs} with a brief introduction to the various types of GFFs which will be important for this article.  We continue in Section~\ref{subsec::thick_wedges} with the definition of a so-called ``thick'' wedge.  This is an infinite volume and infinite boundary length surface which (together with its prime-end boundary) is homeomorphic to~$\ol{\h}$.  We will then give the definition of a quantum cone in Section~\ref{subsec::cones}.  This is an infinite volume surface without boundary which is homeomorphic to~$\C$.  This section will be largely parallel with Section~\ref{subsec::thick_wedges}.  We will then give our first definition of a ``thin'' wedge in Section~\ref{subsec::surfaces_strips_cylinders} (we will give another construction in Section~\ref{sec::quantumwedge}).  This will describe an ordered sequence of surfaces each of which is (together with its prime-end boundary) homeomorphic to $\ol{\D}$.

\subsection{Gaussian free fields}
\label{subsec::gffs}

We are now going to summarize the properties of the GFF which will be important for this article.  We direct the reader to \cite{Sh} for a detailed introduction in addition to \cite[Section~3]{she2010zipper} and \cite[Section~2.2]{ms2013imag4} for some additional discussion of the whole-plane and free boundary GFFs.

\subsubsection{Dirichlet inner product}

Suppose that $D \subseteq \C$ is a domain.  Let $C_0^\infty(D)$ denote the set of $C^\infty$ functions which are compactly supported in~$D$.  The {\bf Dirichlet inner product} is defined by
\begin{equation}
\label{eqn::dirichlet}
(f,g)_\nabla = \frac{1}{2\pi} \int_D \nabla  f(x) \cdot \nabla g(x) dx \quad \text{for}\quad f,g \in C_0^\infty(D).
\end{equation}
More generally,~\eqref{eqn::dirichlet} makes sense for $f,g \in C^\infty$ with $L^2$ gradients.

\subsubsection{Distributions}
\label{subsubsec:gff_spaces}

We view $C_0^\infty(D)$ as a space of test functions and equip it with the topology where a sequence $(\phi_k)$ in $C_0^\infty(D)$ satisfies $\phi_k \to 0$ if and only if there exists a compact set $K \subseteq D$ such that the support of $\phi_k$ is contained in $K$ for every $k \in \N$ and $\phi_k$ as well as all of its derivatives converge uniformly to zero as $k \to \infty$.  A {\bf distribution} on $D$ is a continuous linear functional on $C_0^\infty(D)$ with respect to the aforementioned topology.  A {\bf modulo additive constant distribution} on $D$ is a continuous linear functional which is defined on the subspace of functions $f \in C_0^\infty(D)$ with $\int_D f(x) dx = 0$ with the same topology.

\subsubsection{GFF with Dirichlet boundary conditions}
\label{subsubsec::gff_dirichlet}

Assume that $D \subseteq \C$ is a domain.  Recall that the harmonic measure on $\partial D$ as seen from a point $z \in D$ is the first exit distribution on $\partial D$ defined by a standard Brownian motion starting from $z$.  We shall assume that $D$ has {\bf harmonically non-trivial} boundary, by which we mean that the harmonic measure of $\partial D$ is positive as seen from any point in $D$.  We let $H_0(D)$ be the Hilbert-space closure of $C_0^\infty(D)$ with respect to the Dirichlet inner product~\eqref{eqn::dirichlet}.  The GFF $h$ on $D$ with zero Dirichlet boundary conditions can be expressed as a random linear combination of an $(\cdot,\cdot)_\nabla$-orthonormal basis $(f_n)$ of $H_0(D)$:
\begin{equation}
\label{eqn::gff_series}
h = \sum_n \alpha_n f_n,\quad (\alpha_n) \quad \text{i.i.d.}\quad N(0,1).
\end{equation}
Although this expansion of $h$ does not converge in $H_0(D)$, we claim that it does converge a.s.\ in the space of distributions. That is, the limit 
\begin{equation}
\label{eqn::gff_series2}
(h, f) :=\lim_{N\to\infty} \left(\sum_{n=1}^N \alpha_n f_n, f\right)
\end{equation}
a.s.\ exists for every test function $f \in C_0^\infty(D)$, and the $h$ defined this way is a.s.\ a distribution, where each such $(h,f)$ is a centered Gaussian and \begin{equation} \label{eqn::gff_variance_def} \cov\bigl( (h,f), (h,g) \bigr) = \iint f(x) G(x,y) g(y) dxdy\end{equation} where the Green's function $G(x,y)$ is defined by
\[ G(x,y) := -\log|y-x| - \wt G_x(y)\]
and for each fixed $x$, $\wt G_x(y)$ is the harmonic extension to $D$ of the function on $\partial D$ given by $y \mapsto -\log|y-x|$. (The finiteness of $\wt G_x(y)$ for distinct $x$ and $y$ in $D$ can be obtained from the optional stopping theorem and the fact that if $B_t$ is a Brownian motion in $D$ then $\log|B_t|$ evolves as a local martingale.)

The distributional convergence of \eqref{eqn::gff_series2} and the variance formula \eqref{eqn::gff_variance_def} are justified in \cite{Sh} in the case that $D$ is any bounded domain. By conformal invariance of the Dirichlet inner product and the definition of a distribution, this extends to any domain $D$ which is homeomorphic to a bounded domain; this covers the cases of interest for this paper, including any $D$ whose complement includes a connected path segment.

\begin{remark}
The distributional convergence of \eqref{eqn::gff_series2} and the variance formula \eqref{eqn::gff_variance_def} are \emph{not} justified in \cite{Sh} in the case that, e.g., the complement of $D$ is a harmonically non-trivial Cantor set. Nonetheless, we remark that one may deal with the general harmonically non-trivial boundary case as follows: assume (translating if necessary) that $0 \in D$, let $D_\delta = D\setminus I_\delta$ where $I_\delta$ is the slit $[0,\delta]$ along the real axis, and consider the $\delta \to 0$ limit. It is easy to see that the Green's functions for $D_\delta$ increase monotonically (e.g., by the Brownian motion definition) to $G(x,y)$ in the $\delta \to 0$ limit. For each positive integer $n$, the spaces $H_0(D_{1/n})$ are an increasing sequence whose union is dense in $H_0(D)$. Suppose we let $(f_{1,k})$ be an orthonormal basis for $H_0(D_1)$  and for each $n \in \N$ with $n \geq 2$ we can let $(f_{n,k})$ be an orthonormal basis for the orthogonal complement of $H_0(D_{1/(n-1)})$ in $H_0(D_{1/n})$. Then for each $f$ compactly supported away from $0$, the series $(h,f) = (\sum \alpha_{i,j} f_{i,j}, f)$ converges (for i.i.d.\ Gaussians $(\alpha_{i,j})$) and the random variables defined this way satisfy the covariance formula  \eqref{eqn::gff_variance_def} (and the standard arguments in, e.g., \cite{Sh} imply that this continues to hold with other orthonormal basis choices).  The fact that this a.s.\ holds for {\em all} test functions supported away from $0$ (instead of just for a fixed test function) can be obtained by letting $h_n$ be the projection of $h$ onto $H_0(D_{1/n})$, and noting that the differences $h_{n+1} - h_n$ are random harmonic functions (off $I_{1/n}$) whose suprema on any fixed compact subset of $D$ is bounded (though we will not explain this in detail here).
\end{remark}

It is also shown in  \cite{Sh} that (when $D$ is bounded) one has convergence in the fractional Sobolev space $H^{-\epsilon}(D)$ for each $\epsilon > 0$. 
    If $f,g \in C_0^\infty(D)$ then an integration by parts gives $(f,g)_\nabla = -(2\pi)^{-1} ( f,\Delta g)$.  Using this, we define
\[ (h,f)_\nabla = -\frac{1}{2\pi}(h, \Delta f) \quad\text{for}\quad f \in C_0^\infty(D).\]
Observe that $(h,f)_\nabla$ is a Gaussian random variable with mean zero and variance $(f,f)_\nabla$.  In fact, for any {\em fixed} $f \in H_0(D)$, we can define $(h,f)_\nabla$ via \eqref{eqn::gff_series}, and once this is done the space of random variables of the form $(h,f)_\nabla$ is a {\em Gaussian Hilbert space}, which we denote by $\CG$, and

\[
\cov((h,f)_\nabla,(h,g)_\nabla) = (f,g)_\nabla \quad\text{for all}\quad f,g \in H_0(D).
\]

\subsubsection{GFF with free boundary conditions}

Suppose that $D \subseteq \C$ has harmonically non-trivial boundary.  Let $H(D)$ be the Hilbert space closure of the subspace of functions $f \in C^\infty(D)$ with $\| f \|_\nabla^2 := (f,f)_\nabla < \infty$, defined modulo additive constant, with respect to the Dirichlet inner product~\eqref{eqn::dirichlet}.  (Note that $H(D)$ differs from $H_0(D)$.)  We define the free boundary GFF $h$ on $D$ using the series
\begin{equation}
\label{eqn::freeGFFsum}
h = \sum_n \alpha_n f_n,\quad (\alpha_n) \quad \text{i.i.d.}\quad N(0,1)
\end{equation}
where $(f_n)$ is an orthonormal basis of $H(D)$.  We emphasize that the free boundary GFF is only defined modulo additive constant, but there are various ways of fixing it (e.g., by declaring that the integral against a given test function is equal to $0$). In order to prove that this makes sense, it is sometimes convenient to decompose $H(D)$ into two orthogonal subspaces: $H_0(D)$ and the space of (finite Dirichlet energy) harmonic functions on $D$.  The projection of the free boundary GFF $h$ onto the former is just the zero boundary GFF (after fixing the additive constant so that its average on $\partial \D$ vanishes), while the projection onto the latter is a Gaussian harmonic function (defined modulo additive constant).

Let us first consider the case that $D = \D$ is the unit disk.  Then every harmonic function on $\D$ is the real part of an analytic function, and hence (if we subtract an additive constant to make the constant term zero) has an expansion of the form $\sum_{k=1}^\infty \Re (a_k z^k)$ for a sequence of complex numbers $(a_k)$.  The two components of the gradient of $z^k$ are the real and imaginary parts of the derivative $kz^{k-1}$. Hence the Dirichlet energy of $z^k$ is
\[\frac{1}{2\pi} k^2 \int_D |z|^{2k-2} dz  = k^2 \int_0^1 r^{2k-1} dr = k^2 \frac{ 1}{2k} = \frac{k}{2},\]
and so an explicit orthonormal basis for the space of harmonic functions on $\D$ is given by $\sqrt{2/k} \Re(z^k)$ and $\sqrt{2/k} \Re (iz^k)$ for positive integer $k$.  Thus we can write an instance of the harmonic part of the free boundary GFF as $\sum_{k=1}^\infty \sqrt{2/k} \Re (\alpha_k z^k)$ where $(\alpha_k)$ is a sequence of i.i.d.\ complex Gaussians (i.e., their real and imaginary parts are i.i.d.\ $N(0,1)$).  For any fixed $r \in (0,1)$, it is now obvious that this sum (and all of its derivatives) a.s.\ converge uniformly in $B(0,r)$, and that the limit is a.s.\ a harmonic function. (This is of course much stronger than the convergence we require in order to say that the sum a.s.\ makes sense as a random distribution.)  The convergence of the projection of the GFF to $H_0(\D)$ follows from the discussion in Section~\ref{subsubsec::gff_dirichlet}.  Combined, this proves that the series~\eqref{eqn::freeGFFsum} which defines the free boundary GFF on $\D$ converges in the space of modulo additive constant distributions for a specific choice of orthonormal basis of $H(\D)$: the one given above for the space of harmonic functions on $\D$ together with any fixed orthonormal basis for $H_0(\D)$.  The standard arguments described in, e.g., \cite{Sh} then imply the convergence in the space of modulo additive constant distributions for any fixed choice of orthonormal basis of $H(\D)$.

We will now explain why one has the convergence of~\eqref{eqn::freeGFFsum} in the space of modulo additive constant distributions in the case that $D$ is a general planar domain with harmonically non-trivial boundary.  We may assume without loss of generality (translating and dilating if necessary) that~$D$ contains~$\D$. Let us begin by showing that one has the convergence of~\eqref{eqn::freeGFFsum} restricted to test functions with support in $\D$ (i.e., we will argue that the restriction of the GFF to $\D$ is defined).  We will proceed by showing that one has the convergence with respect to a fixed orthonormal basis and then extend to all orthonormal bases using the standard arguments \cite{Sh}.  Let $H_1$ be the subspace of $H(D)$ which is given by those (modulo additive constant) functions which are constant on the unit circle $\partial \D$ and let $H_2$ be the orthogonal complement of $H_1$. Note that $H_2$ consists of functions that, given their restriction to~$\partial \D$, have minimal Dirichlet energy (such functions are harmonic off~$\partial \D$ and are said to satisfy {\em Neumann} boundary conditions on~$\partial D$).

The projection of the GFF on $D$ to $H_1$ is defined as above in the case that $D=\D$.  When the additive constant is fixed so that its average on $\partial \D$ vanishes, it is a zero boundary GFF.  Let us now describe how the projection onto $H_2$ (restricted to $\D$) is defined.  It will be a random harmonic function on $\D$ (defined modulo additive constant).  Note that each $f \in H_2$ determines (by restriction) an element of $H(\D)$, and the norm squared of an $f \in H_2$ is between $1$ and $2$ times its norm squared as an element of $H(\D)$ (since the total Dirichlet energy {\em outside} of $\D$ --- by conformal symmetry and the fact that this quantity is being minimized subject to values on $\partial \D$ --- is at most the Dirichlet energy inside of $\D$).  In particular, the identity operator mapping $H_2$ (with the Dirichlet inner product computed on all of $D$) to the space of harmonic functions on $H(\D)$ (with the Dirichlet inner product computed on $\D$) is a bounded operator with bounded inverse.

Fix $r \in (0,1)$.  We are now going to explain why one has the modulo additive constant distributional convergence on $B(0,r)$ of the part of the sum~\eqref{eqn::freeGFFsum} consisting of basis elements of $H_2$.  The standard abstract Wiener space theory (as in \cite{Sh}) implies that the harmonic part of the GFF on~$\D$ can be understood as a random element of $L^2(B(0,r))$ (modulo additive constant).  In particular, the harmonic part can be understood as a (modulo additive constant) random distribution on $B(0,r)$ and the defining series a.s.\ converges in this space for any orthonormal basis choice. This is a consequence of the fact that the latter space can be realized as the dual of a space defined by norm of the form $f \to |Lf|$ where $L$ is a so called Hilbert-Schmidt operator.  We will not discuss this theory further here, but we note that (as explained above) the function from $H_2$ to $H(\D)$ is a bounded linear operator, and it is well-known (and immediate from definitions) that the composition of a bounded linear operator (from one Hilbert space to another) and a Hilbert-Schmidt operator is again a Hilbert-Schmidt operator. This implies that the same convergence results in $L^2(B(0,r))$ (modulo additive constant) apply if we use an orthonormal basis for $H_2$ in place of an orthonormal basis for $H(\D)$. Thus if $(f_k)$ is an orthonormal basis for $H_2$ and $(\alpha_k)$ is a sequence of i.i.d.\ standard normal random variables then $\sum \alpha_k f_k$ converges a.s.\ in $L^2(B(0,r))$ (modulo additive constant) to a random harmonic limit.  Combining, we have shown that the sum~\eqref{eqn::freeGFFsum} converges in the modulo additive constant distributional sense on $B(0,r)$ with a fixed choice of orthonormal basis.  As explained before, the standard arguments \cite{Sh} yield that we have the modulo additive constant distributional convergence of~\eqref{eqn::freeGFFsum} with respect to any fixed orthonormal basis (not just one consisting of an orthonormal basis of $H_1$ and an orthonormal basis of $H_2$).

We have thus completed the proof of the modulo additive constant distributional convergence of the sum~\eqref{eqn::freeGFFsum} since any compact set subset of $D$ can be covered by finitely many such disks.

As a consequence of the conformal invariance of the Dirichlet inner product, the GFF with free boundary conditions (as a distribution viewed modulo additive constant) is conformally invariant.  This means that if $D$, $\wt{D}$ are domains with harmonically non-trivial boundary, $\varphi \colon D \to \wt{D}$ is a conformal transformation, and $h$ is a GFF with free boundary conditions on $D$ (defined modulo additive constant) then $\wt{h} = h \circ \varphi^{-1}$ is a GFF with free boundary conditions on $\wt{D}$ (defined modulo additive constant).

The whole-plane GFF is constructed in the same manner except with $D = \C$.  In particular, like the free boundary GFF, the whole-plane GFF is only defined modulo a global additive constant.  See \cite[Section~3]{she2010zipper} and \cite[Section~2.2]{ms2013imag4} for additional discussion of the whole-plane GFF.

More generally, suppose that $D \subseteq \C$ is a domain and $\partial D = \partial^\dirichlet \cup \partial^\free$ where $\partial^\dirichlet \neq \emptyset$ and $\partial^\dirichlet \cap \partial^\free = \emptyset$.  We assume further that the harmonic measure of $\partial^\dirichlet$ is positive as seen from any point $z \in D$.  The GFF on $D$ with Dirichlet (resp.\ free) boundary conditions on $\partial^\dirichlet$ (resp.\ $\partial^\free$) is constructed using a series expansion as in~\eqref{eqn::gff_series} except the space $H_0(D)$ is replaced with the Hilbert space closure with respect to $(\cdot,\cdot)_\nabla$ of the subspace of functions in $C^\infty(D)$ which have an $L^2$ gradient and vanish on~$\partial^\dirichlet$.  We note that the GFF with mixed boundary conditions has well-defined values (and is not just defined modulo additive constant).  As a consequence of the conformal invariance of the Dirichlet inner product, the GFF with mixed boundary conditions is conformally invariant.

\subsubsection{GFF Markov property}

Let us now say a few words about the Markov property of the GFF (with all types of boundary conditions).

\noindent{\it Dirichlet boundary conditions.}  We begin by reminding the reader of the Markov property of the GFF with Dirichlet boundary conditions.  Suppose that $h$ is a GFF on a domain $D \subseteq \C$ with Dirichlet boundary conditions and $U \subseteq D$ is open.  Then the Markov property in this case states that we can write $h = h_1 + h_2$ where $h_1$ (resp.\ $h_2$) is a GFF with zero boundary conditions (resp.\ is harmonic) on $U$ and $h_1,h_2$ are independent.  We recall that this follows by decomposing the Hilbert space $H_0(D)$ into the orthogonal sum consisting of $H_0(U)$ and those functions in $H_0(D)$ which are harmonic on $U$.  If we express the GFF using a series expansion using an orthonormal basis of $H_0(D)$ consisting of an orthonormal basis of $H_0(U)$ and its orthogonal complement then we obtain exactly the claimed decomposition.

\noindent{\it Mixed boundary conditions.}  In the case that $\partial U \cap \partial^\free D = \emptyset$, the statement in the case of a GFF with mixed boundary conditions as described just above is exactly the same as in the case of Dirichlet boundary conditions.  In the case that $\partial U \cap \partial^\free D \neq \emptyset$, we can write $h = h_1 + h_2$ where $h_1$ (resp.\ $h_2$) is a GFF with mixed boundary conditions on $U$ which are free on $\partial U \cap \partial^\free D$ and zero on $\partial U \setminus \partial^\free D$ (resp.\ $h_2$ is harmonic in $U$ with Neumann boundary conditions on $\partial U \cap \partial^\free D$ and boundary conditions given by those of $h$ on $\partial U \setminus \partial^\free D$).  Moreover, $h_1,h_2$ are independent.  If one conditions further on the values of $h$ on all of $\partial U$, then the decomposition becomes $h = h_1+h_2$ where $h_1$ (resp.\ $h_2$) is a GFF with zero boundary conditions (resp.\ $h_2$ is harmonic) on $U$ and $h_1,h_2$ are independent.  These statements are proved in the same way as the Markov property for the GFF with Dirichlet boundary conditions is proved.

\noindent{\it Free boundary conditions.}  If $\partial U \cap \partial D = \emptyset$ and $h$ is a GFF on $D$ with free boundary conditions, then we can write $h = h_1 + h_2$ where $h_1$ (resp.\ $h_2$) is a GFF with zero boundary conditions (resp.\ is harmonic) on $U$ and $h_1,h_2$ are independent.  The difference with the case of Dirichlet boundary conditions is that $h_2$ is only defined modulo additive constant.  This is proved in the same way as the Markov property for the GFF with Dirichlet boundary conditions is proved.  Namely, we can express $H(D)$ as an orthogonal sum consisting of $H_0(U)$ and those functions in $H(D)$ (which we recall are defined modulo additive constant) which are harmonic on $U$.  If $\partial U \cap \partial D \neq \emptyset$, then the decomposition becomes $h = h_1 + h_2$ where $h_1$ (resp.\ $h_2$) is a GFF on $U$ with zero boundary conditions on $\partial U \setminus \partial D$ and free boundary conditions on $\partial U \cap \partial D$ (resp.\ $h_2$ is harmonic on $U$ with Neumann boundary conditions on $\partial U \cap \partial D$ and the boundary conditions given by $h$ on $\partial U \setminus \partial D$) and $h_1,h_2$ are independent.  In this case, $h_2$ is only defined modulo additive constant.  Also, if we condition further on the values of $h$ on $\partial U$, then $h_1$ is simply a zero boundary GFF on $U$.

We can also consider the Markov property of the free boundary GFF in the case that we have fixed the additive constant in some way.  If we fix the additive constant so that the average of $h$ on a set which is \emph{disjoint from $U$} is equal to $0$ or the integral of $h$ against a test function whose support is \emph{disjoint from $U$} is equal to $0$, then the Markov property is exactly the same except now $h_2$ has well-defined values and is no longer only defined modulo additive constant.

\noindent{\it Whole-plane GFF.}  The statements in the case of the whole-plane GFF are the same as in the case of free boundary conditions.

The situation for both the free boundary and whole-plane GFFs are slightly more complicated when one has fixed the additive constant in a way which depends on the field values in $U$, but we will not discuss this possibility further since we will not use the Markov property in this way in this article.

\subsubsection{Circle averages and Brownian motion}

Suppose that $h$ is a GFF on a domain $D \subseteq \C$ (with any type of boundary conditions or a whole-plane GFF if $D = \C$).  For each $\epsilon > 0$, we let $h_\epsilon(z)$ be the average of $h$ on $\partial B(z,\epsilon)$.  As explained in \cite[Proposition~3.1]{ds2011kpz}, the circle average process $(z,\epsilon) \mapsto h_\epsilon(z)$ has a modification which is H\"older continuous jointly in~$z$ and~$\epsilon$.  In particular, $h_\epsilon(z)$ is defined for all $(z,\epsilon)$ pairs simultaneously a.s.  We note that in the case that $h$ has free or mixed boundary conditions, then by the Markov property of the GFF we can write $h = h_1 + h_2$ where $h_1$ is a zero-boundary GFF on $D$ and $h_2$ is harmonic in $D$.  If the additive constant for $h$ has not been fixed, then $h_2$ is only defined modulo additive constant and in this case $h_1,h_2$ are independent.  As the average of $h_2$ on $\partial B(z,\epsilon)$ for any $z \in D$ and $\epsilon > 0$ such that $B(z,\epsilon) \subseteq D$ is equal to $h_2(z)$ as $h_2$ is harmonic, it follows that $(z,\epsilon) \mapsto h_\epsilon(z)$ has the same continuity properties as in the case that $h$ has Dirichlet boundary conditions.  When $h$ is only defined up to a global additive constant, then so is $h_\epsilon$.  In this case, differences of averages such as $h_\epsilon(z) - h_\delta(w)$ for $z,w \in D$ have well-defined values.  For fixed $z \in D$ and $r > 0$ so that $B(z,r) \subseteq D$, $h_{e^{-t}}(z) - h_r(z)$ evolves as a standard Brownian motion for all $t$ such that $e^{-t} \leq r$ \cite[Proposition~3.3]{ds2011kpz}.  (In the case that $h$ has mixed or free boundary conditions, this statement can be reduced to the zero-boundary case by using the Markov property as described above.)  If $h$ has Dirichlet or mixed boundary conditions or has free boundary conditions and the additive constant for $h$ has been fixed in a manner which does not depend on its values in $B(z,r)$, then $h_{e^{-t}}(z)$ for all $t$ such that $e^{-t} \leq r$ evolves as a standard Brownian motion.  If $h$ has free boundary conditions, $L$ is a linear segment of $\partial D$, and $z \in L$, then $h_{e^{-t}}(z) - h_r(z)$ evolves as $\sqrt{2}$ times a standard Brownian motion provided $e^{-t} \leq r$ and $r > 0$ is small enough so that $B(z,r) \cap \partial D = B(z,r) \cap L \neq \emptyset$ (see the discussion in \cite[Section~6]{ds2011kpz} and recall that the Neumann Green's function on $\h$ is given by $G(y,z) = -\log|y-z| - \log|y-\ol{z}|$ so that $G(0,z) = -2\log|z|$).  If the additive constant for $h$ has been fixed in a manner which does not depend on the values of $h$ in $B(z,r) \cap D$, then $h_{e^{-t}}(z)$ evolves as $\sqrt{2}$ times a standard Brownian motion for all $t$ such that $e^{-t} \leq r$.

Due to the following orthogonal decomposition for $H(\h)$ it is often convenient to take $D = \h$.  (As we explain momentarily, this leads to a two-step procedure for sampling a free-boundary GFF which will be important when we give the definition of a quantum wedge.)

\begin{lemma}
\label{lem::h_spaces_orthogonal}
Let $\CH_1(\h)$ be the subspace of $H(\h)$ which consists of functions which are radially symmetric about the origin and let $\CH_2(\h)$ be the subspace of $H(\h)$ which consists of functions which have a common mean about all semi-circles centered at the origin.  Then $\CH_1(\h)$ and $\CH_2(\h)$ give an orthogonal decomposition of $H(\h)$.
\end{lemma}

As we will see in the proof of Lemma~\ref{lem::h_spaces_orthogonal}, the radially symmetric part $f_1$ of $f$ is the function whose common value on $\partial B(0,r) \cap \h$ is given by the average of $f$ on $\partial B(0,r) \cap \h$.  Since $f$ is only defined modulo additive constant, so is $f_1$.  However, we emphasize that $f_2 = f-f_1$ does not depend on the additive constant for $f$, $f_1$ hence is a well-defined function and by construction has mean zero on all semi-circles $\partial B(0,r) \cap \h$.

\begin{proof}[Proof of Lemma~\ref{lem::h_spaces_orthogonal}]
If $f \in H(\h)$ then we can let $f_1$ be the function which on a given circle $\partial B(0,r) \cap \h$ is given by the average of $f$ on $\partial B(0,r) \cap \h$.  (Note that in the case $f \in C^\infty(\h)$ with $\| f\|_\nabla < \infty$ we have that $\|f_1 \|_\nabla \leq \|f \|_\nabla$, so the linear map $f \mapsto f_1$ is continuous hence extends to $H(\h)$.)  Let $f_2 = f-f_1$.  Then $f = f_1 + f_2$, $f_1 \in \CH_1(\h)$, and $f_2 \in \CH_2(\h)$.  Consequently,  $H(\h) = \CH_1(\h) \oplus \CH_2(\h)$.  To see that the sum is orthogonal, for $f_1 \in \CH_1(\h)$ and $f_2 \in \CH_2(\h)$ one computes $(f_1,f_2)_\nabla$ by putting $\nabla f_1$ and $\nabla f_2$ into polar coordinates.  In particular, the normal derivative of $f_1$ at $z \in \h$ depends only on $|z|$ while the mean of the normal derivative of $f_2$ on $\partial B(0,r) \cap \h$ is $0$ for each $r > 0$.  Thus the mean of the product of the normal derivatives of $f_1$ and $f_2$ on $\partial B(0,r) \cap \h$ is equal to $0$ for each $r > 0$.  Similarly, the tangential derivative of $f_1$ on $\partial B(0,r) \cap \h$ is $0$ for each $r > 0$ so the mean of the product of the tangential derivatives of $f_1$ and $f_2$ is equal to $0$.
\end{proof}

Lemma~\ref{lem::h_spaces_orthogonal} implies that we can sample a free boundary GFF on $\h$ with the additive constant fixed so that its average on $\h \cap \partial \D$ is equal to $0$ in two steps:
\begin{enumerate}
\item Sample its projection onto $\CH_1(\h)$ to be given by the function whose common value on $\partial B(0,e^{-t}) \cap \h$ is given by $B_{2t}$ where $B$ is a two-sided standard Brownian motion with $B_0 = 0$.
\item Sample its projection onto $\CH_2(\h)$ independently by taking it to be $\sum_n \alpha_n f_n$ where $(f_n)$ is an orthonormal basis of $\CH_2(\h)$ and $(\alpha_n)$ is a sequence of i.i.d.\ $N(0,1)$ random variables.
\end{enumerate}
We then fix the additive constant for both projections so that their average on $\partial \D \cap \partial \h$ vanishes.

\subsubsection{GFFs on strips and cylinders} \label{subsubsec::stripandcylinder}

In many places in this article it will be convenient to work on the infinite strip $\strip = \R \times [0,\pi]$.  Suppose that $h$ is a free boundary GFF on $\strip$ with additive constant fixed so that its average on $[0, i \pi]$ vanishes.  Let $\psi \colon \strip \to \h$ be the conformal transformation given by $z \mapsto e^{z}$.  Then $\wt{h} = h \circ \psi^{-1}$ is a free boundary GFF on $\h$ with additive constant fixed so that its average on $\partial \D \cap \partial \h$ vanishes.  Let $\wt{h}_\epsilon(z)$ be the circle average process associated with $\wt{h}$.  Note $\psi$ maps the vertical line $u + [0,i\pi]$ to the half-circle $\partial B(0,e^u) \cap \h$.  Since $\wt{h}_{e^{-t}}(0)$ evolves as $\sqrt{2}$ times a standard Brownian motion, it follows that the average of $h$ on $u+[0,i\pi]$ evolves as $\sqrt{2}$ times a standard Brownian motion in $u$.

In analogy with Lemma~\ref{lem::h_spaces_orthogonal}, we have the following orthogonal decomposition for $H(\strip)$:
\begin{lemma}
\label{lem::strip_spaces_orthogonal}
Let $\CH_1(\strip)$ be the subspace of $H(\strip)$ which consists of functions which are constant on vertical lines of the form $u+[0,i\pi]$ for $u \in \R$ and let $\CH_2(\strip)$ be the subspace of $H(\strip)$ which consists of functions which have a common mean on all such vertical lines.  Then $\CH_1(\strip)$ and $\CH_2(\strip)$ give an orthogonal decomposition of $H(\strip)$.
\end{lemma}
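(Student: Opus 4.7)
The plan is to mirror the proof of Lemma~\ref{lem::h_spaces_orthogonal}, replacing the role of circular averages about the origin with averages on vertical segments $u + [0,i\pi]$.

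Given any $f \in C^\infty(\strip)$ with $\|f\|_\nabla < \infty$, I would define
\[ f_1(u+iv) := \frac{1}{\pi} \int_0^\pi f(u+iv')\, dv', \qquad f_2 := f - f_1. \]
By construction $f_1$ depends only on the horizontal coordinate $u$, hence $f_1 \in \CH_1(\strip)$, while $f_2$ has mean zero on every vertical segment, so $f_2 \in \CH_2(\strip)$. Smoothness of $f$ passes to $f_1$ and $f_2$, and the Dirichlet integrals are finite (once one checks orthogonality, as below). Thus the algebraic sum $\CH_1(\strip) + \CH_2(\strip)$ exhausts a dense subspace of $H(\strip)$, and the decomposition extends to all of $H(\strip)$ by continuity once orthogonality is established.

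For orthogonality, the key computation is
\[ (f_1,f_2)_\nabla = \frac{1}{2\pi} \int_\strip \nabla f_1(z)\cdot \nabla f_2(z)\, dz. \]
Since $f_1$ is a function of $u$ alone, its $v$-derivative vanishes and $\partial_u f_1(u+iv) = f_1'(u)$ is constant on each vertical segment. Hence
\[ \int_\strip \nabla f_1 \cdot \nabla f_2\, dz = \int_\R f_1'(u) \left( \int_0^\pi \partial_u f_2(u+iv)\, dv \right) du. \]
But the inner integral equals $\partial_u \int_0^\pi f_2(u+iv)\, dv = \partial_u 0 = 0$ by the defining property of $\CH_2(\strip)$ (differentiation under the integral is justified by smoothness and a standard dominated convergence argument using the finiteness of $\|f_2\|_\nabla$). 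So $(f_1,f_2)_\nabla = 0$.

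The only subtle point — which is the analog of the boundary-term issue in the proof of Lemma~\ref{lem::h_spaces_orthogonal} — is justifying differentiation under the integral and ensuring that, on passing from smooth functions to the Hilbert-space closure $H(\strip)$, the projection onto $\CH_1(\strip)$ (``averaging on vertical segments'') remains well-defined and continuous in the $(\cdot,\cdot)_\nabla$ norm. This follows from the orthogonality computed above on a dense subspace: once orthogonality holds for smooth functions, the maps $f \mapsto f_1$ and $f \mapsto f_2$ extend by continuity to orthogonal projections on $H(\strip)$, and $H(\strip) = \CH_1(\strip) \oplus \CH_2(\strip)$. I expect this density/continuity step to be the only mild obstacle; the orthogonality itself is essentially a one-line Fubini argument.
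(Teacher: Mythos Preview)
Your proposal is correct and follows essentially the same approach as the paper: the paper's proof says the result ``follows from the same argument as in the proof of Lemma~\ref{lem::h_spaces_orthogonal},'' and your explicit Cartesian-coordinate computation (averaging on vertical segments, then observing that $\partial_v f_1 = 0$ and $\int_0^\pi \partial_u f_2\,dv = 0$) is precisely that argument transported to the strip. The paper also notes an alternative route you did not take: deduce the lemma directly from Lemma~\ref{lem::h_spaces_orthogonal} via the conformal invariance of the Dirichlet inner product under the map $z \mapsto -e^{-z}$ from $\strip$ to $\h$.
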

\begin{proof}
This follows from the same argument as in the proof of Lemma~\ref{lem::h_spaces_orthogonal}.  Alternatively, the result can be deduced from Lemma~\ref{lem::h_spaces_orthogonal} using the conformal invariance of the Dirichlet inner product and the map $\psi$ defined above.
\end{proof}

In analogy with the case of a free boundary GFF on $\h$, we can use Lemma~\ref{lem::strip_spaces_orthogonal} to sample a free boundary GFF on $\strip$ in two steps by first sampling the Brownian motion which determines its averages on vertical lines and then sampling independently its orthogonal projection onto $\CH_2(\strip)$.  We can fix the additive constant for $h$ by taking the additive constant for its projection onto $\CH_2(\strip)$ to be so that its average on $[0,\pi i]$ vanishes and then fixing the additive constant for its projection onto $\CH_1(\strip)$ in some way to be specified.   This will be useful in Section~\ref{subsec::surfaces_strips_cylinders} below where we will want to sample a Poissonian collection of fields on $\strip$.

\subsection{Thick quantum wedges}
\label{subsec::thick_wedges}

We will now give a precise description of a quantum wedge.  In this section, we will focus on the case of a wedge which (together with its prime-end boundary) is homeomorphic to $\ol{\h}$ and will refer to such a wedge as being ``thick.''  In Section~\ref{subsec::surfaces_strips_cylinders}, we will give the definition of a quantum wedge which is ``thin'' (i.e., not homeomorphic to $\h$).  The contents of this section are similar to that of \cite[Section~1.6]{she2010zipper}, however the presentation here is somewhat different.

Fix a proper simply connected domain $\wt{D} \subseteq \C$.  Suppose that we have a quantum surface represented by $(\wt{D},\wt{h})$ with two distinct marked boundary points.  Then we can apply the change of coordinates formula~\eqref{eqn::Qmap} to represent the surface as $(\h,h,0,\infty)$ for some $h$ and the two marked points are taken to be $0$ and $\infty$.  We will focus on the case that there is a.s.\ a finite amount of both~$\mu_h$ and~$\nu_h$ mass in each bounded neighborhood of the origin and infinite mass in each neighborhood of $\infty$.  We will refer to such a surface as {\bf half-plane-like} since it has a distinguished point of ``infinity'' whose neighborhoods have an infinite amount of area and boundary length and a distinguished ``origin,'' sufficiently small neighborhoods of which have a finite amount of area and boundary length.  In general, we will refer to a surface with two distinguished boundary points as being {\bf doubly marked}.

Note that the $h$ which describes the surface is canonical except that it has one free parameter corresponding to constant rescalings of $\h$.  There are various ways of fixing this parameter.  In practice, we will typically fix this parameter so that if $r = \sup\{s > 0 : h_s(0) + Q\log s = 0\}$ then $r=1$ where $h_s(0)$ denotes the average of $h$ on $\h \cap \partial B(0,s)$.  This choice of scaling yields the so-called {\bf circle-average embedding.}  As we will see, this particular choice is convenient mathematically.

\begin{remark}
\label{rem::bm_drift_conditioned}
In many places in this article (e.g., Definition~\ref{def::quantum_wedge} below) we will consider processes of the form $B_t + a t$ where $B$ is a standard Brownian motion with $B_0 \geq 0$ and $a > 0$ \emph{conditioned} on the event that $B_t + at > 0$ for all $t > 0$.  If $B_0 > 0$, then this is a positive probability event so there is no difficulty in making sense of this process.  In the case that $B_0 = 0$, however, we are conditioning on an event which occurs with zero probability.  Such a continuous process $X_t$ is characterized by the properties that $X_0 = 0$, $X_t > 0$ for all $t > 0$, and for each $\epsilon > 0$ if we let $\tau_\epsilon = \inf\{t \geq 0 : X_t = \epsilon\}$ then $X_{t + \tau_\epsilon}$ is equal in distribution to a standard Brownian motion with linear drift $a t$ starting from $\epsilon$ and conditioned on the (positive probability) event that it does not subsequently hit $0$.  Indeed, suppose that $\wt{X}_t$ is another such process and $\wt{\tau}_\epsilon = \inf\{t \geq 0 : \wt{X}_t = \epsilon\}$.  Then for each $\epsilon > 0$ we can construct a coupling of $X$, $\wt{X}$ onto a common probability space so that $X_{t + \tau_\epsilon} = \wt{X}_{t + \wt{\tau}_\epsilon}$ for all $t > 0$.  Note that $\tau_\epsilon,\wt{\tau}_\epsilon \to 0$ a.s.\ as $\epsilon \to 0$.  Thus it is not difficult to see that in any subsequential limit as $\epsilon \to 0$ of this coupling, one has that $X = \wt{X}$.  One can show that such a process exists by first sampling a Brownian motion $\wt{B}$ with $\wt{B}_0 = 0$, letting $\tau$ be the last time that $\wt{B}_t + at$ hits $0$ (this time is a.s.\ finite due to the positive drift), and then taking $B_t + at = \wt{B}_{t + \tau} + a(t+\tau)$.  When $a=0$, it is a well-known fact that one can make sense of a standard Brownian motion $B_t$ with $B_0  = 0$ conditioned on $B_t > 0$ for all $t > 0$ by taking it to be a $\bes^3$ process.

One can also make sense of $B_t + at$ with $B_0 = 0$ conditioned on $B_t + at > 0$ for all $t > 0$ in a unified manner when $a \geq 0$ using Bessel processes.  Let us first consider the case $a > 0$.  Proposition~\ref{prop::bessel_exponential_bm} implies that if $X_t$ is a $\bes^{2+2a}$ process then $\log X_t$, reparameterized to have quadratic variation $dt$, evolves as a standard Brownian motion with linear drift $at$.  Suppose that $X_0 = 0$ and $\tau$ is the last time that $X$ hits $1$.  Then $\log X_{t+ \tau}$, reparameterized to have quadratic variation $dt$, has the law of $B_t + at$ with $B_0 = 0$ conditioned on $B_t + a t > 0$ for all $t > 0$.

In order to extend this perspective in a unified manner to the case $a \geq 0$, one has to do something that at first sight might seem strange.  Let again $X_t$ be a $\bes^{2+2a}$ with $X_0 = 0$ and now let $\tau$ be the first time that $X_t$ hits $1$.  Then the time-reversal $-\log X_{\tau-t}$ starts from $0$ and does not subsequently hit $0$.  Then when $a > 0$ this process corresponds to $B_t + at$ conditioned on $B_t + at > 0$ for all $t \geq 0$ as above and when $a = 0$ it corresponds to a $\bes^3$ process, also as above.  This process is in fact defined for all times and can be thought of as a Brownian motion defined for all times where $t=0$ is the last time that it hits $0$.  This treatment is useful in the setting of defining quantum wedges since it allows us to give a unified definition for all wedge weights.
\end{remark}

\begin{definition}
\label{def::quantum_wedge}
Fix $\alpha < Q$.  An {\bf $\alpha$-quantum wedge} is the doubly marked quantum surface (when parameterized by $\h$) described by the distribution $h$ on $\h$ whose law can be sampled from as follows.  Let $A_s$ be the process defined for $s \in \R$ such that
\begin{enumerate}
\item For $s > 0$, $A_s = B_{2s} + \alpha s$ where $B$ is a standard Brownian motion with $B_0 = 0$ and
\item For $s < 0$, $A_s = \wh{B}_{-2s}  + \alpha s$ where $\wh{B}$ is a standard Brownian motion independent of $B$ with $\wh{B}_0 = 0$ conditioned so that $\wh{B}_{2t} + (Q-\alpha) t > 0$ for all $t > 0$.
\end{enumerate}
Let $\CH_1(\h)$ and $\CH_2(\h)$ be as in the statement of Lemma~\ref{lem::h_spaces_orthogonal}.  Then $h$ is the field with projection onto $\CH_1(\h)$ given by the function whose common value on $\partial B(0,e^{-s}) \cap \h$ is equal to $A_s$ for each $s \in \R$ and with projection onto $\CH_2(\h)$ given by the corresponding projection of a free boundary GFF on $\h$.  That is, the projection of $h$ onto $\CH_2(\h)$ is given by $\sum_n \alpha_n f_n$ where $(f_n)$ is an orthonormal basis of $\CH_2(\h)$ and $(\alpha_n)$ is an i.i.d.\ $N(0,1)$ sequence.  We fix the additive constant for the projection of $h$ onto $\CH_1(\h)$ (resp.\ $\CH_2(\h)$) so that its average on $\partial \D \cap \h$ vanishes.
\end{definition}

Recall that a quantum surface is only defined modulo the equivalence relation~\eqref{eqn::Qmap}.  Since a quantum wedge has two marked points (the origin point and the infinity point), if we parameterize it by $\h$ with the origin point taken to $0$ and the infinity point to $\infty$, then we are left with one free parameter in the choice of embedding.  The particular embedding of a quantum wedge into $\h$ given in Definition~\ref{def::quantum_wedge} is the \emph{circle average embedding} because the scaling parameter is chosen based on the average $h_{e^{-t}}(0)$ of the field on semicircles $\h \cap \partial B(0,e^{-t})$.  The circle average embedding is the one for which if we set $\tau = \sup\{r \geq 0 : h_r(0) + Q \log r = 0\}$ then $\tau = 1$.  Note that if we perform the change of coordinates $z \mapsto z/r$, then by~\eqref{eqn::Qmap} we must add $Q \log r$ to the field.  Thus the circle average embedding can equivalently be described as the one such that $r = 1$ is the largest value of $r$ so that if we were to perform the rescaling which takes $\h \cap \partial B(0,r)$ to $\h \cap \partial \D$ then the average of the field on $\h \cap \partial \D$ is equal to $0$.  In practice, this is a very convenient choice because the law of the field in $\h \cap \D$ takes a simple form.  In particular, if $(\h,h,0,\infty)$ is an $\alpha$-quantum wedge with $h$ as in Definition~\ref{def::quantum_wedge}, then the law of $h$ restricted to $\D \cap \h$ is the same as the law of the corresponding restriction of $\wt{h} - \alpha \log|\cdot|$ where $\wt{h}$ is a free boundary GFF on $\h$ with the additive constant fixed so that its average on $\h \cap \partial \D$ is equal to $0$.

As mentioned in Table~\ref{tab::wedge_parameterization}, there are many different variables that one can use to parameterize the space of quantum wedges.  In this article, we will mainly use $\alpha$ (multiple of $\log$ singularity at the origin) and weight $W$ (as defined in~\eqref{eqn::wedge_weight}).  Note that the constraint $\alpha < Q$ corresponds to $W > \tfrac{\gamma^2}{2}$.  The critical value $\alpha = Q$ and $W = \tfrac{\gamma^2}{2}$ also describes a quantum surface which (together with its prime-end boundary) is homeomorphic to~$\ol{\h}$.  However, it is convenient to give the definition of the surface in this case using Bessel processes in Section~\ref{subsec::surfaces_strips_cylinders}.

\begin{remark}
\label{rem::fat_quantum_wedge_strip}
As we will see in Section~\ref{subsec::surfaces_strips_cylinders}, it is also natural to parameterize an $\alpha$-quantum wedge by the strip $\strip$.  Note that if we start with a surface parameterized by $\h$ and we change coordinates using $z \mapsto \log(z)$ to parameterize it by $\strip$, then applying~\eqref{eqn::Qmap} involves adding the function $Q \re(z)$.  Suppose that $\CH_1(\strip),\CH_2(\strip)$ are as in Lemma~\ref{lem::strip_spaces_orthogonal}.  It thus follows that we can sample from the law of a distribution~$h$ which represents an $\alpha$-quantum wedge parameterized by $\strip$ by sampling its projection onto $\CH_1(\strip)$ from the law of the function whose common value on $u + [0,i\pi]$ is $\wt{A}_u$ where:
\begin{enumerate}
\item For $u > 0$, $\wt{A}_u = B_{2u} + (Q-\alpha) u$ where $B$ is a standard Brownian motion with $B_0 = 0$ conditioned so that $B_{2u} + (Q-\alpha)u > 0$ for all $u > 0$ and
\item For $u < 0$, $\wt{A}_u = \wh{B}_{-2u} + (Q-\alpha)u$ where $\wh{B}$ is a standard Brownian motion independent of $B$ with $\wh{B}_0 = 0$.
\end{enumerate}
The projection of $h$ onto $\CH_2(\strip)$ can be sampled independently from the law of the corresponding projection of a free boundary GFF on $\strip$.  We fix the additive constant for both projections so that their average on $[0,i \pi]$ vanishes.

As in the case of the parameterization of a quantum wedge by $\h$, when we parameterize it by $\strip$ and we take the origin (resp.\ infinity) to correspond to $-\infty$ (resp.\ $+\infty$), we have the free parameter which corresponds to the horizontal translation when we specify an embedding.  The embedding described above is the one so that the projection of the field onto $\CH_1(\strip)$ last hits the value $0$ at $u=0$.  This is the reason for the asymmetry in the definition of $\wt{A}$.  Another natural choice is to take the horizontal translation to be the one so that the projection first hits the value $0$ at $u=0$.

If we chose instead to have the origin (resp.\ infinity) correspond to $+\infty$ (resp.\ $-\infty$) then we would instead take $\wt{A}_u$ to be:
\begin{enumerate}
\item For $u > 0$, $\wt{A}_u = B_{2u} + (\alpha-Q) u$ where $B$ is a standard Brownian motion with $B_0 = 0$ and
\item For $u < 0$, $\wt{A}_u = \wh{B}_{-2u} + (\alpha-Q)u$ where $\wh{B}$ is a standard Brownian motion independent of $B$ with $\wh{B}_0 = 0$ conditioned so that $\wh{B}_{2t} + (Q-\alpha) t > 0$ for all $t > 0$.
\end{enumerate}
This process and the one above agree up to a reflection and horizontal translation of $\strip$, which means that the corresponding fields differ by a conformal map hence one obtains a quantum surface with the same law. 
\end{remark}

We note that by Proposition~\ref{prop::bessel_exponential_bm}, the construction of a quantum wedge described in Remark~\ref{rem::fat_quantum_wedge_strip} is equivalent to the definition described in Figure~\ref{fig::besseltowedge}.

We are now going to show that the quantum wedge arises as a certain type of infinite volume limit of the local behavior of a free boundary GFF near a point with a certain type of $\log$ singularity.  The particular case of a $\gamma$-quantum wedge arises upon taking an infinite volume limit near a point chosen from the quantum boundary length measure (which should be thought of as the LQG analog of a Benjamini-Schramm limit near a boundary typical point).  As a consequence of this, we will deduce that the law of a quantum wedge (as a quantum surface) is invariant under the operation of multiplying its area by a constant.  We say that a sequence of doubly marked quantum surfaces $(\h,h_n,0,\infty)$ converges to the doubly marked surface $(\h,h,0,\infty)$ if the distribution which describes the circle average embedding of $(\h,h_n,0,\infty)$ converges to the distribution which describes the circle average embedding of $(\h,h,0,\infty)$.

Note that if $(D,h)$ is a quantum surface and $C \in \R$ is a constant then replacing $h$ with $h+C/\gamma$ corresponds to multiplying the $\mu_h$ area of $D$ by $e^C$.

\begin{proposition}
\label{prop::quantum_wedge_properties}
Fix $\alpha < Q$.  Then the following hold:
\begin{enumerate}[(i)]
\item \label{it::quantum_wedge_invariance} The law of an $\alpha$-quantum wedge is invariant under the operation of multiplying its area by a constant.  That is, if $(\h,h,0,\infty)$ is the circle average embedding of an $\alpha$-quantum wedge, we fix $C \in \R$, and then let $(\h,\wt{h},0,\infty)$ be the circle average embedding of $(\h,h+C/\gamma,0,\infty)$, then $(\h,h,0,\infty) \stackrel{d}{=} (\h,\wt{h},0,\infty)$.
\item \label{it::quantum_wedge_limit} Suppose that $\wt{h}$ is a free boundary GFF on $\h$ and let $h = \wt{h} - \alpha \log|\cdot|$.  Assume that the additive constant for $h$ is fixed so that its average on $\h \cap \partial \D$ is equal to $0$.  Then the quantum surfaces $(\h,h + C/\gamma,0,\infty)$ equipped with the circle average embedding converge weakly as $C \to \infty$ in the space of distributions to an $\alpha$-quantum wedge with the circle average embedding.
\end{enumerate}
\end{proposition}

We remark that part~\eqref{it::quantum_wedge_invariance} of Proposition~\ref{prop::quantum_wedge_properties} does not apply to unscaled quantum wedges because an unscaled quantum wedge is only defined modulo additive constant.  If one were to fix the additive constant for an unscaled wedge in some way, for example by setting its average on $\h \cap \partial \D$ to be equal to $0$, then the first part of Proposition~\ref{prop::quantum_wedge_properties} still would not apply.

\begin{proof}[Proof of Proposition~\ref{prop::quantum_wedge_properties}]
The proofs of both parts of the proposition are explained in the text of \cite[Section~1.6]{she2010zipper}.  For the convenience of the reader, we are going to provide a self-contained treatment here.  Let $h = \wt{h} - \alpha \log|\cdot|$ where $\wt{h}$ is a free boundary GFF on $\h$ and $\alpha < Q$.  We fix the additive constant for $h$ as in the statement of the proposition so that its average on $\partial \D \cap \h$ is equal to $0$.  For each $\epsilon > 0$ and $z \in \ol{\h}$, we let $h_\epsilon(z)$ be the average of $h$ on $\partial B(z,\epsilon) \cap \h$.  Due to how we have fixed the additive constant for $h$, we have that $h_1(0) = 0$.  Then $h_{e^{-t}}(0)$ for $t > 0$ evolves as $B_{2t} + \alpha t$ where $B$ is a standard Brownian motion; for $t < 0$, it evolves as $\wh{B}_{-2t} + \alpha t$ where $\wh{B}$ is a standard Brownian motion which is independent of $B$.

For each $C \in \R$ we let $h^C = h + C/\gamma$.  Then $\mu_{h^C} = e^C \mu_h$.  For each $a > 0$, we let $h^{a,C} = h^C(a \cdot) + Q \log a$.  By~\eqref{eqn::Qmap}, considering $h^{a,C}$ in place of $h^C$ corresponds to scaling spatially by the constant factor $a^{-1}$.  As usual, for each $r > 0$ we let $h_r^{a,C}(0)$ be the average of $h^{a,C}$ on $\h \cap \partial B(0,r)$.  Let
\[ \epsilon_0^C = \sup\left\{ \epsilon \in [0,1] : h_1^{\epsilon,C}(0)= 0\right\} = \sup\left\{ \epsilon \in [0,1] : h_\epsilon(0) + Q\log \epsilon + C/\gamma = 0\right\}.\]
That is, $\epsilon_0^C$ gives the largest value of $\epsilon \in [0,1]$ such that if we add $C/\gamma$ to $h$ and then rescale by $\epsilon^{-1}$ so that $B(0,\epsilon)$ is mapped to $B(0,1)$ then the average of the rescaled field on $\h \cap \partial \D$ is equal to $0$.  We note that for each $C > 0$ there a.s.\ exists such an $\epsilon \in [0,1]$ since $h_{e^{-t}}(0) - Q t + C/\gamma$ for $t \geq 0$ evolves as a Brownian motion with negative drift (recall $\alpha < Q$) and starts from a positive value.  Thus we can equivalently let
\[ t_0^C = \inf\{t \geq 0 : h_{e^{-t}}(0) - Q t + C/\gamma = 0\}\]
(which is then obviously finite) and set $\epsilon_0^C = e^{-t_0^C}$.

We claim that:
\begin{enumerate}[(a)]
\item\label{it::brownian_forward} For $t > 0$, $h_{e^{-t}}^{\epsilon_0^C,C}(0)$ evolves as $B_{2t} + \alpha t$ where $B$ is a standard Brownian motion with $B_0 = 0$.
\item\label{it::brownian_middle} As $t$ decreases from $0$ to $\log \epsilon_0$, $h_{e^{-t}}^{\epsilon_0^C,C}(0) - Q t$ evolves as $\wh{B}_{-2t}+(\alpha - Q) t$, $\wh{B}$ a standard Brownian motion with $\wh{B}_0 = 0$, conditioned on being positive in $[\log \epsilon_0,0)$ and run until the last time it hits $C/\gamma$.  Equivalently, $h_{e^{-t}}^{\epsilon_0^C,C}(0)$ evolves as $\wh{B}_{-2t} + \alpha t$ conditioned on being larger than $Q t$ for $t \in [\log \epsilon_0,0)$ and run up until the last time that $h_{e^{-t}}^{\epsilon_0^C,C}(0) - Q t$ hits $C/\gamma$.
\item\label{it::brownian_backward} As $t$ decreases from $\log \epsilon_0$ to $-\infty$, $h_{e^{-t}}^{\epsilon_0^C,C}(0)$ evolves as $\wt{B}_{-2t} + \alpha t$ where $\wt{B}$ is a standard Brownian motion (with no conditioning).
\end{enumerate}
Indeed, \eqref{it::brownian_forward} follows from the strong Markov property for Brownian motion.  The same is also true for~\eqref{it::brownian_backward}.  To see why~\eqref{it::brownian_middle} holds, we note that $h_{e^{-t}}(0) - Qt + C/\gamma$ for $t \in [0,t_0^C]$ evolves as the process $B_{2t} + (\alpha-Q) t$ starting from the value $C/\gamma$ and stopped at the first time it hits $0$.  Therefore its time-reversal evolves as $\wh{B}_{-2t} + (\alpha - Q) t$ for $t \leq 0$ conditioned on being positive and stopped at the last time it hits $C/\gamma$ (Propositions~\ref{prop::bessel_exponential_bm} and~\ref{prop::bessel_time_reversal}, or alternatively \cite[Theorem~2.5]{w1974pathdecomp}).

It is therefore easy to see that if we take a limit as $C \to \infty$, then the law of $r \mapsto h_r^{\epsilon_0^C,C}(0)$ (i.e., the projection of $h$ onto $\CH_1(\h)$ with additive constant fixed to vanish on $\partial \D \cap \h$) converges weakly with respect to the topology of local uniform convergence to the law of $r \mapsto A_{\log r^{-1}}$ where $A_s$ is as in Definition~\ref{def::quantum_wedge}.  On the other hand, this rescaling procedure does not affect the projection of $h$ onto the subspace $\CH_2(\h)$ with additive constant fixed to vanish on $\partial \D \cap \h$.

Part~\eqref{it::quantum_wedge_invariance} also follows from this argument because if we fix $u > 0$ and take $C' = C+u$, the law of the circle average embeddings of the surfaces associated with the field with area rescaled by $e^C$ and the field with area rescaled by $e^{C'}$ will have the same limit as $C \to \infty$ (with $u$ fixed).  (We also note that part~\eqref{it::quantum_wedge_invariance} can be seen directly from the definition of a quantum wedge.)
\end{proof}

We finish with the following characterization of an $\alpha$-quantum wedge which will be useful later in the article.  (This characterization, along with its companion Proposition~\ref{prop::quantum_cone_characterization} for quantum cones, may seem trivial.  However, both results will be useful for showing that certain surfaces are $\alpha$-quantum wedges or cones which at first glance may not appear to be so.)

\begin{proposition}
\label{prop::quantum_wedge_characterization}
Fix $\alpha < Q$ and suppose that $(\h,h,0,\infty)$ has the circle average embedding of a quantum surface parameterized by $\h$ such that the following hold:
\begin{enumerate}[(i)]
\item The law of $(\h,h,0,\infty)$ (as a quantum surface) is invariant under the operation of multiplying its area by a constant.  That is, if we fix $C \in \R$, and then let $(\h,\wt{h},0,\infty)$ be the circle average embedding of $(\h,h+C/\gamma,0,\infty)$, then $(\h,h,0,\infty) \stackrel{d}{=} (\h,\wt{h},0,\infty)$.
\item The total variation distance between the law of $(\h,h,0,\infty)$ restricted to $B(0,r) \cap \h$ and the law of an $\alpha$-quantum wedge $(\h,h_\alpha,0,\infty)$ restricted to $B(0,r) \cap \h$ tends to $0$ as $r \to 0$.
\end{enumerate}
Then $(\h,h,0,\infty)$ has the law of an $\alpha$-quantum wedge.
\end{proposition}
\begin{proof}
Observe that if $R,\epsilon > 0$ are fixed then there exists $C_0 > 0$ such that for all $C \geq C_0$ we can couple together the circle average embedding of $(\h,h+C/\gamma,0,\infty)$ with the circle average embedding of $(\h,h_\alpha+C/\gamma,0,\infty)$ in $B(0,R)$ so that they agree with probability at least $1-\epsilon$.  Thus sending $C \to \infty$ yields an asymptotic coupling where both surfaces agree with probability $1$.  The claim then follows since the laws of both surfaces are invariant under the operation of multiplying their area by a constant.  
\end{proof}

\subsection{Quantum cones}
\label{subsec::cones}

We are now going to give a precise description of a quantum cone.  This will be an infinite volume surface without boundary which is homeomorphic to $\C$ with a marked interior point and will be derived from the whole-plane GFF.  Like quantum wedges, quantum cones are also doubly marked where the marked interior point plays the role of the origin and $\infty$ plays the role of the marked point whose neighborhoods have infinite mass.  The discussion will be largely parallel with that given in Section~\ref{subsec::thick_wedges}.  In order to give the definition of a quantum cone, we first need to record the whole-plane analog of Lemma~\ref{lem::h_spaces_orthogonal}.

\begin{lemma}
\label{lem::c_spaces_orthogonal}
Let $\CH_1(\C)$ be the subspace of $H(\C)$ which consists of functions which are radially symmetric about the origin and let $\CH_2(\C)$ be the subspace of $H(\C)$ which consists of functions which have common mean on all circles centered at the origin.  Then $\CH_1(\C)$ and $\CH_2(\C)$ give an orthogonal decomposition of $H(\C)$.
\end{lemma}
\begin{proof}
This is proved in the same manner as Lemma~\ref{lem::h_spaces_orthogonal}.
\end{proof}

\begin{definition}
\label{def::quantum_cone}
Fix $\alpha < Q$.  An {\bf $\alpha$-quantum cone} is the doubly marked quantum surface (when parameterized by $\C$) described by the distribution $h$ on $\C$ whose law can be sampled from as follows.  Let $A_s$ be as in Definition~\ref{def::quantum_wedge} except with $B_{2s}$ and $\wh{B}_{2s}$ replaced by $B_s$ and $\wh{B}_s$.  Let $\CH_1(\C)$ and $\CH_2(\C)$ be as in the statement of Lemma~\ref{lem::c_spaces_orthogonal}.  Then $h$ is the field with projection onto $\CH_1(\C)$ given by the function whose common value on $\partial B(0,e^{-s})$ is equal to $A_s$ for each $s \in \R$ and with projection onto $\CH_2(\C)$ is given by the corresponding projection of a whole-plane GFF.  That is, the projection of $h$ onto $\CH_2(\C)$ given by $\sum_n \alpha_n f_n$ where $(f_n)$ is an orthonormal basis of $\CH_2(\C)$ and $(\alpha_n)$ is an i.i.d.\ $N(0,1)$ sequence.  We fix the additive constant for the projection of $h$ onto $\CH_1(\C)$ (resp.\ $\CH_2(\C)$) so that its average on $\partial \D$ vanishes.
\end{definition}

\begin{remark}
In principle, one could also define the $\alpha$-quantum cone for the case $\alpha = Q$, which would correspond to $W=0$ (and a Bessel process of dimension $2$), but we will not actually have any use for that case in this paper.  By contrast, the $\alpha$-quantum {\em wedge} with $\alpha = Q$ corresponds to a positive weight ($W = \gamma^2/2$) which happens to be the thin-versus-thick critical point.
\end{remark}

The same remark made after Definition~\ref{def::quantum_wedge} also applies here with respect to different choices of embeddings of a quantum cone.  The particular embedding described in Definition~\ref{def::quantum_cone} is the \emph{circle average embedding}.

Note that if $(\C,h,0,\infty)$ is an $\alpha$-quantum cone with $h$ as in Definition~\ref{def::quantum_cone}, then the law of $h$ restricted to $\D$ is the same as the law of the restriction to $\D$ of $\wt{h} - \alpha \log|\cdot|$ where $\wt{h}$ is a whole-plane GFF with the additive constant fixed so that its average on $\partial \D$ is equal to $0$.

As mentioned in Table~\ref{tab::cone_parameterization}, there are many different variables that one can use to parameterize the space of quantum cones.  In this article, we will mainly use $\alpha$ (multiple of $\log$ singularity at the origin) and weight $W$ (as defined in~\eqref{eqn::quantum_cone_weight}).  Note that the constraint $\alpha < Q$ corresponds to $W > 0$.

\begin{remark}
\label{rem::quantum_cone_cyl}
As we will see in Section~\ref{subsec::surfaces_strips_cylinders}, it is also natural to parameterize an $\alpha$-quantum cone by the infinite cylinder $\cyl$.  Note that if we start with a surface parameterized by $\C$ and we change coordinates using $z \mapsto \log(z)$ to parameterize it by $\cyl$, then applying~\eqref{eqn::Qmap} involves adding the function $Q \re(z)$.  Suppose that $\CH_1(\cyl),\CH_2(\cyl)$ are as in Lemma~\ref{lem::strip_spaces_orthogonal} (with $\cyl$ in place of $\strip$).  It thus follows that we can sample from the law of a distribution $h$ which represents an $\alpha$-quantum cone parameterized by $\cyl$ by sampling its projection onto $\CH_1(\cyl)$ from the law of the function whose common value on $u + [0,2\pi i]$ is $\wt{A}_u$ where:
\begin{enumerate}
\item For $u > 0$, $\wt{A}_u = B_{u} + (Q-\alpha) u$ where $B$ is a standard Brownian motion with $B_0 = 0$ and
\item For $u < 0$, $\wt{A}_u = \wh{B}_{-u} + (Q-\alpha)u$ where $\wh{B}$ is a standard Brownian motion independent of $B$ with $\wh{B}_0 = 0$ conditioned so that $\wh{B}_t + (\alpha-Q) < 0$ for all $t > 0$.
\end{enumerate}
The projection of $h$ onto $\CH_2(\cyl)$ can be sampled from independently from the law of the corresponding projection of a GFF on $\cyl$.  With this choice, the origin (resp.\ infinity) corresponds to $-\infty$ (resp.\ $+\infty$).  We fix the additive constant for the projections of $h$ onto $\CH_1(\cyl)$, $\CH_2(\cyl)$ so that their average on $[0,2\pi i]$ vanishes. Remark~\ref{rem::fat_quantum_wedge_strip} also applies here to explain the asymmetry in the definition of $\wt{A}$.

If we wish to take the origin (resp.\ infinity) to correspond to $+\infty$ (resp.\ $-\infty$), then we take $\wt{A}_u$ to be:
\begin{enumerate}
\item For $u > 0$, $\wt{A}_u = B_{u} + (\alpha-Q) u$ where $B$ is a standard Brownian motion with $B_0 = 0$ and
\item For $u < 0$, $\wt{A}_u = \wh{B}_{-u} + (\alpha-Q)u$ where $\wh{B}$ is a standard Brownian motion independent of $B$ with $\wh{B}_0 = 0$ conditioned so that $\wh{B}_t + (Q-\alpha) t > 0$ for all $t > 0$.
\end{enumerate}
This process and the one above agree up to a reflection and horizontal translation of $\cyl$, which is why one obtains a quantum surface with the same law.
\end{remark}

We next record the analog of Proposition~\ref{prop::quantum_wedge_properties}.

\begin{proposition}
\label{prop::quantum_cone_properties}
Fix $\alpha < Q$.  Then the following hold:
\begin{enumerate}[(i)]
\item \label{it::quantum_cone_invariance} The law of an $\alpha$-quantum cone is invariant under the operation of multiplying its area by a constant.  That is, if $(\C,h,0,\infty)$ is the circle average embedding of an $\alpha$-quantum cone, we fix $C \in \R$, and then let $(\C,\wt{h},0,\infty)$ be the circle average embedding of $(\C,h+C/\gamma,0,\infty)$, then $(\C,h,0,\infty) \stackrel{d}{=} (\C,\wt{h},0,\infty)$.
\item \label{it::quantum_cone_limit} Suppose that $D \subseteq \C$ is a domain with $0 \in D$ and that $\wt{h}$ is a GFF on $D$ with any type of boundary conditions if $D \neq \C$ or a whole-plane GFF if $D = \C$.  Let $h = \wt{h} - \alpha \log|\cdot|$.  If $h$ is only defined modulo additive constant, we fix $r > 0$ so that $B(0,r) \subseteq D$ and then fix the additive constant by taking $h_r(0) = 0$.  Then the quantum surfaces $(\C,h + C/\gamma,0,\infty)$ equipped with the circle average embedding converge weakly in the space of distributions as $C \to \infty$ to an $\alpha$-quantum wedge with the circle average embedding.
\end{enumerate}
\end{proposition}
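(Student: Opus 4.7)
The plan is to follow the structure of the proof of Proposition~\ref{prop::quantum_wedge_properties} essentially line-by-line, replacing the free boundary GFF on $\h$ with the whole-plane GFF, the orthogonal decomposition from Lemma~\ref{lem::h_spaces_orthogonal} with its whole-plane analog Lemma~\ref{lem::c_spaces_orthogonal}, and noting the single substantive difference that circle averages of the whole-plane GFF (modulo additive constant) are standard Brownian motions in $-\log \epsilon$ rather than $\sqrt{2}$ times a Brownian motion; this is exactly the reason that the radial process in Definition~\ref{def::quantum_cone} has $B_s$ in place of $B_{2s}$. First I would reduce part~\eqref{it::quantum_cone_limit} to the case $D = \C$ with $\wt{h}$ a whole-plane GFF: for an arbitrary domain $D \ni 0$ with boundary GFF, one writes $\wt{h}|_{B(0,r)}$ as a whole-plane GFF plus an independent random harmonic function on a neighborhood of the origin. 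The rescaling by $\epsilon_0^{-1} \to \infty$ (which will be the operation used for the canonical description as $C \to \infty$) reduces the $C^k$-norm of that harmonic function on any fixed compact set to zero; by absolute continuity of whole-plane versus domain GFFs in relatively compact subsets and the continuity of the area measure in $h$, this error is negligible in the limit.

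Next, taking $\wt{h}$ a whole-plane GFF, set $h = \wt{h} - \alpha \log|\cdot|$ with additive constant fixed so $h_1(0) = 0$. Then $t \mapsto h_{e^{-t}}(0)$ is $B_t + \alpha t$ for $t \ge 0$ and $\wh{B}_{-t} + \alpha t$ for $t \le 0$, with $B$ and $\wh{B}$ independent standard Brownian motions. For each $C > 0$, define
\[
\epsilon_0 \defeq \sup\bigl\{\epsilon \in (0,1] : h_\epsilon(0) + Q\log \epsilon + C/\gamma = 0 \bigr\}.
\]
Since $t \mapsto h_{e^{-t}}(0) - Qt + C/\gamma$ is a Brownian motion with drift $\alpha - Q \le 0$ starting at $C/\gamma > 0$, it a.s.\ hits zero in finite forward time, so $\epsilon_0 \in (0,1]$ a.s. Replace $h$ by the distribution obtained by rescaling by $\epsilon_0^{-1}$, applying the quantum coordinate change~\eqref{eqn::Qmap} (which adds $-Q\log \epsilon_0^{-1}$), and adding $C/\gamma$. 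Using Lemma~\ref{lem::c_spaces_orthogonal}, the rescaled $h$ decomposes as an independent sum of its $\CH_1(\C)$ and $\CH_2(\C)$ projections; the $\CH_2(\C)$ projection is unaffected (by scale and shift invariance of a whole-plane GFF) and matches Definition~\ref{def::quantum_cone}. The $\CH_1(\C)$ projection is the function whose value on $\partial B(0,e^{-t})$ is: a standard Brownian motion plus drift $\alpha t$ for $t \ge 0$; a Brownian motion with drift $(\alpha - Q)t$ conditioned to exceed $Qt$ on $(\log \epsilon_0, 0)$ and take a prescribed value at $t = \log \epsilon_0$, in the middle interval; and a free Brownian motion for $t < \log \epsilon_0$.

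As $C \to \infty$, one checks that $\epsilon_0 \to 0$ a.s.\ and that the conditioned Brownian segment converges locally uniformly in $t$ to the Brownian motion $\wh{B}_{-t} + \alpha t$ conditioned so that $\wh{B}_s + (Q-\alpha) s > 0$ for all $s > 0$ (interpreted as in Remark~\ref{rem::bm_drift_conditioned}). This is the radial process $A_s$ from Definition~\ref{def::quantum_cone}, proving part~\eqref{it::quantum_cone_limit}. Part~\eqref{it::quantum_cone_invariance} then follows exactly as in Proposition~\ref{prop::quantum_wedge_properties}: applying the above with $C$ and $C' = C + u$ yields the same limit, so the law of the canonical description is invariant under $h \mapsto h + u/\gamma$ followed by canonical rescaling. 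The main obstacle is the convergence of the conditioned middle segment in the limit $\epsilon_0 \to 0$, which must be argued via the reversal-of-Bessel construction outlined in Remark~\ref{rem::bm_drift_conditioned} (with Lemma~\ref{lem::condition_bm_negative_drift_large} providing the relevant Doob-transform description) and its straightforward adaptation to the whole-plane variance normalization; the $\alpha = Q$ boundary case is excluded by hypothesis, so no delicate critical-drift argument is needed.
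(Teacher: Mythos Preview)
Your proposal is correct and follows essentially the same approach as the paper; indeed, the paper's proof is simply ``Both parts follow from the same proof as given in Proposition~\ref{prop::quantum_wedge_properties}.'' Your write-up actually supplies more detail than the paper does, in particular the reduction of part~\eqref{it::quantum_cone_limit} from a general domain $D \ni 0$ to the case $D = \C$ via the Markov decomposition into a whole-plane GFF plus a harmonic correction---a point the paper leaves implicit.
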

\begin{proof}
We first consider part~\eqref{it::quantum_cone_limit}.  For each $C \in \R$ we let $h^C = h+C/\gamma$.  For each $a > 0$, we let $h^{a,C} = h^C(a \cdot) + Q \log a$.  For each $r > 0$, we let $h_r^{a,C}(0)$ be the average of $h_r^{a,C}$ on $\partial B(0,r)$.  Let
\[ \epsilon_0^C = \sup\{ \epsilon \in [0,1] : h_1^{\epsilon,C}(0) = 0 \} = \sup\{ \epsilon \in [0,1] : h_\epsilon(0) + Q \log \epsilon + C/\gamma = 0\}.\]
The fact that the projection of $h^{\epsilon_0^C,C}$ onto $\CH_1(\C)$ (with the additive constant fixed) converges weakly with respect to the topology of local uniform convergence to the corresponding projection of an $\alpha$-quantum cone follows from the same argument as given in Proposition~\ref{prop::quantum_wedge_properties}.

To complete the proof of part~\eqref{it::quantum_cone_limit}, we need to explain why the projection of $h^{\epsilon_0^C,C}$ onto $\CH_2(\C)$ converges weakly as $C \to \infty$ in the space of distributions to the corresponding projection of an $\alpha$-quantum cone.  Recall that the latter has the same law as the corresponding projection of a whole-plane GFF onto $\CH_2(\C)$.  We will prove the result in the case that $\wt{h}$ has Dirichlet boundary conditions; the other cases follow from the same argument.   By rescaling $D$ if necessary, we may assume without loss of generality that $\D \subseteq D$.  By the Markov property, we can write $\wt{h} = \wt{h}^0 + \wt{\Fh}$ where $\wt{h}^0$ is a zero boundary GFF on $\D$ and $\wt{\Fh}$ is harmonic on $\D$.  Let $\wh{h}$ be a whole-plane GFF with the additive constant fixed so that its average on $\partial \D$ is equal to $0$.  Then we can write $\wh{h} = \wh{h}^0 + \wh{\Fh}$ where $\wh{h}^0$ is a zero boundary GFF on $\D$ and $\wh{\Fh}$ is harmonic on $\D$.  We assume that $\wt{h}$, $\wh{h}$ are coupled together so that $\wt{h}^0 = \wh{h}^0$.  Fix $\phi \in \CH_2(\C)$.  By the scale-invariance of $\wh{h}$, we have that
\[ (\wh{h},\phi)_\nabla \stackrel{d}{=} (\wh{h}(\epsilon_0^C \cdot),\phi)_\nabla = (\wh{h}^0(\epsilon_0^C \cdot),\phi)_\nabla + (\wh{\Fh}(\epsilon^C \cdot),\phi)_\nabla.\]
Since $\wh{\Fh}$ is a $C^\infty$ function in $\D$, we have that $(\wh{\Fh}(\epsilon_0^C \cdot), \phi)_\nabla \to 0$ as $C \to \infty$.  This implies that the law of the projection of $\wh{h}^0(\epsilon_0^C \cdot)$ onto $\CH_2(\C)$ converges weakly in the space of distributions to that of the projection of $\wh{h}$ onto $\CH_2(\C)$.  Therefore the same is also true for $\wt{h}^0(\epsilon_0^C \cdot)$.  The same argument as above implies that $(\wt{\Fh}(\epsilon_0^C\cdot),\phi)_\nabla \to 0$ as $C \to \infty$.  Altogether, this implies that  the projection of $h^{\epsilon_0^C,C}$ onto $\CH_2(\C)$ converges weakly as $C \to \infty$ in the space of distributions to the corresponding projection of an $\alpha$-quantum cone.

Having proved part~\eqref{it::quantum_cone_limit}, part~\eqref{it::quantum_cone_invariance} follows as explained in the proof of Proposition~\ref{prop::quantum_wedge_properties}.
\end{proof}

We also have the following analog of Proposition~\ref{prop::quantum_wedge_characterization}.

\begin{proposition}
\label{prop::quantum_cone_characterization}
Fix $\alpha < Q$ and suppose that $(\C,h,0,\infty)$ is the circle average embedding of a quantum surface parameterized by $\C$ such that the following hold:
\begin{enumerate}[(i)]
\item The law of $(\C,h,0,\infty)$ (as a quantum surface) is invariant under the operation of multiplying its area by a constant.  That is, if we fix $C \in \R$, and then let $(\C,\wt{h},0,\infty)$ be the circle average embedding of $(\C,h+C/\gamma,0,\infty)$, then $(\C,h,0,\infty) \stackrel{d}{=} (\C,\wt{h},0,\infty)$.
\item The total variation distance between the law of $(\C,h,0,\infty)$ restricted to $B(0,r)$ and the law of an $\alpha$-quantum cone $(\C,h_\alpha,0,\infty)$ restricted to $B(0,r)$ tends to $0$ as $r \to 0$.
\end{enumerate}
Then $(\C,h,0,\infty)$ has the law of an $\alpha$-quantum cone.
\end{proposition}
\begin{proof}
This follows from the same argument given in the proof of Proposition~\ref{prop::quantum_wedge_characterization}.
\end{proof}

\subsection{Encoding surfaces using Bessel processes}
\label{subsec::surfaces_strips_cylinders}

We are now going to explain how to encode a quantum wedge using a Bessel process (as will be clear from the following discussion, one can similarly encode a quantum cone using a Bessel process).  This perspective will allow us to give the definition of a thin quantum wedge (i.e., a wedge not homeomorphic to $\h$).  Recall that if we consider a thick quantum wedge parameterized by $\strip$ then the projection of the field onto $\CH_1(\strip)$ (with additive constant fixed so that the average on $[0, \pi i]$ vanishes) can be expressed as $X_t = B_{2t} + a t$ where $a \in \R$ and $B$ is a standard Brownian motion.  Write $Z_t = \exp(\tfrac{\gamma}{2} X_t)$.  By It\^o's formula,
\[ d \langle Z \rangle_t = \frac{\gamma^2}{2} Z_t^2 dt.\]
By Proposition~\ref{prop::bessel_exponential_bm}, we see that if we reparameterize $Z$ by its quadratic variation then it evolves as a $\bes^\delta$ with
\begin{equation}
\label{eqn::bessel_a_gamma}
   \delta = 2+\frac{2a}{\gamma}.
\end{equation}
Equivalently,
\begin{equation}
\label{eqn::a_gamma_delta}
a = \frac{\gamma}{2}(\delta-2).
\end{equation}
Recalling the phases described in Section~\ref{subsec::bessel_processes}, we note that:
\begin{enumerate}
\item If $a \geq 0$, then $\delta \geq 2$. This implies that $Z$ --- if started at zero --- a.s.\ does not hit zero in the future.
\item If $a \in (-\gamma,0)$, then $\delta \in (0,2)$.  This implies that we can view $Z$ as a process which is defined for all times and is instantaneously reflecting at $0$.
\end{enumerate}
In the former case, we can associate with $Z$ a single quantum wedge where the projection of the field onto $\CH_1(\strip)$ is determined by $Z$ by reparameterizing $2\gamma^{-1} \log Z$ to have quadratic variation given by $2dt$ (with additive constant fixed to agree exactly with this process).  This is how we define the $\alpha=Q$, $W = \tfrac{\gamma^2}{2}$ quantum wedge.  In the latter case, each excursion of $Z$ from $0$ (recall the form of the excursion measure from Remark~\ref{rem::bessel_ito_excursion}) can be associated with a (finite area) quantum surface, which we can view as being parameterized by $\strip$.  Using the change of coordinates formula (with the map $\strip \to \h$ given by $z \mapsto e^z$ 
this corresponds to a quantum wedge with $\alpha = Q-a$).  We will now use this construction to give a definition of a quantum wedge with $\alpha \in (Q,Q+\tfrac{\gamma}{2})$.

\begin{definition}
\label{def::skinny_wedge_bessel}
Fix $\alpha \in (Q,Q + \tfrac{\gamma}{2})$.  An {\bf $\alpha$-quantum wedge} is defined as follows.  Let
\begin{equation}
\label{eqn::wedge_bessel_dimension}
\delta = 2 + \frac{2(Q-\alpha)}{\gamma} = 3 + \frac{4 - 2 \alpha \gamma}{\gamma^2}
\end{equation}
(note that $\delta \in (1,2)$) and let $Y$ be a $\bes^\delta$.  Let $\CH_1(\strip)$, $\CH_2(\strip)$ be as in the statement of Lemma~\ref{lem::strip_spaces_orthogonal}.  Sample a countable collection of distribution valued random variables $h_e$ on $\strip$ indexed by the excursions $e$ of $Y$ from $0$ where the projection of $h_e$ onto $\CH_1(\strip)$ is given by reparameterizing $2\gamma^{-1} \log(e)$ to have quadratic variation $2dt$ (with additive constant fixed to agree exactly with this process) and the projection of $h_e$ onto $\CH_2(\strip)$ is sampled independently according to the law of the corresponding projection of a free boundary GFF on $\strip$ (with additive constant fixed so that its average on $[0, \pi i]$ vanishes).  As explained in Section~\ref{subsubsec::intro_qwedges}, a thin quantum wedge can be viewed as a \ppp and also a.s.\ can be given a topological structure in which the left and right boundaries are each homeomorphic to the positive real axis.
\end{definition}

\begin{remark}
\label{rem::excursion_lengths}
Suppose that $2/\gamma^2 > 1$ (so that Lemma~\ref{lem::moment_bound} below applies; see also Lemma~\ref{lem::negative_drift_first_moment}).  By \cite[Proposition~1.2]{ds2011kpz}, we have that the conditional expectation of the quantum mass of the surface given $X$ (i.e., the projection of the field onto the subspace $\CH_1(\strip)$ of $\CH(\strip)$ with fixed additive constant) is (up to a multiplicative constant) given by
\begin{equation}
\label{eqn::mass_qv}
\int_{-\infty}^\infty \exp\left(\gamma X_t \right) dt = \frac{2}{\gamma^2} \int_{-\infty}^\infty d\langle Z \rangle_t = \frac{2}{\gamma^2}\langle Z \rangle_\infty.
\end{equation}
Therefore the length of a given excursion $e$ of $Y$ from $0$ as in Definition~\ref{def::skinny_wedge_bessel} is equal to a non-random constant times the conditional expectation of the quantum mass of the quantum surface described by $h_e$ given $e$ (recall~\eqref{eqn::mass_qv}).
\end{remark}

We will see in Section~\ref{sec::structure_theorems} and Section~\ref{sec::quantumwedge} that the manner in which we string together a sequence of quantum disks in Definition~\ref{def::skinny_wedge_bessel} using a Bessel process is natural in the context of quantum wedges.  In particular, we will show that the beaded surface which arises by slicing a thick wedge with an independent boundary intersecting $\SLE_\kappa(\rho_1;\rho_2)$ process yields a thin wedge as defined in Definition~\ref{def::skinny_wedge_bessel}.

\begin{remark}
\label{rem::critical_bessel_values}
Some critical values for the Bessel process occur when $\delta \in \{0,1,2\}$.  As explained in Section~\ref{subsec::bessel_processes}, these respectively correspond to when~$0$ is an absorbing state, the critical value below which the process fails to be a semimartingale when it is interacting with~$0$, and the critical value at or above which the process never hits~$0$.  These respectively correspond to
\[ a \in \left\{-\gamma, -\frac{\gamma}{2}, 0 \right\},\]
which in turn correspond to
\[ \alpha \in \left\{\gamma+Q, \frac{\gamma}{2}+Q, Q \right\} = \left\{\frac{3\gamma}{2} + \frac{2}{\gamma}, \gamma + \frac{2}{\gamma}, \frac{\gamma}{2}+\frac{2}{\gamma} \right\},\]
and
\[ W \in \left\{-\frac{\gamma^2}{2}, 0,\frac{\gamma^2}{2} \right\}.\]
\end{remark}

It will also be natural to encode an $\alpha$-quantum wedge using a Bessel process $\wh{Y}$ so that the length of each excursion of $\wh{Y}$ from $0$ corresponds to a (non-random) constant times the $\gamma$-LQG mass of the corresponding bubble (rather than a conditional expectation).   We will now explain why this is possible and why the dimension of the Bessel process is the same as in~\eqref{eqn::wedge_bessel_dimension}.  The rest of this subsection may be skipped on a first reading.

\begin{proposition}
\label{prop::actual_quantum_area}
Fix $\alpha \in (Q,Q+\tfrac{\gamma}{2})$ and suppose that $\CW$ is an $\alpha$-quantum wedge.  The law of the $\gamma$-LQG areas of the bubbles of $\CW$ coincides with the law of the lengths of the excursions from $0$ of a Bessel process of dimension $\delta$ with $\delta$ as in~\eqref{eqn::wedge_bessel_dimension}. 
\end{proposition}

We will need two preparatory lemmas to prove Proposition~\ref{prop::actual_quantum_area}.  We begin with the following basic observation about \ppp's.

\begin{lemma}
\label{lem::poisson_reweight}
Suppose that $\Lambda$ is a \ppp\ on $\R_+ \times \R_+ \times \R_+$ chosen with intensity measure $du \otimes t^\alpha dt \otimes \mu$ where $\mu$ is a probability measure on $\R_+$ and both $du$ and $dt$ denote Lebesgue measure on $\R_+$.  Assume that $c = \E[U^{-\alpha-1}] < \infty$ for $U \sim \mu$.  Then $\{(u, U e) : (u,e,U) \in \Lambda\}$ is a \ppp\ on $\R_+ \times \R_+$ with intensity measure $c (du \otimes t^\alpha dt)$.
\end{lemma}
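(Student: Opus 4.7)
The plan is to invoke the standard mapping (pushforward) theorem for Poisson point processes: if $\Lambda$ is a PPP on a space $S$ with intensity $\nu$ and $F \colon S \to S'$ is a measurable map such that the pushforward $F_* \nu$ is locally finite, then $F(\Lambda) = \{F(x) : x \in \Lambda\}$ is a PPP on $S'$ with intensity $F_* \nu$. Here I would apply this with $S = \R_+ \times \R_+ \times \R_+$, $S' = \R_+ \times \R_+$, $\nu = du \otimes t^\alpha dt \otimes \mu$, and $F(u,e,U) = (u, Ue)$.

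The main computation is therefore to identify $F_* \nu$. First I would observe that $F$ acts trivially on the first coordinate, so by Fubini it suffices to compute the pushforward of $t^\alpha dt \otimes \mu$ on $\R_+ \times \R_+$ under the map $(e,U) \mapsto Ue$. Conditioning on $U$, the pushforward of the measure $t^\alpha dt$ under the scaling $e \mapsto Ue$ is obtained by the substitution $s = Ue$, $ds = U\,de$, giving
\begin{equation*}
(s/U)^\alpha \cdot U^{-1} \, ds = U^{-\alpha-1} s^\alpha \, ds.
\end{equation*}
Integrating over $U$ against $\mu$ and using Fubini yields
\begin{equation*}
\int_{\R_+} U^{-\alpha-1} \, d\mu(U) \cdot s^\alpha \, ds = c \cdot s^\alpha \, ds,
\end{equation*}
where the interchange of integrals is justified by the finiteness hypothesis $c = \E[U^{-\alpha-1}] < \infty$. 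Hence $F_* \nu = du \otimes (c \cdot s^\alpha ds)$, as desired.

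There is no real obstacle here; the only point requiring a word of care is that $F$ is not injective (two triples $(u,e,U)$ and $(u, e', U')$ with $Ue = U'e'$ map to the same point), so strictly speaking the output is labeled by the original triples. But since the image points $(u, Ue)$ are a.s.\ all distinct (the marks $U$ are $\mu$-distributed and $e$ ranges over a continuum of values), the set $\{(u, Ue) : (u,e,U) \in \Lambda\}$ is well-defined and agrees with $F(\Lambda)$ in the sense used by the mapping theorem. Local finiteness of $F_*\nu$ is immediate from the explicit formula $c(du \otimes t^\alpha dt)$, completing the proof.
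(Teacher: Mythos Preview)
Your proof is correct and takes essentially the same approach as the paper: both compute the pushforward of the intensity measure under $(u,e,U)\mapsto(u,Ue)$ via the mapping theorem, exploiting the scaling covariance of the power-law density $t^\alpha\,dt$. The only cosmetic difference is that the paper first passes to $\log$-coordinates so that the product becomes a sum and the pushforward becomes a convolution of $e^{(\alpha+1)t}\,dt$ with the law of $\log U$, whereas you carry out the same change of variables directly on $\R_+$.
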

\begin{proof}
Recall that the image of a \ppp\ under a measurable map is a \ppp\ and that the intensity of the latter is given by the pushforward of the intensity of the former.  Consequently, $\{(u, U e) : (u,e,U) \in \Lambda\}$ is a \ppp\ on $\R_+ \times \R_+$.  Let $\nu$ be the law of $\log U$ for $U \sim \mu$.  Applying a change of variables using the $\log$ function, the density $t^\alpha dt$ becomes $e^{(\alpha+1)t}dt$.  The convolution of this density with $\nu$ is given by:
\[ \int_\R e^{(\alpha+1)(t-s)} d\nu(s) = e^{(\alpha+1) t} \E[ U^{-\alpha-1}],\]
which implies the lemma.
\end{proof}

\begin{lemma}
\label{lem::negative_drift_first_moment}
Fix $a < 0$ and suppose that $h$ is a random distribution on $\strip$ such that
\begin{enumerate}[(i)]
\item Its projection onto $\CH_1(\strip)$ is given by $X_u = B_{2u} + a u$ for $u \geq 0$ and $X_u = \wh{B}_{-2u} - a u$ for $u < 0$ where $B,\wh{B}$ are independent standard Brownian motions with $B_0 = \wh{B}_0=0$ conditioned so that $X_u \leq 0$ for all~$u \in \R$ (with additive constant fixed so that its average on $[0, \pi i]$ vanishes).
\item Its projection onto $\CH_2(\strip)$ is given independently by the corresponding projection of a free boundary GFF on $\strip$ (with additive constant fixed so that its average on $[0, \pi i]$ vanishes).
\end{enumerate}
Then we have for each $p \in (0,4/\gamma^2)$ that $\E[ (\nu_h(\partial \strip))^p] < \infty$ and for each $p \in (0,2/\gamma^2)$ that $\E[(\mu_h(\strip))^p] < \infty$.
\end{lemma}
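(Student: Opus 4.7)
My plan is to first use the orthogonal decomposition $h = X + h_2$ from Lemma~\ref{lem::strip_spaces_orthogonal}, in which $X$ (the $\CH_1(\strip)$ projection) and $h_2$ (the independent $\CH_2(\strip)$-projection of a free-boundary GFF) are independent. Because $u \mapsto X_u$ is a.s.\ continuous, the circle-average regularization argument recalled around~\eqref{eqn::mass_qv} will give the multiplicative identities
\begin{equation*}
d\mu_h(z) = e^{\gamma X_{\re z}}\, d\mu_{h_2}(z), \qquad d\nu_h(x) = e^{\gamma X_{\re x}/2}\, d\nu_{h_2}(x).
\end{equation*}
For $\gamma \in (0,2)$, subcritical GMC theory for the free-boundary GFF gives that $\E[\mu_{h_2}([0,1]\times[0,\pi])]$ and $\E[\nu_{h_2}([0,1]\times\{0,\pi\})]$ are both finite, and by horizontal translation invariance of the law of $h_2$ they equal the corresponding expectations over any unit horizontal slab. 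Conditioning on $X$ and taking expectations will therefore reduce the lemma to the two pure Brownian-motion bounds
\begin{equation*}
\int_{-\infty}^{\infty} \E[e^{\gamma X_u}]\, du < \infty \quad\text{and}\quad \int_{-\infty}^{\infty} \E[e^{\gamma X_u/2}]\, du < \infty.
\end{equation*}

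By the $u \leftrightarrow -u$ symmetry built into the definition of $X$, I only need to bound these integrals on $u \geq 0$. There I will set $V_t := -X_{t/2}$, so that $V$ has the law of a standard Brownian motion with positive drift $\mu := -a/2 > 0$ started at $0$ and conditioned to stay nonnegative for all $t \geq 0$. The zero-probability conditioning is interpreted exactly as in Remark~\ref{rem::bm_drift_conditioned}---either as $V_t = \tilde B_{t+\tau}$, where $\tilde B$ is unconditioned drift-$\mu$ Brownian motion from $0$ and $\tau$ is its a.s.\ finite last zero, or equivalently via the Doob $h$-transform using $h(x) = 1 - e^{-2\mu x}$. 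Taking $x \downarrow 0$ in the $h$-transform gives the explicit one-time density
\begin{equation*}
q^{\mu,\ast}_0(t, y) = \frac{y}{\mu\, t\,\sqrt{2\pi t}}\, e^{-(y-\mu t)^2/(2t)}, \quad y > 0.
\end{equation*}

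A direct Gaussian computation---completing the square in $\int_0^\infty e^{-cy}\, q^{\mu,\ast}_0(t,y)\, dy$---then shows that for every $c > 0$, $\E[e^{-c V_t}]$ decays exponentially in $t$ (with rate $c(\mu - c/2)$ when $c \in (0,\mu]$ and rate $\mu^2/2$ when $c \geq \mu$), while trivially $\E[e^{-c V_t}] \leq 1$. Hence $\int_0^\infty \E[e^{-c V_t}]\, dt < \infty$ for every $c > 0$; specializing to $c \in \{\gamma/2, \gamma\}$ and changing variables $t = 2u$ yields the two finite integrals above and completes the proof. The only delicate step is giving rigorous meaning to the zero-probability conditioning $\{X_u \leq 0 \text{ for all }u\}$, which is handled as in Remark~\ref{rem::bm_drift_conditioned}; after that, everything reduces to elementary Gaussian computations.
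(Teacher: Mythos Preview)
Your reduction to the one-dimensional estimate $\int_0^\infty \E[e^{\gamma X_u}]\,du<\infty$ (and its $\gamma/2$ analog for the boundary measure) coincides with what the paper uses; the paper simply cites \cite[Proposition~1.2]{ds2011kpz} for the conditional expectation rather than writing out the multiplicative identity $d\mu_h=e^{\gamma X_{\re z}}\,d\mu_{h_2}$. Where you diverge is in bounding the integral itself. The paper never computes the law of the conditioned process: it first treats $X_0=-1$, where $\{X_u\leq 0\ \forall u\geq 0\}$ has positive probability, and bounds
\[
\E\!\int_0^\infty e^{\gamma X_u}\,du\ \leq\ C\int_0^\infty\Bigl(\P\bigl[B_{2u}+au\geq\tfrac{au}{2}\bigr]+e^{\gamma au/2}\Bigr)\,du,
\]
using only that $e^{\gamma X_u}\leq 1$ on the conditioning event; the case $X_0=0$ is then recovered via the Markov property and the fact that the hitting time of $-1$ has an exponential tail. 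Your route through the explicit $h$-transform density is equally legitimate and yields sharper decay rates, at the cost of importing the density formula rather than arguing from first principles.

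One correction: the density you quote is missing a factor. Taking $x\downarrow 0$ in the $h$-transform with $h(y)=1-e^{-2\mu y}$ actually gives
\[
q^{\mu,\ast}_0(t,y)\ =\ \bigl(1-e^{-2\mu y}\bigr)\cdot\frac{y}{\mu\,t\,\sqrt{2\pi t}}\,e^{-(y-\mu t)^2/(2t)},
\]
and it is this expression, not yours, that integrates to $1$ (your formula integrates to $\Phi(\mu\sqrt t)+\tfrac{1}{\mu\sqrt t}\phi(\mu\sqrt t)$, which is strictly greater than $1$ for every finite $t$). This does not affect your conclusion: since $1-e^{-2\mu y}\leq 1$, your stated expression dominates the true density, so your Gaussian computation still produces a valid upper bound on $\E[e^{-cV_t}]$ with the claimed exponential decay.
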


The distribution $h$ in the statement of Lemma~\ref{lem::negative_drift_first_moment} describes a single bead of a quantum wedge parameterized by $\strip$ with the two special points located at $\pm \infty$ with the horizontal translation taken so that the supremum of the projection onto $\CH_1(\strip)$ (with the additive constant fixed so that its average on $[0, \pi i]$ is the average of $h$ on $[0, \pi i]$) occurs at $u=0$.

\begin{proof}[Proof of Lemma~\ref{lem::negative_drift_first_moment}]
We will give the proof that $\E[ (\mu_h(\strip))^p] < \infty$ for each $p \in (0,2/\gamma^2)$; that $\E[ (\nu_h(\partial \strip))^p] < \infty$ for each $p \in (0,4/\gamma^2)$ follows from the same argument.  Let $\wh{h}$ be the projection of $h$ onto $\CH_2(\strip)$.

Fix $p \in (0,2/\gamma^2)$ and assume that $p \geq 1$.  For each $k \in \Z$, let $X_k^* = \sup_{t \in [k,k+1]} X_t$.  We also let $A_k = [k,k+1) \times [0,\pi]$.  Then we have that
\begin{align*}
   \mu_h(\strip)
&= \sum_{k \in \Z} \mu_h(A_k)
 \leq \sum_{k \in \Z} e^{\gamma X_k^*} \mu_{\wh{h}}(A_k).
\end{align*}
By Jensen's inequality, we have that
\begin{align*}
( \mu_h(\strip))^p \leq \left(\sum_{k \in \Z} e^{\gamma X_k^*} \right)^{p} \sum_{k \in \Z} p_k (\mu_{\wh{h}}(A_k))^p \quad\text{where}\quad p_k = \frac{e^{\gamma X_k^*}}{\sum_{j \in \Z} e^{\gamma X_j^*}}.
\end{align*}
Let $c_0 = \E[ (\mu_{\wh{h}}(A_0)^p]$ and note that $c_0 < \infty$ by Lemma~\ref{lem::moment_bound}.  Taking expectations of both sides over the randomness over $\mu_{\wh{h}}$ and using that $X_k^*$ is independent of $\mu_{\wh{h}}$, we thus see that
\begin{align*}
	\E[ (\mu_h(\strip))^p]
&\leq c_0 \E\left[ \left( \sum_{k \in \Z} e^{\gamma X_k^*} \right)^{p} \right].
\end{align*}
Lemma~\ref{lem:bm_moment_bound} implies that the expectation is finite, which completes the proof in the case of $\mu_h$.

As mentioned earlier, the case of $\nu_h$ follows from the same argument except one uses \cite[Proposition~3.5]{rv2010revisited} in place of Lemma~\ref{lem::moment_bound} to bound the analog of $c_0$ in this case.
\end{proof}

\begin{proof}[Proof of Proposition~\ref{prop::actual_quantum_area}]
We can think of sampling the Bessel process $Y$ in Definition~\ref{def::skinny_wedge_bessel} by first sampling a \ppp\ $\Lambda^*$ according to $du \otimes \nu_\delta^*$ (where $du$ is Lebesgue measure on $\R_+$ and $\nu_\delta^*$ is in Remark~\ref{rem::bessel_ito_excursion}) and then sampling $Y$ by associating with each $(u,e^*) \in \Lambda^*$ a $\bes^\delta$ excursion from~$0$ to~$0$ whose maximum value is given by~$e^*$ (this law can be constructed using $\bes^{4-\delta}$ processes as described in Remark~\ref{rem::bessel_ito_excursion}).  Note that the total quantum mass of the surface associated with a given $(u,e^*)$ is given by~$(e^*)^2 U_e$ where the $U_e$ are i.i.d.\ random variables indexed by the excursions of $Y$ from $0$.  By the discussion just after~\eqref{eqn::bessel_ito_lifetime}, we note that $\{ (u,(e^*)^2) : (u,e^*) \in \Lambda^*\}$ is a \ppp\ on $\R_+ \times \R_+$ with intensity measure proportional to $du \otimes t^{\delta/2-2} dt$ where~$dt$ denotes Lebesgue measure on~$\R_+$.  Moreover, the law of $U_e$ is given by the law of the $\gamma$-LQG mass associated with the law on distributions on~$\strip$ described in the statement of Lemma~\ref{lem::negative_drift_first_moment} with $a = Q-\alpha$ (this follows from Proposition~\ref{prop::bessel_exponential_bm} and Proposition~\ref{prop::bessel_time_reversal}).  In particular, we have that $\E[ (\mu_h(\strip))^{1-\delta/2}] < \infty$ since $\delta \in (1,2)$ hence $1-\delta/2 < 1/2 < 2/\gamma^2$.  The result is thus a consequence of Lemma~\ref{lem::poisson_reweight}.
\end{proof}

\subsection{Quantum disks and spheres}
\label{subsec::disks_and_spheres}

As in Section~\ref{subsec::surfaces}, for each value of $W \geq \tfrac{\gamma^2}{2}$, we let $\diskmeasure_W$ be the infinite measure on doubly marked surfaces $(\strip,h,-\infty,+\infty)$ a ``sample'' from which can be generated using the following steps.
\begin{enumerate}
\item ``Sample'' a Bessel excursion $Z$ with $\delta =  3-\tfrac{2}{\gamma^2} W$.  Take the projection of $h$ onto $\CH_1(\strip)$ to be given by $\tfrac{2}{\gamma} \log Z$ reparameterized to have quadratic variation $2dt$ (with additive constant fixed so as to agree exactly with this process).
\item Take the projection of $h$ onto $\CH_2(\strip)$ to be given by the corresponding projection of a GFF on $\strip$ with free boundary conditions which is independent of $Z$ (with additive constant fixed so that its average on $[0, \pi i]$ vanishes).
\end{enumerate}

Recall from Table~\ref{tab::wedge_parameterization} that the dimension of the Bessel process which encodes a quantum wedge of weight $W$ is given by $1+\tfrac{2}{\gamma^2} W$.  Recall that a Bessel process of dimension $\delta < 2$ conditioned to be non-negative yields a Bessel process of dimension $4-\delta$.  Thus the dimension $3-\tfrac{2}{\gamma^2} W$ corresponds to that which encodes a thick quantum wedge which near its marked point around which bounded neighborhoods have finite mass has the same behavior as a sample from $\diskmeasure_W$ near one of its marked points.  When we do not specify a value of $W$, we mean $W = 2$ and refer to this as simply the quantum disk.  Note that this gives a Bessel process dimension of $3-\tfrac{4}{\gamma^2}$.  We write $\diskmeasure$ to indicate this case.  (See Section~\ref{subsubsec:int_disk_sphere} for some additional motivation for this definition.)

A quantum disk naturally has two marked points at $\pm \infty$.  It turns out that the conditional law of these two points given the quantum surface (i.e., modulo conformal transformation) is that of independent picks from the boundary length measure (see Appendix~\ref{app::disks_spheres}).  Equivalently, the law of $h$ described above (which is defined modulo a horizontal translation of $\strip$) is invariant under the operation of sampling $x,y$ independently from~$\nu_h$ and then replacing~$h$ with $h \circ \varphi + Q \log|\varphi'|$ where~$\varphi$ is a conformal map $\strip \to \strip$ which takes $\pm \infty$ to $x,y$.

For each value of $W \geq \tfrac{\gamma^2}{2}$, we let $\spheremeasure_W$ be the infinite measure on doubly marked surfaces which can be parameterized by $\cyl$ and sampled from as follows.
\begin{enumerate}
\item ``Sample'' a Bessel excursion $Z$ with $\delta = 2-\tfrac{2}{\gamma^2} W$.  Take the projection of $h$ onto $\CH_1(\cyl)$ to be given by $\tfrac{2}{\gamma} \log Z$ reparameterized to have quadratic variation $dt$ (with additive constant fixed so as to agree exactly with this process).
\item Take the projection of $h$ onto $\CH_2(\cyl)$ to be given by the corresponding projection of a GFF on $\cyl$ which is independent of $Z$ (with additive constant fixed so that its average on $[0, 2\pi i]$ vanishes).
\end{enumerate}

In analogy with the case of $\diskmeasure_W$, the local behavior of a sample from $\spheremeasure_W$ near its marked points is the same as that of a weight $W$ quantum cone near its marked point around which finite neighborhoods have finite mass.  We will refer to the case $W=4-\gamma^2$ as the quantum sphere and simply write $\spheremeasure$ to indicate this case.  Note that this gives a Bessel process dimension of $4-\tfrac{8}{\gamma^2}$.  As in the case of a quantum disk, the conditional law of the marked points (corresponding to $\pm \infty$ above) for a quantum sphere given the quantum surface structure is that of independent picks from $\mu_h$ (see Appendix~\ref{app::disks_spheres}).  Equivalently, the law of $h$ described above (which is defined modulo horizontal translation of $\cyl$) is invariant under the operation of sampling $x,y$ from $\mu_h$ and then replacing $h$ with $h \circ \varphi + Q \log |\varphi'|$ where $\varphi$ is a conformal map $\cyl \to \cyl$ which takes $\pm \infty$ to $x,y$.

\begin{definition}
\label{def::finite_volume_surfaces} $\ $
\begin{enumerate}[(i)]
\item The {\bf unit boundary length quantum disk} is the law on quantum surfaces given by $\diskmeasure$ conditional on $\nu_h(\partial \strip) = 1$.
\item The {\bf unit area quantum sphere} is the law on quantum surfaces given by $\spheremeasure$ conditional on $\mu_h(\cyl) = 1$.
\end{enumerate}
\end{definition}

A sample from $\diskmeasure$ naturally comes with two marked points which, when parameterized by $\strip$ as above, correspond to $\pm \infty$.

Let us now explain why the conditional laws in Definition~\ref{def::finite_volume_surfaces} exist.  We first consider the setting of the unit boundary length quantum disk.  Suppose that $0 < l_1 < l_2 < \infty$.  It follows from Lemma~\ref{lem::poisson_reweight} (with $a=4/\gamma^2$) and Lemma~\ref{lem::negative_drift_first_moment} (with $p=4/\gamma^2-1$) that the mass that $\diskmeasure$ assigns to the event $E_{l_1,l_2}$ that $\nu_h(\partial \strip) \in [l_1,l_2]$ is positive and finite.  We can thus make sense of $\diskmeasure$ conditioned on $E_{l_1,l_2}$ as a probability measure.  It therefore follows from the general theory of regular conditional probability distributions that the conditional law of $\diskmeasure$ given $\nu_h(\partial \strip) = l$ exists for a.e.\ $l > 0$.  Let $\diskmeasure( \cdot \giv \nu_h(\partial \strip) = l)$ denote this regular conditional probability distribution.

We will now explain why this conditional law exists for \emph{every} $l > 0$ and is in fact given by the limit of $\diskmeasure$ conditioned on $E_{l,l+\epsilon}$ as $\epsilon \to 0$.  Note that if $h$ has distribution $\diskmeasure$ and $C \in \R$ is a constant then replacing $h$ with $h+C$ has the effect of replacing the Bessel excursion $Z$ in its construction by $e^{\gamma C/2} Z$.  Recalling that $Z$ has dimension $\delta = 3-\tfrac{4}{\gamma^2}$ and the distribution of the maximum of $Z$ is given by $c_\delta t^{\delta-3} dt = c_\delta t^{-4/\gamma^2} dt$, $c_\delta > 0$ a constant, it follows that the distribution of the maximum of $e^{\gamma C/2} Z$ is given by $c_\delta e^{C(Q-\gamma)} t^{-4/\gamma^2} dt$.  Therefore the distribution of $h+C$ is given by $e^{C(Q-\gamma)} \diskmeasure$.  Moreover, replacing $h$ by $h+C$ has the effect of multiplying the boundary length measure by $e^{\gamma C/2}$.  It follows that the distribution of $\nu_h(\partial \strip)$ has density given by $c l^{-4/\gamma^2} dl$, $c > 0$ a constant, where $dl$ denotes Lebesgue measure on $\R_+$ (recall Lemma~\ref{lem::poisson_reweight}).  That is, we can write
\[ \diskmeasure(\cdot) = c \int_{\R_+} \diskmeasure(\cdot \giv \nu_h(\partial \strip) = l) l^{-4/\gamma^2} dl.\]
This representation combined with the Lebesgue differentiation theorem implies that for a.e.\ $l > 0$, $\diskmeasure(\cdot \giv \nu_h(\partial \strip) = l)$ exists and is given by the weak limit as $\epsilon \to 0$ of $\diskmeasure( \cdot \giv E_{l,l+\epsilon})$.  Fix $l_0 > 0$ so that this holds.  If we start with $h$ sampled from $\diskmeasure(\cdot  \giv E_{l_0,l_0+\epsilon})$ and then replace $h$ with $h+C$ we obtain a sample from $\diskmeasure(\cdot \giv E_{e^{\gamma C/2} l_0, e^{\gamma C/2} (l_0+\epsilon)})$.  Since the former converges as $\epsilon \to 0$ to $\diskmeasure(\cdot \giv \nu_h(\partial \strip) = l_0)$, it follows that the latter also converges as $\epsilon \to 0$ and the limit is $\diskmeasure( \cdot \giv \nu_h(\partial \strip) = l_0 e^{\gamma C/2})$.  Since $C \in \R$ was arbitrary, we therefore see that the conditional law $\diskmeasure(\cdot \giv \nu_h(\partial \strip) = l)$ exists for every $l > 0$.  Moreover, if $h$ has law $\diskmeasure(\cdot \giv \nu_h(\partial \strip) = l)$ then $h+C$ has law $\diskmeasure(\cdot \giv \nu_h(\partial \strip) = l e^{\gamma C/2})$.  In particular, the unit boundary length quantum disk is in fact defined.  A similar discussion applies to show that the unit area quantum sphere is also defined.

\begin{remark}
\label{rem::finite_volume_constructions}
In Appendix~\ref{app::disks_spheres}, we will show that each of the laws on finite volume surfaces from Definition~\ref{def::finite_volume_surfaces} can be constructed using a limiting procedure.  We will explain in Section~\ref{subsec::sle_kappa_prime_bubbles} that the law of the unit boundary length quantum disk can be used to describe the structure of the bubbles which are cut out from a weight $\tfrac{3\gamma^2}{2}-2$ quantum wedge by an independent $\SLE_{\kappa'}$ process, $\kappa' \in (4,8)$.  (As we will see later, this is the type of wedge which describes the local behavior of the surface parameterized by the unbounded complementary component of an $\SLE_{\kappa'}$ process drawn up to a quantum typical time.) We will also explain in Remark~\ref{rem::equivalence_of_disk_measures} that the unit boundary length quantum disk can be constructed from a certain excursion measure which is naturally associated with the cone times of a $2$-dimensional Brownian motion.  (Altogether, there will be four different constructions of this object in this article.)  Finally, we show in Section~\ref{sec::duality} that the law of the unit area quantum sphere can be used in conjunction with L\'evy trees to construct a type of $\gamma$-LQG surface with $\gamma^2 \in (4,8)$.
\end{remark}

\section{SLE/GFF couplings}
\label{sec::couplings}

In this section, we will describe reverse (Section~\ref{subsec::reverse_sle_coupling}) and forward (Section~\ref{subsec::forward_sle_coupling}) couplings of $\SLE$ with the GFF.  

For the reverse case, the coupling statement we state and prove here is slightly different from the statement given in \cite{she2010zipper}, mainly because \cite{she2010zipper} deals with weldings of two thick wedges, and in this paper we will also treat weldings involving thin wedges; in particular, we will be interested in cutting a thick quantum wedge (parameterized by the upper half-plane) along a {\em boundary-intersecting} $\SLE_\kappa(\rho)$ curve, instead of along a boundary-avoiding curve. Conceptually, if one is ``cutting'' a thick wedge along a boundary-intersecting $\SLE_\kappa(\rho)$ curve, one ``cuts off pieces'' of the quantum wedge at times when the curve collides with the boundary; analogously, when one is running the appropriate reverse $\SLE_\kappa(\wt{\rho})$, new pieces are getting ``glued in'' in a manner that will be discussed more in the next section.  To handle this, we will require a reverse SLE coupling that applies to reverse SLE involving a force point that bounces off the boundary (rather than getting absorbed into the interior).  Extending the argument of  \cite{she2010zipper} to accommodate bouncing force points of this kind requires a short stochastic calculus exercise, which we will present here. On the other hand, the forward coupling statement we require will be recalled here and stated in its original formulation without proof.

As an application of the forward coupling, in Section~\ref{subsec::sle_kappa_rho_excursion} we will give the conditional law of the excursion of a boundary intersecting $\SLE_\kappa(\rho)$ process in $\h$ with $\kappa \in (0,4)$ which separates a given boundary point $u$ from $\infty$ given the realization of the path before (resp.\ after) the start (resp.\ end) of the excursion.

\subsection{Reverse coupling}
\label{subsec::reverse_sle_coupling}

Throughout, we fix $\kappa > 0$, let $\gamma = \min(\sqrt{\kappa},4/\sqrt{\kappa})$, and $Q = \tfrac{2}{\gamma} + \tfrac{\gamma}{2}$.  Let
\begin{equation}
\label{eqn::neumann_green_half_plane}
G(y,z) = -\log|y-z| - \log|y-\ol{z}|
\end{equation}
denote the Neumann Green's function on $\h$.  For a given centered reverse Loewner flow $(\wt{f}_t)$ (which will be clear from the context), we let $\wt{G}_t(y,z) = G(\wt{f}_t(y),\wt{f}_t(z))$,
\[ \wt{\Fh}_0(z) = \frac{2}{\sqrt{\kappa}} \log|z|, \quad\text{and}\quad \wt{\Fh}_t(z) = \wt{\Fh}_0(\wt{f}_t(z)) + Q\log|\wt{f}_t'(z)|.\]
The following is a version of \cite[Theorem~4.5]{she2010zipper}.

\begin{theorem}
\label{thm::reverse_coupling}
Fix $\kappa > 0$, $\wt{\rho}_1,\ldots,\wt{\rho}_k \in \R$, and $\wt{x}_1,\ldots,\wt{x}_k \in \ol{\h}$.  Suppose that $\wt{W}$ solves the reverse $\SLE_\kappa(\ul{\wt{\rho}})$ SDE~\eqref{eqn::reverse_sle_kappa_rho} with force points located at $\wt{x}_1,\ldots,\wt{x}_k$ of weight $\wt{\rho}_1,\ldots,\wt{\rho}_k$, respectively, and let $(\wt{f}_t)$ be the centered reverse Loewner evolution driven by $\wt{W}$.  For each $t \geq 0$, let
\[ \wh{\Fh}_t(z) = \wt{\Fh}_t(z) + \frac{1}{2\sqrt{\kappa}} \sum_{j=1}^k \wt{\rho}_j \wt{G}_t(\wt{x}_j,z)\]
and let $h$ be an instance of the free boundary GFF on $\h$ independent of $\wt{W}$.  Suppose that $\wt{\tau}$ is an a.s.\ finite stopping time for the filtration generated by the Brownian motion which drives $\wt{W}$ that occurs at or before the continuation threshold for $\wt{W}$ is reached.  Then
\[ \wh{\Fh}_0 + h \stackrel{d}{=} \wh{\Fh}_{\wt{\tau}} + h \circ \wt{f}_{\wt{\tau}}\]
where the left and right sides are viewed as modulo additive constant distributions.
\end{theorem}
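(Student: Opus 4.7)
The plan is to follow the template of~\cite[Theorem~4.5]{she2010zipper}, which handles the force-point-free case ($\rho_j \equiv 0$), and show that the correction $(2\sqrt{\kappa})^{-1} \sum_j \rho_j G_t(x_j, z)$ appended to $\Fh_t$ to form $\wh{\Fh}_t$ precisely cancels the extra It\^o drift produced by the $\rho_j$-drift of $W_t$ in~\eqref{eqn::reverse_sle_kappa_rho}. Since the assertion is modulo additive constant, it suffices to match characteristic functions evaluated at arbitrary mean-zero test functions $\phi \in C_c^\infty(\h)$. Fix such a $\phi$ and write
\begin{equation*}
X_t := (\wh{\Fh}_t, \phi), \qquad V_t := \iint_{\h \times \h} G_t(z, w)\, \phi(z)\, \phi(w)\, dz\, dw.
\end{equation*}
Conditioning on the $\sigma$-algebra $\CF_\tau^W$ generated by $W|_{[0, \tau]}$, the distribution $h \circ f_\tau$ is, given $\CF_\tau^W$, a centered Gaussian with covariance $G_\tau(z,w) = G(f_\tau(z),f_\tau(w))$ (by the coordinate-change computation for the free boundary GFF under $f_\tau$), so
\begin{equation*}
\E\!\left[e^{i(\wh{\Fh}_\tau + h \circ f_\tau, \phi)}\right] \;=\; \E\!\left[e^{iX_\tau - V_\tau/2}\right].
\end{equation*}
At $\tau = 0$ this reads $e^{iX_0 - V_0/2} = \E[e^{i(\wh{\Fh}_0 + h, \phi)}]$. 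The theorem therefore reduces to the claim that $Z_t := e^{iX_t - V_t/2}$ is a martingale on $[0, \tau]$.

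By It\^o's formula this splits into (a) $X_t$ is a continuous local martingale, and (b) $d\langle X \rangle_t = -dV_t$. For (a), I would apply It\^o to $\Fh_t(z) = (2/\sqrt{\kappa}) \log|f_t(z)| + Q \log|f_t'(z)|$ using the centered reverse Loewner ODE $df_t(z) = -2/f_t(z)\, dt - dW_t$ and the driving SDE $dW_t = \sqrt{\kappa}\, dB_t + \sum_j \Re(-\rho_j / f_t(x_j))\, dt$ from~\eqref{eqn::reverse_sle_kappa_rho}. In the $\rho_j \equiv 0$ regime the choice $Q = 2/\gamma + \gamma/2$ (with $\gamma = \min(\sqrt{\kappa}, 4/\sqrt{\kappa})$) is exactly what makes the It\^o drift of $\Fh_t(z)$ vanish---this is She's original calculation. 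When the $\rho_j$'s are present, the $\rho_j$-drift of $W_t$ produces an extra deterministic contribution to $d\Fh_t(z)$, and differentiating $G_t(x_j, z) = -\log|f_t(x_j) - f_t(z)| - \log|f_t(x_j) - \overline{f_t(z)}|$ along the Loewner flow shows that this contribution is cancelled on the nose by the It\^o drift of $(2\sqrt{\kappa})^{-1}\sum_j \rho_j G_t(x_j, z)$; this is the whole design rationale for the correction in $\wh{\Fh}_t$.

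For (b), the Brownian coefficient $\sigma_t(z)$ identified while carrying out (a) gives the quadratic covariation $d \langle \wh{\Fh}_\cdot(z), \wh{\Fh}_\cdot(w) \rangle_t = \sigma_t(z) \sigma_t(w)\, dt$, and a direct differentiation of the Neumann Green's function~\eqref{eqn::neumann_green_half_plane} along the Loewner flow shows this equals $-\partial_t G_t(z, w)\, dt$; integrating against $\phi(z)\phi(w)$ over $\h \times \h$ yields $d\langle X \rangle_t = -dV_t$. The main technical nuisance is the logarithmic singularity of $G_t$ on the diagonal and along $\partial \h$ together with the behavior of the $G_t(x_j, \cdot)$ corrections as the force points approach $W_t$; on the compact support of $\phi$ these singularities are integrable and the Loewner flow remains smooth on $[0, \tau]$ by the stopping-at-continuation-threshold hypothesis. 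A standard localization argument, using that $|Z_t| = e^{-V_t/2}$ stays uniformly bounded on $[0, \tau]$ after localization, upgrades the local martingale $Z_t$ to a true martingale, which closes the proof.
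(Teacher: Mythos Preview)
Your proposal is correct and follows essentially the same approach as the paper: the paper likewise reduces to showing that $(\wh{\Fh}_t,\phi)$ is a continuous (square-integrable) martingale with $d\langle(\wh{\Fh}_t,\phi)\rangle = -dE_t(\phi)$ (your $V_t$), via the It\^o computations $d\wh{\Fh}_t(z) = -\Re\frac{2}{f_t(z)}\,dB_t$ and $dG_t(y,z) = -\Re\frac{2}{f_t(y)}\Re\frac{2}{f_t(z)}\,dt$, and then concludes by the same characteristic-function identity. The only cosmetic difference is that the paper upgrades the local martingale to a true one via a Burkholder--Davis--Gundy moment bound rather than your boundedness-of-$|Z_t|$ localization, but both are standard and equivalent here.
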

The proof of Theorem~\ref{thm::reverse_coupling} is similar to that of \cite[Theorem~4.5]{she2010zipper}, but we will provide it for completeness.  The proof below closely follows the presentation of the radial version of this theorem given in \cite[Section~5]{ms2013qle}.

\begin{lemma}
\label{lem::dg_df}
For each $y,z \in \h$ distinct, we have that
\begin{align}
d\wt{G}_t(y,z) &= -\re \frac{2}{\wt{f}_t(y)} \re \frac{2}{\wt{f}_t(z)} dt.\label{eqn::dg}\\
\intertext{For each $z \in \h$, we have that}
d \wh{\Fh}_t(z) &= -\re \frac{2}{\wt{f}_t(z)} dB_t. \label{eqn::df}
\end{align}
\end{lemma}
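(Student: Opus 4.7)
The plan is to derive both identities by direct It\^o calculus on the reverse Loewner flow, exploiting the fact that $dW_t$ appears identically in $df_t(y)$ and $df_t(z)$ (see~\eqref{eqn::reverse_sle_kappa_rho_bis}) and therefore cancels in various differences.

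For~\eqref{eqn::dg}, I would write $G_t(y,z) = -\log|A_t| - \log|B_t|$ with $A_t := f_t(y) - f_t(z)$ and $B_t := f_t(y) - \overline{f_t(z)}$. Since $W_t$ is real valued, $dW_t$ cancels in both $dA_t$ and $dB_t$, so these processes have finite variation and no It\^o correction is needed. A direct computation from~\eqref{eqn::reverse_sle_kappa_rho_bis} gives $dA_t/A_t = 2/(f_t(y) f_t(z))\,dt$ and $dB_t/B_t = 2/(f_t(y)\overline{f_t(z)})\,dt$, and taking real parts via the identity $\re(1/w) = \re(w)/|w|^2$ produces
\[ dG_t(y,z) = -\re\!\left(\frac{2}{f_t(y) f_t(z)} + \frac{2}{f_t(y)\overline{f_t(z)}}\right)dt = -\re\frac{2}{f_t(y)}\cdot\re\frac{2}{f_t(z)}\,dt, \]
which is the desired identity.

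For~\eqref{eqn::df}, I would compute the evolution of each of the three summands of $\wh{\Fh}_t(z)$ separately and tally drifts. Applying It\^o to $\log f_t(z)$ using $d\langle W\rangle_t = \kappa\,dt$ yields $d\log f_t(z) = -(2+\kappa/2)/f_t(z)^2\,dt - dW_t/f_t(z)$, so $(2/\sqrt\kappa)\,d\log|f_t(z)|$ contributes a martingale part $-\re(2/f_t(z))\,dB_t$, a drift in $\re(1/f_t(z)^2)$ of size $-(2/\sqrt\kappa)(2+\kappa/2)$, and a cross drift $-(2/\sqrt\kappa)\re(1/f_t(z))\,\mu_t\,dt$, where $\mu_t = -\sum_j \rho_j \re(1/f_t(x_j))$ is the drift of $W$. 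The finite-variation equation $df_t'(z) = (2f_t'(z)/f_t(z)^2)\,dt$ shows that $Q\log|f_t'(z)|$ contributes a drift $2Q\,\re(1/f_t(z)^2)\,dt$, and the identity $2Q = 4/\gamma + \gamma = 4/\sqrt\kappa + \sqrt\kappa$ makes the two $\re(1/f_t(z)^2)$ drifts cancel exactly. Finally, applying part~\eqref{eqn::dg} to each $G_t(x_j,z)$ gives $\tfrac{1}{2\sqrt\kappa}\sum_j \rho_j\,dG_t(x_j,z) = (2/\sqrt\kappa)\re(1/f_t(z))\,\mu_t\,dt$, which kills the cross drift. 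What survives is exactly $-\re(2/f_t(z))\,dB_t$.

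The calculation is pure bookkeeping rather than deep insight, and the only thing worth watching is that the coefficients $Q$ and $1/(2\sqrt\kappa)$ in the definition of $\wh{\Fh}_t$ are precisely the values needed to kill, respectively, the $\re(1/f_t(z)^2)\,dt$ drift and the cross drift coming from the $\rho$-forces on $W$; this is exactly the sort of cancellation that defines the coupling in the first place.
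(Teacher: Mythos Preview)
Your proof is correct and follows essentially the same approach as the paper: both compute $d\log(f_t(y)-f_t(z))$ and $d\log(f_t(y)-\overline{f_t(z)})$ directly for~\eqref{eqn::dg}, and both verify~\eqref{eqn::df} by applying It\^o to $\log f_t(z)$ and $\log f_t'(z)$ and checking that the $Q$ term kills the $\re(1/f_t(z)^2)$ drift while the $G_t(x_j,z)$ terms kill the drift coming from the $\rho$-forces on $W$. The only cosmetic difference is that the paper expands $dW_t$ into its Brownian and drift parts from the outset, whereas you keep $dW_t$ intact and split off $\mu_t$ at the end; the bookkeeping is otherwise identical.
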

\begin{proof}
Applications of It\^o's formula imply that
\begin{align*}
 d \log(\wt{f}_t(y) - \wt{f}_t(z)) =  \frac{2dt}{\wt{f}_t(y) \wt{f}_t(z)} \quad\text{and}\quad
 d \log(\wt{f}_t(y) - \ol{\wt{f}_t(z)}) = \frac{2dt}{\wt{f}_t(y) \ol{ \wt{f}_t(z)}}.
\end{align*}
Adding these up, multiplying by $-1$, and then taking real parts yields~\eqref{eqn::dg}.  See the proof of \cite[Theorem~1.2]{she2010zipper} for a similar calculation (though for the forward rather than reverse coupling).

For~\eqref{eqn::df}, we first note by several applications of It\^o's formula that:
\begin{align*}
  d \wt{f}_t(z) &= \left(-\frac{2}{\wt{f}_t(z)} + \sum_{j=1}^k \re \frac{\wt{\rho}_j}{\wt{f}_t(\wt{x}_j)}\right) dt - \sqrt{\kappa} dB_t,\\
  d\log \wt{f}_t(z) &= \left(-\frac{4+\kappa}{2\wt{f}_t^2(z)} + \frac{1}{\wt{f}_t(z)}\sum_{j=1}^k \re \frac{\wt{\rho}_j}{\wt{f}_t(\wt{x}_j)}\right)dt - \frac{\sqrt{\kappa}}{\wt{f}_t(z)} dB_t,\\
  d\wt{f}_t'(z) &= \frac{2\wt{f}_t'(z)}{\wt{f}_t^2(z)} dt, \quad\text{and}\quad d \log \wt{f}_t'(z) = \frac{2}{\wt{f}_t^2(z)} dt.
\end{align*}
Taking real parts and adding up (and using~\eqref{eqn::dg}) yields~\eqref{eqn::df}.
\end{proof}

\begin{lemma}
\label{lem::ht_dist_mg}
For each $t \geq 0$, $\wh{\Fh}_t + h \circ \wt{f}_t$ a.s.\ takes values in the space of modulo additive constant distributions.  Suppose that $\phi \in C_0^\infty(\h)$ with $\int_\h \phi(z) dz = 0$.  Then both $(\wh{\Fh}_t + h \circ \wt{f}_t,\phi)$ and $(\wh{\Fh}_t,\phi)$ have a.s.\ continuous modifications in $t$ and the latter is a square-integrable martingale. 
\end{lemma}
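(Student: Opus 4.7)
The plan is to treat the two processes $(\wh{\Fh}_t,\phi)$ and $(h\circ f_t,\phi)$ separately and then sum. For the deterministic piece, I will start from the identity $d\wh{\Fh}_t(z) = -\re\frac{2}{f_t(z)}\, dB_t$ in~\eqref{eqn::df}. Since $\phi \in C_0^\infty(\h)$ is supported in a compact set $K \subset \h$, and $f_t$ is a conformal automorphism of $\h$ that maps $K$ into $\h$ for every $t \in [0,\tau]$ (with $|f_t(z)|$ staying bounded below uniformly on compact time intervals almost surely), the integrand $z \mapsto \re\frac{2}{f_t(z)}\phi(z)$ is bounded and continuous on $\h$ uniformly in $t$. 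A standard stochastic Fubini argument (e.g.\ \cite[Theorem~IV.65]{protter}) then justifies interchanging the $dz$ and $dB_t$ integrations, yielding
\[
d(\wh{\Fh}_t,\phi) \;=\; -\Bigl(\int_\h \re\tfrac{2}{f_t(z)}\,\phi(z)\,dz\Bigr)\,dB_t.
\]
So $(\wh{\Fh}_t,\phi)$ is a continuous local martingale with quadratic variation $\int_0^t \bigl(\int_\h \re\frac{2}{f_s(z)}\phi(z)\,dz\bigr)^2 ds$, and the uniform bound on the integrand up to the stopping time $\tau$ shows this quadratic variation is bounded, hence $(\wh{\Fh}_t,\phi)$ is a genuine square-integrable martingale.

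For the random piece $(h\circ f_t,\phi)$, note that since $\phi$ has mean zero we can legitimately pair it with any distribution defined only modulo an additive constant, so $(h\circ f_t,\phi)$ is well defined. Using the change of variables $w = f_t(z)$, we have $(h\circ f_t,\phi) = (h,(\phi\circ f_t^{-1})|f_t^{-1}{}'|^2)$, where the test function $\psi_t := (\phi\circ f_t^{-1})|f_t^{-1}{}'|^2$ is a compactly supported, smooth function on $\h$ varying continuously in $t$ in the topology of $C_0^\infty(\h)$ (the support of $\psi_t$ stays inside a fixed compact subset of $\h$ on $[0,\tau]$ because $f_t^{-1}$ maps compacts to compacts uniformly, and $\psi_t$ remains mean-zero up to negligible boundary effects which can be absorbed into the additive constant freedom). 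Continuity of $t \mapsto (h,\psi_t)$ then follows from the continuity of $\psi_t$ and the continuity of $h$ as a linear functional on $C_0^\infty(\h)$. Adding the two pieces yields continuity of $(\wh{\Fh}_t + h\circ f_t,\phi)$.

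For the first claim, that $\wh{\Fh}_t + h\circ f_t$ takes values in the space of modulo additive constant distributions almost surely, the key point is that $\wh{\Fh}_t$ is a deterministic smooth function on $\h$ (given a realization of $W$) away from the force points and the image of $0$ under the flow, hence defines a distribution, while $h\circ f_t$ is a well-defined modulo-additive-constant distribution because $f_t$ is a conformal map defined on all of $\h$ and the free boundary GFF pulls back accordingly. The sum is thus a modulo additive constant distribution.

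The main technical obstacle will be justifying the stochastic Fubini interchange and establishing the uniform bounds on $\re\frac{2}{f_t(z)}$ on the support of $\phi$ up to the stopping time $\tau$. For a general test function $\phi$, one needs to rule out the possibility that $f_t(z)$ gets arbitrarily close to $0$ for $z$ in the support of $\phi$ on $[0,\tau]$; this holds because $f_t$ is conformal on $\h$ and $\mathrm{supp}(\phi)$ stays at positive distance from $\partial\h$, so $f_t(\mathrm{supp}(\phi))$ is a continuously evolving compact subset of $\h$ whose distance to $\R$ is controlled by the conformal distortion estimates inherent in the reverse Loewner flow.
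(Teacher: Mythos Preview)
Your overall strategy matches the paper's: use the SDE $d\wh{\Fh}_t(z) = -\re\frac{2}{f_t(z)}\,dB_t$ together with a uniform lower bound on $|f_t(z)|$ over $z$ in the support of $\phi$, then handle $(h\circ f_t,\phi)$ separately. But the execution has several problems.

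First and most importantly, you never identify the clean reason for the uniform bound. From the reverse Loewner SDE $df_t(z) = -\tfrac{2}{f_t(z)}dt - dW_t$ one computes $d\,\im f_t(z) = \tfrac{2\,\im f_t(z)}{|f_t(z)|^2}\,dt \geq 0$, so $\im f_t(z)$ is \emph{nondecreasing} in $t$. Hence $|f_t(z)| \geq \im f_t(z) \geq \im z$ for every $t \geq 0$ and every $z \in \h$. This is the observation the paper uses, and it gives the bound for all $t$, uniformly over compact $K \subset \h$, with no stopping time needed. Your appeal to ``conformal distortion estimates inherent in the reverse Loewner flow'' is vague and not the right mechanism; the bound is a one-line ODE computation.

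Second, you repeatedly refer to a stopping time $\tau$ that does not appear in the lemma. The lemma is stated for each $t \geq 0$; the monotonicity above shows the bounds hold globally in $t$, so there is no need to localize.

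Third, your claim that $\psi_t := (\phi\circ f_t^{-1})|(f_t^{-1})'|^2$ is ``mean-zero up to negligible boundary effects which can be absorbed into the additive constant freedom'' is both unnecessary and wrong in spirit. By change of variables, $\int_\h \psi_t(w)\,dw = \int_\h \phi(z)\,dz = 0$ exactly. And if it were only approximately mean-zero, you could \emph{not} absorb the discrepancy: the additive-constant freedom means $(h,\psi)$ is only well defined for exactly mean-zero $\psi$.

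Fourth, $f_t$ is not a conformal automorphism of $\h$; it maps $\h$ onto $\h$ minus a hull.

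Once the monotonicity of $\im f_t(z)$ is in place, your stochastic Fubini route and the paper's BDG/Kolmogorov--Centsov route are essentially equivalent; the paper's version also gives that $\wh{\Fh}_t$ is a.s.\ locally $L^p$, which is what makes it a genuine distribution.
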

\begin{proof}
We first note that it is clear that $h \circ \wt{f}_t$ takes values in the space of distributions modulo additive constant and that $(h \circ \wt{f}_t,\phi)$ is a.s.\ continuous in $t$ from how it is defined.  This leaves us to deal with $\wh{\Fh}_t$.  It follows from~\eqref{eqn::df} of Lemma~\ref{lem::dg_df} that for each fixed $z \in \h$ we have
\[ d \langle \wh{\Fh}_t(z) \rangle = \left( \re \frac{2}{\wt{f}_t(z)} \right)^2 dt.\]
Note that it follows from the form of the SDE for the centered reverse Loewner flow that $\im \wt{f}_t(z)$ is non-decreasing in $t$ for any fixed $z \in \h$.  Fix $K \subseteq \h$ compact and $T < \infty$.  It consequently follows that there exists a constant $C > 0$ depending only on $K$ and $T$ such that
\[ \sup_{z \in K} \langle \wh{\Fh}_t(z) - \wh{\Fh}_u(z) \rangle \leq C(t-u) \quad\text{for all}\quad 0 \leq u \leq t \leq T.\]
It therefore follows from the Burkholder-Davis-Gundy inequality that for each $p \geq 1$ there exists $C_p,C_p' > 0$ depending only on $p$, $K$, and $T$ such that
\begin{align}
      \sup_{z \in K} \E\left[ \sup_{u \leq s \leq t} |\wh{\Fh}_s(z) - \wh{\Fh}_u(z)|^p \right]
\leq&C_p \sup_{z \in K} \E\left[ \langle \wh{\Fh}_t(z) - \wh{\Fh}_u(z) \rangle^{p/2} \right]\notag\\
\leq&C_p' (t-u)^{p/2} \quad\text{for all}\quad 0 \leq u \leq t \leq T. \label{eqn::bdg}
\end{align}
It is easy to see from~\eqref{eqn::bdg} with $u =0$ and Fubini's theorem that for each $K \subseteq \h$ compact we have that $\wh{\Fh}_t|_K$ is a.s.\ in $L^p(K)$.  By combining~\eqref{eqn::bdg} and the Kolmogorov-Centsov theorem, it is also easy to see that $t \mapsto (\wh{\Fh}_t,\phi)$ has an a.s.\ continuous modification for any fixed $\phi \in C_0^\infty(\h)$ with $\int_\h \phi(z) dz = 0$.  Lastly, it follows from~\eqref{eqn::df} and~\eqref{eqn::bdg} of Lemma~\ref{lem::dg_df} that $(\wh{\Fh}_t,\phi)$ is a square-integrable martingale.  This completes the proof of both assertions of the lemma.
\end{proof}

For each $\phi \in C_0^\infty(\h)$ with $\int_\h \phi(z) dz = 0$ and $t \geq 0$ we let
\[ \wt{E}_t(\phi) = \iint_{\h^2} \phi(y) \wt{G}_t(y,z) \phi(z) dy dz\]
be the conditional variance of $(h \circ \wt{f}_t,\phi)$ given $\wt{f}_t$.

\begin{lemma}
\label{lem::h_phi_qv}
For each $\phi \in C_0^\infty(\h)$ with $\int_\h \phi(z) dz = 0$ we have that
\[ d \langle (\wh{\Fh}_t,\phi) \rangle = -d\wt{E}_t(\phi).\]
\end{lemma}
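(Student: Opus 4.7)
The plan is to compute both sides of the claimed identity directly using the stochastic differential identities collected in Lemma~\ref{lem::dg_df}, and to verify that the deterministic and martingale pieces match after a Fubini-type interchange. The compactness of the support of $\phi$ and the mean-zero hypothesis $\int_\h \phi(z)\,dz = 0$ (which makes $(\wh{\Fh}_t,\phi)$ well-defined on the space of modulo-additive-constant distributions) will be used throughout to control integrability.

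First I would express $(\wh{\Fh}_t,\phi)$ as an Itô integral. Starting from the pointwise identity
\begin{equation*}
d \wh{\Fh}_t(z) = -\re\!\frac{2}{f_t(z)}\, dB_t
\end{equation*}
of Lemma~\ref{lem::dg_df}, a stochastic Fubini argument gives
\begin{equation*}
d (\wh{\Fh}_t,\phi) = -\left( \int_\h \phi(z)\,\re\!\frac{2}{f_t(z)}\, dz \right) dB_t,
\end{equation*}
whence
\begin{equation*}
d \langle (\wh{\Fh}_t,\phi) \rangle = \left( \int_\h \phi(z)\,\re\!\frac{2}{f_t(z)}\, dz \right)^{\!2} dt.
\end{equation*}
The stochastic Fubini step is the main technical point; I would justify it by appealing to the estimate~\eqref{eqn::bdg} from the proof of Lemma~\ref{lem::ht_dist_mg} (applied on the compact set $K = \supp \phi$), which gives uniform $L^p$ control of $\wh{\Fh}_t$ on $K$, and by noting that the integrand $\phi(z)\,\re(2/f_t(z))$ is bounded on $[0,T] \times \supp\phi$ for any fixed $T$ because $\im f_t(z)$ is non-decreasing in $t$ and $\phi$ is compactly supported in $\h$ (so $\im f_t(z) \geq \im z \geq c_\phi > 0$ on $\supp\phi$).

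Next I would differentiate $E_t(\phi)$. Since $G_t(y,z) = G(f_t(y),f_t(z))$ evolves as an absolutely continuous process in $t$ for each fixed $(y,z)$ with $y \neq z$, and Lemma~\ref{lem::dg_df} supplies
\begin{equation*}
dG_t(y,z) = -\re\!\frac{2}{f_t(y)}\,\re\!\frac{2}{f_t(z)}\, dt,
\end{equation*}
a Fubini interchange (again justified by the compactness of $\supp\phi$ and the lower bound on $\im f_t$) yields
\begin{equation*}
dE_t(\phi) = -\left( \int_\h \phi(y)\,\re\!\frac{2}{f_t(y)}\, dy \right) \left( \int_\h \phi(z)\,\re\!\frac{2}{f_t(z)}\, dz \right) dt = -\left( \int_\h \phi(z)\,\re\!\frac{2}{f_t(z)}\, dz \right)^{\!2} dt.
\end{equation*}
Comparing the two displays proves $d\langle (\wh{\Fh}_t,\phi) \rangle = -dE_t(\phi)$.

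The one point requiring care is the Fubini interchange at the pole of $G$ on the diagonal $y=z$: because $G_t(y,z)$ has only a logarithmic singularity there and $\phi \in C_0^\infty(\h)$, the integral defining $E_t(\phi)$ is absolutely convergent uniformly on compact time intervals, so both the definition of $E_t(\phi)$ and the differentiation under the double integral are legitimate. Apart from this, every step reduces to routine Itô calculus using the formulas already established in Lemma~\ref{lem::dg_df}.
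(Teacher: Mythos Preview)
Your proof is correct and complete; the stochastic Fubini step is easily justified by the uniform bound $|\re(2/f_t(z))|\leq 2/\im z$ on $\supp\phi$, and the differentiation of $E_t(\phi)$ is unproblematic since the logarithmic singularity of $G_t$ on the diagonal is integrable. The paper takes a slightly different route: rather than computing each side explicitly, it invokes the characterization of $\langle(\wh\Fh_t,\phi)\rangle$ as the unique increasing process making $(\wh\Fh_t,\phi)^2-\langle(\wh\Fh_t,\phi)\rangle$ a local martingale, and then shows that $(\wh\Fh_t,\phi)^2+E_t(\phi)$ is a local martingale by observing that the pointwise process $\wh\Fh_t(y)\wh\Fh_t(z)+G_t(y,z)$ has (after applying It\^o's product rule and Lemma~\ref{lem::dg_df}) no drift, and then integrating against $\phi(y)\phi(z)\,dy\,dz$. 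Your approach is more direct and exhibits the common integrand $\int_\h\phi(z)\,\re(2/f_t(z))\,dz$ explicitly; the paper's approach sidesteps the need to write $(\wh\Fh_t,\phi)$ as a single It\^o integral. Both rest on the same two identities from Lemma~\ref{lem::dg_df}.
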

\begin{proof}
Since $(\wh{\Fh}_t,\phi)$ is a continuous $L^2$ martingale, the process $\langle (\wh{\Fh}_t,\phi) \rangle$ is characterized by the property that
\[ (\wh{\Fh}_t,\phi)^2 - \langle (\wh{\Fh}_t,\phi) \rangle\]
is a continuous local martingale in $t \geq 0$.  Thus to complete the proof of the lemma, it suffices to show that
\[ (\wh{\Fh}_t,\phi)^2 + \wt{E}_t(\phi)\]
is a continuous local martingale.  It follows from~\eqref{eqn::dg} and~\eqref{eqn::df} of Lemma~\ref{lem::dg_df} that
\[ \wh{\Fh}_t(y) \wh{\Fh}_t(z) + \wt{G}_t(y,z)\]
evolves as the sum of a continuous martingale in $t \geq 0$ plus a drift term which can be expressed as a sum of terms, one of which depends only on $y$ and the other only on $z$.  These drift terms cancel upon integrating against $\phi(y)\phi(z) dy dz$ which in turn implies the desired result.
\end{proof}

\begin{proof}[Proof of Theorem~\ref{thm::reverse_coupling}]
Fix $\phi \in C_0^\infty(\h)$ with $\int_\h \phi(z) dz = 0$.  Let~$\wt{\CF}_t$ be the filtration generated by~$\wt{f}_t$.  Note that~$\wh{\Fh}_t$ is $\wt{\CF}_t$-measurable and that, given $\wt{\CF}_t$, $(h \circ \wt{f}_t,\phi)$ is a Gaussian random variable with mean zero and variance $\wt{E}_t(\phi)$.  Let $\wt{I}_t(\phi) = (h \circ \wt{f}_t + \wh{\Fh}_t,\phi)$.  For $\theta \in \R$ we have that
\begin{align*}
   &\E[ \exp(i \theta \wt{I}_t(\phi))]
= \E[ \E[ \exp(i \theta \wt{I}_t(\phi)) \giv \wt{\CF}_t]]\\
=& \E[ \E[ \exp(i \theta (h \circ \wt{f}_t,\phi)) \giv \wt{\CF}_t] \exp(i \theta (\wh{\Fh}_t,\phi))]\\
=& \E[ \exp(i \theta (\wh{\Fh}_t,\phi) - \tfrac{\theta^2}{2} \wt{E}_t(\phi))]\\
=& \exp(i \theta (\wh{\Fh}_0,\phi) - \tfrac{\theta^2}{2} \wt{E}_0(\phi)).
\end{align*}
Therefore $\wt{I}_t(\phi) \stackrel{d}{=} \wt{I}_0(\phi)$ for each $\phi \in C_0^\infty(\h)$ with $\int_\h \phi(z) dz = 0$.  The result in the case of deterministic times $t$ follows since this holds for all such test functions $\phi$ and $\phi \mapsto \wt{I}_0(\phi)$ has a Gaussian distribution.  The same argument applies in the case of a.s.\ finite stopping times; note that the the optional stopping theorem applies as we are using it in the setting of the bounded martingale $t \mapsto \exp(i \theta (\wh{\Fh}_t,\phi) - \tfrac{\theta^2}{2} \wt{E}_t(\phi))$.
\end{proof}

In analogy with \cite[Theorem~5.1]{ms2013qle} and Theorem~\ref{thm::reverse_coupling}, it is also possible to couple reverse radial $\SLE_\kappa(\wt{\rho})$ with the GFF.  This is proved in \cite[Theorem~5.5]{ms2013qle}; we restate the result here for the convenience of the reader since we will make use of it later in the article.

\begin{theorem}
\label{thm::radial_rho_coupling_existence}
Fix $\kappa > 0$.  Suppose that $h$ is a free boundary GFF on $\D$ and let $(\wt{f}_t)$ be the centered reverse radial $\SLE_\kappa(\wt{\rho})$ Loewner flow which is driven by the solution $\wt{U}$ as in~\eqref{eqn::reverse_radial_sle_kappa_rho} with $\wt{V}_0 = \wt{v}_0 \in \partial \D$ taken to be independent of $h$.  For each $t \geq 0$ and $z \in \D$ we let
\begin{equation}
\label{eqn::fh_definition}
\begin{split}
\wt{\Fh}_t(z) =& \frac{2}{\sqrt{\kappa}} \log|\wt{f}_t(z)-1| - \frac{\kappa+6 - \wt{\rho}}{2 \sqrt{\kappa}} \log|\wt{f}_t(z)| -\\
 &\frac{\wt{\rho}}{\sqrt{\kappa}} \log|\wt{f}_t(z) - \wt{V}_t| + Q \log|\wt{f}_t'(z)|.
 \end{split}
\end{equation}
Let $\wt{\tau}$ be an a.s.\ finite stopping time for the filtration generated by the Brownian motion $B$ which drives $\wt{U}$ which occurs at or before the continuation threshold is reached.  Then
\begin{equation}
\label{eqn::coupling_rho}
\wt{\Fh}_0 + h  \stackrel{d}{=} \wt{\Fh}_{\wt{\tau}} + h \circ \wt{f}_{\wt{\tau}}
\end{equation}
where we view the left and right sides as distributions defined modulo additive constant.
\end{theorem}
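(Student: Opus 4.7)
The plan is to follow the same four-step strategy that worked for the chordal reverse coupling (Theorem~\ref{thm::reverse_coupling}), adapting each step to the radial setting. The key inputs that need to be reworked are: (i) the Neumann Green's function on $\D$ in place of the one on $\h$; (ii) the reverse radial Loewner SDE~\eqref{eqn::reverse_radial_sle_kappa_rho} in place of~\eqref{eqn::reverse_sle_kappa_rho}; and (iii) the more elaborate drift function~\eqref{eqn::fh_definition}, which carries extra logarithmic terms reflecting the marked points $1$, $0$, and $V_t$ (corresponding to the starting point, target, and force point of the radial process).

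First I would record the Neumann Green's function on $\D$, namely $G(y,z) = -\log|y-z| - \log|1 - y \bar z|$, and set $G_t(y,z) = G(f_t(y), f_t(z))$. Applying It\^o's formula to $\log(f_t(y)-f_t(z))$ and $\log(1 - f_t(y)\overline{f_t(z)})$ using the reverse radial Loewner equation~\eqref{eqn::reverse_radial_loewner} with driving process $U_t$ from~\eqref{eqn::reverse_radial_sle_kappa_rho}, the martingale parts cancel as usual and one obtains an analog of~\eqref{eqn::dg} of the form $dG_t(y,z) = -a_t(y)\,a_t(z)\,dt$ for some explicit real-valued kernel $a_t$ built out of $\Phi(U_t, f_t(\cdot))$. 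Next, I would compute $d\Fh_t(z)$ via It\^o's formula applied to each of the four terms in~\eqref{eqn::fh_definition}; the additive drifts contributed by the three logarithmic singularities (at $1$, $0$, $V_t$) together with the $Q\log|f_t'|$ term must combine with the drift produced by~\eqref{eqn::reverse_radial_sle_kappa_rho} so as to leave a pure martingale part, yielding an identity of the form $d\Fh_t(z) = -a_t(z)\,dB_t$. This cancellation of drifts is really the computational heart of the proof, and is precisely what forces the specific coefficients appearing in~\eqref{eqn::fh_definition}; in particular, the coefficient $(\kappa+6-\rho)/(2\sqrt{\kappa})$ at the target point $0$ is determined by requiring the $dt$ terms coming from the radial contraction toward $0$ to cancel.

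Having these two evolution identities, the analogs of Lemmas~\ref{lem::ht_dist_mg} and~\ref{lem::h_phi_qv} follow by the same arguments given for the chordal case: using the Burkholder-Davis-Gundy inequality together with a uniform bound on $|a_t(z)|$ for $z$ in a compact subset of $\D$ (away from the continuation threshold), one shows that for $\phi \in C_0^\infty(\D)$ with $\int \phi = 0$ the process $(\Fh_t,\phi)$ is a continuous square-integrable martingale, and that $d\langle (\Fh_t,\phi)\rangle = -dE_t(\phi)$ where $E_t(\phi) = \iint \phi(y) G_t(y,z) \phi(z)\,dy\,dz$ is the conditional variance of $(h \circ f_t,\phi)$ given $\CF_t = \sigma(f_s : s \leq t)$. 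One then runs the characteristic function computation verbatim: conditionally on $\CF_\tau$, the random variable $(h \circ f_\tau,\phi)$ is centered Gaussian with variance $E_\tau(\phi)$, so
\[
\E[\exp(i\theta (\Fh_\tau + h \circ f_\tau,\phi))] = \E[\exp(i\theta(\Fh_\tau,\phi) - \tfrac{\theta^2}{2}E_\tau(\phi))],
\]
and the right side is a local martingale in $\tau$ by the two evolution identities above, with initial value $\exp(i\theta(\Fh_0,\phi) - \tfrac{\theta^2}{2}E_0(\phi))$. A standard localization and uniform integrability argument (justified by the compactness of $\ol{\D}$ and control on moments from BDG) upgrades this to an honest martingale identity at $\tau$, which gives~\eqref{eqn::coupling_rho} since the characteristic functionals match for every mean-zero $\phi$ and both sides are Gaussian modulo additive constant.

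The main obstacle is really step~two: the bookkeeping of It\^o drifts in $d\Fh_t(z)$. One must carefully handle the joint evolution of $U_t$ and $V_t$ under~\eqref{eqn::reverse_radial_sle_kappa_rho} (including the quadratic variation of $U$ which contributes an extra $-\tfrac{\kappa}{2}U_t\,dt$ term), and verify that every $dt$ term in $d\Fh_t(z)$ can be matched either against $-dG_t$-type contributions (to give the $-dE_t(\phi)$ quadratic variation) or against drifts that vanish upon pairing with mean-zero test functions. A minor additional subtlety is that the Neumann Green's function on $\D$ now involves both $|y-z|$ and $|1-y\bar z|$ terms, so one gets two pieces in $dG_t$ rather than one, but both are handled by the same It\^o computation. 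Once these drift cancellations are verified, the rest of the argument is structurally identical to the chordal case.
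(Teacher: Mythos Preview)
Your approach is correct and follows the natural strategy: adapt the chordal proof of Theorem~\ref{thm::reverse_coupling} to the radial setting by replacing the half-plane Neumann Green's function with the disk Neumann Green's function $G(y,z) = -\log|y-z| - \log|1-y\bar z|$, redoing the It\^o computations for $dG_t$ and $d\Fh_t$ using the reverse radial Loewner SDE, and then running the same characteristic-function martingale argument. The drift cancellation you identify as the computational heart is indeed what pins down the specific coefficients in~\eqref{eqn::fh_definition}.

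However, the paper itself does not give a proof of this theorem at all: it is stated as a quotation of \cite[Theorem~5.5]{ms2013qle} and restated here only for the reader's convenience. So there is no ``paper's own proof'' to compare against in this document. That said, the proof in \cite{ms2013qle} proceeds along exactly the lines you describe, so your proposal matches the argument in the original source.
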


\subsection{Forward coupling}
\label{subsec::forward_sle_coupling}

\begin{figure}[ht!]
\begin{center}
\includegraphics[scale=0.85,page=2]{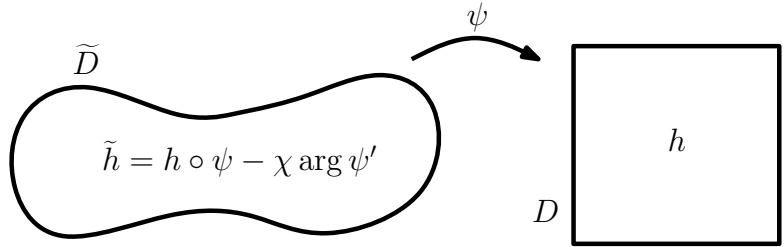}
\end{center}
\caption{\label{fig::coordinatechange} An imaginary surface coordinate change.}
\end{figure}

\begin{figure}[ht!]
\begin{center}
\subfloat{\includegraphics[scale=0.85]{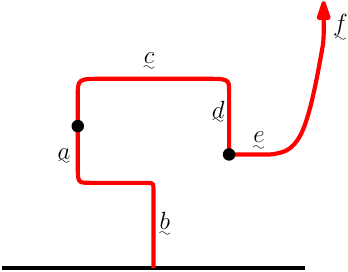}}
\hspace{0.05\textwidth}
\subfloat{\includegraphics[scale=0.85]{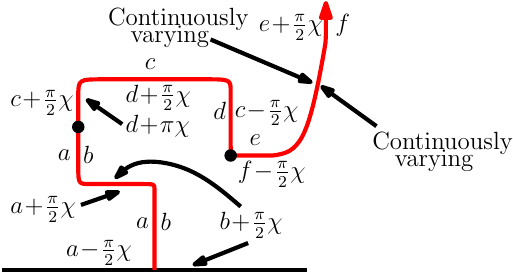}}
\caption{\label{fig::winding}  The notation on the left is a shorthand for the boundary data indicated on the right.  We often use this shorthand to indicate GFF boundary data.  In the figure, we have placed some black dots on the boundary $\partial D$ of a domain $D$.  On each arc $L$ of $\partial D$ that lies between a pair of black dots, we will draw either a horizontal or vertical segment $L_0$ and label it with $\uwave{x}$.  This means that the boundary data on $L_0$ is given by $x$, and that whenever $L$ makes a quarter turn to the right, the height goes down by $\tfrac{\pi}{2} \chi$ and whenever $L$ makes a quarter turn to the left, the height goes up by $\tfrac{\pi}{2} \chi$.  More generally, if $L$ makes a turn which is not necessarily at a right angle, the boundary data is given by $\chi$ times the winding of $L$ relative to $L_0$.  If we just write $x$ next to a horizontal or vertical segment, we mean to indicate the boundary data at that segment and nowhere else.  The right side above has exactly the same meaning as the left side, but the boundary data is spelled out explicitly everywhere.  Even when the curve has a fractal, non-smooth structure, the {\em harmonic extension} of the boundary values still makes sense, since one can transform the figure via the rule in Figure~\ref{fig::coordinatechange} to a half plane with piecewise constant boundary conditions.
}
\end{center}
\end{figure}

\begin{figure}[ht!]
\begin{center}
\includegraphics[scale=0.85]{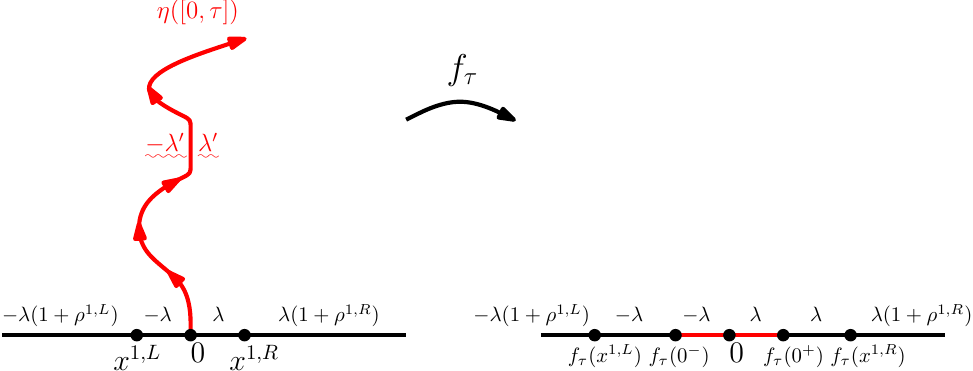}
\caption{\label{fig::conditional_boundary_data}  Suppose that $h$ is a GFF on $\h$ with the boundary data depicted above.  Then the flow line $\eta$ of $h$ starting from $0$ is an $\SLE_\kappa(\ul{\rho}^L;\ul{\rho}^R)$ curve in $\h$ where $|\ul{\rho}^L| = |\ul{\rho}^R| = 1$.  For any $\eta$ stopping time $\tau$, the conditional law of $h$ given $\eta|_{[0,\tau]}$ is equal to that of a GFF on $\h \setminus \eta([0,\tau])$ with the boundary data depicted above (the notation $\uwave{a}$ is explained in Figure~\ref{fig::winding}).  It is also possible to couple $\eta' \sim\SLE_{\kappa'}(\ul{\rho}^L;\ul{\rho}^R)$ for $\kappa' > 4$ with $h$ and the boundary data takes on the same form.  The difference is in the interpretation.  The (a.s.\ self-intersecting) path $\eta'$ is not a flow line of $h$, but for each $\eta'$ stopping time $\tau'$ the left and right {\em boundaries} of $\eta'([0,\tau'])$ are $\SLE_{\kappa}$ flow lines, where $\kappa=16/\kappa'$, angled in opposite directions.  The union of the left boundaries --- over a collection of $\tau'$ values --- is a tree of merging flow lines, while the union of the right boundaries is a corresponding dual tree whose branches do not cross those of the tree.}
\end{center}
\end{figure}

We will now give a brief overview of the theory of GFF flow lines developed in \cite{ms2012imag1,ms2012imag2,ms2012imag3,ms2013imag4}.  We assume throughout that $\kappa \in (0,4)$ so that $\kappa' := 16/\kappa \in (4,\infty)$.  We will often make use of the following definitions and identities:
\begin{align}
 \label{eqn::deflist} &\lambda := \frac{\pi}{\sqrt \kappa}, \,\,\,\,\,\,\,\,\lambda' := \frac{\pi}{\sqrt{16/\kappa}} = \frac{\pi \sqrt{\kappa}}{4} = \frac{\kappa}{4} \lambda < \lambda, \,\,\,\,\,\,\,\, \chi := \frac{2}{\sqrt \kappa} - \frac{\sqrt \kappa}{2}\\
 \label{eqn::fullrevolution} &\quad\quad\quad\quad\quad\quad\quad\quad 2 \pi \chi = 4(\lambda-\lambda'), \,\,\,\,\,\,\,\,\,\,\,\lambda' = \lambda - \frac{\pi}{2} \chi\\
\label{eqn::fullrevolutionrho} &\quad\quad\quad\quad\quad\quad\quad\quad\quad 2 \pi \chi = (4-\kappa)\lambda = (\kappa'-4)\lambda'.
\end{align}

See \cite[Theorem~1.1]{ms2012imag1} for a formal statement of the forward coupling of $\SLE$ with the GFF.  The boundary data one associates with the GFF on $\h$ so that its flow line $\eta$ from $0$ to $\infty$ is an $\SLE_\kappa(\ul{\rho}^L;\ul{\rho}^R)$ process with force points located at $\ul{x} = (\ul{x}^L;\ul{x}^R)$ for $\ul{x}^L = (x^{k,L} < \cdots < x^{1,L} \leq 0 ^-)$ and $\ul{x}^R = (0^+ \leq x^{1,R} < \cdots < x^{\ell,R})$ and with weights $(\ul{\rho}^L;\ul{\rho}^R)$ for $\ul{\rho}^L = (\rho^{1,L},\ldots,\rho^{k,L})$ and $\ul{\rho}^R = (\rho^{1,R},\ldots,\rho^{\ell,R})$ is
\begin{align}
 -&\lambda\left( 1 + \sum_{i=1}^j \rho^{i,L}\right) \quad\text{for}\quad x \in [x^{j+1,L},x^{j,L}) \quad\text{and}\\
 &\lambda\left( 1 + \sum_{i=1}^j \rho^{i,R}\right) \quad\text{for}\quad x \in [x^{j,R},x^{j+1,R}).
\end{align}
This is depicted in Figure~\ref{fig::conditional_boundary_data} in the special case that $|\ul{\rho}^L| = |\ul{\rho}^R| = 1$.  For any $\eta$ stopping time $\tau$, the conditional law of $h$ given $\eta|_{[0,\tau]}$ is that of a GFF on $\h \setminus \eta([0,\tau])$.  The boundary data of the conditional field agrees with that of $h$ on $\partial \h$.  On the right side of $\eta([0,\tau])$, it is $\lambda' + \chi \cdot {\rm winding}$, where the terminology ``winding'' is explained in Figure~\ref{fig::winding}, and to the left it is $-\lambda' + \chi \cdot {\rm winding}$.  This is also depicted in Figure~\ref{fig::conditional_boundary_data}.

A complete description of the manner in which GFF flow lines interact with each other is given in \cite[Theorem 1.5]{ms2012imag1} and \cite[Theorem~1.7]{ms2013imag4}.  See, in particular, \cite[Figure~1.13]{ms2013imag4}.  For more on flow lines started from boundary points, see \cite{ms2012imag1} as well as \cite[Section~2.3]{ms2013imag4}.  The introduction to \cite{ms2013imag4} contains a detailed overview of the behavior of flow lines started from interior points.  See also \cite[Section~2]{mw2013intersections} for an overview of the $\SLE$/GFF coupling.

\subsection{Law of an $\SLE_\kappa(\rho)$ excursion}
\label{subsec::sle_kappa_rho_excursion}

\label{sec::sle_kappa_rho_excursion}

\begin{figure}[ht!]
\begin{center}
\includegraphics[scale=0.85, page=1]{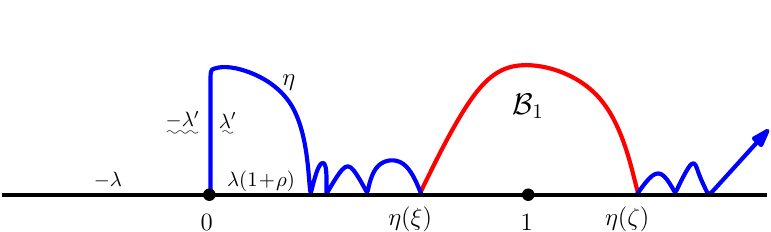}
\end{center}
\caption{\label{fig::excursion_over_1}
Shown is an $\SLE_\kappa(\rho)$ process $\eta$ in $\h$ from $0$ to $\infty$ with $\kappa \in (0,4)$ and $\rho \in (-2,\tfrac{\kappa}{2}-2)$ where the force point is located at~$0^+$ viewed as the flow line starting from $0$ of a GFF on $\h$ with the indicated boundary data.  The red segment represents the excursion of $\eta$ from $\partial \h$ which traces the boundary of the component $\CB_1$ of $\h \setminus \eta$ with $1$ on its boundary.  The time $\xi$ (resp.\ $\zeta$) is the start (resp.\ end) time of this excursion.  In Theorem~\ref{thm::excursion_law}, we show that the conditional law of $\eta|_{[\xi,\zeta]}$ given $\eta|_{[0,\xi]}$ and $\eta|_{[\zeta,\infty)}$ is an $\SLE_\kappa(\wh{\rho})$ process where $\wh{\rho} = \kappa-4-\rho$.}
\end{figure}

In this section, we will determine the law of the excursion of a boundary intersecting $\SLE_\kappa(\rho)$ process in $\h$ with $\kappa \in (0,4)$ which disconnects a given boundary point from $\infty$ (see Figure~\ref{fig::excursion_over_1}).  We will then use this result to determine the local behavior of the path at the starting and ending points of the excursion.

\begin{theorem}
\label{thm::excursion_law}
Fix $\kappa \in (0,4)$, $\rho \in (-2,\tfrac{\kappa}{2}-2)$, and let $\wh{\rho} = \kappa-4-\rho > \tfrac{\kappa}{2}-2$.  Let $\eta$ be an $\SLE_\kappa(\rho)$ process in $\h$ from $0$ to $\infty$ with a single boundary force point of weight $\rho$ located at~$0^+$.  Let $\CB_1$ be the component of $\h \setminus \eta$ which has $1$ on its boundary and let $\xi$ (resp.\ $\zeta$) be the first (resp.\ last) time that $\eta$ visits a point on $\partial \CB_1$.  Then the conditional law of $\eta|_{[\xi,\zeta]}$ given $\eta|_{[0,\xi]}$ and $\eta|_{[\zeta,\infty)}$ is that of an $\SLE_\kappa(\wh{\rho})$ process in the component of $\h \setminus \big( \eta([0,\xi]) \cup \eta([\zeta,\infty))\big)$ with $1$ on its boundary with a single boundary force point of weight $\wh{\rho}$ located at $(\eta(\xi))^+$.
\end{theorem}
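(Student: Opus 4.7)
Plan: My plan is to prove this by combining the Bessel process description of the $\SLE_\kappa(\rho)$ driving function from Section~\ref{subsec::sle_kappa_rho} with the Bessel excursion theory from Section~\ref{subsec::bessel_processes}. Setting $Y_t := (V_t-W_t)/\sqrt{\kappa}$, where $W_t$ is the driving function and $V_t$ is the force point, the relation~\eqref{eqn::besseldim_forward} gives that $Y$ is a $\bes^\delta$ with $\delta = 1+2(\rho+2)/\kappa \in (1,2)$. The zero set of $Y$ corresponds exactly to the set of times when $\eta$ touches $\R_+$, so the excursion of $Y$ from $0$ on the interval $[\xi,\zeta]$ is precisely the one generating the bubble $\CB_1$, with $\eta(\xi) = x_L$ and $\eta(\zeta) = x_R$ both lying on $\R_+$.

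The first step is to observe that, conditional on $\eta|_{[0,\xi]}$ and $\eta|_{[\zeta,\infty)}$, the times $\xi$ and $\zeta$ are determined, as are $Y_\xi = Y_\zeta = 0$, the locations $x_L, x_R$, and the centered Loewner maps at these two times. By the Markov property for $Y$, the conditional law of $Y|_{[\xi,\zeta]}$ is then a Bessel excursion of dimension $\delta$ from $0$ to $0$ of duration $\zeta-\xi$. By the It\^o excursion description recalled in Remark~\ref{rem::bessel_ito_excursion} (together with the time-reversal identity of Proposition~\ref{prop::bessel_time_reversal}), such a $\bes^\delta$ excursion has the law of a $\bes^{4-\delta}$ bridge of the same duration.

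Now $\wh{\rho} = \kappa - 4 - \rho$ is precisely the unique weight for which $1 + 2(\wh{\rho}+2)/\kappa = 4 - \delta$, so via~\eqref{eqn::besseldim_forward} a $\bes^{4-\delta}$ is exactly the $Y$-process of an $\SLE_\kappa(\wh{\rho})$; moreover, $\rho < \kappa/2-2$ forces $\wh{\rho} > \kappa/2-2$, so this process does not hit its force point. Via the conformal map $\psi \colon D \to \h$ sending $x_L \to 0$ and $x_R \to \infty$, where $D$ is the component of $\h \setminus (\eta([0,\xi]) \cup \eta([\zeta,\infty)))$ with $1$ on its boundary, the $\SLE_\kappa(\wh{\rho})$ in $D$ from $x_L$ to $x_R$ with force point at $x_L^+$ corresponds by conformal invariance to the standard $\SLE_\kappa(\wh{\rho})$ in $\h$ from $0$ to $\infty$ with force point at $0^+$. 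Crucially, the latter process naturally targets $\infty = \psi(x_R)$, which is $\psi^{-1}(\infty) = x_R$ in $D$; this absorbs the bridge endpoint condition from the Bessel picture, so no external conditioning remains.

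The main obstacle is verifying that this Bessel-level identification carries over to a full distributional identity for the conditional law of the curve $\eta|_{[\xi,\zeta]}$ in $D$: one must check that the conformal transport via $\psi$ converts the conditionally-distributed driving function $W|_{[\xi,\zeta]}$ into the driving function of an unconditioned $\SLE_\kappa(\wh{\rho})$ in the image. A cleaner alternative that I would use as a backup realizes $\eta$ as the flow line at angle $0$ of a GFF $h$ on $\h$ with boundary data $-\lambda$ on $\R_-$ and $\lambda(1+\rho)$ on $\R_+$ (as in Section~\ref{subsec::forward_sle_coupling}), and then appeals to the flow line domain Markov property from~\cite{ms2012imag1}: the conditional law of $\eta|_{[\xi,\zeta]}$ becomes the flow line of $h|_D$ from $x_L$ targeted at $x_R$, and pulling back via $\psi$ one computes the boundary data of the resulting GFF on $\R$, with the winding terms on the $\eta$-segments of $\partial D$ absorbed cleanly by the coordinate change $-\chi \arg(\psi^{-1})'$ to produce piecewise constant boundary values matching those of an $\SLE_\kappa(\wh{\rho})$ with single force point at $0^+$.
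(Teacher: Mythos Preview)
Both routes you sketch have genuine gaps, and the backup in fact gives the wrong answer if carried out as written.

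\textbf{The Bessel route.} Your assertion that, given $\eta|_{[0,\xi]}$ and $\eta|_{[\zeta,\infty)}$, the process $Y|_{[\xi,\zeta]}$ is simply a $\bes^\delta$ excursion of duration $\zeta-\xi$ is not correct. Conditioning on $\eta|_{[\zeta,\infty)}$ also fixes $W_\zeta$, and since $Y_\xi=Y_\zeta=0$ one has $W_\zeta-W_\xi=V_\zeta-V_\xi=\int_\xi^\zeta \tfrac{2}{\sqrt{\kappa}\,Y_s}\,ds$; so the excursion is additionally conditioned on this functional, not just its length. Even granting the right excursion law, you would still have to transport it from the $g_\xi$-chart (where your Bessel computation lives) to the $\psi$-chart on $D$ (where $\SLE_\kappa(\wh\rho)$ is defined), and that SLE coordinate change is nontrivial. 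The dimension match $4-\delta\leftrightarrow\wh\rho$ is the right heuristic for why $\wh\rho=\kappa-4-\rho$ appears, but it is not a proof.

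\textbf{The GFF backup.} The ``flow line domain Markov property'' you invoke would require $\eta([0,\xi])\cup\eta([\zeta,\infty))$ to be a local set of $h$, but $\xi$ is not a stopping time (it is only identified once the excursion closes at $\zeta$). Worse, if you proceed anyway and compute boundary data: the arc $[x_L,x_R]\subset\R_+$ carries the value $\lambda(1+\rho)$, and since $\psi$ has real positive derivative there, the coordinate change leaves this unchanged on $\R_+$. That is the boundary data of $\SLE_\kappa(\rho)$ on the force-point side, not $\SLE_\kappa(\wh\rho)$. The discrepancy is precisely the effect of the two-sided conditioning you have suppressed: knowing that $[\xi,\zeta]$ is a single excursion forces the flow line from $x_L$ to reach $x_R$ without touching $(x_L,x_R)$, and that Doob transform is exactly what converts $\rho$ to $\kappa-4-\rho=\wh\rho$. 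Your sketch omits this, so the claimed boundary-data match is false as stated.

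The paper handles the two-sided conditioning via the resampling machinery of \cite{ms2012imag2}. One takes a forward stopping time $\tau$ and a reverse stopping time $\tau'$ in the excursion, introduces dual flow lines $\eta_L,\eta_R$ at angles $\pm\pi$ from $\eta(\tau')$ (the key point being that $\eta_R$ shields the middle segment from $\partial\h$), and observes that the conditional law of $\eta|_{[\tau,\tau']}$ given $\eta|_{[0,\tau]},\eta|_{[\tau',\infty)},\eta_L,\eta_R$ is $\SLE_\kappa(\tfrac{\kappa}{2}-2;\tfrac{\kappa}{2}-2)$, independently of $\rho$. One then checks that an $\SLE_\kappa(\wh\rho)$ in the relevant domain satisfies the identical pair of resampling rules, and concludes by the uniqueness theorem \cite[Theorem~6.1]{ms2012imag2}. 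This characterization is what makes the two-sided conditioning tractable.
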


Theorem~\ref{thm::excursion_law} gives the conditional law of $\eta|_{[\xi,\zeta]}$ given $\eta|_{[0,\xi]}$ and $\eta|_{[\zeta,\infty)}$.  We also know the conditional law of $\eta|_{[\zeta,\infty)}$ given $\eta|_{[0,\zeta]}$: it is an $\SLE_\kappa(\rho)$ process in the unbounded component of $\h \setminus \eta([0,\zeta])$ from $\eta(\zeta)$ to $\infty$ with a single boundary force point located at $(\eta(\zeta))^+$.  This just follows from the conformal Markov property for $\SLE_\kappa(\rho)$.  By the reversibility of $\SLE_\kappa(\rho)$ \cite[Theorem~1.1]{ms2012imag2}, we also know the conditional law of $\eta|_{[0,\xi]}$ given $\eta|_{[\xi,\infty)}$: it is an $\SLE_\kappa(\rho)$ in the component of $\h \setminus \eta([\xi,\infty))$ with $0$ on its boundary from $0$ to $\eta(\xi)$ with a single boundary force point of weight $\rho$ located at~$0^+$.  These facts will be important for us when we establish Proposition~\ref{prop::zoomed_in_picture} below.

\begin{figure}[ht!]
\begin{center}
\includegraphics[scale=0.85, page=2]{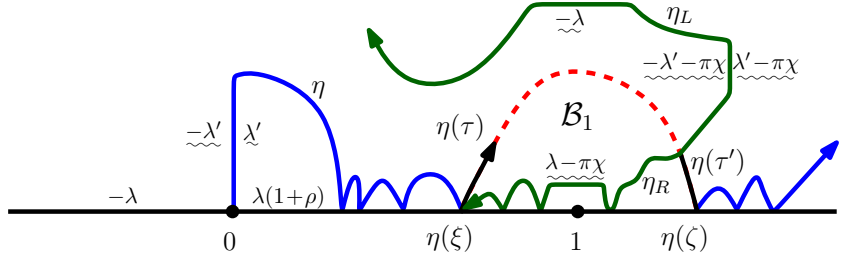}
\end{center}
\caption{\label{fig::excursion_over_1_proof}
Setup for the proof of Theorem~\ref{thm::excursion_law}.  We know the conditional law of $\eta|_{[\tau,\tau']}$ given $\eta|_{[0,\tau]}$, $\eta|_{[\tau',\infty)}$, and the dual flow lines $\eta_L$ and $\eta_R$ (i.e., with angles $\pi$ and $-\pi$).  We similarly know the conditional law of the dual flow lines $\eta_L$ and $\eta_R$ given all of $\eta$.  As explained in the proof of \cite[Theorem~6.1]{ms2012imag2}, these resampling properties determine the conditional law of $\eta|_{[\tau,\tau']}$ given $\eta|_{[0,\tau]}$ and $\eta|_{[\tau',\infty)}$.  The corresponding triple associated with an $\SLE_\kappa(\kappa-4-\rho)$ process satisfies the same resampling properties which gives the desired result.}
\end{figure}

\begin{proof}[Proof of Theorem~\ref{thm::excursion_law}]
The proof is based on resampling arguments which are the similar to those used to prove \cite[Theorem~1.1]{ms2012imag2}.  See Figure~\ref{fig::excursion_over_1_proof} for an illustration of the setup.

As illustrated in Figure~\ref{fig::excursion_over_1}, we view $\eta$ as the flow line starting from $0$ of a GFF $h$ on $\h$ with boundary conditions given by $-\lambda$ on $\R_-$ and $\lambda(1+\rho)$ on $\R_+$.  Let $\tau$ be a forward stopping time for $\eta$ with $\eta(\tau) \in \h$ a.s., let $\sigma$ be the first time after $\tau$ that $\eta(t) \in \partial \h$, and let $\tau'$ be a reverse stopping time for $\eta$ given $\eta|_{[0,\tau]}$ and $\eta|_{[\sigma,\infty)}$ such that $\eta(\tau') \in \h$ a.s.  Assume that we are working on the positive probability event that $\eta|_{[\tau,\tau']}$ does not intersect $\partial \h$.  We also assume that are working on the event that $\xi = \sup\{ t < \tau : \eta(t) \in \partial \h\}$ and $\zeta = \inf\{t \geq \tau : \eta(t) \in \partial \h\}$.  Let $\eta_L$ (resp.\ $\eta_R$) be the flow line of $h$ given $\eta$ starting from $\eta(\tau')$ with angle $\pi$ (resp.\ $-\pi$).  Note that $\eta_R$ a.s.\ terminates at~$\eta(\xi)$.  In particular, $\eta_R \cup \eta([0,\tau]) \cup \eta([\tau',\infty))$ separates $\eta|_{[\tau,\tau']}$ from $\partial \h$.  From the boundary data for the conditional law of $h$ given $\eta$, we can read off the conditional laws of $\eta_L$ and $\eta_R$ given $\eta$.  Moreover, \cite[Lemma~5.13]{ms2012imag2} gives us the conditional law of $\eta|_{[\tau,\tau']}$ given $\eta|_{[0,\tau]}$, $\eta|_{[\tau',\infty)}$, $\eta_L$, and $\eta_R$: it is given by an $\SLE_\kappa(\tfrac{\kappa}{2}-2;\tfrac{\kappa}{2}-2)$ process in the component of $\h \setminus (\eta_L \cup \eta_R \cup \eta([0,\tau]))$ with $\eta(\tau)$ on its boundary from $\eta(\tau)$ to $\eta(\tau')$.

Let $\wt{\eta}$ be an $\SLE_\kappa(\wh{\rho})$ process in the component of $\h \setminus \big(\eta([0,\xi]) \cup \eta([\zeta,\infty)) \big)$ with~$1$ on its boundary from $\eta(\xi)$ to $\eta(\zeta)$ with a single boundary force point located at $(\eta(\xi))^+$.  Let $\wt{\tau}$ be a stopping time for $\wt{\eta}$ and let $\wt{\tau}'$ be a reverse stopping time for $\wt{\eta}$ given $\wt{\eta}|_{[0,\wt{\tau}]}$ and assume that we are working on the event that $\wt{\eta}|_{[0,\wt{\tau}]}$ and $\wt{\eta}|_{[\wt{\tau}',\infty)}$ are disjoint.  Let $A = \eta([0,\xi]) \cup \eta([\zeta,\infty)) \cup \wt{\eta}$ and let $\wt{h}$ be a GFF on $\h \setminus A$ with the same boundary conditions as $h$ on $\partial \h$ and with boundary conditions along $A$ as if it were the flow line of $h$ from $0$ to $\infty$ (with angle $0$).  Let $\wt{\eta}_L$ (resp.\ $\wt{\eta}_R$) be the flow line of $\wt{h}$ starting from $\wt{\eta}(\wt{\tau}')$ with angle $\pi$ (resp.\ $-\pi$).  Then the conditional law of $\wt{\eta}_L$ and $\wt{\eta}_R$ given $A$ is the same as the conditional law of $\eta_L$ and $\eta_R$ given $\eta$ (in the setup described in the previous paragraph).  Moreover, \cite[Proposition~5.12]{ms2012imag2} gives us the conditional law of $\wt{\eta}$ given $\wt{\eta}_L$, $\wt{\eta}_R$, $\eta|_{[0,\xi]}$, $\eta|_{[\zeta,\infty)}$, $\wt{\eta}|_{[0,\wt{\tau}]}$, and $\wt{\eta}|_{[\wt{\tau}',\infty)}$: it is given by an $\SLE_\kappa(\tfrac{\kappa}{2}-2;\tfrac{\kappa}{2}-2)$ process in the component of $\h \setminus (\wt{\eta}_L \cup \wt{\eta}_R \cup \eta([0,\xi]) \cup \wt{\eta}([0,\wt{\tau}]))$ with $\wt{\eta}(\wt{\tau})$ on its boundary from $\wt{\eta}(\wt{\tau})$ to $\wt{\eta}(\wt{\tau}')$.

As explained in the proof of \cite[Theorem~6.1]{ms2012imag2}, this resampling invariance implies that the conditional law of $\eta$ given $\eta|_{[0,\tau]}$ and $\eta|_{[\tau',\infty)}$ (conditioned on the event described in the first paragraph) is the same as the conditional law of $\wt{\eta}$ given $\eta|_{[0,\xi]}$, $\eta|_{[\zeta,\infty)}$, $\wt{\eta}|_{[0,\wt{\tau}]}$, and $\wt{\eta}|_{[\wt{\tau}',\infty)}$.  This completes the proof since it holds for all $\tau$, $\tau'$, $\wt{\tau}$, and $\wt{\tau}'$.
\end{proof}

\begin{figure}[ht!]
\begin{center}
\includegraphics[scale=0.85]{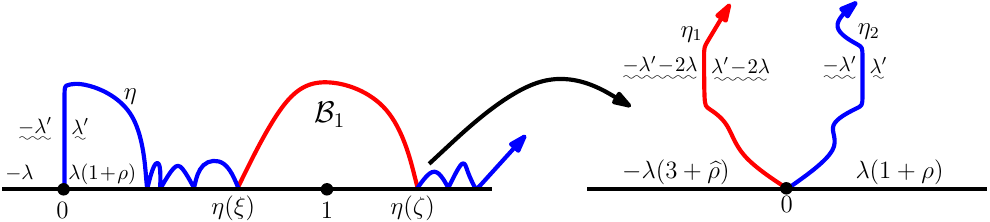}
\end{center}
\caption{\label{fig::excursion_zoom_in}
Illustration of the statement of Proposition~\ref{prop::zoomed_in_picture}.  Shown on the left is an $\SLE_\kappa(\rho)$ process $\eta$ with $\rho \in (-2,\tfrac{\kappa}{2}-2)$ and $\zeta$ is the first time that $\eta$ separates $1$ from $\infty$.  In Proposition~\ref{prop::zoomed_in_picture}, we show that the local picture of the pair of paths $t \mapsto \eta(\zeta-t)$ and $t \mapsto \eta(\zeta+t)$ near $\eta(\zeta)$ is given by a pair of flow lines of a GFF with the boundary data on the right with angles $2\lambda/\chi$ ($\eta_1$) and $0$ ($\eta_2$).
}
\end{figure}

\begin{proposition}
\label{prop::zoomed_in_picture}
Suppose that $\kappa \in (0,4)$ and $\rho \in (-2,\tfrac{\kappa}{2}-2)$.  Let $\eta$ be an $\SLE_\kappa(\rho)$ process in $\h$ with a single boundary force point of weight $\rho$ located at~$0^+$.  Let $\zeta$ be the first time $t$ that $\eta$ separates~$1$ from~$\infty$.  For each $\delta > 0$, we let $\eta_1^\delta = \delta^{-1}(\eta(\zeta-\cdot)-\eta(\zeta))$ and $\eta_2^\delta = \delta^{-1}(\eta(\zeta+\cdot)-\eta(\zeta))$.  The joint law of $(\eta_1^\delta,\eta_2^\delta)$ converges weakly as $\delta \to 0$ with respect to the topology of local uniform convergence modulo reparameterization to the law of a pair of paths $(\eta_1,\eta_2)$ which are constructed as follows.  Let $\wh{\rho}=\kappa-4-\rho$ and let $h$ be a GFF on $\h$ with boundary data given by $-\lambda(3+\wh{\rho})$ on $\R_-$ and $\lambda(1+\rho)$ on $\R_+$.  Then $\eta_1$ (resp.\ $\eta_2$) is the flow line of $h$ starting from~$0$ with angle $2\lambda/\chi$ (resp.\ $0$).
\end{proposition}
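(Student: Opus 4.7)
The plan is to decompose $\eta$ at times $\xi$ and $\zeta$, identify the conditional law of each piece, zoom in at $\eta(\zeta)$, and then identify the joint limit using the GFF/flow line coupling.

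By Theorem~\ref{thm::excursion_law}, the middle piece $\eta|_{[\xi,\zeta]}$ is, conditionally on $\eta|_{[0,\xi]}$ and $\eta|_{[\zeta,\infty)}$, an $\SLE_\kappa(\wh{\rho})$ process in $\CB_1$ from $\eta(\xi)$ to $\eta(\zeta)$ with force at $\eta(\xi)^+$. Since $\wh{\rho}=\kappa-4-\rho>\tfrac{\kappa}{2}-2$, this process almost surely does not hit its boundary, so by the reversibility of $\SLE_\kappa(\rho)$ processes \cite[Theorem~1.1]{ms2012imag2} its time-reversal from $\eta(\zeta)$ to $\eta(\xi)$ is again an $\SLE_\kappa(\wh{\rho})$, now with force at $\eta(\zeta)^-$. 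By the conformal Markov property, $\eta|_{[\zeta,\infty)}$ is conditionally on $\eta|_{[0,\zeta]}$ an $\SLE_\kappa(\rho)$ in the unbounded component of $\h\setminus\eta([0,\zeta])$ with force at $\eta(\zeta)^+$.

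Next I would zoom in at $\eta(\zeta)$ via $z\mapsto\delta^{-1}(z-\eta(\zeta))$. Since $\eta(\zeta)\in(1,\infty)\subset\partial\h$ lies on the flat part of $\partial\h$, and since the other boundary points of $\CB_1$ and of the unbounded component (namely $0$ and $\eta(\xi)$) are at positive distance from $\eta(\zeta)$, both of these components locally converge to $\h$ as $\delta\to 0$. Conformal invariance of $\SLE_\kappa(\rho)$ together with standard Loewner-driving-function estimates (yielding tightness in the local uniform topology modulo reparameterization) then imply that $\eta_2^\delta$ converges to an $\SLE_\kappa(\rho)$ in $\h$ from $0$ to $\infty$ with force at $0^+$, while $\eta_1^\delta$ converges to an $\SLE_\kappa(\wh{\rho})$ in $\h$ from $0$ to $\infty$ with force at $0^-$; these give the correct marginal laws.

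To identify the joint limit, couple $\eta$ with a GFF $h_0$ on $\h$ having boundary data $-\lambda$ on $\R_-$ and $\lambda(1+\rho)$ on $\R_+$, so that $\eta$ is the flow line of $h_0$ at angle $0$. Conditionally on $\eta|_{[0,\zeta]}$, the boundary data of $h_0$ on either side of $\eta$ is $\pm\lambda'+\chi\cdot(\text{winding})$; tracking this winding as $\eta$ leaves $0$, loops around the bubble $\CB_1$, and returns to $\eta(\zeta)$, and applying the identities~\eqref{eqn::deflist}--\eqref{eqn::fullrevolutionrho} together with $\wh{\rho}=\kappa-4-\rho$, one checks that the scaling limit of the conditional GFF in the window at $\eta(\zeta)$ is a GFF $h$ on $\h$ with boundary values $-\lambda(3+\wh{\rho})$ on $\R_-$ and $\lambda(1+\rho)$ on $\R_+$. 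By construction $\eta_2$ is the flow line of $h$ at angle $0$, while by the merging/crossing analysis of \cite{ms2012imag1,ms2013imag4} and the winding calculation above, the reversed bubble interface $\eta_1$ is its flow line at angle $2\lambda/\chi$; this angle gap is precisely the one accumulated by the boundary-to-boundary excursion of $\eta$ around $\CB_1$. The main obstacle will be in rigorously identifying the limiting coupling: tightness is essentially standard, but passing the conditional GFF boundary data through the rescaling $z\mapsto\delta^{-1}(z-\eta(\zeta))$ and matching it to the claimed $h$ requires the continuity of the GFF/flow line coupling in the boundary data developed in \cite{ms2012imag1,ms2012imag2,ms2012imag3,ms2013imag4}, which is where most of the technical work is concentrated.
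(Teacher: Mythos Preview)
Your marginal analysis is essentially correct, and the paper also begins from Theorem~\ref{thm::excursion_law} together with reversibility and the conformal Markov property. The gap is in the joint identification.

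You propose to couple $\eta$ with a GFF $h_0$ (so $\eta$ is its angle-$0$ flow line), condition on $\eta|_{[0,\zeta]}$, track the winding boundary data near $\eta(\zeta)$, and then read off that $\eta_1$ is the flow line of the limiting field $h$ at angle $2\lambda/\chi$. This does not work as stated. Once you condition on $\eta|_{[0,\zeta]}$, the incoming arc (and hence $\eta_1^\delta$) is deterministic; the conditional GFF lives only on the complement of $\eta([0,\zeta])$ and cannot produce $\eta_1$ as one of its flow lines. If instead you condition on less (say only on $\eta|_{[0,\xi]}$), then both arcs near $\eta(\zeta)$ are pieces of the \emph{same} angle-$0$ flow line of $h_0$; the time-reversal of the incoming arc is not a flow line of $h_0$ at some shifted angle, so ``tracking winding'' does not convert $\eta_1$ into an angle-$2\lambda/\chi$ flow line of the same field. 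The merging/crossing results you cite describe how flow lines of a common field interact, but do not by themselves let you recouple a time-reversed piece of one flow line as a new flow line of that field.

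The paper closes this gap differently. From Theorem~\ref{thm::excursion_law}, reversibility, and the conformal Markov property one has the conditional law of each of $\eta|_{[\xi,\zeta]}$ and the time-reversal of $\eta|_{[\zeta,\infty)}$ given the other (conditionally on $\eta|_{[0,\xi]}$). The key input is \cite[Theorem~4.1]{ms2012imag2}, which says these two one-sided conditional laws \emph{determine} the joint law. One then exhibits an explicit pair $(\eta^0,\eta^1)$ of flow lines of a single GFF on $\h$ (with the boundary data of Figure~\ref{fig::bubble_and_continuation}, at angles $0$ and $-2\lambda/\chi$) whose conditional laws match; hence, after conformally mapping via $\varphi$, $(\varphi(\eta^0),\varphi(\eta^1))$ has the same joint law as $(\eta|_{[\zeta,\infty)},\eta|_{[\xi,\zeta]})$. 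The large-scale behaviour of $(\eta^0,\eta^1)$ near $\infty$ is that of two flow lines from $0$ with the stated boundary data and angles (by the counterflow-line description following \cite[Lemma~5.13]{ms2012imag2}), and a final application of reversibility under $z\mapsto -1/z$ yields the claimed description of $(\eta_1,\eta_2)$. What your argument is missing is precisely this bi-chordal uniqueness step; without it, you have the right marginals but no mechanism to pin down the coupling.
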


See Figure~\ref{fig::excursion_zoom_in} for an illustration of the setup.  Let $\xi$ be as in the statement of Theorem~\ref{thm::excursion_law}.  Note that $\xi$ corresponds to the first time that the time-reversal of $\eta$ separates $1$ from $\infty$.  Consequently, by the reversibility of $\SLE_\kappa(\rho_1;\rho_2)$ \cite[Theorem~1.1]{ms2012imag2}, Proposition~\ref{prop::zoomed_in_picture} can also be used to describe the local behavior of $\eta$ near $\eta(\xi)$.

\begin{figure}[ht!]
\begin{center}
\includegraphics[scale=0.85]{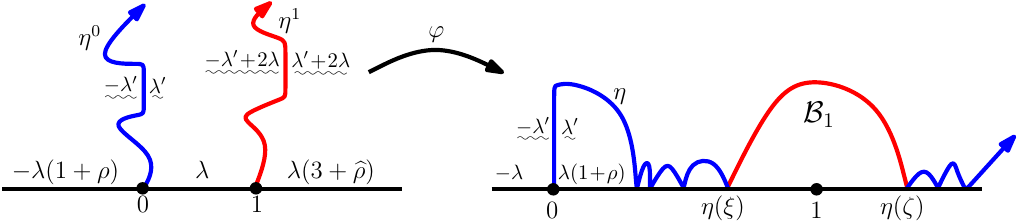}
\end{center}
\caption{\label{fig::bubble_and_continuation}
Setup for the proof of Proposition~\ref{prop::zoomed_in_picture}.  On the left, we suppose that we have a GFF $h$ on $\h$ with the indicated boundary data where $\eta^0$ (resp.\ $\eta^1$) is the flow line of $h$ starting from $0$ (resp.\ $1$) with angle $0$ (resp.\ $-2\lambda/\chi$).  Then $\eta^0 \sim \SLE_\kappa(\rho;2+\wh{\rho})$ and $\eta^1 \sim \SLE_\kappa(\rho+2;\wh{\rho})$.  Moreover, the conditional law of $\eta^0$ given $\eta^1$ is that of an $\SLE_\kappa(\rho)$ process and the conditional law of $\eta^1$ given $\eta^0$ is that of an $\SLE_\kappa(\wh{\rho})$ process.  In particular, the conditional law of $\eta^1$ given $\eta^0$ is the same as $\eta|_{[\xi,\zeta]}$ given $\eta|_{[0,\xi]}$ and $\eta|_{[\zeta,\infty)}$ and the conditional law of $\eta^0$ given $\eta^1$ is the same as the time-reversal of $\eta|_{[\zeta,\infty)}$ given $\eta|_{[0,\xi]}$ and $\eta|_{[\xi,\zeta]}$.  Consequently, \cite[Theorem~4.1]{ms2012imag2} implies that with $\varphi$ the unique conformal transformation from $\h$ to the unbounded component of $\h \setminus \eta([0,\xi])$ with $\varphi(0) = \infty$, $\varphi(1) = \eta(\xi)$, and $\varphi(\infty) = \eta(\zeta)$ we have that $\varphi(\eta^0,\eta^1) \stackrel{d}{=} (\eta|_{[\zeta,\infty)},\eta|_{[\xi,\zeta]})$ given $\eta|_{[0,\xi]}$ (up to time-reversal).
}
\end{figure}

\begin{proof}[Proof of Proposition~\ref{prop::zoomed_in_picture}]
We define the times $\xi,\zeta$ as in the statement of Theorem~\ref{thm::excursion_law}.  By Theorem~\ref{thm::excursion_law}, we know the conditional law of $\eta|_{[\xi,\zeta]}$ given $\eta|_{[0,\xi]}$ and $\eta|_{[\tau,\infty)}$: it an $\SLE_\kappa(\wh{\rho})$ process from $\eta(\xi)$ to $\eta(\zeta)$ in the component of $\h \setminus (\eta([0,\xi]) \cup \eta([\zeta,\infty)))$ with $1$ on its boundary.  Similarly, we know the conditional law of $\eta|_{[\zeta,\infty)}$ given $\eta|_{[0,\xi]}$ and $\eta|_{[\xi,\zeta]}$: it is an $\SLE_\kappa(\rho)$ process from $\eta(\zeta)$ to $\infty$ in the unbounded component of $\h \setminus \eta([0,\zeta])$.  By the reversibility of $\SLE_\kappa(\rho)$ \cite[Theorem~1.1]{ms2012imag2}, we also know that the time-reversal of $\eta|_{[\zeta,\infty)}$ given $\eta|_{[0,\xi]}$ and $\eta|_{[\xi,\zeta]}$ is an $\SLE_\kappa(\rho)$ process.  By \cite[Theorem~4.1]{ms2012imag2}, we have that these conditional laws determine the joint law of the pair $\eta|_{[\xi,\zeta]}$ and the time-reversal of $\eta|_{[\zeta,\infty)}$ conditional on $\eta|_{[0,\xi]}$.

The joint law can in particular be sampled from as follows (see Figure~\ref{fig::bubble_and_continuation} for an illustration).  Let $h$ be a GFF on $\h$ with boundary data $-\lambda(1+\rho)$ on $(-\infty,0]$, $\lambda$ on $(0,1]$, and $\lambda(3+\wh{\rho})$ on $(1,\infty)$.  Let $\eta^0$ (resp.\ $\eta^1$) be the flow line of $h$ starting from $0$ (resp.\ $1$) with angle $0$ (resp.\ $-2\lambda/\chi$).  Finally, let $\varphi$ be the unique conformal transformation from $\h$ to the unbounded component of $\h \setminus \eta([0,\xi])$ taking $0$ to $\infty$, $1$ to $\eta(\xi)$, and $\infty$ to $\eta(\zeta)$.  Then $(\varphi(\eta^0),\varphi(\eta^1),\eta|_{[0,\xi]})$ has the same law as $(\eta|_{[\zeta,\infty)},\eta|_{[\xi,\zeta]},\eta|_{[0,\xi]})$ (up to time-reversal).  The behavior of $\eta^0$ and $\eta^1$ near $\infty$ is asymptotically the same as that of the flow lines $(\wt{\eta}_1,\wt{\eta}_2)$ of a GFF on $\h$ with boundary conditions given by $-\lambda(1+\rho)$ (resp.\ $\lambda(3+\wh{\rho})$) on $\R_-$ (resp.\ $\R_+$) starting from $0$ with angles $0$ and $-2\lambda/\chi$.  (See the discussion after the statement of \cite[Lemma~5.13]{ms2012imag2} which explains how a flow line of a GFF from $0$ to $\infty$ can be realized as the common boundary of a pair of counterflow lines starting from $\infty$.)  This implies the result since the reversibility of $\SLE_\kappa(\rho_1;\rho_2)$ for $\kappa \in (0,4)$ and $\rho_1,\rho_2 > -2$ \cite[Theorem~1.1]{ms2012imag2} implies that the joint law of $(\wt{\eta}_1,\wt{\eta}_2)$ is invariant under applying the map $z \mapsto -1/z$ and then reversing time.
\end{proof}

\begin{proposition}
\label{prop::zoomed_in_picture_general_rho}
Suppose that $\kappa \in (0,4)$.  Fix weights $\rho_1 > -2$ and $\rho_2 \in (-2,\tfrac{\kappa}{2}-2)$.  Let $\eta$ be an $\SLE_\kappa(\rho_1;\rho_2)$ process with boundary force points located $x \leq 0 \leq y < 1$.  Let $\zeta$ be the first time that $\eta$ separates $1$ from $\infty$.  We define paths $\eta_1^\delta,\eta_2^\delta$ as in Proposition~\ref{prop::zoomed_in_picture}.  Then the joint law of $(\eta_1^\delta,\eta_2^\delta)$ converges weakly as $\delta \to 0$ with respect to the topology of local uniform convergence modulo reparameterization to the law of a pair of paths $(\eta_1,\eta_2)$ which are constructed as in Proposition~\ref{prop::zoomed_in_picture} where $\rho = \rho_2$.
\end{proposition}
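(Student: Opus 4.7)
The plan is to reduce to Proposition~\ref{prop::zoomed_in_picture} by first extending Theorem~\ref{thm::excursion_law} to this two-force-point setting and then arguing that the weight-$\rho_1$ force point at $x \leq 0$ becomes asymptotically irrelevant as $\delta \to 0$. The first step is to verify that, exactly as in Theorem~\ref{thm::excursion_law}, the conditional law of $\eta|_{[\xi,\zeta]}$ given $\eta|_{[0,\xi]}$ and $\eta|_{[\zeta,\infty)}$ is that of an $\SLE_\kappa(\wh{\rho})$ process with $\wh{\rho} = \kappa - 4 - \rho_2$ in the component $\CB_1$ of $\h \setminus (\eta([0,\xi]) \cup \eta([\zeta,\infty)))$ containing $1$, with a boundary force point at $(\eta(\xi))^+$. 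The proof would be the same flow-line/dual-flow-line resampling argument used for Theorem~\ref{thm::excursion_law} (based on \cite[Lemma~5.13]{ms2012imag2} and \cite[Proposition~5.12]{ms2012imag2}), applied inside $\CB_1$: since $\CB_1$ is bounded and its boundary intersects $\partial \h$ only in a compact subinterval of $[0,\infty)$, the extra weight-$\rho_1$ force point gets absorbed into the imaginary-geometry boundary data along $\partial \CB_1$ and does not alter the conclusion.

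Next I would follow the strategy of the proof of Proposition~\ref{prop::zoomed_in_picture} to describe the joint law of the pair $(\eta|_{[\xi,\zeta]}, \eta|_{[\zeta,\infty)})$ (with the latter reversed in time), conditional on $\eta|_{[0,\xi]}$. Using the reversibility of $\SLE_\kappa(\rho_1;\rho_2)$ together with \cite[Theorem~4.1]{ms2012imag2}, this joint law is realized as the conformal image of a pair of flow lines $(\eta^0, \eta^1)$ of a GFF $h$ on $\h$ with angle difference $2\lambda/\chi$. The boundary data of $h$ will differ from that in the proof of Proposition~\ref{prop::zoomed_in_picture} only on a segment $(-\infty, \wt{x}\,]$ for some $\wt{x} < 0$ (the image under the relevant conformal map of the force point at $x$); on this segment the boundary value is shifted by the constant $-\lambda \rho_1$. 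The pair $(\eta^0, \eta^1)$ escapes to $+\infty$, which corresponds to $\eta(\zeta)$, and the $\delta \to 0$ rescaling in the statement of Proposition~\ref{prop::zoomed_in_picture_general_rho} translates, through the conformal map, into zooming in near that point.

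The third and final step is to show that the extra boundary data on $(-\infty, \wt{x}\,]$ does not affect the scaling limit. Because $\wt{x}$ is deterministic given $\eta|_{[0,\xi]}$ and the zoom-in is happening at $+\infty$, on any compact set in the rescaled window the difference between the two sets of boundary data produces a bounded harmonic perturbation that tends uniformly to zero as $\delta \to 0$. Combined with the local absolute continuity of flow line laws under such perturbations, which follows from the $\SLE$/GFF coupling and a Girsanov-type argument as in \cite[Section~6]{ms2012imag1} and \cite[Section~2]{mw2013intersections}, this yields convergence in law (modulo reparameterization) to the same pair $(\eta_1, \eta_2)$ of flow lines as in Proposition~\ref{prop::zoomed_in_picture} with $\rho = \rho_2$. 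I expect the main obstacle to be making this third step precise: showing that the harmonic perturbation really does vanish uniformly on compacts in the scaling limit, and then promoting this to convergence in law of the flow lines. Doing so requires tracking that $\wt{x}$ stays at a positive distance from the scaling center throughout the argument, and invoking a uniform local absolute continuity estimate for flow lines under small boundary-data changes on a far-away arc.
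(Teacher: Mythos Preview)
Your approach is correct in outline, but it is far more elaborate than what the paper does. The paper's proof is a single sentence: ``This follows from absolute continuity and Proposition~\ref{prop::zoomed_in_picture}.'' The point is that the law of an $\SLE_\kappa(\rho_1;\rho_2)$ with force points at $x \leq 0 \leq y < 1$, restricted to any event on which the path stays in a bounded region and away from $x$, is mutually absolutely continuous with respect to the law of an $\SLE_\kappa(\rho_2)$ with its single force point at $0^+$ (this is a standard Girsanov argument, since the extra drift coming from $\rho_1$ is bounded and the displacement of the right force point from $0^+$ to $y$ is likewise innocuous). The event ``$\zeta < \infty$ and $\eta(\zeta)$ lies in a given bounded neighborhood of the boundary'' has positive probability under both laws, and the scaling limit near $\eta(\zeta)$ is a purely local statement about the path in such a neighborhood. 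Absolute continuity then transfers the conclusion of Proposition~\ref{prop::zoomed_in_picture} directly.

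By contrast, you re-prove the excursion-law Theorem~\ref{thm::excursion_law} in the two-force-point setting, rerun the flow-line construction of Proposition~\ref{prop::zoomed_in_picture}, and then argue that the modified boundary data on a far-away arc vanishes harmonically under the zoom. Each of these steps is plausible and your identification of Step~3 as the technical crux is accurate, but all of this work is subsumed by the one-word observation ``absolute continuity.'' Your third step is, in effect, a hands-on proof of that absolute continuity at the level of flow-line boundary data, which is unnecessary once one invokes it at the level of the driving SDE. The paper's route is both shorter and avoids the need to track where $\wt{x}$ lands under the various conformal maps.
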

\begin{proof}
This follows from absolute continuity and Proposition~\ref{prop::zoomed_in_picture}.
\end{proof}

\begin{remark}
\label{rem::sle_kappa_prime_bubble_closing_point}
Using the version of $\SLE$ duality established in \cite{ms2012imag1} and Proposition~\ref{prop::zoomed_in_picture_general_rho}, we can also describe the local behavior of an $\SLE_{\kappa'}(\rho_1';\rho_2')$ process near the point where it first separates $1$ from $\infty$.  Indeed, fix $\kappa' > 4$, $\rho_1' > -2$, and $\rho_2' \in (\tfrac{\kappa'}{2}-4,\tfrac{\kappa'}{2}-2)$.  Suppose that $\eta'$ is an $\SLE_{\kappa'}(\rho_1';\rho_2')$ process in $\h$ from $0$ to $\infty$ with boundary force points located at~$0^-$ and~$0^+$.  (Note that $\rho_2'$ is in the range so that $\eta'$ hits but does not fill $\R_+$.)  Let $\eta$ denote the right boundary of $\eta'$.  Then $\eta$ is an $\SLE_\kappa(\rho_1;\rho_2)$ process in $\h$ from $\infty$ to $0$ with
\[ \rho_1 = \frac{\kappa}{2} - 2 + \frac{\kappa \rho_1'}{4} \quad\text{and}\quad \rho_2 = \frac{\kappa}{2}-4 + \frac{\kappa \rho_2'}{4}\]
where the force points are located immediately to the left and right of $\infty$.  Let $\zeta'$ be the first time that $\eta'$ separates $1$ from $\infty$.  Since the right boundary of $\eta'$ is given by $\eta$ and $\eta \sim \SLE_\kappa(\rho_1;\rho_2)$, we can consequently apply Proposition~\ref{prop::zoomed_in_picture_general_rho} to $\eta$ to get that the law of the right boundary of $\eta'$ near $\eta'(\zeta')$ is described by a certain pair of GFF flow lines (same law as would correspond to $\SLE_\kappa(\rho_2)$).
\end{remark}

We finish this section by giving the radial version of Proposition~\ref{prop::zoomed_in_picture}.

\begin{proposition}
\label{prop::zoomed_in_picture_radial}
Suppose that $\kappa \in (0,4)$ and $\rho \in (-2,\tfrac{\kappa}{2}-2)$.  Let $\eta$ be a radial $\SLE_\kappa(\rho)$ process in $\D$ starting from $1$ and targeted at $0$ with a single boundary force point of weight $\rho$ located at $1^+$.  Fix $\theta \in (0,2\pi)$ and let $\zeta$ be the first time that $\eta$ separates $e^{i \theta}$ from $0$.  On the positive probability event that $\eta(\zeta) \in \partial \D$, we let $\varphi \colon \D \to \h$ be the unique conformal transformation which takes $\eta(\zeta)$ to $0$ and~$0$ to~$i$.  Let $\eta_\delta^1 = \delta^{-1} \varphi(\eta(\zeta-\cdot))$ and $\eta_\delta^2 = \delta^{-1} \varphi(\eta(\zeta+\cdot))$.  Then the joint law of $(\eta_1^\delta,\eta_2^\delta)$ converges weakly as $\delta \to 0$ with respect to the topology of local uniform convergence modulo reparameterization to the law of a pair of paths $(\eta_1,\eta_2)$ which are constructed as in Proposition~\ref{prop::zoomed_in_picture}.
\end{proposition}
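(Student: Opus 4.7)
The plan is to reduce this radial statement to the chordal statement of Proposition~\ref{prop::zoomed_in_picture_general_rho} via a local absolute continuity argument between radial and chordal $\SLE_\kappa(\rho)$ processes away from the target point. The key point is that at the moment $\zeta$ when $\eta$ first separates $e^{i\theta}$ from $0$, conditioned on the positive probability event that $\eta(\zeta) \in \partial \D$, we are observing local behavior near a point of $\partial \D$ which is uniformly at positive distance from the radial target $0$; and in any such regime the radial and chordal laws are mutually absolutely continuous with a smooth density.

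First I would fix a small $\epsilon > 0$ and condition on the event that $\eta(\zeta) \in \partial \D$ and that the entire final excursion into the bulk which terminates at $\eta(\zeta)$ is contained in the ball $B(\eta(\zeta), \epsilon)$. Let $\tau$ be the last time before $\zeta$ at which $\eta$ touches $\partial \D$; then $\tau < \zeta$ on this event and $\eta(\tau), \eta(\zeta) \in \partial \D$. Let $D_\tau$ be the connected component of $\D \setminus \eta([0,\tau])$ containing $0$, and let $\Psi_\tau \colon D_\tau \to \H$ be the conformal map sending $\eta(\tau)$ to $0$, $0$ to $i$, and $\eta(\zeta)$ to some point of $\R$. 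By Theorem~\ref{thm::excursion_law} (combined with its radial analogue, or by a direct argument using the Loewner SDE), the conditional law of $\Psi_\tau(\eta|_{[\tau, \infty)})$ given $\eta|_{[0, \tau]}$ is absolutely continuous with respect to a chordal $\SLE_\kappa(\rho_1; \rho)$ process in $\H$ from $0$ to $\infty$ with force points at some $x \leq 0$ (coming from $\partial \D \setminus \{\eta(\tau)\}$) and $0^+$ (coming from the original radial force point), with Radon-Nikodym derivative given by the standard radial-to-chordal martingale. This martingale is an explicit smooth function of the positions of the tip, the force point, and the image of $0$ under the centered Loewner flow, and it is bounded above and below on the event that the tip stays in a fixed compact subset of $\H$ avoiding the image of $0$.

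Next, I would apply Proposition~\ref{prop::zoomed_in_picture_general_rho} to this chordal $\SLE_\kappa(\rho_1; \rho)$ process: the first time after $0$ that this chordal path separates the image $\Psi_\tau(e^{i\theta})$ (or rather a nearby point depending on the choice of $\Psi_\tau$) from $\infty$ agrees with the image under $\Psi_\tau$ of time $\zeta$; and the scaling limit of the pair $(\eta_1^\delta, \eta_2^\delta)$ for the chordal process exists and is given by the prescribed pair of GFF flow lines with angles $2\lambda/\chi$ and $0$, depending only on the weight $\rho$ of the force point that is swallowed. To pass from the chordal result to the radial one, I would observe that the Radon-Nikodym derivative evaluated along the final excursion converges, as $\delta \to 0$, to a deterministic positive finite quantity determined by $\varphi$ (since the positions of the tip and driving function both converge to $\varphi^{-1}(0) = \eta(\zeta)$ while the image of $0$ stays at uniformly positive distance). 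Consequently the weak limit of the rescaled pair under the radial law agrees with the weak limit under the chordal law, which is exactly the pair $(\eta_1, \eta_2)$ described in the statement.

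The main obstacle is controlling the Radon-Nikodym derivative uniformly on the event in question. One has to verify that on the rescaled picture the density converges to a continuous nondegenerate function of the paths, so that multiplying by it does not alter the scaling limit. This is a direct computation with the explicit radial-to-chordal Girsanov density, and is standard but must be checked carefully because the density depends on the position of the radial target $0$ relative to both the tip and the force point; the delicate point is to show that on the event $\{\eta(\zeta) \in \partial \D\} \cap \{\text{final excursion stays near } \eta(\zeta)\}$, the density along the excursion is, in the $\delta \to 0$ limit, a random variable depending only on the already-revealed information $\eta|_{[0,\tau]}$ and not on the infinitesimal local structure of $\eta_1^\delta, \eta_2^\delta$.
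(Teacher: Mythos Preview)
Your proposal is correct and follows essentially the same route as the paper's own proof, which is a one-line appeal to absolute continuity between radial and chordal $\SLE_\kappa(\rho)$ away from the interior target point, together with Proposition~\ref{prop::zoomed_in_picture} (via the argument for Proposition~\ref{prop::zoomed_in_picture_general_rho}). You have simply spelled out in more detail the mechanism of that absolute continuity reduction.

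One minor technical point: the time $\tau$ you introduce (the last visit to $\partial\D$ before $\zeta$) is not a stopping time in the forward filtration, so the sentence ``the conditional law of $\Psi_\tau(\eta|_{[\tau,\infty)})$ given $\eta|_{[0,\tau]}$ is absolutely continuous with respect to a chordal $\SLE_\kappa(\rho_1;\rho)$'' is not literally justified as written. The cleaner (and standard) way to phrase the argument avoids $\tau$ entirely: the radial $\SLE_\kappa(\rho)$ in $\D$ targeted at $0$, run until any stopping time before it enters a fixed neighborhood of $0$, is mutually absolutely continuous with a chordal $\SLE_\kappa(\rho)$ in $\D$ (same starting point and force point, targeted at some fixed boundary point), with Radon--Nikodym derivative given by the Schramm--Wilson coordinate-change martingale. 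On the event $\{\eta(\zeta)\in\partial\D\}$ the point $\eta(\zeta)$ lies at distance~$1$ from the target, so this absolute continuity applies in a full neighborhood of $\eta(\zeta)$; the martingale is a smooth function of macroscopic data and becomes asymptotically constant under the $\delta^{-1}$ rescaling, exactly as you argue in your final paragraph. With this adjustment your argument goes through, and it is precisely what the paper's terse ``absolute continuity and Proposition~\ref{prop::zoomed_in_picture}'' is gesturing at.
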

\begin{proof}
This follows from absolute continuity and Proposition~\ref{prop::zoomed_in_picture} using the same argument used to prove Proposition~\ref{prop::zoomed_in_picture_general_rho}.
\end{proof}

\section{Structure theorems and quantum natural time}
\label{sec::structure_theorems}

\newcommand{\bsize}{\ol{\epsilon}}

The purpose of this section is to describe the Poissonian structure of the surfaces which are cut out of a $\gamma$-LQG surface by an independent $\SLE$ process.  Sections~\ref{subsec::skinny_wedge},~\ref{subsec::cone_bubbles}, and~\ref{subsec::sle_kappa_prime_bubbles} respectively focus on the cases of boundary-intersecting $\SLE_\kappa(\rho)$ processes, radial and whole-plane $\SLE_\kappa(\rho)$ processes, and $\SLE_{\kappa'}$ processes on top of a free boundary GFF with an appropriate $\log$ singularity and the additive constant normalized in a particular way.  The strategy to derive each of these results is very similar.  We have included all of the details in the case of boundary intersecting $\SLE_\kappa(\rho)$ processes (Theorem~\ref{thm::skinny_wedge_bubble_structure}) and in the other cases we explain the necessary modifications to transfer the argument.  These statements for $\SLE$ on top of a free boundary GFF are in fact intermediate results that will be used to deduce the corresponding result when we instead work on top of a quantum wedge.  This will be deduced in Section~\ref{subsec::quantum_typical_zip_unzip}.

Associated with each of the aforementioned structure theorems is a local time which should be thought of as counting the number of small bubbles which have been cut off by the $\SLE$.  (As $\epsilon$ tends to zero, the number of excised bubbles of quantum mass between $\epsilon$ and $2 \epsilon$ --- times an appropriate power of $\epsilon$ --- converges to this local time.) The right-continuous inverse of this local time leads to an alternative time-parameterization for $\SLE$ which is intrinsic to these bubbles (viewed as quantum surfaces).  We will refer to this notion of time as ``quantum natural time.''  For $\kappa' \in (4,8)$, it is the LQG analog of the natural parameterization for $\SLE$ \cite{ls2011natural_param,lz2013natural_param,lr2012minkowski,law2015minkowski,gps2013pivotal,lv2016lerw,b2017natural,hls2018natural} (for $\kappa \in (0,4)$ the LQG analog of the natural parameterization is $\gamma$-LQG length and for $\kappa' \geq 8$ it is $\gamma$-LQG area).  In Section~\ref{subsec::quantum_typical_zip_unzip}, we will show that the laws of certain types of quantum wedges are invariant under zipping/unzipping according to quantum natural time.  This is analogous to one of the main results of \cite{she2010zipper} (where the notion of time is given by $\gamma$-LQG length).

Throughout, we let $\strip = \R \times [0,\pi]$, $\strip_+ = \R_+ \times [0,\pi]$, $\strip_- = \R_- \times [0,\pi]$, and $\CH_1(\cdot),\CH_2(\cdot)$ be as in Section~\ref{sec::surfaces}.  In what follows, we will be considering both forward and reverse Loewner flows.  We will distinguish between forward and reverse by using a tilde for objects associated with the latter.

\subsection{Strategy overview}
\label{subsec::strategy}

We will now give a general overview of the strategy of this section in the case of a chordal $\SLE_\kappa(\rho)$ process.  The strategy in the case of radial $\SLE_\kappa(\rho)$ processes and $\SLE_{\kappa'}$ processes is analogous.  Assume that $\rho \in (-2,\kappa/2-2)$ so that an $\SLE_\kappa(\rho)$ process is boundary intersecting.
\begin{itemize}
\item We first consider a free boundary GFF on~$\h$ with a~$\log$ singularity at $0$ which is appropriate for the reverse coupling of $\SLE_\kappa(\wt{\rho})$, $\wt{\rho} = \rho+4$ (recall Proposition~\ref{prop::zip_up_to_local_time}), with the GFF (recall Theorem~\ref{thm::reverse_coupling}).  More specifically, $h = \wh{h} - \tfrac{\rho+2}{\gamma} \log|\cdot|$ where~$\wh{h}$ is a free boundary GFF on~$\h$.
\item Let $\eta$ be an $\SLE_\kappa(\rho)$ process in~$\h$ from~$0$ to~$\infty$ with a single boundary force point located at~$0^+$.  We assume that~$\eta$ is sampled independently of $h$ and then we fix the additive constant for $h$ so that the average of $h \circ f_{T_u}^{-1} + Q\log|(f_{T_u}^{-1})'|$ on $\h \cap \partial \D$ is equal to $0$.  Here, $T_u$ is the right-continuous inverse of the local time of $V-W$ at $0$ where $(W,V)$ is the driving pair for $\eta$, $(f_t)$ is its centered forward Loewner evolution, and $u > 0$ is large and fixed.
\item We then show (Theorem~\ref{thm::skinny_wedge_bubble_structure}) that the structure of the bubbles separated by $\eta|_{[0,T_u]}$ from $\infty$ which are to the right of $\eta$ agree with the initial part of those of a weight $\rho+2$ quantum wedge.  Here, the bubbles are ordered from \emph{right to left} starting from $\eta(T_u)$.  This may look like the wrong order in view of the statement of Theorem~\ref{thm::welding}.  It will arise because the proof is based on the reverse coupling of $\SLE$ with the GFF, in which we are performing a reverse Loewner flow.  We emphasize that the ``size'' of this initial segment is defined in terms of $T_u$, which is related to the capacity of $\eta$ since it is defined in terms of the driving pair.  In particular, it is not intrinsic to the weight $\rho+2$ quantum wedge structure because it depends on the entire surface.  (See Figure~\ref{fig::skinny_wedge_illustration} for an illustration.)
\item The weight $\rho+2$ quantum wedge structure of these bubbles gives us a definition of a quantum measure on the set $\eta([0,T_u]) \cap \partial \h$.  We can then understand the local behavior of the field at a typical point chosen from this quantum measure.  This is analogous to the statement that when one chooses a typical point from the boundary measure, the local behavior of the field is described by a $\gamma$-quantum wedge (weight $2$).  Here, we will see a different wedge weight which is a reflection of the fact that the measure we will analyze is supported on a fractal set rather than the whole domain boundary.  (See Lemma~\ref{lem::field_weighted_by_quantum_local_time}.)
\item At the same time, the bubbles which are to the right of $\eta([0,T_u])$ near and to the right of this quantum typical intersection point will look like a weight $\rho+2$ quantum wedge, but this time ordered from left to right (since this amounts to recentering a Poisson point process at a typical time and then using the reversibility of the measure used in its construction).  This will complete the proof of the main structure theorem in this case because we will have identified the law of the bubbles which are to the right of an independent $\SLE_\kappa(\rho)$ process on top of a weight $\rho+4$ quantum wedge.  The weight $\rho+4$ will arise in the proof because in the reverse coupling of $\SLE_\kappa(\rho)$ the singularity at the origin is given by $-\tfrac{2+\rho}{\gamma}\log|\cdot|$.  Near a typical point $w$ chosen from the quantum measure on $\eta([0,T_u]) \cap \partial \h$ we will find that there is an additional singularity given by $-\gamma(2-d)\log|\cdot - w|$ where $d = 1+\tfrac{2}{\gamma^2}(\rho+2)$.  If we apply the change of coordinates from the centered Loewner flow up to the time that $w$ is mapped to $0$, the overall singularity at the origin is given by the sum $\tfrac{2+\rho-\gamma^2}{\gamma} \log| \cdot|$.  Recall that the formula~\eqref{eqn::wedge_weight} which gives the relationship between the weight and the multiple $-\alpha$ of the $\log$ function is given by $W = \gamma(\gamma+2/\gamma-\alpha)$.  Inserting $\alpha = \tfrac{\gamma^2-\rho-2}{\gamma}$ gives $W=\rho+4$.
  The invariance under cutting using this quantum time will be proved at the same time that this final structure theorem is deduced.
\end{itemize}
We emphasize that in the final bullet point above, there are two quantum wedges: the one which consists of the surfaces which are to the right of $\eta$ and the overall surface on top of which $\eta$ is drawn.  The former has weight $\rho+2$ and the latter has weight $\rho+4$.

At the end of this section, we will have not yet proved Theorem~\ref{thm::welding} because we will not have identified the law of the surface which is to the \emph{left} of an $\SLE_\kappa(\rho)$ process (when the force point is on the right), we will have not shown that the surface to the left and the surface to the right of the $\SLE$ are independent of each other, and we will not have proved results for general wedge weights on both the left and right sides.  These steps will be accomplished in Section~\ref{sec::quantumwedge}.

\subsection{Boundary-intersecting $\SLE_\kappa(\rho)$ processes}
\label{subsec::skinny_wedge}

The purpose of this section is to determine the law of the beads which are cut out by a boundary intersecting $\SLE_\kappa(\rho)$ process for $\kappa \in (0,4)$ and $\rho \in (-2,\tfrac{\kappa}{2}-2)$ drawn on top of a free boundary GFF with an appropriate $\log$ singularity and with the additive constant fixed in a certain way.  (Recall the correspondence between forward $\SLE_\kappa(\rho)$ and reverse $\SLE_\kappa(\wt{\rho})$ for $\wt{\rho}=\rho+4$ from Proposition~\ref{prop::zip_up_to_local_time}.)  See Figure~\ref{fig::skinny_wedge_illustration} for an illustration of the setup.

\begin{figure}[ht!]
\begin{center}
\includegraphics[scale=0.85]{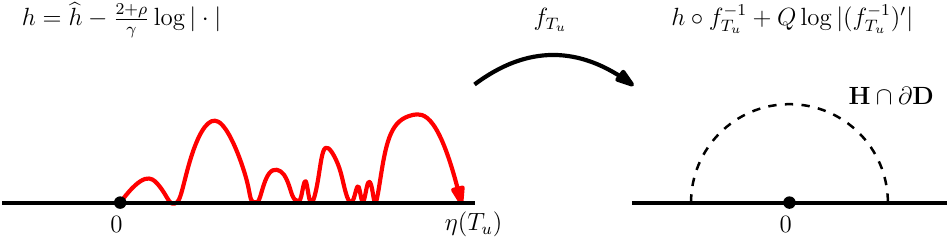}	
\end{center}
\caption{\label{fig::skinny_wedge_illustration} Illustration of the setup for Theorem~\ref{thm::skinny_wedge_bubble_structure}.  Shown on the left is a (forward) $\SLE_\kappa(\rho)$ process $\eta$ drawn up to the first time that the associated Bessel process has accumulated $u$ units of local time at $0$.  It is drawn on top of $h = \wh{h} - (2+\rho)/\gamma \log|\cdot|$ where $\wh{h}$ is a free boundary GFF which is first sampled independently of $\eta$ and with its additive constant then fixed so that the average of $h \circ f_{T_u}^{-1} + Q\log|(f_{T_u}^{-1})'|$ on $\h \cap \partial \D$ is equal to $0$.  Theorem~\ref{thm::skinny_wedge_bubble_structure} implies that the bubbles cut off from $\infty$ by $\eta|_{[0,T_u]}$, viewed as quantum surfaces and ordered from right to left, have the structure of an initial part (depending on $u$) of a weight $\rho+2$ quantum wedge.  We will show later in Section~\ref{subsec::quantum_typical_zip_unzip} that if one cuts along $\eta$ until a ``quantum typical'' intersection point of $\eta$ with $\partial \h$ and then zooms in, one obtains an independent $\SLE_\kappa(\rho)$ process drawn on top of a weight $4+\rho$ quantum wedge and the bubbles which are to the right of the former are a wedge of weight $\rho+2$.}
\end{figure}

\subsubsection{Statement}

\begin{theorem}
\label{thm::skinny_wedge_bubble_structure}
Fix $\kappa \in (0,4)$, $\rho \in (-2,\tfrac{\kappa}{2}-2)$, $\wt{\rho}=\rho+4$, and let $\gamma = \sqrt{\kappa}$.  Let $\wh{h}$ be a free boundary GFF on $\h$ and let
\[ h = \wh{h} + \frac{2-\wt{\rho}}{\gamma} \log| \cdot| = \wh{h} - \frac{2+\rho}{\gamma}\log|\cdot|.\]
Let $\eta$ be an $\SLE_\kappa(\rho)$ process starting from $0$ with a single boundary force point of weight $\rho$ located at $0^+$ and assume that $\eta$ is independent of $h$.  Let $(f_t)$ be the centered forward Loewner flow for $\eta$, $(W,V)$ be its driving process, $\ell_t$ be the local time of $V-W$ at $0$, and let $T_u = \inf\{t > 0 : \ell_t > u\}$ be the right-continuous inverse of~$\ell_t$.  Fix $u > 0$ large and assume that the additive constant for~$h$ is fixed so that the average of $h \circ f_{T_u}^{-1} + Q \log |(f_{T_u}^{-1})'|$ on $\h \cap \partial \D$ is equal to~$0$.  Then the law of the quantum surfaces parameterized by the bounded components of $\h \setminus \eta([0,T_u])$, ordered from right to left, is equal to those of a weight $\rho+2$ wedge up to a time~$s_u$ which tends to $\infty$ in probability as $u \to \infty$.
\end{theorem}

To further clarify the statement of Theorem~\ref{thm::skinny_wedge_bubble_structure}, we recall that the definition of a thin quantum wedge (Definition~\ref{def::skinny_wedge_bessel}) involves a Bessel process~$Y$.  Associated with this Bessel process is its local time $\qlt$ at~$0$.  The time~$s_u$ in the statement of Theorem~\ref{thm::skinny_wedge_bubble_structure} refers to the amount of local time associated with the bubbles (in the sense of the Bessel process which encodes the thin wedge) cut out by~$\eta$ by time~$T_u$.  We emphasize that $s_u$ is not equal to~$T_u$ because~$T_u$ refers to an amount of capacity time and the capacity of the bubbles depends on their embedding into $\h$.  Also, the natural direction of time for $Y$ is opposite to the natural direction of time for $\eta$; this will also manifest itself in the proof.  The motivation for the particular way that we have fixed the additive constant for $h$ will also become clearer in the proof.  There are also other ways of fixing the additive constant so that the same result holds.  In fact, any way of fixing the additive constant which only depends on the field $h \circ f_{T_u}^{-1} + Q \log| (f_{T_u}^{-1})'|$ would suffice.  For example, the statement of Theorem~\ref{thm::skinny_wedge_bubble_structure} is also true if we were to instead normalize $h \circ f_{T_u}^{-1} + Q \log| (f_{T_u}^{-1})'|$ so that the amount of LQG mass it associates with $\D \cap \h$ is equal to $1$.

Recalling~\eqref{eqn::wedge_weight} (as well as Table~\ref{tab::wedge_parameterization}), we note that the dimension of the Bessel process $Y$ associated with a wedge of weight $\rho+2$ is given by:
\begin{equation}
\label{eqn::bessel_wedge_correspondence}
\delta = \delta(\kappa,\rho) = 1 + \frac{2\rho+4}{\kappa} = 1 + \frac{2\rho+4}{\gamma^2}.
\end{equation}
Note that the value of $\delta(\kappa,\rho)$ in~\eqref{eqn::bessel_wedge_correspondence} varies in $(1,2)$ as $\rho$ varies in $(-2,\tfrac{\kappa}{2}-2)$.

\begin {figure}[ht!]
\begin {center}
\includegraphics[scale=0.85]{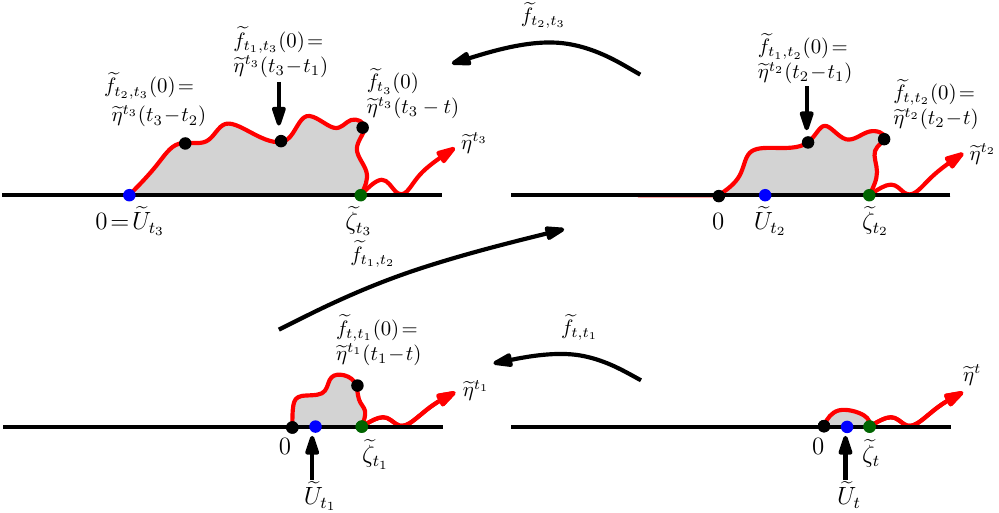}
\caption {\label{fig::cuttingslekr} Illustration of why the bubbles cut out by an $\SLE_\kappa(\rho)$ process have a Poissonian structure.  Shown on the bottom right is the curve $\wt{\eta}^t$ constructed in Lemma~\ref{lem::fields_consistent} for a given value of $t$ at which $\wt{\eta}^t$ is making an excursion away from $\partial \h$ starting from $0$ and terminating at $\wt{\zeta}_t$.  Times $t < t_1 < t_2 < t_3$ are chosen so that $t_3$ is the first time that $\wt{U}$, the location of the force point, collides with $0$ after time $t$.  This corresponds to the first time that the grey region is completely ``zipped in''.  An important observation is that the quantum surface which is parameterized by the grey region does not change as one performs the zipping up procedure.  (In particular, the quantum length of the interval $[\wt{U}_s,\wt{\zeta}_s]$ is the same for all $s \in [t,t_3]$.)  If the grey region is very small, then by the Markov property of the field, conditioning on the field values in this region tells us almost nothing about the rest of the surface.  As these field values determine the surface parameterized by the grey region, we see that observing it tells us almost nothing about the quantum surfaces parameterized by the subsequent excursions that $\wt{\eta}^t$ makes from $0$.}
\end{center}
\end{figure}

\begin{figure}[h!]
\begin{center}
\includegraphics[scale=0.85]{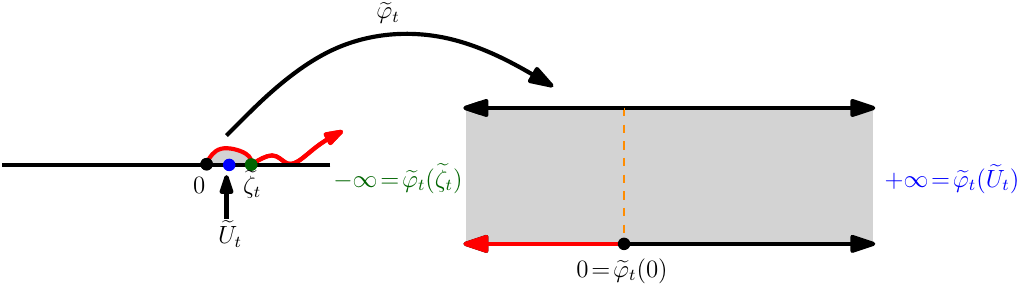}
\caption {\label{fig::smallexcursiontostrip} A conformal map from the bottom right part of Figure~\ref{fig::cuttingslekr} to the infinite strip~$\strip$.  The part of $\wt{\eta}^t$ that forms the upper boundary of the grey region on the left (the red curve) is mapped to $(-\infty,0]$.  The blue vertex on the left is mapped to the endpoint of the strip at $+\infty$, while the green vertex is mapped to the endpoint of the strip at $-\infty$.  Note that the field~$\wh{h}^t$ on the right obtained from the quantum coordinate change does not change as more of the grey region is zipped up (modulo horizontal translation).  Given~$\wt{\CF}_t$ (defined in Section~\ref{subsubsec::filtration}), the conditional law of~$\wh{h}^t$ on~$\strip_+$ (right of the dotted orange line) is that of a GFF on $\strip_+$ (with the field values to the left of the dotted orange line having been already determined), with free boundary conditions on the upper and lower boundaries, and an asymptotic linear growth rate at $+\infty$ depending on the weight of the force point.  This linear growth rate is what determines the structure of the Poisson law because it will determine the type of Bessel process (hence quantum wedge) which describes the bubbles (recall Proposition~\ref{prop::bessel_exponential_bm}).}
\end{center}
\end{figure}

\subsubsection{Intuition and setup}

Throughout, we suppose that $\rho$ and $\wt{\rho}$ are as in the statement of Theorem~\ref{thm::skinny_wedge_bubble_structure}.  Let $(\wt{f}_t)$ denote the centered reverse Loewner flow associated with a reverse $\SLE_\kappa(\wt{\rho})$ process.  Let $(\wt{W},\wt{V})$ be the associated driving process and let $\wt{U} = \wt{V}-\wt{W}$.  Note $\wt{U} \geq 0$ since we have taken the force point to start at $0^+$.  For each $s \leq t$, we also let $\wt{f}_{s,t} = \wt{f}_t \circ \wt{f}_s^{-1}$.  Equivalently, $\wt{f}_{s,t}$ is given by solving the centered reverse Loewner equation using the driving function $(\wt{W},\wt{V})$ in the time-interval $[s,t]$.

In the coupling of reverse $\SLE_\kappa(\wt{\rho})$ with the GFF described in Theorem~\ref{thm::reverse_coupling}, the collision times of $\wt{W}$ and $\wt{V}$ correspond to the instances of time when new quantum surfaces are added to the big surface (i.e., the surface parameterized by $\h$).  This is because when one performs the forward Loewner flow, these times correspond to collision times of the path with the boundary.  Roughly speaking, the reason that the surfaces in $\h \setminus \eta$ which are to the right of~$\eta$ in Theorem~\ref{thm::skinny_wedge_bubble_structure} have a Poissonian law is that the structure of a newly zipped in surface (in an infinitesimal amount of time after it is added to the big surface) is determined by the values of the GFF in an infinitesimal region and the conditional law of the GFF given its values in a very small region will be very close to the law of the unconditioned field (see e.g., Lemma~\ref{lem::sigma_algebra_is_trivial}).  This tells us that the quantum surfaces to the right of $\eta$ should be independent of each other.  See Figure~\ref{fig::cuttingslekr} for an illustration of this and Figure~\ref{fig::smallexcursiontostrip} for an illustration of how to obtain the form of the Poissonian law.

We now proceed to the details of the proof of Theorem~\ref{thm::skinny_wedge_bubble_structure}.  The remainder of this section is structured as follows.  We will first define a collection of stopping times that serve to restrict our attention to ``large'' bubbles.  We will then study the quantum surface structure of these bubbles.  As we will see, it will be convenient to use $\strip$ as the underlying domain on which we will parameterize them.  The first step is Proposition~\ref{prop::line_average_law}, which will determine the form of the projection of the field which describes such a bubble onto $\CH_1(\strip)$ (with fixed additive constant).  This, in turn, will determine the form of the Poisson law (see Figure~\ref{fig::smallexcursiontostrip}).  The second step is to control the dependency structure between the bubbles, which will be carried out primarily in Lemma~\ref{lem::harmonic_converges}.

In the proof, it will be useful to be able to keep track of the fields which arise when zipping and unzipping using a common probability space.  This is the purpose of the following lemma.

\begin{lemma}
\label{lem::fields_consistent}
There exists a family $(\wt{h}^t)$ of modulo additive constant distributions such that for each $t \geq 0$, $\wt{h}^t$ can be expressed as the sum of a free boundary GFF on $\h$ plus $\tfrac{2}{\gamma} \log|\cdot| - \tfrac{\wt{\rho}}{\gamma} \log|\cdot-\wt{U}_t|$ which satisfy
\[ \wt{h}^s = \wt{h}^t \circ \wt{f}_{s,t} + Q \log|\wt{f}_{s,t}'| \quad\text{for each} \quad 0 \leq s \leq t < \infty.\]
There also exists a family of curves $(\wt{\eta}^s)$, each defined on $[0,\infty)$, where $\eta = \wt{\eta}^0$ is an $\SLE_\kappa(\rho)$ process in $\h$ from $0$ to $\infty$ with a single boundary force point of weight $\rho$ located at $0^+$ which is independent of $\wt{h}^0$ and, for each $r,t > 0$, the hull of $\wt{\eta}^t([0,t+r])$ is equal to the hull of $\wt{f}_t(\partial \h \cup \wt{\eta}^0([0,r]))$.
\end{lemma}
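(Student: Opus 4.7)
The plan is to build $(h^t)_{t\geq 0}$ on a single probability space by combining Theorem~\ref{thm::reverse_coupling} with the semigroup structure of the reverse Loewner flow, and to build $(\eta^t)$ by sampling $\eta^0$ independently and pushing it through the flow. I would first sample the centered reverse $\SLE_\kappa(\wt{\rho})$ driving process $(W_t, V_t)$ with force point starting at $0^+$, together with the associated centered Loewner maps $(f_t)$ and incremental maps $f_{s,t} = f_t\circ f_s^{-1}$. Independently I would sample a free boundary GFF $\wt{h}$ on $\h$ and set $h^0 := \wt{h} + \tfrac{2-\wt{\rho}}{\gamma}\log|\cdot|$; this has the asserted marginal form at $t=0$ since $U_0 = 0$ collapses the two singularities at $0$ and $U_0$ into a single one of combined weight $\tfrac{2-\wt{\rho}}{\gamma}$.

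For $t > 0$, the required relation $h^0 = h^t\circ f_t + Q\log|f_t'|$ on $\h$ forces, for $w \in f_t(\h) = \h\setminus K_t$,
\[ h^t(w) = h^0\bigl(f_t^{-1}(w)\bigr) - Q\log\bigl|f_t'(f_t^{-1}(w))\bigr|.\]
To extend $h^t$ to $K_t$ and obtain a distribution on all of $\h$, I would sample $h^t|_{K_t}$ from the Markovian conditional law of the target distribution (a free boundary GFF on $\h$ plus the deterministic function $\tfrac{2}{\gamma}\log|\cdot| - \tfrac{\wt{\rho}}{\gamma}\log|\cdot - U_t|$) given its values on $\h\setminus K_t$, using an independent source of randomness. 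The key content of Theorem~\ref{thm::reverse_coupling}, specialized to a single boundary force point of weight $\wt{\rho}$ at $0^+$, is precisely that this globally defined $h^t$ has the claimed marginal law: computing
\[ \wh{\Fh}_t(z) = \tfrac{2}{\gamma}\log|f_t(z)| + Q\log|f_t'(z)| - \tfrac{\wt{\rho}}{\gamma}\log|U_t - f_t(z)|\]
and pulling back by $f_t^{-1}$, the $Q\log|f_t'|$ term is absorbed by the quantum coordinate change, leaving exactly the advertised pair of log singularities at $0$ and $U_t$ together with a free boundary GFF (modulo the additive constant that Theorem~\ref{thm::reverse_coupling} provides).

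The consistency relation $h^s = h^t\circ f_{s,t} + Q\log|f_{s,t}'|$ for $0\leq s\leq t$, understood on the natural domain $\h\setminus K_s$, would then be automatic from the semigroup identity $f_t = f_{s,t}\circ f_s$ and the chain rule: both $h^s\circ f_s + Q\log|f_s'|$ and $(h^t\circ f_{s,t})\circ f_s + Q\log|(f_{s,t}\circ f_s)'|$ equal $h^0$ on $\h$ by construction, and cancelling the common $f_s$-pullback yields the identity. A brief density/continuity argument on a countable dense set of time pairs, combined with continuity of $t\mapsto h^t$ in the appropriate distributional topology, would promote this to simultaneous validity almost surely. For the curve family, I would sample $\eta^0\sim\SLE_\kappa(\rho)$ from $0$ to $\infty$ with force point at $0^+$, independent of everything above, and for $t>0$ define $\eta^t$ to be the curve in $\h$ whose hull equals $f_t(\partial\h\cup\eta^0)$, constructed concretely by concatenating a parameterization of $K_t$ with the conformal image $f_t\circ\eta^0$.

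The main obstacle is justifying rigorously that the Markovian extension of $h^t$ across $K_t$ yields a field with the global marginal law claimed, and not merely the correct restriction to $\h\setminus K_t$. Because Theorem~\ref{thm::reverse_coupling} identifies distributions only modulo an additive constant, care is needed to fix the additive constant consistently across $t$ (for example by normalizing the free boundary GFF component to have vanishing average against a fixed test function) and to verify that the conditional sampling on $K_t$ is measurable and independent in the correct way. A secondary delicate point is the tip behavior of $f_t$ near the endpoint of $K_t$, where $f_t'$ has a square-root vanishing: this is precisely the mechanism that splits the single singularity $\tfrac{2-\wt{\rho}}{\gamma}\log|\cdot|$ of $h^0$ into the pair $\tfrac{2}{\gamma}\log|\cdot|$ and $-\tfrac{\wt{\rho}}{\gamma}\log|\cdot - U_t|$ of $h^t$, and must be tracked carefully in the pullback computation.
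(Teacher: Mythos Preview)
Your approach runs in the wrong direction and thereby creates an unnecessary difficulty that you do not actually resolve. The paper's proof is a two-line argument: for each fixed $T>0$, one samples the flow $(f_t)_{t\le T}$ and an independent free boundary GFF $\wt h$, sets $h^T=\wt h+\tfrac{2}{\gamma}\log|\cdot|-\tfrac{\wt\rho}{\gamma}\log|\cdot-U_T|$, and then \emph{defines} $h^t:=h^T\circ f_{t,T}+Q\log|f_{t,T}'|$ for $t\le T$. The point is that the incremental reverse map $f_{t,T}$ is well-defined on all of $\h$ (it is itself a reverse Loewner map for the shifted driving function), so this produces a distribution on $\h$ with no extension step required. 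Theorem~\ref{thm::reverse_coupling}, applied to the incremental flow with force point at $U_t$, then gives each $h^t$ the correct marginal; the consistency relation holds on all of $\h$ by the chain rule and $f_{s,T}=f_{t,T}\circ f_{s,t}$. Kolmogorov extension handles $T\to\infty$.

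By contrast, you start from $h^0$ and try to push \emph{forward}: since $f_t^{-1}$ is only defined on $\h\setminus K_t$, you must extend $h^t$ across $K_t$, and you propose doing so with ``an independent source of randomness'' for each $t$. This breaks the consistency you need. For $s<t$, the relation $h^s=h^t\circ f_{s,t}+Q\log|f_{s,t}'|$ is required as an equality of distributions on $\h$, not merely on $\h\setminus K_s$; but $f_{s,t}$ carries $K_s$ into $K_t\setminus K_{s,t}$, so the relation on $K_s$ compares your conditionally-sampled values of $h^s$ on $K_s$ with your independently-conditionally-sampled values of $h^t$ on part of $K_t$. With independent randomness these will a.s.\ disagree. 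Your comment that the relation should be ``understood on the natural domain $\h\setminus K_s$'' misses that $f_{s,t}$ extends conformally to all of $\h$, and the lemma (and its subsequent uses) need the identity there. The clean fix is exactly the paper's: reverse the direction of construction so that no extension is ever needed.
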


The transformation from $(\wt{h}^s,\wt{\eta}^s)$ to $(\wt{h}^t,\wt{\eta}^t)$ for $t > s$ corresponds to zipping up for $t-s$ units of capacity time and the transformation from $(\wt{h}^s,\wt{\eta}^s)$ to $(\wt{h}^t,\wt{\eta}^t)$ for $t < s$ corresponds to unzipping for $s-t$ units of capacity time.

\begin{proof}[Proof of Lemma~\ref{lem::fields_consistent}]
Fix $v \geq 0$.  Suppose that $(\wt{W}^v,\wt{V}^v) \stackrel{d}{=} (\wt{W},\wt{V})$, $\wt{U}^v = \wt{V}^v - \wt{W}^v$, and $(\wt{f}_t^v)$ is the centered reverse Loewner flow driven by $(\wt{W}^v,\wt{V}^v)$.  Let $\wt{\ell}_t^v$ be the local time of $\wt{U}_t^v$ at $0$ and let $\wt{T}_u^v = \inf\{t > 0 : \wt{\ell}_t^v > u\}$ be its right-continuous inverse.  For each $0 \leq t \leq \wt{T}_v^v$, we let $\wt{f}_{t,\wt{T}_v^v}^v = \wt{f}_{\wt{T}_v^v}^v \circ (\wt{f}_{t}^v)^{-1}$.  In other words, $\wt{f}_{t,\wt{T}_v^v}^v$ is the solution of the centered reverse Loewner flow driven by $(\wt{W}^v,\wt{V}^v)$ in $[t,\wt{T}_v^v]$.  We then let $\wt{h}^{\wt{T}_v,v}$ be the sum of a free boundary GFF (viewed modulo additive constant) on $\h$ plus $\tfrac{2-\wt{\rho}}{\gamma} \log| \cdot|$ taken to be independent of $(\wt{W}^v,\wt{V}^v)$.  For each $0 \leq t \leq \wt{T}_v^v$, we set $\wt{h}^{t,v} = \wt{h}^{\wt{T}_v,v} \circ \wt{f}_{t,\wt{T}_v}^v + Q \log| (\wt{f}_{t,\wt{T}_v}^v)'|$.  Theorem~\ref{thm::reverse_coupling} implies that for each $0 \leq t \leq \wt{T}_v^v$ we have that $\wt{h}^t$ (as a modulo additive constant distribution) has the law of the sum of a GFF with free boundary conditions on $\h$ and $\tfrac{2}{\gamma} \log|\cdot| - \tfrac{\wt{\rho}}{\gamma} \log| \cdot - \wt{U}_t^v|$.  Proposition~\ref{prop::zip_up_to_local_time} implies that there exists an $\SLE_\kappa(\rho)$ curve $\wt{\eta}^{\wt{T}_v^v,v}$ so that for each $t \in [0,\wt{T}_v^v]$ its centered (forward) mapping out function at time $t$ is $(\wt{f}_{\wt{T}_v^v-t,\wt{T}_v^v}^v)^{-1}$.  This defines $\wt{\eta}^{\wt{T}_v^v,v}$ in $[0,\wt{T}_v]$.  We extend its definition to $\R_+$ by taking $(\wt{f}_{\wt{T}_v^v}^v)^{-1}(\wt{\eta}^{\wt{T}_v^v,v}(\cdot+\wt{T}_v^v))$ to be independent of everything else.  For each $t \in [0,\wt{T}_v^v]$, we let $\wt{\eta}^{t,v}(\cdot) = (\wt{f}_{\wt{T}_v^v-t,\wt{T}_v^v}^v)^{-1}(\wt{\eta}^{\wt{T}_v^v,v}(\cdot+t))$.  Proposition~\ref{prop::zip_up_to_local_time} implies that $\wt{\eta}^{0,v}$ is an $\SLE_\kappa(\rho)$ process which, by construction, is independent of $\wt{h}^{0,v}$.  More generally, for all $0 \leq w \leq v$, we have that $\wt{\eta}^{\wt{T}_w^v}$ is an $\SLE_\kappa(\rho)$ process in $\h$ from $0$ to $\infty$.  It is determined by $(\wt{W}^v,\wt{V}^v)$ in $[\wt{T}_w^v,\wt{T}_v^v]$ and $\wt{\eta}^{0,v}$ and therefore independent of $\wt{h}^{\wt{T}_w^v,v}$.

 Theorem~\ref{thm::reverse_coupling} implies that if we take $w > v$ and define the fields $\wt{h}^{t,w}$ and paths $\wt{\eta}^{t,w}$ analogously for $0 \leq t \leq \wt{T}_w^w$, then we have that $(\wt{h}^{t,w},\wt{\eta}^{t,w})_{t \in [0,\wt{T}_v^w]} \stackrel{d}{=} (\wt{h}^{t,v},\wt{\eta}^{t,v})_{t \in [0,\wt{T}_v^v]}$.  The Kolmogorov extension theorem thus implies that one can find a family $(\wt{h}^t,\wt{\eta}^t)$ as in the lemma statement which is defined for $t \geq 0$.  We let write $(\wt{W},\wt{V})$ for the corresponding reverse Loewner flow and $\wt{U} = \wt{V} - \wt{W}$.  If we let $\wt{\ell}$ be the local time of $\wt{U}$ at $0$ and $\wt{T}_u = \inf\{t > 0 : \wt{\ell}_t > u\}$ its right-continuous inverse, then the construction immediately implies that $(\wt{h}^{t+T_u},\wt{\eta}^{t+T_u})_{t \geq 0} \stackrel{d}{=} (\wt{h}^t,\wt{\eta}^t)_{t \geq 0}$ for each $u \geq 0$.  Therefore a second application of the Kolmogorov extension theorem implies that we can find such a family which is defined for all $t \in \R$.
\end{proof}

We will assume throughout that $(\wt{h}^t)$ and $(\wt{\eta}^t)$ are as in the statement of Lemma~\ref{lem::fields_consistent}.  By definition, $\eta$ has the law of a (forward) $\SLE_\kappa(\rho)$ process.  For general times $t \geq 0$, however, $\wt{\eta}^t$ does not have the law of a (forward) $\SLE_\kappa(\rho)$ process.  Using the forward/reverse symmetry for $\SLE_\kappa(\rho)$ established in Proposition~\ref{prop::zip_up_to_local_time}, we do have that $\wt{\eta}^t$ has the law of a (forward) $\SLE_\kappa(\rho)$ when we zip up to a given local time for $\wt{U}$ at $0$.  We record this fact in the following lemma (which is implied by the proof of Lemma~\ref{lem::fields_consistent}).

\begin{lemma}
\label{lem::zip_up_sle_kappa_rho_law}
Suppose that we have the setup as described above.  Let $\wt{\ell}_t$ denote the local time of $\wt{U}_t$ at $0$ and, for each $u > 0$, we let $\wt{T}_u = \inf\{t > 0 : \wt{\ell}_t > u\}$ be the right-continuous inverse of $\wt{\ell}_t$.  For each $u >0$, we have that $\wt{\eta}^{\wt{T}_u}$ has the law of a (forward) $\SLE_\kappa(\rho)$ process.  In particular, 
\[ (\wt{h}^{\wt{T}_u},\wt{\eta}^{\wt{T}_u}) \stackrel{d}{=} (\wt{h}^0,\wt{\eta}^0) \quad\text{for each}\quad u \geq 0.\]
\end{lemma}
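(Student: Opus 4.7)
The plan is to reduce Lemma~\ref{lem::zip_up_sle_kappa_rho_law} to the domain Markov property for forward $\SLE_\kappa(\rho)$ via the forward/reverse symmetry of Proposition~\ref{prop::zip_up_to_local_time}, together with Theorem~\ref{thm::reverse_coupling} for the field. First I would apply Proposition~\ref{prop::zip_up_to_local_time} to realize on a common probability space the reverse Loewner flow $(f_t)_{0 \leq t \leq T_u}$ together with a forward $\SLE_\kappa(\rho)$ trace $\xi$ run up to its own inverse local time $T_u^{\rm fwd}$ (with force point at $0^+$), such that $f_{T_u} = (f^{\rm fwd}_{T_u^{\rm fwd}})^{-1}$ as conformal maps and the reverse hull $K_{T_u}^{\rm rev}$ coincides with $\xi([0, T_u^{\rm fwd}])$. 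This coupling is available because, by~\eqref{eqn::besseldim_forward}--\eqref{eqn::besseldim_reverse}, both $\kappa^{-1/2}(V - W)$ and $\kappa^{-1/2}(\wt V - \wt W)$ are Bessel processes of the common dimension $\delta = 1 + 2(\rho + 2)/\kappa$, which lies in $(1, 2)$ under the hypothesis $\rho \in (-2, \kappa/2 - 2)$ and the choice $\wt\rho = \rho + 4$.

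Under this coupling, Lemma~\ref{lem::fields_consistent} tells us that the hull of $\eta^{T_u}$, which is $f_{T_u}(\partial \h \cup \eta^0)$, equals $\xi([0, T_u^{\rm fwd}]) \cup (f^{\rm fwd}_{T_u^{\rm fwd}})^{-1}(\eta^0)$. Since $\eta^0$ is an independent forward $\SLE_\kappa(\rho)$ sampled after the flow, the domain Markov property for $\SLE_\kappa(\rho)$ applied to $\xi$ at the stopping time $T_u^{\rm fwd}$ identifies this concatenation as a full forward $\SLE_\kappa(\rho)$ from $0$ to $\infty$, which gives the marginal $\eta^{T_u} \sim \SLE_\kappa(\rho)$. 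For the field, Theorem~\ref{thm::reverse_coupling} applied at the a.s.\ finite stopping time $T_u$ (positive recurrence of the Bessel process for $\delta \in (1, 2)$, with the continuation threshold never reached since $\wt\rho > -2$) gives $h^{T_u} \stackrel{d}{=} h^0$ as modulo-additive-constant distributions; the explicit log singularities agree because $U_{T_u} = 0$ places the force point back at the origin.

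For the joint statement $(h^{T_u}, \eta^{T_u}) \stackrel{d}{=} (h^0, \eta^0)$---with the right-hand side an independent pair by construction in Lemma~\ref{lem::fields_consistent}---observe that $h^{T_u}$ is a functional of $(h^0, f_{T_u})$ while $\eta^{T_u}$ is a functional of $(\eta^0, f_{T_u})$, with $h^0$, $\eta^0$, and the Brownian motion driving $(f_t)$ mutually independent; hence $h^{T_u}$ and $\eta^{T_u}$ are conditionally independent given $(f_t)_{0 \leq t \leq T_u}$. The main obstacle is to remove this conditioning: one must show that the conditional law of $h^{T_u}$ given the flow agrees with its unconditional law, a strengthening of Theorem~\ref{thm::reverse_coupling} that is essentially joint invariance of the reverse $\SLE$/GFF coupling. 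I expect to handle it by combining the strong Markov property of the reverse driving process at $T_u$ (justified by $U_{T_u} = 0$ returning the system to its initial configuration) with the marginal invariance of Theorem~\ref{thm::reverse_coupling}, so that the stationarity of $(h^t, \eta^t)$ under the zipper---read off from the consistency relations in Lemma~\ref{lem::fields_consistent} together with the forward identification above---forces the joint identity.
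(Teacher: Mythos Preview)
Your approach is exactly the paper's: the first assertion comes from Proposition~\ref{prop::zip_up_to_local_time} (you spell out the domain-Markov concatenation that the paper leaves implicit), and the second assertion is obtained by combining this with Theorem~\ref{thm::reverse_coupling}. The paper's own proof is two sentences and cites the same two inputs.

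Your worry about the joint statement is well placed and worth commenting on. Theorem~\ref{thm::reverse_coupling}, as stated and proved in the paper, gives only the \emph{marginal} identity $h^{T_u}\stackrel{d}{=}h^0$; its proof computes $\E[\exp(i\theta I_\tau(\phi))\mid\CF_\tau]=\exp(i\theta(\wh\Fh_\tau,\phi)-\tfrac{\theta^2}{2}E_\tau(\phi))$, which visibly depends on $f_\tau$, so the conditional law of the zipped field given the flow is \emph{not} its unconditional law. Thus your ``main obstacle'' is real, and your proposed resolution (``strong Markov plus stationarity forces the joint identity'') is not an argument---stationarity of the pair is precisely what is to be proved. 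That said, the paper is equally terse here: it simply writes ``the second assertion follows by combining this with Theorem~\ref{thm::reverse_coupling},'' and elsewhere (e.g.\ in the proof of Lemma~\ref{lem::high_weight_near_origin}) it asserts outright that ``by Theorem~\ref{thm::reverse_coupling}, we know that $f_\tau$ and $h^\tau$ are independent.'' So you have matched the paper's level of detail while being more honest about where the gap lies. A complete treatment would either appeal to the stronger independence form of the reverse coupling in \cite{she2010zipper} (where this is established for the zipper), or supply the missing argument directly; neither you nor the paper does so.
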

We emphasize that in the equality of laws in the statement of Lemma~\ref{lem::zip_up_sle_kappa_rho_law}, $\wt{h}^{\wt{T}_u}$ is a distribution defined modulo additive constant.

We fix the additive constant for $\wt{h}^0$ by taking its average on $\h \cap \partial \D$ to be equal to $0$.  Since $\wt{h}^0 = \wt{h}^t \circ \wt{f}_t + Q \log |\wt{f}_t'|$ for each $t \geq 0$, fixing the additive constant for $\wt{h}^0$ (so that it is a well-defined distribution, not just modulo additive constant) serves to fix the additive constant for $\wt{h}^t$ for all $t \geq 0$.  That is, the additive constant for $\wt{h}^t$ is fixed so that the average of $\wt{h}^t \circ \wt{f}_t + Q \log|\wt{f}_t'|$ on $\h \cap \partial \D$ is is equal to $0$.

Figure~\ref{fig::cuttingslekr} illustrates what happens to a given bubble when one performs either the forward or the reverse Loewner flow.  In the forward direction, there are certain special times at which the $\SLE$ completes an entire excursion away from $\partial \h$, and at each such time a positive-but-finite-quantum-area region is ``cut off'' from $\h$.  If we parameterize time in the reverse direction, then at such a time, a positive area region is added all at once.

We are now going to define the objects which are used in the proof of Theorem~\ref{thm::skinny_wedge_bubble_structure}; see Figure~\ref{fig::smallexcursiontostrip} for an illustration of the definitions.  For each $t \geq 0$ such that $\wt{U}_t \neq 0$, we let $\wt{\bubble}_t$ denote the component of $\h \setminus \wt{\eta}^t$ with $\wt{U}_t$ on its boundary.  If $\wt{U}_t = 0$, we take $\wt{\bubble}_t = \emptyset$.  Note that $\wt{\bubble}_t$ corresponds to the grey region in Figure~\ref{fig::cuttingslekr} and Figure~\ref{fig::smallexcursiontostrip}.  It is also a Jordan domain as its boundary consists of part of the curve~$\wt{\eta}^t$ and an interval in $\partial \h$.  Also, $\wt{\bubble}_t$ is the bubble which is being ``zipped in'' (i.e., ``zipped up'') by the reverse Loewner flow at time $t$.  In other words, the part of its boundary corresponding to the interval $[0,\wt{U}_t]$ is being glued to part of $\R_-$ by the reverse Loewner flow at time $t$.  We emphasize that $0$ is always the leftmost point of $\partial \wt{\bubble}_t \cap \partial \h$ when $\wt{\bubble}_t \neq \emptyset$.  The reason for this is that the time-reversal of the centered reverse Loewner flow is the centered forward Loewner flow.  When one considers the centered forward Loewner flow, the curve always starts from $0$.

For $t \geq 0$ such that $\wt{\bubble}_t \neq \emptyset$, we let
\begin{itemize}
\item $\wt{\zeta}_t$ be the closing point of $\partial \wt{\bubble}_t$ (the opening point is $0$).  In other words, $0 = \wt{\eta}^t(\inf\{s : \wt{\eta}^t(s) \in \partial \wt{\bubble}_t\})$ and $\wt{\zeta}_t = \wt{\eta}^t(\sup\{s : \wt{\eta}^t(s) \in \partial \wt{\bubble}_t\})$.
\item $\wt{\varphi}_t \colon \wt{\bubble}_t \to \strip$ be the unique conformal transformation which takes $\wt{U}_t$ to $+\infty$, $0$ to $0$, and $\wt{\zeta}_t$ to $-\infty$.  Note that the choice of the images of $\wt{U}_t$ and $\wt{\zeta}_t$ determines $\wt{\varphi}_t$ up to horizontal translation.  Taking $\wt{\varphi}_t$ so that $\wt{\varphi}_t(0) = 0$ fixes the horizontal translation.
\item $\wh{h}^t = \wt{h}^t \circ \wt{\varphi}_t^{-1} + Q \log| (\wt{\varphi}_t^{-1})'|$.
\item $\wh{X}^t$ be the projection of $\wh{h}^t$ onto $\CH_1(\strip)$ (with fixed additive constant).
\end{itemize}

\subsubsection{Filtration and stopping times}
\label{subsubsec::filtration}

There are different $\sigma$-algebras one can use with the zipping up process.  One option is to keep track only of the path (and not make any additional observations about the field).  Alternatively, one could imagine considering the $\sigma$-algebra $\wt{\CG}_t = \sigma(\wt{h}^t, \wt{\eta}^t)$ which is generated by the entire field and path corresponding to the zipping up process up to time $t$.  We will use an intermediate approach.  For each $t \geq 0$, we let~$\wt{\CF}_t$ be the $\sigma$-algebra in which the following are measurable:
\begin{itemize}
\item Both $\eta = \wt{\eta}^0$ and the driving pair $(\wt{W}_s,\wt{V}_s)$ of the reverse Loewner flow for $s \leq t$.
\item The field $\wt{h}^t$ restricted to $\h \setminus \wt{\bubble}_t$ (i.e., $\wt{h}^t$ restricted to any component of $\h \setminus \wt{\eta}^t$ {\em other} than the one currently being generated).
\item The restriction of $\wh{h}^t$ to $\strip_-$ (i.e., the restriction of $\wt{h}^t$ to the portion of the current bubble whose image under $\wt{\varphi}_t$ lies to the left of the dotted orange line of Figure~\ref{fig::smallexcursiontostrip}).
\end{itemize}

Note that $\wt{\zeta}_t$, $\wt{\varphi}_t$, and the restriction of $\wh{X}^t$ to $\strip_-$ are all $\wt{\CF}_t$-measurable.  Let us now explain why $(\wt{\CF}_t)$ is a filtration.  It is clear that the amount of information associated with $\eta=\wt{\eta}^0$, $(\wt{W}_s,\wt{V}_s)$ for $s \leq t$, and the field $\wt{h}^t$ restricted to $\h \setminus \wt{\bubble}_t$ is increasing in $t$.  We thus need to understand how the fields $\wh{h}^t$ are related to each other for different values of $t$.  Suppose that we have $s < t$ such that there is no $r \in [s,t]$ for which $\wt{U}_r = 0$.  This means that the quantum surface parameterized by $\wt{\bubble}_s$ is the same as the quantum surface parameterized by $\wt{\bubble}_t$.  Consequently, the field $\wh{h}^s$ describes the same quantum surface as the field $\wh{h}^t$.  Let $\wh{\varphi}_{s,t} = \wt{\varphi}_t \circ \wt{f}_{s,t} \circ \wt{\varphi}_s^{-1}$.  Then $\wh{\varphi}_{s,t}$ is a conformal transformation $\strip \to \strip$ which fixes $\pm \infty$.  In particular, it must be given by a horizontal translation.  That is, there exists $a \in \R$ so that $\wh{\varphi}_{s,t}(z) = z+a$, hence $\wh{\varphi}_{s,t}'(z) = 1$ for all $z \in \strip$.  Since we have that $\wh{h}^s = \wh{h}^t \circ \wh{\varphi}_{s,t} + Q \log| \wh{\varphi}_{s,t}'| = \wh{h}^t \circ \wh{\varphi}_{s,t}$, it follows that one can obtain $\wh{h}^t$ from $\wh{h}^s$ by horizontally translating the latter.  We note that although the quantum surface described by $\wh{h}^t$ does not change with $t$, the point which corresponds to $0$ does change.  To finish proving the claim, we just need to explain why $a < 0$.  This can be seen because the quantum length of $\h \cap \partial \wt{\CB}_t$ is increasing in $t$ in any interval of times in which $\wt{\CB}_t \neq \emptyset$.  Indeed, this follows because performing the reverse Loewner flow corresponds to identifying $\R_+$ (from $0$ to $\infty$) to $\R_-$ (from $0$ to $-\infty$) according to quantum length.  In particular, in an interval in which $\wt{\CB}_t$ is being pushed into $\h$ under the reverse Loewner flow, $\partial \wt{\CB}_t \cap \partial \h \subseteq \R_+$ is being identified with points in $\R_-$ according to quantum length.  Therefore the quantum length of $\partial \wt{\CB}_t \cap \h$ is increasing in $t$.    (Note that in fact the quantum length assigned to $\R_- \subseteq \partial \strip$ tends to $0$ as $t$ tends to the time that $\wt{\CB}_t$ first appears and to the quantum length of the clockwise part of $\partial \wt{\CB}_t$ from $\wt{U}_t$ to $\wt{\zeta}_t$ as $t$ tends to the time at which $\wt{\CB}_t$ is fully zipped in.)  This means that the quantum length of $\R_- \subseteq \partial \strip$ is also increasing in $t$, which can only happen if $a < 0$.

Fix $\epsilon>0, \bsize > 0$ and $r \in \R$.  We now define stopping times inductively as follows.  We let $\wt{\tau}_1^{\epsilon,\bsize,r}$ be the first time $t \geq 0$ that:
\begin{itemize}
\item The quantum length of $\partial \wt{\bubble}_t \cap \h$ is at least $\epsilon$ with respect to the $\gamma$-LQG length measure induced by $\wt{h}^t$,
\item $\inf\{ u \in \R : \wh{X}_u^t = r\} = 0$, and
\item The $\gamma$-LQG mass of $\strip_-$ associated with the field $\wh{h}^t$ is at least $\bsize$.
\end{itemize} 
We emphasize that the time $\wt{\tau}_1^{\epsilon,\bsize,r}$ is a.s.\ finite because for a given bubble, the horizontal translation for $\wh{h}^t$ changes continuously in $t$ and tends to the extreme values when one takes a limit as $t$ tends to either the first time that the bubble appears in the reverse Loewner flow or the first time that it is fully zipped in (recall the explanation just above about how the quantum length of $\R_- \subseteq \partial \strip$ changes in $t$).  In particular, if $\sup_{u \in \R} \wh{X}_u^t \geq r$, there will exist $s \in \R$ so that $\inf\{u \in \R : \wh{X}_u^s = r\} = 0$.

We then let $\wt{\sigma}_1^{\epsilon,\bsize,r}$ be the first time $t$ after time $\wt{\tau}_1^{\epsilon,\bsize,r}$ that $\wt{U}_t = 0$.  Given that $\wt{\tau}_j^{\epsilon,\bsize,r},\wt{\sigma}_j^{\epsilon,\bsize,r}$ for $1 \leq j \leq k$ have been defined for some $k \in \N$, we let $\wt{\tau}_{k+1}^{\epsilon,\bsize,r}$ be defined in exactly the same way as $\wt{\tau}_1^{\epsilon,\bsize,r}$ except with $t \geq \wt{\sigma}_k^{\epsilon,\bsize,r}$.  We then let $\wt{\sigma}_{k+1}^{\epsilon,\bsize,r}$ be the first time $t$ after time $\wt{\tau}_{k+1}^{\epsilon,\bsize,r}$ that $\wt{U}_t = 0$.

For each $j$, we let~$\wh{X}^{j,\epsilon,\bsize,r}$ be given by~$\wh{X}^{\wt{\tau}_j^{\epsilon,\bsize,r}}$ and $\wh{h}_j^{\epsilon,\bsize,r}$ be given by $\wh{h}^{\wt{\tau}_j^{\epsilon,\bsize,r}}$.  We emphasize that $\inf\{u \in \R : \wh{X}_u^{j,\epsilon,\bsize,r} = r\} = 0$.

The reason for this particular choice of stopping times is that in the proof of Theorem~\ref{thm::skinny_wedge_bubble_structure} we will analyze the structure of a given bubble when it has only been partially zipped into the big surface and these conditions will be a useful way to restrict our attention to a discrete collection of ``large bubbles.''

\subsubsection{Form of the drift}

The main input into the derivation of the form of the Poisson law in Theorem~\ref{thm::skinny_wedge_bubble_structure} is the following observation regarding the law of the sequences $(\wh{X}^{j,\epsilon,\bsize,r})$ and $(\wh{X}^{j,\bsize,r}) = (\wh{X}^{j,0,\bsize,r})$.  In particular, we will show that $(\wh{X}^{j,\epsilon,\bsize,r})$ and $(\wh{X}^{j,\bsize,r}) = (\wh{X}^{j,0,\bsize,r})$ are given by independent Brownian motions with a common downward linear drift.  Each such Brownian motion will ultimately correspond to an excursion of a Bessel process from $0$ (recall Proposition~\ref{prop::bessel_exponential_bm}).  In particular, the speed of the downward drift is what determines the dimension of the Bessel process that we will consider hence determines the form of the Poisson law.

\begin{proposition}
\label{prop::line_average_law}

Let
\[ a = \frac{\rho+4}{\gamma} - Q = \frac{\rho+2}{\gamma} - \frac{\gamma}{2}.\]
Let $\wh{B}_{2 t}^{j,\epsilon,\bsize,r} = at - \wh{X}_t^{j,\epsilon,\bsize,r}$ for $t \geq 0$.  Given $\wt{\CF}_{\wt{\tau}_j^{r,\epsilon,\bsize}}$, the process $\wh{B}^{j,\epsilon,\bsize,r}$ is a standard Brownian motion with $\wh{B}_0^{j,\epsilon,\bsize,r} = -r$.  
\end{proposition}

We will write $\wh{B}^{j,\bsize,r}$ for $\wh{B}^{j,0,\bsize,r}$.  Before we give the proof of Proposition~\ref{prop::line_average_law}, we will describe the behavior of the maps $\wt{\varphi}_t^{-1} \colon \strip \to \wt{\bubble}_t$ near $+\infty$.

\begin{lemma}
\label{lem::map_asymptotic_growth}
Fix $t \geq 0$ such that $\wt{U}_t \neq 0$.  Let $\wt{\psi}_t \colon \h \to \wt{\bubble}_t$ be the unique conformal transformation with $\wt{\psi}_t(0) = \wt{U}_t$, $\wt{\psi}_t(-1) = 0$, and $\wt{\psi}_t(\infty) = \wt{\zeta}_t$.  Then we have that
\begin{align*}
\lim_{R \to \infty} \sup_{\substack{z \in \strip \\ \re(z) \geq R}} \left| e^z (\wt{\varphi}_t^{-1})'(z) - \wt{\psi}_t'(0) \right| = 0 \quad\text{and}\quad
\lim_{R \to \infty} \sup_{\substack{z \in \strip \\ \re(z) \geq R}} \left| e^z ( \wt{\varphi}_t^{-1}(z) - \wt{U}_t) - \wt{\psi}_t'(0) \right| = 0.
\end{align*}
\end{lemma}
\begin{proof}
We first note that the map $-e^{-z}$ takes $\strip$ to $\h$ with $+\infty$ sent to $0$, $-\infty$ sent to $\infty$, and $0$ sent to $-1$.  The first part of the lemma then follows since $\wt{\varphi}_t^{-1}(z) = \wt{\psi}_t(-e^{-z})$.  To see the second part of the lemma, we note that
\begin{align*}
   \wt{\varphi}_t^{-1}(z) - \wt{U}_t
&= \wt{\psi}_t(-e^{-z}) - \wt{U}_t
 = -\int_z^\infty (\wt{\psi}_t(-e^{-w}))' dw
 = -\int_z^\infty \wt{\psi}_t'(-e^{-w}) e^{-w} dw\\
&= -\int_z^\infty (\wt{\psi}_t'(-e^{-w}) - \wt{\psi}_t'(0)) e^{-w} dw - \int_z^\infty \wt{\psi}_t'(0) e^{-w} dw\\
&= \wt{\psi}_t'(0) e^{-z} + o(|e^{-z}|) \quad\text{as}\quad \re(z) \to \infty.
\end{align*}
\end{proof}

\begin{proof}[Proof of Proposition~\ref{prop::line_average_law}]
Given $\wt{\CF}_{\wt{\tau}_j^{\epsilon,\bsize,r}}$, we note that $\wh{h}_j^{\epsilon,\bsize,r}$ is independent from $\wh{h}_1^{\epsilon,\bsize,r},\ldots,\wh{h}_{j-1}^{\epsilon,\bsize,r}$ (as the latter are in fact determined by $\wt{\CF}_{\wt{\tau}_j^{\epsilon,\bsize,r}}$).  To complete the proof we just need to identify the law of each $\wh{B}^{j,\epsilon,\bsize,r}$.

Recall that we can write $\wt{h}^t = \wh{h} + \tfrac{2}{\gamma}\log|\cdot| - \frac{\wt{\rho}}{\gamma}\log|\cdot - \wt{U}_t|$ where $\wh{h}$ is a free boundary GFF on $\h$.  We thus have that
\[ \wh{h}^t = \wh{h} \circ \wt{\varphi}_t^{-1} + \frac{2}{\gamma}\log|\wt{\varphi}_t^{-1}(\cdot)| - \frac{\wt{\rho}}{\gamma} \log|\wt{\varphi}_t^{-1}(\cdot) - \wt{U}_t| + Q \log| (\wt{\varphi}_t^{-1})'(\cdot)|.\]
Applying this for $t = \wt{\tau}_j^{\epsilon,\bsize,r}$, we see that the conditional law of the restriction of $\wh{h}_j^{\epsilon,\bsize,r}$ to $\strip_+$ given $\wt{\CF}_{\wt{\tau}_j^{\epsilon,\bsize,r}}$ is that of a GFF with the given boundary conditions on $[0,i \pi]$ and free boundary conditions on $\partial \strip_+ \setminus [0,i \pi]$ plus a harmonic function $\wh{\Fh}_j^{\epsilon,\bsize,r}$ on $\strip_+$.  The boundary conditions for $\wh{\Fh}_j^{\epsilon,\bsize,r}$ are equal to $0$ on $[0,i \pi]$, Neumann on $\partial \strip_+ \setminus [0,i \pi]$, and $\wh{\Fh}_j^{\epsilon,\bsize,r}$ has the same behavior at $+\infty$ as the harmonic function $\wh{\Fg}_t(\cdot) = -\tfrac{\wt{\rho}}{\gamma} \log|\wt{\varphi}_t^{-1}(\cdot) - \wt{U}_t| + Q \log| (\wt{\varphi}_t^{-1})'(\cdot)|$.  By this latter statement, we mean that $\wh{\Fh}_j^{\epsilon,\bsize,r}(z) - \wh{\Fg}_t(z)$ converges to a constant as $\re(z) \to \infty$ uniformly in $\im(z)$.  Lemma~\ref{lem::map_asymptotic_growth} implies that both $(\wt{\varphi}_t^{-1})'(z) = \alpha e^{-z} + o(|e^{-z}|)$ and $\wt{\varphi}_t^{-1}(z) - \wt{U}_t = \alpha e^{-z} + o(|e^{-z}|)$ as $\re(z) \to \infty$ uniformly in $\im(z)$ where $\alpha = \wt{\psi}_t'(0)$.  Let $a = \tfrac{\wt{\rho}}{\gamma} - Q$ be as in the statement of the proposition.  We can insert the asymptotic expressions for $(\wt{\varphi}_t^{-1})'(z)$ and $\wt{\varphi}_t^{-1}(z)$ into the definition of $\wh{\Fg}_t$ to obtain that
\begin{align*}
   \wh{\Fg}_t(z)
&= -\frac{\wt{\rho}}{\gamma} \log|\wt{\varphi}_t^{-1}(z) - \wt{U}_t| + Q \log| (\wt{\varphi}_t^{-1})'(z)|\\
&= -\frac{\wt{\rho}}{\gamma} \log|\alpha e^{-z} + o(|e^{-z}|)| + Q \log| \alpha e^{-z} + o(|e^{-z}|)|\\
&= a \re(z) -a\log|\alpha| + o(1) \quad\text{as}\quad \re(z) \to \infty
\end{align*}
uniformly in $\im(z)$.  Combining, we have for a constant $c$ that
\[ \wh{\Fh}_j^{\epsilon,\bsize,r}(z) =  \big( \wh{\Fh}_j^{\epsilon,\bsize,r}(z) - \wh{\Fg}_t(z) \big) + \wh{\Fg}_t(z) = a \re(z) + c + o(1) \quad\text{as}\quad \re(z) \to \infty\]
uniformly in $\im(z)$.  The desired result thus follows by combining everything with Lemma~\ref{lem::harmonic_strip_constant} because the function $\R_+ \to \R$ defined by starting with the function $\wh{\Fh}_j^{\epsilon,\bsize,r}$, which is harmonic on $\strip_+$ with Neumann boundary conditions on $\partial \strip_+ \setminus [0,\pi i]$, and averaging it on vertical lines is linear in $\re(z)$.
\end{proof}

\subsubsection{Controlling the dependency}

Proposition~\ref{prop::line_average_law} proved just above implies that the projections of the $\wh{h}_j^{\epsilon,\bsize,r}$ onto $\CH_1(\strip)$ (with fixed additive constant) and restricted to $\strip_+$ are i.i.d.\ and can be related to the excursions of a Bessel process from $0$ (recall Proposition~\ref{prop::bessel_exponential_bm}).  In order to complete the proof of Theorem~\ref{thm::skinny_wedge_bubble_structure}, we need to control the dependency between the projections of the fields onto $\CH_2(\strip)$.  Recall that conditioning $\wh{h}_j^{\epsilon,\bsize,r}$ on $\wt{\CF}_{\wt{\tau}_j^{\epsilon,\bsize,r}}$ corresponds to conditioning on the values of $\wh{h}_j^{\epsilon,\bsize,r}$ in $\strip_-$ (in addition to some other information which does not involve the values of $\wh{h}_j^{\epsilon,\bsize,r}$ in $\strip_+$).  It thus follows from the Markov property of the GFF that the conditional law of~$\wh{h}_j^{\epsilon,\bsize,r}$ given~$\wt{\CF}_{\wt{\tau}_j^{\epsilon,\bsize,r}}$ is given by that of a GFF in~$\strip_+$ with Dirichlet boundary conditions on $[0,\pi i]$ and free boundary conditions on the rest of $\partial \strip_+$.  In other words, the dependency between $\wh{h}_j^{\epsilon,\bsize,r}$ restricted to $\strip_+$ and $\wh{h}_1^{\epsilon,\bsize,r}, \ldots, \wh{h}_{j-1}^{\epsilon,\bsize,r}$ is encoded by the function which is harmonic in $\strip_+$ with boundary conditions given by the values of $\wh{h}_j^{\epsilon,\bsize,r}$ on $[0,\pi i]$ and Neumann boundary conditions on the rest of $\partial \strip_+$.

In what follows, we will make use of the following notation.  We let $\wh{w}_j^{\epsilon,\bsize,r} \in \R$ be such that the $\gamma$-LQG length associated with the field $\wh{h}_j^{\epsilon,\bsize,r}$ of $(-\infty,\wh{w}_j^{\epsilon,\bsize,r}]$ is equal to $\epsilon$ (note that there always is such a $\wh{w}_j^{\epsilon,\bsize,r}$ by the definition of the stopping times $\wt{\tau}_j^{\epsilon,\bsize,r}$).  We then let $\wh{\psi}_j^{\epsilon,\bsize,r}$ be the function which is harmonic in $[\wh{w}_j^{\epsilon,\bsize,r},\infty) \times [0,\pi]$ with Neumann boundary conditions on the horizontal parts of the strip boundary (and at $+\infty$) and Dirichlet boundary conditions on $\wh{w}_j^{\epsilon,\bsize,r} + [0,i\pi]$ with boundary values given by those of $\wh{h}_j^{\epsilon,\bsize,r}$.  We emphasize that the function $\wh{\psi}_j^{\epsilon,\bsize,r}$ has Neumann boundary conditions at $+\infty$; this rules out the possibility of $\wh{\psi}_j^{\epsilon,\bsize,r}(z)$ having linear growth in $\re(z)$ as $z \to +\infty$.  By the Markov property of the field, given $\wh{\psi}_j^{\epsilon,\bsize,r}$, we have that $\wh{h}_j^{\epsilon,\bsize,r}$ is independent from $\wh{h}_1^{\epsilon,\bsize,r},\ldots,\wh{h}_{j-1}^{\epsilon,\bsize,r}$.  (The Markov property is applied at the first time that the quantum length of the part of the bubble being zipped in at time $\tau_j^{\epsilon,\bsize,r}$ is exactly equal to $\epsilon$.)  The purpose of Lemma~\ref{lem::harmonic_converges} stated and proved just below is to show that the functions $\wh{\psi}_j^{\epsilon,\bsize,r}$ become trivial in the limit as $\epsilon \to 0$.  As we have explained just above, this will ultimately imply that the successive bubbles which are zipped in are independent.

\begin{lemma}
\label{lem::harmonic_converges}
Fix $j \in \N$, $\bsize > 0$, and $r \in \R$.  There exists a sequence $(\epsilon_k)$ of positive numbers decreasing to $0$ such that $\wh{\psi}_j^{\epsilon_k,\bsize,r}$ a.s.\ converges to the $0$ function on $\strip_+$ as $k \to \infty$ with respect to the topology of local uniform convergence modulo a global additive constant.
\end{lemma}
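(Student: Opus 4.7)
The plan is to first apply Lemma~\ref{lem::epsilon_to_zero_limit_exists} to pass to a single almost-sure limiting field: for $\epsilon$ below some random threshold, $\wt{h}_j^{\epsilon,\bsize,r}$ stabilizes to a fixed field $\wt{h}_j^{\bsize,r}$, and only the vertical cut location $w := w_j^{\epsilon,\bsize,r}$ varies with $\epsilon$. Since the $\gamma$-LQG boundary measure induced by $\wt{h}_j^{\bsize,r}$ assigns almost surely finite and nonatomic mass to the left half of $\partial \strip$ (that is, to the image of the part of $\partial \bubble$ traced by the SLE), the defining equation $\nu_{\wt{h}_j^{\bsize,r}}((-\infty, w]) = \epsilon$ forces $w \to -\infty$ as $\epsilon \to 0$.

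The core analytic input will be the following: on the half-strip $[w,\infty) \times [0,\pi]$, any harmonic function with Neumann boundary conditions on the horizontal sides (and at $+\infty$) and Dirichlet values on $w + [0,i\pi]$ admits, after even reflection across the horizontal sides, the Fourier expansion
\begin{equation*}
\wt\psi_j^{\epsilon,\bsize,r}(x+iy) = c_0 + \sum_{n \geq 1} a_n \cos(ny)\, e^{-n(x - w)},
\end{equation*}
where $c_0$ is the average of the Dirichlet values on $w + [0,i\pi]$ and the $a_n$ are the corresponding non-constant cosine coefficients. Every mode with $n \geq 1$ decays exponentially in $x-w$, so for any compact $K \subset \strip_+$ one obtains
\begin{equation*}
\|\wt\psi_j^{\epsilon,\bsize,r} - c_0\|_{L^\infty(K)} \leq C_K\, e^{-(\inf_{z \in K} \re(z) - w)} \bigg(\sum_{n \geq 1} |a_n|^2 \bigg)^{1/2}.
\end{equation*}
The exponential prefactor tends to $0$ as $w \to -\infty$, so the task reduces to controlling the $\ell^2$ norm of the Fourier coefficients uniformly in $w$.

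To do this I will use the orthogonal decomposition of Lemma~\ref{lem::strip_spaces_orthogonal}. The $\CH_1(\strip)$-component of $\wt{h}_j^{\bsize,r}$ is constant on each vertical line, so its harmonic extension from $w + [0,i\pi]$ is a single constant on $\strip_+$ and contributes only to $c_0$. The $\CH_2(\strip)$-component is a mean-zero Gaussian field whose restriction to $w + [0,i\pi]$ (interpreted via a narrow vertical mollifier and a limit) has cosine Fourier coefficients that are jointly Gaussian, with variances bounded by a universal constant governed by the Neumann Green's function on the half-strip, so $\E \sum_{n \geq 1} |a_n|^2$ is bounded uniformly in $w$; the deterministic drift $a \re(z)$ arising from the log singularity at $+\infty$ and from the coordinate-change term $Q\log|(\varphi_t^{-1})'|$ (as identified in the proof of Proposition~\ref{prop::line_average_law}) is harmonic and contributes only to a modified constant, and so is absorbed into $c_0$ after this modification is accounted for. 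Taking expectation in the bound above and extracting a subsequence $\epsilon_k \to 0$ along which Borel-Cantelli applies on an exhaustion of $\strip_+$ by compacts yields almost sure local uniform convergence of $\wt{\psi}_j^{\epsilon_k,\bsize,r} - c_{0,k}$ to $0$, which is the required convergence modulo an additive constant. The main obstacle will be making rigorous the Gaussian bound on the Fourier coefficients of the ``boundary values'' of the distributional field $\wt{h}_j^{\bsize,r}$ on the line $w+[0,i\pi]$; I expect to handle this by a mollifier-based regularization together with a Gaussian Hilbert space computation showing that the $\ell^2$ bound on coefficients is stable as the regularization is removed.
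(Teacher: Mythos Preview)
Your approach takes a genuinely different route from the paper and has a real gap. The paper does not argue via Fourier decay on the strip; instead it uses the local structure of the $\SLE_\kappa(\rho)$ near the closing point $\zeta_t$ of the bubble (Proposition~\ref{prop::zoomed_in_picture}). It introduces an auxiliary map $\wt\varphi_t^\epsilon$ uniformizing $\h\setminus\eta([\zeta-t,\zeta])$, shows that the law of the coordinate-changed field $h\circ(\wt\varphi_t^\epsilon)^{-1}+Q\log|(\wt\varphi_t^\epsilon)^{-1}{}'|$ has a scaling limit as $t,\epsilon\to 0$ (this is exactly where the independence of $h$ and $\eta$ together with Proposition~\ref{prop::zoomed_in_picture} enter), deduces that the associated harmonic extensions converge in law, and finally checks that $\wt\varphi_t^\epsilon$ and the bubble map $\varphi^\epsilon$ agree on compacts as $\epsilon\to 0$ with $t$ fixed. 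The existence of a subsequence with almost sure convergence then follows.

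The gap in your argument is the description of the $\CH_2(\strip)$-component of $\wt{h}_j^{\bsize,r}$ as ``a mean-zero Gaussian field'' whose Fourier coefficients on $w+[0,i\pi]$ have variances bounded by a universal Neumann-Green constant. That description would be correct if $\wt{h}_j^{\bsize,r}$ were a free-boundary GFF on $\strip$ plus $a\re(z)$, but it is not. By construction $\wt{h}^t = h^t\circ\varphi_t^{-1}+Q\log|(\varphi_t^{-1})'|$, where $\varphi_t^{-1}:\strip\to\bubble_t$ is an $\eta$-dependent conformal map. Two non-Gaussian contributions to the $\CH_2$-component appear: (i) the coordinate-change term $Q\log|(\varphi_t^{-1})'|$, which is \emph{not} of the form $a\re(z)+\text{const}$ globally --- the identification $a=(\wt\rho/\gamma)-Q$ in Proposition~\ref{prop::line_average_law} comes from Lemma~\ref{lem::map_asymptotic_growth}, which analyzes $\varphi_t^{-1}$ only near $+\infty$; near $-\infty$ this term is governed by the fractal $\SLE$ boundary near $\zeta_t$ and has nontrivial oscillation on vertical lines; and (ii) the free-boundary GFF on $\h$, when restricted to $\bubble_t$ and pushed to $\strip$, is a mixed-boundary GFF (zero on the image $(-\infty,0]\times\{0\}$ of the $\SLE$ arc, free elsewhere) \emph{plus} a random harmonic function coming from the Markov decomposition; neither piece has the translation-invariant Neumann covariance you invoke. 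Controlling the $n$th Fourier coefficient of either contribution on $w+[0,i\pi]$ uniformly as $w\to-\infty$ amounts precisely to controlling the local geometry of $\eta$ and the field near $\zeta_t$ --- which is what Proposition~\ref{prop::zoomed_in_picture} supplies and what your outline does not. Without that input (or a substitute such as H\"older-derivative estimates for the uniformizing map onto the $\SLE$-bounded bubble, carried through to uniform-in-$w$ Fourier bounds), the exponential factor $e^{-n(\re z_0-w)}$ cannot be shown to dominate.
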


Before we proceed to the proof of Lemma~\ref{lem::harmonic_converges}, let us briefly explain the idea.  If one imagines that the bubble associated with the function $\wh{\psi}_j^{\epsilon,\bsize,r}$ is fully zipped in, then the behavior of $\wh{\psi}_j^{\epsilon,\bsize,r}$ (modulo additive constant) as $\epsilon \to 0$ is determined by the microscopic behavior of the field/path pair near the terminal point of the bubble.  We will use Proposition~\ref{prop::zoomed_in_picture} to deduce that this microscopic behavior cannot be sufficiently pathological to prevent $\wh{\psi}_j^{\epsilon,\bsize,r}$ from becoming constant as $\epsilon \to 0$.

\begin{proof}[Proof of Lemma~\ref{lem::harmonic_converges}]
Fix a value of $u > 0$ large.  For $x > 0$ rational, let $\CV_x$ be the connected component of $\h \setminus \wt{\eta}^{\wt{T_u}}$ with $x$ on its boundary.  (Note that a unique such component exists a.s.\ for all $x  > 0$ rational simultaneously since the probability that $\wt{\eta}^{\wt{T}_u}$ hits any given point in $(0,\infty)$ is zero.)  We also fix $\epsilon > 0$ and assume that we are working on the event that the quantum length of $\partial \CV_x \setminus \partial \h$ is at least $\epsilon$.  Note that the probability of this event tends to $1$ as $\epsilon \to 0$ (for $x$ fixed).  We let $\xi$ (resp.\ $\zeta$) be the first (resp.\ last) time that $\wt{\eta}^{\wt{T}_u}$ hits a point on $\partial \CV_x$ and let $\varphi^\epsilon \colon \CV_x \to \strip$ be the unique conformal transformation which takes $\wt{\eta}^{\wt{T}_u}(\xi)$ to $+\infty$ and $\wt{\eta}^{\wt{T}_u}(\zeta)$ to $-\infty$ where the horizontal translation has been fixed so that the quantum length of $(-\infty,0]$ under the $\gamma$-LQG boundary measure associated with $\wt{h}^{\wt{T}_u} \circ (\varphi^\epsilon)^{-1} + Q\log|((\varphi^\epsilon)^{-1})'|$ is equal to $\epsilon$.  Let $\psi^\epsilon$ be the function which is harmonic in $\strip_+$ with Neumann boundary conditions on $\partial \strip_+ \setminus [0,i\pi]$ and Dirichlet boundary conditions on $[0,i\pi]$ with boundary values given by the values of $\wt{h}^{\wt{T}_u} \circ (\varphi^\epsilon)^{-1} + Q\log|((\varphi^\epsilon)^{-1})'|$ on $[0,i\pi]$.

By Lemma~\ref{lem::harmonic_strip_constant}, it suffices to show that, as $\epsilon \to 0$, the law of $\psi^\epsilon$ converges weakly with respect to the topology of local uniform convergence in $\strip_+$ modulo a global additive constant to a function which is harmonic in $\strip_+$.  Indeed, here we are using that for each fixed $j \in \N$, $\epsilon > 0$, and $r \in \R$, if we ``zip up'' to a large enough local time as in Lemma~\ref{lem::zip_up_sle_kappa_rho_law} (i.e., take $u > 0$ large enough) then with high probability the bubble associated with $\wh{\psi}_j^{\epsilon,\bsize,r}$ will correspond to one of the $\CV_x$ for $\wt{\eta}^{\wt{T}_u}$ and $\wh{\psi}_j^{\epsilon,\bsize,r}$ will correspond to $\psi^\epsilon$ with a large horizontal translation.  Thus if $\psi^\epsilon$ is converging weakly to a limit, then $\wh{\psi}_j^{\epsilon,\bsize,r}$ will be converging to a constant due to the large horizontal translation.

We are going to deduce this from Proposition~\ref{prop::zoomed_in_picture} and the independence of $\wt{\eta}^{\wt{T}_u}$ and $\wt{h}^{\wt{T}_u}$ (viewed modulo additive constant).  For each $\delta > 0$ we let $\eta_1^\delta = \delta^{-1} (\wt{\eta}^{\wt{T}_u}(\zeta-\cdot)-\wt{\eta}^{\wt{T}_u}(\zeta))$ and $\eta_2^\delta = \delta^{-1}(\wt{\eta}^{\wt{T}_u}(\zeta+\cdot)-\wt{\eta}^{\wt{T}_u}(\zeta))$.  Then Proposition~\ref{prop::zoomed_in_picture} gives us that the law of $(\eta_1^\delta,\eta_2^\delta)$ converges as $\delta \to 0$ to the law of a certain pair of GFF flow lines $(\eta_1,\eta_2)$ starting from $0$.  Let $h^\delta$ be the field which arises by precomposing $\wt{h}^{\wt{T}_u}$ with $z \mapsto \delta(z+\wt{\eta}^{\wt{T}_u}(\zeta))$.  Then $h^\delta$ (modulo additive constant) converges as $\delta \to 0$ to a free boundary GFF on $\h$ by the scale and translation invariance of free boundary GFFs and the independence of $\wt{h}^{\wt{T}_u}$ (modulo additive constant) and $\wt{\eta}^{\wt{T}_u}$.  By the independence of $\wt{h}^{\wt{T}_u}$ (modulo additive constant) and $\wt{\eta}^{\wt{T}_u}$, we get that the triple $(\eta_1^\delta,\eta_2^\delta,h^\delta)$ converges in law as $\delta \to 0$ to the law of a triple consisting of a pair of GFF flow lines and an independent free boundary GFF on~$\h$.

For each $t \in (0,\zeta-\xi)$, we let $\wt{\varphi}_t^\epsilon$ be the conformal transformation which takes $\h \setminus \wt{\eta}^{\wt{T}_u}([\zeta-t,\zeta])$ to $\strip$ with $\wt{\eta}^{\wt{T}_u}(\zeta)$ taken to $-\infty$, $\wt{\eta}^{\wt{T}_u}(\zeta-t)$ taken to $+\infty$, and the horizontal shift normalized so that the $\gamma$-LQG boundary measure of $(-\infty,0]$ with respect to $\wt{h}^{\wt{T}_u} \circ (\wt{\varphi}_t^\epsilon)^{-1} + Q \log|((\wt{\varphi}_t^\epsilon)^{-1})'|$ is equal to $\epsilon$.  Then the law of the field $\wt{h}^{\wt{T}_u} \circ (\wt{\varphi}_t^\epsilon)^{-1} + Q \log |((\wt{\varphi}_t^\epsilon)^{-1})'|$ (viewed as a modulo additive constant distribution on $\strip$) has a scaling limit as~$t,\epsilon \to 0$ where $\epsilon \to 0$ at a sufficiently fast rate relative to the rate at which $t \to 0$.  This follows from what we explained in the previous paragraph.  Therefore the law of the function which is harmonic in $\strip_+$ with Neumann boundary conditions on $\partial \strip_+ \setminus [0,i\pi]$ and with Dirichlet boundary conditions on $[0,i\pi]$ given by the values of $\wt{h}^{\wt{T}_u} \circ (\wt{\varphi}_t^\epsilon)^{-1} + Q \log |((\wt{\varphi}_t^\epsilon)^{-1})'|$ on $[0,i\pi]$ converges weakly as $t,\epsilon \to 0$ as before with respect to the topology of local uniform convergence modulo a global additive constant.

The claimed result will follow by showing that $\wt{h}^{\wt{T}_u} \circ (\wt{\varphi}_t^\epsilon)^{-1} + Q \log|((\wt{\varphi}_t^\epsilon)^{-1})'|$ is close to $\wt{h}^{\wt{T}_u} \circ (\varphi^\epsilon)^{-1} + Q\log|((\varphi^\epsilon)^{-1})'|$ for small enough $\epsilon$ relative to $t$.  That this is the case can be seen as follows.  Let $t_\epsilon \in [\xi,\zeta]$ be such that the quantum length of $\wt{\eta}^{\wt{T}_u}([t_\epsilon,\zeta])$ is $\epsilon$ and assume that we are working on the event that $t_\epsilon \in (\zeta-t,\zeta)$.  Note that the probability of this event tends to $1$ as $\epsilon \to 0$ with $t$ fixed.  The image of a point $z \in \h \setminus \wt{\eta}^{\wt{T}_u}([t_\epsilon,\zeta])$ under $\wt{\varphi}_t^\epsilon$ is determined by the harmonic measure as seen from~$z$ of the boundary segments (counterclockwise) from~$\eta(\zeta)$ to~$\eta(t_\epsilon)$, from~$\eta(t_\epsilon)$ to~$\eta(\zeta-t)$, and from~$\eta(\zeta-t)$ to~$\eta(\zeta)$.  Similarly, the image of $z \in \CV_u$ under $\varphi^\epsilon$ is determined by the harmonic measure as seen from $z$ of the boundary segments (counterclockwise) of $\partial \CV_u$ from $\wt{\eta}^{\wt{T}_u}(\zeta)$ to $\wt{\eta}^{\wt{T}_u}(t_\epsilon)$, $\wt{\eta}^{\wt{T}_u}(t_\epsilon)$ to $\wt{\eta}^{\wt{T}_u}(\xi)$, and from $\wt{\eta}^{\wt{T}_u}(\xi)$ to $\wt{\eta}^{\wt{T}_u}(\zeta)$.  From this, it is easy to see that if $K \subseteq \strip$ is any fixed compact set, then $\wt{\varphi}_t^\epsilon \circ (\varphi^\epsilon)^{-1}$ converges uniformly to the identity on $K$ as $\epsilon \to 0$.
\end{proof}

\subsubsection{Proof of Theorem~\ref{thm::skinny_wedge_bubble_structure}}

First of all, suppose that $X_t = B_{2t} + a t$ where $B$ is a standard Brownian motion and $a$ is as in Proposition~\ref{prop::line_average_law}.  By the discussion in the beginning of Section~\ref{subsec::surfaces_strips_cylinders} (see also Proposition~\ref{prop::bessel_exponential_bm}), we know that $Z_t = \exp(\tfrac{\gamma}{2} X_t)$ (reparameterized to have quadratic variation $dt$) evolves as a $\bes^\delta$ with $\delta=\delta(\kappa,\rho)$ where $\delta(\kappa,\rho)$ is as in~\eqref{eqn::bessel_wedge_correspondence}.  Suppose that $Y$ is a $\bes^\delta$.  Fix $u > 0$ large and let $\wt{T}_u$ be as in the statement of Lemma~\ref{lem::zip_up_sle_kappa_rho_law}.  Proposition~\ref{prop::line_average_law} then allows us to construct a coupling between $Y$ and the sequence of bounded components of $\h \setminus \wt{\eta}^{\wt{T}_u}$ which are to the left of $\wt{f}_{\wt{T}_u}(0)$, viewed as quantum surfaces and ordered from right to left, as follows.  For any given $r$, we can couple the excursions that $Y$ makes above $\exp(\tfrac{\gamma}{2} r)$ with the sequence $\exp(\tfrac{\gamma}{2} \wh{X}^{j,\bsize,r})$, where we view both processes as starting from the first time that they hit $\exp(\tfrac{\gamma}{2} r)$, to be the same (after reparameterizing the latter according to quadratic variation) for those $j$ which correspond to bubbles zipped in before time $\wt{T}_u$.  Note that, for each $r$, given $(\wh{X}^{j,\bsize,r})$ the bubbles are otherwise independent as quantum surfaces since Lemma~\ref{lem::harmonic_converges} implies that the projection of $\wh{\psi}_j^{\epsilon_k,\bsize,r}$ onto $\CH_2(\strip)$ tends to $0$ a.s.\ as $k \to \infty$ for a sequence $(\epsilon_k)$ of positive numbers that decreases to $0$ sufficiently quickly.

Note that for each fixed $j \in \N$ and $r \in \R$ there exists $\bsize_0 > 0$ (random) such that $\bsize \in (0,\bsize_0)$ implies that $\wh{X}^{j,r} = \wh{X}^{j,\bsize,r}$.  Indeed, recall that at this point in the proof we know that the bubbles which correspond to the $(\wh{X}^{j,\bsize,r})$ are conditionally independent given the $(\wh{X}^{j,\bsize,r})$.  Moreover, note that the probability that the bubble which corresponds to $\wh{X}^{j,\bsize,r}$ has $\gamma$-LQG mass at least $e^{\gamma r}$ is uniformly positive as $\bsize \to 0$.  Thus if the claim were not true, we would easily be led to a contradiction to the fact that the amount of $\gamma$-LQG mass in compact subsets of the origin is a.s.\ finite.

Sending $\bsize \to 0$, we get an asymptotic coupling where the excursions of~$Y$ from~$0$ which reach level at least $\exp(\tfrac{\gamma}{2} r)$ up until some time $s_u$ each correspond to a bounded component of $\h \setminus \wt{\eta}^{\wt{T}_u}$ which is to the left of $\wt{f}_{\wt{T}_u}(0)$.  Sending $r \to -\infty$, we get an asymptotic coupling where the excursions of~$Y$ from~$0$ up until some time $s_u$ each correspond to a component of $\h \setminus \wt{\eta}^{\wt{T}_u}$ which is to the left of $\wt{f}_{\wt{T}_u}(0)$ and this correspondence is bijective.  Note that~$Y$ encodes these bubbles from right to left. \qed

\subsection{Radial and whole-plane $\SLE_\kappa(\rho)$ processes}
\label{subsec::cone_bubbles}

We are now going to determine the law of the bubbles (viewed as quantum surfaces) which are cut off by radial and whole-plane $\SLE_\kappa(\rho)$ processes.

\begin{theorem}
\label{thm::radial_bubble_form}
Fix $\kappa \in (0,4)$, $\rho \in (-2,\tfrac{\kappa}{2}-2)$, $\wt{\rho}=\rho+4$, and let $\gamma = \sqrt{\kappa}$.  Suppose that
\[ h = \wh{h} - \frac{\gamma^2+6-\wt{\rho}}{2\gamma}\log|\cdot| + \frac{2-\wt{\rho}}{\gamma}\log|\cdot-1| = \wh{h} - \frac{\gamma^2+2-\rho}{2\gamma}\log|\cdot| - \frac{\rho+2}{\gamma}\log|\cdot-1|\]
where $\wh{h}$ is a free boundary GFF on $\D$.  Let $\eta$ be a radial $\SLE_\kappa(\rho)$ process in $\D$ starting from $1$ and targeted at $0$ with a single boundary force point of weight $\rho$ located at $1^+$.  Assume that $\eta$ is independent of $h$.  Let $(f_t)$ be the centered Loewner flow associated with $\eta$, let $(W,V)$ be its Loewner driving pair, let~$\ell_t$ be the local time at~$1$ of $V/W$ and $T_u = \inf\{t > 0 : \ell_t > u\}$ be its right-continuous inverse.   Fix $u > 0$ and assume that the additive constant for $h$ has been fixed so that the average of $h \circ f_{T_u}^{-1} + Q \log |(f_{T_u}^{-1})'|$ on $\partial B(0,1/2)$ is equal to $0$.  Then the law of the quantum surfaces parameterized by the components of $\D \setminus \eta([0,T_u])$ separated from $0$ by $\eta$, in the reverse order in which they were cut off from $0$ by $\eta$, is equal to those of a weight $\rho+2$ wedge up to a time~$s_u$ which tends to~$\infty$ in probability as $u \to \infty$.
\end{theorem}

As in the statement of Theorem~\ref{thm::skinny_wedge_bubble_structure}, in the statement of Theorem~\ref{thm::radial_bubble_form}, the time~$s_u$ is not deterministic because it is not determined by the capacity of the bubbles cut out by $\eta$ up to time $T_u$ which, in turn, depends on both the bubbles in addition to the outside surface.  We also emphasize that the particular way in which the additive constant is fixed in Theorem~\ref{thm::skinny_wedge_bubble_structure} is not important, as long as it is done so in a way which only depends on the field $h \circ f_{T_u}^{-1} + Q \log| (f_{T_u}^{-1})'|$.

We consider the following setup.  We suppose that we have a collection of pairs $(\wt{h}^t,\wt{\eta}^t)$ defined for $t \geq 0$ together with a centered reverse radial $\SLE_\kappa(\rho)$ Loewner flow $(\wt{f}_t)$ with driving process $(\wt{W},\wt{V})$ such that with $\wt{f}_{s,t} = \wt{f}_t \circ \wt{f}_s^{-1}$ we have that
\[  \wt{h}^s = \wt{h}^t \circ \wt{f}_{s,t} + Q \log|\wt{f}_{s,t}'| \quad\text{for each}\quad 0 \leq s \leq t < \infty \]
and $\wt{f}_{s,t}^{-1}(\wt{\eta}^t) = \wt{\eta}^s$.  As in the chordal case, the transformation from $(\wt{h}^s,\wt{\eta}^s)$ to $(\wt{h}^t,\wt{\eta}^t)$ for $t > s$ corresponds to zipping up for $t-s$ units of capacity time and the transformation from $(\wt{h}^s,\wt{\eta}^s)$ to $(\wt{h}^t,\wt{\eta}^t)$ for $t < s$ corresponds to unzipping for $s-t$ units of capacity time.  Letting $U_t = V_t/W_t$, for each time $t$, we have that $\wt{h}^t$ (modulo additive constant) is equal in distribution to 
\[ \wh{h} - \frac{\gamma^2+2-\rho}{2\gamma}\log|\cdot| - \frac{\rho+2}{\gamma} \log|\cdot-\wt{U}_t| \]
where $\wh{h}$ is a free boundary GFF on~$\D$.  If~$\wt{\ell}_t$ denotes the amount of local time that~$\wt{U}_t$ has spent at~$1$ up to time~$t$ and $\wt{T}_u = \inf\{t > 0 : \wt{\ell}_t > u\}$ is the right-continuous inverse of~$\wt{\ell}_t$ then we in fact have that $(\wt{h}^{\wt{T}_u},\wt{\eta}^{\wt{T}_u}) \stackrel{d}{=} (\wt{h}^0,\wt{\eta}^0)$ for all $u \geq 0$ where here we view the $\wt{h}^t$ as modulo additive constant distributions.  We fix the additive constant for $\wt{h}^0$ so that its average on $\partial B(0,1/2)$ is equal to $0$.  This in turn fixes the additive constant for $\wt{h}^t$ for all $t \geq 0$.

\begin{proof}[Proof of Theorem~\ref{thm::radial_bubble_form}]
With the setup described just above, this follows from the same argument used to prove Theorem~\ref{thm::skinny_wedge_bubble_structure}.  The only difference that we need to explain is why the analogs of the functions $\wh{\psi}_j^{\epsilon_k,\bsize,r}$ converge to $0$ as $k \to \infty$ (modulo a global additive constant) where $(\epsilon_k)$ is a sequence of positive numbers which decrease to $0$ sufficiently quickly.  We can apply the same argument as in Theorem~\ref{thm::skinny_wedge_bubble_structure} (using Proposition~\ref{prop::zoomed_in_picture_radial} in place of Proposition~\ref{prop::zoomed_in_picture}) to get the result for the bubbles whose terminal boundary segment of $\gamma$-LQG length $\epsilon$ terminates in $\partial \D$ but we cannot apply this argument directly in the case of a bubble where the terminal boundary segment of length $\epsilon$ terminates in $\D$.  (The latter correspond to self-intersection points of the path.)  To get that all of the $\wh{\psi}_j^{\epsilon_k,\bsize,r}$ converge to $0$ as $k \to \infty$ (modulo a global additive constant), we can apply the above argument to the setting in which we have unzipped the path to a given rational time and use that a.s.\ for each bubble there exists a rational time such that if we unzip to that time then the terminal segment of $\gamma$-LQG length $\epsilon$ of the boundary of the bubble will end in $\partial \D$.
\end{proof}

We will now deduce a statement about whole-plane $\SLE_\kappa(\rho)$ from Theorem~\ref{thm::radial_bubble_form}.  We assume that we have the same setup as described just above except we fix the additive constant for $\wt{h}^0$ (hence $\wt{h}^t$ for all $t \geq 0$) by taking it so that the amount of $\gamma$-LQG area  it assigns to $\D$ is equal to $1$.  Fix $t \geq 0$ and let $(\wh{h}^t,\wh{\eta}^t)$ be given by the pair $(\wt{h}^t,\wt{\eta}^t)$ rescaled by $e^t$.  Sending $t \to \infty$, we have that the law of the pair $(\wh{h}^t,\wh{\eta}^t)$ converges to the law of the pair $(\wh{h},\wh{\eta})$ where $\wh{h}$ (modulo additive constant) is a whole-plane GFF plus $-(\gamma^2+2-\rho)/(2\gamma) \log|\cdot|$ and $\wh{\eta}$ is a whole-plane $\SLE_\kappa(\rho)$ process from $\infty$ to~$0$.  (We assume that $\wh{\eta}$ is parameterized by capacity as seen from $0$.)  Moreover, $\wh{\eta}$ is independent of~$\wh{h}$ (viewed modulo additive constant) and the additive constant of~$\wh{h}$ is fixed so that the component of $\C \setminus \wt{\eta}((-\infty,0])$ containing the origin has LQG mass equal to~$1$.  Theorem~\ref{thm::radial_bubble_form} implies that the bubbles separated by~$\wh{\eta}$ from~$0$ before time~$0$, after time-reversal, have the law of a weight $\rho+2$ quantum wedge.  This implies that the bubbles separated from $\infty$ by the time-reversal of~$\wh{\eta}$, after it has separated $1$ unit of quantum mass from $\infty$, have the law of a weight $\rho+2$ quantum wedge.  Note that the time-reversal of $\wh{\eta}$ is a whole-plane $\SLE_\kappa(\rho)$ process from $0$ to $\infty$ by \cite{ms2013imag4}.

We have therefore obtained the following corollary from Theorem~\ref{thm::radial_bubble_form}.

\begin{corollary}
\label{cor::cone_bubble_form}
Fix $\kappa \in (0,4)$, $\rho \in (-2,\tfrac{\kappa}{2}-2)$, $\wt{\rho} = \rho+4$, and let $\gamma = \sqrt{\kappa}$.  Suppose that
\[ h = \wh{h} - \frac{\gamma^2+6-\wt{\rho}}{2\gamma}\log|\cdot| = \wh{h} - \frac{\gamma^2+2-\rho}{2\gamma}\log|\cdot|,\]
$\wh{h}$ a whole-plane GFF, and $\eta$ is a whole-plane $\SLE_\kappa(\rho)$ process from $0$ to $\infty$.  Assume that we sample $h$ and $\eta$ to be independent and then fix the additive constant for $h$ so that if $\wh{\eta}$ is the time-reversal of $\eta$ then the amount of LQG mass assigned to the component of $\C \setminus \wh{\eta}((-\infty,0])$ containing $0$ is equal to $1$.  Then the sequence of bubbles that $\eta$ separates from $\infty$ after it has separated $1$ unit of mass from $\infty$ has the law of a quantum wedge of weight $\rho + 2$.
\end{corollary}

\subsection{$\SLE_{\kappa'}$ processes with $\kappa' \in (4,8)$}
\label{subsec::sle_kappa_prime_bubbles}

We will now determine the structure of the bubbles (viewed as quantum surfaces) which arise when cutting along an $\SLE_{\kappa'}$ process.  The following theorem gives the analogs of Theorem~\ref{thm::skinny_wedge_bubble_structure}, Theorem~\ref{thm::radial_bubble_form}, and Corollary~\ref{cor::cone_bubble_form} in this case.  Throughout, we will use $\CM$ to denote the infinite measure on quantum disks as defined in Section~\ref{subsec::disks_and_spheres}.

\subsubsection{Statement}

\begin{theorem}
\label{thm::kappa_prime_bubbles}
Fix $\kappa' \in (4,8)$, let $\gamma = 4/\sqrt{\kappa'}$, and suppose that
\[ h = \wh{h} + \frac{2}{\sqrt{\kappa'}}\log|\cdot| = \wh{h} + \frac{\gamma}{2} \log|\cdot|\]
where $\wh{h}$ is a free boundary GFF on $\h$.  Let $\eta'$ be an $\SLE_{\kappa'}$ process in $\h$ starting from $0$ and targeted at~$\infty$ which is independent of~$h$ and let $(f_t)$ be the centered Loewner flow associated with~$\eta'$.  Fix $t > 0$ and assume that the additive constant for~$h$ has been fixed so that the average of $h \circ f_t^{-1} + Q\log|(f_t^{-1})'|$ on $\h \cap \partial \D$ is equal to~$0$.  We order the components of $\h \setminus \eta'([0,t])$ so that a component~$\bubble$ comes before another component~$\bubble'$ if~$\eta'$ finishes drawing all of~$\partial \bubble$ before it finishes drawing all of~$\partial \bubble'$.  The law of the time-reversal of the ordered sequence of components of $\h \setminus \eta'([0,t])$, each viewed as a quantum surface and which were completely cut off by~$\eta'$ before time~$t$, is equal to that of a \ppp\ $\Lambda$ with intensity measure
\[ du \otimes \frac{1}{\nu_h(\partial \strip)} d \diskmeasure(h),\]
where $du$ denotes Lebesgue measure on $\R_+$, up to a time which tends to $\infty$ in probability as $t \to \infty$.  The same also holds if we (with the additive constant fixed as in Theorem~\ref{thm::radial_bubble_form} and Corollary~\ref{cor::cone_bubble_form}):
\begin{itemize}
\item Take $\eta'$ to be a radial $\SLE_{\kappa'}$ process in $\D$ from $1$ to $0$ and
\[ h = \wh{h} + \frac{2}{\sqrt{\kappa'}}\log|\cdot-1| - \frac{\kappa'+6}{2\sqrt{\kappa'}}\log|\cdot|
     = \wh{h} + \frac{\gamma}{2}\log|\cdot-1| - \frac{3\gamma^2+8}{4\gamma}\log|\cdot|\]
where $\wh{h}$ is a free boundary GFF on $\D$ independent of $\eta'$.
\item Take $\eta'$ to be a whole-plane $\SLE_{\kappa'}$ process from~$\infty$ to~$0$ and
\[ h = \wh{h} - \frac{\kappa'+2}{2\sqrt{\kappa'}} \log|\cdot| = \wh{h} - \frac{\gamma^2+8}{4\gamma} \log|\cdot|\]
where $\wh{h}$ is a whole-plane GFF independent of $\eta'$.
\end{itemize}
\end{theorem}

\subsubsection{Proof ideas}

The proof of Theorem~\ref{thm::kappa_prime_bubbles} will proceed along lines which are similar to Theorem~\ref{thm::skinny_wedge_bubble_structure}, Theorem~\ref{thm::radial_bubble_form}, and Corollary~\ref{cor::cone_bubble_form}, though there will be a few key differences which we now highlight.

\begin{itemize}
\item As before, we will deduce the quantum surface structure of a bubble cut out by $\eta'$ by conformally mapping it to $\strip$.  In this case, each bubble which is completely separated from $\partial \h$ by $\eta'$ only comes equipped with a single marked point which is given by the first (equivalently past point) on its boundary which is visited by $\eta'$.  This is in contrast to the bubbles cut out by a boundary intersecting $\SLE_\kappa(\rho)$ curve, which have \emph{two} marked points.  In order to define a map to $\strip$, we will take the intermediate step of considering the law of the surfaces corresponding to the components of $\h \setminus \eta'$ weighted by their $\gamma$-LQG boundary length.  As shown in Lemma~\ref{lem::weighted_quantum_surface_boundary}, this is equivalent to adding a certain $\log$ singularity to the boundary of each component of $\h \setminus \eta'$.  This weighting will introduce some technicalities since we will then need to unweight the law.
\item Since we will be weighting the bubbles according to their $\gamma$-LQG boundary length, we will give a different definition of a large bubble which will require that the diameter is at least some given value.
\item When $\eta'$ is in the process of cutting out a given bubble, it will in the mean time cut out many other bubbles as the set of double points is dense in the range of $\eta'$.  (This is in fact the reason that the bubbles cut out by $\eta'$ have a tree structure.)  This means that for the reverse Loewner flow, in between the time a given bubble first appears and is being zipped in, infinitely many other bubbles will appear.
\end{itemize}

\subsubsection{Setup and stopping times}

The setup for the proof of Theorem~\ref{thm::kappa_prime_bubbles} is similar to that of Theorem~\ref{thm::skinny_wedge_bubble_structure}.  Let $(\wt{f}_t)$ denote the centered reverse Loewner flow associated with a reverse $\SLE_{\kappa'}$ process.  For each $s \leq t$, we also let $\wt{f}_{s,t} = \wt{f}_t \circ \wt{f}_s^{-1}$.  As in the proof of Lemma~\ref{lem::zip_up_sle_kappa_rho_law}, it is not hard to see that we can construct a family of fields $(\wt{h}^t)$ such that, for each $t$, $\wt{h}^t$ can be expressed as the sum of a free boundary GFF on $\h$ plus $\tfrac{\gamma}{2}\log|\cdot|$ which satisfy
\[ \wt{h}^s = \wt{h}^t \circ \wt{f}_{s,t} + Q \log|\wt{f}_{s,t}'| \quad\text{for each} \quad 0 \leq s \leq t < \infty.\]
Let $\eta'$ be an $\SLE_{\kappa'}$ process independent of $h = \wt{h}^0$ and $(\wt{f}_t)$ and, for each $t > 0$, we let $(\wt{\eta}')^t$ be the curve associated with $\wt{f}_t(\partial \h \cup \eta')$.  Note that $(\wt{\eta}')^t$ has the law of an $\SLE_{\kappa'}$ process for each $t \geq 0$.

We fix the additive constant for $\wt{h}^0$, hence $\wt{h}^t$ for all $t \geq 0$, by setting its average on $\h \cap \partial \D$ to be equal to $0$.

Fix $\epsilon > 0$.  We define stopping times inductively as follows.  We let $\wt{\tau}_1^\epsilon$ be the first time $t$ that the diameter of a partially zipped in bubble by the reverse Loewner flow at time $t$ is at least $\epsilon$ (i.e., the connected components of $\h \setminus \eta'$ are not counted).  Let~$\wt{\bubble}_1^\epsilon$ be the corresponding bubble.  Assuming that $\wt{\tau}_j^\epsilon$ has been defined for $1 \leq j \leq k$, some $k \in \N$, we let $\wt{\tau}_{k+1}^\epsilon$ be the first time $t$ after time $\wt{\tau}_k^\epsilon$ that the diameter of a partially zipped in bubble, distinct from those corresponding to $\wt{\tau}_1^\epsilon,\ldots,\wt{\tau}_k^\epsilon$, is at least $\epsilon$.  Let $\wt{\bubble}_{k+1}^\epsilon$ be the corresponding bubble.

For each $\wt{\bubble}$ which is not fully zipped in, we take $x_{\wt{\bubble}}$ to be the endpoint of the interval corresponding to the interior of $\partial \wt{\bubble} \cap \partial \h$ which is closest to $0$.  We let $y_{\wt{\bubble}_j^\epsilon}$ be a point independently picked in the interior of $\partial \wt{\bubble}_j^\epsilon \cap \partial \h$ with density as in Lemma~\ref{lem::weighted_quantum_surface_boundary} with respect to Lebesgue measure.  Let $G_j^\epsilon(u,v)$ be the Green's function on $\wt{\bubble}_j^\epsilon$ with Neumann boundary conditions on $\partial \wt{\bubble}_j^\epsilon \cap \partial \h$ and Dirichlet boundary conditions on $\partial \wt{\bubble}_j^\epsilon \setminus \partial \h$.  By Lemma~\ref{lem::weighted_quantum_surface_boundary}, the law of $(\wt{\bubble}_j^\epsilon, \wt{h}^{\wt{\tau}_j^\epsilon}+\tfrac{\gamma}{2} G_j^\epsilon(y_{\wt{\bubble}_j^\epsilon},\cdot))$ (viewed as a quantum surface) is equal to the law of $(\wt{\bubble}_j^\epsilon,\wt{h}^{\wt{\tau}_j^\epsilon})$ weighted by the $\gamma$-LQG boundary length of the interior of $\partial \wt{\bubble}_j^\epsilon \cap \partial \h$.

Let $G(u,v)$ be the Green's function on $\strip$ with Neumann boundary conditions on $\partial \strip \setminus (-\infty,0]$ and Dirichlet boundary conditions on $(-\infty,0]$ and let $G(u) = \lim_{v \to \infty} G(u,v)$; this limit exists by Lemma~\ref{lem::harmonic_strip_constant}.  For each $j \in \N$ and $\epsilon > 0$, we let
\begin{itemize}
\item $\wt{\varphi}_j^\epsilon \colon \wt{\bubble}_j^\epsilon \to \strip$ be the unique conformal transformation which takes $x_{\wt{\bubble}_j^\epsilon}$ to $-\infty$ and $y_{\wt{\bubble}_j^\epsilon}$ to $+\infty$ with the horizontal translation fixed so that $\partial \wt{\bubble}_j^\epsilon \cap \h$ is mapped to $(-\infty,0]$.
\item $\wh{h}_j^\epsilon = \wt{h}^{\wt{\tau}_j^\epsilon} \circ (\varphi_j^\epsilon)^{-1} + \tfrac{\gamma}{2} G(u)  + Q\log|( (\varphi_j^\epsilon)^{-1})'|$.
\item $\wh{X}^{j,\epsilon}$ be the projection of $\wh{h}_j^\epsilon$ onto $\CH_1(\strip)$ (with fixed additive constant).
\end{itemize}

By Lemma~\ref{lem::weighted_quantum_surface_boundary}, $(\strip,\wh{h}_j^\epsilon)$ viewed as a quantum surface has the law of $(\wt{\bubble}_j^\epsilon,\wt{h}^{\wt{\tau}_j^\epsilon})$ weighted by the $\gamma$-LQG boundary length of the interior of $\partial \wt{\bubble}_j^\epsilon \cap \partial \h$.

For each $j$ and $\wt{\epsilon} > 0$, let $\wh{u}_j^{\epsilon,\wt{\epsilon}} \in \R$ be such that the $\gamma$-LQG length measure associated with $\wh{h}_j^\epsilon$ assigns mass $\wt{\epsilon}$ to $(-\infty,\wh{u}_j^{\epsilon,\wt{\epsilon}}]$; we take $\wh{u}_j^{\epsilon,\wt{\epsilon}} = +\infty$ if the $\gamma$-LQG length of $\R$ is smaller than $\wt{\epsilon}$.  We let $\wh{\psi}_j^{\epsilon,\wt{\epsilon}}$ be the function which is harmonic on $[\wh{u}_j^{\epsilon,\wt{\epsilon}},\infty) \times [0,\pi]$ with Neumann boundary conditions on the horizontal part of the strip boundary (and at $+\infty$) and Dirichlet boundary conditions on $\wh{u}_j^{\epsilon,\wt{\epsilon}} + [0,i\pi]$ given by the values of $\wh{h}_j^\epsilon$.

We similarly let $\wh{\psi}_j^{\epsilon}$ be the function which is harmonic on $\strip_+$ with Neumann boundary conditions on $\partial \strip_+ \setminus [0,i\pi]$ (in particular, also at $+\infty$) and Dirichlet boundary conditions on $[0,i\pi]$ equal to those of $\wh{h}_j^\epsilon$ on $[0,i\pi]$.

We next record the following analog of Lemma~\ref{lem::harmonic_converges} for the present setting.
\begin{lemma}
\label{lem::psi_j_kp_constant}
Fix $j \in \N$ and $\epsilon > 0$.  There exists a sequence $(\wt{\epsilon}_k)$ of positive numbers with $\wt{\epsilon}_k \to 0$ as $k \to \infty$ such that $\wh{\psi}_j^{\epsilon,\wt{\epsilon}_k}$ a.s.\ converges to the $0$ function on $\strip_+$ as $k \to \infty$ with respect to the topology of local uniform convergence modulo a global additive constant.
\end{lemma}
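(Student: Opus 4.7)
The plan is to follow the structure of the proof of Lemma~\ref{lem::harmonic_converges}, replacing Proposition~\ref{prop::zoomed_in_picture} by its $\SLE_{\kappa'}$ analogue provided by Remark~\ref{rem::sle_kappa_prime_bubble_closing_point} (via $\SLE$ duality and Proposition~\ref{prop::zoomed_in_picture_general_rho}). By Lemma~\ref{lem::harmonic_strip_constant} it suffices to show that, in the topology of local uniform convergence on $\strip_+$ modulo a global additive constant, the law of $\wt{\psi}_j^{\epsilon,\wt{\epsilon}}$ converges to the zero function as $\wt{\epsilon} \to 0$; a standard subsequence extraction will then upgrade this to the almost sure convergence asserted in the lemma.

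First I would fix a rational $q \in \R \setminus \{0\}$ and restrict attention to the positive probability event $\{q \in \mathrm{int}(\partial \bubble_j^\epsilon \cap \partial \h)\}$. A countability argument, covering the relevant interval with a rational grid of such $q$, reduces the lemma to this event. Under $\varphi_j^\epsilon$ the endpoint $x_{\bubble_j^\epsilon}$ corresponds to $-\infty$ in $\strip$, so $u_j^{\epsilon,\wt{\epsilon}} \to -\infty$ as $\wt{\epsilon} \to 0$, and the line $u_j^{\epsilon,\wt{\epsilon}} + [0,i\pi]$ corresponds under $(\varphi_j^\epsilon)^{-1}$ to a tiny crosscut in $\bubble_j^\epsilon$ enclosing $x_{\bubble_j^\epsilon}$. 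In the forward direction $x_{\bubble_j^\epsilon}$ is the point where $\eta'$ begins the excursion from $\partial \h$ carving out the bubble; by the reversibility of $\SLE_{\kappa'}$ for $\kappa' \in (4,8)$ established in \cite{ms2013imag4}, its local picture is distributed as the local picture at an excursion \emph{ending} point, which by Remark~\ref{rem::sle_kappa_prime_bubble_closing_point} is described by a prescribed pair of GFF flow lines.

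The key step is the rescaling/independence argument: rescale about $x_{\bubble_j^\epsilon}$ by $\delta^{-1}$ and send $\delta \to 0$. The rescaled path pair converges in law to the pair of GFF flow lines identified above, while the rescaled field $h^\delta$ converges in law (modulo additive constant) to an independent free boundary GFF on $\h$ by the scale and translation invariance of the free boundary GFF together with the independence of $h$ and $\eta'$; hence the triple (path pair, field) converges jointly. Transporting this through an intermediate family of conformal maps $\wt{\varphi}_t^{\wt{\epsilon}}$ constructed as in the proof of Lemma~\ref{lem::harmonic_converges} --- that is, unzipping only $t$ units of capacity time of the excursion rather than the entire excursion, and sending $t, \wt{\epsilon} \to 0$ with $\wt{\epsilon}$ vanishing sufficiently rapidly relative to $t$ --- we conclude that the boundary values of $\wt{h}_j^\epsilon$ on $u_j^{\epsilon,\wt{\epsilon}} + [0,i\pi]$, viewed modulo additive constant, are tight in $\wt{\epsilon}$, with any subsequential limit coinciding with the restriction of a free boundary GFF on $\strip$ to a vertical line.

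Finally, the Poisson kernel on $\strip$ from any compact subset $K \subseteq \strip_+$ to the line $u + [0,i\pi]$ decays exponentially in $|u|$ as $u \to -\infty$. Combined with tightness of the boundary fluctuations about their average, this forces $\wt{\psi}_j^{\epsilon,\wt{\epsilon}}$ minus its average on $u_j^{\epsilon,\wt{\epsilon}} + [0,i\pi]$ to converge in probability to zero uniformly on $K$, which upon passing to a subsequence gives the almost sure convergence claimed. The main obstacle I anticipate is the careful verification that $x_{\bubble_j^\epsilon}$ can be treated as a ``generic'' point of $\eta'$ for the purposes of the zoom-in argument, given that for $\kappa' \in (4,8)$ the path $\eta'$ is non-simple and revisits boundary points; however, this is handled in the standard way by first conditioning on the rational $q$ event above and then invoking the absolute continuity argument used in Proposition~\ref{prop::zoomed_in_picture_general_rho}.
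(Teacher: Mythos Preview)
Your proposal is essentially correct and follows the same approach as the paper, which simply states that the argument of Lemma~\ref{lem::harmonic_converges} goes through with Remark~\ref{rem::sle_kappa_prime_bubble_closing_point} substituted for Proposition~\ref{prop::zoomed_in_picture}. You have filled in the details the paper omits.

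One minor point: your invocation of the reversibility of $\SLE_{\kappa'}$ is an unnecessary detour. The point $x_{\bubble_j^\epsilon}$ is, in the forward picture for $(\eta')^{\tau_j^\epsilon}$ (which by the stationarity of the reverse coupling is itself distributed as an $\SLE_{\kappa'}$), already of the type addressed by Remark~\ref{rem::sle_kappa_prime_bubble_closing_point}: it is a point on $\partial\h$ at which the outer boundary of $\eta'$ returns to the real line, separating a region from $\infty$. The remark handles this directly via $\SLE$ duality and Proposition~\ref{prop::zoomed_in_picture_general_rho} applied to the right boundary (an $\SLE_\kappa$-type curve), so no time-reversal of $\eta'$ is required. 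Apart from this small simplification, the rescaling-plus-independence argument and the Poisson kernel decay step you outline are exactly what is needed, and your countability reduction via rational $q$ plays the same role as the rational $u$ in the proof of Lemma~\ref{lem::harmonic_converges}.
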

\begin{proof}
This follows from an argument which is very similar to that given in Lemma~\ref{lem::harmonic_converges} (using Remark~\ref{rem::sle_kappa_prime_bubble_closing_point} in place of Proposition~\ref{prop::zoomed_in_picture}), so we will omit the details.
\end{proof}

Fix $\bsize > 0$ and $r \in \R$.  Let $\wh{\varsigma}_j^{\epsilon,r} = \inf\{ u \in \R : \wh{X}_u^{j,\epsilon} = r\}$.  For each $k$, let $j_k$ be the $k$th index $j$ such that both
\begin{itemize}
\item $\wh{\varsigma}_j^{\epsilon,r} \in [0,\infty)$ and
\item the total variation distance between the conditional law of the projection of $\wh{h}_j^{\epsilon}(\cdot+\wh{\varsigma}_j^{\epsilon,r})$ onto $\CH_2(\strip)$ restricted to $\strip_+$ given $\wh{\psi}_j^\epsilon(\cdot + \wh{\varsigma}_j^{\epsilon,r})$ and the corresponding projection of a free boundary GFF on $\strip$ is at most $\bsize$.
\end{itemize}
(We emphasize that Lemma~\ref{lem::psi_j_kp_constant} implies that there exists such times.)  We then let~$\wt{\bubble}_k^{\epsilon,\bsize,r} = \wt{\bubble}_{j_k}^\epsilon$.  We also let~$\wh{X}^{k,\epsilon,\bsize,r}$, $\wh{h}_k^{\epsilon,\bsize,r}$, and $\wh{\psi}_k^{\epsilon,\bsize,r}$ be respectively given by~$\wh{X}^{{j_k},\epsilon}$, $\wh{h}_{j_k}^\epsilon$, and $\wh{\psi}_{j_k}^\epsilon$ with the horizontal translation for each chosen so that $\wh{X}^{k,\epsilon,\bsize,r}$ first reaches $r$ at $u=0$.  We also let $\wh{w}_k^{\epsilon,r} = - \wh{\varsigma}_{j_k}^{\epsilon,r}$.  Then $\partial_k^{\epsilon,r} \strip = \partial \strip \setminus (-\infty,\wh{w}_k^{\epsilon,r}]$ gives the of image of the part of the boundary of the corresponding bubble which lies in $\partial \h$.

Note that Lemma~\ref{lem::psi_j_kp_constant} implies that for every given bubble generated by the reverse Loewner flow and $\bsize > 0$ there exists $\epsilon_0 >0$ and $r_0 \in \R$ such that for all $\epsilon \in (0,\epsilon_0)$ and $r \leq r_0$ we have that the bubble will be part of the sequence $(\wh{h}_k^{\epsilon,\bsize,r})$ (of course, the index $j$ associated with any fixed bubble is changing as $\epsilon \to 0$).

\subsubsection{Form of the drift}

We are now going to identify the law of the sequence $(\wh{X}^{k,\epsilon,\bsize,r})$.

\begin{proposition}
\label{prop::kappa_prime_large_bubble_form}

Let 
\[ a =  \gamma - Q = \frac{\gamma}{2}-\frac{2}{\gamma}.\]
Let $\wh{B}_{2t}^{j,\epsilon,\bsize,r} = \wh{X}_t^{j,\epsilon,\bsize,r} - at$.  Given $\wh{\psi}_j^{\epsilon,\bsize,r}$, $\wh{B}^{j,\epsilon,\bsize,r}$ is a standard Brownian motion with $\wh{B}_0^{j,\epsilon,\bsize,r} = r$.
\end{proposition}
\begin{proof}
The proof is analogous to that of Proposition~\ref{prop::line_average_law}.
\end{proof}

\subsubsection{Controlling the dependency}
\label{subsubsec::kp_dependency}

Throughout the rest of this subsection, we let $\delta = 4 - \tfrac{8}{\gamma^2} = 4 - \tfrac{\kappa'}{2}$ (as in the statement of Theorem~\ref{thm::kappa_prime_bubbles}).  For each $r \in \R$, we let $E^r$ be the set of distributions on $\strip$ whose projection onto $\CH_1(\strip)$ (with fixed additive constant) has supremum which is at least $r$.  Note that $\diskmeasure(E^r) \in (0,\infty)$ (recall Remark~\ref{rem::bessel_ito_excursion}).

\begin{lemma}
\label{lem::total_var_bound}
For each $k \in \N$, the total variation distance between the conditional law of $\wh{h}_k^{\epsilon,\bsize,r}$ given $\wh{\psi}_k^{\epsilon,\bsize,r}$ and the sample from $\diskmeasure$ conditional on $E^r$ (where we take the horizontal translation for samples from the latter law so that the projection onto $\CH_1(\strip)$ (with fixed additive constant) first hits~$r$ at $u=0$) and restricted to~$\strip_+$ is at most~$\bsize$.
\end{lemma}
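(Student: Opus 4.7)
The plan is to compare the two laws on fields on $\strip_+$ via the orthogonal splitting $H(\strip)=\CH_1(\strip)\oplus\CH_2(\strip)$ from Lemma~\ref{lem::strip_spaces_orthogonal}, and to show that both have matching product structures with identical $\CH_1(\strip)$-marginals and $\CH_2(\strip)$-marginals that lie within $\bsize$ in total variation.

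First I will describe the law of $\CM_\delta^{4\gamma^{-1}}$ conditional on $E^r$, horizontally translated so the $\CH_1(\strip)$-projection first hits $r$ at $u=0$, and restricted to $\strip_+$. By Definition~\ref{def::poisson_surface_law} the $\CH_1(\strip)$-projection is obtained by parameterizing $4\gamma^{-1}\log e$, with $e$ a $\bes^\delta$-excursion, to have quadratic variation $2\,du$. Applying Proposition~\ref{prop::bessel_exponential_bm} together with the strong Markov property of a $\bes^\delta$-process at the first hitting time of $\exp(\gamma r/4)$, the resulting projection on $\{u\geq 0\}$ evolves as $B_{2u}+au$ starting from $r$, where $a=(\delta-2)\gamma/4=\gamma-Q$ (using $\delta=4-8/\gamma^2$). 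By Definition~\ref{def::poisson_surface_law} the $\CH_2(\strip)$-projection is independent of this and equal in law to the corresponding projection of a free boundary GFF on $\strip$. On the other side of the comparison, Proposition~\ref{prop::kappa_prime_large_bubble_form} and the normalization $\wt X_0^{k,\epsilon,\bsize,r}=r$ give that the $\CH_1(\strip)$-projection of $\wt h_k^{\epsilon,\bsize,r}$ on $\{u\geq 0\}$ is the \emph{same} Brownian motion $B_{2u}+au$ starting from $r$, while the very criterion defining the index $j_k$ ensures that the conditional law of the $\CH_2(\strip)$-projection on $\strip_+$ given $\wt\psi_k^{\epsilon,\bsize,r}$ lies within total variation $\bsize$ of the corresponding free boundary GFF projection.

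The remaining ingredient is conditional independence of the two projections given $\wt\psi_k^{\epsilon,\bsize,r}$. Up to the deterministic harmonic correction $\tfrac{\gamma}{2}G(u)$ and the $Q\log|\,\cdot\,|$ coordinate-change term, $\wt h_k^{\epsilon,\bsize,r}$ is the image of $h^{\tau_j^\epsilon}$ under a conformal map, and $h^{\tau_j^\epsilon}$ is a free boundary GFF plus a deterministic $\log$ singularity at the origin; the boundary-length weighting from Lemma~\ref{lem::weighted_quantum_surface_boundary} adds a further deterministic Neumann $\log$-term at the chosen marked boundary point. Consequently, given $\wt\psi_k^{\epsilon,\bsize,r}$, the remainder $\wt h_k^{\epsilon,\bsize,r}-\wt\psi_k^{\epsilon,\bsize,r}$ on $\strip_+$ is a centered GFF with Dirichlet boundary values on $[0,i\pi]$ and free boundary conditions on $\partial\strip_+\setminus[0,i\pi]$, for which the $\CH_1(\strip)$- and $\CH_2(\strip)$-projections are independent by orthogonality. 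Combining these three points, both joint laws on $\strip_+$ factor as the product of their $\CH_1(\strip)$- and $\CH_2(\strip)$-marginals; the $\CH_1(\strip)$-marginals coincide exactly and the $\CH_2(\strip)$-marginals agree up to $\bsize$ in total variation, so the triangle inequality for total variation gives the required bound by~$\bsize$.

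The main obstacle is the conditional-independence step: although each ingredient in the construction of $\wt h_k^{\epsilon,\bsize,r}$ (selection of a bubble, conformal map, boundary-length weighting, horizontal shift, reverse Loewner evolution) preserves the Gaussian-plus-deterministic-harmonic structure, one must track these operations carefully to confirm that the Markov/orthogonality property survives, particularly because the choice of $j_k$ is random and depends on the field in a nontrivial way through the total-variation selection criterion. Once this is in hand, the matching of product laws is essentially immediate.
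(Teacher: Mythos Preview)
Your proposal is correct and follows essentially the same approach as the paper: match the $\CH_1(\strip)$-projections exactly using Proposition~\ref{prop::kappa_prime_large_bubble_form}, and invoke the defining property of the index $j_k$ to get the $\bsize$ bound on the $\CH_2(\strip)$-projections. The paper's proof is considerably terser and leaves the conditional-independence/product-structure step implicit; your explicit discussion of this point is a reasonable elaboration, though your concern about the random selection of $j_k$ is somewhat overcautious---the selection criterion is a condition on the \emph{conditional law} given $\wt\psi_j^\epsilon$, hence is $\wt\psi_j^\epsilon$-measurable, so conditioning on $\wt\psi_k^{\epsilon,\bsize,r}$ already absorbs the selection and the remaining field is governed by the standard GFF Markov property.
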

\begin{proof}
By Proposition~\ref{prop::kappa_prime_large_bubble_form}, we know that the conditional law given $\wh{\psi}_k^{\epsilon,\bsize,r}$ of the projection of $\wh{h}_k^{\epsilon,\bsize,r}$ onto $\CH_1(\strip)$ (with fixed additive constant) is the same as the corresponding projection for a sample chosen from $\diskmeasure$ given $E^r$ (with the horizontal translation chosen as in the statement).  Therefore it is just a matter of showing that the conditional law given $\wh{\psi}_k^{\epsilon,\bsize,r}$ of the projection of $\wh{h}_k^{\epsilon,\bsize,r}$ onto $\CH_2(\strip)$ and restricted to $\strip_+$ has total variation distance at most $\bsize$ from the law of the corresponding projection of sample chosen from $\diskmeasure$ conditional $E^r$.  This, in turn, follows from the definition of $\wh{h}_k^{\epsilon,\bsize,r}$ since the latter is given by the law of the projection of a free boundary GFF on $\strip$ onto $\CH_2(\strip)$ and then restricted to~$\strip_+$.
\end{proof}

For each $c > 0$, we let $E_c^r$ be the set of elements in $E^r$ whose associated $\gamma$-LQG boundary measure assigns mass at least $c$ to $\partial \strip \cap \partial \strip_+$, mass at most $c^{-1} e^r$ to $\partial \strip \setminus \partial \strip_+$, and whose associated $\gamma$-LQG area measure assigns mass at least~$c$ to~$\strip_+$.  In what follows, when we refer to the $\gamma$-LQG boundary length of $\partial \strip_+$ (either in words or using the notation $\nu_h(\partial \strip_+)$) we mean the $\gamma$-LQG boundary length of $\partial \strip \cap \partial \strip_+$.  In particular, we do not want to count the length of $[0,i\pi]$.  Let~$\mu_k^{\epsilon,\bsize,r}$ denote the conditional law of~$\wh{h}_k^{\epsilon,\bsize,r}$ given~$\wh{\psi}_k^{\epsilon,\bsize,r}$.

\begin{lemma}
\label{lem::total_var_bound2}
The total variation distance between $(\nu_h(\partial \strip_+))^{-1}d\diskmeasure(h)$ conditional on $E_c^r$ (with the horizontal translation chosen as in Lemma~\ref{lem::total_var_bound} and restricted to $\strip_+$) and $(\nu_h(\partial \strip_+))^{-1} d\mu_k^{\epsilon,\bsize,r}(h)$ conditional on $E_c^r$ and given $\wh{\psi}_k^{\epsilon,\bsize,r}$ (and restricted to $\strip_+$) is $O(\bsize)$ where the implicit constants depend on $r$ and $c$ but not $\bsize$ or $\epsilon$.
\end{lemma}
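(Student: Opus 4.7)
The plan is to derive the bound from Lemma~\ref{lem::total_var_bound} by tracking how the total variation distance changes under the weighting $(\nu_h(\partial \strip_+))^{-1}$ and the conditioning on $E_C^r$. First observe that on $E_C^r$ the weighting factor is uniformly bounded above by $1/C$. Next decompose $E_C^r = E_C^{r,+} \cap E_C^{r,-}$, where $E_C^{r,+}$ collects the two conditions $\nu_h(\partial \strip \cap \partial \strip_+) \geq C$ and $\mu_h(\strip_+) \geq C$ that are $\sigma(h|_{\strip_+})$-measurable, while $E_C^{r,-}$ is the remaining condition $\nu_h(\partial \strip \setminus \partial \strip_+) \leq C^{-1} e^r$, which depends on $h|_{\strip \setminus \strip_+}$. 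For each of the two measures $P = \CM_\delta^{4\gamma^{-1}}$ (conditioned on $E^r$ with the translation fixed as in Lemma~\ref{lem::total_var_bound}) and $Q = \mu_k^{\epsilon,\bsize,r}$, I will integrate out $E_C^{r,-}$ by setting $p_M(\phi) = M(E_C^{r,-} \mid h|_{\strip_+} = \phi)$, so that the $\strip_+$-marginal of the weighted and conditioned measure becomes proportional to $\Phi_M(\phi) \, dM|_{\strip_+}(\phi)$, where $\Phi_M(\phi) = (\nu_\phi(\partial \strip_+))^{-1} \one_{E_C^{r,+}}(\phi) p_M(\phi)$.

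The main obstacle will be to show that $p_P$ and $p_Q$ agree, or are at least $L^1$-close under $P|_{\strip_+}$. Here I will use the Markov property of the free boundary GFF across the vertical seam $[0,i\pi]$: conditional on $h|_{\strip_+}$, the $\CH_2$ component of $h|_{\strip \setminus \strip_+}$ is, under both measures, a Dirichlet GFF on $\strip \setminus \strip_+$ with boundary data on $[0,i\pi]$ determined by $\phi$, and this conditional law is identical under $P$ and $Q$. The $\CH_1$ component on $\strip \setminus \strip_+$, viewed backward in time from $u = 0$, starts at the common value $r$ and remains strictly below $r$ for $u<0$ in both cases (by the choice of translation); a comparison via the techniques of Proposition~\ref{prop::kappa_prime_large_bubble_form}, together with Lemma~\ref{lem::psi_j_kp_constant} applied to the negative-$u$ half of the strip, should show that the laws of this backward extension under $P$ and $Q$ are mutually absolutely continuous, with Radon--Nikodym derivative close to $1$ on the events relevant to $E_C^{r,-}$.

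Once pointwise closeness of $\Phi_P$ and $\Phi_Q$ is established, the bound $\|P|_{\strip_+} - Q|_{\strip_+}\|_{\mathrm{TV}} \leq \bsize$ from Lemma~\ref{lem::total_var_bound}, combined with $\|\Phi_M\|_\infty \leq 1/C$, will yield $\|\Phi_P \, dP|_{\strip_+} - \Phi_Q \, dQ|_{\strip_+}\|_{\mathrm{TV}} = O(\bsize)$ with implicit constant depending only on $C$. The normalization constants $Z_M = \int \Phi_M \, dM|_{\strip_+}$ are bounded below uniformly in $\bsize$ and $\epsilon$ by a constant depending only on $r$ and $C$, since both $P$ and $Q$ assign uniformly positive mass to $E_C^r$ (using the explicit description of the $\CH_1$ projections from Proposition~\ref{prop::kappa_prime_large_bubble_form} together with integrability of the $\CH_2$ noise). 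Applying the standard inequality controlling the total variation distance between normalized weighted probability measures will then give the $O(\bsize)$ bound, with implicit constant depending only on $r$ and $C$ as asserted.
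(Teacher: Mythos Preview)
The paper's proof is much more direct: it records three elementary inequalities showing that for any bounded nonnegative $f$ and probability measures $\mu_1,\mu_2$ with $\mu_2\ll\mu_1$, the normalized weighted measures $\bar\nu_i \propto f\,d\mu_i$ satisfy $\|\bar\nu_1-\bar\nu_2\|_{\mathrm{TV}}\leq \tfrac{2}{N_2}(\sup f)\,\|\mu_1-\mu_2\|_{\mathrm{TV}}$. It then applies this directly with $f=(\nu_h(\partial\strip_+))^{-1}\one_{E_C^r}$ (so $\sup f\leq C^{-1}$) and the bound $\|\mu_1-\mu_2\|_{\mathrm{TV}}\leq\bsize$ from Lemma~\ref{lem::total_var_bound}, noting that the normalizing constants are bounded below uniformly in $\bsize$ and $\epsilon$. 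There is no decomposition of $E_C^r$ and no comparison of conditional laws on $\strip_-$ at all.

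Your proposal, by contrast, hinges on showing $p_P\approx p_Q$, and the justification you sketch for this step does not hold. Under $Q=\mu_k^{\epsilon,\bsize,r}$ the field on $\strip\setminus\strip_+$ is the conformal image of a field living on an $\SLE_{\kappa'}$ bubble, and part of the boundary there (namely $(-\infty,w_k^{\epsilon,r}]$) corresponds to the $\SLE_{\kappa'}$ curve rather than to $\partial\h$; neither the $\CH_1$ nor the $\CH_2$ component on $\strip\setminus\strip_+$ has the free-boundary-GFF structure you assert, so the conditional laws of $h|_{\strip\setminus\strip_+}$ given $h|_{\strip_+}$ under $P$ and under $Q$ are genuinely different. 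The tools you invoke do not help: Lemma~\ref{lem::psi_j_kp_constant} is a statement about the harmonic function $\wt\psi_j^{\epsilon,\wt\epsilon}$ on $\strip_+$ and says nothing about $\strip\setminus\strip_+$, and Proposition~\ref{prop::kappa_prime_large_bubble_form} only describes $\wt X^{k,\epsilon,\bsize,r}_u$ for $u\geq 0$. There is no mechanism here to force the relevant Radon--Nikodym derivative close to $1$, so the argument does not close as written. If you want to pursue this route, you would need an independent way to control $\int|p_P-p_Q|\,dQ|_{\strip_+}$; the paper sidesteps this entirely by working with a single bounded weight and the abstract inequality above.
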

\begin{proof}
We begin with three observations.  First, suppose that $\mu_1,\mu_2$ are measures on some space $\CX$ with $\mu_2$ absolutely continuous with respect to $\mu_1$.  Suppose that $f$ is a positive function on $\CX$ and, for $i=1,2$, let $\nu_i$ be the measure whose Radon-Nikodym derivative with respect to $\mu_i$ is $f$.  Using that $d\nu_2/d\nu_1 = d\mu_2/d\mu_1$, we have that
\begin{align*}
   \| \nu_1 - \nu_2 \|_{TV}
&= \int \left| 1 - \frac{d\nu_2}{d\nu_1} \right| d\nu_1
 \leq \left(\sup_{x \in \CX} f(x)\right) \int \left| 1 - \frac{d\mu_2}{d\mu_1}\right| d\mu_1\\
&=  \left(\sup_{x \in \CX} f(x)\right) \| \mu_1 - \mu_2 \|_{TV}.
\end{align*}
For $i=1,2$, let $N_i = \nu_i(\CX)$ be the total amount of $\nu_i$ mass, assume that $N_i \in (0,\infty)$, and let $\ol{\nu}_i = \nu_i/N_i$.  Second, we have that:
\begin{align*}
       \| \ol{\nu}_1 - \ol{\nu}_2 \|_{TV}
&\leq \frac{1}{N_2}\left|N_1-N_2\right| + \frac{1}{N_2} \|\nu_1 - \nu_2\|_{TV}.
\end{align*}
Third, we have that
\begin{align*}
   |N_1 - N_2|
&= \left| \int f(x) d\mu_1(x) - \int f(x) d\mu_2(x) \right|
 \leq \left(\sup_{x \in \CX} f(x) \right) \| \mu_1 - \mu_2 \|_{TV}.
\end{align*}
Combining all three observations, we have
\[ \| \ol{\nu}_1 - \ol{\nu}_2 \|_{TV} \leq \frac{2}{N_2}\left(\sup_{x \in \CX} f(x)\right) \| \mu_1 - \mu_2\|_{TV}.\]
Combining this with Lemma~\ref{lem::total_var_bound} implies the result because the probability that each assigns to $E_c^r$ is uniformly bounded from below as $\bsize \to 0$ (in this context, $\sup_{x \in \CX} f(x) \leq c^{-1}$).
\end{proof}

\begin{lemma}
\label{lem::total_var_bound3}
The total variation distance between $(\nu_h(\partial \strip))^{-1}d\diskmeasure(h)$ conditional on $E_c^r$ (with the horizontal translation chosen as in Lemma~\ref{lem::total_var_bound2} and restricted to $\strip_+$) and $(\nu_h(\partial_k^{\epsilon,r} \strip))^{-1} d\mu_k^{\epsilon,\bsize,r}(h)$ conditional on $E_c^r$ and given $\wh{\psi}_k^{\epsilon,\bsize,r}$ (and restricted to $\strip_+$) is $O(\bsize) + O(c^{-2} e^r)$ where the implicit constants in the first summand depend on $r$ and $c$ but not $\bsize$ or $\epsilon$ and the implicit constants in the second summand are uniform.
\end{lemma}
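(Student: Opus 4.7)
The plan is to deduce Lemma~\ref{lem::total_var_bound3} from Lemma~\ref{lem::total_var_bound2} by controlling how the change of weighting from $(\nu_h(\partial \strip_+))^{-1}$ to either $(\nu_h(\partial \strip))^{-1}$ or $(\nu_h(\partial_k^{\epsilon,r} \strip))^{-1}$ perturbs the underlying probability measures, using only the geometric constraints imposed by the event $E_C^r$. The whole argument is a triangle inequality together with a pointwise comparison of Radon--Nikodym densities.

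First I would record the basic pointwise bounds on $E_C^r$. By definition of $E_C^r$, one has $\nu_h(\partial \strip_+) \geq C$ and $\nu_h(\partial \strip \setminus \partial \strip_+) \leq C^{-1} e^r$, so
\[
\frac{\nu_h(\partial \strip_+)}{\nu_h(\partial \strip)} = \frac{1}{1 + \nu_h(\partial \strip \setminus \partial \strip_+)/\nu_h(\partial \strip_+)} \in [1 - C^{-2} e^r,\, 1].
\]
Since $\varsigma_{j_k}^{\epsilon,r} \geq 0$ we have $w_k^{\epsilon,r} \leq 0$ and hence $\partial \strip_+ \subseteq \partial_k^{\epsilon,r}\strip \subseteq \partial\strip$. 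This inclusion gives the matching bound $\nu_h(\partial \strip_+)/\nu_h(\partial_k^{\epsilon,r}\strip) \in [1 - C^{-2} e^r, 1]$ as well, with the upper bound~$C^{-2} e^r$ depending on nothing but $r$ and $C$.

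Next, let $\mu_1, \mu_2$ denote the two probability measures from Lemma~\ref{lem::total_var_bound2} and $\wh{\mu}_1, \wh{\mu}_2$ those of Lemma~\ref{lem::total_var_bound3}; both pairs are conditioned on $E_C^r$ and restricted to $\strip_+$, and $\mu_2, \wh{\mu}_2$ are also conditioned on $\wt{\psi}_k^{\epsilon,\bsize,r}$. Viewed as distributions on $h|_{\strip_+}$, the Radon--Nikodym derivatives $d\wh{\mu}_i / d\mu_i$ are proportional to the conditional expectations given $h|_{\strip_+}$ (and $\wt{\psi}_k^{\epsilon,\bsize,r}$ in the case $i=2$) of $\nu_h(\partial \strip_+)/\nu_h(\partial \strip)$ and of $\nu_h(\partial \strip_+)/\nu_h(\partial_k^{\epsilon,r}\strip)$, respectively. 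By the previous paragraph these integrands are already pointwise in $[1 - C^{-2} e^r, 1]$ on $E_C^r$, so the same holds for their conditional expectations, and after the normalization needed to make $\wh{\mu}_i$ a probability measure we conclude $d\wh{\mu}_i/d\mu_i \in [1 - O(C^{-2} e^r),\, 1 + O(C^{-2} e^r)]$ with uniform implicit constants. In particular $\|\wh{\mu}_i - \mu_i\|_{TV} = O(C^{-2} e^r)$.

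Combining with Lemma~\ref{lem::total_var_bound2} and the triangle inequality gives
\[
\|\wh{\mu}_1 - \wh{\mu}_2\|_{TV} \leq \|\wh{\mu}_1 - \mu_1\|_{TV} + \|\mu_1 - \mu_2\|_{TV} + \|\mu_2 - \wh{\mu}_2\|_{TV} = O(\bsize) + O(C^{-2} e^r),
\]
as claimed, with the $O(\bsize)$ constants inherited from Lemma~\ref{lem::total_var_bound2} (and thus depending on $r$ and $C$) and the $O(C^{-2} e^r)$ constants uniform. The only mildly delicate point is that $\nu_h(\partial \strip)$ and $\nu_h(\partial_k^{\epsilon,r}\strip)$ are not measurable with respect to $h|_{\strip_+}$, so the densities $d\wh{\mu}_i/d\mu_i$ really are conditional expectations rather than pointwise ratios; but pointwise bounds on the integrands pass immediately to the conditional expectations, so this causes no difficulty and is not the main obstacle of the proof.
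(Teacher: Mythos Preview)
Your proof is correct and follows essentially the same approach as the paper: use the triangle inequality through the two measures of Lemma~\ref{lem::total_var_bound2}, and bound each extra hop by observing that on $E_C^r$ the ratio $\nu_h(\partial\strip)/\nu_h(\partial\strip_+)$ (and likewise for $\partial_k^{\epsilon,r}\strip$) is $1+O(C^{-2}e^r)$, so the Radon--Nikodym derivatives are uniformly close to $1$. Your explicit treatment of the conditional-expectation point (that the boundary-length ratios are not $\strip_+$-measurable, so the densities are averages of ratios rather than the ratios themselves) is a nice clarification the paper leaves implicit, but it does not change the argument.
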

\begin{proof}
Lemma~\ref{lem::total_var_bound2} gives us that the total variation distance between $(\nu_h(\partial \strip_+))^{-1} d\diskmeasure(h)$ conditional on $E_c^r$ and $(\nu_h(\partial \strip_+))^{-1} d\mu_k^{\epsilon,\bsize,r}(h)$ conditional on $E_c^r$ and given $\wh{\psi}_k^{\epsilon,\bsize,r}$ is $O(\bsize)$.  Thus to prove the result we just have to bound the total variation distance between
\begin{itemize}
\item $(\nu_h(\partial \strip))^{-1} d\diskmeasure(h)$ and $(\nu_h(\partial \strip_+))^{-1} d\diskmeasure(h)$ conditional on $E_c^r$ and
\item $(\nu_h(\partial_\epsilon^{k,r}  \strip))^{-1} d\mu_k^{\epsilon,\bsize,r}(h)$ and $(\nu_h(\partial \strip_+))^{-1} d\mu_k^{\epsilon,\bsize,r}(h)$ conditional on $E_c^r$ and given $\wh{\psi}_k^{\epsilon,\bsize,r}$.
\end{itemize}
We have for $h \in E_c^r$ that
\begin{align*}
 1 \leq \frac{\nu_h(\partial \strip)}{\nu_h(\partial \strip_+)} = 1 + \frac{\nu_h(\partial \strip \setminus \partial \strip_+)}{\nu_h(\partial \strip_+)} = 1 + O( c^{-2} e^r).
\end{align*}
Therefore it is easy to see in the first case that the Radon-Nikodym derivative between the two measures is equal to $1+ O(c^{-2} e^r)$ with uniform constants.  It is similarly easy to see that this holds in the second case, which proves the result.
\end{proof}

\subsubsection{Proof of Theorem~\ref{thm::kappa_prime_bubbles}}

Fix $r \in \R$, $c > 0$, and let $(H_k^{r,c})$ be an i.i.d.\ sequence chosen from $(\nu_h(\partial \strip))^{-1} d \diskmeasure(h)$ conditional on $E_c^r$ with the horizontal translation chosen so that the projection of $H_k^{r,c}$ onto $\CH_1(\strip)$ (with fixed additive constant) first hits $r$ at $u=0$.  Fix $\bsize > 0$.  We also let $(\wh{h}_k^{\epsilon,\bsize,r})$ be a sequence chosen from $\prod_{k=1}^\infty (\nu_{h_k}(\partial_k^{\epsilon,r} \strip))^{-1} d \mu_k^{\epsilon,\bsize,r}(h_k)$.  Note that the $(\wh{h}_k^{\epsilon,\bsize,r})$ have the same law as the subsequence of bubbles which are zipped in (viewed as quantum surfaces) by the reverse Loewner flow such that a certain event occurs.  Let $(\acute{h}_k^{\epsilon,\bsize,r,c})$ be the subsequence of $(\wh{h}_k^{\epsilon,\bsize,r})$ for which $E_c^r$ occurs.  Fix $n \in \N$.  Lemma~\ref{lem::total_var_bound3} implies that the total variation distance between the law of the first $n$ of the $H_k^{r,c}$ and the law of the first $n$ of the $\acute{h}_k^{\epsilon,\bsize,r,c}$ (both restricted to $\strip_+$) is $O(\bsize) \cdot n + O(c^{-2} e^r) \cdot n$ where the $O(\bsize)$ term tends to $0$ as $\bsize \to 0$ with $r$ and $c$ fixed and the constants in the $O(c^{-2} e^r)$ term are uniform.

It is easy to see that the quantum surfaces $(\strip,\acute{h}_k^{\epsilon,\bsize,r,c})$ converge to a limiting family of quantum surfaces $(\strip,\acute{h}_k^c)$ when we take a limit first as $\epsilon \to 0$, then $r \to -\infty$, and then as $\bsize \to 0$.  (One can take the horizontal translation so that the mass assigned to $\strip_+$ is exactly equal to $c/2$.)  Indeed, for otherwise we would get a contradiction to the statement that the amount of $\gamma$-LQG mass in each bounded neighborhood of zero is a.s.\ finite --- recall that the $\gamma$-LQG mass of each bubble is at least $c$.  Moreover, we have that the $(\acute{h}_k^c)$ are i.i.d.\ sampled from $(\nu_h(\partial \strip))^{-1} d \diskmeasure(h)$ conditioned on having both $\gamma$-LQG boundary length and $\gamma$-LQG area at least $c$.  Sending $c \to 0$ proves the result.\qed

\subsubsection{Alternative formulation}

We are now going to give an alternative formulation of the Poisson law described in Theorem~\ref{thm::kappa_prime_bubbles} in terms of the boundary length quantum disk of Definition~\ref{def::finite_volume_surfaces}.

\begin{proposition}
\label{prop::unit_area_bl_construction}
There exists a constant $c > 0$ such that the law on quantum surfaces described in the statement of Theorem~\ref{thm::kappa_prime_bubbles} admits the following equivalent method of sampling.  Sample a \ppp\ $\Lambda$ on $\R_+ \times \R_+$ with intensity measure $c du \otimes t^{-\kappa'/4-1} dt$ where $du$ and $dt$ denote Lebesgue measure on $\R_+$.  Then sample an i.i.d.\ collection of unit boundary length quantum disks $(D_{u,t} : (u,t) \in \Lambda)$ indexed by the elements of~$\Lambda$.  Finally, take the process $((u,\wt{D}_{u,t}) : (u,t) \in \Lambda)$ where $\wt{D}_{u,t}$ for $(u,t) \in \Lambda$ is given by taking $D_{u,t}$ and then scaling so that its boundary length is equal to~$t$.
\end{proposition}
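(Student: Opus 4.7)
The plan is to disintegrate the intensity measure $du \otimes (\nu_h(\partial\strip))^{-1}\,d\CM_\delta^{4\gamma^{-1}}(h)$ with $\delta = 4 - \kappa'/2$ over the quantum boundary length $L := \nu_h(\partial\strip)$, read off the marginal of $L$ from Proposition~\ref{prop::actual_quantum_area_general}(iii), and identify the conditional law of the surface given $L = t$ as a unit boundary length quantum disk scaled to boundary length $t$. Concretely, I would use the standard fact that if $\Lambda$ is a PPP on $\R_+ \times \CX$ with intensity $du \otimes \nu(dh)$ and $F\colon\CX \to \R_+$ is measurable, then the pair $(u, F(h))$ is a PPP with intensity $du \otimes F_*\nu$ and the conditional law of $h$ given $F(h) = t$ is the disintegration $\nu(\,\cdot \mid F = t)$; multiplying the intensity by $G(F(h))$ for a measurable $G\colon \R_+ \to \R_+$ changes only the marginal on $t$, by the factor $G(t)$.

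Applying this to $F(h) = \nu_h(\partial\strip)$ and $G(t) = t^{-1}$, Proposition~\ref{prop::actual_quantum_area_general}(iii) gives that the marginal of $L$ under our intensity measure is a constant multiple of $t^{-1}\cdot t^{\delta/2 - 2}\,dt = t^{\delta/2 - 3}\,dt$. Substituting $\delta = 4 - \kappa'/2$ yields the exponent $-\kappa'/4 - 1$ that appears in the proposition statement, and choosing $c_2$ appropriately absorbs the constant from Proposition~\ref{prop::actual_quantum_area_general}.

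What remains is to identify the conditional law of $\CM_\delta^{4\gamma^{-1}}$ given $L = t$ with a unit boundary length quantum disk rescaled to boundary length $t$. The $t = 1$ case is the definition of the unit boundary length quantum disk in Definition~\ref{def::finite_volume_surfaces}, after using the equivalence between $\CM_\delta^{4\gamma^{-1}}$ with $\delta = 4 - 8/\gamma^2$ and $\CM_{\delta'}^{2\gamma^{-1}}$ with $\delta' = 3 - 4/\gamma^2$ recorded there. For general $t$ the key input is a scale-covariance of $\CM_\delta^{4\gamma^{-1}}$: a sample from this measure is $(e,\Xi)$ where $e \sim \nu_\delta^\bes$ and $\Xi$ is an independent free boundary GFF projection onto $\CH_2(\strip)$, and under the Brownian-scaling map $e_s \mapsto \lambda e_{s/\lambda^2}$ (with $\Xi$ left fixed) the $\CH_1(\strip)$-projection of the field is shifted by the constant $4\gamma^{-1}\log\lambda$. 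Since $\nu_\delta^\bes$ transforms to a scalar multiple of itself under this Brownian scaling, the conditional law of $(e,\Xi)$ given $L = \lambda^2$ is the pushforward of the conditional law given $L = 1$ under the map $h \mapsto h + 4\gamma^{-1}\log\lambda$. Using that $\nu_{h+c} = e^{\gamma c/2}\nu_h$, taking $\lambda = \sqrt{t}$ corresponds to $c = (2/\gamma)\log t$, which is exactly the operation that scales a unit boundary length quantum disk to have boundary length $t$.

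The main obstacle is making the scale-covariance in the preceding paragraph fully rigorous: because $\CM_\delta^{4\gamma^{-1}}$ is an infinite measure, one cannot condition directly on $\{L = 1\}$ without first establishing a disintegration $\CM_\delta^{4\gamma^{-1}} = \int K_t \, \mu(dt)$ in terms of probability kernels $K_t$ and a $\sigma$-finite marginal $\mu$ on $L$; the existence of such a disintegration, and the identification $\mu(dt) \propto t^{\delta/2-2}dt$, follow from Remark~\ref{rem::bessel_ito_excursion} together with Lemma~\ref{lem::poisson_reweight}. One then verifies the scale-covariance $K_t = (h \mapsto h + (2/\gamma)\log t)_* K_1$ separately on the $\CH_1$- and $\CH_2$-projections, using the explicit power-law intensity of excursion maxima under $\nu_\delta^\bes$ on the $\CH_1$-side and the translation invariance of the independent GFF component on the $\CH_2$-side. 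Everything else --- the Poisson marking argument and the arithmetic of the exponent --- is routine given the tools already developed.
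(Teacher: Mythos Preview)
Your proposal is correct and follows essentially the same logic as the paper's proof, organized slightly differently. The paper does not disintegrate $\CM_\delta^{4\gamma^{-1}}$ over the boundary length $L$ directly; instead it disintegrates over the \emph{maximum} $t$ of the Bessel excursion (where the disintegration is explicit from Remark~\ref{rem::bessel_ito_excursion}), observes the same scale-covariance you describe (a sample conditioned on max $t$ is a sample conditioned on max $1$ with the constant $4\gamma^{-1}\log t$ added), and then applies Lemma~\ref{lem::poisson_reweight} to pass from the max parameter to the actual boundary length $t^2\nu_h(\partial\strip)$, checking along the way the finiteness of the $\tfrac{\kappa'}{4}$-moment of $\nu_h(\partial\strip)$ under the unit-max law (which reduces to Lemma~\ref{lem::negative_drift_first_moment} since $\tfrac{\kappa'}{4}-1\in(0,1)$). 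Your route packages that last step into a single citation of Proposition~\ref{prop::actual_quantum_area_general}(iii), which is itself proved via Lemma~\ref{lem::poisson_reweight} and the same moment bound. The scale-covariance step and the final identification with the unit boundary length disk are the same in both arguments. Your version is marginally more streamlined; the paper's version makes the moment check visible rather than hiding it inside Proposition~\ref{prop::actual_quantum_area_general}.
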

\begin{proof}
Let $\delta = 4-\tfrac{8}{\gamma^2} = 4 - \tfrac{\kappa'}{2} \in (0,2)$ as in the statement of Theorem~\ref{thm::kappa_prime_bubbles}.  For each $t > 0$, let $\CV_{\delta,1}^t$ denote the law on functions on $\strip$ which is obtained by sampling a~$\bes^\delta$ excursion~$e$ from~$0$ given that its supremum is equal to~$t$ (a sample can be produced from this law by joining back to back two $\bes^{4-\delta}$ processes starting from~$0$ and stopped upon hitting $t$ as explained in Remark~\ref{rem::bessel_ito_excursion}), then taking $4\gamma^{-1}\log(e)$, and then finally reparameterizing so that it has constant quadratic variation $2ds$.  We can fix the horizontal translation so that the supremum is reached at $s=0$.  We also let $\CV_2$ be the law of the projection of a free boundary GFF on $\strip$ onto $\CH_2(\strip)$.  Note that the infinite measure $(\nu_h(\partial \strip))^{-1} d \diskmeasure(h)$ used to describe the Poisson law in the statement of Theorem~\ref{thm::kappa_prime_bubbles} is equivalent to:
\[ \frac{1}{\nu_h(\partial \strip)} d\CV_{\delta,1}^t(h_1) d\CV_2(h_2) d\nu_\delta^*(t) = \frac{t^2}{\nu_h(\partial \strip)} d\CV_{\delta,1}^t(h_1) d\CV_2(h_2) d\nu_{\delta-2}^*(t)\]
where $h=h_1+h_2$ and $\nu_\delta^*$ is as in Remark~\ref{rem::bessel_ito_excursion}.  Let
\[ d\CV^t(h) = \frac{t^2}{\nu_h(\partial \strip)} d\CV_{\delta,1}^t(h_1) d\CV_2(h_2).\]
Observe that a sample from $\CV^t$ can be produced by picking $h$ from $\CV^1$ and then adding $4\gamma^{-1}\log(t)$ to $h$.

In view of Remark~\ref{rem::bessel_ito_excursion}, we can describe $\CV^1$ explicitly as follows.  We let $\wt{\CV}^1$ be the law on distributions on~$\strip$ which can be sampled from by:
\begin{enumerate}
\item Taking its projection $X$ onto $\CH_1(\strip)$ to be given by $X_u = B_{2u} + (\gamma-Q)u$ for $u \geq 0$ where $B$ is a standard Brownian motion and to be given by $X_u = \wh{B}_{-2u} - (\gamma-Q)u$ for $u < 0$ where $\wh{B}$ is a standard Brownian motion with $B_0 = \wh{B}_0 = 0$ conditioned so that $X_u \leq 0$ for all $u$.  We assume that the additive constant for the projection is fixed to agree exactly with this process.
\item Independently sampling its projection onto $\CH_2(\strip)$ using the corresponding projection of a free boundary GFF on $\strip$.  We assume that the additive constant is fixed so that the average on $[0, \pi i]$ vanishes.
\end{enumerate}
Then $d\CV^1(h) = (\nu_h(\partial \strip))^{-1} d\wt{\CV}^1(h)$.

It thus follows from the discussion so far that if we let $\wt{\Lambda}$ be a \ppp\ with intensity measure $du \otimes d\CV^1(h) \otimes \nu_{\delta-2}^*$
then $\Lambda = \{(u,h+4\gamma^{-1}\log(t)) : (u,h,t) \in \wt{\Lambda}\}$ is a \ppp\ with the same intensity measure as in Theorem~\ref{thm::kappa_prime_bubbles}.

We want to argue now that $\nu_h(\partial \strip)$ with $h$ sampled from $\CV^1$ has a finite $\tfrac{\kappa'}{4}$-moment.  Note that this moment is equal to $\int (\nu_h(\partial \strip))^{\kappa'/4-1} d\wt{\CV}^1(h)$.  Since $\kappa'/4-1 \in (0,1)$ as $\kappa' \in (4,8)$, it suffices to show that $\int \nu_h(\partial \strip) d\wt{\CV}^1(h) < \infty$.  This, in turn, follows from Lemma~\ref{lem::negative_drift_first_moment}.

It thus follows from Lemma~\ref{lem::poisson_reweight} (as in the proof of Proposition~\ref{prop::actual_quantum_area}), that there exists a constant $c > 0$ such that $\{(u,h+4 \gamma^{-1}\log(t),t^2 \nu_h(\partial \strip)) : (u,h,t) \in \wt{\Lambda}\}$ is a \ppp\ with intensity measure given by $c du \otimes \big( t^{-\kappa'/4-1} d\CU^t dt\big)$ where $\CU^t$ is the law of the unit boundary length quantum disk scaled to have boundary length equal to $t$.  The result thus follows because the $\gamma$-LQG boundary length of $\partial \strip$ associated with the field $h+4\gamma^{-1} \log(t)$ is equal to $t^2 \nu_h(\partial \strip)$.
\end{proof}

\subsection{Zipping according to quantum natural time}
\label{subsec::quantum_typical_zip_unzip}

Suppose that~$(h,\eta)$ is as in the statement of Theorem~\ref{thm::skinny_wedge_bubble_structure}, let~$(W,V)$ be the driving pair for~$\eta$, let~$(f_t)$ be the centered (forward) Loewner flow associated with~$\eta$, let~$\ell_t$ be the local time at~$0$ of the Bessel process~$\kappa^{-1/2}(V-W)$, and let~$T_u$ be the right continuous inverse of~$\ell_t$.  In this setting, in Theorem~\ref{thm::skinny_wedge_bubble_structure} we showed that the beaded surface which consists of the bounded components of~$\h \setminus \eta([0,T_u])$ which are to the right of~$\eta$ can be described in terms of a certain Bessel process~$Y$ (i.e., as in Definition~\ref{def::skinny_wedge_bessel}), where the additive constant is fixed as in the statement of Theorem~\ref{thm::skinny_wedge_bubble_structure}.  That is, we have that the average of $h \circ f_{T_u}^{-1} + Q \log|(f_{T_u}^{-1})'|$ on $\partial \D$ is equal to $0$.  We will now use this to prove the following theorem as well as Theorem~\ref{thm::sle_kp_quantum_local_typical} stated below, which are the main results of this section and are versions of Theorem~\ref{thm::reverse_coupling} in which one has the invariance with respect to shifts associated with the local time associated with $Y$.

\begin{theorem}
\label{thm::bubbles_quantum_local_typical}
Fix $\kappa \in (0,4)$, $\rho \in (-2,\tfrac{\kappa}{2}-2)$, and suppose that $(\h,h,0,\infty)$ is a quantum wedge of weight $\rho+4$.  Let~$\eta$ be an $\SLE_\kappa(\rho)$ process in $\h$ from~$0$ to~$\infty$ with a single boundary force point of weight $\rho$ located at~$0^+$ and assume that~$\eta$ is independent of~$h$.  Then the following hold:
\begin{enumerate}[(i)]
\item\label{it::quantum_typical_bubble_form} The law of the beaded surface consisting of the components of $\h \setminus \eta$ which are to the right of $\eta$ is that of a quantum wedge of weight $\rho+2$.
\item\label{it::quantum_typical_stationary} Let $\qnt_u$ be the first capacity time $t$ for $\eta$ that the local time at $0$ of the Bessel process $Y$ which encodes the weight $\rho+2$ wedge as in Part~\eqref{it::quantum_typical_bubble_form} is equal to $u$.  Also let $(f_t)$ be as described above.  For each $u > 0$, we have (as path-decorated quantum surfaces) that
\begin{equation}
\label{eqn::quantum_local_time_invariance}
(h,\eta) \stackrel{d}{=} \left( h \circ f_{\qnt_u}^{-1} + Q\log|(f_{\qnt_u}^{-1})'|, f_{\qnt_u}(\eta) \right).
\end{equation}
\end{enumerate}
Part~\eqref{it::quantum_typical_bubble_form} also holds if we take $\eta$ to be a whole-plane $\SLE_\kappa(\rho)$ process from $0$ to $\infty$ and replace $(\h,h)$ with a quantum cone $(\C,h)$ of weight $\rho+2$.
\end{theorem}

\begin{definition}
\label{def::qnt_sle_k}
We call $\qnt_u$ the {\bf quantum natural time} parameterization of the bubbles which are cut off by $\eta$ from $\infty$.
\end{definition}

\begin{figure}[ht!]
\begin{center}
\includegraphics[scale=0.85]{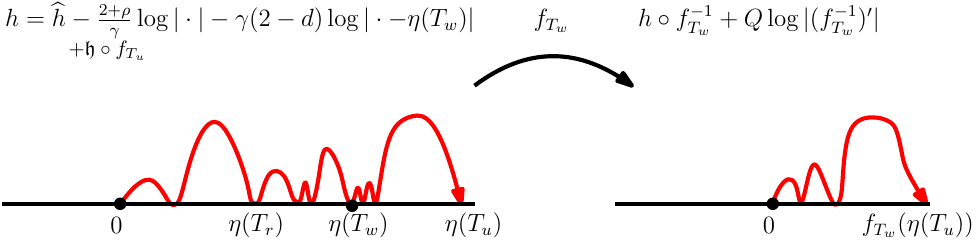}	
\end{center}
\caption{\label{fig::bubbles_typical_illustration} Illustration of the setup for the proof of Theorem~\ref{thm::bubbles_quantum_local_typical}.  It is shown in Lemma~\ref{lem::field_weighted_by_quantum_local_time} that if we weight the law of $h$ as in Theorem~\ref{thm::skinny_wedge_bubble_structure} by the amount of local time that the Bessel process which encodes the weight $\rho+2$ quantum wedge structure accumulates between when the Bessel process which drives the $\SLE_\kappa(\rho)$ accumulates $r$ and $u$ units of local time (capacity times $T_r$ and $T_u$), then the result is a free boundary GFF with an extra $\log$ singularity which is located at $\eta(T_w)$ where $w \in [r,u]$ plus a certain harmonic function.  If one cuts along $\eta$ until the time $T_w$, the $\log$ singularity at $\eta(T_w)$ is moved to the origin and one obtains a free boundary GFF plus $(\rho+2-\gamma^2)/\gamma \log|\cdot|$ (and some harmonic function) decorated by the $\SLE_\kappa(\rho)$ process $f_{T_w}(\eta)$.  If one then zooms in at the origin as in part~\eqref{it::quantum_wedge_limit} of Proposition~\ref{prop::quantum_wedge_properties}, one thus obtains a weight $\rho+4$ quantum wedge.  On the other hand, as the local behavior of the bubbles which are to the right of $\eta(T_w)$ (ordered from left to right) is described by a weight $\rho+2$ quantum wedge (by time-reversing the Poisson point process at a typical time), it follows that the bubbles to the right of $f_{T_w}(\eta)$ after zooming in is described by a weight $\rho+2$ quantum wedge.}
\end{figure}

See Figure~\ref{fig::bubbles_typical_illustration} for an illustration of the setup for the proof of Theorem~\ref{thm::bubbles_quantum_local_typical}.  The proof of Theorem~\ref{thm::bubbles_quantum_local_typical} will follow the strategy used in \cite{she2010zipper} to deduce \cite[Theorem~1.8]{she2010zipper} (the invariance of a weight-$4$ quantum wedge under the operation of zipping/unzipping according to quantum length) from \cite[Theorem~1.2]{she2010zipper} (the invariance of the free boundary GFF plus an appropriate $\log$ singularity under zipping/unzipping according to capacity time).  In particular, Theorem~\ref{thm::skinny_wedge_bubble_structure} gives us a measure (which is defined from the weight $\rho+2$ quantum wedge structure) on the intersection points of an $\SLE_\kappa(\rho)$ curve with the domain boundary.  The main step is to determine the local behavior of the field near a point chosen from this measure, and this will be accomplished in Lemma~\ref{lem::quantum_local_time_change_field} and Lemma~\ref{lem::field_weighted_by_quantum_local_time} below.  We note that the statement that we will obtain to this effect is analogous to the fact that the local behavior of a point chosen from the quantum boundary measure is described by a $\gamma$-quantum wedge, though here we will obtain a different type of quantum wedge and the proof will also proceed along somewhat different lines.

\subsubsection{Local behavior near a typical intersection point}

Fix $d \in (0,2)$ and recall the description of the It\^o excursion measure $\nu_d^\bes$ for the excursions made by a $\bes^d$ process from $0$ given in Remark~\ref{rem::bessel_ito_excursion}.  As in Section~\ref{sec::preliminaries}, for each $\epsilon > 0$, we let $\CE(\epsilon)$ be the set of excursions with length at least~$\epsilon$.  Then~\eqref{eqn::bessel_ito_lifetime} leads to the value of $\nu_d^\bes(\CE(\epsilon))$.  We record this in the following lemma.

\begin{lemma}
\label{lem::bessel_ito_excursion_measure}
Fix $d \in (0,2)$ and let $\nu_d^\bes$ be the It\^o excursion measure associated with a $\bes^d$ process.  There exists a constant $C_d > 0$ depending only on $d$ such that $\nu_d^\bes(\CE(\epsilon)) = C_d \epsilon^{d/2-1}$.
\end{lemma}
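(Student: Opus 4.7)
The plan is to reduce the claim directly to the description of the It\^o excursion measure given in Remark~\ref{rem::bessel_ito_excursion}. According to that remark, a sample from $\nu_\delta^\bes$ can be produced by first picking a lifetime $t$ according to the $\sigma$-finite measure $c_\delta t^{\delta/2-2} dt$ on $\R_+$ and then (conditionally on $t$) running a $\bes^{4-\delta}$ bridge from $0$ to $0$ of length $t$. In particular, the push-forward of $\nu_\delta^\bes$ under the map ``take the length of an excursion'' is precisely the one-dimensional measure $c_\delta t^{\delta/2-2} dt$ on $\R_+$.

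Since $\CE(\epsilon)$ is defined to be the set of excursions whose lifetime is at least $\epsilon$, it follows immediately that
\[ \nu_\delta^\bes(\CE(\epsilon)) = \int_\epsilon^\infty c_\delta t^{\delta/2-2} dt. \]
Because $\delta \in (0,2)$, the exponent $\delta/2 - 2$ lies in $(-2,-1)$, so the integrand is integrable at infinity (though not at zero, reflecting the well-known infinite total mass of $\nu_\delta^\bes$). Evaluating the integral gives
\[ \nu_\delta^\bes(\CE(\epsilon)) = \frac{c_\delta}{1-\delta/2}\, \epsilon^{\delta/2-1}, \]
so the lemma holds with $C_\delta := c_\delta/(1-\delta/2)$.

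There is essentially no obstacle here: the statement is simply a restatement of the tail behavior of the lifetime distribution under the excursion measure, and the whole proof is a one-line integration. The only thing worth emphasizing is that the restriction $\delta \in (0,2)$ is needed precisely so that the integral at $+\infty$ converges and produces the desired power law, and so that the construction of $\nu_\delta^\bes$ recalled in Remark~\ref{rem::bessel_ito_excursion} applies verbatim.
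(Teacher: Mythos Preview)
Your proof is correct and takes essentially the same approach as the paper. The paper does not write out a proof but simply notes just before the lemma that \eqref{eqn::bessel_ito_lifetime} (the lifetime density $c_\delta t^{\delta/2-2}\,dt$ from Remark~\ref{rem::bessel_ito_excursion}) leads to the value of $\nu_\delta^\bes(\CE(\epsilon))$; you have made this explicit by carrying out the integration.
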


Using Lemma~\ref{lem::bessel_ito_excursion_measure}, we can now describe how the local time $\qlt$ of $Y$ at $0$ changes when we perturb the field by adding a smooth function.

\begin{lemma}
\label{lem::quantum_local_time_change_field}
Assume that we have the same setup as in Theorem~\ref{thm::skinny_wedge_bubble_structure} (with the additive constant for $h$ fixed in the same way) and let $\qlt$ be the local time at $0$ associated with the Bessel process $Y$ which encodes the weight $\rho+2$ wedge corresponding to the bounded components of $\h \setminus \eta([0,T_u])$.  For each smooth function $\phi$, let~$Y^\phi$ be the corresponding process with $h$ replaced by~$h+\phi$ (which we assume to be parameterized according to its quadratic variation).  Let~$\ol{\nu}$ (resp.\ $\ol{\nu}^\phi$) denote the empirical measure of the excursions made by~$Y$ (resp.\ $Y^\phi$) from~$0$ which correspond to the bubbles cut off $\infty$ by $\eta([T_r,T_u])$.  Let $d$ be the Bessel dimension of $Y$ as given in~\eqref{eqn::bessel_wedge_correspondence}.  We a.s.\ have (off a common set of measure $0$) for all $r \in [0,u]$ and all such smooth functions $\phi$ that the limit $m^\phi([r,u])$ of $\ol{\nu}^\phi(\CE(\epsilon)) / \nu_d^\bes(\CE(\epsilon))$ as $\epsilon \to 0$ exists and (with $m = m^0$)
\begin{equation}
\label{eqn::quantum_local_time_change_field}
m^\phi([r,u]) = \int_r^u \exp\left( \frac{\gamma(2-d)}{2} \phi(\eta(T_v)) \right) dm(v).
\end{equation}
\end{lemma}

Before we give the proof of Lemma~\ref{lem::quantum_local_time_change_field}, we first make the following remark.

\begin{remark}
\label{rem::h_phi_abs_cts}
\begin{enumerate}[(i)]
\item If $\phi$ is a deterministic smooth function with $\| \phi \|_\nabla < \infty$, then the law of~$h+\phi$ is mutually absolutely continuous with respect to the law of~$h$.  Therefore it follows that the law of~$Y^\phi$ is mutually absolutely continuous with respect to the law of~$Y$.  In particular, the local time~$\qlt^\phi$ of $Y^\phi$ at $0$ exists (and can be defined, for example, as in \cite[Corollary~19.6]{KAL_FOUND}).  Lemma~\ref{lem::quantum_local_time_change_field} in fact implies that this local time can be defined for all smooth $\phi$ simultaneously off a common set of measure $0$.
\item It is natural in~\eqref{eqn::quantum_local_time_change_field} to see~$\tfrac{1}{2}\gamma(2-d)$ rather than~$\gamma(2-d)$ in the exponential because we are dealing with a \emph{boundary} (rather than bulk) measure supported on~$\eta \cap \partial \h$.  We note that in the limit $\rho \downarrow -2$, we have that $d \downarrow 1$ so that the constant term in the exponent of~\eqref{eqn::quantum_local_time_change_field} converges to $\tfrac{\gamma}{2}$, i.e., the exponent which appears in the definition of the boundary measure.
\end{enumerate}
\end{remark}

\begin{proof}[Proof of Lemma~\ref{lem::quantum_local_time_change_field}]
If~$\phi$ is a constant function, then~$Y^\phi$ is given by transforming~$Y$ by scaling spatially by~$\exp(\tfrac{\gamma}{2} \phi)$ and then speeding up time by the factor~$\exp(\gamma \phi)$ (recall that~$Y^\phi$ is parameterized by quadratic variation so that~$d\langle Y^\phi \rangle_t = dt$).  This means that an excursion of length~$l$ made by~$Y$ will correspond to an excursion of length~$e^{-\gamma \phi} l$ made by~$Y^\phi$.  Fix~$\epsilon > 0$.  We then have that
\begin{equation}
\label{eqn::empirical_excursion_add_constant_function}
\ol{\nu}^\phi(\CE(\epsilon)) = \ol{\nu}(\CE(e^{-\gamma \phi} \epsilon)).
\end{equation}
We also let~$\nu_d^\bes$ denote the It\^o excursion measure associated with a~$\bes^d$.  Using Lemma~\ref{lem::bessel_ito_excursion_measure} in the final equality, we then have that
\begin{align*}
   \frac{\ol{\nu}^\phi(\CE(\epsilon))}{\nu_d^\bes(\CE(\epsilon))}
&= \frac{\ol{\nu}(\CE(e^{-\gamma \phi} \epsilon))}{\nu_d^\bes(\CE(\epsilon))}
 = \frac{\nu_d^\bes(\CE(e^{-\gamma \phi}\epsilon))}{\nu_d^\bes(\CE(\epsilon))} \times \frac{\ol{\nu}(\CE(e^{-\gamma \phi} \epsilon))}{\nu_d^\bes(\CE(e^{-\gamma \phi}\epsilon))}\\
&= \exp\left(\frac{\gamma (2-d)}{2} \phi \right) \frac{\ol{\nu}(\CE(e^{-\gamma \phi} \epsilon))}{\nu_d^\bes(\CE(e^{-\gamma \phi}\epsilon))}.
\end{align*}
By \cite[Proposition~19.12]{KAL_FOUND}, as~$\epsilon \to 0$, the right side converges to~$\exp(\tfrac{\gamma(2-d)}{2}\phi) m([r,u])$ and therefore the left side also converges.  This proves~\eqref{eqn::quantum_local_time_change_field} if~$\phi$ is a constant function.

We are now going to generalize to the case that~$\phi$ is a smooth function.  For each~$q < s$, we let~$\ol{\nu}_{q,s}$ (resp.\ $\ol{\nu}_{q,s}^\phi$) be the empirical measure for the excursions from~$0$ made by~$Y$ (resp.\ $Y^\phi$) which correspond to the bubbles cut off by $\eta([T_q,T_s])$.  We let~$b_{q,s}$ (resp.\ $B_{q,s}$) be the infimum (resp.\ supremum) of the values that~$\phi$ takes on in the interval~$[\eta(T_q),\eta(T_s)]$.   Arguing as in~\eqref{eqn::empirical_excursion_add_constant_function}, we have that
\begin{equation}
\label{eqn::empircal_excursion_upper_lower}
\ol{\nu}_{q,s}(\CE(e^{-\gamma b_{q,s}} \epsilon)) \leq \ol{\nu}_{q,s}^\phi(\CE(\epsilon)) \leq \ol{\nu}_{q,s}(\CE(e^{-\gamma B_{q,s}} \epsilon)).
\end{equation}
Pick a partition~$r \leq q_1 < \cdots < q_k \leq u$.  Applying~\eqref{eqn::empircal_excursion_upper_lower} in the inequality, we have that
\begin{align}
   \frac{\ol{\nu}^\phi(\CE(\epsilon))}{\nu_d^\bes(\CE(\epsilon))}
&= \sum_{j=1}^k \frac{\ol{\nu}_{q_{j-1},q_j}^\phi(\CE(\epsilon))}{\nu_d^\bes(\CE(\epsilon))}\notag\\
&\leq \sum_{j=1}^k \frac{\nu_d^\bes(\CE(e^{-\gamma B_{q_{j-1},q_j}}\epsilon))}{\nu_d^\bes(\CE(\epsilon))} \times \frac{\ol{\nu}_{q_{j-1},q_j}(\CE(e^{-\gamma B_{r_{j-1},r_j}}\epsilon))}{\nu_d^\bes(\CE(e^{-\gamma B_{q_{j-1},q_j}}\epsilon))} \notag\\
&=    \sum_{j=1}^k \exp\left(\frac{\gamma(2-d)}{2} B_{q_{j-1},q_j}\right) \frac{\ol{\nu}_{q_{j-1},q_j}(\CE(e^{-\gamma B_{q_{j-1},q_j}}\epsilon))}{\nu_d^\bes(\CE(e^{-\gamma B_{q_{j-1},q_j}}\epsilon))} \label{eqn::local_time_big_b}.
\end{align}
Arguing as in the case that $\phi$ is a constant function, the existence of the limit as $\epsilon \to 0$ of the expression in~\eqref{eqn::local_time_big_b} follows from \cite[Proposition~19.12]{KAL_FOUND}.  (We emphasize that this holds for all smooth functions $\phi$ off a common set of measure $0$.)  Combining, we have that
\begin{align*}
   \limsup_{\epsilon \to 0} \frac{\ol{\nu}^\phi(\CE(\epsilon))}{\nu_d^\bes(\CE(\epsilon))}
\leq \sum_{j=1}^k \exp\left(\frac{\gamma(2-d)}{2} B_{q_{j-1},q_j}\right)  m((q_{j-1},q_j]).
\end{align*}
Since $T_q$ is right continuous and $\phi$ is smooth, we note that $q \mapsto \phi(\eta(T_q))$ is also right continuous.  Thus taking a limit as the mesh size of the partition tends to zero, we get that
\begin{align*}
   \limsup_{\epsilon \to 0} \frac{\ol{\nu}^\phi(\CE(\epsilon))}{\nu_d^\bes(\CE(\epsilon))}
\leq  \int_r^u \exp\left(\frac{\gamma(2-d)}{2} \phi(\eta(T_v))\right) dm(v).
\end{align*}
Arguing in the same manner (using $b_{q,r}$ in place of $B_{q,r}$) implies that
\begin{align*}
   \liminf_{\epsilon \to 0} \frac{\ol{\nu}^\phi(\CE(\epsilon))}{\nu_d^\bes(\CE(\epsilon))}
\geq  \int_r^u \exp\left(\frac{\gamma(2-d)}{2} \phi(\eta(T_v))\right) dm(v),
\end{align*}
which gives the existence of the limit as $\epsilon \to 0$ of $\ol{\nu}^\phi(\CE(\epsilon))/\nu_d^\bes(\CE(\epsilon))$ and establishes~\eqref{eqn::quantum_local_time_change_field}.
\end{proof}

\begin{lemma}
\label{lem::field_weighted_by_quantum_local_time}
Assume that we have the same setup as in Theorem~\ref{thm::skinny_wedge_bubble_structure} (with the additive constant for $h$ fixed in the same way) and recall that
\begin{equation}
\label{eqn::field_weighted_by_quantum_local_time_h}
h = \wh{h} -\frac{2+\rho}{\gamma}\log|\cdot|
\end{equation}
where $\wh{h}$ is a free boundary GFF on $\h$.  For each $0 \leq q < r \leq u$, we let $m_{q,r}$ be the restriction of $m$ as in Lemma~\ref{lem::quantum_local_time_change_field} to subsets of $[q,r]$.  Consider the law on $(w,h,\eta)$ triples given by $\CZ_{q,r}^{-1} d m_{q,r} dh d\eta$ where $dh$ denotes the law as in~\eqref{eqn::field_weighted_by_quantum_local_time_h} and $\CZ_{q,r}^{-1}$ is a normalization constant.  (Note that $m_{q,r}$ depends on $h$ and $\eta$.)
\begin{enumerate}[(i)]
\item \label{it::ql_field_cond_law} Given $w$ and $\eta$, the conditional law of $h$ is equal to the law of 
\[ \wh{h} - \frac{2+\rho}{\gamma} \log|\cdot| - \gamma(2-d)\log|\cdot-\eta(T_w)| + \Fh \circ f_{T_u} \]
where $\wh{h}$ is a free boundary GFF on $\h$, $d$ is the dimension of the Bessel process $Y$ as given in~\eqref{eqn::bessel_wedge_correspondence}, $\Fh$ is a function which harmonic outside of $\h \cap \partial \D$, and the additive constant is fixed in the same manner as for $h$.
\item \label{it::ql_path_cond_law} Given $w$ and $\eta|_{[0,T_w]}$, the conditional law of $\eta|_{[T_w,\infty)}$ is that of an $\SLE_\kappa(\rho)$ process in the unbounded component of $\h \setminus \eta([0,T_w])$ from $\eta(T_w)$ to $\infty$ with a single boundary force point of weight $\rho$ located at $(\eta(T_w))^+$ weighted by the Radon-Nikodym derivative
\[ \CZ^{-1}\exp\left( \frac{\gamma^2(2-d)^2}{8} \left( \iint G(f_{T_u}^{-1}(x),f_{T_u}^{-1}(y)) dx dy - 2 \int G(\eta(T_w), f_{T_u}^{-1}(x)) dx \right) \right)\] where $G$ denotes the Neumann Green's function on $\h$ and $\CZ^{-1}$ is a normalizing constant and the integrals are all over $\partial \D \cap \partial \h$.

\item \label{it::ql_bubbles_independent} If one zooms in near $\eta(T_w)$ as in part~\eqref{it::quantum_wedge_limit} of Proposition~\ref{prop::quantum_wedge_properties}, then the law of the beaded surface which consists of the components of $\h \setminus \eta$ which are to the right of $\eta|_{[T_w,\infty)}$ converges to that of a quantum wedge of weight $\rho+2$.
\end{enumerate}
\end{lemma}

As we mentioned earlier, Lemma~\ref{lem::field_weighted_by_quantum_local_time} serves to describe the local behavior of the field near a ``quantum typical'' intersection point of an $\SLE_\kappa(\rho)$ process with the domain boundary.  The idea of the proof will be to use Lemma~\ref{lem::quantum_local_time_change_field} to relate weighting the law of the field/path pair by the ``quantum mass'' of the intersection of the $\SLE$ with the boundary to introducing a Radon-Nikodym derivative which in turn will correspond to a shift in the mean of the field.

\begin{proof}[Proof of Lemma~\ref{lem::field_weighted_by_quantum_local_time}]
Let $(\phi_n)$ be an orthonormal basis for $H(\h)$ consisting of smooth functions and write $\wh{h} = \sum_n \wh{\alpha}_n \phi_n$ where $(\wh{\alpha}_n)$ are i.i.d.\ $N(0,1)$ random variables.  We fix the additive constant so that the average of $h \circ f_{T_u}^{-1} + Q\log|(f_{T_u}^{-1})'|$ on $\h \cap \partial \D$ is equal to $0$.  Note that we can write the field $h \circ f_{T_u}^{-1} + Q\log|(f_{T_u}^{-1})'|$ as $\sum_n (\wh{\alpha}_n + \beta_n) \phi_n \circ f_{T_u}^{-1}$ for some sequence of coefficients $(\beta_n)$.  Fixing the additive constant so that the average of $h \circ f_{T_u}^{-1} + Q\log|(f_{T_u}^{-1})'|$ on $\h \cap \partial \D$ is equal to $0$ is equivalent to taking $\wh{h} = \sum_n \left( \wh{\alpha}_n \phi_n - (\wh{\alpha}_n + \beta_n) p_n \right)$ where $p_n$ is the average of $\phi_n \circ f_{T_u}^{-1}$ on $\h \cap \partial \D$.

For each $N \in \N$, we let $h_N = \sum_{n=1}^N \left( \wh{\alpha}_n \phi_n - (\wh{\alpha}_n + \beta_n) p_n \right)$ and we let $h^N = h-h_N$.  Since $\eta$ determines $f_{T_u}$ hence $p_n$ for every $n$, it follows that $h_N$ and $h^N$ are conditionally independent given~$\eta$.  Let $m_{q,r}^N$ be the restriction of $m^\phi$ to $[q,r]$ where $\phi = -h_N$.  Let $\qlt_t^N = \qlt_t^\phi$ be the associated local time at~$0$ of~$Y^\phi$.  By Lemma~\ref{lem::quantum_local_time_change_field} we have that
\begin{align}
   dm_{q,r}(w) dh d\eta
&= \exp\left(\frac{\gamma(2-d)}{2} h_N(\eta(T_w)) \right) dm_{q,r}^N(w) dh d\eta. \label{eqn::quantum_local_gff_weighted_expression}
\end{align}
Since $h_N$ is conditionally independent of $(\qlt_t^N,h^N)$ given $\eta$, we can rewrite~\eqref{eqn::quantum_local_gff_weighted_expression} as
\begin{align}
dm_{q,r}(w) dh d\eta = \exp\left( \frac{\gamma(2-d)}{2} h_N(\eta(T_w)) \right) dh_N dm_{q,r}^N(w) dh^N d\eta.
\label{eqn::quantum_local_gff_weighted_expression2}	
\end{align}
Since $h_N(\eta(T_w)) = \sum_{n=1}^N (\wh{\alpha}_n \phi_n(\eta(T_w)) - (\wh{\alpha}_n + \beta_n) p_n)$, it follows that weighting the law of $h_N$ by $\exp( \tfrac{\gamma(2-d)}{2} h_N(\eta(T_w)) )$ has the effect of shifting the mean of $\wh{\alpha}_n$ by
\[ \frac{\gamma(2-d)}{2} \left( \phi_n(\eta(T_w)) - p_n \right)\]
for each $n \in \{1,\ldots,N\}$.  Consequently, under $\CZ_{q,r}^{-1} dm_{q,r} dh d\eta$ the conditional law of $h_N$ given $w$ and $\eta|_{[0,T_r]}$ is equal to the law of
\[ \breve{h}_N + \sum_{n=1}^N \left( \frac{\gamma(2-d)}{2} \left(\phi_n(\eta(T_w)) - p_n\right) \phi_n - \left(\breve{\alpha}_n + \beta_n + \frac{\gamma(2-d)}{2}\left( \phi_n(\eta(T_w)) - p_n \right) \right) p_n\right) \]
where $\breve{h}_N = \sum_{n=1}^N \breve{\alpha}_n \phi_n$ for i.i.d.\ $N(0,1)$ random variables $\breve{\alpha}_1,\ldots,\breve{\alpha}_N$.  We have that
\[ \sum_{n=1}^N \phi_n(\eta(T_w)) \phi_n(\cdot) \to G(\eta(T_w),\cdot) \quad\text{as}\quad N \to \infty\]
where $G$ is the Neumann Green's function on $\h$.  We similarly have that
\[ \sum_{n=1}^N \phi_n \circ f_{T_u}^{-1}(z) \phi_n(\cdot) \to G(f_{T_u}^{-1}(z),\cdot) \quad\text{as}\quad N \to \infty.\]
Note that this function is harmonic for $w \neq f_{T_u}^{-1}(z)$.  If we now average it over $z \in \h \cap \partial \D$ (and recall the definition of $p_n$), then we see that $\sum_{n=1}^N p_n \phi_n(\cdot)$ converges as $N \to \infty$ to a function which is harmonic off $f_{T_u}^{-1}(\h \cap \partial \D)$.  Equivalently, the limit can be written as the composition of a function which is harmonic off $\h \cap \partial \D$ and $f_{T_u}$.  Combining everything, this completes the proof of part~\eqref{it::ql_field_cond_law}.

Let us now explain how to deduce~\eqref{it::ql_path_cond_law}.  Given $\eta$ and $T_w$, the expectation with respect to the law of $h_N$ of $\exp(\tfrac{\gamma(2-d)}{2} h_N(\eta(T_w)))$ is equal to a constant times
\[ \exp\left( \frac{\gamma^2(2-d)^2}{8} \sum_{n=1}^N \big( \phi_n^2(\eta(T_w)) - 2 p_n \phi_n(\eta(T_w)) + p_n^2 \big) \right).  \]
The terms $\phi_n^2(\eta(T_w))$ are determined by $\eta$ and $T_w$ so do not enter into the Radon-Nikodym derivative.  If we take a limit as $N \to \infty$, the other terms in the exponential are given by
\[ \sum_{n=1}^\infty p_n^2 - 2 \sum_{n=1}^\infty \phi_n(\eta(T_w)) p_n = \iint G(f_{T_u}^{-1}(x), f_{T_u}^{-1}(y)) dx dy - 2 \int G(\eta(T_w),f_{T_u}^{-1}(x)) dx,\]
(where the integrals are over $\partial \D \cap \h$) as desired.

It is left to justify part~\eqref{it::ql_bubbles_independent}.  Under the (unweighted) law~$dh d\eta$ (but with the additive constant still fixed as in Theorem~\ref{thm::skinny_wedge_bubble_structure}), it follows from Theorem~\ref{thm::skinny_wedge_bubble_structure} that the beaded surface which consists of the bubbles parameterized by the bounded components of~$\h \setminus \eta([0,T_r])$ (from right to left) is given by that of a quantum wedge of weight $\rho+2$, up to a given amount of local time, from which the claimed result follows.
\end{proof}

\subsubsection{Proof of Theorem~\ref{thm::bubbles_quantum_local_typical}}
We will first give the proof of part~\eqref{it::quantum_typical_bubble_form}.  Let $(f_t)$ be the centered, forward Loewner flow associated with $\eta$.  We first recall that by Lemma~\ref{lem::zip_up_sle_kappa_rho_law} we know that $dh d\eta$ is invariant under applying a change of coordinates using~$f_{T_q}$.  That is, if $(h,\eta)$ are sampled from $dh d\eta$ then $(h \circ f_{T_q}^{-1} + Q\log|(f_{T_q}^{-1})'|,f_{T_q}(\eta)) \stackrel{d}{=} (h,\eta)$ where we take the first coordinate modulo additive constant.  If we fix the additive constant for $h$ so that the average of $h \circ f_{T_u}^{-1} + Q\log| (f_{T_u}^{-1})'|$ on $\h \cap \partial \D$ is equal to $0$, then the additive constant for $h \circ f_{T_q}^{-1} + Q\log|(f_{T_q}^{-1})'|$ is fixed so that if one cuts for another $u-q$ units of local time (here $u > q$) then the resulting field has average on $\h \cap \partial \D$ equal to $0$.

For $0 \leq q < r \leq u$, we let $m_{q,r}$ be as in the statement of Lemma~\ref{lem::field_weighted_by_quantum_local_time} and let $m_r = m_{0,r}$.  We now work under the law $\CZ_u^{-1} dm_u(w) dh d\eta$ where $\CZ_u = \CZ_{0,u}$ is the normalizing constant from Lemma~\ref{lem::field_weighted_by_quantum_local_time}.  We claim that the conditional law of the pair consisting of $f_{T_w}(\eta)$ and $h \circ f_{T_w}^{-1} + Q \log|(f_{T_w}^{-1})'|$ given $w$ is equal to that of an $\SLE_\kappa(\rho)$ process $\breve{\eta}$ in $\h$ with a single boundary force point located at $0^+$ weighted by the Radon-Nikodym derivative from part~\eqref{it::ql_path_cond_law} of Lemma~\ref{lem::field_weighted_by_quantum_local_time} and centered Loewner flow $(\breve{f}_t)$ with associated inverse local times $\breve{T}$ and
\[ \breve{h} = \wh{h} + \frac{\rho+2-\gamma^2}{\gamma} \log|\cdot| + \Fh \circ \breve{f}_{\breve{T}_{u-w}}\]
where $\wh{h}$ is a free boundary GFF and the additive constant is normalized so that the average of the field after applying the change of coordinates with $\breve{f}_{\breve{T}_{u-w}}$ on $\h \cap \partial \D$ is equal to $0$.  Here, $\Fh$ is the harmonic function from Lemma~\ref{lem::quantum_local_time_change_field}.  This will follow from three observations.
\begin{itemize}
\item The conditional law of $(w,h,\eta)$ under $\CZ_u^{-1} dm_u(w) dh d\eta$ given $w \in [q,r]$ is equal to $\CZ_{q,r}^{-1} dm_{q,r}(w) dh d\eta$.
\item If $(w,h,\eta)$ have law $\CZ_{q,r}^{-1} dm_{q,r}(w) dh d\eta$, then the triple consisting of $w-q$, $h \circ f_{T_q}^{-1} + Q\log|(f_{T_q}^{-1})'|$, and $f_{T_q}(\eta)$ has law $\CZ_{r-q}^{-1} dm_{r-q}(w) dh d\eta$.
\item The limit as $s \to 0$ of the law $\CZ_s^{-1} dm_s(w) dh d\eta$ is equal to the law of the triple consisting of $w=0$, an $\SLE_\kappa(\rho)$ process $\breve{\eta}$ on $\h$ from $0$ to $\infty$ with a single boundary force point located at $0^+$ weighted by the Radon-Nikodym derivative from part~\eqref{it::ql_path_cond_law} of Lemma~\ref{lem::field_weighted_by_quantum_local_time} with $w=0$, and
\[ \breve{h} = \wh{h} + \frac{\rho+2-\gamma^2}{\gamma} \log|\cdot| + \Fh \circ \breve{f}_{\breve{T}_u}\]
where $\wh{h}$ is a free boundary GFF on $\h$.  Here, the additive constant for $\breve{h}$ is fixed so that the average of $\breve{h} \circ \breve{f}_{\breve{T}_u}^{-1} + Q\log|(\breve{f}_{\breve{T}_u}^{-1})'|$ on $\h \cap \partial \D$ is equal to $0$ where $(\breve{f}_t)$ and $\breve{T}$ are the centered Loewner flow and inverse local times for $\breve{\eta}$ and $\Fh$ is the harmonic function from Lemma~\ref{lem::field_weighted_by_quantum_local_time}.  Indeed, this follows from the explicit forms of the conditional laws given in part~\eqref{it::ql_field_cond_law} and~\eqref{it::ql_path_cond_law} of Lemma~\ref{lem::field_weighted_by_quantum_local_time}.
\end{itemize}
Combining these three observations proves the claim.

Part~\eqref{it::ql_bubbles_independent} of Lemma~\ref{lem::field_weighted_by_quantum_local_time} implies that the bubbles which are to the right of $f_{T_w}(\eta)$ are locally given by the bubbles in a weight $\rho+2$ quantum wedge.  Thus the first assertion of the theorem follows by zooming in as in part~\eqref{it::quantum_wedge_limit} of Proposition~\ref{prop::quantum_wedge_properties}.

We now turn to prove part~\eqref{it::quantum_typical_stationary}.  We let $u,r > 0$ and consider the law $\CZ_u^{-1} dm_u dh d\eta$ (with the additive constant fixed so that the average of $h \circ f_{T_u}^{-1} + Q \log |(f_{T_u}^{-1})'|$ on $\h \cap \partial \D$ is equal to~$0$).  We will eventually take a limit as $u \to \infty$ but we will leave $r$ fixed.  Let $s_u = m([0,u])$ be the total amount of local time of the bubbles cut out by $\eta([0,T_u])$ as measured by the Bessel process $Y$ which encodes the weight $\rho+2$ quantum wedge.  We note that, at the moment, these bubbles are naturally ordered from right to left (rather than from left to right as in the statement).  Suppose that $w$ has been picked from $m_u$, $t = m_u([0,w])$, and let $t' = t+r$.  Note that we can sample from the law of $t$ by first picking $U$ uniformly in $[0,m_u([0,u])]$ then taking $t = U$.  We can similarly sample from~$t'$ by picking~$U'$ uniformly in $[0,m_u([0,u])]$ and then taking $t' = U'+r$.  Since $m_u([0,u]) \to \infty$ as $u \to \infty$, it follows that the total variation distance between the law of $U$ and the law of $U'+r$ tends to $0$ as $r \to \infty$, which implies that the same is true for the laws of $t$ and $t'$.  For $v \geq 0$, let $\wh{\qnt}_v$ be the capacity time for $\eta$ which corresponds to when $\eta$ has cut off $v$ units of local time as measured by $Y$ from $\infty$.  It follows that, as $u \to \infty$, the total variation distance between the laws of $(h \circ f_{\wh{\qnt}_t}^{-1} + Q\log|(f_{\wh{\qnt}_t}^{-1})'|, f_{\wh{\qnt}_t}(\eta))$ and $(h \circ f_{\wh{\qnt}_{t'}}^{-1} + Q\log|(f_{\wh{\qnt}_{t'}}^{-1})'|, f_{\wh{\qnt}_{t'}}(\eta))$ tends $0$.  By the proof of part~\eqref{it::quantum_typical_bubble_form} given above, we have that the local behavior near $0$ under the law of $(h \circ f_{\wh{\qnt}_t}^{-1} + Q\log|(f_{\wh{\qnt}_t}^{-1})'|, f_{\wh{\qnt}_t}(\eta))$ is equal to the law of the pair on the left side of~\eqref{eqn::quantum_local_time_invariance}.  Note that the right side of~\eqref{eqn::quantum_local_time_invariance} is obtained by shifting time in the left side by a given amount of quantum local time.  Part~\eqref{it::quantum_typical_stationary} follows because $(h \circ f_{\wh{\qnt}_{t'}}^{-1} + Q\log|(f_{\wh{\qnt}_{t'}}^{-1})'|, f_{\wh{\qnt}_{t'}}(\eta))$ is obtained from $(h \circ f_{\wh{\qnt}_t}^{-1}  + Q\log|(f_{\wh{\qnt}_t}^{-1})'|, f_{\wh{\qnt}_t}(\eta))$ in the same manner.

We are now going to finish the proof of the theorem by establishing the claimed result in the setting of a weight $\rho+2$ quantum cone decorated by an independent whole-plane $\SLE_\kappa(\rho)$ process.  First, we recall that Corollary~\ref{cor::cone_bubble_form} implies that the following is true.  Suppose that
\[ h = \wh{h} - \frac{\gamma^2+2-\rho}{2\gamma}\log|\cdot|\]
where $\wh{h}$ is a whole-plane GFF.  Let $\eta$ be an independent whole-plane $\SLE_\kappa(\rho)$ process.  We then assume that we have fixed the additive constant for $h$ as in the statement of Corollary~\ref{cor::cone_bubble_form} so that if $\ol{\eta}$ denotes the time-reversal of $\eta$ then the amount of mass in the component of $\C \setminus \ol{\eta}((-\infty,0])$ containing the origin is equal to $1$.  Corollary~\ref{cor::cone_bubble_form} then tells us that the structure of the bubbles, each viewed as a quantum surface, that $\eta$ separates from $\infty$ after it has separated $1$ unit of quantum mass from $\infty$ is given by that of a wedge of weight $\rho+2$.  We note that if we instead let $\epsilon > 0$ and fix the additive constant for $h$ so that the amount of mass in the component of $\C \setminus \ol{\eta}((-\infty,0])$ containing $0$ is equal to $\epsilon$ then the structure of the bubbles that $\eta$ separates from $\infty$ after it has separated $\epsilon$ units of mass from $\infty$ is also that of a weight $\rho+2$ quantum wedge.  Note that the same remains true if we then scale the pair $(h,\eta)$ so that $h_r(0) + Q\log r$ first hits $0$ at $r=0$.  Moreover, in the limit as $\epsilon \to 0$, the pair $(h,\eta)$ converges to a pair consisting of a weight $\rho+2$ quantum cone and independent whole-plane $\SLE_\kappa(\rho)$ process. \qed

\subsubsection{The case of $\SLE_{\kappa'}$ processes}

We now give the analog of Theorem~\ref{thm::bubbles_quantum_local_typical} for $\SLE_{\kappa'}$ processes.  We will not provide a separate proof since it follows from the same argument used to prove Theorem~\ref{thm::bubbles_quantum_local_typical} (using the representation of the Poissonian structure from Proposition~\ref{prop::unit_area_bl_construction}).

\begin{theorem}
\label{thm::sle_kp_quantum_local_typical}
Fix $\kappa' \in (4,8)$, let $\gamma=4/\sqrt{\kappa'}$, and suppose that $(\H,h)$ is a quantum wedge of weight $\tfrac{3\gamma^2}{2}-2$.  Let $\eta'$ be an $\SLE_{\kappa'}$ process in~$\h$ from~$0$ to~$\infty$ which is independent of~$\eta'$.  Then the following hold:
\begin{enumerate}[(i)]
\item The law of the beaded surface consisting of the components of $\h \setminus \eta'$ whose boundary is contained in $\eta'$ can be sampled from as in Theorem~\ref{thm::kappa_prime_bubbles} (with the $\kappa'$ value matched).
\item Let $(f_t)$ be the centered (forward) Loewner flow for $\eta'$.  Let $\qnt_u$ be the time parameterization of the bubbles in the previous part which comes from the first coordinate in the \ppp\ description.  For each $u \geq 0$, we have (as curve-decorated quantum surfaces) that
\begin{equation}
\label{eqn::sle_kp_typical_invariant}
(h,\eta') \stackrel{d}{=} (h \circ f_{\qnt_u}^{-1} + Q\log| (f_{\qnt_u}^{-1})'|, f_{\qnt_u}(\eta')).
\end{equation}
\end{enumerate}
The first item holds if we take $\eta'$ to be a whole-plane $\SLE_{\kappa'}$ process and replace $(\H,h)$ with a quantum cone $(\C,h)$ of weight $\tfrac{\gamma^2}{2}$.
\end{theorem}

\begin{definition}
\label{def::qnt_sle_kp}
The time parameterization~$\qnt_u$ from Theorem~\ref{thm::sle_kp_quantum_local_typical} is the {\bf quantum natural time} parameterization of the $\SLE_{\kappa'}$ process $\eta'$.
\end{definition}

\begin{remark}
\label{rem::quantum_typical_other_bessel}
In the setting of Theorem~\ref{thm::sle_kp_quantum_local_typical}, the multiple of $\log$ singularity at the starting point of $\eta'$ is given by $-\sqrt{8/3}$ in the case that $\kappa'=6$.  This is natural to expect in the context of \cite{ms2013qle} because $\QLE(8/3,0)$ has the interpretation of corresponding to first passage percolation on a $\sqrt{8/3}$-LQG surface.  In particular, this implies that in the quantum natural time version of $\QLE(8/3,0)$ (as opposed to the capacity time version constructed in \cite{ms2013qle}) we can pick the points from which we draw independent segments of $\SLE_6$ from the $\sqrt{8/3}$-LQG boundary measure.
\end{remark}

\section{Welding quantum wedges}
\label{sec::quantumwedge}

The purpose of this section is to describe the conformal welding of wedges with general weights, thus generalizing the results of \cite{she2010zipper}.  We will prove Theorem~\ref {thm::zip_up_wedge_rough_statement} in pieces:  Theorem~\ref {thm::zip_up_wedge_rough_statement} follows directly from Proposition~\ref{prop::cone_divide}, Proposition~\ref{prop::cone_divide_self_intersecting} and Theorem~\ref{thm::paths_determined}.

The first step is Section~\ref{subsec::adding_to_weight_2}, in which we will explain how to weld together a weight $W \geq \tfrac{\gamma^2}{2}$ wedge with a wedge of weight $2$ and that the resulting object is a wedge of weight $W+2$.  This will follow the strategy of \cite{she2010zipper}, though we will provide most of the details of the proof.  We will then show in Section~\ref{subsec::fat_zipping} that conformally welding the left and right boundaries of a wedge of weight $W \geq \tfrac{\gamma^2}{2}$ yields a quantum cone of weight $W$ with an independent non-self-intersecting whole-plane $\SLE_\kappa(\rho)$ process ($\kappa = \gamma^2$) with $\rho = W-2$ drawn on top of it.  Combining the results of Section~\ref{subsec::adding_to_weight_2} and Section~\ref{subsec::fat_zipping}, we will complete the proof of Theorem~\ref{thm::welding} by showing in Section~\ref{subsec::zipping_general_wedges} that the conformal welding of two wedges of weights $W_1, W_2 > 0$ yields a wedge of weight $W_1+W_2$.  We will then show in Section~\ref{subsec::zipping_thin_wedge} that welding the left and right sides of a wedge of weight $W \in (0,\tfrac{\gamma^2}{2})$ yields a quantum cone of weight $W$ with an independent self-intersecting whole-plane $\SLE_\kappa(\rho)$ with $\rho = W-2$ drawn on top of it.

\subsection{Welding a thick wedge and a weight $2$ wedge}
\label{subsec::adding_to_weight_2}

\begin{figure}[h!]
\begin{center}
\includegraphics[scale=0.85]{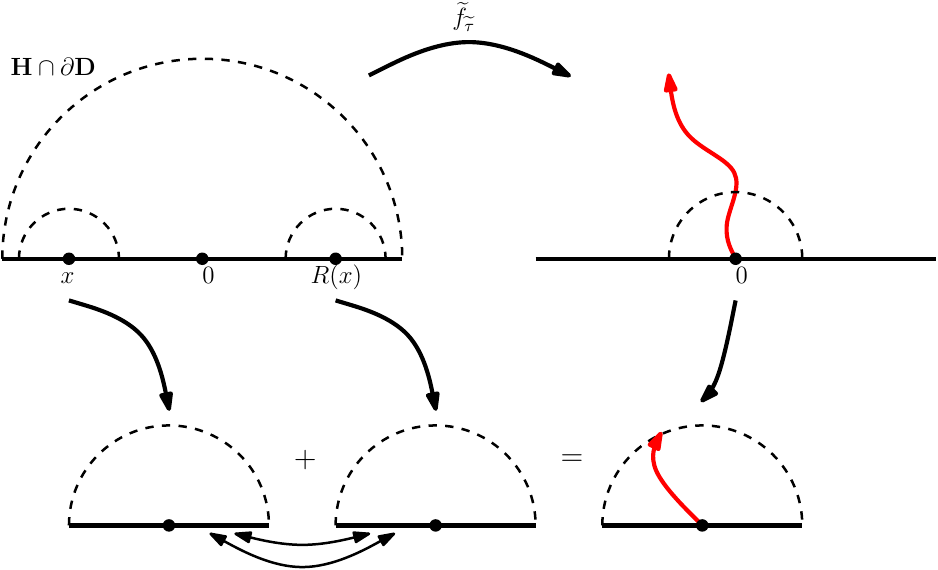}
\caption {\label{fig::triplezoom} Illustration of the proof of Proposition~\ref{prop::high_weight_welding}.  For some fixed $x \in (-1,0)$, write $h = \wh{h} + \frac{2}{\gamma} \log |\cdot| - \alpha \log|x - \cdot|,$ where $\wh{h}$ is a free boundary GFF on $\H$ normalized so that the mean of $h$ on $\h \cap \partial \D$ is equal to $0$.  Suppose that $(\wt{f}_t)$ is the centered reverse Loewner flow associated with a reverse $\SLE_\kappa(\wt{\rho})$ process with a single boundary force point of weight $\wt{\rho} = \alpha \gamma$ located at $x$.  Let $\wt{\tau} = \inf\{t \geq 0 : \wt{f}_t(x) = 0\}$ so that $\wt{f}_{\wt{\tau}}$ zips $[x,0]$ up with $[0,\wt{R}(x)]$.  Consider the three random surfaces obtained by choosing a semi-disk of quantum mass $\eps$ centered at each of $x$ and $\wt{R}(x)$ (on the left side) and $0$ (on the right side), and multiplying areas by $1/\eps$ (zooming in) so that three semi-disks each have unit quantum mass.  In the $\eps \to 0$ limit, part~\eqref{it::quantum_wedge_limit} of Proposition~\ref{prop::quantum_wedge_properties} implies that the left two quantum surfaces respectively converge to an independent $\alpha$-quantum wedge and $\gamma$-quantum wedge, and the right surface converges to the conformal welding of these two.}
\end{center}
\end{figure}

Suppose that $h = \wh{h} - \alpha \log|\cdot|$ where $\wh{h}$ is a free boundary GFF on~$\h$.  Assume that the additive constant for $h$ is fixed so that its average on $\h \cap \partial \D$ is equal to $0$.  Recall from~\eqref{eqn::wedge_weight} that the formula for the weight $W$ of the wedge $\CW$ which arises by rescaling $h$ near~$0$ as in part~\eqref{it::quantum_wedge_limit} of Proposition~\ref{prop::quantum_wedge_properties} is given by
\[ W = \gamma\left(\gamma+\frac{2}{\gamma}-\alpha\right).\]
The purpose of this section is to show that the conformal welding of a wedge with weight $W_1 \geq \tfrac{\gamma^2}{2}$ with a wedge of weight $W_2 = 2$ yields a wedge of weight $W_1 + W_2$.  The main result is the following proposition.

\begin{proposition}
\label{prop::high_weight_welding}
Suppose that $\CW$ is a wedge of weight $W \geq \tfrac{\gamma^2}{2}+2$ represented by some $(D,h,(z_1,z_2))$ with $z_1,z_2 \in \partial D$ distinct.  Let $\eta$ be an $\SLE_\kappa(W-4;0)$ process in $D$ independent of $\CW$ from $z_1$ to $z_2$ with a single boundary force point located at~$z_1^-$.  (Note that $\eta$ a.s.\ does not hit $\partial D$ except at its starting and terminal points since $W-4 \geq \tfrac{\kappa}{2}-2$.)  Let $D_1$ and $D_2$ denote the left and right components of $D \setminus \eta$.  Then the quantum surfaces $\CW_1 = (D_1,h,(z_1,z_2))$ and $\CW_2 = (D_2,h,(z_1,z_2))$ are independent and $\CW_1$ (resp.\ $\CW_2$) has the law of a quantum wedge with weight $W_1 = W-2$ (resp.\ $W_2 = 2$).
\end{proposition}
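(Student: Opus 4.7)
The plan is to generalize the welding argument from \cite[Theorem~1.2]{she2010zipper} (which handled $W_1 = W_2 = 2$) to welding a thick wedge of weight $W_1 = W-2 \geq \gamma^2/2$ with a weight-$2$ wedge. The essential inputs are the reverse $\SLE_\kappa(\rho)$/GFF coupling (Theorem~\ref{thm::reverse_coupling}), the forward/reverse symmetry of Proposition~\ref{prop::zip_up_to_zero}, and the zoom-in characterizations of quantum wedges (Proposition~\ref{prop::quantum_wedge_properties} and Proposition~\ref{prop::quantum_wedge_characterization}).

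Concretely, let $\alpha = \gamma + \tfrac{4}{\gamma} - \tfrac{W}{\gamma}$, which is the exponent of a weight-$(W-2)$ wedge, and set $\rho = \alpha\gamma = \gamma^2 + 4 - W$. For fixed $x \in (-1,0)$, take $h = \wt{h} + \tfrac{2}{\gamma}\log|\cdot| - \alpha\log|x-\cdot|$, where $\wt{h}$ is a free boundary GFF on $\h$ with additive constant chosen so that its mean on $\h \cap \partial\D$ vanishes. Run the centered reverse Loewner flow $(f_t)$ driven by a reverse $\SLE_\kappa(\rho)$ with force point at $x$ until $\tau = \inf\{t : f_t(x) = 0\}$; since $\rho < \tfrac{\kappa}{2}+2$ (equivalent to $W \geq \tfrac{\gamma^2}{2}+2$), one has $\tau < \infty$ almost surely. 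By Theorem~\ref{thm::reverse_coupling}, $h \stackrel{d}{=} h \circ f_\tau^{-1} + Q\log|(f_\tau^{-1})'|$ modulo additive constant, and by Proposition~\ref{prop::zip_up_to_zero} the curve $\eta_\tau$ traced by the reverse flow is an initial segment of a forward $\SLE_\kappa(\rho')$ with $\rho' = \kappa - \rho = W - 4$ and force point at $0^+$. Since $\rho' \geq \tfrac{\kappa}{2}-2$, this forward curve a.s.\ does not touch the real line apart from its endpoints.

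Following Figure~\ref{fig::triplezoom}, we then compare three zoom-in limits. On the pre-zip side, zooming near $x$, where only the singularity $-\alpha\log|x-\cdot|$ is present, produces by part~(ii) of Proposition~\ref{prop::quantum_wedge_properties} an $\alpha$-quantum wedge (weight $W-2$); zooming near $R(x) := f_\tau^{-1}(0^+)$, where $h$ carries no log singularity, produces a $\gamma$-quantum wedge (weight $2$). These two limits are asymptotically independent, since they localize to disjoint semi-disks of $h$ combined with independent additive rescalings. On the post-zip side, $f_\tau$ merges the singularities at $x$ and $0$ into a single singularity with total exponent $\alpha_W := \alpha - \tfrac{2}{\gamma}$, so zooming at $0$ produces a weight-$W$ wedge which is split by $\eta_\tau$ into two sub-surfaces. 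The reverse coupling identity forces the triple to couple so that the two post-zip sub-surfaces coincide with the two pre-zip wedges after welding. Passing from the initial segment $\eta_\tau$ to a full $\SLE_\kappa(W-4;0)$ from $0$ to $\infty$ is then handled by iterating and using the scale invariance of wedges (Proposition~\ref{prop::quantum_wedge_properties}(i)).

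The main obstacle will be to make the three zoom-in limits converge jointly on a common probability space, in a way that both preserves the asymptotic independence of the two pre-zip wedges and identifies their welding along the image of $\{x\} \cup \{R(x)\}$ with the splitting of the post-zip wedge along $\eta_\tau$. This requires quantitative control on the conformal maps entering each zoom-in (in particular on the derivative of $f_\tau$ at $0$), together with control on the circle averages of $h$ near each zoom point, analogous to the estimates carried out in \cite[Section~4]{she2010zipper} but now accommodating the extra logarithmic singularity at $x$. Once this joint convergence is in place, Proposition~\ref{prop::quantum_wedge_characterization} together with the additivity $\alpha_W = \alpha_{W-2} - \tfrac{2}{\gamma}$ completes the identification, and hence the proof.
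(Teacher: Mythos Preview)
Your overall strategy---the triple zoom of Figure~\ref{fig::triplezoom}, combining Theorem~\ref{thm::reverse_coupling}, Proposition~\ref{prop::zip_up_to_zero}, and Proposition~\ref{prop::quantum_wedge_properties}---is exactly the paper's approach, and your identification of the weights at $x$ and at $0$ (post-zip) is correct.

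There is, however, a genuine conceptual gap in your treatment of the zoom at $R(x)$. You write that zooming there ``where $h$ carries no log singularity, produces a $\gamma$-quantum wedge (weight $2$).'' This is false as stated: zooming into a free boundary GFF at a point where the field has no log singularity yields, by Proposition~\ref{prop::quantum_wedge_properties}(ii), a $0$-quantum wedge, which has weight $\gamma^2+2$, not weight $2$. A weight-$2$ wedge corresponds to $\alpha=\gamma$, i.e.\ to a point where the field is $\gamma$-thick. The reason the zoom at $R(x)$ actually produces a weight-$2$ wedge is that $R(x)$ is not a generic boundary point but is determined by the quantum-length matching condition $\nu_h([x,0]) = \nu_h([0,R(x)])$ (which follows from \cite[Theorem~1.3]{she2010zipper}); this makes $R(x)$ a ``quantum boundary-measure typical'' point, and the zoom-in law at such a point is governed by Proposition~\ref{prop::quantum_typical_zoom_in}. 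The paper isolates this step as Lemma~\ref{lem::high_weight_near_rx}, whose proof conditions on the boundary lengths $L_1 = \nu_h([x,x+r])$ and $L_2 = \nu_h([x+r,0])$ and uses the Markov property (together with the $\sigma$-algebra triviality of Lemma~\ref{lem::sigma_algebra_is_trivial}) to decouple the two zoom limits and to put $R(x)$ into the setting of Proposition~\ref{prop::quantum_typical_zoom_in}. Without this ingredient, your argument does not produce the correct weight on the right, and the independence claim (``they localize to disjoint semi-disks'') is also not justified, since $R(x)$ depends on $h$ globally through $\nu_h$.

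Two minor points: the condition $\rho < \tfrac{\kappa}{2}+2$ is equivalent to the strict inequality $W > \tfrac{\gamma^2}{2}+2$, so the boundary case $W = \tfrac{\gamma^2}{2}+2$ must be handled separately (the paper takes a limit); and with $x<0$ the forward force point sits at $0^-$, matching the $z_1^-$ in the statement, not $0^+$.
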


We are going to prove Proposition~\ref{prop::high_weight_welding} in the case that $W > \tfrac{\gamma^2}{2}+2$; the case that $W = \tfrac{\gamma^2}{2}+2$ then follows by taking a limit as $W \downarrow \tfrac{\gamma^2}{2}+2$.  Indeed, it is easy to see that the law of an $\SLE_\kappa(\rho)$ process is continuous in $\rho$ with respect to the Caratheodory topology because the law of the Bessel process used to define the driving function for the $\SLE_\kappa(\rho)$ is continuous with respect to the weak topology associated with local uniform convergence.  Similarly, the law of a thick quantum wedge is also continuous in $W$ as the law of the Bessel process used to define it is continuous in $W$.  The proof of Proposition~\ref{prop::high_weight_welding} will follow from Lemmas~\ref{lem::high_weight_near_origin}--\ref{lem::high_weight_near_rx}, stated and proved below, and the argument sketched in Figure~\ref{fig::triplezoom}.  In particular, Lemma~\ref{lem::high_weight_near_origin} gives the rightmost downward arrow in Figure~\ref{fig::triplezoom} and Lemma~\ref{lem::high_weight_near_x} and Lemma~\ref{lem::high_weight_near_rx} respectively give the leftmost and middle downward arrows in Figure~\ref{fig::triplezoom}.  We begin with the following elementary lemma for the free boundary GFF.

\begin{lemma}
\label{lem::sigma_algebra_is_trivial}
Suppose that $h$ is a free boundary GFF on $\h$ and $x \in \partial \h$.  For each $\delta > 0$, let $\CF_{x,\delta}$ be the $\sigma$-algebra generated by the restriction of $h$ to $\h \cap B(x,\delta)$.  Then  $\cap_{\delta >0} \CF_{x,\delta}$ is trivial.  
\end{lemma}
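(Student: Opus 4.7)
The plan is to establish triviality of $\bigcap_{\delta > 0} \CF_{x,\delta}$ by a Cameron-Martin shift argument.  Fix $A \in \bigcap_{\delta > 0} \CF_{x,\delta}$; the goal is to show $\P[A] \in \{0,1\}$.

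The key observation is that $A$ is invariant under adding to $h$ any $\phi \in C_c^\infty(\h)$ whose closed support avoids $x$.  Indeed, writing $d_0 := \dist(x, \ol{\mathrm{supp}(\phi)}) > 0$, for every $\delta \in (0, d_0)$ the restrictions $h|_{\h \cap B(x,\delta)}$ and $(h+\phi)|_{\h \cap B(x,\delta)}$ coincide; since $A \in \CF_{x,\delta}$, this forces $\mathbf{1}_A(h) = \mathbf{1}_A(h + \phi)$.  By the Cameron-Martin theorem for the free boundary GFF, the law of $h + \phi$ is absolutely continuous with respect to that of $h$ with Radon-Nikodym derivative $M_\phi := \exp\bigl((h,\phi)_\nabla - \tfrac{1}{2}\|\phi\|_\nabla^2\bigr)$.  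Combined with the invariance of $A$, this yields
\[
\P[A] \;=\; \E[\mathbf{1}_A(h+\phi)] \;=\; \E[\mathbf{1}_A(h)\, M_\phi],
\]
so $\E\bigl[(\mathbf{1}_A - \P[A])\, M_\phi\bigr] = 0$ for every admissible $\phi$.

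To close the argument I would invoke two density facts.  First, the subspace of $\phi \in C_c^\infty(\h)$ with $x \notin \ol{\mathrm{supp}(\phi)}$ is dense in $H(\h)$ in the Dirichlet norm; this relies on the failure of pointwise continuity in the two-dimensional Sobolev space $H^1$, so that functions vanishing in a neighborhood of any prescribed point remain dense (a standard logarithmic cutoff does the job).  Second, as $\phi$ ranges over any dense subspace of $H(\h)$, the Wick exponentials $\{M_\phi\}$ span a dense subspace of $L^2(\sigma(h))$, a classical feature of Gaussian Hilbert spaces.  Together these show $\mathbf{1}_A - \P[A]$ is orthogonal to a dense subspace of $L^2(\sigma(h))$, and hence to itself, which forces $\P[A] \in \{0,1\}$.

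The variant in which $h$ is replaced by $h + \Fh$ reduces immediately to the GFF case: since $\Fh$ is deterministic, the $\sigma$-algebras $\sigma\bigl((h + \Fh)|_{\h \cap B(x,\delta)}\bigr)$ and $\sigma\bigl(h|_{\h \cap B(x,\delta)}\bigr)$ coincide for every $\delta > 0$, so the tail $\sigma$-algebra is unchanged.  The only step requiring any genuine care is the density assertion for $\phi$ with support avoiding $x$; this is where the two-dimensionality is essential, but it is routine and will not present a real obstacle.
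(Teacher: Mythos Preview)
Your Cameron--Martin/zero--one law approach is sound and genuinely different from the paper's, but there is one point that needs repair. You take $\phi \in C_c^\infty(\h)$, i.e.\ with compact support in the \emph{open} half-plane; the Dirichlet closure of this space is $H_0(\h)$, not $H(\h)$, since $\partial\h = \R$ has positive capacity. Consequently the Wick exponentials $\{M_\phi : \phi \in C_c^\infty(\h)\}$ only span $L^2$ of the sub-$\sigma$-algebra generated by the $H_0(\h)$-projections of $h$, which is strictly smaller than $\sigma(h)$ for the \emph{free}-boundary field (the missing piece is the harmonic part). So the orthogonality you obtain does not yet force $\mathbf 1_A = \P[A]$. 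The fix is immediate: the invariance step $\mathbf 1_A(h) = \mathbf 1_A(h+\phi)$ requires only that $\phi$ vanish near $x$, not that it vanish on $\R$; so enlarge the admissible class to smooth elements of $H(\h)$ (equivalently, $C_c^\infty(\overline\h)$) whose support avoids a neighbourhood of $x$. Your logarithmic-cutoff remark then shows this class is dense in $H(\h)$ (a single boundary point has zero capacity), and the rest of the argument goes through.

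By way of comparison, the paper proceeds more concretely: it uses the Markov property to write the conditional law of $(h,\phi)$ given $\CF_{x,\delta}$ as a Gaussian with explicit mean $(\Fh_{x,\delta},\phi)$ and variance $\iint \phi\, G_{x,\delta}\, \phi$, then shows directly via Poisson-kernel estimates that the mean tends to zero in probability and the variance to the unconditioned value as $\delta\to 0$; reverse-martingale convergence then gives triviality. Your route is cleaner and more structural (a standard Gaussian tail-triviality argument), while the paper's gives quantitative control on how the conditioning degenerates, which may be closer in spirit to how the lemma is used downstream. Once you correct the function class, both arguments are complete.
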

\begin{proof}

Let $\phi \in C_0^\infty(\h)$ with $\int \phi(z) dz = 0$.  It suffices to show that the conditional law of $(h,\phi)$ given $\CF_{x,\delta}$ converges as $\delta \to 0$ to the unconditioned law of $(h,\phi)$.  Note that the latter is explicitly given by that of a Gaussian random variable with mean zero and variance $\iint \phi(y) G(y,z) \phi(z) dy dz$ where $G$ is the Neumann Green's function on~$\h$.  For each $\delta > 0$, we let $\Fh_{x,\delta}$ be the harmonic extension of the boundary values of~$h$ from $\h \cap \partial B(x,\delta)$ to $\h \setminus B(x,\delta)$.  We also let $G_{x,\delta}$ be the Green's function on $\h \setminus B(x,\delta)$ with Dirichlet (resp.\ Neumann) boundary conditions on $\h \cap \partial B(x,\delta)$ (resp. $\partial \h \setminus B(x,\delta)$).  By the Markov property for the GFF, the conditional law of $(h,\phi)$ given $\CF_{x,\delta}$ is given by that of a Gaussian random variable with mean $(\Fh_{x,\delta},\phi)$ and variance $\iint \phi(y) G_{x,\delta}(y,z) \phi(z) dy dz$.  It is clear that 
\[ \iint \phi(y) G_{x,\delta}(y,z) \phi(z) dy dz \to \iint \phi(y) G(y,z) \phi(z) dy dz \quad\text{as}\quad \delta \to 0\]
since $G_{x,\delta} - G \to 0$ locally uniformly as $\delta \to 0$.

To complete the proof, it suffices to show that $(\Fh_{x,\delta},\phi) \to 0$ in probability as $\delta \to 0$.  To see this, we note that $(\Fh_{x,\delta},\phi)$ is a Gaussian random variable with mean zero and variance given by $\iint \phi(y) \cov(\Fh_{x,\delta}(y),\Fh_{x,\delta}(z)) \phi(z) dy dz$ (see e.g., \cite[Section~4.2.4]{ms2013qle}).  Letting $\CP_{x,\delta}$ be the Poisson kernel on $\h \setminus B(x,\delta)$, we have that
\[ \cov(\Fh_{x,\delta}(y), \Fh_{x,\delta}(z)) = \iint \CP_{x,\delta}(y,u) \CP_{x,\delta}(z,v) G(u,v) dudv\]
where the integral is over $(\h \cap \partial B(x,\delta))^2$.  This quantity is easily seen to tend to $0$ as $\delta \to 0$ since $\CP_{x,\delta}(y,u) = O(1)$ uniformly over $y$ in the support of $\phi$ and $u \in \h \cap \partial B(x,\delta)$ as $\delta \to 0$.
\end{proof}

\begin{lemma}
\label{lem::high_weight_near_origin}
Fix $\kappa \in (0,4)$, $\alpha < Q$, and let $\wt{\rho} =  \alpha \gamma$.  Suppose that $\wh{h}$ is a free boundary GFF on~$\h$, $x < 0$, and let $h = \wh{h} + \tfrac{2}{\gamma}\log|\cdot| - \alpha\log|\cdot-x|$ with the additive constant fixed so that its average on $\h \cap \partial \D$ is equal to $0$.  Let $(\wt{f}_t)$ be the centered reverse Loewner flow associated with a reverse $\SLE_\kappa(\wt{\rho})$ process with a single boundary force point of weight $\wt{\rho}$ located at $x$.  For each $t \geq 0$, let
\begin{equation}
\label{eqn::high_weight_ht}
\wt{h}^t = h \circ \wt{f}_t^{-1} + Q\log|(\wt{f}_t^{-1})'(\cdot)|.
\end{equation}
Let $\wt{\tau} = \inf\{t \geq 0: \wt{f}_t(x) = 0\}$ and let $\wt{\eta}_{\wt{\tau}} = \h \setminus \wt{f}_{\wt{\tau}}(\h)$.  Then the pair $(\wt{h}^{\wt{\tau}},\wt{\eta}_{\wt{\tau}})$ rescaled as in part~\eqref{it::quantum_wedge_limit} of Proposition~\ref{prop::quantum_wedge_properties} converges in law to $(\CW,\eta)$ where $\CW$ is a wedge of weight $\gamma^2+4-\alpha \gamma$ and $\eta$ is a forward $\SLE_\kappa(\rho)$ process with $\rho = \kappa-\gamma \alpha$ with a single boundary force point located at $0^-$.  Moreover, $\CW$ and $\eta$ are independent.
\end{lemma}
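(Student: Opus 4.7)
The plan is to combine three ingredients: the reverse GFF/SLE coupling of Theorem~\ref{thm::reverse_coupling}, the forward/reverse correspondence of Proposition~\ref{prop::zip_up_to_zero}, and the wedge rescaling limit of Proposition~\ref{prop::quantum_wedge_properties}(ii). First I observe that the log part of $h$ matches $\wh\Fh_0$ exactly: with $\Fh_0(z) = \tfrac{2}{\sqrt\kappa}\log|z|$, a single boundary force point $x$ of weight $\rho = \alpha\gamma$, and $G(x,z) = -2\log|x-z|$, one has $\wh\Fh_0 = \tfrac{2}{\gamma}\log|\cdot| - \alpha\log|\cdot-x|$. Thus $h = \wh\Fh_0 + \wt h$, and Theorem~\ref{thm::reverse_coupling} supplies the distributional equality $\wh\Fh_0 + \wt h \stackrel{d}{=} \wh\Fh_\tau + \wt h \circ f_\tau$ on $\h$, in a coupling where the GFF on the right is an independent free-boundary GFF (hence in particular independent of $\eta_\tau$). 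Transporting this equality by $f_\tau^{-1}$ and adding $Q\log|(f_\tau^{-1})'|$ computes $h^\tau$: the $Q\log|f_\tau'(f_\tau^{-1}(\cdot))|$ term hidden inside $\Fh_\tau\circ f_\tau^{-1}$ exactly cancels the change-of-coordinate term $Q\log|(f_\tau^{-1})'|$, and since $f_\tau(x)=0$ at $t=\tau$ the Green's-function contribution collapses to $-\alpha\log|\cdot|$. What remains is
\begin{equation*}
h^\tau \stackrel{d}{=} \wt h + \left(\tfrac{2}{\gamma} - \alpha\right)\log|\cdot| \quad\text{on}\quad \h \setminus \eta_\tau,
\end{equation*}
with $\wt h$ an independent free-boundary GFF on $\h$.

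Next I identify $\eta_\tau$ via Proposition~\ref{prop::zip_up_to_zero}. Since $\alpha<Q$ yields $\rho=\alpha\gamma<\tfrac{\kappa}{2}+2$ (the hypothesis of that proposition), and since our force point is at $x<0$ rather than $x>0$, the obvious left-right symmetric version of Proposition~\ref{prop::zip_up_to_zero} gives that $\eta_\tau=\h\setminus f_\tau(\h)$ is an initial segment of a forward $\SLE_\kappa(\wt\rho)$ process from $0$ with $\wt\rho=\kappa-\alpha\gamma$ and a single force point of weight $\wt\rho$ located at $0^-$. Setting $\alpha^\tau:=\alpha-\tfrac{2}{\gamma}$, the first step shows the underlying field on $\h\setminus\eta_\tau$ has the form $\wt h - \alpha^\tau\log|\cdot|$, so Proposition~\ref{prop::quantum_wedge_properties}(ii) applies: adding $C/\gamma$ and taking $C\to\infty$ (i.e., rescaling so that smaller and smaller Euclidean neighborhoods of $0$ have unit quantum mass) produces an $\alpha^\tau$-quantum wedge in the limit, whose weight is $\gamma(\gamma+\tfrac{2}{\gamma}-\alpha^\tau)=\gamma^2+4-\alpha\gamma$, matching the claim. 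Under the same rescaling, scale invariance of forward $\SLE_\kappa(\wt\rho)$ gives that the rescaled initial segment $\eta_\tau$ converges in law to the full infinite forward $\SLE_\kappa(\wt\rho)$ from $0$ with force point at $0^-$.

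The main obstacle is to upgrade these marginal convergences to joint convergence with independence of the limits $\CW$ and $\eta$. The canonical conformal rescaling in Proposition~\ref{prop::quantum_wedge_properties}(ii) is random --- it normalizes the quantum mass of $\D$ and thus depends on $\wt h$ --- so the rescaled interface inherits an a priori dependence on $\wt h$. The resolution is that as $C\to\infty$ the rescaling concentrates at $0$: the limit wedge is a functional of the behavior of $\wt h$ in shrinking Euclidean neighborhoods of the origin (by Proposition~\ref{prop::quantum_wedge_properties}(i) the limit is invariant under multiplicative rescaling, so only arbitrarily small scales of $\wt h$ near $0$ matter), while the limit curve is a functional of $\eta_\tau$ in correspondingly shrinking neighborhoods of $0$ together with the scale-invariant law of forward $\SLE_\kappa(\wt\rho)$. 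Since $\wt h$ and $\eta_\tau$ are genuinely independent in the pre-limit (by the independence assertion of Theorem~\ref{thm::reverse_coupling}), the two limit functionals are also independent, yielding the claimed independence of $\CW$ and $\eta$.
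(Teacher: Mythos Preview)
Your proof is correct and follows essentially the same route as the paper: identify $h$ with $\wh\Fh_0+\wt h$ so that Theorem~\ref{thm::reverse_coupling} applies, compute that after zipping the force point to the origin the field $h^\tau$ has the form of a free boundary GFF plus $(\tfrac{2}{\gamma}-\alpha)\log|\cdot|$, invoke (the left--right reflected version of) Proposition~\ref{prop::zip_up_to_zero} to identify $\eta_\tau$ as an initial segment of forward $\SLE_\kappa(\kappa-\alpha\gamma)$ with force point at $0^-$, and then zoom in via Proposition~\ref{prop::quantum_wedge_properties}(ii). The paper's proof does exactly the same three things.

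The one place where you diverge is the final paragraph on independence. The paper handles this in a single sentence: from the reverse coupling one has that $h^\tau$ and $f_\tau$ (hence $\eta_\tau$) are independent, so the rescaled field and curve are automatically independent in the limit (using only the scale invariance of $\SLE_\kappa(\wt\rho)$ from the origin). Your argument about ``the limit wedge being a functional of $\wt h$ in shrinking neighborhoods of the origin'' is more circuitous than necessary and not quite accurate as written---$h^\tau$ near $0$ depends on $\wt h$ near $x$ and near $R(x)$ (via $f_\tau^{-1}$), not near the origin in the $\h$-parameterization. The cleaner line is the paper's: once $h^\tau\perp\eta_\tau$ is accepted, the rescaling factor is $\sigma(h^\tau)$-measurable, and since the conditional law of $\eta_\tau$ given $h^\tau$ is that of an $\SLE_\kappa(\wt\rho)$ segment (unchanged under rescaling by scale invariance), the rescaled curve converges to the full $\SLE_\kappa(\wt\rho)$ independently of the limit wedge.
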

\begin{proof}
The assumption that $\alpha < Q$ implies that $\wt{\rho} =  \alpha \gamma < \tfrac{\kappa}{2}+2$.  This, in turn, implies that the dimension $\delta$ of the Bessel process $\wt{f}_t(x)/\gamma$ (recall~\eqref{eqn::singleforce} and~\eqref{eqn::besseldim_reverse}) satisfies $\delta < 2$.  This implies that $\p[\wt{\tau} < \infty] = 1$ (recall Section~\ref{subsec::bessel_processes}).

By Proposition~\ref{prop::zip_up_to_zero}, we also know that $\wt{\eta}_{\wt{\tau}}$ is given by the initial segment of a forward $\SLE_\kappa(\rho)$ process with a single boundary force point located at $0^-$ of weight $\rho = \kappa-\wt{\rho}$ stopped at an a.s.\ positive time.  Moreover, by Theorem~\ref{thm::reverse_coupling}, we know that $\wt{f}_{\wt{\tau}}$ and $\wt{h}^{\wt{\tau}}$ are independent hence $\wt{\eta}_{\wt{\tau}}$ and $\wt{h}^{\wt{\tau}}$ are also independent.  Since $\wt{h}^{\wt{\tau}}$ can be written as the sum of a free boundary GFF on $\h$ plus $(\tfrac{2}{\gamma}-\alpha)\log|\cdot|$, it follows that the quantum surface described by $\wt{h}^{\wt{\tau}}$ rescaled as in part~\eqref{it::quantum_wedge_limit} of Proposition~\ref{prop::quantum_wedge_properties} converges to the type of wedge indicated in the statement.  Likewise, it is clear that $\wt{\eta}_{\wt{\tau}}$ converges to an independent $\SLE_\kappa(\rho)$ process drawn from $0$ to $\infty$ upon applying the same scaling.  Combining gives the result.
\end{proof}

\begin{lemma}
\label{lem::high_weight_near_x}
Suppose that $\wh{h}$ is a free boundary GFF on $\h$, $x < 0$, and $\alpha < Q$.  Let $\nu_h$ be the $\gamma$-LQG boundary length measure associated with $h=\wh{h}+\tfrac{2}{\gamma}\log|\cdot|-\alpha\log|x-\cdot|$, with the additive constant fixed so that its average on $\h \cap \partial \D$ is equal to~$0$.  Fix $r > 0$.  Conditional on both the values of $h$ restricted to $\h \setminus B(x,r)$ and $\nu_h([x,x+r])$, the law of the field near $x$ rescaled as in part~\eqref{it::quantum_wedge_limit} of Proposition~\ref{prop::quantum_wedge_properties} converges to a wedge of weight $\gamma^2 + 2 - \alpha \gamma$.
\end{lemma}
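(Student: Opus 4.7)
The plan is to verify the conditional convergence using the characterization of $\alpha$-quantum wedges in Proposition~\ref{prop::quantum_wedge_characterization}, together with the Markov property of the free boundary GFF and the rescaling argument of Proposition~\ref{prop::quantum_wedge_properties}(ii). First I would translate by $-x$ so that the $-\alpha\log|x-\cdot|$ singularity sits at the origin; this is harmless because the remaining deterministic term $\tfrac{2}{\gamma}\log|\cdot+x|$ becomes a smooth function in a neighborhood of the origin (and in particular on all of $B(0,|x|/2)$). After the translation, the conditioning becomes conditioning on the restriction of $h$ to $\h\setminus B(0,r)$ together with $\nu_h([0,r])$.

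Next I would apply the Markov property. Conditional on the values of $\widetilde h$ outside $B(0,r)\cap\h$, the restriction of $\widetilde h$ to $B(0,r)\cap\h$ decomposes as $\Fh+\widetilde h^0$, where $\Fh$ is the harmonic extension of the outside boundary values with Neumann boundary conditions on $(-r,r)$ and Dirichlet boundary conditions on $\h\cap\partial B(0,r)$ given by $\widetilde h$, and $\widetilde h^0$ is an independent GFF on $B(0,r)\cap\h$ with free boundary conditions on $(-r,r)$ and zero boundary conditions on $\h\cap\partial B(0,r)$. Inside $B(0,r)\cap\h$ we may therefore write $h=\Fh+f+\widetilde h^0-\alpha\log|\cdot|$, where $f(z)=\tfrac{2}{\gamma}\log|z+x|$ is smooth near $0$.

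Now I would run the rescaling procedure of Proposition~\ref{prop::quantum_wedge_properties}(ii): add $C/\gamma$ to $h$ and take the canonical description (rescale by a factor $\epsilon_C$ so that $\mu(\D)=1$). As $C\to\infty$ one has $\epsilon_C\to 0$, and the coordinate change~\eqref{eqn::Qmap} zooms in on $0$. Under this zoom, the smooth functions $\Fh$ and $f$ converge locally uniformly to constants that are absorbed into the additive-constant ambiguity of the canonical description, the singularity $-\alpha\log|\cdot|$ is preserved (modulo an explicit constant shift), and the Dirichlet portion of the boundary of $\widetilde h^0$ recedes to the image of $\partial B(0,r/\epsilon_C)$, so that $\widetilde h^0$ converges in law to a free boundary GFF on $\h$. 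Running through the circle-average/Brownian-motion computation from the proof of Proposition~\ref{prop::quantum_wedge_properties}(ii) verbatim (the extra smooth summands only contribute vanishing additive constants) shows that the unconditional rescaled limit is an $\alpha$-quantum wedge of weight $W=\gamma(\gamma/2+Q-\alpha)=\gamma^2+2-\alpha\gamma$.

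The final step is to check that the extra conditioning on $\nu_h([0,r])$ does not change this limit, and this is the main obstacle. The issue is that $\nu_h([0,r])$ is a genuinely global functional of the field inside $B(0,r)$ which depends on the additive constant through its $e^{\gamma\,\cdot\,/2}$-scaling. I would argue as follows. Boundary length is preserved by the coordinate change~\eqref{eqn::Qmap}, so under the rescaling the interval $[0,r]$ becomes $[0,r/\epsilon_C]$ with the same quantum length $L$; as $C\to\infty$, conditioning $\nu_h([0,r])=L$ therefore becomes a condition on the boundary length of a larger and larger initial segment of the right side of the putative wedge, which in the limit reduces to fixing the additive constant of the wedge. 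Since an $\alpha$-quantum wedge is invariant in law under multiplying its area by a constant (Proposition~\ref{prop::quantum_wedge_properties}(i)), fixing this constant is compatible with the limit being an $\alpha$-quantum wedge: the two hypotheses of Proposition~\ref{prop::quantum_wedge_characterization} (invariance under area rescaling and asymptotic matching with an $\alpha$-wedge on $B(0,\rho)$ as $\rho\to 0$) both hold for the conditional limit. The technical heart of this final step is a quantitative version of Lemma~\ref{lem::sigma_algebra_is_trivial}, applied to the rescaled Dirichlet/free GFF, showing that after the zoom the boundary-length functional couples only to the additive constant while the local structure near the origin becomes asymptotically independent of both pieces of conditioning.
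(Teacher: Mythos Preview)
Your overall strategy is more elaborate than necessary, and the final step has a genuine gap. The paper's proof is a two-line argument: let $\CF_{x,\delta}$ be the $\sigma$-algebra generated by $h|_{\h\cap B(x,\delta)}$; by Lemma~\ref{lem::sigma_algebra_is_trivial} the tail $\cap_{\delta>0}\CF_{x,\delta}$ is trivial, so by reverse martingale convergence the conditional law of the pair $(h|_{\h\setminus B(x,r)},\,\nu_h([x,x+r]))$ given $\CF_{x,\delta}$ tends to the unconditional law as $\delta\to 0$. One then \emph{reverses the conditioning via Bayes' rule}: the conditional law of $h|_{\h\cap B(x,\delta)}$ given the pair $(h|_{\h\setminus B(x,r)},\,\nu_h([x,x+r]))$ is close in total variation to its unconditional law for small $\delta$. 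Zooming in at $x$ and applying Proposition~\ref{prop::quantum_wedge_properties}\eqref{it::quantum_wedge_limit} finishes the proof.

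The gap in your argument is the last paragraph. You assert that under rescaling ``conditioning $\nu_h([0,r])=L$ \ldots\ in the limit reduces to fixing the additive constant of the wedge,'' but this is precisely the nontrivial claim. The boundary length $\nu_h([0,r])$ depends on the field values \emph{arbitrarily close to the singularity}, not merely through a single additive constant, and your appeal to Proposition~\ref{prop::quantum_wedge_characterization} does not help because you have not established hypothesis~(ii) there: you need the conditional law restricted to $B(0,\rho)$ to be close in total variation to that of an $\alpha$-wedge as $\rho\to 0$, and that is exactly what requires justification. You acknowledge this yourself when you say the ``technical heart'' is a quantitative form of Lemma~\ref{lem::sigma_algebra_is_trivial}; but once you have that, the Bayes' rule reversal gives the result immediately and the rest of your machinery (the Markov decomposition, the explicit tracking of $\Fh$ and $f$, the invariance argument) is unnecessary. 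The paper's route avoids ever having to analyze how the boundary-length conditioning interacts with the rescaling.
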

\begin{proof}
For each $\delta > 0$, we let $\CF_{x,\delta}$ be the $\sigma$-algebra generated by the values of~$h$ restricted to~$\h \cap B(x,\delta)$.  By Lemma~\ref{lem::sigma_algebra_is_trivial}, $\cap_{\delta > 0} \CF_{x,\delta}$ is trivial.  In particular, by the reverse martingale convergence theorem, the conditional law of the pair $(h|_{\h \setminus B(x,r)}, \nu_h([x,x+r]))$ given $\CF_{x,\delta}$ converges to the unconditioned law as $\delta \to 0$.  The result then follows by applying Bayes' rule to reverse the order of the conditioning (i.e., to consider the conditional law of the restriction of $h$ to $\h \cap B(x,\delta)$ given the pair $(h|_{\h \setminus B(x,r)}, \nu_h([x,x+r]))$) and then combining with part~\eqref{it::quantum_wedge_limit} of Proposition~\ref{prop::quantum_wedge_properties}.
\end{proof}

\begin{lemma}
\label{lem::high_weight_near_rx}
Suppose that $\wh{h}$ is a free boundary GFF on $\h$, $x < 0$, and $\alpha < Q$.  Let $h = \wh{h} + \tfrac{2}{\gamma}\log|\cdot| - \alpha\log|x-\cdot|$ with the additive constant fixed so that its average on $\h \cap \partial \D$ is equal to~$0$.  Let $\wt{\rho} = \alpha  \gamma $ and let $(\wt{f}_t)$ be the centered reverse Loewner flow associated with a reverse $\SLE_\kappa(\wt{\rho})$ process with a single boundary force point of weight $\wt{\rho}$ located at $x$.  Assume that $(\wt{f}_t)$ and $h$ are independent.  Let $\wt{R}(x) > 0$ be the unique point which is identified with $x$ under $(\wt{f}_t)$.  Then the joint law on quantum surfaces which arises by rescaling as in part~\eqref{it::quantum_wedge_limit} of Proposition~\ref{prop::quantum_wedge_properties} near both $x$ and $\wt{R}(x)$ is that of independent wedges respectively of weight $\gamma^2 + 2 - \alpha \gamma$ and of weight~$2$.
\end{lemma}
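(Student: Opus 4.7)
I plan to prove Lemma~\ref{lem::high_weight_near_rx} by combining independence of the free boundary GFF at widely separated boundary points with an identification of each of the two limiting wedges using the coupling of Theorem~\ref{thm::reverse_coupling}. First, I will establish independence: since $(f_t)$ is independent of $h$, I may condition on $R(x) = r > 0$ without changing the law of $h$, and for almost every realization $x$ and $r$ are at positive Euclidean distance. By Lemma~\ref{lem::sigma_algebra_is_trivial} applied at both $x$ and $r$, combined with the domain Markov property of the free boundary GFF, the restrictions of $h$ to shrinking neighborhoods of $x$ and of $r$ become asymptotically mutually independent. Passing to the joint rescaling limit along a common subsequence then gives the independence of the two limiting quantum surfaces.

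For the wedge at $x$, I will apply Lemma~\ref{lem::high_weight_near_x} directly: conditional on the field data outside a small neighborhood of $x$ together with the boundary-length observable $\nu_h([x, x+\delta])$, the rescaled surface near $x$ converges to an $\alpha$-quantum wedge, i.e., a wedge of weight $\gamma^2 + 2 - \alpha\gamma$. By the independence established in the previous step, this convergence holds jointly with the rescaling near $R(x)$.

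The main obstacle will be the identification of the wedge at $R(x)$ as a weight-$2$ wedge, since the field $h$ has no explicit $\log$ singularity at $R(x) > 0$ and a naive application of Proposition~\ref{prop::quantum_wedge_properties} would suggest the wrong weight. My plan here is to use Theorem~\ref{thm::reverse_coupling} to transfer the analysis to the zipped picture, where Lemma~\ref{lem::high_weight_near_origin} already identifies the limiting surface near the tip of $\eta_\tau$ as a wedge of weight $\gamma^2 + 4 - \alpha\gamma$ decorated by an independent $\SLE_\kappa(\wt{\rho})$ process with $\wt{\rho} = \kappa - \alpha\gamma$. Un-zipping a small neighborhood of the tip via $f_\tau^{-1}$ produces two quantum surfaces separated by a short initial segment of the $\SLE_\kappa(\wt{\rho})$ curve: one near $x$ and one near $R(x)$. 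The left piece will match the wedge of weight $\gamma^2 + 2 - \alpha\gamma$ already identified in the second paragraph, and the right piece must carry the remaining weight $(\gamma^2 + 4 - \alpha\gamma) - (\gamma^2 + 2 - \alpha\gamma) = 2$.

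Making this weight-accounting precise at the level of limiting quantum surfaces is the delicate point, and it must be carried out without circularly invoking Proposition~\ref{prop::high_weight_welding} itself (the very statement this lemma is used to prove). Concretely, my plan is to analyze the local behavior of $f_\tau^{-1}$ near the tip, which has a square-root branch since the tip opens up at interior angle $2\pi$ in $\h \setminus \eta_\tau$, and to track how the $\tfrac{2}{\gamma}\log|\cdot|$ singularity at $0$, the $-\alpha\log|\cdot - x|$ singularity at $x$, and the $Q\log|(f_\tau^{-1})'|$ coordinate-change term partition between small neighborhoods of $x$ and $R(x)$ in the pre-zip picture. Combined with part~\eqref{it::quantum_wedge_limit} of Proposition~\ref{prop::quantum_wedge_properties} and the identification at the tip from Lemma~\ref{lem::high_weight_near_origin}, this local analysis will pin down the effective $\gamma$-log singularity at $R(x)$ and hence the weight-$2$ wedge structure there.
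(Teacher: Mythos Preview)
Your plan for the wedge at $x$ and for the mutual independence is on the right track, but the argument for the wedge at $R(x)$ has a genuine gap that is not easily repaired by the local analysis you sketch.

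First, a small confusion: in the zipped picture the point $0$ is the \emph{base} of $\eta_\tau$, not its tip. Zooming in near $0$ sends the tip of $\eta_\tau$ to $\infty$, so there is no square-root branch point in the rescaled picture at the location you care about. The local expansion of $f_\tau^{-1}$ near the base does not have the simple $z^{1/2}$ behavior you invoke, and in any case depends on the (random, rough) geometry of the $\SLE$ curve near its starting point.

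More importantly, even after correcting this, your two proposed routes both run into trouble. The weight-accounting route is, as you note, exactly Proposition~\ref{prop::high_weight_welding} and hence circular. The direct singularity-tracking route would require you to conformally map the region to the right of the $\SLE_\kappa(\wt\rho)$ curve back to $\h$ and read off the effective $\log$-singularity at the image of the base. But that conformal map depends on the entire random curve, and there is no way to extract the coefficient $\gamma$ at $R(x)$ without already knowing how an $\SLE_\kappa(\wt\rho)$ slice partitions the singularity of the ambient wedge --- which is again the content of the proposition you are trying to prove. Your first paragraph also does not help here: conditioning on $R(x)=r$ with $h$ unchanged and then zooming in at the fixed point $r$ would produce a $0$-quantum wedge (weight $\gamma^2+2$), not a weight-$2$ wedge, as you correctly observe; the independence argument alone cannot repair the identification.

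The paper bypasses all of this with a different idea. The key input (from \cite[Theorems~1.2,~1.3]{she2010zipper} via absolute continuity) is the quantum-length identity $\nu_h([x,0]) = \nu_h([0,R(x)])$: the welding is length-preserving, so $R(x)$ is precisely the boundary point at quantum distance $L := \nu_h([x,0])$ to the right of $0$. One then invokes Proposition~\ref{prop::quantum_typical_zoom_in} (restated from \cite{she2010zipper}), which says that zooming in near a boundary point conditioned to lie at a prescribed $\nu_h$-distance produces a $\gamma$-quantum wedge, i.e.\ weight $2$, even after conditioning on the values $L_1 = \nu_h([x,x+r])$, $L_2 = \nu_h([x+r,0])$ and on $h|_{\h\cap\partial B(x,r)}$. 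Since the field inside $B(x,r)$ is conditionally independent of the pair $(h|_{\h\setminus B(x,r)}, L)$ given $(h|_{\h\cap\partial B(x,r)}, L_1)$, combining this with Lemma~\ref{lem::high_weight_near_x} for the wedge at $x$ yields the joint statement. The crucial point you are missing is that $R(x)$ should be treated as a quantum-length-determined point rather than as an $\SLE$-determined Euclidean location; this is what makes Proposition~\ref{prop::quantum_typical_zoom_in} applicable and avoids any circular appeal to the welding theorem.
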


Before we prove Lemma~\ref{lem::high_weight_near_rx}, we first recall \cite[Proposition~5.5]{she2010zipper}.

\begin{proposition}
\label{prop::quantum_typical_zoom_in}
Fix $\gamma \in [0,2)$ and let $D$ be a bounded subdomain of $\h$ for which $\partial D \cap \R$ is a segment of positive length.  Let $\wh{h}$ be a GFF with zero boundary conditions on $\partial D \setminus \R$ and free boundary conditions on $\partial D \cap \R$.  Let $[a,b]$ be any sub-interval of $\partial D \cap \R$ and let $\Fh_0$ be a continuous function on $D$ that extends continuously to the interval $(a,b)$.  Let $dh$ be the law of $\Fh_0 + \wh{h}$, and let $\nu_h([a,b]) dh$ denote the measure whose Radon-Nikodym derivative with respect to $dh$ is $\nu_h([a,b])$.  Now suppose we:
\begin{enumerate}
\item Sample $h$ from $\nu_h([a,b])dh$ (normalized to be a probability measure).
\item Sample $x$ uniformly from $\nu_h$ restricted to $[a,b]$ (normalized to be a probability measure).
\item Let $h^*$ be $h$ translated by $-x$ units horizontally.
\end{enumerate}
Then as $C \to \infty$ the random surfaces $(\h,h^* + \tfrac{C}{\gamma})$ converge in law (with respect to the topology of convergence of doubly-marked quantum surfaces) to a $\gamma$-quantum wedge.  Moreover, the same remains true even if, when we choose $h$ and $x$, we condition on a particular pair of values $L_1 = \nu_h([a,x])$ and $L_2 = \nu_h([x,b])$.
\end{proposition}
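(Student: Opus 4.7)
The plan is to follow the same strategy used in the proof of Proposition~\ref{prop::unit_area_quantum_disk}, where the key input is the description of boundary-length-weighted GFF measures provided by Lemma~\ref{lem::weighted_quantum_surface_boundary}. First I would use a version of Lemma~\ref{lem::weighted_quantum_surface_boundary} adapted to the present setup (domain $D$, mixed boundary conditions, deterministic harmonic/continuous modification $\Fh_0$) to re-express the two-step sampling procedure. Specifically, drawing $h$ from $\nu_h([a,b])\,dh$ and then choosing $x$ uniformly from $\nu_h|_{[a,b]}$ is equivalent to first picking $x \in [a,b]$ from a measure with a continuous density (proportional to $\confrad(\cdot;D^\dagger)^{\gamma^2/4}$ restricted to $[a,b]$) and then sampling $h = \Fh_0 + \wt h + \tfrac{\gamma}{2} G_D(x,\cdot)$, where $G_D$ is the Green's function on $D$ with Neumann boundary conditions on $\partial D \cap \R$ and Dirichlet boundary conditions on $\partial D \setminus \R$. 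The proof of the unweighted version of Lemma~\ref{lem::weighted_quantum_surface_boundary} goes through verbatim after replacing the Neumann Green's function on $\h$ by $G_D$; the point is that $G_D(x,y) = -2\log|x-y| + (\mathrm{smooth})$ near $x \in \partial D \cap \R$ because this boundary segment is linear and carries free boundary conditions.

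Next I would translate by $-x$ so that the marked point sits at the origin, and decompose the resulting field on $D-x$ as a free boundary GFF on $D-x$ plus a deterministic function of the form $-\gamma \log|\cdot| + \psi(\cdot;x)$, where $\psi(\cdot;x)$ is continuous on a neighborhood of $0$ (it collects $\Fh_0(\cdot+x)$ together with the smooth part of $\tfrac{\gamma}{2} G_D(x,\cdot+x)$). The coefficient in front of $\log|\cdot|$ is exactly $-\gamma = -\tfrac{\gamma}{2} \cdot 2$, which is precisely the singularity corresponding to $\alpha = \gamma$ and hence weight $W=2$ (see Table~\ref{tab::wedge_parameterization}). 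By the absolute continuity of the free boundary GFF on $D-x$ with the free boundary GFF on $\h$ (after restriction to any fixed compact subset of $\h$), and by continuity of $\psi$, the law of $h^\ast$ restricted to $B(0,r)$ differs from the law of $\wt h_0 - \gamma\log|\cdot|$ restricted to $B(0,r)$ by a smooth perturbation which is bounded on $B(0,r)$ for each fixed $r > 0$. Applying part~\eqref{it::quantum_wedge_limit} of Proposition~\ref{prop::quantum_wedge_properties} to the shifted field $h^\ast + C/\gamma$, the additive constant $C/\gamma \to \infty$ dominates both $\psi$ and the effect of the subdomain, and the resulting doubly-marked quantum surface converges in law to a $\gamma$-quantum wedge. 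More formally, one applies Proposition~\ref{prop::quantum_wedge_characterization}: the limit is scale-invariant, and its law in shrinking neighborhoods of $0$ is total-variation close to that of the $\gamma$-wedge.

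For the conditional statement, in which one additionally conditions on the values $L_1 = \nu_h([a,x])$ and $L_2 = \nu_h([x,b])$, the plan is to invoke Lemma~\ref{lem::sigma_algebra_is_trivial}. Indeed, for each $\delta > 0$ the random variables $L_1,L_2$ (together with the values of $h$ outside a tiny neighborhood of the chosen marked point) are measurable with respect to the $\sigma$-algebra generated by $h$ on $D\setminus B(x,\delta)$. By Lemma~\ref{lem::sigma_algebra_is_trivial} the intersection of these $\sigma$-algebras as $\delta \to 0$ is trivial (after conditioning on $x$, which plays the role of the boundary point there), so by Bayes' rule the conditional law of $h$ on $B(x,\delta)$ given $(L_1,L_2)$ converges in total variation to the unconditional law as $\delta \to 0$. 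Combined with the preceding step, this shows that adding the conditioning on $(L_1,L_2)$ does not affect the local picture after zooming in, and the limit is the same $\gamma$-quantum wedge.

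The main obstacle I expect is bookkeeping rather than a conceptual difficulty: one must carefully verify that the smooth remainder $\psi(\cdot;x)$ from the combined contributions of $\Fh_0$, the coordinate change, and the nonsingular part of $G_D(x,\cdot)$ is uniformly controlled as $C \to \infty$ on any fixed compact set. This requires that the density of the marginal law of $x$ is continuous and positive on $(a,b)$ (so that the conditional construction makes sense), and that the derivative of $\Fh_0$ and the regularity of $\partial D$ near $(a,b)$ suffice to apply Proposition~\ref{prop::quantum_wedge_characterization}; both are guaranteed by the hypotheses.
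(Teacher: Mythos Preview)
Your proposal is correct and follows essentially the same approach the paper indicates: the paper recalls this result from \cite{she2010zipper} and simply notes that the first part is a consequence of part~\eqref{it::quantum_wedge_limit} of Proposition~\ref{prop::quantum_wedge_properties}, which is exactly your main tool, while your treatment of the conditional statement via Lemma~\ref{lem::sigma_algebra_is_trivial} and Bayes' rule is precisely the argument used in the proof of the closely related Lemma~\ref{lem::high_weight_near_x}. One small correction: with the deterministic shift $\Fh_0$ present, the marginal density of $x$ acquires an additional factor $e^{\gamma\Fh_0(x)/2}$ beyond $\confrad(\cdot;D^\dagger)^{\gamma^2/4}$, though this remains continuous on $(a,b)$ and does not affect the rest of your argument.
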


We note that the first part of Proposition~\ref{prop::quantum_typical_zoom_in} is a consequence of part~\eqref{it::quantum_wedge_limit} of Proposition~\ref{prop::quantum_wedge_properties}.

\begin{proof}[Proof of Lemma~\ref{lem::high_weight_near_rx}]
We first note that it follows from \cite[Theorems~1.2 and~1.3]{she2010zipper} and the absolute continuity properties of the GFF that $\nu_h([x,0]) = \nu_h([0,\wt{R}(x))])$.  Fix $r > 0$ so that $B(x,r) \cap \partial \h \subseteq \R_-$.  Let $L_1 = \nu_h([x,x+r])$, $L_2 = \nu_h([x+r,0])$, and let $L = L_1 + L_2$.  Fix $y > 0$ large.  Proposition~\ref{prop::quantum_typical_zoom_in} implies that the following is true.  Suppose that we pick $u \in [0,y]$ according to $\nu_h$ and condition on $\nu_h([0,u]) = L$.  Then the conditional law of $h$ translated by $-u$ given $L_1$, $L_2$, and the restriction of $h$ to $\h \cap \partial B(x,r)$ rescaled as in part~\eqref{it::quantum_wedge_limit} of Proposition~\ref{prop::quantum_wedge_properties} is described by that of a $\gamma$-quantum wedge (i.e., of weight $2$).  We also know from Lemma~\ref{lem::high_weight_near_x} that the conditional law of $h$ translated by $-x$ given $L_1$ and $h$ restricted to $\h \cap  \partial B(x,r)$ rescaled as in part~\eqref{it::quantum_wedge_limit} of Proposition~\ref{prop::quantum_wedge_properties} converges to that of a wedge of weight $\gamma^2 + 2 - \alpha \gamma$.  The lemma thus follows because the restriction of $h$ to $\h \cap B(x,r)$ is conditionally independent of the pair consisting of the restriction of $h$ to $\h \setminus B(x,r)$ and $L$ given the restriction of $h$ to $\h \cap \partial B(x,r)$ and $L_1$.
\end{proof}

\begin{proof}[Proof of Proposition~\ref{prop::high_weight_welding}]
Combine Lemmas~\ref{lem::high_weight_near_origin}--\ref{lem::high_weight_near_rx} with the argument described in the caption of Figure~\ref{fig::triplezoom}.
\end{proof}

\subsection{Zipping up a thick wedge}
\label{subsec::fat_zipping}

Fix $\alpha < Q$.  Recall from Section~\ref{subsec::cones} that an $\alpha$-quantum cone is the surface which arises by starting with an instance of the whole-plane GFF, adding to it $-\alpha\log|\cdot|$, and then rescaling as part~\eqref{it::quantum_cone_limit} of Proposition~\ref{prop::quantum_cone_properties} so that the resulting surface is invariant under the operation of multiplying its area by a constant.  Recall that the weight of an $\alpha$-quantum cone is given by
\[ W = 2\gamma(Q-\alpha).\]
The purpose of this section is to explain how zipping up the left and right sides of a quantum wedge with weight $W \geq \tfrac{\gamma^2}{2}$ (so that the wedge (together with its prime-end boundary) is homeomorphic to $\ol{\h}$) according to quantum length yields a quantum cone of weight $W$ where the image of the welding of $\R_-$ with $\R_+$ is given by an independent whole-plane $\SLE_\kappa(\rho)$ process with $\rho = W-2$.  This is equivalent to stating that cutting a quantum cone of weight $W$ along an independent whole-plane $\SLE_\kappa(\rho)$ process with $\rho = W-2$ yields a quantum wedge of weight $W$ and, moreover, that the quantum length of the image of the left side of a segment of the path is the same as that of the image of its right side.  This result is contained in the following proposition.  (See Figure~\ref{fig::fat_wedge_zipped_up} for an illustration; in Section~\ref{subsec::zipping_thin_wedge} we will explain how to generalize this procedure to wedges which are not homeomorphic to $\h$.)

\begin{proposition}
\label{prop::cone_divide}
Fix $W \geq \tfrac{\gamma^2}{2}$ and suppose that $\CC = (\C,h,0,\infty)$ is a quantum cone of weight~$W$.  Suppose that $\eta$ is a whole-plane $\SLE_\kappa(W-2)$ process independent of $\CC$ starting from~$0$.  Then $(\C \setminus \eta,h,0,\infty)$ is a quantum wedge of weight $W$.  Moreover, if $f \colon \h  \to \C \setminus \eta$ is a conformal transformation fixing $0$ and $\infty$, $\wh{h} = h \circ f + Q\log|f'|$, and $x \in \eta$, then the lengths of the intervals connecting $0$ and the two images of $x$ under $f^{-1}$ are the same with respect to the $\gamma$-LQG boundary measure $\nu_{\wh{h}}$ on $\R$.
\end{proposition}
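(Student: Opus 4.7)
The plan is to prove the two assertions of Proposition~\ref{prop::cone_divide} separately. For the boundary length matching, I will appeal to local absolute continuity and the chordal quantum zipper results of \cite{she2010zipper}. For the identification of $(\C \setminus \eta, h, 0, \infty)$ as a quantum wedge of weight $W$, I will combine the reverse radial $\SLE_\kappa(\rho')$--GFF coupling of Theorem~\ref{thm::radial_rho_coupling_existence} with the characterization Proposition~\ref{prop::quantum_wedge_characterization}, using the forward/reverse symmetry Proposition~\ref{prop::radial_zip_up_to_local_time} to match parameters.

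For the length-matching claim I would fix $x \in \eta \setminus \{0\}$ and a small neighborhood $U$ of $x$ with $0 \notin \overline U$. Whole-plane $\SLE_\kappa(\rho)$ is locally absolutely continuous with respect to chordal $\SLE_\kappa$ away from its starting point (Section~\ref{subsubsec::radial_whole_plane_sle}), and the restriction of the whole-plane GFF (with its log singularity at $0$) to $U$ is mutually absolutely continuous with a free boundary GFF on a suitable half-planar domain. The chordal quantum zipper of \cite[Theorems~1.2, 1.8]{she2010zipper} (see Theorem~\ref{thm::paths_determined}) then gives the equality of the two-sided $\gamma$-LQG length measures on $\eta \cap U$. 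A covering argument together with continuity of $\eta$ and the non-atomicity of $\nu_h$ upgrades this to length-matching on all of $\eta$.

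For the wedge identification, hypothesis (i) of Proposition~\ref{prop::quantum_wedge_characterization} (scale invariance under multiplying area by a constant) is immediate: the quantum cone $\CC$ has this invariance by Proposition~\ref{prop::quantum_cone_properties}(i), whole-plane $\SLE_\kappa(\rho)$ from $0$ is scale-invariant in law, and the two are independent. Hypothesis (ii) (that the local law near the origin agrees with that of a weight $W$ wedge) is the substantive step, and I would verify it using the reverse radial coupling. Starting from a free boundary GFF on $\D$ plus the $\log$ singularity at $0$ that represents a weight $W$ wedge restricted to $\D$ (so $\alpha' = Q + \gamma/2 - W/\gamma$ as in Definition~\ref{def::quantum_wedge}), I would apply the reverse radial $\SLE_\kappa(\rho')$ Loewner flow with $\rho'$ chosen via Proposition~\ref{prop::radial_zip_up_to_local_time} so that the associated forward process is radial $\SLE_\kappa(W-2)$. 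Theorem~\ref{thm::radial_rho_coupling_existence} asserts that the transported field has the same law modulo additive constant at every finite capacity time. Passing to the stationary $t \to \infty$ limit (paralleling the construction of whole-plane $\SLE_\kappa(\rho)$ from Section~\ref{subsubsec::radial_whole_plane_sle}), the curve becomes whole-plane $\SLE_\kappa(W-2)$ and the transported field converges, after the canonical rescaling, to an $\alpha$-quantum cone with $\alpha = Q - W/(2\gamma)$; this identification is verified via Proposition~\ref{prop::quantum_cone_characterization}. Reading the coupling backwards then yields the required local agreement near $0$.

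The main obstacle will be making the stationary limit rigorous: one must show that the centered reverse radial driving pair $(W_t, V_t)$ converges in a suitable sense to the stationary driver of whole-plane $\SLE_\kappa(W-2)$, and that the transported field converges as a quantum surface to the $\alpha$-cone. This requires careful tracking of the logarithmic singularities through the coordinate change formula \eqref{eqn::Qmap}. The consistency of parameters can be checked directly: the formula $\alpha = \alpha'/2 + 1/\gamma$ from Table~\ref{tab::cone_wedge_values} gives $\alpha = Q - W/(2\gamma)$ as required, and substituting into \eqref{eqn::cone_rho} recovers $\rho = W - 2$, so the welding of a weight $W$ wedge indeed produces a weight $W$ cone cut by a whole-plane $\SLE_\kappa(W-2)$.
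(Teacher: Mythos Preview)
Your length-matching argument via local absolute continuity and \cite[Theorems~1.2,~1.3]{she2010zipper} is exactly what the paper does, and the final identification step using scale invariance plus Proposition~\ref{prop::quantum_wedge_characterization}/\ref{prop::quantum_cone_characterization} is also in the same spirit.

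The genuine problem is the choice of forward/reverse symmetry. You invoke Proposition~\ref{prop::radial_zip_up_to_local_time} to pick $\rho'$ so that the reverse radial $\SLE_\kappa(\rho')$ corresponds to forward radial $\SLE_\kappa(W-2)$. But that proposition is stated only for $\rho \in (-2,\tfrac{\kappa}{2}-2)$: it runs until a given amount of \emph{local time} at the collision set of driving and force point, and this local time only exists when the process is boundary-intersecting. Here $W \geq \tfrac{\gamma^2}{2}$ forces $\rho = W-2 \geq \tfrac{\kappa}{2}-2$, so the forward radial process is simple and Proposition~\ref{prop::radial_zip_up_to_local_time} does not apply. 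There is no local-time stopping available, and hence no mechanism in your outline for producing the whole-plane curve from the reverse radial flow.

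The paper avoids this by using a different symmetry: Proposition~\ref{prop::zip_into_interior}. One runs a reverse \emph{chordal} $\SLE_\kappa(\rho')$ in $\h$ with a single \emph{interior} force point seeded infinitesimally above~$0$, with $\rho' = 2\alpha\gamma$. Proposition~\ref{prop::zip_into_interior} identifies the zipped-in curve, stopped when the force point reaches height~$r$, with a forward chordal $\SLE_\kappa(\rho'-8)$ with an interior force point at height~$r$; by \cite[Theorem~3]{sw2005sle_coordinate_changes} this is the same as radial $\SLE_\kappa(W-2)$ targeted at that interior point. Sending $r\to\infty$ (and translating by the force point location) then yields a whole-plane GFF plus $-\alpha\log|\cdot|$ together with an independent whole-plane $\SLE_\kappa(W-2)$. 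The coupling Theorem~\ref{thm::reverse_coupling} guarantees that the chordal field is preserved in law at every step, so the pushed-forward field on $\h$ has exactly the log-singularity of a weight-$W$ wedge near~$0$, and the characterization propositions finish the job. The consistency check you wrote down ($\alpha = \alpha'/2 + 1/\gamma$, $\rho = W-2$) is correct, but it is a byproduct of this interior-force-point mechanism, not of the local-time symmetry.
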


Note that the assumption that $W \geq \tfrac{\gamma^2}{2}$ implies that $\rho = W-2 \geq \tfrac{\kappa}{2}-2$.  This is the critical threshold at or above which the whole-plane $\SLE_\kappa(\rho)$ processes are simple and below which they are self-intersecting (see \cite[Section~2.1]{ms2013imag4}).

\begin{proof}[Proof of Proposition~\ref{prop::cone_divide}]
In the proof, we are going to describe our cones in terms of $\alpha$ values rather than weights.  We are going to prove the result in the case that $\alpha < \tfrac{2}{\gamma} + \tfrac{\gamma}{4}$; the case in which $\alpha = \tfrac{2}{\gamma} + \tfrac{\gamma}{4}$ then follows by taking a limit as $\alpha \uparrow \tfrac{2}{\gamma} + \tfrac{\gamma}{4}$.  Suppose that $\wh{h}$ is a free boundary GFF on~$\h$.  Let $\wt{\rho} = 2\alpha \gamma$ and let $(\wt{f}_t)$ be the centered reverse Loewner flow associated with a centered reverse $\SLE_\kappa(\wt{\rho})$ process with a single interior force point which starts infinitesimally above~$0$.  Note that $\alpha < \tfrac{2}{\gamma} + \tfrac{\gamma}{4}$ implies that $\wt{\rho} < \tfrac{\kappa}{2}+4$ so that the existence of this process is given by Proposition~\ref{prop::reverse_zip_in_solution}.  We assume that $\wh{h}$ and $(\wt{f}_t)$ are independent.  Let $\wt{Z}_t$ denote the evolution of the force point under $\wt{f}_t$.    Let~$G$ be the Neumann Green's function on~$\h$ (recall~\eqref{eqn::neumann_green_half_plane}).  For each $t \geq 0$, we let
\begin{equation}
\label{eqn::new_h_t}
 \wt{h}^t = \wh{h} \circ \wt{f}_t  + \frac{2}{\gamma} \log|\wt{f}_t(\cdot)| + \frac{\wt{\rho}}{2\gamma}G(\wt{f}_t(\cdot),\wt{Z}_t) + Q\log|\wt{f}_t'(\cdot)|.
\end{equation}
By Theorem~\ref{thm::reverse_coupling}, we have that $\wt{h}^t \stackrel{d}{=} \wt{h}^0$ for all $t \geq 0$.

For each $r > 0$, let $\wt{\tau}_r = \inf\{t > 0 : \im(\wt{Z}_t) = r\}$.  By Proposition~\ref{prop::zip_into_interior}, we know that we can also sample from the law of $\wt{f}_{\wt{\tau}_r}$ by first sampling $\wt{Z}_{\wt{\tau}_r}$, then drawing an independent (forward) $\SLE_\kappa(\breve{\rho})$ process $\wt{\eta}_{\wt{\tau}_r}$ from $0$ to $\infty$ in $\h$ with a single interior force point located at $\wt{Z}_{\wt{\tau}_r}$ of weight
\[ \breve{\rho} = \wt{\rho} - 8 = 2\alpha \gamma - 8\]
and then take $\wt{f}_{\wt{\tau}_r}$ to be the unique conformal map $\h \to \h \setminus \wt{\eta}_{\wt{\tau}_r}$ with $\wt{f}_{\wt{\tau}_r}(z) = z(1+o(1))$ as $z \to \infty$.  Note that
\[ \rho = 2+\kappa-2\alpha \gamma = \kappa-6-\breve{\rho}.\]
The above is thus in turn the same as drawing an independent radial $\SLE_\kappa(\rho)$ process~$\wt{\eta}_{\wt{\tau}_r}$ with a single boundary force point of weight~$\rho$ and targeted at~$\wt{Z}_{\wt{\tau}_r}$ (see \cite[Theorem~3]{sw2005sle_coordinate_changes}). 

Let 
\[ \ol{h}^{\wt{\tau}_r} = \wh{h}(\cdot + \wt{Z}_{\wt{\tau}_r}) + \frac{2}{\gamma} \log|\cdot + \wt{Z}_{\wt{\tau}_r}|+ \frac{\wt{\rho}}{2\gamma} G(\cdot+\wt{Z}_{\wt{\tau}_r},\wt{Z}_{\wt{\tau}_r}),\]
$\ol{\eta}_{\wt{\tau}_r} = \wt{\eta}_{\wt{\tau}_r} - \wt{Z}_{\wt{\tau}_r}$, and $\ol{f}_{\wt{\tau}_r} = \wt{f}_{\wt{\tau}_r} - \wt{Z}_{\wt{\tau}_r}$.  Then the law of the triple $(\ol{h}^{\wt{\tau}_r},\ol{\eta}_{\wt{\tau}_r},\ol{f}_{\wt{\tau}_r})$ converges as $r \to \infty$ to the law of the triple $(h,\eta,f)$ where: $h$ is a whole-plane GFF plus $-\alpha \log|\cdot|$, $\eta$ (after reversing time and applying \cite[Theorem~1.20]{ms2013imag4}) is a whole-plane $\SLE_\kappa(\rho)$ process from $0$ to $\infty$ independent of~$h$, and $f \colon \h \to \C \setminus \eta$ is a conformal transformation fixing $0$ and $\infty$.

Let $\acute{h} = h \circ f + Q\log|f'(\cdot)|$.  Then we know that $\acute{h} \stackrel{d}{=} \wt{h}^0$ where $\wt{h}^0$ is as in~\eqref{eqn::new_h_t}.  We fix the additive constant for $h$ (hence also $\acute{h}$) by setting its average on $\partial \D$ to be equal to $0$.  For each $x < 0$, we let $R(x) > 0$ be the unique point identified with~$x$ under~$f$.  Let $\nu_{\acute{h}}$ be the $\gamma$-LQG boundary measure associated with $\acute{h}$.  By applying absolute continuity for $\SLE_\kappa(\rho)$ processes and the GFF and combining with \cite[Theorem~1.3]{she2010zipper}, we know that the quantum lengths of $[x,0]$ and $[0,R(x)]$ under $\nu_{\wh{h}}$ a.s.\ agree.

Rescaling $(\C,h,0,\infty)$ as in part~\eqref{it::quantum_cone_limit} of Proposition~\ref{prop::quantum_cone_properties} yields an $\alpha$-quantum cone.  The law of~$\eta$ is not affected by the rescaling procedure since it is independent of~$h$.  Let $\CC = (\C,\mathring{h},0,\infty)$ be the resulting quantum cone, let $\mathring{\eta}$ be the resulting path, and let $\mathring{f} \colon \h \to \C \setminus \mathring{\eta}$ be the resulting conformal map.  We claim that $\mathring{h} \circ \mathring{f} + Q \log| \mathring{f}'|$ is a $(2\alpha-\tfrac{2}{\gamma})$-quantum wedge.  We assume that $\mathring{h}$ is such that the embedding of~$\CC$ is exactly as in Definition~\ref{def::quantum_cone}.  Then the restriction of~$\mathring{h}$ to~$\D$ is equal in law to that of a whole-plane GFF in $\C$ plus $-\alpha \log|\cdot|$ with the additive constant fixed so that its average is equal to $0$ on $\partial \D$.  From the discussion above, it is clear that the law of $\mathring{h} \circ \mathring{f} + Q \log| \mathring{f}'|$ in a sufficiently small neighborhood of $0$ (of random size) is the same as what one gets by starting with a free boundary GFF on $\h$, adding $-(2\alpha-\tfrac{2}{\gamma}) \log|\cdot|$, then restricting to the same small neighborhood.  Moreover, the law of the quantum surface described by $\mathring{h} \circ \mathring{f} + Q \log| \mathring{f}'|$ is invariant under the operation of multiplying its area by a constant because it is given as the image of a surface which has this property.  Thus the claim follows by applying Proposition~\ref{prop::quantum_wedge_characterization}.
\end{proof}

\subsection{Welding more general wedges: proof of Theorem~\ref{thm::welding}}
\label{subsec::zipping_general_wedges}

In this section, we are going to complete the proof of Theorem~\ref{thm::welding}.  The following are the main steps:
\begin{enumerate}
\item Show that dividing a weight $2+W$ wedge by an independent $\SLE_\kappa(0;W-2)$ process for $W > 0$ yields independent wedges of weight $2$ and $W$.  Proposition~\ref{prop::high_weight_welding} is the case that $W \geq \tfrac{\gamma^2}{2}$ and Lemma~\ref{lem::slice_2_plus_w} generalizes Proposition~\ref{prop::high_weight_welding} to include the case that $W \in (0,\tfrac{\gamma^2}{2})$.
\item Show that dividing a weight $2$ wedge by an independent $\SLE_\kappa(-W;W-2)$ process with $W \in (0,2)$ yields independent wedges of weight~$2-W$ and~$W$ (Lemma~\ref{lem::define_skinny_wedge}).
\item Show how to weld together wedges with weights $W_1,W_2 > 0$ with $W_1 + W_2 \in (0,2]$ and that the result is a wedge with weight $W_1 + W_2$ (Lemma~\ref{lem::slice_weight_2}).
\item Generalize to the setting of welding together wedges with weights $W_1,W_2 > 0$ and that the result is a wedge with weight $W_1 + W_2$.  Upon showing this, the proof of Theorem~\ref{thm::welding} will be complete.
\end{enumerate}

\begin{figure}
\begin{center}
\includegraphics[scale=0.85]{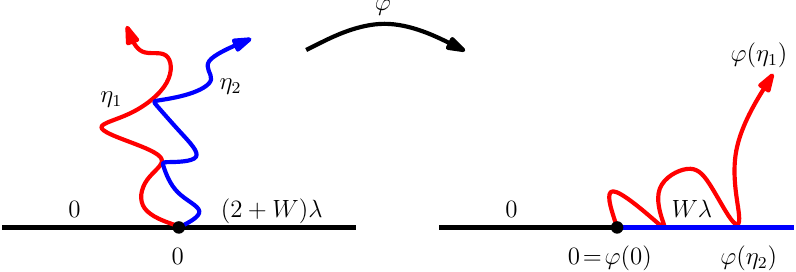}
\end{center}
\caption{\label{fig::slice_2_plus_w} Illustration of the setup of the proof of Lemma~\ref{lem::slice_2_plus_w}.  On the left, we have a wedge $\CW$ of weight $4+W$ and drawn on top of $\CW$ are flow lines of a common GFF on $\h$ which is independent of $\CW$ with the boundary data shown.  The angle of $\eta_1$ (resp.\ $\eta_2$) is $-\lambda/\chi$ (resp.\ $-(1+W)\lambda/\chi$).  The conformal map $\varphi$ takes the component of $\h \setminus \eta_2$ which is to the left of $\eta_2$ to $\h$ with $0$ and $\infty$ fixed.  The resulting surface is, by Proposition~\ref{prop::high_weight_welding}, a weight $2+W$ wedge which is sliced by $\varphi(\eta_1)$, an independent $\SLE_\kappa(0;W-2)$ process.}
\end{figure}

\begin{lemma}
\label{lem::slice_2_plus_w}
Fix $W > 0$ and suppose that $\CW = (\h,h,0,\infty)$ is a wedge of weight $2+W$.  Let~$\eta$ be an~$\SLE_\kappa(0;W-2)$ process in $\h$ from $0$ to $\infty$ with a single boundary force point of weight $W-2$ located at $0^+$ independent of $\CW$.  Then the quantum surfaces parameterized by the regions which are to the left and right of $\eta$ are independent of each other and are respectively given by wedges of weight $2$ and $W$.
\end{lemma}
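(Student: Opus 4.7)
The plan is to handle the two regimes $W \geq \tfrac{\gamma^2}{2}$ and $W \in (0,\tfrac{\gamma^2}{2})$ separately, both ultimately reducing to Proposition~\ref{prop::high_weight_welding}. In the first regime, $2+W \geq \tfrac{\gamma^2}{2}+2$, so the statement follows directly from Proposition~\ref{prop::high_weight_welding} applied to the wedge $\CW$, after using the anti-holomorphic involution $z \mapsto -\overline z$ on $\h$ to swap the force-point conventions $\SLE_\kappa(W-4;0) \leftrightarrow \SLE_\kappa(0;W-2)$ and correspondingly the roles of the two slicing weights.

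For $W \in (0,\tfrac{\gamma^2}{2})$, where the right-hand piece is a thin wedge, I would use the embedding of Figure~\ref{fig::slice_2_plus_w}. Start with a wedge $\CW_0 = (\h,h_0,0,\infty)$ of weight $4+W$; since $\gamma \in (0,2)$ forces $\tfrac{\gamma^2}{2}<2$, we have $4+W > \tfrac{\gamma^2}{2}+2$, so $\CW_0$ is thick and Proposition~\ref{prop::high_weight_welding} applies to it. Independently of $\CW_0$, introduce flow lines $\eta_1,\eta_2$ of an auxiliary GFF on $\h$ from $0$ at angles $-\lambda/\chi$ and $-(1+W)\lambda/\chi$, coupled through the imaginary geometry of \cite{ms2012imag1,ms2012imag2,ms2013imag4}. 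By the flow-line theory, $\eta_2$ lies to the right of $\eta_1$, and the marginal law of each $\eta_j$ is an $\SLE_\kappa(\rho_j^L;\rho_j^R)$ process whose weights are arranged so that the reflected version of Proposition~\ref{prop::high_weight_welding} yields: slicing $\CW_0$ by $\eta_2$ splits it into independent wedges of weights $2+W$ (left) and $2$ (right), while slicing $\CW_0$ by $\eta_1$ splits it into independent wedges of weights $2$ (left) and $2+W$ (right).

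Let $\varphi$ be the conformal map from the left component of $\h \setminus \eta_2$ onto $\h$ fixing $0$ and $\infty$, and set $\wt\CW = (\h, h_0\circ\varphi^{-1} + Q\log|(\varphi^{-1})'|, 0, \infty)$. Then $\wt\CW$ is a wedge of weight $2+W$ that is independent of $\eta_2$. Using the conformal covariance of GFF flow lines under $\varphi$ (with the $\chi \arg (\varphi^{-1})'$ correction), together with the boundary data inherited by the auxiliary GFF on the left component of $\h \setminus \eta_2$, one checks that conditionally on $\eta_2$ the curve $\varphi(\eta_1)$ is an $\SLE_\kappa(0;W-2)$ process in $\h$ from $0$ to $\infty$. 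Since this conditional law does not depend on $\eta_2$, $\varphi(\eta_1)$ is unconditionally an $\SLE_\kappa(0;W-2)$ process independent of $\wt\CW$. The region to the left of $\varphi(\eta_1)$ in $\wt\CW$ pulls back under $\varphi^{-1}$ to the region of $\CW_0$ lying to the left of $\eta_1$, which has already been identified as a wedge of weight $2$.

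The main obstacle is to identify the middle region of $\CW_0$ (between $\eta_1$ and $\eta_2$), and hence the right region of $\varphi(\eta_1)$ in $\wt\CW$, as a thin wedge of weight $W$ in the sense of Definition~\ref{def::skinny_wedge_bessel}. Proposition~\ref{prop::high_weight_welding} itself produces only thick wedges, and $2+W < \tfrac{\gamma^2}{2}+2$ rules out re-applying it inside $\wt\CW$. Instead, I would enumerate the beads of the middle region by the excursions of (the image of) $\eta_1$ away from (the image of) $\eta_2$, parameterize each bead by the strip $\strip$, and show that the projection of the describing field onto $\CH_1(\strip)$ encodes, via Proposition~\ref{prop::bessel_exponential_bm}, the excursions of a Bessel process of the dimension $\delta$ prescribed by~\eqref{eqn::bessel_wedge_correspondence} with $\rho = W-2$. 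The correct drift is forced by combining the two independence assertions from Proposition~\ref{prop::high_weight_welding} (applied to the slicings by $\eta_1$ and by $\eta_2$) with the boundary-length welding of $\CW_0$ along $\eta_2$, and then matching the resulting Poissonian structure of beads against the one appearing in Section~\ref{sec::structure_theorems}. Independence of the three pieces finally follows from assembling the two independence statements produced by the two applications of Proposition~\ref{prop::high_weight_welding}.
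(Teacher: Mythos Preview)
Your overall architecture matches the paper's: reduce $W\geq\tfrac{\gamma^2}{2}$ to Proposition~\ref{prop::high_weight_welding} by reflection, and for $W\in(0,\tfrac{\gamma^2}{2})$ embed the problem in a weight $4+W$ wedge sliced by two flow lines $\eta_1,\eta_2$, then combine the two applications of Proposition~\ref{prop::high_weight_welding} (one to each $\eta_j$) to obtain mutual independence of the three pieces. That part is correct and is exactly what the paper does.

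Where you diverge is in what you call the ``main obstacle'': identifying the middle region as a thin wedge in the Bessel sense of Definition~\ref{def::skinny_wedge_bessel}. The paper does \emph{not} do this here. Instead, just before the lemma statement, the paper adopts a provisional definition: for $W\in(0,\tfrac{\gamma^2}{2})$, a weight-$W$ wedge is \emph{defined} to be the beaded surface produced by this very construction. With that convention, the proof of the lemma is complete once the independence and the identification of the left region as a weight-$2$ wedge are established. The equivalence of this provisional definition with the Bessel one is deferred to Lemma~\ref{lem::wedge_definitions_equivalent}, whose proof appeals to Theorem~\ref{thm::bubbles_quantum_local_typical} (built in turn on Theorem~\ref{thm::skinny_wedge_bubble_structure}) and uses the full welding theory of Section~\ref{sec::quantumwedge} assembled in the intervening lemmas.

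Your sketch for the Bessel identification---``the drift is forced by combining the two independence assertions with the boundary-length welding, then matching the Poissonian structure against Section~\ref{sec::structure_theorems}''---is pointing at the right ingredients but is not a proof. The substantive step is exactly the content of Theorem~\ref{thm::skinny_wedge_bubble_structure} and Theorem~\ref{thm::bubbles_quantum_local_typical}: one has to show that the beads cut off by a boundary-intersecting $\SLE_\kappa(\rho)$ from the appropriate quantum surface have precisely the Bessel-excursion law, and that the quantum-natural-time invariance then forces the correct $\alpha$. The paper's strategy of decoupling the definition from the identification is what lets the argument close cleanly; attempting to prove the Bessel identification inline would require reproducing that machinery.
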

In the statement of Lemma~\ref{lem::slice_2_plus_w}, the quantum surface to the right of $\eta$ is not connected in the case that $W \in (0,\tfrac{\gamma^2}{2})$ so that the $\rho$ value associated with the force point of $\eta$ located at $0^+$ is in $(0,\tfrac{\kappa}{2}-2)$: it is a countable collection of beads that comes with a natural ordering (the order in which the boundaries are drawn by $\eta$), and a marked pair of boundary points for each set (corresponding to the first and last points on the boundary hit by $\eta$).  For the moment, we will take this as our definition of a wedge of weight $W \in (0,\tfrac{\gamma^2}{2})$ and show momentarily that this definition is equivalent to the one given in Definition~\ref{def::quantum_wedge}.
\begin{proof}[Proof of Lemma~\ref{lem::slice_2_plus_w}]
See Figure~\ref{fig::slice_2_plus_w} for an illustration of the setup.  Proposition~\ref{prop::high_weight_welding} gives the result in the case that $W \geq \tfrac{\gamma^2}{2}$, so we will assume that $W \in (0,\tfrac{\gamma^2}{2})$.  We will prove the result by considering the following modified setup.  Let $\CW = (\h,h,0,\infty)$ be a wedge of weight $4+W$.  Let $\eta_1$ and $\eta_2$ be flow lines of a common GFF on $\h$ with boundary conditions given by $0$ (resp.\ $(2+W)\lambda$) on $\R_-$ (resp.\ $\R_+$) with respective angles given by $-\lambda/\chi$ and $-(1+W)\lambda/\chi$ starting from $0$ and targeted at $\infty$.  Let $\CW_1$ (resp.\ $\CW_3$) be the quantum surface parameterized by the region which is to the left (resp.\ right) of $\eta_1$ (resp.\ $\eta_2$) and let $\CW_2$ be the quantum surface parameterized by the region in between $\eta_1$ and $\eta_2$.  Note that $\eta_1$ (resp.\ $\eta_2$) is marginally an $\SLE_\kappa(0;W)$ (resp.\ $\SLE_\kappa(W;0)$) process independent of $\CW$.  Moreover, the conditional law of $\eta_1$ given $\eta_2$ is that of an $\SLE_\kappa(0;W-2)$ and the conditional law of $\eta_2$ given $\eta_1$ is an $\SLE_\kappa(W-2;0)$ process.  Proposition~\ref{prop::high_weight_welding} implies that $(\CW_1, \CW_2)$ is independent of $\CW_3$ and $\CW_1$ is independent of $(\CW_2, \CW_3)$.  This implies that $\CW_1$, $\CW_2$, and $\CW_3$ are all independent of each other.  Moreover, the quantum surface parameterized by the region which is to the left of $\eta_2$ is a wedge of weight $2+W$ sliced by an $\SLE_\kappa(0;W-2)$ process, which proves the result.
\end{proof}

\begin{lemma}
\label{lem::define_skinny_wedge}
Suppose that $\CW = (\h,h,0,\infty)$ is a wedge of weight $2$, fix $W \in (0,2)$, and suppose that $\eta$ is an $\SLE_\kappa(-W;W-2)$ process in $\h$ from $0$ to $\infty$ with force points located at $0^-,0^+$ and independent of $\CW$.  Then the quantum surfaces parameterized by the regions which are to the left and right of $\eta$ are independent of each other and are respectively given by wedges of weight $2-W$ and $W$.
\end{lemma}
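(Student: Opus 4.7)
The plan is to adapt the strategy of the proof of Lemma~\ref{lem::slice_2_plus_w} by embedding the target configuration as a substructure of a thick auxiliary wedge sliced by two imaginary-geometry flow lines of an independent GFF. Let $\wt{\CW}$ be a quantum wedge of weight $4-W$; since $W \in (0,2)$, one has $4-W > 2 \geq \gamma^2/2$, so $\wt{\CW}$ is thick and parameterized by $\h$, and Lemma~\ref{lem::slice_2_plus_w} (together with its mirror image obtained by reflecting $\h$ across the imaginary axis) applies to it. Let $\CH$ be an auxiliary GFF on $\h$ independent of $\wt{\CW}$, with boundary data tuned so that two flow lines $\eta_\ell, \eta_r$ of $\CH$ starting from $0$, at suitable distinct angles with $\eta_\ell$ to the left of $\eta_r$, have marginal laws $\SLE_\kappa(-W;0)$ and $\SLE_\kappa(0;-W)$ respectively. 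The required boundary data and angles are dictated by standard imaginary-geometry bookkeeping \cite{ms2013imag4}, with angle gap equal to $W\lambda/\chi$.

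Applying Lemma~\ref{lem::slice_2_plus_w} with the parameter $W$ there replaced by $2-W$ (so that $(2-W)-2 = -W$) to $\eta_r$ shows that $\eta_r$ divides $\wt{\CW}$ into an independent pair: a wedge of weight $2$ on its left, which we identify as our $\CW$, and a wedge of weight $2-W$ on its right, which we call $\wt{\CW}_3$. Applying the mirror image of Lemma~\ref{lem::slice_2_plus_w} to $\eta_\ell$ analogously shows that $\eta_\ell$ divides $\wt{\CW}$ into an independent pair: a wedge of weight $2-W$ on its left, which we call $\wt{\CW}_1$, and a wedge of weight $2$ on its right. Denoting the region between $\eta_\ell$ and $\eta_r$ by $\wt{\CW}_2$, the two independence statements combine exactly as at the end of the proof of Lemma~\ref{lem::slice_2_plus_w}: the pair $(\wt{\CW}_1, \wt{\CW}_2)$ is independent of $\wt{\CW}_3$ and $\wt{\CW}_1$ is independent of $(\wt{\CW}_2, \wt{\CW}_3)$, which together force $\wt{\CW}_1, \wt{\CW}_2, \wt{\CW}_3$ to be mutually independent.

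It remains to identify $\wt{\CW}_2$ as a wedge of weight $W$ and to read off the conditional law of $\eta_\ell$ given $\eta_r$ inside the weight-$2$ wedge $\CW$ (namely, the left-of-$\eta_r$ component $\wt{\CW}_1 \cup \eta_\ell \cup \wt{\CW}_2$) as $\SLE_\kappa(-W;W-2)$ with force points at $0^-$ and $0^+$. This latter identification is the main obstacle: it requires tracking the boundary data for $\CH$ along the left side of $\eta_r$ through the conformal map from $\CW$'s domain to $\h$, using the imaginary-geometry convention in which the $\chi$-winding along $\eta_r$ combines with the $-\chi\arg$ coordinate-change term to produce a piecewise-constant effective boundary data, from which the weights of both force points of the conditional $\eta_\ell$ may be read off; this computation parallels the conditional-law computation at the end of the proof of Lemma~\ref{lem::slice_2_plus_w}. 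Once this step is in place, the three independent wedges $(\wt{\CW}_1, \wt{\CW}_2, \wt{\CW}_3)$ with weights $(2-W, W, 2-W)$ realize exactly the slicing of $\CW$ asserted by the lemma, with weight additivity forcing $\wt{\CW}_2$ to have weight $W$. The case of thin wedges (when either $W$ or $2-W$ lies in $(0, \gamma^2/2)$) is handled by the beaded-surface interpretation introduced immediately after the statement of Lemma~\ref{lem::slice_2_plus_w}, and the argument transfers without modification.
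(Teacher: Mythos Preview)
Your approach is structurally the same as the paper's: embed the target configuration inside a larger thick wedge sliced by two flow lines of an auxiliary GFF, apply Lemma~\ref{lem::slice_2_plus_w} once to each flow line to obtain the three-way independence, and read off the conditional law of one flow line given the other as $\SLE_\kappa(-W;W-2)$. The only real difference is your choice of auxiliary weight: you take $4-W$, while the paper takes $2+W$ (with flow lines of marginal laws $\SLE_\kappa(W-2;0)$ and $\SLE_\kappa(0;W-2)$ at angles $(1-W)\lambda/\chi$ and $-\lambda/\chi$). Your conditional-law computation is correct: the angle gap $W\lambda/\chi$ gives $\rho^R = W-2$, and the left boundary is unchanged so $\rho^L = -W$.

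The genuine gap is your line ``with weight additivity forcing $\wt{\CW}_2$ to have weight $W$.'' Weight additivity is precisely Theorem~\ref{thm::welding}, which this lemma is a step toward proving, so invoking it here is circular. Your choice of auxiliary weight $4-W$ makes the \emph{outer} pieces $\wt{\CW}_1,\wt{\CW}_3$ directly of Lemma~\ref{lem::slice_2_plus_w} form (weight $2-W$ each), but leaves the \emph{middle} piece $\wt{\CW}_2$ --- which is the right side of $\eta$ in the lemma statement --- without a direct identification. The paper's choice of $2+W$ does the opposite: there the region to the right of $\eta_2$ is literally the right side of an $\SLE_\kappa(0;W-2)$ in a weight-$(2+W)$ wedge, so Lemma~\ref{lem::slice_2_plus_w} immediately gives it weight $W$; it is the middle piece (the left side of $\eta$) that is not directly identified. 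Either choice can be completed without circularity by the reflection $z\mapsto -\bar z$, which sends $\SLE_\kappa(-W;W-2)$ to $\SLE_\kappa(-(2-W);(2-W)-2)$ and swaps left/right: having established that the left piece has weight $2-W$ for every $W\in(0,2)$, applying the same argument with $W$ replaced by $2-W$ and reflecting gives that the right piece has weight $W$. Replace your weight-additivity sentence with this reflection argument (or simply switch to the paper's auxiliary weight $2+W$) and the proof goes through.
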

\begin{proof}
We consider the following modified setup.  We suppose that $\CW = (\h,h,0,\infty)$ is a wedge of weight $2+W$.  We let $\eta_1$ and $\eta_2$ be flow lines of a common GFF on $\h$ independent of $\CW$ with boundary conditions given by $0$ on $\R_-$ and $W\lambda$ on $\R_+$ with respective angles given by $(1-W)\lambda/\chi$ and $-\lambda/\chi$.  Then $\eta_1$ is marginally an $\SLE_\kappa(W-2;0)$ process and $\eta_2$ is marginally an $\SLE_\kappa(0;W-2)$ process.  Moreover, the conditional law of $\eta_2$ given $\eta_1$ is that of an $\SLE_\kappa(-W;W-2)$ process and the conditional law of $\eta_1$ given $\eta_2$ is that of an $\SLE_\kappa(W-2;-W)$ process.  Let $\CW_1$ (resp.\ $\CW_3$) be the quantum surface parameterized by the region which is to the left (resp.\ right) of $\eta_1$ (resp.\ $\eta_2$) and $\CW_2$ be the quantum surface parameterized by the region which is in between $\eta_1$ and $\eta_2$.  Lemma~\ref{lem::slice_2_plus_w} implies that $(\CW_1, \CW_2)$ is independent of $\CW_3$ and that $\CW_1$ is independent of $(\CW_2, \CW_3)$.  This implies that $\CW_1$, $\CW_2$, and $\CW_3$ are all independent of each other.  Moreover, the quantum surface parameterized by the region which is to the right of $\eta_1$ is a wedge of weight $2$ sliced by an independent $\SLE_\kappa(-W;W-2)$ process.  This proves the result since the quantum surface parameterized by the region to the right of $\eta_2$ is a wedge of weight $W$ (and symmetry with $W$ replaced by $2-W$ implies that the surface to the left of $\eta$ implies that it is a weight $2-W$ wedge).
\end{proof}

\begin{figure}
\begin{center}
\includegraphics[scale=0.85]{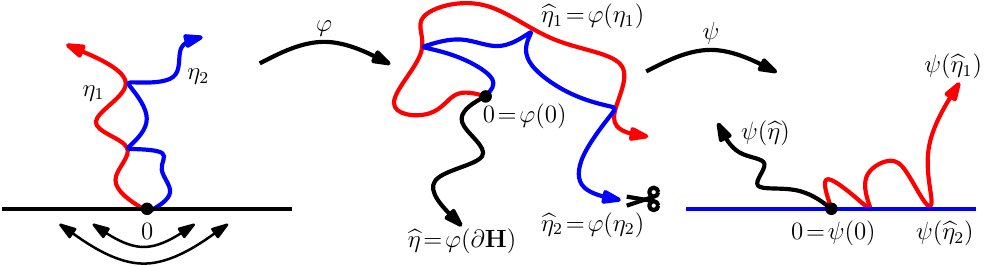}
\end{center}
\caption{\label{fig::zip_unzip} Illustration of the proof of Lemma~\ref{lem::skinny_well_defined}.  On the left, we have a weight~$2$ wedge~$\CW = (\h,h,0,\infty)$ with two paths $\eta_1$, $\eta_2$ (GFF flow lines) drawn on top of it which are independent of $\CW$.  To go from the picture on the left to the picture on the right, we zip up the left and right sides of $\CW$.  The zipping up map is $\varphi$ and the resulting object is, by Proposition~\ref{prop::cone_divide}, a quantum cone decorated with three paths which are independent of the cone and whose joint law can be described in terms of whole-plane GFF flow lines.  We then unzip along $\wh{\eta}_2 = \varphi(\eta_2)$ to yield, again by Proposition~\ref{prop::cone_divide}, a weight~$2$ wedge~$\wh{\CW}$ decorated with two independent paths whose joint law can be described in terms of GFF flow lines.  In particular, $(\psi(\wh{\eta}_1),\wh{\CW})$ is as in the statement of Lemma~\ref{lem::define_skinny_wedge}.}
\end{figure}

The following shows that there is a family of equivalent ways to construct a wedge of weight $W \in (0,\tfrac{\gamma^2}{2})$ as a slice of a weight $2$ wedge.

\begin{lemma}
\label{lem::skinny_well_defined}
Fix $W_1,W_2 > 0$ with $W_1 + W_2 \in (2-\tfrac{\gamma^2}{2},2)$.  Suppose that~$\eta_1$ and~$\eta_2$ are given by the flow lines starting from~$0$ and targeted at~$\infty$ of a GFF on~$\h$ with zero boundary values and with angles respectively given by $(1-W_1)\lambda/\chi$ and $-(1-W_2)\lambda/\chi$.  (Note that the marginal law of $\eta_1$ (resp.\ $\eta_2$) is given by that of an $\SLE_{\kappa}(W_1-2; -W_1)$ (resp.\  $\SLE_{\kappa}(-W_2; W_2-2)$ process in $\h$ from $0$ to $\infty$ with force points located at $0^-,0^+$).  Let $\CW = (\h,h,0,\infty)$ be a wedge of weight $2$ which is independent of $(\eta_1,\eta_2)$.   Then the beaded quantum surface bounded between $\eta_1$ and $\eta_2$ has the law of a wedge of weight $W = 2-(W_1+W_2)$.
\end{lemma}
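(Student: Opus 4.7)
The plan is to deduce the lemma from the already established Lemma~\ref{lem::define_skinny_wedge} by means of a zip-up/unzip argument, following the scheme depicted in Figure~\ref{fig::zip_unzip}. The key observation is that Lemma~\ref{lem::define_skinny_wedge} treats the special case in which $\eta_1$ is the negative real ray of the weight $2$ wedge itself; the general case will be recovered by performing a conformal welding followed by a new cutting operation that turns the configuration $(\CW,\eta_1,\eta_2)$ into exactly this special case.

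First I would zip up the left and right boundaries of $\CW$ via the map $\varphi$ from Proposition~\ref{prop::cone_divide} applied with $W=2$ (so $\rho = 0$). By that proposition, the resulting surface $\varphi(\CW)$ is a quantum cone of weight $2$, and the image $\eta_0 := \varphi(\R)$ is a whole-plane $\SLE_\kappa(0)$ process independent of the cone. Using the imaginary-geometry framework of \cite{ms2013imag4}, and the fact that quantum welding preserves the $\SLE$/GFF coupling along the seam (the boundary data prescribed on $\partial \CW$ is compatible on both sides of the identification), the three paths $\eta_0$, $\wt\eta_1 := \varphi(\eta_1)$, $\wt\eta_2 := \varphi(\eta_2)$ can be realized as flow lines of a common whole-plane GFF (plus the appropriate $-\alpha\log|\cdot|$ singularity) started from the origin, with angles that are shifts of the original angles of $\eta_1, \eta_2$ and of $0$.

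Next I would unzip along $\wt\eta_2$: by Proposition~\ref{prop::cone_divide} applied in the reverse direction to the weight-$2$ cone with the interface chosen to be $\wt\eta_2$ (which is a whole-plane $\SLE_\kappa(0)$ because of the angle symmetries of the whole-plane GFF, so that $\rho = W-2 = 0$), one obtains a new weight $2$ quantum wedge $\wt\CW$ whose left and right boundaries are the two sides of $\wt\eta_2$, and in which the remaining two paths $\eta_0$ and $\wt\eta_1$ become flow lines $\psi(\eta_0)$ and $\psi(\wt\eta_1)$ of a GFF on $\h$ independent of $\wt\CW$. The angles of these two new flow lines are precisely those prescribed in Lemma~\ref{lem::define_skinny_wedge} for the weights $W_1$ and $W-W_1 = 2 - (W_1 + W_2)$ (or, in a symmetric labeling, $W_2$ and $W$); indeed, the cyclic difference of angles between $\eta_0,\wt\eta_1,\wt\eta_2$ inherited from the original GFF equals the cyclic angle difference required to match the force-point weights in Lemma~\ref{lem::define_skinny_wedge}.

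Once this identification is established, Lemma~\ref{lem::define_skinny_wedge} applied to $(\wt\CW,\psi(\wt\eta_1))$ shows that the beaded surface between $\psi(\eta_0)$ and $\psi(\wt\eta_1)$ in $\wt\CW$ is a wedge of weight $W = 2-(W_1+W_2)$, independently of the two complementary wedges. Since the zip-up map $\varphi$ and the unzip map $\psi$ are conformal away from the seams and preserve quantum surfaces (and the quantum-surface structure is invariant under the welding/cutting by Proposition~\ref{prop::cone_divide}), the beaded surface between $\psi(\eta_0)$ and $\psi(\wt\eta_1)$ in $\wt\CW$ corresponds to the beaded surface between $\eta_1$ and $\eta_2$ in $\CW$. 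This yields the claim.

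The main obstacle will be the second step: verifying that the simultaneous welding of $\CW$ and cutting along $\wt\eta_2$ produce the stated angle configuration and can be done in a measurable, coupling-preserving way. In particular, one must check (using the local absolute continuity of GFF flow lines and the $\SLE_\kappa(\underline\rho)$/GFF coupling in $\CW$, which is absolutely continuous with respect to the free-boundary GFF in compact subsets away from $0$ and $\infty$) that the triple $(\eta_0,\wt\eta_1,\wt\eta_2)$ in the cone really has the joint law of three whole-plane GFF flow lines. Once this is granted, the rest of the argument is straightforward bookkeeping of the imaginary-geometry angles and applications of Proposition~\ref{prop::cone_divide} in both directions.
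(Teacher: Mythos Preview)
Your proposal is correct and follows essentially the same approach as the paper: zip up the weight $2$ wedge to a weight $2$ cone via Proposition~\ref{prop::cone_divide}, realize the three curves $\eta_0=\varphi(\R)$, $\wt\eta_1$, $\wt\eta_2$ as flow lines of a common whole-plane GFF (invoking \cite[Theorem~1.4, Theorem~1.11]{ms2013imag4}), unzip along $\wt\eta_2$ (again a whole-plane $\SLE_\kappa$) to obtain a new weight $2$ wedge $\wt\CW$, and then apply Lemma~\ref{lem::define_skinny_wedge} to $\psi(\wt\eta_1)$ inside $\wt\CW$.

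One small bookkeeping correction: after unzipping along $\wt\eta_2$, the region of $\CW$ between $\eta_1$ and $\eta_2$ becomes the region of $\wt\CW$ to the \emph{right} of $\psi(\wt\eta_1)$ (bounded by $\psi(\wt\eta_1)$ and the new boundary $\R$, which is the image of $\wt\eta_2$), not the region between $\psi(\eta_0)$ and $\psi(\wt\eta_1)$. This is exactly the region that Lemma~\ref{lem::define_skinny_wedge} identifies as a weight $W$ wedge when $\psi(\wt\eta_1)$ is an $\SLE_\kappa(-W;W-2)$, so the conclusion is unchanged.
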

\begin{proof}
See Figure~\ref{fig::zip_unzip} for an illustration of the proof.  We are first going to apply Proposition~\ref{prop::cone_divide} to zip up $\CW$ to yield a cone of weight $2$ and an independent whole-plane $\SLE_\kappa$ path $\wh{\eta}$.

Let $\varphi \colon \h \to \C \setminus \wh{\eta}$ be the conformal transformation associated with the zipping up process.  By \cite[Theorem~1.4]{ms2013imag4}, we can view~$\wh{\eta}$ as the flow line starting from~$0$ of a whole-plane GFF plus $-(\tfrac{\gamma}{2} - \tfrac{1}{\gamma})\arg(\cdot)$.  Moreover, we can realize~$\wh{\eta}_1 = \varphi(\eta_1)$ and~$\wh{\eta}_2 = \varphi(\eta_2)$ as flow lines of the same GFF starting from $0$ with angles respectively given by $(1-W_1)\lambda/\chi$ and $-(1-W_2)\lambda/\chi$.  This, in turn, gives us the marginal law of~$\wh{\eta}_2$ along with the conditional laws of each of~$\wh{\eta}$ and~$\wh{\eta}_1$ given the other two paths; see \cite[Theorem~1.11]{ms2013imag4}.  In particular, $\wh{\eta}_2$ is a whole-plane $\SLE_\kappa$ process.  Thus we can apply Proposition~\ref{prop::cone_divide} a second time to unzip along~$\wh{\eta}_2$ to yield a weight~$2$ wedge~$\wh{\CW}$ decorated with two independent paths.  Let $\psi \colon \C \setminus \wh{\eta}_2 \to \h$ be the corresponding unzipping map.  Then we know the joint law of $(\psi(\wh{\eta}),\psi(\wh{\eta}_1))$ can be sampled from by taking the paths to be flow lines starting from~$0$ and targeted at~$\infty$ of a GFF on $\h$ with zero boundary values and with respective angles $-(W_2-1)\lambda/\chi$ and $(W-1)\lambda/\chi$.  In particular, $\psi(\wh{\eta}_1)$ is an $\SLE_\kappa(-W;W-2)$ process and the pair $(\psi(\wh{\eta}_1),\wh{\CW})$ is as in the setting of Lemma~\ref{lem::define_skinny_wedge}.  We therefore know that the beaded surface to the right of $\psi(\wh{\eta}_1)$ is a weight $W$ wedge.  This completes the proof since this is the same (as quantum surfaces) as the region of~$\CW$ between~$\eta_1$ and~$\eta_2$.
\end{proof}

In Lemma~\ref{lem::define_skinny_wedge}, we showed that slicing a wedge of weight $2$ with an $\SLE_\kappa(-W;W-2)$ process yields a wedge of weight $W$.  We now generalize this to the case of wedges with weight in $(0,2]$.

\begin{lemma}
\label{lem::slice_weight_2}
Fix $W \in (0,2]$ and let $\CW = (\h,h,0,\infty)$ be a wedge of weight $W$.  Fix $W_1,W_2 > 0$ with $W_1 + W_2 = W$ and let $\eta$ be an $\SLE_\kappa(W_1-2;W_2-2)$ process in~$\h$ from~$0$ to~$\infty$ with force points located at $0^-,0^+$ which is independent of~$\CW$.  (In the case that $W \in (0,\tfrac{\gamma^2}{2})$, we have an independent $\SLE_\kappa(W_1-2;W_2-2)$ process in each of the beads of $\CW$.)  Then the quantum surface $\CW_1$ (resp.\ $\CW_2$) parameterized by the region which is to the left (resp.\ right) of $\eta$ is a weight $W_1$ (resp.\ $W_2$) wedge and $\CW_1, \CW_2$ are independent.
\end{lemma}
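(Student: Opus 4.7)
The case $W=2$ is immediate from Lemma~\ref{lem::define_skinny_wedge}: with $W_1+W_2=2$ one has $W_1-2=-W_2$, so the $\SLE_\kappa(-W_2;W_2-2)$ appearing in that lemma coincides with $\SLE_\kappa(W_1-2;W_2-2)$. The content lies in the case $W\in(0,2)$, which I will reduce to our existing lemmas by embedding the desired four-piece decomposition into a larger auxiliary wedge.

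Let $\CW'=(\h,h,0,\infty)$ be a wedge of weight $W+4$, independent of a GFF $\wt h$ on $\h$ with boundary data $0$ on $\R_-$ and $(W+2)\lambda$ on $\R_+$. Consider the three flow lines $\eta_L,\eta,\eta_R$ of $\wt h$ starting at $0$ and targeted at $\infty$ with angles $-\lambda/\chi$, $-(1+W_1)\lambda/\chi$, and $-(1+W)\lambda/\chi$, respectively; these partition $\h$ into four regions $\CW_{(1)},\CW_{(2)},\CW_{(3)},\CW_{(4)}$ from left to right. The imaginary geometry machinery of \cite{ms2012imag1,ms2013imag4} yields: marginally $\eta_L\sim \SLE_\kappa(0;W+2)$ and $\eta_R\sim \SLE_\kappa(W+2;0)$; conditionally on $\eta_L$, the path $\eta_R$ is an $\SLE_\kappa(W;0)$ in the weight $W+2$ region to the right of $\eta_L$; and conditionally on both $\eta_L$ and $\eta_R$, the path $\eta$ is exactly an $\SLE_\kappa(W_1-2;W_2-2)$ (concatenated bead-by-bead if the middle region is thin) inside the middle beaded surface $\CW:=\CW_{(2)}\cup\eta\cup\CW_{(3)}$.

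The next step is to apply Lemma~\ref{lem::slice_2_plus_w} twice (in its stated form and its left/right mirror) to the pair $(\eta_L,\eta_R)$: slicing $\CW'$ by $\eta_L$ produces an independent decomposition into a weight $2$ wedge $\CW_{(1)}$ on the left and a weight $W+2$ wedge on the right; then slicing that right-hand wedge by $\eta_R$ given $\eta_L$ (which is $\SLE_\kappa(W;0)$) produces an independent decomposition into a weight $W$ middle wedge (namely $\CW$) and the weight $2$ wedge $\CW_{(4)}$ on the right. This yields the three-way independence of $\CW_{(1)}$, $\CW$, $\CW_{(4)}$. Because the conditional law of $\eta$ given $\eta_L,\eta_R$ depends only on the interface data and not on $h$, while $\CW$ is determined by $h$ restricted to the middle region given $\eta_L,\eta_R$, one concludes that $(\CW,\eta)$ has the joint law of a weight $W$ wedge together with an independent $\SLE_\kappa(W_1-2;W_2-2)$ process drawn on top; so the lemma is reduced to identifying $\CW_{(2)},\CW_{(3)}$ as independent wedges of weights $W_1,W_2$ inside $(\CW,\eta)$.

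To finish this identification, the plan is to repeat the same two-slice argument but with different pairings of flow lines. Applying Lemma~\ref{lem::slice_2_plus_w} first via $\eta_R$ (peeling $\CW_{(4)}$ off from $\CW'$) and then via $\eta_L$ within the resulting weight $W+2$ wedge (now conditioned on $\eta_R$, where $\eta_L$ is marginally $\SLE_\kappa(0;W)$) identifies $\CW_{(1)}\cup\CW_{(2)}$ as a weight $W_1+2$ wedge and $\CW_{(3)}$ as an independent weight $W_2$ wedge; by the symmetric argument using $\eta_L$ and then $\eta_R$, one identifies $\CW_{(2)}$ as an independent weight $W_1$ wedge. In the thin regime $W\in(0,\gamma^2/2)$ or $W_i\in(0,\gamma^2/2)$ the wedges in question are beaded, and one additionally invokes Lemma~\ref{lem::skinny_well_defined} to verify that the Poissonian bead structure is preserved bead-by-bead under these slicings. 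The main obstacle will be precisely this last step: the intermediate slicings above are of a weight $W+2$ wedge, not a weight $2$ wedge, so Lemma~\ref{lem::slice_2_plus_w} does not directly apply to them as stated. Circumventing this requires either introducing an \emph{additional} flow line of $\wt h$ at an auxiliary angle so that each successive slice again peels off precisely a weight $2$ piece to which Lemma~\ref{lem::slice_2_plus_w} genuinely applies (with the final four-way decomposition obtained by matching the two orderings), or---equivalently---deducing the more general slicing from Lemma~\ref{lem::slice_2_plus_w} by applying it repeatedly to a still larger auxiliary wedge (weight $W+2k$ for $k$ flow lines); this iterative scheme is the anticipated content of the generalization Proposition~\ref{prop::slice_wedge_many_times} invoked in the introduction.
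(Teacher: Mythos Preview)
Your approach differs from the paper's and runs into a genuine gap that you partly acknowledge but do not resolve.

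\textbf{The gap.} In your auxiliary weight $W+4$ wedge with three flow lines, the two outer slices via $\eta_L,\eta_R$ are fine (modulo some miscomputed $\rho$-values: with boundary data $0$ and $(W+2)\lambda$ and angle $-\lambda/\chi$, the flow line is $\SLE_\kappa(0;W)$, not $\SLE_\kappa(0;W+2)$, and similarly for $\eta_R$). The problem is the identification of $\CW_{(2)},\CW_{(3)}$. In your final paragraph you try to slice the weight $W+2$ wedge by the middle flow line $\eta$; but conditionally on $\eta_R$, the law of $\eta$ in that region is $\SLE_\kappa(W_1;W_2-2)$, which is \emph{not} of the form $\SLE_\kappa(0;\cdot)$ or $\SLE_\kappa(\cdot;0)$ needed for Lemma~\ref{lem::slice_2_plus_w}. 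Your proposed workaround---add more flow lines so that each slice is a weight-2 peel, or invoke Proposition~\ref{prop::slice_wedge_many_times}---is circular: Proposition~\ref{prop::slice_wedge_many_times} is proved from Theorem~\ref{thm::welding}, whose proof uses the present lemma. And the ``more flow lines'' idea cannot work here either, since the pieces you are trying to identify have weights $W_1,W_2<2$, so there is no room to insert an intermediate flow line that carves off a weight-$2$ slab between $\eta$ and $\eta_R$.

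\textbf{What the paper does instead.} The paper embeds the problem in a weight-$2$ wedge rather than a weight-$(W+4)$ wedge. It takes two flow lines $\eta_1,\eta_2$ of a zero-boundary GFF with angles $(W-1)\lambda/\chi$ and $(W_2-1)\lambda/\chi$; then $\eta_1\sim\SLE_\kappa(-W;W-2)$ and $\eta_2\sim\SLE_\kappa(-W_2;W_2-2)$, and the conditional law of $\eta_2$ given $\eta_1$ is exactly $\SLE_\kappa(W_1-2;W_2-2)$ in the region to the right of $\eta_1$. Lemma~\ref{lem::define_skinny_wedge} identifies the right of $\eta_1$ as a weight-$W$ wedge and the right of $\eta_2$ as a weight-$W_2$ wedge; Lemma~\ref{lem::skinny_well_defined} (which is specifically about the region between two flow lines in a weight-$2$ ambient wedge) identifies the middle as a weight-$W_1$ wedge. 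The independence argument then proceeds as in Lemmas~\ref{lem::slice_2_plus_w} and~\ref{lem::define_skinny_wedge}. The point is that Lemmas~\ref{lem::define_skinny_wedge} and~\ref{lem::skinny_well_defined} already handle arbitrary weights in $(0,2)$, precisely because they live in a weight-$2$ ambient wedge; this is what your weight-$(W+4)$ setup sacrifices.
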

\begin{proof}
We consider the following modified setup.  We suppose that $\CW$ is a wedge of weight $2$.  We let $\eta_1$ and $\eta_2$ be flow lines of a GFF on $\h$ with zero boundary conditions with respective angles $(W-1)\lambda/\chi$ and $(W_2-1)\lambda/\chi$.  Then the marginal law of $\eta_1$ is that of an $\SLE_\kappa(-W;W-2)$ process and the marginal law of $\eta_2$ is that of an $\SLE_\kappa(-W_2;W_2-2)$ process, both in $\h$ from $0$ to $\infty$ with force points located at $0^-,0^+$.  Moreover, the conditional law of $\eta_2$ given $\eta_1$ is that of an $\SLE_\kappa(W_1-2;W_2-2)$ process in each of the components of $\h \setminus \eta_1$ which are to the right of $\eta_1$.  Let $\CW_1$ be the quantum surface parameterized by the region which is to the left of $\eta_1$, $\CW_2$ be the quantum surface parameterized by the region between $\eta_1$ and $\eta_2$, and $\CW_3$ be the quantum surface parameterized by the region to the right of $\eta_2$.  We also let~$\wh{\CW}$ be the quantum surface parameterized by the region which is to the right of $\eta_1$.  Lemma~\ref{lem::define_skinny_wedge} implies that~$\CW_1$ is a wedge of weight $2-W$, $\wh{\CW}$ is a wedge of weight $W$, and $\CW_3$ is a wedge of weight~$W_2$.  Lemma~\ref{lem::skinny_well_defined} implies that $\CW_2$ is a wedge of weight~$W_1$.  The same argument as in the end of the proofs of Lemma~\ref{lem::slice_2_plus_w} and Lemma~\ref{lem::define_skinny_wedge} (with Lemma~\ref{lem::define_skinny_wedge} and Lemma~\ref{lem::skinny_well_defined} in place of Proposition~\ref{prop::high_weight_welding}) implies that $\CW_2$ and $\CW_3$ are independent, which implies the result.
\end{proof}

\begin{lemma}
\label{lem::any_to_weight_2j}
Fix $W > 0$, $j \in \N$, and let $\CW = (\h,h,0,\infty)$ be a wedge of weight $2 j+W$.  Let $\eta$ be an $\SLE_\kappa(2j-2;W-2)$ process in $\h$ from $0$ to $\infty$ with force points located at $0^-,0^+$ independent of $\CW$.  Let $\CW_1$ (resp.\ $\CW_2$) be the quantum surface parameterized by the region which is to the left (resp.\ right) of $\eta$.  Then $\CW_1$ (resp.\ $\CW_2$) is a wedge of weight $2j$ (resp.\ $W$) and $\CW_1$ and $\CW_2$ are independent.
\end{lemma}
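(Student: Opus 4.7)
The plan is to proceed by induction on $j \geq 1$. The base case $j=1$ is exactly Lemma~\ref{lem::slice_2_plus_w} (slicing a weight $2+W$ wedge by $\SLE_\kappa(0;W-2)$ yields independent wedges of weights $2$ and $W$), so assume the statement holds for $j-1$ with $j \geq 2$.

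For the inductive step I would introduce an auxiliary two-path picture, in the spirit of the proofs of Lemmas~\ref{lem::slice_2_plus_w} and~\ref{lem::slice_weight_2}. Let $\CW=(\h,h,0,\infty)$ be a wedge of weight $2j+W$, and sample independently a GFF $\Fh$ on $\h$ with boundary data $-\lambda(2j-1)$ on $\R_-$ and $\lambda(W-1)$ on $\R_+$. Let $\eta_2$ be the flow line of $\Fh$ at angle $0$; this is marginally an $\SLE_\kappa(2j-2;W-2)$ process from $0$ to $\infty$ independent of $\CW$ by the imaginary-geometry couplings of \cite{ms2012imag1,ms2013imag4}. Let $\eta_1$ be the flow line of the same GFF at angle $(2j-2)\lambda/\chi>0$; since the relation $\rho^L+\rho^R=2j+W-4$ forced by the boundary data is consistent with either splitting $0+(2j+W-4)$ or $(2j-2)+(W-2)$, the path $\eta_1$ is marginally an $\SLE_\kappa(0;2j+W-4)$ process, and its strictly larger angle ensures it lies to the left of $\eta_2$.

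Next I would condition on $\eta_1$. By Lemma~\ref{lem::slice_2_plus_w} (applied with $W$ replaced by $2(j-1)+W$), the path $\eta_1$ partitions $\CW$ into two independent pieces, namely a wedge $\CW_1$ of weight $2$ on the left and a wedge $\wt{\CW}$ of weight $2(j-1)+W$ on the right. The imaginary-geometry rules for conditioning one flow line on another (as in \cite[Theorem~1.1]{ms2012imag1}) then identify the conditional law of $\eta_2$ given $\eta_1$ as an $\SLE_\kappa(2j-4;W-2)$ process in $\wt{\CW}$ with force points immediately to the left and right of the opening point. Applying the inductive hypothesis to $\wt{\CW}$ sliced by this $\eta_2$ yields independent wedges $\CW_2$ (middle, weight $2(j-1)=2j-2$) and $\CW_3$ (right, weight $W$). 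Since $\CW_1$ is already independent of $(\wt{\CW},\eta_2)$, all three wedges $\CW_1$, $\CW_2$, $\CW_3$ are mutually independent.

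Finally I would re-read this picture using the marginal law of $\eta_2$. By construction $\eta_2$ is marginally an $\SLE_\kappa(2j-2;W-2)$ process independent of $\CW$, so it is precisely the curve $\eta$ in the lemma. The region to its right equals $\CW_3$, a wedge of weight $W$. The region $\CL$ to its left is the beaded surface obtained by welding $\CW_1$ and $\CW_2$ along $\eta_1$ according to quantum boundary length, and the conditional law of $\eta_1$ given $\eta_2$ is (again by imaginary geometry) an $\SLE_\kappa(0;2j-4)$ process inside $\CL$. By Theorem~\ref{thm::paths_determined} the welding of an independent pair of wedges along their quantum boundaries is a.s.\ uniquely determined by the two pieces; combining this uniqueness with the forward direction of Lemma~\ref{lem::slice_2_plus_w} (which identifies the canonical welding of a weight-$2$ wedge and a weight-$(2j-2)$ wedge along an $\SLE_\kappa(0;2j-4)$ as a wedge of weight $2j$) forces $\CL$ to be a wedge of weight $2j$. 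This completes the induction. The main obstacle is the bookkeeping of GFF boundary data and angles to guarantee that $\eta_1$ and $\eta_2$ have exactly the marginal and conditional $\SLE_\kappa(\rho^L;\rho^R)$ laws required to plug into Lemma~\ref{lem::slice_2_plus_w} and the inductive hypothesis; a minor subtlety is that for $W\in(0,\gamma^2/2)$ the right wedge is beaded, but this regime is already covered by Lemma~\ref{lem::slice_2_plus_w} and the inductive hypothesis.
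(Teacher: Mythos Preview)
Your proof is correct and follows essentially the same approach as the paper: both argue by induction on $j$, introduce an auxiliary flow line that peels off a weight-$2$ piece via Lemma~\ref{lem::slice_2_plus_w}, apply the induction hypothesis to the remaining weight-$(2(j-1)+W)$ wedge, and then re-glue the weight-$2$ and weight-$(2j-2)$ pieces using the uniqueness of conformal welding together with Lemma~\ref{lem::slice_2_plus_w} to identify the left region as a weight-$2j$ wedge. The only differences are cosmetic---the paper normalizes the GFF boundary data to be $0$ on $\R_-$ and $(W+2k)\lambda$ on $\R_+$ and labels the auxiliary path $\wt\eta$, whereas you shift the boundary data and relabel---and you are a bit more explicit about invoking Theorem~\ref{thm::paths_determined} in the gluing step, which the paper compresses into the phrase ``Applying Lemma~\ref{lem::slice_2_plus_w}, gluing the former to $\wt{\CW}_1$ yields a wedge of weight $2j$.''
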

\begin{proof}
We will prove the result by induction on~$j$.  The case that $j=1$ is given in Lemma~\ref{lem::slice_2_plus_w}.  Suppose that the result holds for some $k \geq 1$; we will now show that the result holds for $j=k+1$.  Note that we can sample~$\eta$ as a flow line starting from~$0$ and targeted at~$\infty$ of a GFF on~$\h$ independent of~$\CW$ with boundary values given by $0$ on $\R_-$ and $(W + 2 k)\lambda$ on~$\R_+$ where the angle of $\eta$ is given by $-(2j-1)\lambda/\chi$.  Let $\wh{\eta}$ be the flow line of the same GFF starting from $0$ and targeted at $\infty$ with angle $-\lambda/\chi$.  Then $\wh{\eta}$ is marginally an $\SLE_\kappa(0;W+2k-2)$ process.  It thus follows from Lemma~\ref{lem::slice_2_plus_w} that the quantum surface parameterized by the region $\wh{\CW}_1$ (resp.\ $\wh{\CW}_2$) which is to the left (resp.\ right) of $\wh{\eta}$ is a wedge of weight $2$ (resp.\ $W+2k$) and $\wh{\CW}_1$, $\wh{\CW}_2$ are independent.  Note that the conditional law of $\eta$ given $\wh{\eta}$ is that of an $\SLE_\kappa(2k-2;W-2)$ process.  Therefore it follows from the induction hypothesis that $\eta$ splits $\wh{\CW}_2$ into independent wedges of weight $2k$ and $W$.  Applying Lemma~\ref{lem::slice_2_plus_w}, gluing the former to $\wh{\CW}_1$ yields a wedge of weight $2j$ which proves the result.
\end{proof}

\begin{proof}[Proof of Theorem~\ref{thm::welding}]
Suppose that we have a wedge $\CW = (\h,h,0,\infty)$ of weight $W \geq \tfrac{\gamma^2}{2}$ and that $W_1, W_2 > 0$ with $W_1 + W_2 = W$.  If $W \in (0,2]$, then we know from Lemma~\ref{lem::slice_weight_2} that drawing an independent $\SLE_\kappa(W_1-2;W_2-2)$ process in $\h$ from $0$ to $\infty$ with force points located at $0^-,0^+$ on top of $\CW$ divides it into independent wedges of weight $W_1$ and $W_2$.  Lemma~\ref{lem::any_to_weight_2j} implies that the result holds if $W_1 = 2j$ for $j \in \N$.

To complete the proof, we thus assume that $W_1 > 0$ is not an even integer with $W_1 +W_2 > 2$.  Let $j = \lfloor W_1/2 \rfloor$ and $k = \lfloor W_2/2 \rfloor$ and let $\wh{\eta}_1$, $\eta$, and $\wh{\eta}_2$ be flow lines of a common GFF on $\h$ independent of $\CW$ with boundary values given by~$0$ on~$\R_-$ and $(W-2)\lambda$ on~$\R_+$ with respective angles $-(2j-1)\lambda/\chi$, $(1-W_1)\lambda/\chi$, and $(1-(W-2k))\lambda/\chi$.  Note that $\wh{\eta}_1$, $\eta$, and $\wh{\eta}_2$ are respectively given by $\SLE_\kappa(2j-2;W-2j-2)$, $\SLE_\kappa(W_1-2;W_2-2)$, and $\SLE_\kappa(W-2k-2;2k-2)$ processes in $\h$ from $0$ to $\infty$ with force points located at $0^-,0^+$.  Lemma~\ref{lem::any_to_weight_2j} implies that the quantum surface $\wh{\CW}_1$ (resp.\ $\wh{\CW}_2$) parameterized by the region which is to the left (resp.\ right) of~$\wh{\eta}_1$ (resp.~$\wh{\eta}_2$) is a wedge of weight $2j$ (resp.\ $2k$) and the quantum surface $\wh{\CW}$ parameterized by the region which is between $\wh{\eta}_1$ and $\wh{\eta}_2$ is a wedge of weight $W-2j-2k$.  Moreover, $\wh{\CW}_1$, $\wh{\CW}_2$, and $\wh{\CW}$ are independent.  It suffices to show that $\eta$ divides $\wh{\CW}$ into independent wedges of weight $W_1-2j$ and $W_2-2k$ because then we can apply Lemma~\ref{lem::any_to_weight_2j} to glue the former to $\wh{\CW}_1$ and the latter to $\wh{\CW}_2$ to yield independent wedges of weight $W_1$ and $W_2$, respectively.  If $W-2j-2k \leq 2$, then the claim follows from Lemma~\ref{lem::slice_weight_2}.  If $W-2j-2k > 2$, then note that $W-2j-2k < 4$ by the definition of $j,k$.  In this case, the claim follows by slicing $\wh{\CW}$ with a flow line of angle $-(2j+1)\lambda/\chi$ to yield independent wedges of weight $2$ and $W-2j-2k-2$ using Lemma~\ref{lem::slice_2_plus_w} and then slicing the resulting wedge containing $\eta$ with $\eta$ and applying Lemma~\ref{lem::slice_weight_2}.

To complete the proof, it is left to show that the notion of a wedge with weight in $(0,\tfrac{\gamma^2}{2})$ is the same as that given in Definition~\ref{def::skinny_wedge_bessel}.  We state and prove this result in the next lemma.
\end{proof}

\begin{lemma}
\label{lem::wedge_definitions_equivalent}
The definition of a wedge of weight $W \in (0,\tfrac{\gamma^2}{2})$ given in Lemma~\ref{lem::slice_2_plus_w} is equivalent to the definition given in Definition~\ref{def::skinny_wedge_bessel}.
\end{lemma}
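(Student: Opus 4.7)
The plan is to reduce the claim to Theorem~\ref{thm::bubbles_quantum_local_typical}(i) in its quantum wedge variant (the ``same also holds if $h$ is replaced with a quantum wedge of weight $\rho+4$'' clause), which will directly identify the beaded surface produced by Lemma~\ref{lem::slice_2_plus_w} with the Bessel description of Theorem~\ref{thm::skinny_wedge_bubble_structure}, and hence with Definition~\ref{def::skinny_wedge_bessel}.

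First I would observe that a boundary force point of weight $0$ contributes nothing to the driving SDE~\eqref{eqn::forward_sle_kappa_rho}, so the $\SLE_\kappa(0;W-2)$ process in Lemma~\ref{lem::slice_2_plus_w} agrees in law with an $\SLE_\kappa(W-2)$ process with a single boundary force point of weight $W-2$ located at $0^+$. This puts the setup into the form treated by Theorem~\ref{thm::bubbles_quantum_local_typical}: the wedge $\CW$ has weight $W+2$, which is thick (since $W+2>2>\tfrac{\gamma^2}{2}$ for $\gamma<2$), and the slicing path is an independent $\SLE_\kappa(\rho)$ with $\rho=W-2\in(-2,\tfrac{\kappa}{2}-2)$ (the latter using $W\in(0,\tfrac{\gamma^2}{2})$).

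Next I would invoke Theorem~\ref{thm::bubbles_quantum_local_typical}(i) in the wedge variant with $\rho+4=W+2$, whose conclusion is that the beaded surface consisting of the components of $\h\setminus\eta$ lying to the right of $\eta$ has the law of the beaded surface described by Theorem~\ref{thm::skinny_wedge_bubble_structure}. The construction in the proof of Theorem~\ref{thm::skinny_wedge_bubble_structure} couples this beaded surface, via Proposition~\ref{prop::line_average_law} and Lemma~\ref{lem::harmonic_converges}, to the ordered sequence of excursions from $0$ of a $\bes^\delta$ process with $\delta=1+\tfrac{2(\rho+2)}{\kappa}$; each excursion $e$ encodes a surface on $\strip$ whose projection onto $\CH_1(\strip)$ is $2\gamma^{-1}\log(e)$ reparameterized to have quadratic variation $2du$ and whose projection onto $\CH_2(\strip)$ is an independent sample from the corresponding projection of a free boundary GFF on $\strip$. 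This is precisely the recipe of Definition~\ref{def::skinny_wedge_bessel}.

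Finally I would verify the arithmetic: for a wedge of weight $W$, the relation~\eqref{eqn::wedge_weight} gives $\alpha=Q+\tfrac{\gamma}{2}-W/\gamma$, so the Bessel dimension~\eqref{eqn::wedge_bessel_dimension} of Definition~\ref{def::skinny_wedge_bessel} equals $\delta=2+\tfrac{2(Q-\alpha)}{\gamma}=1+\tfrac{2W}{\gamma^2}=1+\tfrac{2(\rho+2)}{\kappa}$, matching the dimension obtained above. Hence the two notions of a wedge of weight $W\in(0,\tfrac{\gamma^2}{2})$ agree. The only potential obstacle is purely bookkeeping --- making sure that the wedge variant of Theorem~\ref{thm::bubbles_quantum_local_typical}(i) is applied with the weight-to-$\rho$ translation done correctly --- since the substantive Poissonian content has already been established in Theorem~\ref{thm::skinny_wedge_bubble_structure} and Theorem~\ref{thm::bubbles_quantum_local_typical}; no new probabilistic argument is needed.
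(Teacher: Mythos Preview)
Your proposal is correct and follows essentially the same approach as the paper. The only cosmetic difference is that you invoke the quantum-wedge clause of Theorem~\ref{thm::bubbles_quantum_local_typical} directly, whereas the paper re-derives that clause on the spot by applying the rescaling of Proposition~\ref{prop::quantum_wedge_properties}\eqref{it::quantum_wedge_limit} to the free-boundary GFF setting of Theorem~\ref{thm::bubbles_quantum_local_typical}; the substance is identical.
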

\begin{proof}
Note that if we apply the rescaling procedure from part~\eqref{it::quantum_wedge_limit} of Proposition~\ref{prop::quantum_wedge_properties} to the setting of Theorem~\ref{thm::bubbles_quantum_local_typical} then the result consists of a wedge of weight $\rho+4$ sliced by an independent $\SLE_\kappa(\rho)$ process.  The result above implies that the ordered sequence of surfaces which are to the right of the path together form a wedge of weight $\rho+2$ as defined earlier in this section.  On the other hand, Theorem~\ref{thm::bubbles_quantum_local_typical} implies that this sequence of surfaces has the law of a wedge of weight $\rho+2$ as defined in Definition~\ref{def::skinny_wedge_bessel}.
\end{proof}

\begin{proposition}
\label{prop::slice_wedge_many_times}
Fix $W \geq \tfrac{\gamma^2}{2}$ and let $\CW = (\h,h,0,\infty)$ be a wedge of weight~$W$.  Consider a GFF on $\h$ independent of $\CW$ with boundary conditions given by~$0$ on $\R_-$ and $(W-2)\lambda$ on $\R_+$.  Fix angles $(1-W)\lambda /\chi < \theta_1 < \cdots < \theta_n < \lambda/\chi$ and, for each~$i$, let~$\eta_i$ be the flow line starting from~$0$ and targeted at~$\infty$ with angle~$\theta_i$.  (Each $\eta_i$ is marginally an $\SLE_\kappa(r_i-2;W-2-r_i)$ process in~$\h$ from~$0$ to~$\infty$ with force points at $0^-,0^+$ where $r_i = 1-\theta_i \chi/\lambda$.  We take the convention that $\eta_0 = \R_+$, $\theta_0 = (1-W)\lambda/\chi$, $r_0 = W$, $\eta_{n+1} = \R_-$, $\theta_{n+1} =\lambda/\chi$, and $r_{n+1} = 0$.)  Then for each $i \in \{1,2,\ldots,n+1\}$, the (possibly beaded) quantum surface bounded between~$\eta_{i-1}$ and~$\eta_i$ (which we denote by $\CW_i$) is a quantum wedge of weight $r_{i-1} - r_i = (\theta_i - \theta_{i-1})\chi/\lambda$ and $\CW_1,\ldots,\CW_{n+1}$ are independent.

The result similarly holds if $W \in (0,\tfrac{\gamma^2}{2})$ except for each bubble $\CB$ of $\CW$ we sample an independent GFF (with boundary data as above) and then view $\eta_j$ as the concatenation of paths from the GFFs according to the natural ordering of the bubbles $\CB$.
\end{proposition}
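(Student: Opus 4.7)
The plan is to induct on $n$, handling the thick ($W \geq \gamma^2/2$) and thin ($W < \gamma^2/2$) cases simultaneously, since Theorem~\ref{thm::welding} already accommodates both. The base case $n=1$ is exactly Theorem~\ref{thm::welding}: the marginal law of $\eta_1$ (the flow line of angle $\theta_1$ of a GFF with boundary data $0$ on $\R_-$ and $(W-2)\lambda$ on $\R_+$) is $\SLE_\kappa(r_1 - 2; W - 2 - r_1)$, and Theorem~\ref{thm::welding} produces independent wedges $\CW_1$ of weight $r_0 - r_1 = W - r_1$ and $\CW_2$ of weight $r_1 - r_2 = r_1$.

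For the inductive step, I would peel off the outermost flow line $\eta_n$ (the one with the largest angle $\theta_n$, adjacent to $\R_-$). Marginally $\eta_n$ is an $\SLE_\kappa(r_n - 2; W - 2 - r_n)$ process, independent of $\CW$ because the imaginary-geometry GFF $\wt h$ is independent of $h$. Hence Theorem~\ref{thm::welding} splits $\CW$ along $\eta_n$ into two independent wedges: $\CW_{n+1}$ of weight $r_n$ to the left of $\eta_n$, and a residual (possibly beaded) wedge $\CW'$ of weight $W' := W - r_n$ to its right. This step also yields the independence of $\CW_{n+1}$ from the pair $(\CW', \wt h|_{\CW'})$, and hence from every $\CW_i$ that will be produced in the reduced problem.

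I would then parameterize $\CW'$ by $\h$ via a conformal map $\psi \colon \CW' \to \h$ fixing $0$ and $\infty$ (bead-by-bead in the thin case). The imaginary-geometry coordinate change $\wt h \mapsto \wt h \circ \psi^{-1} + \chi \arg((\psi^{-1})')$, together with a deterministic additive shift that absorbs the boundary data along $\psi(\eta_n) = \R_-$, produces a GFF on $\h$ with boundary data $0$ on $\R_-$ and $(W'-2)\lambda$ on $\R_+$. Under this identification $\psi(\eta_1), \ldots, \psi(\eta_{n-1})$ remain flow lines, with shifted angles $\theta_i' = \theta_i - \theta_n + \lambda/\chi$, and a direct calculation using $r_i = 1 - \theta_i\chi/\lambda$ gives $r_i' := 1 - \theta_i'\chi/\lambda = r_i - r_n$. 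In particular $r_0' = W'$, $r_{n-1}' > 0$, and $r_{i-1}' - r_i' = r_{i-1} - r_i$, so the reduced configuration satisfies the hypotheses of the proposition at weight $W'$ and with $n-1$ flow lines. Applying the inductive hypothesis to $\CW'$ yields the independence of $\psi(\CW_1), \ldots, \psi(\CW_n)$ with weights $r_0 - r_1, \ldots, r_{n-1} - r_n$; pulling back by $\psi^{-1}$ and combining with the first step gives the full claim.

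The main obstacle is the bookkeeping in this coordinate change: one must verify that after the conformal pullback plus additive shift, the new GFF's boundary data and the $r_i'$-values precisely match the setup of the proposition at weight $W'$, so that the inductive hypothesis applies verbatim. This reduces to the imaginary-geometry transformation rules and the affine relation $r_i = 1 - \theta_i\chi/\lambda$; in the thin case it must be performed bead-by-bead, with independence across beads ensured by the hypothesis that the driving GFFs are sampled independently on each bead of $\CW$.
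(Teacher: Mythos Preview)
Your proposal is correct and follows essentially the same approach as the paper: the base case $n=1$ is Theorem~\ref{thm::welding}, and the general case is obtained by iterating (the paper's proof literally says ``the general case follows by iterating''). You have supplied considerably more detail than the paper about the inductive step---in particular the imaginary-geometry coordinate change and the verification that the shifted angles and $r_i'$ values match the hypotheses at weight $W'$---but this is exactly the bookkeeping implicit in the paper's one-word argument.
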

\begin{proof}
Note that if $n=1$, we have that $\eta_1$ is an $\SLE_\kappa(r_1-2;W-2-r_1)$ process independent of $\CW$.  Since the region which is between $\eta_1$ and $\eta_2 = \R_-$ is the same as the region which is to the left of $\eta_1$ and the region which is between $\eta_1$ and $\eta_0 = \R_+$ is the same as the region which is to the right of $\eta_1$, the result thus follows in this case from Theorem~\ref{thm::welding}.  Then general case follows by iterating.
\end{proof}

\subsection{Zipping up a thin wedge}
\label{subsec::zipping_thin_wedge}

We are now restate part of Theorem~\ref{thm::bubbles_quantum_local_typical}, which extends Proposition~\ref{prop::cone_divide} to show that slicing a quantum cone with an independent self-intersecting whole-plane $\SLE_\kappa(\rho)$ process yields a thin wedge.  Equivalently, conformally welding the left and right sides of a thin wedge yields a certain quantum cone decorated with an independent self-intersecting whole-plane $\SLE_\kappa(\rho)$ process.

\begin{proposition}
\label{prop::cone_divide_self_intersecting}
Fix $W \in (0,\tfrac{\gamma^2}{2})$, suppose that $\CC = (\C,h,0,\infty)$ is a quantum cone of weight $W$.  Suppose that $\eta$ is a whole-plane $\SLE_\kappa(W-2)$ process independent of $\CC$ starting from $0$.  (Note that $\rho \in (-2,\tfrac{\kappa}{2}-2)$ so that $\eta$ is self-intersecting.)  Then the beaded surface $(\C \setminus \eta,h,0,\infty)$ is a quantum wedge of weight $W$.
\end{proposition}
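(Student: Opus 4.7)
The plan is to derive Proposition~\ref{prop::cone_divide_self_intersecting} from Corollary~\ref{cor::cone_bubble_form}, which already handles the self-intersecting case for the explicit field $h_0 = \wt{h} - \alpha \log|\cdot|$ with $\wt{h}$ a whole-plane GFF. First I would verify that with $\rho = W-2$, the coefficient $\alpha = (\gamma^2 + 2 - \rho)/(2\gamma)$ appearing in Corollary~\ref{cor::cone_bubble_form} equals $Q - W/(2\gamma)$, which via $W = 2\gamma(Q-\alpha)$ is precisely the $\log$ singularity multiplier of a weight $W$ quantum cone from Definition~\ref{def::quantum_cone}. This identifies the local singular behavior of $h_0$ at the origin with that of a weight $W$ cone.

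Combining Corollary~\ref{cor::cone_bubble_form} with Lemma~\ref{lem::wedge_definitions_equivalent} tells us that when $h_0$ is decorated by an independent whole-plane $\SLE_\kappa(W-2)$ process $\eta$, the ordered sequence of components of $\C\setminus\eta$, viewed as quantum surfaces, is distributed as a thin quantum wedge of weight $W$ in the sense of Definition~\ref{def::skinny_wedge_bessel}. By Proposition~\ref{prop::quantum_cone_properties}~(ii), the quantum cone of weight $W$ is the weak limit (as doubly-marked quantum surfaces) of $(\C,h_0+C/\gamma,0,\infty)$ as $C\to\infty$; since $\eta$ is independent of $h_0$ and adding $C/\gamma$ to the field scales the $\gamma$-LQG area measure by $e^C$, the beaded surface for $h_0 + C/\gamma$ differs from that for $h_0$ only by a uniform scaling of bubble areas by the factor $e^C$. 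The thin wedge law of Definition~\ref{def::skinny_wedge_bessel} is invariant under such a rescaling: the scaling $Y_{\cdot} \stackrel{d}{=} \lambda Y_{\cdot/\lambda^2}$ of the encoding Bessel process (with $\lambda = e^{C/2}$) corresponds exactly to adding the constant $C/\gamma$ to the field on each bubble. Hence the law of the beaded surface for $h_0 + C/\gamma$ is the same thin wedge of weight $W$ for every $C$, and passing to the limit $C\to\infty$ gives the same law for the quantum cone.

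The main technical point to verify is that the weak convergence of quantum surfaces in Proposition~\ref{prop::quantum_cone_properties}~(ii) passes to the beaded surface structure of the complement of $\eta$. This amounts to showing that the beaded structure is a continuous functional of the pair consisting of the ambient quantum surface and the Brownian motion driving $\eta$, which is routine given the continuity of the $\gamma$-LQG measure in $h$ and the Loewner-evolution construction of $\eta$ from its driving function. Alternatively, one can invoke the characterization in Proposition~\ref{prop::quantum_cone_characterization}: the law of the beaded surface satisfies both the area-rescaling invariance (from the Bessel scaling above) and the required local agreement with the $h_0$ case near the origin, which together identify it with the weight $W$ thin wedge.
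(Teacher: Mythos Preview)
Your proposal is correct and follows essentially the same route as the paper's (very terse) proof, which simply says that Proposition~\ref{prop::quantum_cone_properties} combined with Corollary~\ref{cor::cone_bubble_form} identifies the bubble structure with a weight~$W$ thin wedge. You have unpacked this combination explicitly: the parameter check $\alpha = Q - W/(2\gamma)$, the Bessel-scaling argument showing that the thin-wedge law of Definition~\ref{def::skinny_wedge_bessel} is invariant under adding $C/\gamma$ to the field, and the passage to the $C\to\infty$ limit via Proposition~\ref{prop::quantum_cone_properties}(ii).

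One small caveat on your final sentence: Proposition~\ref{prop::quantum_cone_characterization} is a characterization of quantum \emph{cones}, not of thin wedges, so you cannot invoke it literally to identify the beaded surface. What you would need is the obvious thin-wedge analog (scale invariance plus local agreement near the origin forces the thin-wedge law), which is not stated separately in the paper but follows from the Bessel encoding by the same reasoning as in Proposition~\ref{prop::quantum_wedge_characterization}. Your primary argument via the limit does not rely on this, so this is only a remark about the alternative you offer. The reference to Lemma~\ref{lem::wedge_definitions_equivalent} is also not strictly needed, since Corollary~\ref{cor::cone_bubble_form} already asserts the Bessel (Definition~\ref{def::skinny_wedge_bessel}) structure directly.
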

\begin{proof}
This is proved in Theorem~\ref{thm::bubbles_quantum_local_typical}.
\end{proof}

We can also slice a quantum cone with a collection of paths coupled together as flow lines of a GFF to yield a collection of independent quantum wedges.  This is the quantum cone version of Proposition~\ref{prop::slice_wedge_many_times}.

\begin{proposition}
\label{prop::slice_cone_many_times}
Fix $W \geq \tfrac{\gamma^2}{2}$, suppose that $\CC = (\C,h,0,\infty)$ is a quantum cone of weight $W$.  Let $\wh{h}$ be given by a whole-plane GFF minus $\tfrac{1}{2\gamma}(W+\gamma^2-4) \arg(\cdot)$ and assume that $\wh{h}$ is independent of $h$.  Fix angles $0 \leq \theta_1 < \cdots < \theta_n < \tfrac{2\pi}{4-\gamma^2} W = \tfrac{\pi}{\gamma \chi} W$ and, for each $j$, let $\eta_j$ be the flow line of $\wh{h}$ starting from $0$ with angle $\theta_j$.  For each $1 \leq j \leq n$, let $\CW_j$ be the quantum surface parameterized by the region between $\eta_{j-1}$ and $\eta_j$ (in the counterclockwise direction) where we take $\eta_0 = \eta_n$.  Then $\CW_1,\ldots,\CW_n$ are independent and $\CW_j$ is a wedge of weight $(\theta_j-\theta_{j-1})\tfrac{\chi}{\lambda}$ where we take $\theta_0 = \theta_n - \tfrac{2\pi}{4-\gamma^2} W$.
\end{proposition}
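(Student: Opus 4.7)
The plan is to reduce Proposition~\ref{prop::slice_cone_many_times} to its half-plane analog, Proposition~\ref{prop::slice_wedge_many_times}, by first cutting the cone along one flow line using Proposition~\ref{prop::cone_divide}. First I would verify that each $\eta_j$ is marginally a whole-plane $\SLE_\kappa(W-2)$ process starting from $0$: this is exactly the content of \cite[Theorem~1.4]{ms2013imag4} applied to the field $\wh h$, since the relation between the coefficient of $\arg(\cdot)$ and the $\rho$-parameter gives $\rho=W-2$ when the coefficient is $\tfrac{1}{2\gamma}(W+\gamma^2-4)$ (this identification was used for $W=2$ earlier in the excerpt), and rotation invariance of that coupling shows the marginal law does not depend on the angle $\theta_j$.

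Next I would apply Proposition~\ref{prop::cone_divide} with $\eta=\eta_1$. Because $\eta_1$ is a whole-plane $\SLE_\kappa(W-2)$ independent of $h$, the surface $(\C\setminus\eta_1,h,0,\infty)$ is a quantum wedge of weight $W$, and the associated conformal map $\varphi\colon\C\setminus\eta_1\to\H$ (fixing $0$ and $\infty$) welds the two sides of $\eta_1$ according to $\gamma$-LQG boundary length. Writing $\bar h=h\circ\varphi^{-1}+Q\log|(\varphi^{-1})'|$, the surface $(\H,\bar h,0,\infty)$ is a quantum wedge of weight $W$. Simultaneously the imaginary-geometry coupling transforms under $\varphi$: the field $\wh h\circ\varphi^{-1}+\chi\arg(\varphi^{-1})'$, after subtracting an overall additive constant, is a GFF on $\H$ with boundary values $0$ on $\R_-$ and $(W-2)\lambda$ on $\R_+$, which is precisely the boundary data appearing in Proposition~\ref{prop::slice_wedge_many_times}. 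Moreover this pulled-back field is independent of $\bar h$, since $\wh h$ is independent of both $h$ and $\eta_1$. The remaining curves $\varphi(\eta_2),\dots,\varphi(\eta_n)$ are flow lines from $0$ of this half-plane field with angles $\theta_j-\theta_1$.

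Finally I would invoke Proposition~\ref{prop::slice_wedge_many_times} applied to the weight-$W$ wedge $(\H,\bar h,0,\infty)$ with these $n-1$ independent flow lines. Setting $r_j=1-(\theta_j-\theta_1)\chi/\lambda$ one checks that the shifted angles lie in the admissible range $((1-W)\lambda/\chi,\lambda/\chi)$, using that $0\le \theta_j-\theta_1<\pi W/(\gamma\chi)=W\lambda/\chi$. Proposition~\ref{prop::slice_wedge_many_times} then outputs independent quantum wedges whose weights $r_{j-1}-r_j=(\theta_j-\theta_{j-1})\chi/\lambda$ match those in the statement for $j=2,\dots,n$. The ``wrap-around'' region $\CW_1$ (between $\eta_n$ and $\eta_1$ counterclockwise) corresponds in the half-plane picture to the union of the two outermost slices adjacent to $\R_-$ and $\R_+$, which are glued together by $\varphi$; its weight is $(\theta_1+\pi W/(\gamma\chi)-\theta_n)\chi/\lambda=W-\sum_{j=2}^n(\theta_j-\theta_{j-1})\chi/\lambda$, consistent with the convention $\theta_0=\theta_n-\pi W/(\gamma\chi)$.

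The main obstacle is the middle step: checking that pulling back $\wh h$ under $\varphi$ yields a half-plane GFF with precisely the boundary data $0$ on $\R_-$ and $(W-2)\lambda$ on $\R_+$. This requires combining the imaginary-geometry coordinate change (adding $\chi\arg(\varphi^{-1})'$, which contributes a jump of $2\pi\chi=(\kappa'-4)\lambda'$ across the origin after unrolling the cone) with the intrinsic boundary data carried by the two sides of the whole-plane $\SLE_\kappa(W-2)$ flow line $\eta_1$, and verifying that their sum equals the $(W-2)\lambda$ jump used in Proposition~\ref{prop::slice_wedge_many_times}. This is where the precise choice of $\arg$-coefficient $\tfrac{1}{2\gamma}(W+\gamma^2-4)$ is used in an essential way; the remaining angle bookkeeping is routine given the parameterization tables in Section~\ref{subsec::welding}.
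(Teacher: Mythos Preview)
Your approach is essentially the same as the paper's: cut the cone along $\eta_1$ via Proposition~\ref{prop::cone_divide} to obtain a weight-$W$ quantum wedge, identify the conditional law of $\eta_2,\dots,\eta_n$ given $\eta_1$ as half-plane flow lines (the paper simply cites \cite[Theorem~1.11]{ms2013imag4} for this rather than working through the boundary-data bookkeeping you flag as the main obstacle), and apply Proposition~\ref{prop::slice_wedge_many_times}.

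Two points need correction. First, your justification that the pulled-back imaginary-geometry field is independent of $\bar h$ is wrong as stated: $\wh h$ is certainly \emph{not} independent of $\eta_1$, since $\eta_1$ is a flow line of $\wh h$. The correct argument is that conditionally on $\eta_1$, the pulled-back curves $\varphi(\eta_2),\dots,\varphi(\eta_n)$ have a law that does not depend on $\eta_1$ and are conditionally independent of $h$ (since $h\perp\wh h$); hence they are independent of the pair $(h,\eta_1)$ and in particular of the wedge $(\H,\bar h)$. Second, your description of the wrap-around region $\CW_1$ is mistaken: after cutting along $\eta_1$, the $n$ cone regions $\CW_2,\dots,\CW_n,\CW_1$ correspond bijectively to the $n$ half-plane regions produced by the $n-1$ curves $\varphi(\eta_2),\dots,\varphi(\eta_n)$, with $\CW_1$ being the single slice adjacent to one side of $\R$, not a union of two slices glued by $\varphi$. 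Your weight computation for $\CW_1$ is nonetheless correct.
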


If we describe our cone in terms of $\alpha$ rather than weight, then argument singularity for the whole-plane GFF is given by minus $(\gamma-\alpha)\arg(\cdot)$ and the range of angles is given by $2\pi(\chi + \gamma-\alpha)/\chi$.

\begin{proof}[Proof of Proposition~\ref{prop::slice_cone_many_times}]
The case that $n=1$ is given in Proposition~\ref{prop::cone_divide} for $W \geq \tfrac{\gamma^2}{2}$ and Proposition~\ref{prop::cone_divide_self_intersecting} gives the case that $W \in (0,\tfrac{\gamma^2}{2})$.  Now suppose that $n \geq 2$.  Then we can describe the conditional law of $\eta_2,\ldots,\eta_n$ given $\eta_1$ as follows.  First suppose that $W \geq \tfrac{\gamma^2}{2}$ so that $\eta$ is a whole-plane $\SLE_\kappa(\rho)$ process with $\rho = W-2 \geq \tfrac{\kappa}{2}-2$.  That is, $\eta_1$ is non-self-intersecting.  Then $\eta_2,\ldots,\eta_n$ are given by the conformal image of flow lines $\wh{\eta}_2,\ldots,\wh{\eta}_n$ of a GFF on $\h$ with boundary conditions 
\[ \lambda(5-\gamma^2-W) - (\theta_1+2\pi)\chi \quad\text{on}\quad \R_- \quad\text{and}\quad -\lambda-\theta_1 \chi \quad\text{on}\quad \R_+.\]
starting from $0$ and targeted at $\infty$ with respective angles given by $\theta_2,\ldots,\theta_n$.  (See \cite[Theorem~1.11]{ms2013imag4} and \cite[Figure~3.25]{ms2013imag4}.)  By adding a constant to shift the boundary data of the field and adjusting the angles appropriately, we can take our GFF to have boundary conditions given by
\[ 0 \quad\text{on}\quad \R_- \quad\text{and}\quad (W-2)\lambda \quad\text{on}\quad \R_+.\]
This puts us into the setting of Proposition~\ref{prop::slice_wedge_many_times} in the case of slicing a wedge of weight $W$ which, by Proposition~\ref{prop::cone_divide}, is exactly the weight of the wedge corresponding to $\C \setminus \eta_1$.  This completes the proof in this case.  The proof is analogous for $W \in (0,\tfrac{\gamma^2}{2})$ except the above describes the conditional law of $\eta_2,\ldots,\eta_n$ in each of the components of $\C \setminus \eta_1$.  Proposition~\ref{prop::slice_wedge_many_times} still applies in this case, which gives the result.
\end{proof}

\subsection{Gluing infinitesimally thin wedges: the Poissonian formulation}
\label{subsec::fan_wedges}

\begin{remark}
\label{rem::fan_structure}
Using the ideas developed in the previous subsections, it should be possible to give a Poissonian description of the bubbles which arise when cutting a quantum wedge or cone along GFF flow lines starting from a given point with a continuum of angles.  This is the so-called {\bf fan} $\fan$ introduced in \cite{ms2012imag1,ms2013imag4}.  Recall from \cite[Proposition~7.33]{ms2012imag1} that the Lebesgue measure of $\fan$ is a.s.\ equal to zero.  In fact, it is shown in \cite{m2016lightcone_dim} that the Hausdorff dimension of $\fan$ is a.s.\ $1+\kappa/8$, the same as the dimension of a single $\SLE$.

A precise statement of what should be true is the following.  Suppose that $\CW = (\h,h,0,\infty)$ is a wedge of weight $W \geq \tfrac{\gamma^2}{2}$.  Let~$\wh{h}$ be a GFF on~$\h$ independent of~$\CW$ with boundary values given by~$0$ on~$\R_-$ and~$(W-2)\lambda$ on~$\R_+$.  Fix a countable, dense set $\CD$ of $[(1-W)\lambda/\chi,\lambda/\chi]$ and let $\fan$ be the closure of the set of points accessible by flow lines of $\wh{h}$ from $0$ to $\infty$ with angles in $\CD$.  Then the collection of quantum surfaces parameterized by the components of $\h \setminus \fan$ can be sampled from as follows:
\begin{enumerate}
\item Sample a \ppp\ $\Lambda$ on $\R_+ \times [(1-W)\lambda/\chi,\lambda/\chi] \times \CE$ with intensity measure given by $du \times d\theta \times \nu_1^\bes$ where $du$ denotes Lebesgue measure on $\R_+$, $d\theta$ denotes the uniform measure on $[(1-W)\lambda/\chi,\lambda/\chi]$, and $\nu_1^\bes$ is the It\^o excursion measure associated with a $\bes^1$ process.
\item Associate with each $(u,\theta,e) \in \Lambda$ a quantum surface on $\strip$ as in Definition~\ref{def::quantum_wedge}.
\end{enumerate}
If $\CC$ is a quantum cone of weight $W$ and $\fan$ is defined in the analogous manner except in terms of flow lines from $0$ to $\infty$ with angles in a countable dense subset of $[0,\tfrac{\pi}{\gamma \chi} W]$ of a whole-plane GFF minus $(W+\gamma^2-4) / (2\gamma) \arg(\cdot)$ then the quantum surfaces parameterized by the components of $\C \setminus \fan$ admit the same description except we replace the interval $[(1-W)\lambda/\chi,\lambda/\chi]$ of angles with $[                                                                                                                                                                                                                                                                                                                                                                                                                                                                                                                                                                                                                                                                                                                                                                                                                                                                                                                                                                                                                                                                                                                                                                                                                                                                                                                                                                                                                                                                                                                                                                                                                                                                                                                                                                                                                                                                                                                                                                                                                                                                                                                                                                                                                                                                                                                                                                                                                                                                                                                                                                                                                                                                                                                      
0,\tfrac{\pi}{\gamma \chi} W]$.
\end{remark}

\section{Space-filling SLE and Brownian motion}
\label{sec::brownian_boundary_length}

\subsection{Boundary length processes and Brownian motion}

Throughout, we let $\kappa \in [2,4)$ and $\kappa'=16/\kappa \in (4,8]$.  Consider a $\gamma$-quantum cone $\CC = (\C,h,0,\infty)$ (i.e., $W=4-\gamma^2$ and $\theta=2\pi$), together with a space-filling $\SLE_{\kappa'}$ process $\eta'$ from $\infty$ to $\infty$ within $\C$ so that $\eta'(0) = 0$.  We assume that $\eta'$ is first sampled independently of $\CC$ and then reparameterized according to $\gamma$-LQG area so that for $s,t \in \R$ with $s < t$ we have that $\mu_h(\eta'([s,t])) = t-s$.  That is, $\eta'$ traces $t$ units of quantum area in $t$ units of time.  Let $L_t$ (resp.\ $R_t$) denote the change in the left (resp.\ right) $\gamma$-LQG boundary length between times $0$ and $t$.  Note that $L_0 = R_0 = 0$.  The main result of this section is that $(L,R)$ evolves as a certain $2$-dimensional Brownian motion.

\begin{theorem}
\label{thm::bm_statement}
Fix $\kappa \in [2,4)$ and let 
\begin{equation}
\label{eqn::theta_kappa_relationship}
\theta_\kappa = \frac{\pi \kappa}{4}.
\end{equation}
There exists (non-random) $a > 0$ such that $(L,R)$ evolves as a $2$-dimensional Brownian motion with $\var(L_1) = \var(R_1) = a^2$ and $\cov(L_1,R_1) = -a^2 \cos(\theta_\kappa)$.
\end{theorem}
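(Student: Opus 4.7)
The plan is to derive Theorem~\ref{thm::bm_statement} in three stages: (i) use the welding/cutting machinery to see that $\eta'$ splits the cone into two independent wedges of weight $2-\tfrac{\gamma^{2}}{2}$; (ii) use the shift invariance~\eqref{eqn::qc_invariance} to upgrade this to stationary independent increments, hence Brownian motion; (iii) identify the covariance by a cone-time dimension matching with \cite{EVANS_CONE_TIMES}.

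First, as sketched in Figure~\ref{fig::quantum_cone_bm}, I would take a $\gamma$-quantum cone $\CC = (\C,h,0,\infty)$ of weight $W = 4-\gamma^{2}$ and draw an independent whole-plane $\SLE_{\kappa}(2-\kappa)$ process $\eta_{1}$ from $0$ to $\infty$, which by Theorem~\ref{thm::zip_up_wedge_rough_statement} with $\rho = 2-\kappa = W-2$ cuts $\CC$ into a quantum wedge of weight $W = 4-\gamma^{2}$. Using the imaginary geometry coupling of \cite{ms2013imag4}, I then draw inside each complementary component the ``dual'' flow line $\eta_{2}$ with the angle gap of $\pi$; marginally this is an $\SLE_{\kappa}(-\tfrac{\kappa}{2};-\tfrac{\kappa}{2})$ process in each bubble, and jointly the pair $(\eta_{1},\eta_{2})$ is the outer boundary of a space-filling $\SLE_{\kappa'}$ process $\eta'$ stopped upon hitting $0$. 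By Theorem~\ref{thm::welding} applied inside the wedge cut out by $\eta_{1}$ with weight split $(2-\tfrac{\gamma^{2}}{2}) + (2-\tfrac{\gamma^{2}}{2}) = 4-\gamma^{2}$, the regions swept out by $\eta'$ on $[-\infty,0]$ and $[0,\infty]$ are independent quantum wedges each of weight $2-\tfrac{\gamma^{2}}{2}$, giving the final assertion of the theorem. The process $L$ (resp.\ $R$) is intrinsic to the left (resp.\ right) outer boundary of $\eta'([0,\infty))$ and hence is measurable with respect to the backward/forward wedge, so the random vector $(L_{t},R_{t})_{t\geq 0}$ is independent of $\sigma(\eta'|_{(-\infty,0]})$.

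Second, applying~\eqref{eqn::qc_invariance} with shift $s$ and recentering, the same Figure~\ref{fig::quantum_cone_bm} picture valid at time $0$ applies at any time $s\in\R$, so for every $s < t$ the increment $(L_{t}-L_{s},R_{t}-R_{s})$ is independent of $(L_{u},R_{u})_{u\leq s}$ and has the same law as $(L_{t-s},R_{t-s})$. Combined with the almost sure continuity of the $\gamma$-LQG boundary length process along a continuous space-filling curve (a consequence of the non-atomicity of $\nu_{h}$ along the boundary flow lines, which follows from standard GMC estimates together with the Poissonian structure of Theorem~\ref{thm::gluingtwoforestedlines}), this yields that $(L,R)$ is a continuous process with stationary independent increments, hence a two-dimensional Brownian motion plus a deterministic linear drift. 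The scale invariance of the $\gamma$-quantum cone under multiplying area by a constant (part~\eqref{it::quantum_cone_invariance} of Proposition~\ref{prop::quantum_cone_properties}) together with the $\gamma$-LQG scaling of boundary length forces this linear drift to vanish, and the reflection symmetry $z\mapsto \bar z$ of the cone paired with the resulting swap of $L$ and $R$ forces $\var(L_{1})=\var(R_{1}) =: a^{2}$.

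Third, to pin down the correlation $\cov(L_{1},R_{1})/a^{2}$ for $\kappa'\in(4,8]$ I would look at the set of \emph{cut times} $t$ of $\eta'$, i.e., times such that $\eta'((-\infty,t])\cap\eta'([t,\infty))=\{\eta'(t)\}$. As is visible from Figure~\ref{fig::cone_time}–\ref{fig::ancestor_free}, a time $t$ is a cut time for $\eta'$ if and only if both $L$ and $R$ simultaneously attain a running minimum at $t$ from both sides, which is exactly the classical notion of ``cone time'' for the $2$D Brownian motion $(L,R)$ with cone opening determined by the correlation. On the SLE side, I would compute the almost sure Hausdorff dimension of this cut-time set using the imaginary geometry description of the two boundary flow lines $\eta_{1},\eta_{2}$ and the established dimension formulas for their intersection (for instance via the dimension of the set $\eta_{1}\cap\eta_{2}$ in $\h$ computed in \cite{mw2013intersections}). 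On the Brownian side, Evans' formula~\cite{EVANS_CONE_TIMES} gives the Hausdorff dimension of the cone times of a correlated planar Brownian motion as an explicit decreasing function of the cone angle. Setting the two expressions equal and solving yields $\cov(L_{1},R_{1}) = -a^{2}\cos(\theta_{\kappa})$ with $\theta_{\kappa} = \pi\kappa/4$. The limiting case $\kappa'=8$ (where cone times degenerate) is handled by continuity in $\kappa$, and the strict positivity $-\cos(\theta_{\kappa})\geq 0$ for $\kappa\in[2,4)$ is consistent with the positive correlation observed in Figure~\ref{fig::cone_time}.

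The main obstacle is the precise Hausdorff-dimension computation of the cut-time set and its matching with the Evans formula, which requires careful exponent bookkeeping across both the $\SLE$/imaginary geometry side (dimension of intersection of the two boundary flow lines together with the quantum-area reparameterization) and the Brownian side (translating cone opening into a correlation coefficient); every other step is a direct application of the welding theorems and general Brownian motion characterizations.
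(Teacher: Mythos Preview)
Your steps (i) and (ii) are essentially the paper's approach: the welding theorems give the two independent weight-$(2-\tfrac{\gamma^2}{2})$ wedges, and shift invariance plus scaling and reflection symmetry force $(L,R)$ to be a centered two-dimensional Brownian motion with $\var(L_1)=\var(R_1)$. (The paper phrases the Gaussianity slightly differently, via infinite divisibility and the scaling $L_t \stackrel{d}{=} \sqrt{t}\,L_1$, rather than invoking continuity directly, but this is a minor presentational difference.)

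The genuine gap is in step (iii). You propose to compute the Hausdorff dimension of the cut-\emph{time} set from the Euclidean dimension of the cut-\emph{point} set $\eta_1\cap\eta_2$ via \cite{mw2013intersections}. But Evans' theorem needs the dimension of a subset of the \emph{time axis}, and your curve is parameterized by $\gamma$-LQG area, not by capacity or arclength. Transferring a spatial dimension in $\C$ to a temporal dimension under the quantum-area parameterization is exactly a KPZ-type statement, and there is no off-the-shelf lemma in the paper that does this for you. The paper avoids this entirely: in Lemma~\ref{lem::quantum_pinch_times} it observes that the thin wedge $\eta'((-\infty,0])$ of weight $2-\tfrac{\gamma^2}{2}$ is encoded by a Bessel process of dimension $\tfrac{4}{\gamma^2}$ whose \emph{excursion lengths are the $\gamma$-LQG areas of the beads} (Proposition~\ref{prop::actual_quantum_area}). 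Hence the set $\{t<0:\eta'(t)\in\eta_L\cap\eta_R\}$ is literally the zero set of that Bessel process, with dimension $1-\tfrac{2}{\gamma^2}=1-\tfrac{2}{\kappa}$. Matching this against Evans' formula $1-\tfrac{\pi}{2\theta}$ then gives $\theta=\theta_\kappa=\tfrac{\pi\kappa}{4}$ immediately, with no reparameterization bookkeeping required.

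Your treatment of $\kappa'=8$ by continuity in $\kappa$ is also not what the paper does, and it is not obviously justified: the cone-time set degenerates (dimension zero) exactly at $\kappa=2$, so both sides of the matching equation vanish and you cannot read off the covariance in the limit. The paper instead gives a separate argument (Lemma~\ref{lem::diffusionkappa8}): the weight-$1$ wedge at $\kappa=2$ has Bessel drift $a=0$, which forces $(L,R)$ to come arbitrarily close to simultaneous running minima infinitely often, and a law-of-the-iterated-logarithm argument shows this is incompatible with strictly negative correlation, pinning $\cov(L_1,R_1)=0$.
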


Note that $\cos(\theta_\kappa) < 0$ for $\kappa \in (2,4)$ so that $\cov(L_1,R_1) > 0$.  In the case that $\kappa=2$, $\cos(\theta_\kappa) = 0$ so that $L$ and $R$ are independent.

Before we give the proof of Theorem~\ref{thm::bm_statement}, we will relate $(L,R)$ to the scaling limits of the discrete random planar map models considered in \cite{sheffield2011qg_inventory}.  We first note that it follows from Theorem~\ref{thm::bm_statement} that $L+R$ and $L - R$ are independent Brownian motions.  Moreover, Theorem~\ref{thm::bm_statement} implies that
\begin{align*}
   \var(L_1 + R_1) = 2a^2(1-\cos(\theta_\kappa)) \quad\text{and} \quad \var(L_1 - R_1) = 2a^2(1+\cos(\theta_\kappa)).
\end{align*}
By a (non-random) linear reparameterization of time, we can take $a = (2(1-\cos(\theta_\kappa)))^{-1/2}$ so that $\var(L_1 + R_1) = 1$.  Then, 
\begin{equation}
\label{eqn::l1_r1_var}
\var(L_1 - R_1) = \frac{1+\cos(\theta_\kappa)}{1-\cos(\theta_\kappa)}.
\end{equation} 
Let
\begin{equation}
\label{eqn::p_in_terms_of_theta}
p = -\frac{\cos(\theta_\kappa)}{1-\cos(\theta_\kappa)}
\end{equation} 
and note from~\eqref{eqn::l1_r1_var} that with this choice we have
\begin{equation}
\label{eqn::l1_r1_var_in_p}
\var(L_1 - R_1) = 1-2p.
\end{equation} 

The first co-author \cite{MR1964687}, following work by den Nijs \cite{PhysRevB.27.1674}, Nienhuis \cite{Nien84,DG1987}, his and Saleur \cite{MR947310} (see also Kager and Nienhuis \cite{kg2004guide_to_sle}), has conjectured that the relationship between the parameter $q \in (0,4)$ of the FK representation of the $q$-state Potts model and $\kappa'$ is given by
\begin{equation}
\label{eqn::q_kappa_relationship}
q = 2 + 2 \cos\left(\frac{8\pi}{\kappa'}\right) = 2 + 2 \cos\left(\frac{\pi \kappa}{2}\right).
\end{equation}
Rewriting~\eqref{eqn::q_kappa_relationship} in terms of $\theta_\kappa$ as in~\eqref{eqn::theta_kappa_relationship} yields
\begin{equation}
\label{eqn::q_theta_relationship}
q = 2 + 2 \cos (2 \theta_\kappa) = 4 \cos^2(\theta_\kappa).
\end{equation}
Combining this with~\eqref{eqn::p_in_terms_of_theta} yields
\begin{equation}
\label{eqn::p_q_relationship}
 p = \frac{\sqrt{q}}{2+\sqrt{q}}.
\end{equation}
Summarizing, we have that $(L+R,L-R)$ is a pair of independent Brownian motions with
\[ \var(L_1+R_1) = 1 \quad\text{and}\quad \var(L_1-R_1) = 1 - 2p\]
which matches the scaling limit determined in \cite[Theorem~2.5]{sheffield2011qg_inventory} and the value of $p$ matches the value in the bijection described in \cite[Section~4]{sheffield2011qg_inventory}.

We now turn to the proof of Theorem~\ref{thm::bm_statement}.  The first step (Lemma~\ref{lem::boundary_length_is_brownian_motion}) is to show that $(L,R)$ evolves as \emph{some} $2$-dimensional Brownian motion.  This result will hold for all $\kappa' > 4$.   We will then identify the diffusion matrix in the case that $\kappa \in (2,4)$ so that $\kappa' \in (4,8)$ Lemma~\ref{lem::brownian_covariance} as a consequence of Lemma~\ref{lem::quantum_pinch_times} which gives the a.s.\ Hausdorff dimension of the local cut times for $\eta'$.  We will then identify the diffusion matrix in the case that $\kappa=2$ so that $\kappa'=8$ in Lemma~\ref{lem::diffusionkappa8} using a different method.

\begin{lemma}
\label{lem::boundary_length_is_brownian_motion}
Fix $\kappa \in (0,4)$ so that $\kappa' \in (4,\infty)$.  There exists a pair of independent standard Brownian motions $(X^1, X^2)$ defined for $t \in \R$ with $X_0^1 = X_0^2 = 0$ and a linear transformation $\Lambda$ such that $(L,R) = \Lambda (X^1,X^2)$.
\end{lemma}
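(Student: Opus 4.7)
The plan is to combine the welding/cutting theorems for quantum wedges and cones with the time-shift invariance of the space-filling $\SLE_{\kappa'}$ on a quantum cone recorded in equation~\eqref{eqn::qc_invariance} of Theorem~\ref{thm::quantum_cone_bm_rough_statement}. The structural input, sketched in Figure~\ref{fig::quantum_cone_bm}, is the following: the left and right outer boundaries of $\eta'$ stopped upon reaching $\eta'(0)=0$ are the images of a pair of whole-plane $\SLE_\kappa(2-\kappa)$ curves $\eta_1,\eta_2$ from $0$ to $\infty$ coupled as flow lines of a whole-plane GFF with angle gap $\pi$, sampled independently of the cone. By Theorem~\ref{thm::zip_up_wedge_rough_statement} cutting the $\gamma$-quantum cone (weight $4-\gamma^2$) along $\eta_1$ produces a quantum wedge of weight $4-\gamma^2$; cutting this wedge further along $\eta_2$ produces, by Theorem~\ref{thm::welding}, two \emph{independent} quantum wedges of weight $2-\tfrac{\gamma^2}{2}$, which are precisely the beaded quantum surfaces parameterized by $\eta'([0,\infty))$ and by $\eta'((-\infty,0])$.

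From this decomposition I would deduce stationarity and independence of the increments of $(L,R)$. Stationarity is immediate from~\eqref{eqn::qc_invariance}: applying the time-shift by $s$ to both sides gives $(L_{t+s}-L_s,\,R_{t+s}-R_s)\stackrel{d}{=}(L_t,R_t)$. For independence I would apply the decomposition above to the recentered pair $(h(\cdot-\eta'(s)),\,\eta'(\cdot-s)-\eta'(s))$: it shows that the future surface $\eta'([s,\infty))$ is an independent weight $2-\tfrac{\gamma^2}{2}$ wedge, hence independent of the past surface $\eta'((-\infty,s])$. Since the increment $(L_t-L_s,R_t-R_s)$ for $t\geq s$ is a functional of the future surface (one reads off the evolution of the outer boundary length as $\eta'$ continues to fill the wedge after time $s$) while $(L_u,R_u)_{u\leq s}$ is a functional of the past surface, this gives independent increments. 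Continuity of $t\mapsto(L_t,R_t)$ comes from continuity of $\eta'$ together with continuity of quantum boundary length along the outer boundary of the explored region, which can be read off from the Poissonian bubble structure of a thin wedge (Theorem~\ref{thm::skinny_wedge_bubble_structure}).

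At this point $(L,R)$ is a continuous $\R^2$-valued process with stationary independent increments, i.e.\ a continuous two-dimensional L\'evy process, and therefore by the L\'evy-It\^o decomposition of the form $\Lambda(X_t^1,X_t^2)+bt$ for a pair of independent standard Brownian motions $(X^1,X^2)$, a (possibly degenerate) linear map $\Lambda\colon\R^2\to\R^2$, and a drift vector $b\in\R^2$. To rule out $b\neq 0$ I would invoke the scale invariance of the quantum cone from part~\ref{it::quantum_cone_invariance} of Proposition~\ref{prop::quantum_cone_properties}: scaling the cone's area by a constant factor $e^C$ gives the same doubly-marked quantum surface in distribution, while under the $\gamma$-LQG area parameterization of $\eta'$ it corresponds to rescaling time by $e^C$ and rescaling $\gamma$-LQG boundary length by $e^{C/2}$. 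Hence $(L_{e^Ct},R_{e^Ct})\stackrel{d}{=}e^{C/2}(L_t,R_t)$, which is the Brownian scaling relation and forces $b=0$.

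The main obstacle I anticipate is the iterated application of the welding/cutting theorems in the thin regime $W=2-\tfrac{\gamma^2}{2}<\tfrac{\gamma^2}{2}$: the two flow lines $\eta_1,\eta_2$ are coupled through a common whole-plane GFF and share their starting point, and the wedges they produce are beaded chains of quantum disks rather than half-plane-like surfaces. One needs the versions of Theorems~\ref{thm::welding} and~\ref{thm::zip_up_wedge_rough_statement} that cover self-intersecting $\SLE_\kappa(\rho)$ processes (the relevant statements being the thin-wedge case of Theorem~\ref{thm::welding} together with Proposition~\ref{prop::cone_divide_self_intersecting}). A secondary technical point is verifying continuity of $(L,R)$ across the pinch times of $\eta'$ when $\kappa'\in(4,8)$; this requires a small estimate showing that small bubbles contribute only small amounts of boundary length, which should follow from the Bessel-process encoding of a thin wedge from Definition~\ref{def::skinny_wedge_bessel}.
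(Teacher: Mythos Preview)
Your argument is correct and uses the same structural ingredients as the paper: the flow-line decomposition of the $\gamma$-quantum cone into two independent weight $2-\tfrac{\gamma^2}{2}$ wedges (via Theorem~\ref{thm::zip_up_wedge_rough_statement} and Theorem~\ref{thm::welding}/Proposition~\ref{prop::slice_cone_many_times}) together with the time-shift invariance, which in the paper is isolated as Lemma~\ref{lem::invariant_under_translation}. The paper phrases the independence step slightly differently---it writes $A=\CS_{-\infty,0}$, $B=\CS_{0,1}$, $C=\CS_{1,\infty}$ and notes that $(A,B)\perp C$ (decomposition at time $1$) and $A\perp(B,C)$ (decomposition at time $0$), hence $A,B,C$ are mutually independent---but this is equivalent to your future-past independence applied at two times.

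The one genuine divergence is in how Gaussianity is concluded. You invoke the L\'evy--It\^o decomposition: a continuous process with stationary independent increments is $\Lambda(X^1,X^2)+bt$, and Brownian scaling $(L_{e^Ct},R_{e^Ct})\stackrel{d}{=}e^{C/2}(L_t,R_t)$ forces $b=0$. The paper never establishes continuity directly; instead it observes that $L_t$ is infinitely divisible, satisfies $L_t\stackrel{d}{=}\sqrt{t}\,L_1$, and (via the symmetry $L_t\stackrel{d}{=}L_{-t}$, hence $L_1\stackrel{d}{=}-L_1$) has a real characteristic function, and then computes that $\log\phi$ must be a centered parabola. Your route is more standard and arguably cleaner conceptually, but it trades the characteristic-function computation for a separate continuity verification, which you address only informally. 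The paper's route has the advantage of bypassing that verification entirely.
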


Before we prove Lemma~\ref{lem::boundary_length_is_brownian_motion}, we first need the following.

\begin{lemma}
\label{lem::invariant_under_translation}
For each $t \in \R$, the joint law of $(\CC,\eta')$ is invariant under the operation of translating by $t$ units of (quantum area) time and then recentering so that $\eta'(t)$ is translated to the origin.  That is, for each $t \in \R$ we have that
\[ (h,\eta) \stackrel{d}{=} (h(\cdot + \eta'(t)), \eta'(\cdot+t) - \eta'(t))\]
as curve-decorated quantum surfaces.
\end{lemma}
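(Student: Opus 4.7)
\medskip

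The plan is to combine two symmetries: the stationarity of the underlying independent driving data for the space-filling $\SLE_{\kappa'}$, and the fact that a $\gamma$-quantum cone (i.e.\ $\alpha = \gamma$, $W = 4-\gamma^2$) is the canonical surface one sees after zooming in at a quantum-typical point. Since $\eta'$ is sampled independently of $h$ and then reparameterized according to $\mu_h$, for a fixed $t \in \R$ the point $\eta'(t)$ plays the role of a ``quantum-typical point'' relative to $h$: conditionally on the intrinsic (pre-parameterized) curve $\eta'$, the location $\eta'(t)$ is the unique point along the trace at which exactly $t$ units of $\mu_h$-area have been accumulated, and averaging over the independent $\eta'$ effectively samples $\eta'(t)$ according to $\mu_h$.

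With this in mind, the first step I would carry out is to show that re-centering a $\gamma$-quantum cone at a quantum-typical point gives back a $\gamma$-quantum cone. Concretely, I would imitate the proof of Lemma~\ref{lem::weighted_quantum_surface} (with its whole-plane analog) to show that weighting the law of $h$ by $d\mu_h(z)$ and then translating so that $z$ lies at the origin is equivalent to adding a $-\gamma \log |\cdot|$ singularity at the new origin, up to a globally harmonic (and constant in the limit) correction. Applied to a $\gamma$-quantum cone, this matches the additional log singularity appearing in Definition~\ref{def::quantum_cone}, and so by Proposition~\ref{prop::quantum_cone_characterization} the recentered field has the law of a $\gamma$-quantum cone. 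Scale invariance (Proposition~\ref{prop::quantum_cone_properties}(i)) is crucial here: it lets us fix the normalization arbitrarily after translation.

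The second step is to transport the curve $\eta'$ along with $h$. Since the whole-plane space-filling $\SLE_{\kappa'}$ from $\infty$ to $\infty$ can be realized as the Peano curve of a whole-plane GFF $\wh h$ that is independent of $h$, and $\wh h$ (modulo additive constant) is translation invariant, the conditional law of $\eta'(\cdot - t) - \eta'(t)$ given the translated $h(\cdot - \eta'(t))$ agrees with the conditional law of $\eta'$ given $h$. Because the parameterization is by $\gamma$-LQG area and $\mu_{h(\cdot - \eta'(t))}$ is just the translate of $\mu_h$, the area reparameterization commutes with the translation. Combining this with the cone-recentering result from the previous step yields the claimed invariance in distribution.

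The main obstacle will be the first step: quantum-typical point sampling is clean on a finite-volume surface but the $\gamma$-quantum cone has infinite total $\mu_h$-mass, so one cannot literally ``pick a $\mu_h$-uniform point.'' I would handle this by a truncation and limit argument --- work inside a large ball $B(0,R)$ (where $\mu_h$ restricted to $B(0,R)$ is a.s.\ finite), verify the recentering identity there using the weighted-measure argument above, and then use the scaling invariance and limiting characterization of the $\gamma$-quantum cone (Proposition~\ref{prop::quantum_cone_properties}) together with Proposition~\ref{prop::quantum_cone_characterization} to pass to the limit $R \to \infty$ and obtain the exact identity in law. A parallel truncation is needed for $\eta'$, but there the translation invariance of $\wh h$ on compact sets is immediate.
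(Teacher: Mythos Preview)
Your core intuition is right and matches the paper's: recentering a $\gamma$-quantum cone at a ``quantum-typical'' point returns a $\gamma$-quantum cone, and $\eta'(t)$ should play that role. But two steps in your implementation do not go through as written.

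First, the sentence ``averaging over the independent $\eta'$ effectively samples $\eta'(t)$ according to $\mu_h$'' is not justified, and your spatial truncation to $B(0,R)$ does not supply the missing mechanism: a $\mu_h$-uniform point of $B(0,R)$ is not $\eta'(t)$ for any fixed $t$, and it is unclear how sending $R\to\infty$ converts one into the other. The paper instead truncates in \emph{time}. It first takes $h=\widetilde h-\gamma\log|\cdot|$ for a whole-plane GFF $\widetilde h$ (not yet the cone), parameterizes $\eta'$ by \emph{Lebesgue} area on $[0,S]$, and considers the weighted law $dA_s\,dh\,d\eta'$ with $A_t=\mu_h(\eta'([0,t]))$. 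Under that law the marked time is equivalent to choosing $a$ uniformly in $[0,A_S]$, and the quantum-time shift by $t$ is literally the map $a\mapsto a+t$; since $A_S\to\infty$ a.s., the total-variation distance between these two laws (conditionally on $h,\eta'$) tends to $0$. That is how ``$\eta'(t)$ is quantum-typical'' is made precise. The passage to the cone then comes at the end via Proposition~\ref{prop::quantum_cone_properties}\eqref{it::quantum_cone_limit}.

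Second, your separate treatment of the curve via translation invariance of the imaginary-geometry field $\widehat h$ is circular: the shift amount $\eta'(t)$ depends on $\widehat h$ (through $\eta'$) as well as on $h$, so $\widehat h(\cdot-\eta'(t))$ has no reason to retain the law of $\widehat h$. The paper avoids this by never decoupling $h$ from $\eta'$: the total-variation argument above is applied directly to the recentered \emph{pair} $\big(h(\cdot-\eta'(s)),\,\eta'(\cdot-s)-\eta'(s)\big)$, so no statement about $\widehat h$ alone is needed.
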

\begin{proof}
Let $\eta'$ be a space-filling $\SLE_{\kappa'}$ process normalized so that $\eta'(0) = 0$.  We assume here (in contrast to the above) that $\eta'$ is parameterized according to Lebesgue measure so that for $s,t \in \R$ with $s < t$ we have that the Lebesgue measure of $\eta'([s,t])$ is equal to $t-s$.  Let $dh$ denote the law of a whole-plane GFF with the additive constant fixed so that its average on $\partial \D$ is equal to~$0$.  For each $t > 0$, let $A_t = \mu_h(\eta'([0,t]))$.  Let $\tau$ be the first time that $\eta'$ exits $\D$.  Note that $A_s$ is a continuous, increasing function of $s$ so that $dA_s$ induces a measure on $[0,\tau]$.  Consider the law $\CZ^{-1} dA_s dh d\eta'$ on $(s,h,\eta')$ triples with $s \in [0,\tau]$ where $\CZ$ is a normalizing constant so that this defines a probability measure.  Under this law, the marginal of $h$ can be sampled from by first sampling $\eta'$ independently as a space-filling $\SLE_{\kappa'}$ normalized so that $\eta'(0) = 0$ and weighted by $\E[ A_\tau \giv \eta']$, then $s$ in $[0,\tau]$ according to Lebesgue measure, and then taking $h = \wh{h} - \gamma\log|\cdot-\eta'(s)| + \gamma \log\max(1,|\cdot|)$ where $\wh{h}$ has the law of a whole-plane GFF independent of $\eta'$ where the additive constant is fixed so that the average of $h$ on $\partial \D$ is equal to~$0$.  Indeed, this follows from the same argument used to prove Lemma~\ref{lem::weighted_quantum_surface}.  The conditional law of $s$ given $h$ and $\eta'$ is given by $A_\tau^{-1} dA_s$.  Equivalently, we first pick $a$ uniformly in $[0,A_\tau]$ and then take $s = \inf\{ u \geq 0: A_u = a\}$.

Now suppose that $s$ is picked in $[0,\tau]$ from $A_\tau^{-1} dA_s$.  Fix $t > 0$ and let $\sigma = \inf\{u \geq s : A_u = A_s + t\}$.  Equivalently, we first pick $a$ uniformly in $[0,A_\tau]$, and then let $\sigma$ be the first time $u$ that $A_u = a+t$.  It follows that the total variation distance between the laws of $a$ and $a+t$ tends to zero as $t \to 0$, hence the same is also true for the laws of $s$ and $\sigma$.  Note that this in particular holds even if we condition on $h$ and $\eta'$ and that $h$ and $\eta'$ together determine $A$.  Therefore, the total variation distance between the laws of $(h(\cdot+\eta'(s)),\eta'(\cdot+s) - \eta'(s))$ and $(h(\cdot+\eta'(\sigma)),\eta'(\cdot+\sigma) - \eta'(\sigma))$ tends to $0$ as $t \to 0$.  It then follows from part~\eqref{it::quantum_cone_limit} of Proposition~\ref{prop::quantum_cone_properties} that the desired invariance holds for quantum cones.
\end{proof}

\begin{proof}[Proof of Lemma~\ref{lem::boundary_length_is_brownian_motion}]
By the definition of space-filling $\SLE_{\kappa'}$ and \cite[Theorem~1.1]{ms2013imag4} we have that the left side $\eta_L$ of the outer boundary of $\eta'([-\infty,0])$ is a whole-plane $\SLE_\kappa(2-\kappa)$ process.  Consequently, it follows from Theorem~\ref{thm::zip_up_wedge_rough_statement} that the surface described by cutting $\CC$ along $\eta_L$ is a quantum wedge of weight $4-\gamma^2$.  Since the relative angle between $\eta_L$ and $\eta_R$ is given by $\pi$, Proposition~\ref{prop::slice_cone_many_times} implies that the (beaded if $\gamma^2 > 2$) quantum surfaces corresponding to $\eta'([0,\infty])$ and $\eta'([-\infty,0])$ are independent quantum wedges, both of weight $2-\tfrac{\gamma^2}{2}$.  Lemma~\ref{lem::invariant_under_translation} states that the joint law of $(\CC,\eta')$ is invariant under the operation of translating time by one unit and recentering at~$\eta'(1)$, hence we also find that $\eta'([1,\infty])$ and $\eta'((-\infty,1])$ parameterize independent quantum wedges of weight $2-\tfrac{\gamma^2}{2}$.

Let $\CS_{a,b}$ denote the (possibly beaded) quantum surface parameterized by $\eta'([a,b])$.  Suppose that $A = \CS_{-\infty, 0}$, $B = \CS_{0,1}$ and $C = \CS_{1,\infty}$.  Since the pair $(A,B)$ is independent of $C$ and the pair $(B,C)$ is independent of $A$, it follows that $A$, $B$, and $C$ are all independent of each other. Repeating this argument, we find that the increments $\CS_{n,n+1}$, for integer $n$, are i.i.d.  A similar argument shows that for any $\epsilon > 0$, the increments $\CS_{n\epsilon, (n+1)\epsilon}$ are i.i.d.

The discussion above implies that both $L_t$ and $R_t$ are infinitely divisible random variables.  Moreover,
\begin{enumerate}
\item by the scale invariance of $\CC$ and the fact that quantum area scales as the square of quantum length, it follows that $L_t$ is equal in law to $\sqrt{t} L_1$, and
\item by the symmetry of the construction, we also have that $L_t$ is equal in law to $L_{-t}$.
\end{enumerate}
It is well-known in the literature on stable processes, that these properties imply that $L_1$ is a Gaussian with mean zero \cite{nolan2003stable}.\footnote{This can be seen directly by observing that equivalence in law of $L_1$ and $-L_1$ imply that the characteristic function $\phi(t) = \E[e^{i L_1}]$ is real; the divisibility and scaling rules imply that $\log \phi(a) = \log \phi(1) a^2$ for rational $a$, and hence $\log \phi$ is a centered parabola, which implies that $L_1$ is Gaussian.}

The same follows for any linear combination of $L_1$ and $R_1$, hence the pair is jointly Gaussian, and by symmetry the joint law is the same if the laws of $L_1$ and $R_1$ are reversed.
\end{proof}

We say that a time $t \in \R$ is a local cut time for $\eta'$ if there exists $\epsilon > 0$ such that $\eta'([t-\epsilon,t))$ is disjoint from $\eta'((t,t+\epsilon])$.  If $t$ is a local cut time for $\eta'$, then we call $\eta'(t)$ a local cut point for $\eta'$.  The following lemma, which gives the a.s.\ Hausdorff dimension of the local cut times of $\eta'$, will be used to identify the diffusion matrix of $(L,R)$.

\begin{lemma}
\label{lem::quantum_pinch_times}
Suppose that $\kappa \in (2,4)$ so that $\kappa' \in (4,8)$.  Then the Hausdorff dimension of the set of local cut times for $\eta'$ (parameterized by quantum area) is a.s.\ $1-\tfrac{2}{\gamma^2}$.
\end{lemma}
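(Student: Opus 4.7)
The plan is to identify the local cut points of $\eta'$ with a concrete planar random fractal whose Euclidean Hausdorff dimension is known (the cut points of the non-space-filling $\SLE_{\kappa'}$ trace), translate this Euclidean dimension to a bulk quantum scaling exponent via the KPZ formula, and finally convert that exponent into the Hausdorff dimension of the time set using the quantum-area parameterization of $\eta'$.

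First, I would use the imaginary-geometry description of $\eta'$ from \cite{ms2013imag4}, in which $\eta'$ arises as the outer boundary of a space-filling flow-line tree of an auxiliary whole-plane GFF, with the left and right outer boundaries of $\eta'([-\infty, t])$ being locally flow lines of angles $+\pi/2$ and $-\pi/2$ emanating from $\eta'(t)$. This should let me show that $t$ is a local cut time of $\eta'$ if and only if these two flow lines separate immediately from each other, rather than being joined by a bubble of $\eta'$ near $\eta'(t)$; equivalently, $\eta'(t)$ is a cut point of the non-space-filling $\SLE_{\kappa'}$ trace at some level of the iterated bubble construction. Scale invariance of the quantum cone ensures that cut points arising at every scale contribute the same Hausdorff dimension.

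Next, combining Beffara's dimension theorem with the flow-line intersection dimensions from \cite{mw2013intersections}, one obtains that the Euclidean Hausdorff dimension of the cut-point set of $\SLE_{\kappa'}$ for $\kappa' \in (4,8)$ equals
\[
d_E \;=\; 3 - \tfrac{3\kappa'}{8} \;=\; 3 - \tfrac{6}{\gamma^2}.
\]
Applying the bulk KPZ relation $x = \tfrac{\gamma^2}{4}\Delta^2 + \bigl(1 - \tfrac{\gamma^2}{4}\bigr)\Delta$ with Euclidean scaling exponent $x = 1 - d_E/2 = 3/\gamma^2 - 1/2$ gives the bulk quantum scaling exponent $\Delta = 2/\gamma^2$ (the unique positive root) of the cut-point set in the $\gamma$-LQG measure on $\CC$.

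Finally, I would use that $\eta'$ traces $\mu_h$-area at unit rate: near a local cut time $t$, the image of a short time interval is essentially a $\mu_h$-ball around $\eta'(t)$ of comparable mass, so any efficient covering of the cut-point set by $\mu_h$-balls of mass $\epsilon$ lifts to a covering of the local cut-time set by time intervals of length $\epsilon$. A one-point function estimate based on $\Delta$, combined with the stationarity of $(h, \eta')$ under $\mu_h$-area shifts (Lemma~\ref{lem::invariant_under_translation}), shows that the expected number of such $\epsilon$-intervals needed is of order $\epsilon^{\Delta - 1}$, yielding $\dim_H \le 1 - \Delta$. A matching lower bound is obtained via a Frostman-type argument, using a two-point estimate to construct a measure on the local cut times with finite $s$-energy for every $s < 1 - 2/\gamma^2$. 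I expect the main difficulty to be the quantitative one- and two-point function estimates underlying this last step: the analogous $\SLE$ literature estimates are typically expectation statements, and upgrading them to almost-sure Hausdorff dimension bounds in the presence of the independent LQG randomness, uniformly in location on the cone, requires care.
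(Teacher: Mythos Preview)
Your approach is genuinely different from the paper's, and the difference is instructive.

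The paper's proof never touches the Euclidean cut-point dimension or KPZ at all. Instead it uses the structure theory already built up: $\eta'((-\infty,0])$ parameterizes a thin quantum wedge of weight $2-\tfrac{\gamma^2}{2}$, and by Proposition~\ref{prop::actual_quantum_area} one can encode this wedge by a Bessel process $\wh Y$ of dimension $\delta=4/\gamma^2$ in such a way that excursion lengths equal the $\gamma$-LQG areas of the beads. The local cut times for $t<0$ are then exactly the zero set of $\wh Y$, whose Hausdorff dimension is the classical $1-\delta/2=1-2/\gamma^2$ (via the $1-\delta/2$ stable subordinator for the inverse local time). Taking the union over rational time shifts handles all local cut times. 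The argument is two paragraphs.

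Your route --- Euclidean dimension $3-3\kappa'/8$ from \cite{mw2013intersections}, then KPZ to get $\Delta=2/\gamma^2$, then convert to a time dimension --- is arithmetically consistent, and the first two steps are fine. The gap is entirely in step four, as you suspected. The quantum scaling exponent $\Delta$ from \cite{ds2011kpz} is an \emph{expectation} exponent defined through quantum balls $B^\delta(z)$, not through $\eta'$-images of time intervals, and not an almost-sure Hausdorff dimension. To make your Frostman argument go through you would need (i) that $\eta'([t-\epsilon,t+\epsilon])$ is comparable, in a quantitatively controlled way, to a quantum ball of mass $\epsilon$ at $\eta'(t)$, uniformly over cut points, and (ii) an upgrade from expectation exponents to a.s.\ dimension via first- and second-moment bounds. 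Both are substantial; something like (i) is proved much later in the paper (Proposition~\ref{prop::kpzslesmallareabound} in Section~\ref{sec::treenotes}), and even there only at the level needed for a specific variance estimate. So your plan is not wrong in spirit, but it trades a one-line Bessel zero-set computation for a genuinely hard analytic step that the paper's internal structure lets it avoid entirely.
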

\begin{proof}
Let $\eta_L$ (resp.\ $\eta_R$) denote the left (resp.\ right) side of the outer boundary of $\eta'([-\infty,0])$.  Then the points in $\big(\eta_L \cap \eta_R\big) \setminus \{0\}$ are all local cut points for $\eta'$.  More generally, we let $\eta_L^t$ (resp.\ $\eta_R^t$) denote the left (resp.\ right) side of the outer boundary of $\eta'([-\infty,t])$.  Then 
\[ \bigcup_{t \in \Q} \left( \eta_L^t \cap \eta_R^t \right) \setminus \{\eta'(t)\}\]
gives all of the local cut points of $\eta'$.  By the invariance of the setup, the law of the dimension of the amount of time that $\eta'$ spends in $\big(\eta_L^t \cap \eta_R^t\big) \setminus \{\eta'(t)\}$ does not depend on $t$.  Thus to prove the result, it suffices to show that the dimension of the amount of time that $\eta'$ spends in $\big(\eta_L \cap \eta_R\big) \setminus \{0\}$ is a.s.\ $1-\tfrac{2}{\gamma^2}$.

Recall from the proof of Lemma~\ref{lem::boundary_length_is_brownian_motion} that $\eta'([0,\infty))$ parameterizes a wedge of weight $2-\tfrac{\gamma^2}{2}$.  Consequently, the $\gamma$-LQG area of the surface beads in $\eta'([0,\infty))$ between $\eta_L$ and $\eta_R$ can be sampled from by first sampling a Bessel process $\wh{Y}$ of dimension $\tfrac{4}{\gamma^2}$ (recall Proposition~\ref{prop::actual_quantum_area}).  Note that $\{t \geq 0 : \eta'(t) \in (\eta_L \cap \eta_R) \setminus \{0\}\} = \{t \geq 0: \wh{Y}_t = 0\}$.  Let $\wh{\qlt}$ be the local time at $0$ of $\wh{Y}$ and let $\wh{T}$ be the right-continuous inverse of $\wh{\qlt}$.  Note that the latter set is equal to the range of $\wh{T}$.  Recall that the inverse local time of a Bessel process of dimension $\delta$ is a $1-\tfrac{\delta}{2}$ stable subordinator.  By \cite[Chapter~III, Theorem~15]{bertoin96levy}, the a.s.\ Hausdorff dimension of the range of a $1-\tfrac{\delta}{2}$ stable subordinator is given by $1-\tfrac{\delta}{2}$.  Plugging in the value $\delta = \tfrac{4}{\gamma^2}$ gives $1-\tfrac{2}{\gamma^2}$, as desired.
\end{proof}

We are now going to determine the matrix $\Lambda$ from Lemma~\ref{lem::boundary_length_is_brownian_motion}.  Before we do so, we first need to recall the definition of a cone time.  For each $\theta \in [0,2\pi)$, we let $\W_\theta = \{ z \in \C : \arg(z) \in [0,\theta]\}$ be the Euclidean wedge of opening angle $\theta$.  Suppose that $Z$ is a continuous process taking values in $\R^2$.  A time $t_0 \geq 0$ is said to be a {\bf $\theta$-cone time}, $\theta \in (0,2\pi)$, if there exists $\epsilon > 0$ such that $Z([t_0,t_0+\epsilon]) \subseteq \W_\theta + Z_{t_0}$.  We will make use of a result due to Evans \cite{EVANS_CONE_TIMES} which gives that the a.s.\ Hausdorff dimension of the set of $\theta$-cone times for a $2$-dimensional standard Brownian motion (i.e., with covariance matrix given by the identity) is equal to~$0$ if $\theta \in [0,\tfrac{\pi}{2}]$ and given by $1-\tfrac{\pi}{2\theta}$ for $\theta \in (\tfrac{\pi}{2},2\pi)$.  This result will allow us to determine $\Lambda$ from Lemma~\ref{lem::boundary_length_is_brownian_motion}.

\begin{lemma}
\label{lem::brownian_covariance}
For each $\kappa \in (2,4)$ (hence $\kappa' \in (4,8)$) there exists a constant $a > 0$ such that the linear transformation $\Lambda$ of Lemma~\ref{lem::boundary_length_is_brownian_motion} is represented by the matrix
\begin{equation}
\label{eqn::sigma_form}
\Lambda = a \begin{pmatrix} \sin(\theta_\kappa) & -\cos(\theta_\kappa)\\ 0 & 1 \end{pmatrix}.
\end{equation}
In particular, $\cov(L_1,R_1) = -a^2 \cos(\theta_\kappa)$.
\end{lemma}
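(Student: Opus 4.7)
The plan is to pin down $\Lambda$ from Lemma~\ref{lem::boundary_length_is_brownian_motion} via a Hausdorff dimension match. Writing $(L_t, R_t) = \Lambda(X_t^1, X_t^2)$ with $(X^1, X^2)$ a standard two-dimensional Brownian motion, the left-right symmetry of the $(\CC, \eta')$ setup immediately gives $\var(L_1) = \var(R_1) =: a^2$; since $(L_1, R_1)$ is a centered Gaussian vector, the only remaining unknown is the single parameter $\cov(L_1, R_1)$, and fixing it will determine $\Lambda$ up to the orthogonal action on $(X^1, X^2)$ (which preserves its joint law).

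The key geometric step is to identify the local cut times of $\eta'$ (in the $\gamma$-LQG area parameterization) with the forward cone times of $(L,R)$ in the open positive quadrant, namely the set of $t$ such that $(L_{t+s}-L_t, R_{t+s}-R_t) \in (0,\infty)^2$ for all sufficiently small $s > 0$. Intuitively these are the times at which $\eta'$ pushes forward into fresh territory on both sides without retracing any earlier boundary back toward $\eta'(t)$, which is precisely the local cut condition for space-filling $\SLE_{\kappa'}$. Pushing the first quadrant of $(L,R)$ through $\Lambda^{-1}$ yields a wedge $\W_\theta \subset \R^2$ whose opening angle $\theta$ is the angle between the images of the standard basis vectors under $\Lambda^{-1}$, and the cone times of $(L,R)$ in the first quadrant are in bijection with $\W_\theta$-cone excursion times of the standard Brownian motion $(X^1, X^2)$.

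Applying the main theorem of Evans \cite{EVANS_CONE_TIMES} to $(X^1, X^2)$ gives that the Hausdorff dimension of $\W_\theta$-cone excursion times is a.s.\ $1 - \pi/(2\theta)$ in the regime $\theta \in (\pi/2, 2\pi)$. Together with the $1 - 2/\gamma^2$ dimension count produced by Lemma~\ref{lem::quantum_pinch_times}, this yields $\pi/(2\theta) = 2/\gamma^2$, hence $\theta = \pi\gamma^2/4 = \theta_\kappa$. Since $\kappa \in (2,4)$ gives $\theta_\kappa \in (\pi/2, \pi)$, Evans' formula indeed applies.

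With the opening angle $\theta_\kappa$ and the variance normalization $\var(L_1) = \var(R_1) = a^2$ in hand, a direct linear-algebra check shows that
\[
\Lambda = a\begin{pmatrix}\sin\theta_\kappa & -\cos\theta_\kappa \\ 0 & 1\end{pmatrix}
\]
is one admissible choice modulo an orthogonal rotation of the $(X^1, X^2)$-coordinates: with this $\Lambda$, the image of the positive $L$-axis under $\Lambda^{-1}$ is the positive $X^1$-axis, while the image of the positive $R$-axis lies along $(\cos\theta_\kappa, \sin\theta_\kappa)$, so the first-quadrant preimage cone has opening angle exactly $\theta_\kappa$, as required. Reading off $\Lambda\Lambda^T$ then yields $\cov(L_1, R_1) = -a^2\cos\theta_\kappa$. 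The main obstacle is the identification in the second paragraph: making rigorous the a.s.\ agreement (or at least Hausdorff dimension equality) between the set of local cut times of $\eta'$ and the set of forward cone times of $(L, R)$ in the first quadrant. This requires a careful accounting of how infinitesimal increments of $L$ and $R$ record whether $\eta'$ is extending new boundary or tracing an existing boundary and, at a typical forward cone time, ruling out a.s.\ that the path before $t$ re-enters a neighborhood of $\eta'(t)$ along the boundary being grown after $t$.
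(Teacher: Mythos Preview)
Your argument is correct and follows essentially the same route as the paper: identify local cut times of $\eta'$ with $\tfrac{\pi}{2}$-cone times of $(L,R)$, transport these via $\Lambda^{-1}$ to $\theta$-cone times of the standard Brownian motion $(X^1,X^2)$, invoke Evans' dimension formula $1-\pi/(2\theta)$, and match with the value $1-2/\gamma^2 = 1-2/\kappa$ from Lemma~\ref{lem::quantum_pinch_times} to obtain $\theta=\theta_\kappa$; the matrix form and the covariance then follow by linear algebra. The paper carries this out in exactly the same order, with the same inputs.

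The one place you diverge is in flagging the identification ``local cut times $=$ $\tfrac{\pi}{2}$-cone times of $(L,R)$'' as the main obstacle. The paper simply asserts this in one line and does not treat it as a difficulty. In the present setup it is fairly direct from the boundary-length interpretation of $(L,R)$: at a $\tfrac{\pi}{2}$-cone time $s$, the running infima of $L$ and $R$ on $[s,s+\epsilon]$ are attained at $s$, so no old boundary on either side is consumed on $(s,s+\epsilon]$, hence $\eta'((s,s+\epsilon])$ meets $\eta'((-\infty,s])$ only at $\eta'(s)$ and $s$ is a local cut time; conversely, the pinch points of the $2-\tfrac{\gamma^2}{2}$ wedge (which carry the full dimension in Lemma~\ref{lem::quantum_pinch_times}) are precisely the times at which both $L$ and $R$ hit a simultaneous running minimum, which are cone times. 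So your concern is not misplaced in principle, but within this paper's framework the identification is immediate and the substantive analytic input is Lemma~\ref{lem::quantum_pinch_times} together with Evans' result.
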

\begin{proof}
As in the statement of Lemma~\ref{lem::boundary_length_is_brownian_motion}, we write $(L,R) = \Lambda(X^1,X^2)$.  We note that a local cut time as in Lemma~\ref{lem::quantum_pinch_times} corresponds to a $\tfrac{\pi}{2}$-cone time for the process $(L,R)$.  This, in turn, corresponds to a $\theta$-cone time for $(X^1,X^2)$ for some value of~$\theta$.  Applying the main result of \cite{EVANS_CONE_TIMES}, the value of $\theta$ is determined by $1-\tfrac{2}{\kappa} = 1-\tfrac{\pi}{2\theta}$.  That is, $\theta = \theta_\kappa = \tfrac{\pi \kappa}{4}$.  This implies that $\Lambda$ maps the line with angle~$\theta$ (relative to the first coordinate axis) to the second coordinate axis, fixes the first coordinate axis, and applies the same scaling to both of these lines.  Therefore, up to a constant factor, $\Lambda$ is given by
\[ \begin{pmatrix} 1 & \cos(\theta_\kappa)\\ 0 & \sin(\theta_\kappa) \end{pmatrix}^{-1} = \begin{pmatrix} 1 & -\cot(\theta_\kappa)\\ 0 & \csc(\theta_\kappa) \end{pmatrix}.\]
Scaling this matrix by $\sin(\theta_\kappa)$ gives the matrix in the right hand side of~\eqref{eqn::sigma_form}.
\end{proof}

It is left to determine the form of $\Lambda$ from Lemma~\ref{lem::boundary_length_is_brownian_motion} in the case that $\kappa=2$.  We collect the following preliminary lemma in order to do so.

\begin{lemma}
\label{lem::bm_negative_cor_cone}
Suppose that $(X,Y)$ is a Brownian motion with $\var(X_1) = \var(Y_1) = 1$ and $\cov(X_1,Y_1) = \alpha$ for $\alpha < 0$.  For each $k \in \N$, let $E_k$ be the event that $X_k - \inf_{t \in [0,k]} X_t \leq 1$ and $Y_k - \inf_{t \in [0,k]} Y_t \leq 1$.  There exist constants $c > 0$ and $\beta > 1$ such that $\p[E_k] \leq c k^{-\beta}$.  In particular, the number of such $k \in \N$ for which $E_k$ occurs is a.s.\ finite. 
\end{lemma}
\begin{proof}
By a performing a time-reversal, we have that the probability of $E_k$ is at most the probability that $(X,Y)$ starting from $(1,1)$ does not leave $\R_+^2$ in the time interval $[0,k]$.  Fix $\epsilon > 0$ small.  Assume that $X_0 = Y_0 = 1$ and let $F_k$ be the event that $(X,Y)$ hits $\partial B(0,k^{1/2-\epsilon})$ before leaving $\R_+^2$.  It suffices to show that there exist constants $c > 0$ and $\beta > 1$ such that $\p[F_k] \leq c k^{-\beta}$ for each $k \in \N$ since the probability that $(X,Y)$ does not leave $B(0,k^{1/2-\epsilon})$ in $[0,k]$ is at most $c_0 e^{-c_1 k^{2\epsilon}}$ for constants $c_0,c_1 > 0$.  Let
\[ \Sigma = \frac{1}{(1-\alpha^2)^{1/2}} \begin{pmatrix} (1-\alpha^2)^{1/2} & 0 \\ -\alpha & 1 \end{pmatrix}\]
so that $(\wh{X},\wh{Y})^T = \Sigma (X,Y)^T$ is a standard Brownian motion.  Note that $\Sigma$ takes $\R_+^2$ to a Euclidean wedge $W_\theta$ of opening angle $\theta = \arccos(-\alpha) \in (0,\pi/2)$.  Let $\zeta = \pi/\theta > 2$ and fix $a \in \C$ with $|a| = 1$ so that $z \mapsto a z^\zeta$ takes $W_\theta$ to $\h$ and let $(\breve{X},\breve{Y})$ be the image of $(\wh{X},\wh{Y})$ under this map.  Then the probability of $F_k$ is equal to the probability of the event $G_k$ that $(\breve{X},\breve{Y})$ hits $\partial B(0,k^{\zeta(1/2-\epsilon)})$ before leaving $\h$.  This event has probability at most a constant times $k^{-\zeta(1/2-\epsilon)}$, so the first part follows by taking $\epsilon > 0$ sufficiently small so that $\zeta(1/2-\epsilon) > 1$.  The second claim of the lemma follows from the first and the Borel-Cantelli lemma.
\end{proof}

\begin{lemma}
\label{lem::diffusionkappa8}
For $\kappa = 2$ (hence $\kappa' =8$) there exists a constant $a > 0$ such that the linear transformation $\Lambda$ of Lemma~\ref{lem::boundary_length_is_brownian_motion} is represented by $a$ times the identity matrix.  In particular, $\cov(L_1,R_1) = 0$.
\end{lemma}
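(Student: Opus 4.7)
The plan is to follow the strategy of Lemma~\ref{lem::brownian_covariance}, adapting the cone-time argument to the critical value $\gamma^2 = 2$, and then to close the remaining gap by a continuity argument in $\kappa$. By Lemma~\ref{lem::boundary_length_is_brownian_motion} we may write $(L, R) = \Lambda (X^1, X^2)$ for a standard two-dimensional Brownian motion $(X^1, X^2)$ and some linear map $\Lambda$; the left-right symmetry of the construction yields $\var(L_1) = \var(R_1) =: a^2$, so up to orthogonal rotation we may take $\Lambda = a \begin{pmatrix} \sin\phi & -\cos\phi \\ 0 & 1 \end{pmatrix}$ with $\cov(L_1, R_1) = -a^2 \cos\phi$, and the task becomes showing $\phi = \pi/2$.

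For the inequality $\phi \leq \pi/2$, I would rerun the Bessel analysis from the proof of Lemma~\ref{lem::quantum_pinch_times} at $\gamma^2 = 2$. The area encoding of the beads of the weight-$(2-\gamma^2/2)$ wedge parameterizing $\eta'([-\infty, 0])$ is a Bessel process of dimension $4/\gamma^2$, which is equal to $2$ at the critical value. Since a $\bes^2$ started from any positive value a.s.\ does not hit $0$, this wedge has no pinch points and hence $\eta'$ has a.s.\ no local cut times. The local cut times of $\eta'$ correspond to $\pi/2$-cone times of $(L,R)$, which in turn correspond to $\phi$-cone times of $(X^1,X^2)$ by the same linear-algebra calculation as in the proof of Lemma~\ref{lem::brownian_covariance}; and Evans's theorem says that $\phi$-cone times of planar Brownian motion are a.s.\ empty precisely when $\phi \leq \pi/2$. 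This gives $\phi \leq \pi/2$.

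For the matching inequality $\phi \geq \pi/2$, I would use continuity in $\kappa$. For $\kappa \in (2,4)$, Lemma~\ref{lem::brownian_covariance} gives $\cov(L_1,R_1)/\var(L_1) = -\cos(\pi\kappa/4) \to 0$ as $\kappa \downarrow 2$. To transfer this limit to $\kappa = 2$, I would couple the $\gamma$-quantum cones and the associated space-filling $\SLE_{\kappa'}$ processes for nearby values of $\kappa$ on a common probability space, using a single whole-plane GFF together with the flow-line tree construction of \cite{ms2013imag4}, and then argue that the left and right boundary length increments up to a fixed unit of quantum area vary continuously in $\kappa$, so that $\cov(\kappa)/\var(\kappa) \to \cov(2)/\var(2) = 0$.

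The main obstacle is this continuity argument: both the quantum cone normalization (with its $\gamma$-dependent logarithmic singularity) and the flow-line tree used to define the space-filling $\SLE_{\kappa'}$ (which depends on the imaginary geometry constant $\chi$) shift with $\kappa$, so the coupling must be set up carefully and the joint continuity of boundary lengths controlled. A potentially cleaner alternative would be to exploit the special structure at $\kappa = 2$ directly: the two flow lines that bound $\eta'$ at time $0$ are whole-plane $\SLE_\kappa(2-\kappa) = \SLE_2$ processes, and one could try to show that at this critical parameter their joint law decouples in such a way that $L$ and $R$ are genuinely independent, which would yield $\cov(L_1,R_1) = 0$ without any continuity argument.
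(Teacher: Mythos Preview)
Your first step, showing $\phi\le\pi/2$ (equivalently $\cov(L_1,R_1)\le 0$), is correct and matches the paper: $\SLE_8$ has no cut points, so there are no $\tfrac{\pi}{2}$-cone times for $(L,R)$, and Evans's theorem forces the corresponding angle for the standard Brownian motion to be at most $\tfrac{\pi}{2}$.

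The second step, however, has a real gap. Your continuity-in-$\kappa$ argument would require coupling the $\gamma$-quantum cones and the space-filling $\SLE_{\kappa'}$ processes for varying $\kappa$ and proving that the $\gamma$-LQG boundary-length increments over a fixed unit of $\gamma$-LQG area vary continuously in $\kappa$. Both the cone normalization and the flow-line tree construction change with $\gamma$, and the $\gamma$-LQG length measure itself depends on $\gamma$; none of this continuity is established anywhere in the paper, and setting it up rigorously would be substantial. Your alternative suggestion about an $\SLE_2$ decoupling is just a hope, not an argument.

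The paper bypasses continuity entirely with a direct argument. At $\gamma^2=2$ the wedge $\eta'([0,\infty))$ has weight $2-\tfrac{\gamma^2}{2}=1$, hence Bessel dimension $\delta=2$ (equivalently drift $a=0$ in the $\CH_1(\strip)$ projection; see Table~\ref{tab::wedge_parameterization}). A $\bes^2$ does not hit $0$ but gets arbitrarily close infinitely often, so the wedge ``almost pinches'' infinitely often; since pinch points correspond to simultaneous running minima of $(L,R)$, the process $(L,R)$ gets arbitrarily close to a simultaneous running global minimum infinitely often. The paper then rules out $\cov(L_1,R_1)<0$ by writing $R=\alpha L+\wt R$ with $\alpha\in(-1,0)$ and $\wt R$ independent of $L$, and using the law of the iterated logarithm: at a near-simultaneous minimum both $L_t$ and $R_t$ are near $-\sqrt{2t\log\log t}$, which would force $\wt R_t$ near $-(1-\alpha)\sqrt{2t\log\log t}$, but the running minimum of $\wt R$ is only about $-\sqrt{2(1-\alpha^2)t\log\log t}$, and $1-\alpha>\sqrt{1-\alpha^2}$ for $\alpha\in(-1,0)$, a contradiction. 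This is the missing idea in your proposal.
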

\begin{proof}
Since $\SLE_8$ does not have local cut points, it must be that $\cov(L_1,R_1) \leq 0$, so it is just a matter of showing that $\cov(L_1,R_1) = 0$.  One can see this because if we take the quantum wedge of weight $2-\tfrac{\gamma^2}{2} = 1$ which is parameterized by $\C \setminus \eta'((-\infty,0])$ and then conformally map it to $\strip$ with $0$ (resp.\ $\infty$) sent to $-\infty$ (resp.\ $+\infty$), then the projection of the corresponding field onto $\CH_1(\strip)$ has zero drift.  In other words, $a = 0$ using the notation of Table~\ref{tab::wedge_parameterization}.  From this perspective, it is clear that $(L,R)$ gets within distance $1$ of hitting a simultaneous running global infimum (relative to time $0$) at arbitrarily large times.  That is, for every $T > 0$ there exists $t \geq T$ such that $L_t - \inf_{s \in [0,t]} L_s \leq 1$ and $R_t - \inf_{s \in [0,t]} R_s \leq 1$.  (We emphasize that it never reaches an actual simultaneous global running infimum relative to time $0$, since this would correspond to a local cut point in the path.)  From this, it follows that $\cov(L_1,R_1) = 0$ as Lemma~\ref{lem::bm_negative_cor_cone} implies that a pair of standard Brownian motions with strictly negative covariance will not get within distance $1$ of hitting a simultaneous running global infimum (relative to time $0$) at arbitrarily large times.
\end{proof}

\begin{proof}[Proof of Theorem~\ref{thm::bm_statement}]
The result follows by combining Lemma~\ref{lem::boundary_length_is_brownian_motion} with Lemma~\ref{lem::brownian_covariance} if $\kappa \in (2,4)$ and with Lemma~\ref{lem::diffusionkappa8} if $\kappa=2$.
\end{proof}

\subsection{Compatibility of topological and conformal matings} \label{subsec::conftopcompatibility}

The construction in Section~\ref{subsec::easy} has an obvious infinite volume analog. To explain this, we first take $L$ and $R$ to be (possibly correlated) real-valued Brownian motions defined for all $t \in \R$ (the additive constant is not important; we may fix it in the standard way by taking $L_0 = 0$ and $R_0 = 0$). Note that the topological space described in Section~\ref{subsec::easy} does not change if we replace $L$ and $R$ by $f(L)$ and $f(R)$, where $f$ is some continuous strictly increasing function from $\R$ to $\R$. In our infinite volume context, we may replace $L$ by $\wt L_t = -e^{-L_t}$ and $R$ by $\wt R_t = -e^{-R_t}$ (note that $x \mapsto -e^{-x}$ is increasing).

The replacement is done because $\wt L$ and $\wt R$ both have finite upper bounds, which ensures that the graphs of $\wt L$ and $C - \wt R$ can be drawn as disjoint subsets of the plane (which we can identify with the complex plane $\C$), which leads to an infinite volume version of Figure~\ref{fig::lamination}. In this section we define $\cong$ the same way as in Section~\ref{subsec::easy} except that we use the whole plane $\C$ instead of a rectangle, and we use $\wt L$ and $\wt R$ in place of $L$ and $R$.  Note that $\C$ becomes a topological sphere when one adds the point at $\infty$. (This point at $\infty$ is the analog of the equivalence class given by the outer boundary of the rectangle in  Section~\ref{subsec::easy}.)

Thus, the same argument as in Section~\ref{subsec::easy} shows that (in the zero correlation case, where $L$ and $R$ are independent) the quotient $\C / \cong$ is a.s\ topologically a sphere if one includes the point at $\infty$, and hence topologically a plane if one does not include $\infty$.  For non-zero values of the correlation coefficient, we have not yet proved that $\C / \cong$ is topologically a plane a.s., but we will deduce this by the end of this subsection.

The previous subsection shows how an SLE-decorated LQG quantum cone determines a coupled pair of Brownian motions $(L, R)$.  Each of these Brownian motions describes an infinite CRT, and the quotient $\C / \cong$ described above can be interpreted as a ``topological mating'' of these trees.  It comes equipped with a space-filling path $\zeta: \R \to \C / \cong$, where $\zeta(t)$ is the equivalence class containing the vertical segment (between the graphs of $\wt L$ and $C-\wt R$) with horizontal coordinate $t$. In this section, we show that there is a map $\Phi : \C / \cong \to \C$ such that $\Phi\bigr( \zeta(t) \bigr) = \eta'(t)$ for all $t$. The existence of such a function is equivalent to the fact that $\zeta(s) = \zeta(t)$ implies $\eta'(s) = \eta'(t)$, which is essentially immediate from the way $L$ and $R$ are defined from $\eta'$ and $h$ (see Lemma~\ref{lem::Phidefined} below).  With a little more work, we will show further that this $\Phi$ is a homeomorphism.  Informally, this means that the conformal mating of trees is homeomorphic to the topological mating of trees in the obvious way. We establish this via several lemmas.

Throughout, we will use the notation $\eta_z^L$ (resp.\ $\eta_z^R$) to denote the flow line with angle $\pi/2$ (resp.\ $-\pi/2$) starting from $z$ of the whole-plane GFF with values modulo $2\pi \chi$ associated with $\eta'$.  Then $\eta_z^L$ (resp.\ $\eta_z^R$) gives the left (resp.\ right) outer boundary of $\eta'$ stopped upon hitting $z$.  We will refer to $\eta_z^L$ in many places in what follows as the flow line starting from $z$ (and omit the angle) and will refer to $\eta_z^R$ as the dual flow line starting from $z$.  Recall from \cite[Theorem~1.7]{ms2013imag4} that flow and dual flow lines cannot cross each other and from \cite[Theorem~1.10]{ms2013imag4} that flow lines starting from distinct points a.s.\ eventually merge with each other and the same is true for dual flow lines starting from distinct points.  Moreover, when two flow (or dual flow) lines have merged with each other they do not subsequently separate.

\begin{lemma}
\label{lem::Phidefined}
In the context discussed just above, there is a.s.\ a unique map $\Phi : \C / \cong \to \C$ such that $\Phi\bigr( \zeta(t) \bigr) = \eta'(t)$ for all $t$. In other words, it is a.s.\ the case that $\zeta(s) = \zeta(t)$ implies $\eta'(s) = \eta'(t)$. 
\end{lemma}
\begin{proof}
It is not hard to see that, starting from the time that $\eta'$ first hits a rational point $z$, all subsequent record minima of $L$ correspond to points along $\eta_z^L$ (which are hit in order).  This is because the change in $L_t$ and $R_t$ corresponds to the change in the length of the boundary of $\eta'\bigl( (-\infty, t] \bigr)$ and this length can only decrease when existing boundary is traversed by $\eta'$ (and hence removed from the boundary).  Indeed, if $s < t$ then the change in $L$ from time $s$ to time $t$ is given by the length of the intersection of the left boundary of $\eta'((-\infty,s])$ and the left boundary of $\eta'((-\infty,t])$.  Since the quantum length of the part of the intersection which corresponds to before these paths have merged is equal to $0$, it follows that the change is equal to the part of the intersection after the paths have merged.  Since flow lines do not separate after merging \cite[Theorem~1.7]{ms2013imag4}, it follows that the boundary length is removed in order.  A similar statement holds if we run time backward starting from $z$. We conclude that it is a.s.\ the case that any two times $s$ and $t$ such that 
\begin{enumerate} \item $s<t$ (so that in particular $\eta'$ hits a rational point in between times $s$ and $t$), and
\item $L_s = L_t$ and $L_r > L_s$ for each $r \in (s,t)$
\end{enumerate}
the times must correspond to opposite sides of a flow line started at a rational point, and we must have $\eta'(s) = \eta'(t)$. One can make the analogous statement for $R$, and this implies the result.
\end{proof}

\begin{lemma}
\label{lem::equivalenceequivalence}
Consider the correlated Brownian motion pair $(L, R)$ discussed in this section and let $\equiv$ be the smallest equivalence relation on $\R$ such that $s \equiv t$ whenever either
\begin{enumerate}
 \item $L_s = L_t$ and each $r \in (s,t)$ satisfies $L_r > L_s$
 \item $R_s = R_t$ and each $r \in (s,t)$ satisfies $R_r > R_s$
\end{enumerate}
In other words, $s \equiv t$ if and only if vertical line segments between the graphs of $\wt L$ and $C- \wt R$ (i.e.\ analogs of the red line segments from Figure~\ref{fig::lamination} in Section~\ref{subsec::easy}) with $x$-coordinates $s$ and $t$ are equivalent under $\cong$.  Then the map taking an equivalence class of $\R / \equiv$ to the corresponding point of $\C / \cong$ is a topological isomorphism (i.e., a homeomorphism).
\end{lemma}
\begin{proof}
The correspondence between elements of $\R / \equiv$ and elements of $\C / \cong$ is clearly one-to-one. The problem is to show that the correspondence is a homeomorphism (i.e., that a set is open on one side if and only if its image on the other side is open).  To this end, first observe that to any open set $A$ of $\C$, which is a union of equivalence classes of the kind shown in Figure~\ref{fig::lamination}, the corresponding subset $\wt A$ of $\R$ contains the $x$-coordinate associated to each of the red vertical lines contained in $A$. Clearly, if $A$ is open then $\wt A$ must also be open.

Conversely, we will now argue that if $A$ is not open then $\wt A$ cannot be open either. If $A$ is {\em not open} then there must be a sequence of points $(z_n)$ which are not in $A$ that has a limit point $a \in A$.

First, suppose that infinitely many of the $z_n$ lie on vertical red lines (which implies that $a$ also lies on a vertical red line, possibly at an endpoint).  Then it is immediate that the $x$-coordinate of $a$ (which belongs to $\wt A$) is a limit point of the $x$-coordinates of the $z_n$, which are not in $\wt A$, which implies that $\wt A$ cannot be open. On the other hand, suppose that each $z_n$ (except for finitely many) {\em does not} lie on a vertical red line, and hence lies in the middle of a horizontal chord of the graph (a green line segment). Consider the sequence $(\wt z_n)$ obtained by replacing each $z_n$ by the rightmost endpoint of the green segment it belongs to. Any limit point $\wt a$ of these $\wt z_n$ must have the same vertical height as $a$, and one easily checks that it must be $\cong$ equivalent to $a$.
But then the $x$-coordinate of $\wt a$ corresponds to a point in $\wt A$ which is a limit point of the $x$-coordinates of the $\wt z_n$, which are not in $\wt A$, which implies that $\wt A$ cannot be open. We conclude that $A$ is open if and only if the corresponding set $\wt A$ is open. 
\end{proof}

\begin{remark} \label{rem::toptrees}
Suppose $\equiv_1$ and $\equiv_2$ are two equivalence relations on a topological space $S$ and $\equiv_3$ is the smallest equivalence containing both.  Then $\equiv_2$ induces an equivalence in $S /\! \equiv_1$ in the obvious way, and it is not hard to see that $(S /\! \equiv_1) / \!\equiv_2$ is homeomorphic to $S /\! \equiv_3$ and hence also to $(S /\!\equiv_2)/\! \equiv_1$ in the obvious way. To apply this observation, suppose that $S$ consists of two disjoint copies of $\R$, and $\equiv_1$ is the smallest equivalence that makes $s$ and $t$ in the first copy equivalent whenever $L_s = L_t$ and each $r \in (s,t)$ satisfies $L_r > L_s$, and $s$ and $t$ in the second copy equivalent whenever $R_s = R_t$ and each $r \in (s,t)$ satisfies $R_r > R_s$. Let $\equiv_2$ be the equivalence that makes $s$ in one copy of $\R$ equivalent to the corresponding $s$ in the other copy. Then note that $(S /\! \equiv_1) / \! \equiv_2$ is a ``topological mating of a correlated pair of CRTs'' while $(S / \! \equiv_2) / \! \equiv_1$ is obviously equivalent to $\R /\! \equiv$. Thus we are justified in calling $R /\! \equiv$ (and hence also $\C /\! \cong$) a topological mating of trees.
\end{remark}

\begin{lemma} \label{lem::transientbounded}
Suppose that $\kappa'>4$ is fixed, and $\eta'$ is a space-filling $\SLE_{\kappa'}$ curve from $\infty$ to $\infty$ (as discussed in this section). Then $\eta'$ is a.s.\ transient in the sense that the $\eta'$ pre-image of any bounded set is also bounded.  Furthermore, it is a.s.\ the case that any point $z$ that $\eta'$ hits more than once lies on a flow line or dual flow line started at a different point.
\end{lemma}

\begin{proof}
The transience of $\eta'$ is established in \cite{ms2013imag4} where the space-filling SLE curves are defined. If $z$ is hit at distinct times $t_1$ and $t_2$ then in particular for any rational $t \in (t_1, t_2)$ (for which $\eta'(t) \not = z$) the point $z$ must be on the boundary between $\eta' \bigl((-\infty, t] \bigr)$ and $\eta' \bigl( [t, \infty) \bigr)$, and hence by definition it lies on the flow line or dual flow line started at $\eta'(t)$.
\end{proof}

\begin{figure}[ht!]
\begin{center}
\includegraphics[scale=0.85]{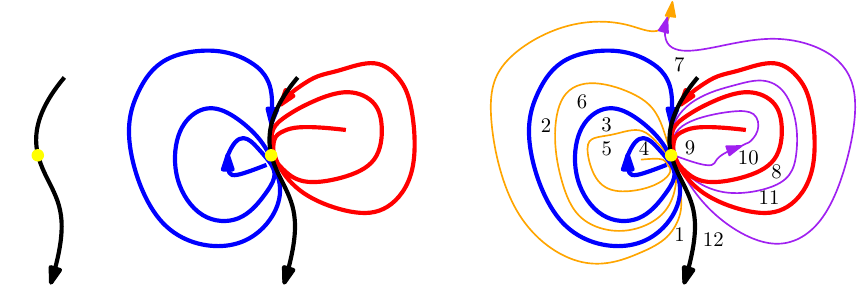}
\end{center}
\caption{\label{fig::doublespiral} {\bf Left:} (sketch of) segment of a flow line passing through point $z$.  {\bf Middle:} flow lines (in blue and red) that spiral around and hit $z$ multiple times before merging into the original flow line. {\bf Right:} dual flow lines (thinner curves) that spiral around (and hit $z$ multiple times, and hit but do not cross the flow lines) before the left and right curves ultimately merging with each other (up top). The numbers indicate the order in which a space-filling curve (one ``tracing the boundary'' of the flow line tree in clockwise order) would visit each of the numbered regions. The red and blue paths may have more intersection points than those shown, but they a.s.\ do not ``cross'' each other.}
\end{figure}

In fact, we will need to go beyond Lemma~\ref{lem::transientbounded} and give a more detailed description of the behaviors that are possible at multiple points of $\eta'$.

Over the next few lemmas, we will aim to show that $s \equiv t$ whenever $\eta'(s) = \eta'(t)$.  The short and informal version of the argument would be ``The results in \cite{ms2013imag4} restrict the ways a point $z$ can be hit multiple times by a space-filling SLE, and these restrictions imply that multiple points (that are not merge points) should look like the point illustrated in Figure~\ref{fig::doublespiral}, and this can be used to show that $s \equiv t$ whenever $\eta'(s) = \eta'(t)$.'' Because this consequence was not explicitly stated in \cite{ms2013imag4}, we will have to remind the reader what was proved in  \cite{ms2013imag4} and work out some simple consequences.

Consider a countable dense set of points in $\C$ --- for example, the set of points with rational coordinates. Each such $z$ is a.s.\ hit by $\eta'$ only once and hence $(\eta')^{-1}(\{z\})$ is a.s.\ a single point $t = t(z)$. From any such point $z$, we can draw a flow line $\eta_z^L$ and a dual flow line $\eta_z^R$, which may hit each other and themselves (recall that interior flow lines are a.s.\ self-intersecting for $\kappa \in (8/3,4)$ and a flow and dual flow starting from the same interior point a.s.\ intersect for $\kappa \in (2,4)$) but which do not cross each other and their liftings to the universal cover of the complement of $z$ are a.s.\ simple curves. These curves form the boundary between $\eta'\bigl( (-\infty, t(z)] \bigr)$ and $\eta' \bigl( [t(z), \infty) \bigr)$.

A {\bf merge point} is a point $z \in \C$ such that, for some two distinct points $x,y \in \C$ with rational coordinates, the curves $\eta_x^L$ and $\eta_y^L$ merge at $z$. 

\begin{lemma} \label{lem::countablemerge}
It is a.s.\ the case that there are only countably many merge points, and that all merge points are triple points of $\eta'$ (i.e., points hit exactly three times) and correspond precisely to the (countably many) local minima and maxima of the Brownian motions $L$ and $R$. Furthermore, for any merge point $z$, the three times in $(\eta')^{-1}(\{z\})$ are equivalent under $\equiv$.
\end{lemma}
\begin{proof}
Since each merge point $z$ corresponds to a merging of a distinct $\eta_x^L$ and $\eta_y^L$ (for $x,y \in \C$ with rational coordinates) it is immediate that there can be only countably many such $z$. Let $t_1$ and $t_2$ be such that $\eta'(t_1) = x$ and $\eta'(t_2) = y$, and assume without loss of generality that $t_1 < t_2$.  (Recall from above that since a.e.\ point in $\C$ is hit by $\eta'$ only once, each rational point a.s.\ $\C$ has a unique $\eta'$-inverse.) If flow lines from $x$ and $y$ merge at $z$ then it is not hard to see that $L$ must obtain a minimum at the time in between $t_1$ and $t_2$ at which $z$ is visited (similarly for $R$ if dual flow lines from $x$ and $y$ merge at $z$), which immediately implies the $\equiv$ equivalence of the three times.

We will now explain why $\eta'$ a.s.\ visits $z$ exactly three times.  First, if we start dual paths $\eta_u^R$, $\eta_v^R$ at points $u,v$ on $\eta_x^L, \eta_y^L$ very close to $z$ then it is very likely that they will merge and form a small diameter pocket $P$ with the merge point $z$ on its boundary.  By the definition of space-filling $\SLE_{\kappa'}$, once $\eta'$ enters this pocket it fills it up entirely before leaving.  Indeed, this follows because two flow lines cannot cross, the same is likewise true for dual flow lines, and flow and dual flow lines cannot cross \cite[Theorem~1.7]{ms2013imag4}.  Moreover, the conditional law of $\eta'$ inside of $P$ is that of a space-filling $\SLE_{\kappa'}(\kappa'/2-4';\kappa'/2-4)$ with force points located at $z$ and the point at which $\eta_u^R$, $\eta_v^R$ merge.  Each boundary point is a.s.\ hit exactly once, and this accounts for one of the three visits.  If we draw in the path $\eta_x^R$, the conditional law of $\eta'$ up until it visits $x$ is an $\SLE_{\kappa'}(\kappa'/2-4;\kappa'/2-4)$ process \cite[Theorem~1.14]{ms2013imag4} in each of the components of $\C \setminus (\eta_x^L \cup \eta_x^R)$ whose boundary consists of part of the right side of $\eta_x^L$ and part of the left side of $\eta_x^R$ (there is in fact only one component if $\kappa' \geq 8$).  If $\kappa' \in (4,8)$, it is a.s.\ the case that $\eta_y^L$ does not merge with $\eta_x^L$ at a point in $\eta_x^L \cap \eta_x^R$, so that $z$ is in the interior of the boundary of one of these pockets.  Therefore $\eta'$ a.s.\ visits $z$ exactly once before hitting $x$.  A similar analysis implies that $\eta'$ a.s.\ visits $z$ exactly once after hitting $x$.
\end{proof}

In addition to these highly special merge points there are uncountably many other points that $\eta'$ hits multiple times, and these other types of multiple points are somewhat more interesting. For example, any point $z$ that lies on a flow line or dual flow line (started from a rational point that is not $z$ itself) must be visited by $\eta'$ both before and after $z$ is visited. But we need a more complete understanding of what can happen at multiple points.

To this end, let us introduce another definition: a {\bf $k$-flow point} is a point $z$ that is not a merge point such that for some rational points $x_1, x_2, \ldots, x_k$ (and a choice of dual flow line or flow line for each $i \in \{1, 2, \ldots, k \}$) the $k$ flow/dual flow lines started from the $x_i$ all reach the point $z$ without merging with each other first.

We recall from \cite{ms2013imag4} that the number of flow and dual flow lines that can meet at a single point (before merging with each other) is at most $n$ if
\begin{equation}
\label{eqn::k_flow_bound}
	\frac{\kappa}{4} \in \Bigl(\frac{n-1}{n}, \frac{n}{n+1}\Bigr].
\end{equation}
For example, when $\kappa$ is between $0$ and $2$, we have $n=1$ (so two flow and dual lines cannot intersect each other).  When $\kappa \in (2, 8/3]$, a flow line and a dual flow line can hit each other but a flow or dual flow line cannot wind around and hit itself (as the $2\pi$ angle gap would be too large to allow such an intersection). In particular, for any $\kappa < 4$, there is a finite upper bound on the number of distinct flow or dual flow lines coinciding at a point, hence a finite upper bound on the $k$ for which $k$-flow points exist.

Suppose $\eta'$ hits $z$ at least $k$ times, and that $z$ is not one of the countably many merge points. Let $t_1, \ldots, t_{k-1}$ be times at which $\eta'$ hits rational points $z_1, \ldots, z_k$, with one such time in each bounded component of $(\eta')^{-1}(z)$. We can draw a flow line and dual flow line from each $z_j$. For any distinct $i, j$, it cannot be the case that both the flow lines from $z_i$ and $z_j$ {\em and} the dual flow lines from $z_i$ and $z_j$ merge {\em before} hitting $z$, since if this happened then $z$ could not be on the boundary of $\eta'([t_i, t_j])$.  In a small neighborhood around $z$, one encounters a finite number of flow line or dual flow line ``strands,'' at most the number $n$ described above.

Now if one follows one of the strands, it either goes off to $\infty$ before returning to $z$ or it merges with one of the other strands after winding around.

\begin{lemma}
\label{lem::multiplepointsandflowpoints}
It is a.s.\ the case that every multiple of point of $\eta'$ is {\em either} a merge point (in which case it is hit by $\eta'$ exactly three times) or a $k$-flow point for some $k \geq 1$ (in which case it is hit by $\eta'$ exactly $k+1$ times).
\end{lemma}

We note that by combining Lemma~\ref{lem::multiplepointsandflowpoints} with~\eqref{eqn::k_flow_bound} gives a deterministic upper bound on the number of times that $\eta'$ can hit any point in $\C$.

\begin{proof}[Proof of Lemma~\ref{lem::multiplepointsandflowpoints}]

We have already established in Lemma~\ref{lem::countablemerge} that merge points are a.s.\ triple points.  We have also shown in Lemma~\ref{lem::transientbounded} that every multiple point is on a flow line or dual flow line path started at another point, which means that it is a $k$-flow point for some $k \geq 1$.  What remains is to show that it is a.s.\ the case that every $k$-flow point for $k \geq 1$ is hit exactly $k+1$ times --- or equivalently, that every point hit exactly $k+1$ times is a $k$-flow point.  We will take mainly the former point of view and proceeding by analyzing a $k$-flow point.

To make the idea of what such points look like more concrete, we sketch a general possible point hit $k+1$ times (with $k = 3$) in Figure~\ref{fig:flower}, and a possible $k$-flow point (with $k = 10$) in Figure~\ref{fig::doublespiral}.  In the latter case, the path $\eta'$ makes $10$ excursions away from $z$ in between the first and last time it hits $a$ (and these excursions are labeled by numbers $2, 3, \ldots, 11$). The initial period (up to first time $z$ is hit) is labeled $1$ and the final period (after $z$ is hit for the last time) labeled $12$.  This implies that the point $z$ is visited $11$ times total (once between filling each pair of consecutively numbered regions).

It is a.s.\ the case that at {\em every} time, the left and right boundary paths, say $\eta^{t,L}$ and $\eta^{t,R}$, of $\eta'((-\infty,0])$ exist starting from $\eta'(t)$.  Any segment of $\eta^{t,L}$ (except possibly at its starting point, $\eta'(t)$) is a segment of a flow line starting from a rational point (since one can take rational points arbitrarily close to $\eta'(t)$ and a flow line starting close to $\eta'(t)$ is very likely to merge with $\eta^{t,L}$ before traveling very far; see \cite[Lemma~3.11]{ms2013imag4} for a similar argument) and the same likewise holds for $\eta^{t,R}$.  This implies that the $\eta^{t,L}$, $\eta^{t,R}$ have all of the properties (at least away from $\eta'(t)$) that a flow or dual flow line starting from a point with rational coordinates has.

Suppose $z$ is a multiple point and let $s$ (resp.\ $t$) be the first (resp.\ last) time that $\eta'$ hits $z$.  Then consider $\eta'(( -\infty, s+\delta]) \cap \eta'([t - \delta, \infty))$ for some very small $\delta > 0$. We claim that (for all sufficiently small $\delta$) this intersection contains exactly one long boundary strand

passing through $z$. To see this, consider a $\delta$ small enough so that $\eta'$ has only hit $z$ once up to time $s+\delta$. (One may assume $\eta'(s+\delta)$ is a rational point, since such points are hit at a dense set of times.) Since $z$ is a multiple point (and must be hit at some subsequent time) so we know that $z$ is on the boundary of $\eta'((-\infty, s+\delta])$ and hence must lie on either (and maybe both) the flow line and dual flow line starting from $\eta'(s+\delta)$, each of which hits $z$ at most once. The time reversal of $\eta'$ swallows points along the flow and dual lines in reverse order, and when it first hits $z$ at time $t$, it must do while either swallowing points along the dual flow line or while swallowing points along the flow line, and in either case one can easily deduce the claim. We refer to the flow line (or dual flow line) strand described in this claim as the {\bf king strand}.  Without loss of generality assume that the king strand is a flow line.  If $z$ is a $k$-flow point, then one can draw other strands which visit $z$, as illustrated in Figure~\ref{fig::doublespiral}.  It is not hard to see that any other such strand must (if one continues to follow it) bounce off~$z$ finitely many times before finally merging with the king strand. Dual strands must do the same except that those that hit the king strand on the left merge with those that hit the king strand on the right at some point {\em away} from~$z$, as illustrated in Figure~\ref{fig::doublespiral}.

Once we have the $k$ strands in place, they divide the region near $z$ into $k+1$ bands, as illustrated in Figure~\ref{fig::doublespiral}.  We need to show that within each such band, $\eta'$ hits $z$ exactly one time.  (In less formal language, if there were multiple hitting times, then their boundaries would be extra ``petals'' that would ``subdivide'' at least one of the bands.)

There are two kinds of bands: the middle bands (which lie between two successive paths) and the two end bands.  The definition of the space-filling ordering implies that, in these middle bands, the bubbles are filled in order (as two flow lines cannot cross and flow and dual flow lines cannot cross \cite[Theorem~1.7]{ms2013imag4}).  Moreover, the conditional law of $\eta'$ in each of these bands (which in fact consist of countably many bubbles if $\kappa' \in (4,8)$) is that of a space-filling $\SLE_{\kappa'}(\kappa'/2-4;\kappa'/2-4)$ process.  In particular, in each of the countably many bubbles the tip is not visited multiple times while the bubble is being filled.

Let us now consider the case of the end bands.  We will without loss of generality consider the leftmost band.  Then the point is to imagine that we stop $\eta'$ at the first time $z$ is visited from the leftmost band --- or rather, at a time $t$ just slightly after that time. Then if the leftmost band has not been completely filled (in a neighborhood of $z$) during that time, then the future/past boundary arcs at time $t$ must go through $z$, and we must either have a merge point at $z$ or a further-left strand, both of which (by assumption) we do not have.
\end{proof}

\begin{lemma} \label{lem::multiplepointsequivalent}
For a space-filling $\SLE_{\kappa'}$ curve $\eta'$ it is furthermore a.s.\ the case that for each point $z$ that the curve hits at multiple times, the times in $(\eta')^{-1}(\{z\})$ are all equivalent under the smallest equivalence relation $\equiv$ such that $s \equiv t$ if one of the following is true:
 \begin{enumerate}
 \item $L_s = L_t$ and each $r \in (s,t)$ satisfies $L_r > L_s$
 \item $R_s = L_t$ and each $r \in (s,t)$ satisfies $R_r > R_s$
 \end{enumerate}
\end{lemma}
\begin{proof}
By Lemma~\ref{lem::multiplepointsandflowpoints}, we may assume that $z$ is a $k$-flow point, and is visited exactly $k+1$ times. As explained in the proof of Lemma~\ref{lem::multiplepointsandflowpoints} (and illustrated in Figure~\ref{fig::doublespiral}), we can divide the region near $z$ into $k+1$ bands and $\eta'$ visits $z$ exactly one time as it is filling each of these bands.  If we order these times from left to right (rather than sequentially) then two adjacent times must be separated by either a flow or a dual flow line and therefore either $L_t$ or $R_t$, respectively, must then take the same value at those two times, and be strictly larger in between those two times.  Thus all of the times are equivalent under $\equiv$ as desired.
\end{proof}

\begin {figure}[htbp]
\begin {center}
\includegraphics [scale=1]{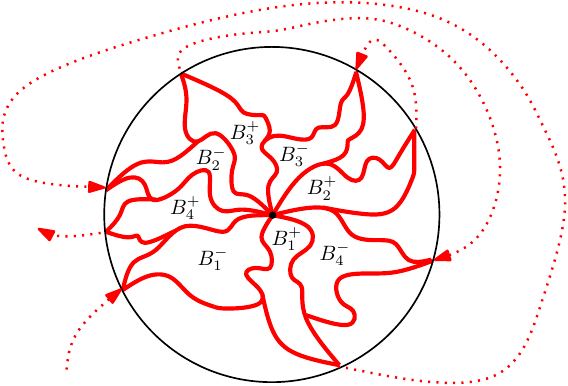}
\caption {\label{fig:flower} Each of the sets $B_j^{\pm}$ represents the region traced by either a segment of $\eta'$ from $\partial B(z,\epsilon)$ to $z$ or a segment of $\eta'$ from $z$ to $\partial B(z,\epsilon)$. Dotted lines are drawn from $B_j^+$ to $B_{j+1}^-$ to illustrate the order in which regions are traversed. Continuity and transitivity of $\eta'$ imply that the union of the $B_j^{\pm}$ regions includes a neighborhood of $z$, in which the eight boundaries between cyclically consecutive regions are given by segments of flow or dual flow lines. Each of these segments (which intersect but not cross one another) is part of the boundary of two neighboring $B_j^{\pm}$ regions; it is ``added'' to the boundary of $\eta' \bigl( (-\infty,t] \bigr)$ as the first of these regions is traversed and ``removed'' from the boundary of $\eta' \bigl( (-\infty,t] \bigr)$ as the second is traversed. }
\end {center}
\end {figure}

\begin{lemma} \label{lem::planeequivalence} If $\eta'$ is the space-filling SLE, then the sets $(\eta')^{-1}(\{z\})$ (for $z$ in the complex plane) are precisely the equivalence classes of $\equiv$.
\end{lemma}
\begin{proof}
This follows from Lemma~\ref{lem::transientbounded} and Lemma~\ref{lem::multiplepointsequivalent}.
\end{proof}

\begin{lemma}
\label{lem::opengivesopen}
Let $\eta'$ be any continuous, transient space-filling continuous function $\R \to \C$.  (By {\em transient} we mean that if $A$ bounded then $(\eta')^{-1}(A)$ must be bounded.)  Then for any $A \subseteq \C$, if $(\eta')^{-1}(A)$ is open, then $A$ is also open.
\end{lemma}
\begin{proof}
We aim to show the converse, i.e., that if $A$ is not open, then $(\eta')^{-1}(A)$ is not open.  Suppose $A$ is not open. Then there must exist $z \in A$ such that there is a sequence $(z_n)$ in $\C \setminus A$ converging to~$z$.   The transience of $\eta'$ implies that the $(\eta')^{-1}(\{z_n\})$ belong to a fixed bounded interval for all large enough~$n$. Thus, by compactness of that interval, we can find a sequence along which the sequence of sets $(\eta')^{-1}(\{z_n\})$ has a limit point $t$. but then by continuity we must have $\eta'(t) = z$. Since $t$ is a point in $(\eta')^{-1}(A)$ that is a limit of points not in $(\eta')^{-1}(A)$, the set $(\eta')^{-1}(A)$ must not be open.
\end{proof}

\begin{lemma}
\label{lem::homeomorphismpath}
Let $\eta'$ be any continuous, transient space-filling continuous function $\R \to \C$. Let $\equiv$ be the equivalence relation on $\R$ such that $s \equiv t$ whenever $\eta'(s) = \eta'(t)$. Let $\wt R$ be the quotient of $\R$ w.r.t.\ $\equiv$.  Then $\eta'$ is a homeomorphism between $\wt R$ and $\C$.
\end{lemma}

\begin{proof}
Given any open $A \subset \C$, the set $(\eta')^{-1}(A)$  (interpreted as a subset of $\wt R$) is clearly open by continuity of $\eta'$ and the definition of the quotient topology.  Conversely, if we have an open set of times that is a union of equivalence classes, then Lemma~\ref{lem::opengivesopen} implies that its image is open as a planar set.
\end{proof}

\begin{theorem}
\label{thm::topologyiscorrect}
The function $\eta'$, interpreted as a map from $\R / \equiv$ to $\C$, is a.s.\ a topological homeomorphism.  The correspondence between $\C / \cong$ and $\C$ is hence a.s.\ also a topological homeomorphism.
\end{theorem}

\begin{proof}
The first assertion is immediate from Lemma~\ref{lem::multiplepointsequivalent} and Lemma~\ref{lem::homeomorphismpath}, and the second follows from Lemma~\ref{lem::equivalenceequivalence}.
\end{proof}

In particular, Theorem~\ref{thm::topologyiscorrect} implies that the ``peanosphere'' given by $\R / \equiv$ (or equivalently the corresponding mating of the correlated pair of CRTs, as given by $\C / \cong$) is a.s.\ homeomorphic to the plane (or sphere if one includes the point at infinity).  Moreover, the map $\eta'$ describes a specific way to map the peanosphere homeomorphically onto the plane. The goal of Section~\ref{sec::trees_determine_embedding} will be to show that (up to rotations, dilations, and translations) this embedding is a.s.\ determined by $L$ and $R$ (or by the corresponding pair of CRTs).

\section{Trees determine embedding}
\label{sec::trees_determine_embedding}

\subsection{Theorem statement and proof using three lemmas}

Let $(\C, h,0,\infty)$ be a $\gamma$-quantum cone, and let~$\eta'$ be an independent space-filling $\SLE_{\kappa'}$ in~$\C$ from~$\infty$ to~$\infty$ with $\kappa' = 16/\gamma^2 \in (4,\infty)$.  We assume that~$\eta'$ is parameterized according to $\gamma$-LQG area so that for each $s < t$ we have $\mu_h(\eta'([s,t])) = t-s$.  It will be convenient for this section to fix the embedding of the quantum cone in $\C$ in a particular way (different from the circle average embedding discussed in Section~\ref{subsec::cones}): namely, by redefining $h$ and $\eta'$ via a quantum coordinate change by a translation and dilation as necessary so that there exists a conformal map
\[ F \colon \C \setminus \ol \D \to \C \setminus \eta'([-1,1]),\]
whose range is the entire unbounded component of $\C \setminus \eta'([-1,1])$ and has the Laurent expansion
\begin{equation}
\label{eqn::Flaurent}
F(z) = z + \sum_{n=1}^\infty \alpha_n z^{-n}.
\end{equation}
Using terms that we will define in Section~\ref{subsec::distortionlemma}, this amounts to scaling the embedding so that $\eta'([-1,1])$ (together with the set of points disconnected from $\infty$ by $\eta'([-1,1])$) has ``harmonic center'' zero and ``outer conformal radius'' one.  In a certain conformal sense (see Section~\ref{subsec::distortionlemma}) this means that $\eta'([-1,1])$ has the same size and center as the unit disk.  Given this embedding in $\C$, we will not expect that the quantum cone marked point $\eta'(0)$ is exactly equal to $0$.  However, the ``infinite'' endpoint of the cone will be $\infty$ as usual.

We have already shown in Section~\ref{sec::brownian_boundary_length} that $(\C, h, \eta')$ determines the Brownian motion pair $(L, R)$ indexed by $t \in \R$.  The aim of this section is to show that $(L, R)$ a.s.\ uniquely determines the triple $(\C, \mu_h, \eta')$ up to a rotation of $\C$.  Combining with \cite{berestycki2014equivalence}, this implies that the path-decorated quantum surface $(\C,h,0,\infty,\eta')$ is a.s.\ determined by $(L,R)$.

Let $\CF$ be the $\sigma$-algebra generated by the quantum surfaces parameterized by $\eta'([1,\infty))$ and by $\eta'((-\infty,-1])$.  Note that these two quantum surfaces are independent quantum wedges of weight $2-\gamma^2/2$.  Now let $\CB$ be the $\sigma$-algebra generated by the restriction of $(L_t, R_t)$ to $[-1,1]$.  We note that $\CF$ and $\CB$ together determine the surface parameterized by the unbounded component of $\C \setminus \eta'([-1,1])$ because $\CB$ determines how the surfaces parameterized by $\eta'((-\infty,-1])$ and $\eta'([1,\infty))$ are glued together.

Proving the following is the main technical challenge of this section.

\begin{theorem}
\label{thm::treesdetermineemebedding}
The conditional law of $F$ (modulo rotation about the origin) given the $\sigma$-algebra generated by {\em both} $\CF$ and $\CB$ is a.s.\ deterministic.
\end{theorem}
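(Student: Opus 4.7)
The plan is to deduce Theorem~\ref{thm::treesdetermineemebedding} from a conformal welding-uniqueness argument, organized as three lemmas.

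The first lemma will identify what $\CB$ encodes. Using the $\SLE_{\kappa'}$ structural results of Section~\ref{subsec::sle_kappa_prime_bubbles} and the infinite-volume welding results of Section~\ref{subsec::matingsandloops}, notably Theorem~\ref{thm::gluingtwoforestedlines} (for $\kappa'\in(4,8)$) and its analog for $\kappa'\geq 8$, the restriction $(L_t,R_t)|_{[-1,1]}$ determines the pair of beaded quantum surfaces lying immediately to the left and right of $\eta'|_{[-1,1]}$ — equipped with their $\gamma$-LQG boundary length measures — together with the Poissonian collection of bubbles cut off during $t\in[-1,1]$. Concatenated according to the peanosphere combinatorics, these determine $\eta'([-1,1])$ as a quantum surface (up to conformal automorphism of a disk) whose outer boundary carries a canonical $\gamma$-LQG length measure.

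The second and central lemma performs the welding. The algebra $\CF$ determines $(\C\setminus\overline\D,\wt h)$ as a conformally parameterized quantum surface, and in particular determines the $\gamma$-LQG length measure on $\partial\D$; both sides of the interface assign the same total length to it, by consistency with the global peanosphere construction. Invoking the conformal welding identifications from Theorem~\ref{thm::zip_up_wedge_rough_statement} and Theorem~\ref{thm::paths_determined}, together with the conformal removability of the outer boundary of $\eta'([-1,1])$ — this boundary is a union of $\SLE_\kappa(\rho)$-type arcs and is removable by Proposition~\ref{prop::sle_union_removable_is_removable} and Proposition~\ref{prop::collection_of_sles_removable} — one obtains a welding that is a.s.\ unique, determining the embedding of $\eta'([-1,1])$ in $\C$ up to a conformal automorphism of $\C$ fixing $\infty$. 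The third lemma then pins down this residual freedom: the Laurent normalization~\eqref{eqn::Flaurent} fixes the translation and scaling at $\infty$, while rotation is fixed because $\wt h$ is given as a distribution on $\C\setminus\overline\D$ rather than merely as a quantum surface modulo rotation. Combined, the three lemmas give measurability of $F$ with respect to $\CF\vee\CB$.

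The principal obstacle is the welding-uniqueness step. The outer boundary of $\eta'([-1,1])$ is not a single $\SLE$ curve but a concatenation of arcs from $\eta_L$ and $\eta_R$ meeting along a topologically intricate cut-point structure (especially for $\kappa'\in(4,8)$), so carefully identifying the removable set and verifying that the matching length data on both sides uniquely determine the interface will require a delicate combination of the removability results of Section~\ref{subsec::removability} with the welding framework of Section~\ref{subsec::matingsandloops}. I expect most of the technical work in a full proof to lie in this step, and in particular in checking that the welding produces the original quantum cone rather than some other conformally inequivalent surface glued from the same boundary data.
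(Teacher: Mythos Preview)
Your proposal has a genuine gap in the first lemma, and this gap is essentially the whole content of the theorem. You claim that $(L_t,R_t)|_{[-1,1]}$ determines $\eta'([-1,1])$ as a quantum surface. It does not: the Brownian motion encodes only boundary-length data. If you partition $[-1,1]$ into small intervals, then conditioned on the values and running minima of $(L,R)$ on these intervals, the quantum surfaces $\CN_j$ traced by $\eta'$ over those intervals are conditionally \emph{independent} with nondegenerate laws (this is how the paper sets things up just after Theorem~\ref{thm::quantitativetreesdetermineemebedding}). That residual randomness is exactly what has to be shown not to affect $F$. Your invocation of Theorem~\ref{thm::gluingtwoforestedlines} does not help: that theorem says the \emph{forested lines} (collections of genuine quantum disks, not just their boundary lengths) determine the wedge, and moreover its measurability clause is itself proved in Section~\ref{sec::duality} \emph{using} the results of Section~\ref{sec::treenotes}, so appealing to it here is circular.

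The paper's actual approach is entirely different and does not pass through a welding-uniqueness argument. It proves the quantitative Theorem~\ref{thm::quantitativetreesdetermineemebedding}: writing $F_k(z)=\E[F(z)\mid\CB_k,\CF]$, one bounds the conditional variance of $F(z)$ given $\CB_k$ and $\CF$ by combining (i) the Efron--Stein inequality (Lemma~\ref{lem::generalvariancebound}), since the $\CN_j$ are conditionally independent; (ii) a deterministic distortion estimate (Lemma~\ref{lem::diametertofourthpower}) showing that resampling a single $\CN_j$ moves $F(z)$ by $O(\diam(\CN_j)^2)$; and (iii) a KPZ-type bound (Lemma~\ref{lem::kpzlem}) on $\E[\diam(\CN_j)^4\mid\CF]$. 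This gives $\Var(F(z)\mid\CB_k,\CF)\to 0$, hence $F$ is $(\CB\vee\CF)$-measurable. Removability of the outer boundary of $\eta'([-1,1])$ plays no role.
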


Before going on, let us point out that Theorem~\ref{thm::treesdetermineemebedding} has some important consequences.   Obviously, the function $F$ determines the hull of $\eta'([-1,1])$ (i.e., the complement of the unbounded component of $\C \setminus \eta'([-1,1])$) as a set, so Theorem~\ref{thm::treesdetermineemebedding} in some sense implies that the hull of $\eta'([-1,1])$ (modulo rotation about the origin) is determined by the information encoded by $\CF$ and $\CB$. The following is an extension of this statement.

\begin{corollary}
\label{cor::manychunk}
Given the $\sigma$-algebra generated by {\em both} $\CF$ and $\CB$ , the conditional law of the collection of sets $\eta'([j/n, (j+1)/n])$ (modulo rotation about the origin), defined for $j \in \{-n,\ldots,n-1\}$, is a.s.\ deterministic.
\end{corollary}
\begin{proof}
We already know, as in the proof of Lemma~\ref{lem::boundary_length_is_brownian_motion}, that the beaded quantum surfaces which are parameterized by the sets $\eta'([j/n, (j+1)/n])$ are independent of each other, and that these together with the information encoded by $\CF$ determine the embedding (by conformal removability of the collection of boundaries).  By rescaling, Theorem~\ref{thm::treesdetermineemebedding} implies that if we resample {\em one} of these surfaces conditionally on $(L,R)$ then {\em none} of the hulls of $\eta'([j/n, (j+1)/n])$ is changed.  Hence resampling all of them does not change the collection of hulls.  This shows that the collection of hulls of $\eta'([j/n,(j+1)/n])$ is a.s.\ determined by $\CF$ and $\CB$.  We now need to show that the collection of sets $\eta'([j/n, (j+1)/n])$ is determined (i.e., not just the hulls).  Fix $-1 < s < t < 1$ and let $u_0^N = s < u_1^N < \cdots < u_N^N = t$ be a sequence of partitions of $[s,t]$ with mesh size tending to $0$ as $N \to \infty$.  Letting ${\mathrm {hull}}(\eta'([a,b]))$ denote the hull of $\eta'([a,b])$, we then have that
\[ \eta'([s,t]) = \cap_{N=1}^\infty \cup_{k=0}^{N-1} {\mathrm {hull}}(\eta'([u_k^N,u_{k+1}^N])),\]
which completes the proof.
\end{proof}

Theorem~\ref{thm::treesdetermineemebedding} also implies a seemingly stronger statement, Corollary~\ref{cor::bmgivesembedding} below (which is a restatement of Theorem~\ref{thm::trees_determine_embedding}), which states that the parameterized path $\eta'$ and the measure $\mu_h$ are both determined by the Brownian motion $(L, R)$, up to a rotation of $\C$ about $0$.  In other words, the infinite volume topological surface obtained by mating the trees described by $L$ and $R$ a.s.\ has a uniquely defined conformal structure, which determines its embedding in $\C$.  Establishing this has been one of the main goals of this paper.

\begin {figure}[ht!]
\begin {center}
\includegraphics[scale=0.85]{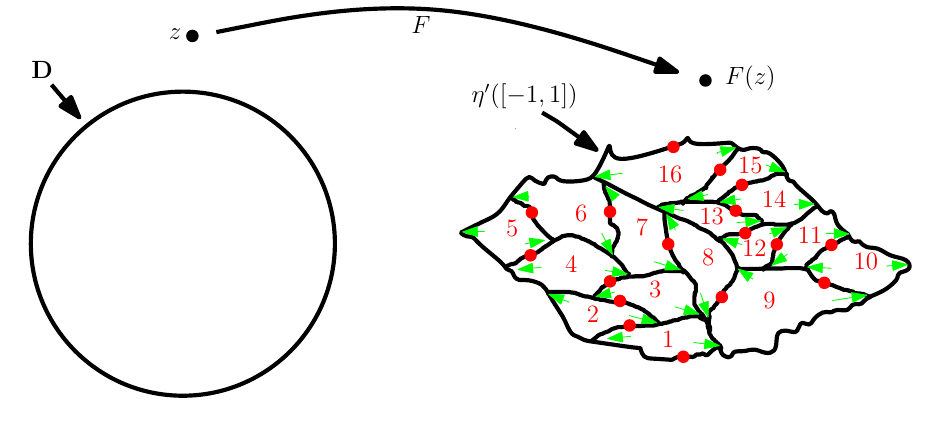}
\caption {\label{fig::necklacemap} The conformal map $F\colon\C \setminus \ol \D \to \C \setminus \eta'([-1,1])$ is normalized to look like the identity near infinity.  Taking $k=3$, we divide $[-1,1]$ into $2^{k+1} = 16$ equal-sized subintervals and illustrate their images, numbered according to the order in which they are traversed by $\eta'$.   The red dots indicate the $\eta'$ images of the endpoints of these intervals.  The green arrow heads point to $\eta'$ images of the times at which $L$ and $R$ were minimal within each given interval.  This figure should correspond to $\kappa' \geq 8$, because the interval images shown do not have cut points.  A $\kappa' \in (4,8)$ image would be similar but somewhat more complicated to illustrate.}
\end{center}
\end{figure}

\begin{corollary}
\label{cor::bmgivesembedding}
In the setting described just above, the pair $(L, R)$ a.s.\ uniquely determines the measure $\mu_h$ and the path $\eta'$ (modulo rotation about the origin).
\end{corollary}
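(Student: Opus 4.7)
The plan is to apply Theorem~\ref{thm::treesdetermineemebedding} iteratively at all scales, recover $(\mu_h,\eta')$ as a measurable function of $(L,R)$ together with some tail data, and then show via a zero--one law that this tail data adds nothing beyond the global rotation about the origin.

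First I would invoke the area-scale invariance of the $\gamma$-quantum cone (Proposition~\ref{prop::quantum_cone_properties}) together with the quantum-time translation invariance of $(h,\eta')$ (Lemma~\ref{lem::invariant_under_translation}) to observe that Theorem~\ref{thm::treesdetermineemebedding} applies verbatim with the reference interval $[-1,1]$ replaced by any finite interval $I\subset\R$: after renormalizing the embedding so that the corresponding conformal map $F_I\colon\C\setminus\overline{\D}\to\C\setminus\eta'(I)$ has Laurent expansion $z+O(z^{-1})$ at $\infty$, the map $F_I$ is measurable with respect to $\CF_I\vee\CB_I$, where $\CF_I$ is generated by the cut-out surface on $\C\setminus\eta'(I)$ pulled back to $\C\setminus\overline{\D}$ and $\CB_I=\sigma((L_t,R_t)_{t\in I})$. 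Once $F_I$ is known, $\eta'(I)$ is pinned down as a planar set; iterating this on the dyadic subdivisions of $I$ and using the continuity of $\eta'$ together with the non-atomicity of $\mu_h$ recovers the parameterized path $\eta'|_I$, and because $\eta'$ is parameterized by $\mu_h$-area this simultaneously determines $\mu_h$ restricted to $\eta'(I)$. Taking $I=[-T,T]$ with $T\to\infty$ and using that $\eta'(\R)=\C$ because $\eta'$ is space-filling, I then obtain $(\mu_h,\eta')$ as a measurable function of $\sigma(L,R)\vee\bigvee_{T>0}\CF_T$.

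The remaining step is to remove the conditioning on the tail $\bigvee_T\CF_T$. The approach I would take is to couple two versions $(h,\eta')$ and $(\tilde h,\tilde\eta')$ of the setup sharing the same $(L,R)$: by the previous step these must agree on $\eta'([-T,T])$ up to a conformal self-map of $\C$ that fixes the Laurent normalization~\eqref{eqn::Flaurent} at infinity, which is necessarily a Euclidean rotation $\rho_T$ about $0$. The consistency of the normalizations across scales (the smaller-scale map factors through the larger-scale map on the relevant annular region) should force the $\rho_T$ to stabilize to a single $T$-independent rotation $\rho$ satisfying $\tilde\mu_h=\rho_*\mu_h$ and $\tilde\eta'=\rho\circ\eta'$, which is exactly the residual ambiguity asserted by the corollary.

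The hardest step is this last one, namely showing that the $\rho_T$ really do collapse to a single rotation as $T\to\infty$. This amounts to a zero--one law saying that the tail $\sigma$-algebra of the quantum cone at infinity is, modulo $\sigma(L,R)$ and null sets, generated exactly by the $\mathrm{SO}(2)$ action of rotations about~$0$. I expect the cleanest route is via the cylinder coordinates of Remark~\ref{rem::quantum_cone_cyl}: the projection of the field onto the radial subspace $\CH_1(\cyl)$ is a two-sided Brownian motion with drift that should be $\sigma(L,R)$-measurable through the boundary-length scaling, while the orthogonal component has a law invariant under rotations about the origin and should become asymptotically $\sigma(L,R)$-trivial modulo rotation as $T\to\infty$.
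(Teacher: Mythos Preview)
Your proposal has a genuine gap in the ``iteration on dyadic subdivisions'' step, and this gap is exactly what the paper's different approach is designed to avoid.

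Concretely: you want to apply Theorem~\ref{thm::treesdetermineemebedding} to a subinterval $I'\subset I$ in order to pin down $\eta'(I')$ inside $\eta'(I)$. But that theorem only gives $F_{I'}$ as a measurable function of $\CF_{I'}\vee\CB_{I'}$, and $\CF_{I'}$ is generated by the full quantum surface on the complement of $\eta'(I')$ --- which includes the \emph{interior} quantum structure of all the other subinterval pieces $\eta'(I\setminus I')$, not merely $\CF_I$ together with $(L,R)|_I$. Knowing $\eta'(I)$ as a planar set (from $F_I$) is far less than knowing the field on $\eta'(I\setminus I')$, so your recursion is circular as written. Consequently the conclusion you need for your tail step --- that $(\mu_h,\eta')$ lies in $\sigma(L,R)\vee\CF_T$ for each $T$ --- is not established by your argument, and your coupling claim that two versions with the same $(L,R)$ must agree on $\eta'([-T,T])$ up to a rotation is likewise unsupported.

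The paper breaks this circularity in a completely different way, using an ingredient you do not invoke: the abstract path-decorated quantum surface $\CS_{[-1,1]}=\eta'([-1,1])$ is \emph{independent} of $\CF$ (this comes from the wedge decomposition proved in Section~\ref{sec::brownian_boundary_length}). Once you have that, a single application of Theorem~\ref{thm::treesdetermineemebedding} already yields something intrinsic to the inside surface: from $F$ and $\CF$ one recovers the hull $K=\eta'([-1,1])$, the boundary measure on $\partial K$, the three $(L,R)$-measurable marked points on $\partial K$, and hence the conformal map $G\colon\D\to K$ fixing those points; pulling back the boundary measure by $G$ gives the boundary measure of $\CS_{[-1,1]}$ in its own unit-disk parameterization. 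This quantity is therefore $(\CF,\CB)$-measurable, but it is also $\CS_{[-1,1]}$-measurable and thus independent of $\CF$; by the Proposition~\ref{prop::abc}-style argument it is then $\CB$-measurable outright. Since this boundary-measure-on-$\partial\D$ is precisely the data needed to conformally weld the piece to anything outside it, one can now subdivide $[-1,1]$, weld the sub-pieces using only $(L,R)$-measurable boundary data, and recover the full abstract surface $\CS_{[-1,1]}$ as $\CB$-measurable --- no tail argument required. Scaling to $[-C,C]$ and letting $C\to\infty$ then gives the whole cone from $(L,R)$ alone, with only the rotation left unfixed by the normalization. Your proposed zero--one law on the cylinder tail is therefore unnecessary, and the hard part you identified dissolves once the inside/outside independence is used.
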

\begin{proof}

Applying Corollary~\ref{cor::manychunk} with $n \to \infty$, and using the fact that the $\mu_h$ measure of each of the sets $\eta'([j/n, (j+1)/n)])$ is $1/n$, we find that the restriction of the $\mu_h$ measure to $\eta'([-1,1])$ {\em and} the restriction of $\eta'$ to $[-1,1]$ are both determined by the information encoded in $\mathcal F$ and $\mathcal B$, as discussed above.  In particular, this means that the beaded surface $\eta'([-1,1])$ is a.s.\ completely determined by the information encoded by $\mathcal B$, i.e., by the restriction of $L$ and $R$ to $[-1,1]$.

By scaling, an analogous statement holds for $[-C,C]$ for any $C$. Taking the $C \to \infty$ limit, we find that the entire conformal structure, together with the measure and path, is determined by the entire process $(L, R)$.
\end{proof}

We will actually establish a stronger quantitative version of Theorem~\ref{thm::treesdetermineemebedding} that implies Theorem~\ref{thm::treesdetermineemebedding}.  To make this stronger statement, let us define $\CB_k$ to be the $\sigma$-algebra generated by the following random variables:
 \begin{enumerate} \item The values $L_t$ and $R_t$ for each $t \in [-1,1]$ that is an integer multiple of $\delta = 2^{-k}$.
 \item The infimum of $L$ and the infimum of $R$ when restricted to each interval of the form $[a 2^{-k}, (a+1)2^{-k}] \subseteq [-1,1]$, for $a \in \Z$.
 \end{enumerate}
Also, let $\CN_1, \CN_2, \ldots, \CN_{2^{k+1}}$ denote the quantum surfaces parameterized by the range of $\eta'$ on these intervals.  As illustrated in Figure~\ref{fig::necklacemap}, each $\CN_j$ comes with four marked points, and the information encoded by the $\CB_k$ determines the lengths of the four boundary segments bounded by the red dots and green arrowheads in Figure~\ref{fig::necklacemap}. We have already established that, given these boundary lengths, the $\CN_j$ are conditionally independent of each other (recall the proof of Lemma~\ref{lem::boundary_length_is_brownian_motion}), and that together the $\CN_j$ determine the conformal structure of the combined quantum surface obtained by gluing them all together (this follows from Theorem~\ref{thm::paths_determined} and Theorem~\ref{thm::zip_up_wedge_rough_statement}).  Intuitively, we might expect that once we know $\CB_k$ for a large $k$ value, the extra information we obtain by sampling the $\CN_j$ would typically ``average out'' and not have a big effect on the function $F$.  This is indeed the case:

\begin{theorem}
\label{thm::quantitativetreesdetermineemebedding}
Write $\delta= 2^{-k}$ and let
$\Delta$ be the positive solution to the KPZ equation
\begin{equation}
\label{eqn::kpz8}
x = \frac{\gamma^2}{4} \Delta^2 + \left(1- \frac{\gamma^2}{4}\right) \Delta,
\end{equation}
with $x=2$.  (Note that when $\gamma \in (0,2)$~\eqref{eqn::kpz8} says that $x=2$ is a convex combination of $\Delta$ and $\Delta^2$, which implies $\Delta \in (\sqrt{2},2)$, so in particular $\Delta - 1 > 0$.)  Then a.s.\ 
\[\liminf_{k \to \infty} \frac{ \log  \E\Bigl[\Var\bigl(F(z) \giv \CB_k, \CF \bigr) \Big| \CF \Bigr]}{\log \delta} \geq \Delta - 1.\]
\end{theorem}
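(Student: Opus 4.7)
The plan is to combine conditional independence of the surface pieces with a conformal distortion estimate and a KPZ-type moment bound. Partition $[-1,1]$ into $N = 2^{k+1}$ intervals $I_j$ of length $\delta = 2^{-k}$ and let $\CN_j$ denote the (beaded) quantum surface parameterized by $\eta'(I_j)$, viewed with the four marked boundary points of Figure~\ref{fig::necklacemap}. By the welding/cutting theorems of Section~\ref{subsec::welding} (Theorems~\ref{thm::welding}, \ref{thm::paths_determined}, \ref{thm::zip_up_wedge_rough_statement}) together with the Poissonian structure theorems of Section~\ref{sec::structure_theorems} and the removability results of Section~\ref{subsec::removability}, conditional on $\CB_k$ and $\CF$ the pieces $(\CN_j)$ are independent with explicit marginal laws determined by the boundary-segment lengths encoded in $\CB_k$, and $F$ is a measurable function of $(\CF,\CB_k,(\CN_j))$. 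An Efron--Stein variance bound for a function of independent random variables then gives
\[
\Var\bigl(F(z) \giv \CB_k, \CF\bigr) \leq \sum_{j=1}^{N} \E\bigl[\,\Var(F(z) \giv \CN_{-j}, \CB_k, \CF) \bigm| \CB_k, \CF\,\bigr],
\]
where $\CN_{-j} = (\CN_i)_{i \neq j}$, reducing the global variance to a sum of per-piece contributions.

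Second, I would bound each per-piece term by a conformal distortion estimate. Fixing $\CF$ and $\CN_{-j}$ and resampling $\CN_j$ produces a new normalized map $\wt F$ (again with the expansion~\eqref{eqn::Flaurent}) that differs from $F$ only through the shape of the welded bead occupying $\eta'(I_j)$. Let $r_j$ denote the Euclidean diameter (equivalently, up to Koebe constants, the conformal radius from $\infty$) of that bead in the current embedding. A Koebe/area-theorem argument applied to $F$ and $\wt F$ yields $|F(z) - \wt F(z)| \lesssim r_j^{\,\alpha}\, \varphi(z)$ for a suitable power $\alpha$ and a $z$-dependent factor $\varphi(z)$ decaying at infinity, so
\[
\E\bigl[\Var(F(z) \giv \CN_{-j}, \CB_k, \CF) \bigm| \CF\bigr] \lesssim \varphi(z)^2 \,\E[\,r_j^{\,2\alpha} \giv \CF\,].
\]

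The crux of the argument is then the KPZ bound
\[
\E\Bigl[\,\sum_{j=1}^{N} r_j^{\,2\alpha} \;\Big|\; \CF\Bigr] \;=\; \delta^{\Delta - 1 + o(1)}
\]
along a subsequence of $k$. Since each bead $\eta'(I_j)$ carries exactly $\delta$ units of $\gamma$-LQG area and a random Euclidean size $r_j$, the sum is governed by the multifractal spectrum of $\mu_h$ along the space-filling curve. The KPZ relation with Euclidean exponent $x = 2$ (as in~\eqref{eqn::kpz8}) identifies $\Delta$ as the matching quantum exponent. To implement this I would decompose dyadically by Euclidean scale $2^{-n}$, use the Poissonian surface descriptions of Section~\ref{sec::structure_theorems} together with the finite-volume disk constructions of Section~\ref{sec::disks_spheres} to reduce to moment estimates for Euclidean diameters of quantum disks with prescribed boundary lengths, and then invoke standard GFF circle-average multifractal bounds (cf.\ \cite{ds2011kpz}) to control the expected number of beads at each scale; summing $r_j^{\,2\alpha}$ over the dyadic scales then produces the claimed exponent.

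The main obstacle is matching the distortion exponent in the Koebe step to the KPZ output in the last step so that they combine to give exactly $\Delta - 1$. KPZ as formulated in \cite{ds2011kpz} treats fractal sets sampled independently of the field, whereas here the beads are intrinsically coupled to $h$ through $\eta'$, so one must develop a KPZ-style estimate directly for the $\gamma$-LQG area measure along the space-filling curve and track how it interacts with the conformal distortion bound. The use of $\liminf$ rather than $\lim$ in the statement is essential: it allows the Euclidean-size moments of individual beads to be controlled only on favorable subsequences of scales, which is what circle-average chaining arguments naturally deliver.
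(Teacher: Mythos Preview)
Your three-step architecture (Efron--Stein, conformal distortion, KPZ-type moment bound) matches the paper's proof exactly. But there is a genuine gap: you never determine the distortion exponent $\alpha$, and without it the argument does not close.

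In the paper, the distortion step (Lemma~\ref{lem::diametertofourthpower}) is not left as $r_j^\alpha$ for some $\alpha$: one shows via the area theorem applied to the Laurent expansion~\eqref{eqn::Flaurent} that swapping a hull of diameter $r$ for another moves $F(z)$ by $O(r^2)$, so the per-piece variance is $O(r_j^4)$. This $4$th power is precisely why $x=2$ appears in the KPZ equation: the KPZ lemma (Lemma~\ref{lem::kpzlem}) bounds $\E[\diam(\CN_j)^4 \mid \CF]$ by $\delta^{\Delta + o(1)}$ with $\Delta$ solving~\eqref{eqn::kpz8} at $x=2$, and then Efron--Stein sums $\delta^{-1}$ such terms to give $\delta^{\Delta-1}$. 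In your write-up the link between the distortion exponent and the choice $x=2$ is missing, which is why your last paragraph treats ``matching the exponents'' as an open obstacle rather than an identity.

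Your sketch of the KPZ step also underestimates the main technical difficulty. The issue is not only that the beads are coupled to $h$ (that is handled by the ball-vs-bead comparison in Proposition~\ref{prop::kpzslesmallareabound}); it is that the embedding normalization~\eqref{eqn::Flaurent} is itself random and depends on $h$, so one cannot directly invoke the fixed-domain KPZ of~\cite{ds2011kpz}. The paper handles this with a separate argument: it first proves a fixed-parameterization analogue (Proposition~\ref{prop::fourthpowerprop}), and then shows via a ``stable configuration'' device (Propositions~\ref{prop::st1t2rstablehaspositiveprob}--\ref{prop::stablerestrictionfourthpower}) that on an $\CF$-measurable event one can compare the random-normalization picture to the fixed one up to bounded rescaling. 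Your dyadic-scale/circle-average sketch does not address this renormalization issue. Finally, the $\liminf$ is not there to allow subsequence arguments; it is simply inherited from the one-sided KPZ bound in Proposition~\ref{prop::fourthpowerprop}.
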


Informally, Theorem~\ref{thm::quantitativetreesdetermineemebedding} states that once the outer surface (i.e., the one parameterized by the unbounded component of $\C \setminus \eta'([-1,1])$) is fixed, the conditional variance of $F(z)$ given $\CB_k$ a.s.\ decays at least as fast as $\delta^{\Delta-1}$ (up to a factor whose log is $o(|\log \delta|)$) where $\delta = 2^{-k}$.
Note that Theorem~\ref{thm::quantitativetreesdetermineemebedding} implies that for each fixed $z \in \C \setminus \ol \D$, the quantity
\[ F_k(z) := \E[ F(z) \giv \CB_k, \CF]\]
converges a.s.\ to $F(z)$ exponentially fast as a function of $k$, which in particular implies that $F(z)$ is $(\CB, \CF)$ measurable (up to redefinition on a set of measure zero) and hence implies Theorem~\ref{thm::treesdetermineemebedding}.  Note that these maps $F_k$ are approximations to $F$, and that for each fixed $z$, $F_k(z)$ is a martingale in $k$.  We remark that it is not hard to see that for each $z$, there are uniform bounds on what the value of $F(z)$ can be (see Section~\ref{subsec::distortionlemma}) --- and thus, since each $F_k(z)$ is an average of analytic, uniformly bounded functions, the maps $F_k$ are themselves a.s.\ analytic on $\C \setminus \ol{\D}$.  However, one would not necessarily expect the maps $F_k$ be one-to-one.

The proof of Theorem~\ref{thm::quantitativetreesdetermineemebedding} will be a consequence of three lemmas, which we state here and then prove in the coming subsections.  The first of these is a simple and very general variance bound known as the Efron-Stein bound, established by Efron and Stein in 1981 \cite{efronstein}:

\begin{lemma}
\label{lem::generalvariancebound}
Let $A = A(X_1, X_2, \ldots, X_n)$ be a function of $n$ independent random variables defined on a common measure space such that $\E[ A^2] < \infty$.  Then
\[ \Var(A) \leq \sum_{i=1}^n \E[ \Var(A \giv X_j: j \not = i) ].\]
\end{lemma}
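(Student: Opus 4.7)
The plan is a standard Doob martingale decomposition followed by an $L^2$-contraction (conditional Jensen) step, using the independence hypothesis at exactly one point. I would introduce the filtration $\CF_i = \sigma(X_1, \ldots, X_i)$ with $\CF_0$ trivial, and set $A_i = \E[A \mid \CF_i]$, so that $A_0 = \E A$ and $A_n = A$. Because $\E[A^2] < \infty$, the increments $D_i := A_i - A_{i-1}$ are square-integrable martingale differences and hence orthogonal in $L^2$, yielding the exact Pythagorean identity
$$\Var(A) = \sum_{i=1}^n \E[D_i^2].$$

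To bound each term, I would introduce the ``leave-one-out'' conditional expectation $A^{(i)} := \E[A \mid X_j : j \neq i]$, for which the definition of conditional variance gives $\E[\Var(A \mid X_j : j \neq i)] = \E[(A - A^{(i)})^2]$. The target inequality then takes the form $\sum_i \E[D_i^2] \leq \sum_i \E[(A - A^{(i)})^2]$, which I propose to establish term by term. The key observation is that $A^{(i)}$ is measurable with respect to $\sigma(X_j : j \neq i)$, which by the independence hypothesis is independent of $X_i$; consequently the tower property gives
$$\E[A^{(i)} \mid \CF_i] = \E[A^{(i)} \mid \CF_{i-1}] = \E[A \mid \CF_{i-1}] = A_{i-1}.$$
Therefore $D_i = A_i - A_{i-1} = \E[A - A^{(i)} \mid \CF_i]$, and the conditional Jensen inequality immediately yields $\E[D_i^2] \leq \E[(A - A^{(i)})^2]$. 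Summing over $i$ completes the proof.

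The only step that uses the independence hypothesis in an essential way is the identity $\E[A^{(i)} \mid \CF_{i-1}] = A_{i-1}$, and this is the main (and essentially only) conceptual point; the martingale Pythagorean identity and the $L^2$ contraction of conditional expectation are both standard and do not require independence. Since the lemma only asserts an inequality, no symmetrization is needed, and there is no obstacle to worry about beyond verifying this one identity.
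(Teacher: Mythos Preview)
Your proof is correct and follows essentially the same route as the paper: both arguments use the Doob martingale $A_i = \E[A \mid X_1,\ldots,X_i]$, write $\Var(A) = \sum_i \E[D_i^2]$ via orthogonality of martingale increments, and then bound each term by $\E[\Var(A \mid X_j : j \neq i)]$ using independence and a Jensen-type step. The only presentational difference is that the paper packages the term-by-term bound as a separate proposition (that $\Var(\E[A\mid Y]) \leq \E\Var(A\mid Z)$ for independent $Y,Z$, proved via convexity of variance), whereas you obtain it directly from the identity $D_i = \E[A - A^{(i)} \mid \CF_i]$ and the $L^2$ contraction of conditional expectation; these are the same idea in slightly different clothing.
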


As stated above, given $\CB_k$ the quantum surfaces $\CN_j$ are conditionally independent of each other, and $F(z)$ is a function of these objects.  That is the context in which Lemma~\ref{lem::generalvariancebound} will be used.  To bound the overall variance of $F(z)$ it is enough to bound the expected amount of variance involved in resampling the quantum surface corresponding to a randomly chosen {\em one} of the pockets from Figure~\ref{fig::necklacemap} conditioned on $\CF$ and on all of the other pockets.  One way to bound this variance would be to give an absolute bound on how much $z$ can move when a pocket is replaced; the variance would then be at most the square of that bound.  Indeed, the second lemma we introduce is a simple complex analysis observation that is relevant to the following question: when one cuts out a small piece of a quantum surface, such as one of the $\CN_j$, and glues another piece in its place, how much can the embedding of the remainder of the quantum surface be distorted?

\begin{lemma}
\label{lem::diametertofourthpower}
There exist universal constants $C_1,C_2 > 0$ such that the following is true.  Let~$K_1$ be a hull of diameter at most~$r$ and~$K_2$ another hull such that there exists a conformal map $G \colon \C \setminus K_1 \to \C \setminus K_2$ of the form 
\[ G(z) = z + \sum_{n=1}^\infty \alpha_n z^{-n}.\]
Then whenever $\dist(z,K_1) \geq C_1 r$ we have that
\begin{equation}
\label{eqn::hullchangebound}
|G(z) - z| \leq C_2 r^2 |z-b_1|^{-1}
\end{equation}
where $b_1$ is the harmonic center of $K_1$ as defined in Section~\ref{subsec::distortionlemma} below.
\end{lemma}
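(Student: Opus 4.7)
The plan is to reduce the estimate to a single application of the maximum principle on $\C \setminus K_1$, applied to the auxiliary function $g(z) := (F(z) - z - (b_2 - b_1))(z - b_1)$. This function is analytic on $\C \setminus K_1$ and, since $F(z) - z$ is $O(1/z)$ at infinity by hypothesis, extends continuously to $\wh{\C} \setminus K_1$ with finite value at $\infty$ given by the first nontrivial coefficient of the Laurent expansion. Before invoking the maximum principle, the preliminary observation to record is that the hydrodynamic normalization $F(z) = z + O(z^{-1})$ is quite restrictive: matching the Laurent expansions at infinity of $F \circ h_1$ and $h_2$, where $h_i \colon \C \setminus \overline{\D} \to \C \setminus K_i$ denote the uniformizing maps normalized as $h_i(w) = c_i w + b_i + O(w^{-1})$ (so that $c_i$ is the outer conformal radius and $b_i$ the harmonic center), forces $c_1 = c_2$ and $b_1 = b_2$. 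The translation term $b_2 - b_1$ in the statement is therefore $0$, and what actually must be bounded is $|F(z) - z|$.

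The second ingredient is a pair of Koebe-type comparison estimates. Applying the Koebe one-quarter and distortion theorems to $h_1$ (a univalent map fixing $\infty$) yields $\mathrm{diam}(K_1) \asymp c_1$ up to absolute constants, hence $c_1 \leq O(r)$. The equality $c_1 = c_2$ combined with the same comparison for $h_2$ then gives $\mathrm{diam}(K_2) \leq O(r)$, and the same theorems imply that $b_i$ lies within distance $O(c_i) = O(r)$ of $K_i$. Hence for any $z \in \partial K_1$ we have $|z - b_1| \leq O(r)$ and
\[
|F(z) - z| \leq \mathrm{diam}(K_2) + |b_2 - b_1| + \mathrm{diam}(K_1) = O(r),
\]
so $|g(z)| \leq O(r^2)$ on $\partial K_1$. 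At infinity, $g(\infty) = \alpha_1$, and a comparison-of-coefficients computation (or the area theorem applied to the normalized maps $(h_i(w) - b_i)/c_i$) gives $|\alpha_1| = O(r^2)$. The maximum principle on $\C \setminus K_1$, together with the fact that $g$ is bounded at $\infty$, then yields $|g(z)| \leq C_2' r^2$ throughout $\C \setminus K_1$.

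To conclude, if $\mathrm{dist}(z, K_1) \geq C_1 r$ with $C_1$ chosen larger than a fixed multiple of the constant in the harmonic-center estimate, then $|z - b_1| \geq \mathrm{dist}(z, K_1) - \mathrm{dist}(b_1, K_1) \gtrsim C_1 r > 0$, and dividing the preceding bound on $|g(z)|$ by $|z - b_1|$ gives the desired inequality $|F(z) - z - (b_2 - b_1)| \leq C_2 r^2 / |z - b_1|$. The main obstacle is not the maximum-principle step itself but the preparatory distortion estimates: the comparability of the outer conformal radius and the diameter of a hull, and the fact that the harmonic center sits within the conformal-radius scale of the hull, are standard Koebe facts, but they must be applied to maps at $\infty$ rather than at the origin and the constants tracked carefully to verify that the $O(r)$ bounds used above are the correct ones.
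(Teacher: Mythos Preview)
Your argument is correct and takes a genuinely different route from the paper. The paper normalizes to $r=1$, centers at the origin, and writes $F = F_2 \circ F_1^{-1}$ where $F_i \colon \C \setminus \ol\D \to \C \setminus K_i$ are the uniformizing maps; it then feeds the area-theorem coefficient bound $|\alpha_n| \leq n^{-1/2}$ into the Laurent series to obtain $|F_i(z) - z| \leq 2|z|^{-1}$ and $|F_i'(z) - 1| \leq 2|z|^{-2}$ for $|z|$ large, and composes these two estimates. Your approach bypasses the series manipulation entirely: once you observe that the hydrodynamic normalization forces $c_1=c_2$ and $b_1=b_2$, the function $g(z) = (F(z)-z)(z-b_1)$ is holomorphic on $\widehat\C \setminus K_1$ with $O(r^2)$ boundary data, and the maximum principle finishes.

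Two small remarks. First, the separate bound on $g(\infty)=\alpha_1$ is not needed: since $g$ extends holomorphically across $\infty$, that point is interior to $\widehat\C\setminus K_1$, and the maximum of $|g|$ is controlled purely by its $\limsup$ along $\partial K_1$. Second, your argument in fact yields the inequality for \emph{every} $z \in \C\setminus K_1$ with $z \neq b_1$, whereas the paper's series estimate genuinely requires $|z|$ to exceed a constant multiple of $r$ for the tail to be dominated by the leading term; the hypothesis $\dist(z,K_1)\geq C_1 r$ is thus essential for their method but only serves in yours to keep $|z-b_1|$ bounded away from zero. The preparatory Koebe facts you flag (that $c_i \asymp \diam(K_i)$ and that $b_i$ lies within $O(c_i)$ of $K_i$) are exactly what the paper records as Proposition~\ref{prop::invertedkoebe} and the harmonic-center discussion preceding it, so those inputs are shared between the two approaches.
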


Note that if the diameter of a pocket is $r$, and resampling it affects $F(z)$ by at most $O(r^2)$, then the variance involved in this resampling is $O(r^4)$.  The third lemma we need is a fact, closely related to a special case of the KPZ theorem, that will help us control the diameters of the pockets.

\begin{lemma}
\label{lem::kpzlem}
For a given $k$, suppose that $j$ is chosen uniformly at random from the set $\{1,2,\ldots,2^{k+1} \}$.  Then it is a.s.\ the case that the conditional expected size of $\diam(\CN_j)^4$ given $\CF$ decays almost as fast as $\delta^{\Delta}$ (see the precise statement below) where $\Delta$ solves the KPZ equation~\eqref{eqn::kpz8} with $x=2$.  The precise statement is that a.s.\ 
\begin{equation}
\label{eqn::loglimitdiamtofourth}
\limsup_{k \to \infty} \frac{ \log  \E[\diam(\CN_j)^4 \giv \CF]}{\log \delta^{-1}} \leq -\Delta,
\end{equation}
where here $\diam(\CN_j)$ is the diameter of the embedding of $\CN_j$ into $\C$.
\end{lemma}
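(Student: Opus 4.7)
The strategy is a KPZ-type first-moment bound at a $\mu_h$-typical point, combined with a classical first-passage computation for a drifted Brownian motion. Because each of the $2^{k+1}$ cells has the same quantum area $\delta = 2^{-k}$, sampling $j$ uniformly is equivalent to sampling $z$ from the probability measure $\tfrac12 \mu_h|_{\eta'([-1,1])}$ and setting $j = J(z)$. Writing $\CN(z) := \CN_{J(z)}$,
\[
\E\!\left[\diam(\CN_j)^4 \mid \CF\right]
= \tfrac{1}{2}\,\E\!\left[\int_{\eta'([-1,1])}\diam(\CN(z))^4\,d\mu_h(z)\,\Big|\,\CF\right],
\]
and it suffices to show that for $\mu_h$-a.e.\ $z\in\eta'([-1,1])$ one has $\E[\diam(\CN(z))^4 \mid z,\CF] \le \delta^{\Delta + o(1)}$ with a deterministic $o(1)\to 0$ as $k\to\infty$. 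Averaging and using $\mu_h(\eta'([-1,1])) = 2$ then yields~\eqref{eqn::loglimitdiamtofourth}.

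The central input is the Girsanov/rooting identity. By Lemma~\ref{lem::weighted_quantum_surface}, sampling $z$ according to $\mu_h$ is equivalent in law to sampling $z$ from Lebesgue (weighted by a conformal-radius factor) and adding $\gamma G(z,\cdot)$ to $h$. Under the shifted law the field near $z$ has the local structure of a $\gamma$-quantum cone: the circle-average process satisfies $h_{e^{-t}}(z) = \gamma t + B_t$ for a standard Brownian motion $B$, giving the usual relation $\mu_h(B(z,\epsilon)) \asymp \epsilon^{2-\gamma^2/2}\,e^{\gamma B_{\log(1/\epsilon)}}$. Combined with Theorem~\ref{thm::quantum_cone_bm_rough_statement}, which asserts that $(h,\eta')$ is invariant under $\mu_h$-time shifts, the cell $\CN(z)$ inherits conditionally the law of a quantum-area-$\delta$ cell of a space-filling $\SLE_{\kappa'}$ drawn on an independent $\gamma$-quantum cone centered at $z$.

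Set $R_\delta(z) := \inf\{r > 0 : \mu_h(B(z,r)) \ge \delta\}$ and $X_t := (2-\gamma^2/2)t - \gamma B_t$, which is a Brownian motion with drift $c := 2-\gamma^2/2$ and diffusivity $\gamma^2$; the circle-average relation then identifies $R_\delta(z) = e^{-T}$, where $T$ is the first passage time of $X$ to $a := \log(1/\delta)$. The containment $\CN(z) \subseteq B(z,\diam(\CN(z)))$ gives $R_\delta(z) \le \diam(\CN(z))$ trivially. For the reverse inequality up to a multiplicative constant absorbable in the $o(|\log\delta|)$, I would use a Koebe-type comparison in the spirit of Lemma~\ref{lem::diametertofourthpower}, relating $\diam(\CN(z))$ to the outer conformal radius of $\CN(z)$ from $\infty$, and invoke the scale invariance of the $\gamma$-quantum cone together with the independence of $\eta'$ from $h$, so that at the unit-quantum-area scale the cell's Euclidean shape has polynomial tails. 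Granted this, the claim reduces to the classical Laplace-transform computation
\[
\E[e^{-4T}]
= \exp\!\left(-\frac{a\bigl(\sqrt{c^2+8\gamma^2}-c\bigr)}{\gamma^2}\right)
= \delta^{\bigl(\sqrt{(4-\gamma^2)^2+32\gamma^2}-(4-\gamma^2)\bigr)/(2\gamma^2)},
\]
and an elementary algebraic check identifies the exponent as the positive root of $\tfrac{\gamma^2}{4}\Delta^2 + (1-\tfrac{\gamma^2}{4})\Delta = 2$, that is, precisely the $\Delta$ of~\eqref{eqn::kpz8} with $x = 2$.

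The hard part will be the Koebe/shape comparison in the previous paragraph. An $\SLE_{\kappa'}$ cell of small quantum mass can a priori be Euclideanly elongated, so the radial quantity $R_\delta(z)$ does not automatically dominate $\diam(\CN(z))$ up to a constant; controlling this requires combining the scale invariance of the cone at the typical $z$ with the independence of $\eta'$ and $h$ to extract polynomial-tail estimates for the Euclidean shape of a typical unit-quantum-area cell, which then absorb into the $o(|\log\delta|)$ in the final exponent. A secondary technical point is that conditioning on $\CF$ modifies the law of the interior field through boundary data and a total-area constraint, so the unconditioned cone Girsanov computation must be transferred via standard absolute-continuity estimates, holding uniformly on a set of full $\CF$-probability; it is this transfer, together with the shape correction, that forces the conclusion to be a $\liminf$ rather than a sharp equality.
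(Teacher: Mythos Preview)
Your core KPZ computation is correct: at a $\mu_h$-typical point the circle-average process is a drifted Brownian motion, and the Laplace transform of its first passage time does give exactly the $\Delta$ solving $\tfrac{\gamma^2}{4}\Delta^2+(1-\tfrac{\gamma^2}{4})\Delta=2$. You also correctly identify the shape comparison (replacing the radius $R_\delta(z)$ by $\diam(\CN(z))$) as a genuine technical step; the paper handles this in Proposition~\ref{prop::kpzslesmallareabound} by showing the SLE cell is unlikely to escape a large ball before accumulating the prescribed mass, with decay $e^{-Ct^2/\log|t|}$.

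The real gap is in your treatment of the conditioning on $\CF$, which you describe as a matter of ``standard absolute-continuity estimates.'' It is not. In the lemma's setup the embedding into $\C$ is normalized so that $\eta'([-1,1])$ has outer conformal radius~$1$, and this normalization depends on the \emph{entire} quantum surface, inside and out. Your rooting identity (Lemma~\ref{lem::weighted_quantum_surface}) and the Brownian circle-average computation apply to a GFF on a \emph{fixed} domain; they do not apply directly to the quantum-cone field in this random normalization. Concretely, if you resample the interior surface given $\CF$, the Euclidean scale of the embedding shifts by a random factor, and the diameter of each $\CN_j$ scales accordingly. Nothing in your argument controls this factor.

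The paper handles this by an indirect route. It first proves the fourth-moment bound in the fixed-parameterization setting (Proposition~\ref{prop::fourthpowerprop}, essentially your computation), then introduces a ``smooth canonical description'' of the cone and defines an $\CF$-measurable event $\CA(S,t_1,t_2,r)$ of ``$(S,t_1,t_2,r)$-stability'' on which the rescaling needed to pass to the lemma's normalization is bounded by a deterministic constant depending only on $r$ (Proposition~\ref{prop::stablerestrictionfourthpower}). Crucially, stability is invariant under resampling the interior surface---that is the whole point of its second defining condition---so it really is $\CF$-measurable. One then shows (Proposition~\ref{prop::st1t2rstablehaspositiveprob} and the proof of Lemma~\ref{lem::kpzlem}) that by choosing $S,t_1,t_2,r$ suitably the stability event can be made to have probability arbitrarily close to~$1$, which transfers the bound to the full conditional expectation. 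This stability machinery, not absolute continuity, is what bridges your fixed-domain KPZ computation to the statement you are asked to prove.
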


We emphasize that in the statement of Lemma~\ref{lem::kpzlem}, the inequality~\eqref{eqn::loglimitdiamtofourth} holds for any embedding of the quantum cone but the rate of convergence depends on the particular choice of embedding.

We next observe that Theorem~\ref{thm::quantitativetreesdetermineemebedding} is a simple consequence of these three lemmas (which we have yet to prove).

\begin{proof}[Proof of Theorem~\ref{thm::quantitativetreesdetermineemebedding}]
Lemma~\ref{lem::diametertofourthpower} (applied with $K_1$ equal to the hull generated by one of the $\SLE$ segments and $K_2$ equal to a different possible realization of the hulls) implies that the conditional variance of $F(z)$ involved when one resamples one of the $\CN_j$ is at most of order $\diam(\CN_j)^4$.  (Note that if $z$ is fixed, then it follows from basic complex analysis --- see the discussion in Section~\ref{subsec::distortionlemma} --- that the distance from $F(z)$ to $\eta'([-1,1])$ is at least some constant that depends only on $z$.)  Together with Lemma~\ref{lem::generalvariancebound}, this implies that the conditional variance of $F(z)$ given $\CF$ and $\CB_k$ is at most (a constant times) the expected sum of the $\diam (\CN_j)^4$ over all $j$ values, which is just $\delta^{-1}$ times the expectation of $\diam (\CN_j)^4$ when $j$ is chosen uniformly from $\{1,2,\ldots,2^{k+1}\}$.  This latter expectation is bounded in Lemma~\ref{lem::kpzlem}, which shows that it decays at least as fast as $\delta^{\Delta}$ (in the sense that the $\liminf$ of the ratio of the logarithms is at most one), and this implies Theorem~\ref{thm::quantitativetreesdetermineemebedding}.
\end{proof}

The proofs of Lemma~\ref{lem::generalvariancebound} and Lemma~\ref{lem::diametertofourthpower} are both fairly straightforward, but the proof of Lemma~\ref{lem::kpzlem} is a bit more interesting.  Essentially, it will be a variant of the KPZ theorem, but we will have to use some tricks to account for the fact that the parameterization of the random quantum surface in question is not fixed as it is in the KPZ formulation of \cite{ds2011kpz} and also the fact that the statement involves random chunks of the surface traced by $\eta'$ over $\delta$ increments of time, instead of the random Euclidean balls of quantum mass $\delta$ that appear in the KPZ formulation of~\cite{ds2011kpz}.

\subsection{Proof of variance bound}
We next include a short proof of Lemma~\ref{lem::generalvariancebound} (originally proved by Efron and Stein in \cite{efronstein}).  We include this for completeness, and to emphasize that this is an easy fact to derive, but the reader who is already familiar with this inequality may skip this subsection.\footnote{We thank Asaf Nachmias for bringing \cite{efronstein} to our attention.}

\begin{proposition}
Let $I$ be an arbitrary index set.  Let $(X_i)$ for $i \in I$ be a collection of real-valued random variables on a common measure space.  Let $\nu$ be a probability measure on $I$ and assume that $\int \E[X_i^2] d\nu(i) < \infty$.  Write $X = \int X_i d\nu(i)$.  In words, $X$ is an average of the $X_i$, where the average is taken with respect to $\nu$.  Then
\begin{equation}
\label{eqn::averagevar}
\Var X \leq \int_I \Var(X_i) d\nu(i).
\end{equation}
\end{proposition}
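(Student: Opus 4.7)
The plan is to derive this as an immediate consequence of Jensen's inequality applied pointwise, followed by an interchange of expectation and integration. Since $X=\int X_i\,d\nu(i)$ and, by Fubini (justified by the square-integrability hypothesis), $\E[X]=\int \E[X_i]\,d\nu(i)$, we have the pointwise identity
\[
X - \E[X] \;=\; \int_I \bigl(X_i - \E[X_i]\bigr)\,d\nu(i).
\]

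First I would apply Jensen's inequality to the convex function $t\mapsto t^2$ with respect to the probability measure $\nu$, obtaining
\[
\bigl(X-\E[X]\bigr)^2 \;=\; \left(\int_I (X_i-\E[X_i])\,d\nu(i)\right)^2 \;\leq\; \int_I \bigl(X_i-\E[X_i]\bigr)^2 d\nu(i).
\]
Then I would take expectations of both sides and exchange $\E$ and $\int_I$ using Fubini (again legal because $\int \E[X_i^2]\,d\nu(i)<\infty$ controls $\int \E[(X_i-\E[X_i])^2]\,d\nu(i)$), yielding
\[
\Var X \;=\; \E\bigl[(X-\E[X])^2\bigr] \;\leq\; \int_I \E\bigl[(X_i-\E[X_i])^2\bigr]\,d\nu(i) \;=\; \int_I \Var(X_i)\,d\nu(i),
\]
which is \eqref{eqn::averagevar}.

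There is no real obstacle here; the only minor point to verify is the applicability of Fubini at both steps, which is immediate from the hypothesis $\int \E[X_i^2]\,d\nu(i)<\infty$ (this controls $\E|X|$, $\E[X^2]$, and the iterated integral of $(X_i-\E[X_i])^2$ simultaneously). Note that this proposition is effectively the continuous analogue of Efron--Stein in the special case when the ``resampling'' is replaced by an averaging operation, and it is precisely the inequality needed to pass from Lemma~\ref{lem::generalvariancebound} as stated for finitely many independent variables to the integral form that will be convenient in the applications to quantum surfaces in subsequent subsections.
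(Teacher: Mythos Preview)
Your proof is correct and is essentially the same argument as the paper's, just unpacked: the paper observes in one line that the map $X\mapsto\Var(X)$ is a convex (quadratic, nonnegative) functional on $L^2$ and applies Jensen's inequality directly to this functional, whereas you center first and apply Jensen to $t\mapsto t^2$ pointwise before taking expectations. Your closing remark slightly mischaracterizes the role of this proposition --- it is not itself a continuous Efron--Stein, but rather the convexity lemma used (via Proposition~\ref{prop::twomartingalepaths}) to \emph{prove} the Efron--Stein bound of Lemma~\ref{lem::generalvariancebound}.
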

\begin{proof}
The map $X \to \Var(X)$ is a quadratic, non-negative map from $L^2$ of the measure space to $\R$.  In particular it is convex.   Applying Jensen's inequality to this map yields~\eqref{eqn::averagevar}.
\end{proof}

\begin{proposition}
\label{prop::twomartingalepaths}
Let $Y$ and $Z$ be independent random variables and let $A = A(Y,Z)$ be a random variable that is a function of the pair $(Y,Z)$ with $\E[A^2] < \infty$.  The variance of the upper left increment in Figure~\ref{fig::martingalesquare} is less than or equal to the expected conditional variance of the lower right increment. That is,
\begin{equation}
\label{eqn::twosidesofmartingalesquare}
\Var(\E[A\giv Y]) \leq \E \Var \bigl[A \giv Z \bigr].
\end{equation}
\end{proposition}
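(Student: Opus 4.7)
My plan is to prove this by orthogonal decomposition of $A$ in the Hilbert space $L^2$, exploiting the independence of $Y$ and $Z$. The key observation is that the inequality really just says that the ``$Y$-part'' of $A$ has $L^2$-norm bounded by the sum of the $Y$-part and a ``pure interaction'' part, the latter being nonnegative.

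Concretely, first I would write $\mu = \E[A]$ and decompose
\[
A = \mu + f(Y) + g(Z) + h(Y,Z),
\]
where $f(Y) := \E[A\giv Y] - \mu$, $g(Z) := \E[A\giv Z] - \mu$, and $h(Y,Z)$ is defined by this equation. Using the independence of $Y$ and $Z$, a quick computation shows that $\E[h(Y,Z)\giv Y] = 0$ and $\E[h(Y,Z)\giv Z] = 0$ almost surely; this is the crucial structural fact, and the only place independence is used.

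Next I would compute both sides of~\eqref{eqn::twosidesofmartingalesquare}. The left side is simply $\E[f(Y)^2]$. For the right side, note that $A - \E[A\giv Z] = f(Y) + h(Y,Z)$, so
\[
\E\Var[A\giv Z] = \E\bigl[(f(Y) + h(Y,Z))^2\bigr] = \E[f(Y)^2] + 2\E[f(Y)h(Y,Z)] + \E[h(Y,Z)^2].
\]
The cross term vanishes by conditioning on $Y$ and using $\E[h(Y,Z)\giv Y]=0$, and $\E[h(Y,Z)^2]\geq 0$, giving the desired inequality (with equality precisely when the interaction term $h$ vanishes).

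I do not anticipate any real obstacle: the whole argument is an elementary $L^2$ decomposition, and the square-integrability hypothesis $\E[A^2]<\infty$ ensures all the conditional expectations above are well defined and all integrations by conditioning are justified. The only care needed is in verifying the orthogonality relations, which follow directly from Fubini-type arguments given the independence of $Y$ and $Z$.
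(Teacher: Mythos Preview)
Your proof is correct. It takes a genuinely different route from the paper's argument, however. The paper proceeds by writing $\E[A\giv Y]$ as an average over the law of $Z$: for fixed $z$, set $B_z := \E[A(Y,z)\giv Y] = A(Y,z)$ (a function of $Y$), so that $\E[A\giv Y] = \int B_z\,dz$. Then the earlier proposition (Jensen's inequality for the convex map $X \mapsto \Var X$) gives $\Var(\E[A\giv Y]) \leq \int \Var(B_z)\,dz = \E\Var[A\giv Z]$. Your approach instead performs the Hoeffding--ANOVA decomposition $A = \mu + f(Y) + g(Z) + h(Y,Z)$ and reads off the inequality from $L^2$ orthogonality. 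Your method has the advantage of identifying the defect in the inequality exactly as $\E[h(Y,Z)^2]$, the pure interaction energy, and of being self-contained; the paper's method is a touch shorter because it recycles the convexity lemma already proved. Both are elementary and valid.
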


\begin {figure}[h!]
\begin {center}
\includegraphics[scale=0.85]{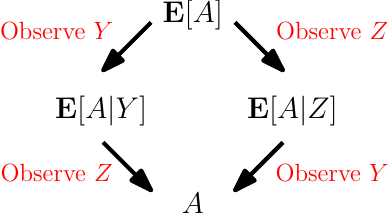}
\caption {\label{fig::martingalesquare} Two martingale pathways describing updated conditional expectations of $A$ as we observe $Y$ and $Z$, in either of the two orders.  The variance of the upper left arrow is at most that of the lower right arrow on average.}
\end{center}
\end{figure}

Informally, Proposition~\ref{prop::twomartingalepaths} states that the expected amount of fluctuation of our expectation of~$A$ when we observe $Y$ if we {\em have not} already observed~$Z$ is less than the expected amount of fluctuation when we observe~$Y$ if we {\em have} already observed~$Z$.  For example, suppose~$Y$ and~$Z$ are both uniformly chosen from $\{-1,1\}$ and $A(Y,Z) = YZ$.  If we do not know~$Z$, then observing $Y$ does not change our conditional expectation of~$A$.  But if we do know~$Z$, then observing $Y$ changes our conditional expectation for~$A$ from~$0$ to an element of $\{ \pm 1 \}$.  Thus, in this particular case, the left hand side of~\eqref{eqn::twosidesofmartingalesquare} evaluates to $0$ while the right hand side evaluates to $1$.

\begin{proof}[Proof of Proposition~\ref{prop::twomartingalepaths}]
The quantity $B = \E[A \giv Y]$ is a random variable that can be written as a function of $Y$.  If $z$ is fixed, then $B_z = \E[A \giv Y, Z=z] = \E[A(Y,z) \giv Y]$ is also a random variable that can be written as a function of $Y$.  Note that $B = \int B_z dz$ where $dz$ is the law of $Z$.  Thus it follows from~\eqref{eqn::averagevar} that $\Var B \leq \E \Var B_z$, where the expectation on the right is taken w.r.t.\ $z$ values chosen from the law of $Z$.  This is another way of writing~\eqref{eqn::twosidesofmartingalesquare}.
\end{proof}

\begin{proof}[Proof of Lemma~\ref{lem::generalvariancebound}]
Consider the martingale $Y_i = \E[A \giv  X_1, X_2, \ldots, X_i]$, indexed by $i = 0,\ldots,n$.  By the orthogonality of martingale increments, we have
\begin{equation}
\label{eqn::martingalevariance}
\Var(A) = \sum_{i=1}^n \E \Var [Y_i \giv  X_1, X_2, \ldots, X_{i-1}].
\end{equation}
We will now apply Proposition~\ref{prop::twomartingalepaths} to argue that the variance (of the change in our conditional expectation for $A$) that comes from observing $X_i$, given that we know only $X_1, X_2, \ldots, X_{i-1}$, is less than the variance that comes from observing $X_i$ when we know {\em all} of $X_j$ with $j \not = i$.  By treating $X_1, X_2, \ldots, X_{i-1}$ as given and applying Proposition~\ref{prop::twomartingalepaths} with $X_i$ playing the role of $Y$ and the variables $X_{i+1}, X_{i+2}, \ldots, X_n$ together playing the role of $Z$, we obtain
\begin{align*}
\E \Var [Y_i \giv  X_1, X_2, \ldots, X_{i-1}]
&=\E \Var \Bigl[\E [A | X_1,X_2,\ldots,X_i] \giv  X_1, X_2, \ldots, X_{i-1}\Bigr]\\
&\leq \E \Var [A \giv  X_j: j \neq i]
\end{align*}
Applying this bound to each term of~\eqref{eqn::martingalevariance} yields the proposition.
\end{proof}

\subsection{Proof of basic distortion lemma}
\label{subsec::distortionlemma}

This subsection recalls a few standard complex analysis facts (see, e.g., the text \cite{rudin1986real}).   We will work in the whole plane setting and use the term {\em hull} to refer to a closed, bounded, connected subset $K$ of $\C$ for which $\C \setminus K$ is also connected.   By the Riemann mapping theorem, and using the standard Laurent expansion, there exists a unique conformal map $F\colon\C  \setminus \ol{\D} \to \C \setminus K$ that has the form
\begin{equation}
\label{eqn::hulllaurent}
F(z) = a_{-1} z + a_0 + \sum_{n=1}^\infty \alpha_n z^{-n},
\end{equation}
where $a_{-1}$ is positive and real.  (This map fixes $\infty$.)  We refer to the $a_0$ of~\eqref{eqn::hulllaurent} as the {\em harmonic center} of $K$.  Computing term by term, we see that the mean value of $F$ on $\partial B(0,r)$, for any $r > 1$, is given by $a_0$.  If $F$ extends continuously to $\partial \D$, then taking $r \to 1$ shows that the mean value of the extension of $F$ to $\partial \D$ is also~$a_0$.  Another way to say this (which makes sense regardless of whether $F$ extends continuously to the boundary) is that $a_0$ is the average value of the points in $\partial K$ sampled according to harmonic measure viewed from infinity.  We say that $K$ is {\em centered} if its harmonic center is $0$.

We refer to the value $a_{-1}$ of~\eqref{eqn::hulllaurent} as the {\em outer conformal radius} of $K$.  (It can also be called ``whole plane capacity''; recall that whole plane Loewner evolution, directed toward $\infty$, describes an increasing sequence of hulls parameterized by the log of this quantity.)  Observe that $r \ol{\D}$ has outer conformal radius~$r$.  The following is known as the area theorem \cite[Theorem~14.13]{rudin1986real}.

\begin{proposition}
\label{prop::area}
Let $K$ be a centered hull of outer conformal radius $1$, so that there exists a conformal map $F\colon\C \setminus \ol \D \to \C \setminus K$ of the form
\begin{equation}
\label{eqn::centeredlaurent}
F(z) = z + \sum_{n=1}^\infty \alpha_n z^{-n}.
\end{equation}
Then $\sum_{n=1}^\infty n|\alpha_n|^2 \leq 1$.
\end{proposition}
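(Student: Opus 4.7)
The plan is to prove this via the classical area-theorem argument: compute the Euclidean area of the image of an exterior annulus under $F$, exploit its non-negativity, and then pass to the limit as the inner radius of the annulus approaches $1$.

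Concretely, fix $r>1$ and let $\gamma_r$ denote the image under $F$ of the circle $\partial B(0,r)$, oriented counterclockwise. Since $F$ is univalent on $\C \setminus \ol{\D}$ and fixes $\infty$, the curve $\gamma_r$ is a simple closed curve enclosing a compact region $\Omega_r \supseteq K$. By Green's theorem (in its complex form, $\area(\Omega) = \tfrac{1}{2i}\oint_{\partial \Omega} \bar w\, dw$), we have
\[
\area(\Omega_r) \;=\; \frac{1}{2i}\oint_{\gamma_r} \bar w\, dw \;=\; \frac{1}{2i}\int_0^{2\pi} \overline{F(re^{i\theta})}\, F'(re^{i\theta})\, i r e^{i\theta}\, d\theta.
\]
The first step is to substitute the Laurent series $F(z) = z + \sum_{n\geq 1}\alpha_n z^{-n}$ and its termwise derivative, then expand the product $\overline{F}\cdot F'\cdot re^{i\theta}$ as a doubly-indexed Fourier series in $\theta$. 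Integrating over $\theta$ kills every non-constant Fourier mode, and a direct bookkeeping of the surviving constant terms gives
\[
\area(\Omega_r) \;=\; \pi\Bigl(r^2 \;-\; \sum_{n=1}^\infty n|\alpha_n|^2 r^{-2n}\Bigr).
\]

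The second step is the inequality $\area(\Omega_r)\geq 0$, which is immediate since $\Omega_r$ is a nonempty bounded planar region. Rearranging, $\sum_{n=1}^\infty n|\alpha_n|^2 r^{-2n} \leq r^2$ for every $r>1$. Since each term on the left is non-negative, the monotone convergence theorem lets us pass to the limit $r\downarrow 1$, yielding $\sum_{n=1}^\infty n|\alpha_n|^2 \leq 1$.

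The only minor technical point is justifying the termwise manipulations: since $F$ extends holomorphically to $\C\setminus\ol\D$, the Laurent series for $F$ and $F'$ converge absolutely and uniformly on $\partial B(0,r)$ for any fixed $r>1$, so the interchange of sum and integral is routine. I do not expect any substantive obstacle; the entire argument is a short calculation once the setup (Green's theorem plus Laurent expansion) is in place. The fact that $K$ is centered (i.e.\ $a_0 = 0$) and has outer conformal radius $1$ (i.e.\ $a_{-1}=1$) is exactly what makes the $r^2$ coefficient come out to $1$ in the area formula; otherwise the same computation would give $|a_{-1}|^2 r^2$ in its place.
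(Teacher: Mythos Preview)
Your argument is correct; it is the classical Green's-theorem proof of the area theorem. The paper does not supply its own proof of this proposition but simply cites it as \cite[Theorem~14.13]{rudin1986real}, so there is nothing to compare against beyond noting that you have reproduced the standard argument. One small inaccuracy in your closing remark: the centering hypothesis $a_0 = 0$ is actually irrelevant to the computation, since $a_0$ contributes nothing to the constant Fourier mode of $\overline{F}\cdot F'\cdot z$ (the factor $F'(z)\cdot z$ has no constant term); only the normalization $a_{-1}=1$ matters for the coefficient of $r^2$.
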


In particular, Proposition~\ref{prop::area} implies that $|\alpha_n| \leq n^{-1/2}$ for each $n$, so
\[ |F(z) - z| \leq \sum_{n=1}^\infty n^{-1/2} |z|^{-n}.\]
When $|z| > 2$, the right hand side is dominated by the first term, which implies
\begin{equation}
\label{eqn::distortionbound}
|F(z) - z| \leq 2 |z|^{-1}.
\end{equation}
Similarly $|F'(z) - 1| \leq \sum_{n=1}^\infty n^{1/2} |z|^{-n-1},$
which is dominated by the first term when $|z| > 4$, so that in this case
\begin{equation}
\label{eqn::distortionbound2}
|F'(z) - 1| \leq 2 |z|^{-2}.
\end{equation}

\begin{proposition}
\label{prop::invertedkoebe}
If a hull $K$ has outer conformal radius $r$, then $K$ has diameter at most $4r$.  Also, the smallest closed ball containing $K$ has radius at least $r$ (and diameter at least $2r$).
\end{proposition}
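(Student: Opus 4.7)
The plan is to derive both inequalities from the classical theory of univalent functions on the exterior of the unit disk, namely the Koebe $1/4$ theorem (for the diameter bound) and the Schwarz lemma (for the enclosing-ball bound). By the scaling $z \mapsto z/r$ we may assume $r=1$, and by translating $K$ we may also assume the harmonic center is $0$, so the conformal map $F\colon \C\setminus\ol{\D}\to \C\setminus K$ has the Laurent expansion $F(z)=z+\sum_{n\ge 1}\alpha_n z^{-n}$; neither normalization affects the diameter or the radius of the smallest enclosing ball.

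For the upper bound $\diam(K)\le 4$, I will fix an arbitrary $p\in K$ and consider
\[
\Phi(w) \;:=\; \frac{1}{F(1/w)-p}, \qquad w\in \D\setminus\{0\}.
\]
Since $p\in K$ while $F$ takes values in $\C\setminus K$, the denominator is nonvanishing on $\D\setminus\{0\}$, and the simple pole of $F(1/w)$ at $w=0$ makes $\Phi$ extend holomorphically to $\D$ with $\Phi(0)=0$. A direct Laurent computation gives $\Phi(w)=w+pw^2+O(w^3)$, so $\Phi'(0)=1$; being a composition of injective maps, $\Phi$ is univalent. The Koebe $1/4$ theorem then yields $\Phi(\D)\supset B(0,1/4)$. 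Unwinding the definition of $\Phi$, this says that whenever $|q|>4$ the point $p+q$ lies in the range of $F$, hence is not in $K$; thus $K\subset \ol{B(p,4)}$, and since $p\in K$ was arbitrary, $\diam(K)\le 4$.

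For the lower bound on the smallest enclosing ball, suppose $K\subset \ol{B(c,\rho)}$. Then the map $g_{c,\rho}(z):=\rho z+c$ is a conformal bijection from $\C\setminus\ol{\D}$ onto $\C\setminus\ol{B(c,\rho)}\subset \C\setminus K$, so the composition $\phi:=F^{-1}\circ g_{c,\rho}$ is a well-defined univalent self-map of $\C\setminus\ol{\D}$. Inverting the Laurent expansion of $F$ gives $F^{-1}(u)=u+O(1)$ for large $u$, and hence $\phi(z)=\rho z+O(1)$ as $z\to\infty$. Transferring to the disk via $\tilde\phi(w):=1/\phi(1/w)$ produces a holomorphic self-map of $\D$ with $\tilde\phi(0)=0$ and $\tilde\phi'(0)=1/\rho$. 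The Schwarz lemma forces $1/\rho\le 1$, i.e.\ $\rho\ge 1=r$.

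I do not expect a genuine obstacle here: the statement is essentially the standard pair of Koebe/Schwarz bounds for univalent maps on $\C\setminus\ol{\D}$ with the normalization already in play. The only points needing care are the routine verifications that $\Phi$ and $\tilde\phi$ are well-defined holomorphic self-maps of $\D$ (using that $F$ misses $K$ and that $g_{c,\rho}$ maps into $\C\setminus K$, respectively), and the Laurent-expansion computations that pin down $\Phi'(0)=1$ and $\tilde\phi'(0)=1/\rho$.
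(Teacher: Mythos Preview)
Your proof is correct and follows essentially the same approach as the paper: Koebe $1/4$ applied to $1/(F(1/w)-p)$ for the diameter bound, and the Schwarz lemma applied to a self-map of $\D$ for the enclosing-ball bound. The paper phrases the first part via translating so that $0\in K$ (rather than subtracting an arbitrary $p\in K$), and is terser on the second part, but the arguments are the same in substance.
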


\begin{proof}
By scaling and translating, we may reduce to the case that $K$ has outer conformal radius $1$ and $0 \in K$.  We now let $F$ be as in~\eqref{eqn::hulllaurent} and apply the Koebe-$1/4$ theorem to the function $\Phi(z) = 1/ F(1/z)$ (we take $\Phi(0) = 0$).  Observe that $\Phi$ is a conformal map from $\D$ to a subset of the complex plane; since $\Phi(0) = 0$ and $\Phi'(0) = 1$, the Koebe-$1/4$ theorem states that its image contains $B(0,\tfrac{1}{4})$.  This in turn implies that the image of $\C \setminus \ol \D$ under $F$ contains the set $\C \setminus \ol{B(0,4)}$, so that $K \subseteq \ol{B(0,4)}$.  Since this is true for any translation of $K$ (as long as $0 \in K$), we conclude that $K$ has diameter at most $4$.  The second sentence follows from the Schwarz lemma, which states that if the map $1/F(1/z)$ has derivative $1/z$ at the origin, then it cannot map $\D$ into a strict subset of a radius $1$ disk.
\end{proof}

We remark that both of the bounds mentioned in Proposition~\ref{prop::invertedkoebe} are tight.  For the tightness of the first, one can check explicitly that $K = [0,4] \subseteq \R \subseteq \C$ has outer conformal radius $1$.   This is related to the fact that the inverted set $\{z: 1/z \in K \} = [1/4,\infty]$, is the complement of a domain $\C \setminus [1/4,\infty)$ whose conformal radius (viewed from zero) is one.  (This is the standard example that shows the tightness of the Koebe $1/4$ theorem.)   For the tightness of the second bound in Proposition~\ref{prop::invertedkoebe}, it suffices to consider the trivial case $K = \ol \D$.

\begin{proof}[Proof of Lemma~\ref{lem::diametertofourthpower}]
If $r = 1$ and $b_1=0$, then~\eqref{eqn::hullchangebound} can be seen by applying~\eqref{eqn::distortionbound} and ~\eqref{eqn::distortionbound2} to the pair of maps $F_i\colon\C \setminus \D \to \C \setminus K_i$ and noting that $G = F_2 \circ F_1^{-1}$.  The general case follows by scaling and translation.
\end{proof}

\subsection{Proof of KPZ lemma on diameter fourth powers}
\subsubsection{A ``fixed parameterization'' variant of the fourth power estimate}
As part of the proof of a form of the KPZ scaling dimension relation, as presented in \cite[Section~4]{ds2011kpz},  one lets $h$ denote an instance of the zero-boundary GFF on a bounded domain $D$.  One then fixes a $\delta > 0$ and then seeks to compute the expectation of $\epsilon^{2x}$ where $\epsilon$ is a random number chosen through the following steps:
\begin{enumerate}
\item Sample $\mu_h$ from its law weighted by $\mu_h(S)$, where $S$ is a bounded, open set with $\ol{S} \subseteq D$.
\item Sample a point $z$ from $\mu_h$ restricted to $S$.
\item Let $\epsilon$ be the radius of the ball centered at $z$ with quantum mass exactly $\delta$.
\end{enumerate}
The version of the KPZ theorem\footnote{The presentation in \cite{ds2011kpz} assumes that there is a random fractal set $\CX$, independent of $h$, and that the $x$ is a scaling exponent associated to $\CX$, so that the probability that any given $\epsilon$ ball intersects $\CX$ is approximately $\epsilon^{2x}$.  The quantity $\Delta$ is then interpreted as a quantum scaling exponent associated with the fractal $\CX$, which gives the probability that a $\delta$ quantum area ball centered at a quantum typical point intersects $\CX$.  However, we stress that~\eqref{eqn::limitlogratio} also makes sense without reference to any particular $\CX$.   See also \cite{aru2015kpz} for an interesting example of the KPZ relation failing when $\CX$ is not independent of $h$.}
 established in \cite{ds2011kpz} states that
\begin{equation}
\label{eqn::limitlogratio}
\lim_{\delta \to 0} \frac{\log \E[\epsilon^{2x}]}{\log \delta^\Delta} = 1,
\end{equation}
where $\Delta$ and $x$ are related by~\eqref{eqn::kpz8}.

We would like to do something similar but with a slightly modified third step.  Namely, we will take $\epsilon$ to be the radius of the smallest  ball centered at $z$ that contains $\eta'([b-\delta, b+\delta])$ where $\eta'(b) = z$.  (Note that there is a.s.\ a unique $b$ for which $\eta'(b) = z$.)  The $\epsilon$ we define this way can only be larger than the one defined above.  We are only concerned with bounding $\E[\epsilon^4]$ from above which amounts to considering the case $x=2$.

\begin{proposition}
\label{prop::fourthpowerprop}
Let $S$ be open with $\ol{S} \subseteq \D$.  Let $h$ be an instance of the GFF on $\C$ with the additive constant fixed so that $h_1(0) = 0$ (i.e., the average of~$h$ on $\partial \D$ is equal to $0$) and define $\mu_h$ as usual using a fixed $\gamma \in (0,2)$.  Let~$\eta'$ be an independent whole-plane space-filling $\SLE_{\kappa'}$ from~$\infty$ to~$\infty$, parameterized so that $\mu_h (\eta'([s,t])) = t-s$ for all $s < t$ and $\eta'(0) = 0$.  Now suppose that we
\begin{enumerate}
\item Sample $\mu_h$ from its law weighted by $\mu_h(S)$.
\item Sample a point $z$ from $\mu_h$ restricted to $S$.
\item Let $\epsilon$ be the radius of the smallest closed ball centered at $z$ that contains $\eta'([b-\delta, b+\delta])$ where $b$ is taken so that $\eta'(b) = z$.
\end{enumerate}
Then using this definition of $\epsilon$, we still have that $\E[\epsilon^4]$ is approximately $\delta^\Delta$ in the sense that
\begin{equation}\label{eqn::fourthpowerkpz} \liminf_{\delta \to 0} \frac{ \log \E[\epsilon^4]}{\log \delta^\Delta}= 1 \end{equation} where $\Delta$ is the positive solution to~\eqref{eqn::kpz8} with $x=2$.
\end{proposition}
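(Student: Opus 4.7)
The plan is to adapt the proof of the standard KPZ estimate~\eqref{eqn::limitlogratio} from \cite[Section~4]{ds2011kpz}, which handles the mass-ball radius $\epsilon^*$, to the present setting where $\epsilon$ is the radius of the smallest ball containing the SLE cell $\eta'([b-\delta, b+\delta])$. The standard proof proceeds via Girsanov and a circle-average martingale at the quantum typical point $z$, and the same framework will apply here; the new input will be a moment estimate that handles the transition from mass-balls to SLE-cells.

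First, I will use Lemma~\ref{lem::weighted_quantum_surface} (extended to the whole-plane GFF via absolute continuity, since $S \subseteq \D$) to rewrite the weighted sample $(h, z)$ as one where $z$ is chosen from a smooth density on $S$ and then the field is sampled as a whole-plane GFF plus an extra $\gamma \log|z - \cdot|$ singularity at $z$. Applying part~\eqref{it::quantum_cone_limit} of Proposition~\ref{prop::quantum_cone_properties}, the surface near $z$ is, at small scales, a $\gamma$-quantum cone $(\C, h^c, 0, \infty)$; by Lemma~\ref{lem::invariant_under_translation} the time-shifted path $\eta'(\cdot + b) - z$ becomes a whole-plane space-filling $\SLE_{\kappa'}$ from $\infty$ to $\infty$ passing through $0$ at time $0$, parameterized by $\mu_{h^c}$-area and independent of $h^c$ as an unparameterized curve. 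This reduces the task to bounding $\E[\epsilon_c(\delta)^4]$, where $\epsilon_c(\delta) := \sup_{t \in [-\delta, \delta]} |\eta'_c(t)|$ on such a cone/SLE pair.

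Next, by the scale invariance of the $\gamma$-quantum cone (part~\eqref{it::quantum_cone_invariance} of Proposition~\ref{prop::quantum_cone_properties}), I will decompose $\epsilon_c(\delta) = r_\delta \cdot \wt{D}$, where $r_\delta$ is the unique positive radius with $\mu_{h^c}(B(0, r_\delta)) = \delta$ and $\wt{D}$ is the quantity $\epsilon_c(1)$ computed in the rescaled canonical embedding $(\wt{h}, \wt{\eta}')$ obtained by dilating by $r_\delta^{-1}$ and shifting the field by $-\gamma^{-1} \log \delta$. Since $(\wt{h}, \wt{\eta}') \stackrel{d}{=} (h^c, \eta'_c)$ jointly, the marginal law of $\wt{D}$ equals that of $\epsilon_c(1)$. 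The expected value $\E[r_\delta^4 \wt{D}^4]$ is then analyzed by the standard KPZ circle-average Girsanov computation as in \cite[Section~4]{ds2011kpz}: the $r_\delta$ contribution produces the scaling $\delta^{\Delta(1+o(1))}$ with $\Delta$ solving~\eqref{eqn::kpz8} at $x = 2$, while $\wt{D}$ contributes a bounded factor provided it has finite moments of order exceeding $4$. The lower bound $\liminf \leq 1$ in~\eqref{eqn::fourthpowerkpz} follows easily since $\epsilon \geq \epsilon^{**}$ where $\epsilon^{**}$ is the radius with quantum mass $2\delta$, reducing to the standard KPZ lower bound applied to $\epsilon^{**}$.

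The main obstacle, and the input not already present in \cite[Section~4]{ds2011kpz}, is the moment bound $\E[\epsilon_c(1)^p] < \infty$ for some $p > 4$. Heuristically $\eta'_c([-1, 1])$ has quantum mass $2$, comparable to $\mu_{h^c}(\D) = 1$, so the Euclidean diameter should typically be of order $1$; rigorously one wants a stretched-exponential tail bound $\p[\epsilon_c(1) > R] \leq C e^{-R^\alpha}$ for some $\alpha > 0$. A natural route is to combine the Brownian boundary length description of Theorem~\ref{thm::bm_statement}, which constrains the boundary length increments of $\eta'_c([-1, 1])$ to be Gaussian of order $1$, with a geometric estimate forcing a cell of diameter $R$ to be anomalously long and thin---an event incompatible with the joint law of $(L_t, R_t)$ except with small probability. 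The technical difficulty is that a quantum surface's Euclidean diameter is not controlled by its boundary length and area alone; this step will require supplementing the Brownian boundary length bound with the welding and conformal removability results of Section~\ref{sec::quantumwedge} and Section~\ref{subsec::removability}, together with the H\"older regularity of $\SLE$-type curves, in order to convert boundary-length control into Euclidean diameter control and close the argument.
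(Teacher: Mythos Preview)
Your overall plan---adapt the argument of \cite[Section~4]{ds2011kpz} and isolate the one new estimate needed to pass from mass-balls to SLE cells---is exactly what the paper does. But two points need correction.

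First, the factorization $\epsilon_c(\delta)=r_\delta\cdot\wt D$ does not separate as you suggest: $r_\delta$ and $\wt D$ are correlated (both are functions of the same cone/curve pair), so a finite moment for $\wt D$ alone does not bound $\E[r_\delta^4\wt D^4]$ by a constant times $\E[r_\delta^4]$. The paper (following \cite{ds2011kpz}) does not factor at all. It works directly with the GFF plus $\gamma$-log singularity (no passage to the cone is needed), uses the circle-average stopping radius $\epsilon_0$ for which $\E[\epsilon_0^{2x}]=\delta^\Delta$ holds exactly by Girsanov and optional stopping, and then controls $\log(\epsilon/\epsilon_0)$ by a \emph{tail} bound uniform in $\delta$. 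What is needed is not a moment of $\wt D$ but a superexponential bound on $\p[\epsilon>1/2,\ b\leq e^t]$ for very negative $t$, analogous to \cite[Lemma~4.5]{ds2011kpz}.

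Second, and more substantively, your proposed route to that key estimate---control the Euclidean diameter of $\eta'([-1,1])$ from above using the Brownian boundary-length process, then invoke welding, removability, and H\"older regularity---attacks the problem from the hard side, and you yourself flag the obstacle that diameter is not controlled by quantum boundary length and area. The paper's argument (Proposition~\ref{prop::kpzslesmallareabound}) is much simpler and goes through the \emph{in-radius} rather than the out-radius. If $b$ is the first time $\eta'$ reaches $\partial B(0,1/2)$, then $\eta'([0,b])$ contains some ball $B(z,r)$; a geometric iteration over the $\lfloor 1/(2r)\rfloor$ concentric annuli shows that at each scale there is a uniformly positive conditional chance that $\eta'$ fills a radius-$r$ ball before exiting, so $T=1/r$ has an exponential tail. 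Then $b\geq\mu_h(B(z,r))$, and the mass of a ball is controlled by the circle average $h_r(z)$ (Gaussian with variance $\log T$) plus a term with quadratic-exponential tails (from \cite[Lemma~4.6]{ds2011kpz}). Combining these via Proposition~\ref{prop::reallyfastdecaybound} gives $\p[b\leq e^t]\leq e^{-Ct^2/\log|t|}$, which is exactly the superexponential decay needed to plug into the \cite[Section~4]{ds2011kpz} machinery.
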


The proof of~\eqref{eqn::limitlogratio} given in \cite[Section 4]{ds2011kpz} implies that~\eqref{eqn::fourthpowerkpz} would hold if we instead took $\epsilon$ to be the radius of the Euclidean ball centered at $z$ containing $\delta$ units of $\mu_h$ area.\footnote{The setup in \cite[Section 4]{ds2011kpz} assumes fixed boundary conditions on a bounded domain, which is somewhat different from our setup here.  However, this distinction is not relevant to the argument in \cite[Section 4]{ds2011kpz}.}   It turns out that with a couple of supplemental arguments, the same proof also implies Proposition~\ref{prop::fourthpowerprop}.  We will not repeat the entire proof of \cite[Section~4]{ds2011kpz} here, but we will give a short overview and explain the necessary supplemental arguments during the remainder of this subsection.
 
The proof in \cite[Section~4]{ds2011kpz} actually begins with yet another analog of~\eqref{eqn::limitlogratio} which is stated as \cite[Theorem~4.2]{ds2011kpz}.  For this analog, one notes that if one has $h$ and $z$, one can construct $h_r(z)$, which evolves as a Brownian motion in $-\log r$.  One then establishes~\eqref{eqn::limitlogratio} with $\epsilon$ replaced by the smallest $r$ for which
\begin{equation}
\label{eqn::circleaverage}
\gamma h_r(z) + \gamma Q \log r = \log \delta.
\end{equation}
Roughly speaking, this is interpreted as the smallest $r$ such that the ``typical value'' of $\log \mu_h \bigl( B(z,r) \bigr)$, conditioned on $h_r(z)$, is equal to $\log \delta$.

The next step in \cite[Section 4]{ds2011kpz} is to derive~\eqref{eqn::limitlogratio} from \cite[Theorem 4.2]{ds2011kpz}.  This is done by deriving bounds (\cite[Lemmas 4.6--4.7]{ds2011kpz}) on the probability that  
the actual value of $\log \mu_h \bigl( B(z,r) \bigr)$ differs from the LHS of~\eqref{eqn::circleaverage} by a large amount.  In order to establish Proposition~\ref{prop::fourthpowerprop} using the argument of  \cite[Section 4]{ds2011kpz}, it suffices to establish variants of these lemmas using the definition of $\epsilon$ from Proposition~\ref{prop::fourthpowerprop}.

The proofs of these variants are exactly the same as the proofs in \cite[Lemmas 4.6--4.7]{ds2011kpz} except that one requires an analog, appropriate for our setting, of \cite[Lemma 4.5]{ds2011kpz}.  The statement and proof of this analog comprise the remainder of this subsection.  We begin by giving a restatement of \cite[Lemma 4.5]{ds2011kpz}.

\begin{proposition}({\it Restatement of \cite[Lemma 4.5]{ds2011kpz}})
\label{prop::kpzsmallareabound}
Fix $\gamma \in [0,2)$, let $h$ be an instance of the GFF on $\C$ with $h_1(0) = 0$, and let $\mu_h$ be the $\gamma$-LQG area measure associated with $h$.  Then the random variable $A = \log \mu_h (B(0, 1/2))$ satisfies $p_A(t) := \p[A \leq t] \leq e^{-Ct^2}$ for some fixed constant $C > 0$ and all sufficiently negative values of $t$.
\end{proposition}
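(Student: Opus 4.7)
The plan is to prove the Gaussian left tail by decomposing $h$ into its radial and lateral parts, controlling the radial contribution via a standard Brownian motion tail estimate, and then obtaining a Gaussian left tail for the (lateral) GMC mass on an annulus via a Chernoff argument based on sub-Gaussian growth of negative moments. Concretely, I would write $h = X + Y$, where $X$ is the radially symmetric projection (constant on each circle $\partial B(0,r)$ and equal to $B_{-\log r}$ there, with $B$ a two-sided standard Brownian motion satisfying $B_0 = 0$ thanks to the normalization $h_1(0) = 0$), and $Y$ is the lateral part (mean zero on each circle about the origin), which is independent of $X$. Since $X$ restricted to the closed annulus $\{1/4 \le |z| \le 1/2\}$ is continuous, Girsanov/change-of-variables gives the pointwise factorization $\mu_h(dz) = e^{\gamma X(z)} d\mu_Y(z)$ on this annulus, where $\mu_Y$ is the GMC built from $Y$. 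In particular,
$$A := \log \mu_h(B(0,1/2)) \;\geq\; \gamma B^{\ast} + \log N_Y, \qquad B^{\ast} := \inf_{u \in [\log 2,\, \log 4]} B_u, \qquad N_Y := \mu_Y\!\bigl(\{1/4 < |z| < 1/2\}\bigr),$$
so $\p(A \le t) \le \p(\gamma B^{\ast} \le t/2) + \p(\log N_Y \le t/2)$, and it suffices to bound each term separately.

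The first term is handled immediately by the reflection principle: $\p(B^{\ast} \le -s) \le 2\,\p(B_{\log 4} \le -s) \le 2 \exp(-s^2/(2\log 4))$, giving the desired Gaussian bound on $\p(\gamma B^{\ast} \le t/2)$ for $t$ sufficiently negative. The second term is the heart of the matter: I want to show that $\log N_Y$ has a Gaussian left tail. My plan is to establish the sub-Gaussian moment bound
$$\E\!\left[N_Y^{\,q}\right] \;\le\; C_1 \exp(C_2 q^2) \qquad \text{for all } q < 0,$$
and then apply Chernoff's inequality: for $t < 0$ and any $q < 0$, $\p(\log N_Y \le t) = \p(N_Y^{q} \ge e^{qt}) \le C_1 \exp(C_2 q^2 - qt)$, and optimizing at $q^{\ast} = t/(2C_2)$ yields $\p(\log N_Y \le t) \le C_1 \exp(-t^2/(4 C_2))$, which is of the desired Gaussian form.

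To establish the sub-Gaussian moment bound for $N_Y$, my approach is to compare $Y$ on the fixed annulus $\{1/4 < |z| < 1/2\}$ with a scale-invariant log-correlated Gaussian field for which the analogous estimate follows from an explicit computation in terms of a single Brownian motion. The comparison is performed by Kahane's convexity inequality, which requires controlling the difference of the two covariance kernels; on a bounded annulus away from $0$, this difference is a bounded continuous function, which is precisely the setting where Kahane's inequality applies cleanly. For the reference scale-invariant field, one obtains the bound $\E[N^q] \le C \exp(C q^2)$ for $q < 0$ directly from the additive Brownian structure of the field along logarithmic scales, combined with the moment generating function of a Gaussian.

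The main obstacle is Step~4, namely obtaining the quantitative sub-Gaussian negative moment bound for the lateral GMC $N_Y$. Qualitative finiteness of all negative moments of GMC is classical, but for the Gaussian tail one needs the \emph{growth rate} $\E[N_Y^{q}] \le \exp(O(q^2))$ to be at most quadratic in $|q|$, not faster. The delicate point is ensuring that the error terms arising from the Kahane comparison do not destroy the sub-Gaussian character of the bound; in particular one must verify that the deterministic factor $|z|^{\gamma^2/2}$ appearing in the covariance of $Y$ (due to the removal of the radial mean) only contributes $O(q^2)$ in the log, which it does on a bounded annulus. Once this bound is in hand the Chernoff step and the combination with the Brownian estimate are routine, and yield the claimed bound $\p(A \le t) \le e^{-C t^2}$ for all sufficiently negative $t$.
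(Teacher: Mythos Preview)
The paper does not prove this proposition; it is stated explicitly as a restatement of \cite[Lemma~4.5]{ds2011kpz} and is invoked as an external input. So there is no ``paper's own proof'' to compare against here.

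Your proposed argument is sound and is essentially the standard modern route to such bounds. The radial/lateral decomposition $h = X + Y$ is exactly the orthogonal decomposition of Lemma~\ref{lem::c_spaces_orthogonal}; on the annulus $\{1/4 \le |z| \le 1/2\}$ the radial part $X$ is continuous, so the factorization $\mu_h(dz) = e^{\gamma X(z)}\,d\mu_Y(z)$ is valid, and the Brownian infimum term is indeed handled by the reflection principle. Your identification of the sub-Gaussian negative-moment bound $\E[N_Y^{\,q}] \le C_1 e^{C_2 q^2}$ for $q<0$ as the crux is correct, and the Kahane-comparison strategy you outline (reducing to an exactly scale-invariant reference kernel on the bounded annulus, where the difference of covariances is bounded) is the right way to obtain it. One point worth making more explicit in a full writeup: the reference-field step does require an actual recursion or branching-type argument to extract the $e^{O(q^2)}$ growth (the ``explicit computation in terms of a single Brownian motion'' you allude to is not quite a one-liner), but this is a known result in the GMC literature and your sketch captures the architecture correctly. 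Once that bound is in hand, the Chernoff optimization and union bound are routine, exactly as you describe.
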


The KPZ argument in \cite[Section 4]{ds2011kpz} actually only requires the fact that $p_A(t)$ decays superexponentially (i.e., faster than $e^{-Ct}$ for any $C$).  Thus, the fact that $p_A(t)$ decays quadratic exponentially, as stated in Proposition~\ref{prop::kpzsmallareabound}, is somewhat more than is necessary for the purposes of \cite[Section 4]{ds2011kpz}.  Proposition~\ref{prop::kpzsmallareabound} can be interpreted as bounding, within the particular setting of the proposition, the probability that $\epsilon > 1/2$ when $\delta = e^{-t}$ is extremely small and $\epsilon$ is defined to make $\log \mu_h \bigl( B(0,\epsilon) \bigr) = 0$.  To bound the probability that the $\epsilon$ defined as in Proposition~\ref{prop::fourthpowerprop} exceeds $1/2$, we will need the following analog of Proposition~\ref{prop::kpzsmallareabound}.

\begin{proposition}
\label{prop::kpzslesmallareabound}
Fix $\gamma \in [0,2)$, let $h$ be an instance of the GFF on $\C$ with $h_1(0) = 0$, and let $\mu_h$ be the $\gamma$-LQG area measure associated with $h$.   Let $\eta'$ be an independent space-filling~$\SLE_{\kappa'}$ from~$\infty$ to~$\infty$ in~$\C$ parameterized so that $\eta'(0) = 0$ and $\mu_h(\eta'([s,t])) = t-s$ for $s < t$.  Let $a = \sup \{s < 0: |\eta'(s)|=1/2 \}$ and $b = \inf \{s > 0: |\eta'(s)| = 1/2 \}$, and write $A'  = \log \min(-a,b)$.  Then
\begin{equation}
\label{prop::pabound}
p_{A'}(t) := \p[A' \leq t] \leq e^{-C t^2/\log |t|}
\end{equation}
for some fixed constant $C > 0$ and all sufficiently negative values of $t$.
\end{proposition}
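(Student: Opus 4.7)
By the time-reversal symmetry $\eta'(\cdot)\leftrightarrow\eta'(-\cdot)$ together with a union bound, it suffices to establish the corresponding upper bound on $\p[b\le\delta]$ with $\delta:=e^{t}$. On the event $\{b\le\delta\}$ the set $K_{\delta}:=\eta'([0,b])$ is a compact connected subset of $\ol{B(0,1/2)}$ joining the origin to $\partial B(0,1/2)$ and satisfying $\mu_{h}(K_{\delta})=b\le\delta$. I would partition $B(0,1/2)$ into dyadic annuli $A_{j}:=B(0,r_{j-1})\setminus\ol{B(0,r_{j})}$ with $r_{j}:=2^{-j}$ for $j=1,\dots,N$, where $N$ is of order $\log|t|$, to be chosen below. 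Because $K_{\delta}$ is a continuum joining $0$ to $\partial B(0,1/2)$, for each $j$ it contains a subcontinuum $C_{j}\subseteq\ol{A_{j}}$ meeting both boundary circles of $A_{j}$; the $C_{j}$ are pairwise disjoint, which gives
\begin{equation*}
\delta\;\ge\;\mu_{h}(K_{\delta})\;\ge\;\sum_{j=1}^{N}\mu_{h}(C_{j}).
\end{equation*}

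The plan is to compare each $\mu_{h}(C_{j})$ with the ``natural'' $\gamma$-LQG scale at radius $r_{j}$, namely $r_{j}^{\gamma Q}e^{\gamma X_{r_{j}}}$ where $X_{r}:=h_{r}(0)$. Under the normalization $h_{1}(0)=0$ the process $s\mapsto X_{e^{-s}}$ is a standard Brownian motion starting at $0$ at $s=0$. Using the Markov decomposition of $h$ inside $B(0,r_{j})$ as $X_{r_{j}}$ plus a harmonic piece plus an independent remainder, together with the $\gamma$-LQG scaling rule under $z\mapsto r_{j}z$, I would reduce to the following geometric lemma: there exist constants $c_{1},p_{*}>0$ such that, conditionally on $\eta'$, on the GFF data outside $B(0,r_{j})$, and on $X_{r_{j}}$, the event $E_{j}:=\{\mu_{h}(C_{j})\ge c_{1}r_{j}^{\gamma Q}e^{\gamma X_{r_{j}}}\}$ has probability at least $p_{*}$. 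Granted this, a Chernoff bound shows that with probability at least $1-e^{-cN}$ at least $N/2$ of the $E_{j}$ hold simultaneously, and on this event the mass bound above forces $X_{r_{j}}\le t/\gamma+Qj\log 2+O(1)$ for at least $N/2$ indices $j$. In particular the Brownian motion $s\mapsto X_{e^{-s}}$ must sit below a line of the form $Qs+t/\gamma+O(1)$ at some scale $s\asymp N\log 2\asymp\log|t|$, an event whose Gaussian probability is at most $\exp(-ct^{2}/\log|t|)$. Choosing $N$ of order $\log|t|$ and absorbing the $e^{-cN}$ factor into the exponent yields the claimed bound.

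The hard part will be the uniform geometric lower bound on $\mu_{h}(C_{j})$. The difficulty is that $C_{j}$ is produced by the space-filling $\SLE_{\kappa'}$, which is \emph{not} independent of the small-scale randomness of $h$ that drives $\mu_{h}$, so one cannot directly invoke an annealed lower tail for the $\gamma$-LQG mass of a deterministic continuum crossing the annulus. I plan to get around this by discretization: cover the rescaled unit annulus by a fixed finite grid of small Euclidean balls, observe that any connected crossing continuum must meet a connected chain of such balls, and show that, with positive probability uniformly in the chain, at least one ball in the chain carries $\mu$-mass of order its typical value. This latter step reduces, via Cameron-Martin comparisons for the GFF and the usual Gaussian lower tail estimates for circle averages centered at the grid points, to a union bound over the finitely many chains, and does not see $\eta'$ at all. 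Carrying out this discretization so that it is compatible with the Markov decomposition above, uniformly in $j$ and uniformly in the random crossing $C_{j}$, is the analog for an $\SLE$-traced set of the circle-average-based estimate underlying Proposition~\ref{prop::kpzsmallareabound} (i.e.\ \cite[Lemma~4.5]{ds2011kpz}), and is where I expect the bulk of the technical work to lie.
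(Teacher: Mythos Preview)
Your proposal has a genuine gap, and it is partly caused by a misconception. You write that $C_j$ ``is produced by the space-filling $\SLE_{\kappa'}$, which is \emph{not} independent of the small-scale randomness of $h$.'' But in the setup of the proposition, $\eta'$ is first sampled \emph{independently} of $h$ and only afterward reparameterized by $\mu_h$-area. Since $b$ is the first time the curve hits $\partial B(0,1/2)$, a purely geometric event, the set $K_\delta=\eta'([0,b])$ is a function of the curve alone and is independent of $h$. So once you condition on $\eta'$, each $C_j$ is a deterministic set and there is no hidden coupling to worry about.

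The actual obstruction is your uniform lower bound $\p[\mu_h(C_j)\ge c_1 r_j^{\gamma Q}e^{\gamma X_{r_j}}]\ge p_*$. A continuum crossing an annulus can have Lebesgue measure zero (a simple arc, for instance), in which case $\mu_h(C_j)=0$ almost surely and $E_j$ has probability zero. Your discretization does not rescue this: if $C_j$ merely \emph{meets} a chain of grid balls, it need not \emph{contain} any of them, so the mass carried by those balls is no lower bound for $\mu_h(C_j)$. Even if you replace $C_j$ by the full intersection $K_\delta\cap A_j$ (which may have positive area), that set could be arbitrarily thin, and then no $p_*$ uniform over all admissible shapes exists. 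What you really need is that $K_\delta$ contains, in many annuli, a Euclidean ball of radius comparable to $r_j$ --- a \emph{thickness} statement about the space-filling SLE, not a mass statement about generic continua.

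The paper sidesteps the multi-scale machinery entirely. It first samples $\eta'$, lets $r$ be the radius of the largest ball $B(z,r)\subseteq K_\delta$, and proves by an iteration over concentric circles that $T:=1/r$ has exponential tails: at each dyadic scale, $\eta'$ has a uniformly positive conditional chance to fill a ball of that scale before exiting the next annulus outward. Given $(\eta',z)$, the circle average $h_r(z)$ is then Gaussian with variance $\log T+O(1)$, and the remainder $\log\mu_h(B(z,r))-\gamma h_r(z)-c\log r$ has quadratic-exponential tails uniformly (this is \cite[Lemma~4.6]{ds2011kpz}). Since $b\le e^t$ forces $\mu_h(B(z,r))\le e^t$, combining these three tail bounds via an elementary splitting ($T>a^2$ versus $T\le a^2$) already gives the $e^{-C t^2/\log|t|}$ bound. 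Your annulus decomposition could be made to work if you first prove the per-scale thickness estimate, but at that point you would essentially be reproving the paper's ball-radius tail bound $N$ times instead of once.
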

\begin{proof}
It is enough to prove that~\eqref{prop::pabound} holds if we replace $A'$ with either $\log(-a)$ or $\log b$ (which are equivalent by symmetry) since combining the two affects the RHS of~\eqref{prop::pabound} by at most a factor of $2$.  For the proof below, we redefine $A'$ to be $\log b$.

The difficulty in proving Proposition~\ref{prop::kpzslesmallareabound} is that there are essentially two things that can make $\eta'$ get to $\partial B(0,1/2)$ in a small amount of time (which would make $b$ small), only the first of which is dealt with in Proposition~\ref{prop::kpzsmallareabound}:
\begin{enumerate}
\item The total mass of $B(0,1/2)$ being small.
\item The path $\eta'$ filling only very small amounts of Euclidean area before reaching $\partial \D$ and/or somehow managing to avoid the parts of $B(0,1/2)$ containing the most mass during its trajectory from $0$ to $\partial B(0,1/2)$.
\end{enumerate}
To proceed with the analysis, let us imagine that we do the sampling in two steps.  First we choose $\eta'$, which determines the set $\eta'([0,b])$, although the value of $b$ is not known yet.  Then we let $r$ denote the radius of the largest ball contained in $\eta'([0,b])$.  Then we single out the value $z$ such that  $B(z,r) \subseteq \eta'([0,b])$.  Then we sample $h$ and check whether $\mu_h \bigl( B(z,r) \bigr) \leq e^t$.  The latter condition is {\em necessary} in order for us to have to have $b < e^t$, so to establish
\eqref{prop::pabound} it suffices to show that
\[ \p \Bigl[ \log \mu_h \bigl( B(z,r) \bigr) \leq t \Bigr] \leq e^{-C t^2 / \log|t|}.\]
First, we observe that the random variable $T = 1/r$ decays exponentially, in that $\p[T \geq s] \leq e^{-Cs}$ for some $C$ and all sufficiently large $s$.  This can be seen by considering $\lfloor 1/2r \rfloor$ disks centered at $0$ with radii $2r, 4r, 6r, \ldots$ and noting that conditioned on $\eta'$ up to the first time it exits one of these disks, there is always a uniformly positive chance that it fills a disk of radius $r$ before it exits the next disk outward.  (See analogous bounds in \cite[Section~4.3.1]{ms2013imag4}.)

Second, observe that given $\eta'$ and $z$, the law of $h_r(z)$ is given by a Gaussian with variance $\log T$ (plus or minus a constant); if we redefine $T$ to be exactly this variance, then we can write $h_r(z) = W_{\log(T)}$ where $W$ is a standard Brownian motion.

Third, observe that once $h_r(z)$ is known, there is a constant $c$ such that the conditional law of the random variable $Z = \log \mu_h( B(z,r) ) - \gamma h_r(z) + c \log r$  has quadratic exponential decay independently of $r$ and $h_r(z)$.  (This is included in \cite[Lemma 4.6]{ds2011kpz}.)  The result now follows immediately from Proposition~\ref{prop::reallyfastdecaybound} as stated below.
\end{proof}

\begin{proposition}
\label{prop::reallyfastdecaybound}
Let $T$ be a random variable with the property that
\begin{equation}
\label{eqn::Tbound}
\p[T \geq s] \leq e^{-Cs}
\end{equation}
for some $C > 0$ and all sufficiently large $s$.  Independently of $T$, sample a standard Brownian motion $W$.  Then define the random variable $X = W_{\log(T)} +c \log T + Y$ where $c$ is a fixed constant and $Y$ is an independent random variable such that 
\begin{equation}
\label{eqn::Ybound}
\p[ Y \geq s] \leq e^{- C s^2}
\end{equation}
for sufficiently large $s$.  Then for all sufficiently large $a$ and some $C'$ we have
\begin{equation}
\label{eqn::Xbound}
\p[X \geq a] \leq e^{-C' a^2/\log (a)}
\end{equation} 
\end{proposition}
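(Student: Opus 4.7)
The plan is a standard truncation argument: split according to whether $\log T$ is above or below a threshold $M = M(a)$ chosen to balance two competing error terms. Concretely, I would write
\[
\p[X \geq a] \;\leq\; \p[\log T > M] \;+\; \p\!\left[\sup_{0 \leq s \leq M}|W_s| + |c|M + Y \geq a\right],
\]
where I use that on $\{\log T \leq M\}$ one has $|W_{\log T}| \leq \sup_{0 \leq s \leq M}|W_s|$ and $|c\log T| \leq |c|M$. (In the application, $T \geq 1$ so $\log T \geq 0$; otherwise one interprets $W_s$ for $s < 0$ via a two-sided Brownian motion, which contributes an entirely analogous term.)

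For the first term, the hypothesis $\p[T \geq s] \leq e^{-Cs}$ gives $\p[\log T > M] \leq e^{-C e^M}$. For the second term, I would further split at level $(a - |c|M)/2$ for each of the two summands. The reflection principle bounds
\[
\p\!\left[\sup_{0 \leq s \leq M} W_s \geq \tfrac{a - |c|M}{2}\right] \;\leq\; 2\exp\!\left(-\frac{(a-|c|M)^2}{8M}\right),
\]
and the hypothesis $\p[Y \geq s] \leq e^{-Cs^2}$ controls the $Y$ term by $\exp(-C(a-|c|M)^2/4)$.

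The natural balance point is to choose $M$ so that $e^M \asymp a^2/\log a$, e.g.\ $M = 2\log a$. Then the first term becomes $e^{-Ca^2}$, the $Y$ term becomes at most $e^{-C'a^2}$, and the Brownian term becomes $\exp(-(a-O(\log a))^2/(16\log a)) \leq \exp(-C'a^2/\log a)$ for all sufficiently large $a$. Summing these three contributions yields \eqref{eqn::Xbound}.

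I do not expect a genuine obstacle here; the argument is a routine two-scale estimate. The one point requiring minor care is the sign of $c\log T$ when $T < 1$, which is irrelevant in the intended application (where $T$ is bounded below) and in any case handled by the absolute value bound above. The choice $M \asymp \log a$ is essentially forced: making $M$ larger makes the Brownian tail worse, while making $M$ smaller makes the exponential tail on $T$ worse, and both bounds coincide (up to logarithmic corrections) precisely at $e^M \asymp a^2/\log a$, which is why the final bound has the $\log a$ in the denominator.
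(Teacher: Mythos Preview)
Your proposal is correct and essentially identical to the paper's argument: the paper also splits on the event $\{T > a^2\}$ versus $\{T \leq a^2\}$ (i.e., your $M = 2\log a$), bounds the first term by $e^{-Ca^2}$ via \eqref{eqn::Tbound}, and on the second event bounds the Brownian contribution at time at most $2\log a$ and the $Y$ contribution separately. Your treatment is slightly more explicit about the $c\log T$ term and uses the running supremum of $W$, but these are cosmetic differences.
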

\begin{proof}
On can break the probability considered in~\eqref{eqn::Xbound} into two cases: $T > a^2$ and $T \leq a^2$.  In the former case, we get a bound of $e^{-C a^2}$ from~\eqref{eqn::Tbound}.  In the latter case, $W_{\log(T)}$ is a Brownian motion stopped at time at most $2 \log a$.  Thus, for an appropriate $C'$, the probability that  $W_{\log(T)}$ exceeds $a/2$ decays as fast as the RHS of~\eqref{eqn::Xbound}, and the probability that $Y > a/2$ decays faster than the RHS of~\eqref{eqn::Xbound} by~\eqref{eqn::Ybound}.
\end{proof}

\subsubsection{Quantum cones: smooth centering and resampling}

We would now like to use Proposition~\ref{prop::fourthpowerprop} to establish Lemma~\ref{lem::kpzlem}.  The statements of these two results are quite similar in a sense.  Both involve expected fourth powers of $\epsilon$ values, which are defined from $h$, $\eta'$, and $\delta$ in a similar way.  However, the statements are set up rather differently.  The former involves a GFF $h$ on $\C$ and the quantum surface parameterized by a fixed domain $S$.  Although $S$ is fixed, its quantum measure $\mu_h(S)$ is random (and one weights by this quantum measure before sampling the quantum typical point $z$).  The latter involves the random piece of a quantum cone traced by $\eta'$ during the time interval $[-1,1]$.  Although the quantum size of the glued-in quantum surface is fixed, the set $\eta'([-1,1])$ that parameterizes it is random.

Despite these differences between the setups, one might expect that the way that $\E[\epsilon^4]$ scales as $\delta \to 0$ in the two settings should depend somehow on the bulk behavior of the quantum surfaces, and that we could use this intuition to somehow derive Lemma~\ref{lem::kpzlem} from Proposition~\ref{prop::fourthpowerprop}.  That is the purpose of the current section.

To begin with, let $(\C,h,0,\infty)$ be a $\gamma$-quantum cone with the projection of $h$ onto $\CH_1(\C)$ normalized as in Definition~\ref{def::quantum_cone}.  For $t \geq 0$, let $R_t$ be such that $e^{\gamma R_t}$ is the expected $\gamma$-LQG mass in $B(0,e^{-t})$ given the mean value of $h$ on $\partial B(0,e^{-t})$.  Then $R_t = B_t + (\gamma-Q) t$ where $B_t$ for $t > 0$ is a standard Brownian motion; note that $\gamma-Q < 0$.

Now we will consider the quantity $s \mapsto \int_{\R} R_t \phi(t-s) dt$ where $\phi$ is a non-negative $C^\infty$ bump function supported on $(-1/100,0]$ with total integral one.  This is an integral of $h$ against a smooth rotationally invariant bump function.  We apply a coordinate change rescaling to $h$ so that this function achieves the value $0$ for the first time at $0$.  We refer to the $h$ with this type of scaling as the {\em smooth canonical description} for the quantum cone.  Now let $S$ be a closed, simply-connected subset of $\D \setminus \{0\}$ with non-empty interior.  We also fix values $t_2 > t_1 > 0$  and $r > 0$.  Given a smooth canonical description $h$ of a $\gamma$-quantum cone as above, and an independently chosen $\eta'$, we say that the configuration $(h, \eta')$ is {\bf $(S, t_1, t_2, r)$-stable} if the following are true:
\begin{enumerate}
\item\label{it::stable_cond1} $\eta'([t_1,t_2]) \subseteq S$ and $\diam(\eta'([t_1,t_2])) > r$
\item\label{it::stable_cond2} The previous item continues to hold if we cut out the surface parameterized by $\eta'([t_1,t_2])$ and weld in any surface at all of the same $\gamma$-LQG area and $\gamma$-LQG boundary length in its place.  In other words, if we consider any conformal map $\psi$ from the unbounded connected component of  $\C \setminus \eta'([t_1,t_2])$ to $\C \setminus K$, for some $K$, that is normalized so that
\begin{enumerate}[(i)]
\item $\psi$ fixes $\infty$ and $0$,
\item $\psi$ has a positive real derivative at $\infty$,
\item $\psi$ is scaled in such a way that the pushforward of $h$ via the quantum coordinate change described by $\psi$ corresponds to a smooth canonical description (note that on the event $K \subseteq S$ this statement does not depend on how the pushforward is defined on $K$ itself), 
\end{enumerate}
then $K \subseteq S$ and $\diam(K) \geq r$.
\end{enumerate}

\begin{remark}
\label{rem::stable_measurable}
Let $F \colon \C \setminus \ol{\D} \to \C \setminus \eta'([t_1,t_2])$ be the unique conformal transformation which looks like the identity at $\infty$.  The convenient property that this definition possesses is that it is measurable with respect to the $\sigma$-algebra generated by $h \circ F^{-1} + Q\log|(F^{-1})'|$ (this is the $\sigma$-algebra $\CF$ introduced earlier with $[t_1,t_2]$ in place of $[-1,1]$).  That is, it is completely independent of the quantum surface parameterized by $\eta'([t_1,t_2])$ --- i.e., one can cut out this piece and replace it with any quantum surface (with the same boundary lengths and area) without affecting whether the overall configuration is $(S, t_1, t_2, r)$-stable or not.  If we had omitted Condition~\ref{it::stable_cond2} from the definition of $(S,t_1,t_2,r)$-stability (and only included Condition~\ref{it::stable_cond1}), then whether a configuration is $(S,t_1,t_2,r)$-stable would not be $\CF$-measurable.  This point will be very important when we complete the proof of Lemma~\ref{lem::kpzlem} below.  In particular, we will:
\begin{enumerate}
\item Argue (Proposition~\ref{prop::stablerestrictionfourthpower}) that we have the desired fourth moment control on the event that a configuration is $(S,t_1,t_2,r)$-stable.
\item We will then argue that there a.s.\ exists some $(S,t_1,t_2,r)$ such that the configuration is $(S,t_1,t_2,r)$-stable after applying a linear a change of coordinates.  Due to our definition of stability, the values of $t_1$, $t_2$, $r$, and $S$ are determined by $\CF$ hence we get the desired fourth moment control conditional on $\CF$.
\end{enumerate}
\end{remark}

We first observe that stable configurations occur with positive probability:

\begin{proposition}
\label{prop::st1t2rstablehaspositiveprob}
Let $h$ be a smooth canonical description of a $\gamma$-quantum cone, and let $\eta'$ be an independent space-filling $\SLE_{\kappa'}$ in $\C$ from $\infty$ to $\infty$, so that $\mu_h(\eta'([s,t])) = t-s$ for all $s < t$.  Fix $t_2 > t_1 > 0$ and let $\CA(S, t_1, t_2, r)$ denote the event that $(h, \eta')$ is $(S, t_1, t_2, r)$-stable.  Then there exists a closed, simply-connected set $S \subseteq \D \setminus \{0 \}$ with non-empty interior and $r > 0$ for which $\CA(S, t_1, t_2, r)$ has positive probability.
\end{proposition}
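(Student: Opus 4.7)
The plan is to construct a positive-probability event $E_0$ on which both conditions defining $(S,t_1,t_2,r)$-stability hold for appropriate choices of $S$ and $r$. First I would fix a closed ball $\ol{B_0} \subseteq \D \setminus \{0\}$ well-separated from both $0$ and $\partial \D$ (say $\ol{B_0} = \ol{B(1/2,1/10)}$), together with small constants $r_0, \delta > 0$ and a large $M > 0$, and let $E_0$ be the event that $\eta'([t_1,t_2]) \subseteq B_0$, $\diam(\eta'([t_1,t_2])) \geq r_0$, $\dist(\eta'([t_1,t_2]), \partial B_0) \geq \delta$, and the pushforward field $\tilde h = h \circ F + Q\log|F'|$ is uniformly bounded by $M$ on the annulus $\{1 < |z| < 2\} \subseteq \C \setminus \ol\D$. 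Positive probability of $E_0$ follows by combining (i)~positive probability under space-filling $\SLE_{\kappa'}$ that $\eta'([t_1,t_2])$ traces out a set of area $t_2 - t_1$ inside any prescribed open subset of $\D\setminus\{0\}$ (a consequence of the continuity and Markov property of space-filling $\SLE_{\kappa'}$ from \cite{ms2013imag4}), (ii)~absolute continuity between the quantum cone's field and a free-boundary GFF on neighborhoods away from $0$ and $\infty$, and (iii)~Cameron-Martin-type arguments giving positive probability to any prescribed $L^\infty$ bound on a GFF over a bounded region.

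Next I would verify condition 2 on $E_0$. For any admissible replacement, let $\psi\colon \C\setminus\eta'([t_1,t_2]) \to \C\setminus K$ be the welding map normalized as in the definition, with Laurent expansion $\psi(z) = a_{-1} z + a_0 + O(z^{-1})$ at $\infty$, where $a_{-1} > 0$. The scaling $a_{-1}$ and harmonic center $a_0$ of $K$ are determined by the requirement that the new field $h'$ satisfies the smooth canonical description at $0$. On the bootstrap event that $K \subseteq \D$, the circles of radius close to $1$ appearing in the smooth canonical description lie outside $K$, so the constraint pins down $\psi^{-1}$ and its derivative on that annulus explicitly in terms of $\tilde h$ on $\{1 < |z| < 2\}$. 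Combining the $L^\infty$ bound on $\tilde h$ from $E_0$ with Koebe distortion estimates for $\psi$ forces $a_{-1}$ into a bounded interval $[\alpha',\beta']$ and $a_0$ into a bounded region, uniformly over all admissible replacements. Proposition~\ref{prop::invertedkoebe} then gives $\diam(K) \in [a_{-1},4a_{-1}] \subseteq [\alpha',4\beta']$, while Lemma~\ref{lem::diametertofourthpower}, applied after renormalizing so that both hulls have outer conformal radius one, confines $K$ to a bounded neighborhood of a translate of $\eta'([t_1,t_2])$. Setting $r = \alpha'$ and $S$ to be any fixed closed ball in $\D \setminus \{0\}$ containing a suitable enlargement of $B_0$ (e.g.\ $S = \ol{B(1/2,1/4)}$, after choosing the constants suitably), we obtain $E_0 \subseteq \CA(S,t_1,t_2,r)$, so this event has positive probability.

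The main obstacle is the self-referential nature of the estimate on $a_{-1}$: bounding $a_{-1}$ using $\tilde h$ on circles near $\partial \D$ implicitly assumes $K\subseteq \D$, yet this containment is itself a consequence of the bound. I would close the loop with a bootstrap: using only the a priori bound on $\tilde h$ from $E_0$, together with the fact that $\tilde h$ is expressed on $\C\setminus\ol\D$ in the outer parameterization (a region that remains outside $K$ for every admissible replacement regardless of $a_{-1}$, as $\psi$ sends $\partial\eta'([t_1,t_2])$ to $\partial K$), one shows directly that $a_{-1}$ very large or very small would force the smooth canonical description of $h'$ to deviate from the prescribed value by a definite positive amount. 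This confines $a_{-1}$ to a bounded range, whence $K\subseteq \D$ follows by Proposition~\ref{prop::invertedkoebe} combined with the bound on $a_0$, and the argument closes.
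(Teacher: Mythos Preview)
There is a genuine gap. Your event $E_0$ requires the pushforward field $\tilde h = h \circ F + Q\log|F'|$ to be uniformly bounded by $M$ on $\{1 < |z| < 2\}$, but $\tilde h$ is a GFF-type distribution: it is a.s.\ not a function on any open set, so this event has probability zero and step~(iii) of your positive-probability argument (Cameron--Martin giving an $L^\infty$ bound on a GFF) is false as stated.

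Even reinterpreting the bound as one on circle averages (which are a.s.\ continuous, hence bounded on compacts with probability one), the second half of the argument does not close. The smooth canonical description of the post-swap field $h'$ involves integrating $h'$ against the radially symmetric bump function supported near radius~$1$ in the \emph{new} coordinates. Pulled back by $\psi^{-1}$, this becomes an integral of $h$ against a \emph{distorted} bump function whose support and shape depend on the unknown map $\psi$ (hence on $a_{-1}$ itself); it need not lie in any annulus fixed in advance, and a bound on circle averages at fixed radii does not control integrals against this moving family of test functions. Your bootstrap asserts that the a priori bound on $\tilde h$ over a fixed annulus in the outer parameterization suffices, but this is precisely the step that requires justification, and it does not follow from what you have written.

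The paper confronts exactly this circularity. It introduces the compact family $\Phi$ of all distorted bump functions $|g'|^2\,\phi\circ g$ arising from admissible conformal maps, uses the variance bound for suprema of Gaussian families (Propositions~\ref{prop::finitemin}--\ref{prop::countablemin}) to show that $M_1(h)=\inf_{\tilde\phi\in\Phi}(h,\phi-\tilde\phi)$ and $M_2(h)=\sup_{\tilde\phi\in\Phi}(h,\phi-\tilde\phi)$ are a.s.\ finite, and then proves (Proposition~\ref{prop::a1a2transience}) that the resulting processes $R_t^1,R_t^2$ tend to $\pm\infty$ as $t\to\pm\infty$. This yields a random but a.s.\ finite $c$ such that the rescaling factor after \emph{any} swap lies in $[\tfrac{1}{10}e^{-c},10e^{c}]$; one then picks $c_1$ with $\p[c\le c_1]\ge\tfrac12$ and chooses $K\subseteq S$ with $aK\subseteq S$ for all $a$ in the corresponding interval. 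The uniform control over the entire distorted family $\Phi$ is the key technical input your proposal lacks.
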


This proposition may seem obvious --- of course there is {\em some} probability that $\eta'([t_1,t_2])$ has very small diameter and lies in the middle of $S$, and then one would expect that cutting out the quantum surface traced by $\eta'([t_1,t_2])$ and gluing in a new should roughly preserve the location of $\eta'([t_1,t_2])$ in the center of $S$ while changing its diameter by at most a constant factor.  The trouble is that after replacing $\eta'([t_1,t_2])$ with a new surface, the field $h$ undergoes a quantum coordinate change that changes its values everywhere, and after this modification, the degree of rescaling that is required in order to produce a smooth canonical description could be drastically different (enough so that the point no longer lies in $S$).  To understand how the degree of rescaling required might change after a coordinate change, we will have to consider how much $(h, \phi)$ can change under small perturbations of $\phi$.  Before we prove Proposition~\ref{prop::st1t2rstablehaspositiveprob} we will need some preliminary results along these lines.  We begin by recalling some general facts about families of Gaussian random variables. The following proposition is stated in the introduction of \cite{chatterjee2008chaos}, which contains a short account of its history.

\begin{figure}[ht!]
\begin{center}
\includegraphics[scale=0.85]{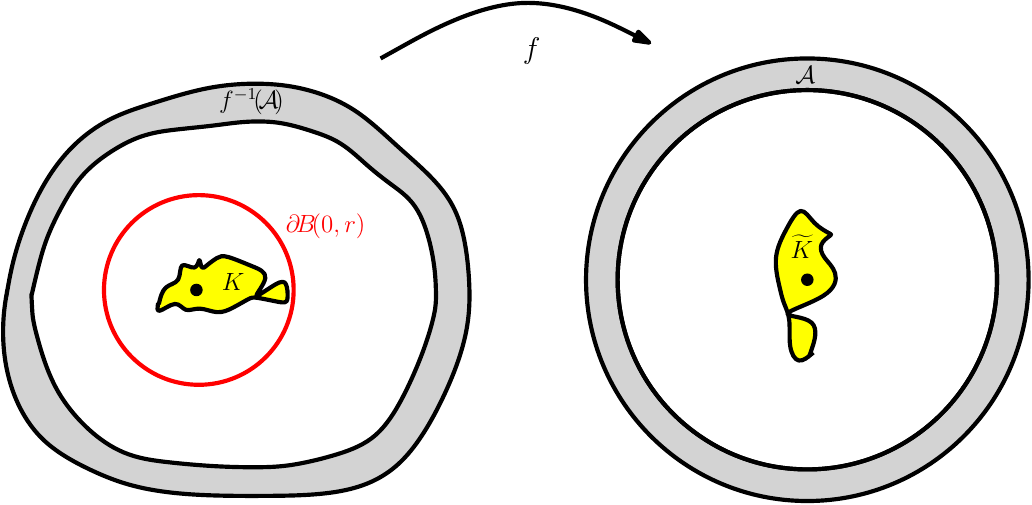}
\caption {\label{distortedbumprange} The set $K$ is a subset of $B(0,r)$ (whose boundary is shown as a red circle on the left) and $f$ is a continuous homeomorphism from $\C$ to $\C$ that maps $\C \setminus K$ conformally to $\C \setminus \wt K$, has derivative $1$ at $\infty$, and fixes the origin.  On the right, a fixed annulus $\CA$ is shown in grey, and on the left we have the $f$ preimage of that set.  Suppose that $h$ is the field on the left and $\wt h = h \circ f^{-1} + Q \log |f'|$ is the field defined on the right.  Let $\rho \colon \CA \to \R$ be a radially symmetric bump function supported in $\CA$.  Then there is a constant $b$ and $\wt \rho$ such that $(\wt h, \rho) = (h, \wt \rho) +b$.  As $K$ and $\wt K$ vary, the supremum and infimum of $(\wt h, \rho)$ are random finite quantities.}
\end{center}
\end{figure}

\begin{proposition}
\label{prop::finitemin}
If $X_1, X_2, \ldots, X_k$ are centered jointly Gaussian random variables (not necessarily independent) each with variance at most $\sigma^2$, then
\[ \Var \bigl( \min_{1 \leq i \leq k} X_i \bigr) = \Var \bigl( \max_{1 \leq i \leq k} X_i \bigr) \leq \sigma^2. \]
\end{proposition}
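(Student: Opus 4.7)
The plan is to first reduce the two variance statements to one another by a symmetry argument, and then bound $\Var(\max_i X_i)$ via the Gaussian Poincar\'e inequality.

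For the reduction, note that since the $X_i$ are centered and jointly Gaussian, $(X_1,\ldots,X_k) \stackrel{d}{=} (-X_1,\ldots,-X_k)$. Hence $\max_i X_i \stackrel{d}{=} \max_i (-X_i) = -\min_i X_i$, which gives $\Var(\max_i X_i) = \Var(\min_i X_i)$. It therefore suffices to show $\Var(\max_i X_i) \leq \sigma^2$.

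For the main bound, I would represent the vector as $X_i = \sum_{j=1}^m a_{ij} Z_j$, where $Z_1,\ldots,Z_m$ are i.i.d.\ $N(0,1)$ and $\sum_j a_{ij}^2 = \Var(X_i) \leq \sigma^2$ for each $i$ (such a representation exists, e.g., by diagonalizing the covariance matrix). Set $f\colon \R^m \to \R$ by $f(z) = \max_i \sum_j a_{ij} z_j$. As a maximum of finitely many linear functions, $f$ is convex, $\sigma$-Lipschitz, and differentiable Lebesgue-almost everywhere; where it is differentiable, its gradient equals $(a_{i^*(z),j})_j$ where $i^*(z)$ is the (a.e.\ unique) argmax, so $|\nabla f(z)|^2 = \sum_j a_{i^*(z),j}^2 \leq \sigma^2$. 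The Gaussian Poincar\'e inequality (valid in the form $\Var(f(Z)) \leq \E[|\nabla f(Z)|^2]$ for every locally Lipschitz $f$) then yields $\Var(\max_i X_i) = \Var(f(Z)) \leq \sigma^2$, completing the proof.

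The only subtlety is the applicability of Gaussian Poincar\'e to the non-smooth function $f$, which I expect to be the main (minor) obstacle. This can be handled in a standard way by mollifying $f$ (e.g., convolving with a Gaussian of variance $\epsilon$), applying Poincar\'e to the smoothed function $f_\epsilon$ (whose gradient remains uniformly bounded in $L^\infty$ by $\sigma$ because $f$ is $\sigma$-Lipschitz), and passing to the limit $\epsilon \downarrow 0$ using dominated convergence on both $\Var(f_\epsilon(Z))$ and $\E[|\nabla f_\epsilon(Z)|^2]$. Alternatively one can cite the form of Gaussian Poincar\'e stated in \cite{chatterjee2008chaos}, which is already formulated for Lipschitz functions.
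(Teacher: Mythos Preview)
Your argument is correct. The paper does not actually prove this proposition: it simply cites \cite{chatterjee2008chaos}, noting that the result appears in the introduction there along with a brief history. Your route via the Gaussian Poincar\'e inequality is one of the standard proofs of this bound; the representation of a centered Gaussian vector as a linear image of i.i.d.\ standard normals always exists, the max of finitely many linear functionals is globally $\sigma$-Lipschitz with a.e.\ gradient of norm at most $\sigma$, and the Poincar\'e inequality in its Lipschitz form (which you correctly justify by mollification) gives the conclusion. The symmetry reduction $\Var(\max_i X_i)=\Var(\min_i X_i)$ is also correct since centered Gaussian vectors satisfy $(X_i)\stackrel{d}{=}(-X_i)$.
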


The following is an easy extension:

\begin{proposition}
\label{prop::countablemin}
If $X_1, X_2, \ldots$ is a countable sequence of jointly Gaussian variables, each with variance at most $\sigma^2$, such that $\min X_i$ and $\max X_i$ are a.s.\ finite, then
\[ \Var \bigl( \min_i X_i \bigr) = \Var \bigl( \max_i X_i \bigr) \leq \sigma^2.\]
\end{proposition}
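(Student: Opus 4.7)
The plan is to reduce to the finite case via monotone approximation and then carefully pass the variance bound to the limit, with the main subtlety being an a priori $L^{1}$ (or $L^{2}$) control on the approximating sequence. By replacing $X_{i}$ with $-X_{i}$ it suffices to treat $\sup_{i} X_{i}$, so set $N_{n} = \max_{i\le n} X_{i}$ and $N = \sup_{i} X_{i}$; by hypothesis $N<\infty$ a.s.\ and $N_{n}\uparrow N$ monotonically. Proposition~\ref{prop::finitemin} gives $\Var(N_{n})\le \sigma^{2}$ for every $n$, and we note that $N_{n}\ge X_{1}$ forces $\E[N_{n}^{-}]\le \E[X_{1}^{-}]<\infty$, so $\E[N_{n}]$ is well-defined and bounded below by $\E[X_{1}]=0$.

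The key step is to obtain a uniform upper bound on $\E[N_{n}]$. For this I will use Chebyshev's inequality together with $\Var(N_{n})\le \sigma^{2}$: if $\E[N_{n}]>t+2\sigma$ then $\p(N_{n}>t)\ge \p(N_{n}-\E[N_{n}]>-2\sigma)\ge 3/4$. Since $N$ is a.s.\ finite I can choose $t<\infty$ with $\p(N>t)<1/4$; then $\p(N_{n}>t)\le \p(N>t)<1/4$ for every $n$ (because $N_{n}\le N$), forcing $\E[N_{n}]\le t+2\sigma$ uniformly in $n$. Combined with $\Var(N_{n})\le \sigma^{2}$ this gives $\sup_{n}\E[N_{n}^{2}]<\infty$, so $\{N_{n}\}$ is bounded in $L^{2}$ and in particular uniformly integrable.

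Having this, I conclude as follows. Monotone convergence applied to $N_{n}^{+}\uparrow N^{+}$ and dominated convergence applied to $N_{n}^{-}\le X_{1}^{-}$ (or just the uniform integrability together with $N_{n}\to N$ a.s.) yield $\E[N_{n}]\to \E[N]$ with $\E[N]$ finite. Fatou's lemma applied to $N_{n}^{2}\to N^{2}$ gives $\E[N^{2}]\le \liminf_{n}\E[N_{n}^{2}]$, and therefore
\[
\Var(N) \;=\; \E[N^{2}]-\E[N]^{2} \;\le\; \liminf_{n}\E[N_{n}^{2}] - \lim_{n}\E[N_{n}]^{2} \;=\; \liminf_{n}\Var(N_{n}) \;\le\; \sigma^{2}.
\]
Applying the same argument with the roles of $X_{i}$ and $-X_{i}$ reversed handles $\min_{i} X_{i}$ and completes the proof.

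The main obstacle is the uniform $L^{1}$ (equivalently, $L^{2}$) bound on $N_{n}$: a priori, the assumption $N<\infty$ a.s.\ does not even guarantee $\E[N]<\infty$, so one cannot simply invoke monotone convergence to pass the variance bound to the limit. The Chebyshev argument above is what bridges the gap, using the already-established finite variance bound $\Var(N_{n})\le \sigma^{2}$ together with the tightness of the sequence $\{N_{n}\}$ (inherited from its a.s.\ finite monotone limit) to convert the qualitative a.s.\ finiteness of $N$ into a quantitative uniform bound on the means.
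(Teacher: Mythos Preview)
Your proof is correct and follows essentially the same monotone-approximation strategy as the paper, which simply invokes Proposition~\ref{prop::finitemin} for the finite partial maxima and passes to the limit. Your version is in fact more careful: the paper asserts without detail that if the limiting variance exceeded $\sigma^{2}$ then the finite-$k$ variances eventually would too, whereas you explicitly supply the missing integrability step (the Chebyshev argument converting a.s.\ finiteness of $N$ into a uniform bound on $\E[N_{n}]$, hence $L^{2}$-boundedness) that justifies this passage to the limit.
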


\begin{proof}
Proposition~\ref{prop::countablemin} can be obtained from Proposition~\ref{prop::finitemin} by considering the monotone sequence of random variables $\min(X_1, X_2, \ldots, X_k)$, for $k = 1, 2, 3, \ldots$, and observing that if the variance of the limit exceeds $\sigma^2$, then for all $k$ sufficiently large, the variance of $\min(X_1, X_2, \ldots, X_k)$ must exceed $\sigma^2$, which would contradict Proposition~\ref{prop::finitemin}.
\end{proof}
\vspace{.1in}

Now let $\Phi$ be the set of ``distorted bump functions'' of the form $|g'|^{2} \phi \circ g$ for which $g^{-1}$ is conformal outside of $K$ for some $K \subseteq B_{1/10}(0)$, and is normalized to look like the identity near infinity, and fixes the origin.  Then $\Phi$ is a sequentially compact subset of the space of test functions w.r.t.\ the topology of uniform convergence of all derivatives on compact sets.  Since this space is a Fr\'echet space (thus metrizable) $\Phi$ is also compact.  In particular, for any distribution $h$, both of the quantities
\[ M_1(h) = \inf_{\wt \phi \in \Phi } (h, \phi - \wt \phi) \quad\text{and}\quad  M_2(h) = \sup_{\wt \phi \in \Phi } (h, \phi - \wt \phi)\]
are finite.  By continuity, they are equal to the infimum and supremum taken over a countable dense subset of $\phi \in \Phi$.  Since the variance of $(h, \phi - \wt \phi)$ is a continuous function of $\wt \phi$, it also has a maximum over $\Phi$, and it follows from Proposition~\ref{prop::countablemin} that the variances of $M_1(h)$ and $M_2(h)$ are finite.

Now, for $j \in \{1,2\}$ we can write $R_t^j = M_j (z \to h(e^t z) + Q t)$.  Roughly speaking, $R_t^1$ and $R_t^2$ describe minimal and maximal averages on the distorted annular bump functions with inner radius about $e^t$ (with the $Qt$ term to account for the coordinate change).  Now we observing the following:

\begin{proposition}
\label{prop::a1a2transience}
It is a.s.\ the case that the processes $R_t^1$ and $R_t^2$ both tend to~$\infty$ as $t \to \infty$ and to $-\infty$ as $t \to -\infty$.
\end{proposition}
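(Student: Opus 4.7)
The plan is to reduce the proposition to the explicit linear-drift structure of the radial process $A_s$ built into Definition~\ref{def::quantum_cone} for the $\gamma$-quantum cone. By that definition, $A_s$ is a Brownian motion with drift $\gamma$ for $s>0$ and a Brownian motion with drift $\gamma$ conditioned by a positivity event for $s<0$; Proposition~\ref{prop::bessel_exponential_bm} identifies the latter conditioned process with (a reparameterization of) the logarithm of a Bessel process of dimension $\delta = 2 + 2(Q-\gamma)/\gamma > 2$, and standard Bessel-process asymptotics give $A_s/s \to \gamma$ a.s.\ as $|s|\to\infty$ in either direction.

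First I would fix a single $\wt\phi_0 \in \Phi$ and study the Gaussian process $Y_t := (h(e^t\cdot)+Qt, \wt\phi_0)$, decomposing $h = h_1 + h_2$ along the orthogonal splitting of Lemma~\ref{lem::c_spaces_orthogonal}. The radial contribution $(h_1(e^t\cdot),\wt\phi_0)$ is a weighted integral of $A_{-t-\log r}$ against $\wt\phi_0$ over $r$ in its annular support, and the Bessel-based LLN above forces it to behave like $-\gamma t + O(\sqrt{|t|})$ a.s.\ as $|t|\to\infty$. The non-radial contribution $(h_2(e^t\cdot),\wt\phi_0)$ is handled by the change of variables $w = e^t z$, which replaces the GFF kernel $-\log|y-z|$ by $-t - \log|u-v|$; since the $-t$ piece only shifts the law by a constant, $(h_2(e^t\cdot),\wt\phi_0)$ is stationary in $t$ with finite variance. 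Adding the deterministic $Qt$ shift then yields $Y_t/t \to Q-\gamma$ a.s.\ as $|t|\to\infty$, and since $Q-\gamma = \tfrac{2}{\gamma}-\tfrac{\gamma}{2} > 0$, $Y_t \to +\infty$ as $t\to+\infty$ and $Y_t \to -\infty$ as $t\to-\infty$.

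The second step is to show that replacing $\wt\phi_0$ by an arbitrary $\wt\phi \in \Phi$ perturbs $Y_t$ by a quantity that is $o(|t|)$ uniformly in $\wt\phi$. Since $\wt\phi - \wt\phi_0$ has total integral zero, the $Qt$ drops out of $(h(e^t\cdot)+Qt,\wt\phi - \wt\phi_0)$, and the same change of variables computation gives
\[
\Var\!\bigl((h(e^t\cdot),\wt\phi-\wt\phi_0)\bigr)
=\iint (-\log|u-v|)(\wt\phi-\wt\phi_0)(u)(\wt\phi-\wt\phi_0)(v)\,du\,dv,
\]
which is independent of $t$ and jointly continuous in $\wt\phi$. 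Compactness of $\Phi$ together with Proposition~\ref{prop::countablemin} then bounds the variances of $\sup_{\Phi}(h(e^t\cdot)+Qt,\wt\phi) - Y_t$ and $Y_t - \inf_{\Phi}(h(e^t\cdot)+Qt,\wt\phi)$ by a single constant $\sigma^2 < \infty$, independent of $t$. A Borel--Cantelli argument along integer times, combined with an elementary equicontinuity estimate on $t\mapsto(h(e^t\cdot),\wt\phi)$ derived from the same variance kernel, upgrades this to an a.s.\ $O(\sqrt{\log|t|})$ bound on the oscillation. Combining with the first step, $R_t^j = Y_t + o(|t|)$ a.s.\ for both $j=1,2$, so $R_t^j/t \to Q-\gamma$ a.s., which implies the claimed transience.

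The main obstacle I anticipate is making the $s<0$ asymptotics of $A_s$ fully rigorous, since the conditioning in Definition~\ref{def::quantum_cone} is on a zero-probability event. I would handle this by importing Proposition~\ref{prop::bessel_exponential_bm} directly: the conditioned process $\wh B_{-s}+(Q-\gamma)(-s)$ for $-s>0$ is the log of a transient Bessel process (time-changed), whose drift is read off from the SDE, and the resulting LLN $A_s/s\to\gamma$ is inherited along with the Brownian fluctuations of size $O(\sqrt{|s|})$. The rest of the argument is routine Gaussian analysis.
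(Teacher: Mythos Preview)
Your approach is correct but takes a somewhat different route from the paper. The paper argues via an ergodic theorem: from the compactness argument just before the proposition it knows that the minimum and maximum of $R_t^j$ over each unit interval $[n,n+1]$ are a.s.\ finite with bounded variance, so the differences between consecutive such extrema form a stationary sequence with finite mean and variance, and the ergodic theorem gives the linear growth. You instead anchor to a single $\wt\phi_0$, split $h$ along the radial/non-radial decomposition of Lemma~\ref{lem::c_spaces_orthogonal}, read off the slope $Q-\gamma$ directly from the law of $A_s$, and control the oscillation over $\Phi$ separately via Proposition~\ref{prop::countablemin} and Borel--Cantelli. Your version has the advantage of actually naming the drift $Q-\gamma$, which the paper's sketch never does; the paper's version is shorter because it bypasses the explicit decomposition.

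One point to tighten: your displayed variance identity for $(h(e^t\cdot),\wt\phi-\wt\phi_0)$ is the whole-plane GFF formula, but $h$ here is the quantum cone, whose radial projection has been replaced by the process $A_s$ (which is non-Gaussian on the $s<0$ side because of the conditioning). The formula is valid as written for the $h_2$ contribution; for the $h_1$ contribution you should argue directly that $\int A_{-t+s}\bar\psi(s)\,ds$ has $t$-bounded variance because $\int\bar\psi=0$ kills the linear-in-$t$ piece of $\Cov(A_{-t+s},A_{-t+s'})$. The same caveat applies to your use of Proposition~\ref{prop::countablemin}, which assumes joint Gaussianity; to make this fully rigorous you would condition on the last-zero time of Remark~\ref{rem::bm_drift_conditioned} (after which $A_s$ is an unconditioned Brownian motion with drift) before taking the supremum. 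The paper's own proof is equally casual on this point, so this is a patch rather than a gap in your strategy.
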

\begin{proof} The continuity argument above also implies that the minimum and maximum values obtained by $R_t^1$ and $R_t^2$, as $t$ varies over a compact range of values, are also a.s.\ finite and have bounded variance.  The different between the minima (or maxima) over $[n,n+1]$ and over $[n,n+2]$ thus has finite variance and finite expectation, and the result is easily obtained by the ergodic theorem.
\end{proof}

\begin{proof}[Proof of Proposition~\ref{prop::st1t2rstablehaspositiveprob}]
Proposition~\ref{prop::a1a2transience} implies that there a.s.\ exists $c \in (0,\infty)$ (random) such that we have $R_t^1, R_t^2 > 0$ for $t > c$ and $R_t^1, R_t^2 < 0$ for $t < -c$.  Pick $c_1 \in (0,\infty)$ such that $\p[ c \geq c_1] \leq \tfrac{1}{2}$.  Fix a square $S \subseteq \D \setminus \{0\}$ with side length $\tfrac{1}{2}$ very close to $0$.  It is easy to see that we can find $K \subseteq S$ closed with non-empty interior such that $a K \subseteq S$ for any $a \in (\tfrac{1}{10} e^{-c_1}, 10 e^{c_1})$ and that $\eta'([t_1,t_2])$ lies in $K$ with positive probability.  On this event, it is not hard to check that if one swaps in a new quantum surface for the one traced by $\eta'([t_1,t_2])$, then resulting appropriately scaled surface will still belong to $S$, and also that its diameter is scaled by some factor which is contained in the interval $[\tfrac{1}{10} e^{-c}, 10 e^c]$.  This, in turn, implies that for some $r > 0$, the diameter of $\eta'([t_1,t_2])$ is greater than~$r$ no matter what piece is swapped in.  This implies the proposition statement.
\end{proof}

\begin{proposition}
\label{prop::stablerestrictionfourthpower}
On the event $\CA(S,t_1,t_2,r)$, we have the analog of~\eqref{eqn::loglimitdiamtofourth} in which the $\CN_j$ are obtained by dividing $[t_1,t_2]$, rather than $[-1,1]$, into size $\delta$ increments.
\end{proposition}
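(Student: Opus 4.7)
The plan is to reduce this to Proposition~\ref{prop::fourthpowerprop} by (i) rewriting the average over sub-intervals as an integral against $\mu_h$ on $S$, (ii) transferring the resulting estimate from the GFF setting of Proposition~\ref{prop::fourthpowerprop} by absolute continuity away from the origin, and (iii) passing from the expectation bound to the almost sure $\liminf$ assertion on $\CA$ via Markov and Borel--Cantelli.

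For step (i), let $N = \lfloor (t_2 - t_1)/\delta \rfloor$ and let $I_1, \ldots, I_N$ denote the sub-intervals of $[t_1, t_2]$ of length $\delta$. For $z \in \eta'([t_1, t_2])$, let $b(z)$ be the unique time with $\eta'(b(z)) = z$ and let $\epsilon_z$ denote the radius of the smallest ball centered at $z$ containing $\eta'([b(z) - \delta, b(z) + \delta])$. If $z \in \eta'(I_j)$ then $I_j \subseteq [b(z) - \delta, b(z) + \delta]$, so $\diam(\eta'(I_j)) \leq 2 \epsilon_z$. Because $\eta'$ is parameterized by $\mu_h$, Fubini yields
\[
\frac{1}{N} \sum_{j=1}^N \diam(\eta'(I_j))^4 \leq \frac{16}{t_2 - t_1} \int_{\eta'([t_1, t_2])} \epsilon_z^4 \, d\mu_h(z),
\]
which on $\CA(S, t_1, t_2, r)$ is bounded by $16(t_2 - t_1)^{-1} \int_S \epsilon_z^4 \, d\mu_h(z)$. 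It therefore suffices to show that on $\CA$, the quantity $M_\delta := \E\bigl[\int_S \epsilon_z^4 \, d\mu_h(z) \giv \CF\bigr]$ satisfies $\liminf_{k \to \infty} \log M_{2^{-k}} / \log 2^{-k} \geq \Delta$.

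For step (ii), I would introduce a GFF $\wh h$ on $\C$ with $\wh h_1(0) = 0$ and an independent space-filling $\SLE_{\kappa'}$ $\wh \eta'$ parameterized by $\mu_{\wh h}$. Proposition~\ref{prop::fourthpowerprop}, combined with the finiteness of $\E^{\mathrm{GFF}}[\mu_{\wh h}(S)]$, gives $\E^{\mathrm{GFF}}[\int_S \epsilon_z^4 \, d\mu_{\wh h}(z)] = \delta^{\Delta - o(1)}$. Since $S$ is compactly contained in $\D \setminus \{0\}$, the smooth canonical description $h$ of the $\gamma$-quantum cone, restricted to an open neighborhood $S' \supset \ol S$ with $\ol{S'} \subseteq \D \setminus \{0\}$, can be written as a whole-plane GFF (matching $\wh h$ in law modulo a constant) plus the smooth bounded function $-\gamma \log|\cdot|$ on $S'$ plus a random additive constant with polynomial moments coming from the smooth centering. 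By Cameron--Martin, the law of $h|_{S'}$ is absolutely continuous with respect to that of $\wh h|_{S'}$, with Radon--Nikodym derivative in every $L^p$. Since $\mu_h|_S$ and $(\epsilon_z)_{z \in S}$ are local functions of $(h, \eta')$ on $S'$ (on $\CA$ for $\delta$ small, all chunks $\eta'([b(z) - \delta, b(z) + \delta])$ with $z \in S$ lie in $S'$), H\"older's inequality combined with the higher-moment extension of Proposition~\ref{prop::fourthpowerprop} (obtained by running its proof with $x = 2q$ for $q$ slightly larger than $1$) gives $\E[\one_\CA \int_S \epsilon_z^4 \, d\mu_h(z)] = \delta^{\Delta - o(1)}$.

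For step (iii), $\CA$ is $\CF$-measurable (Remark~\ref{rem::stable_measurable}) and has positive probability (Proposition~\ref{prop::st1t2rstablehaspositiveprob}). By Markov, for each $\eta > 0$,
\[
\p[\CA \cap \{M_\delta \geq \delta^{\Delta - \eta}\}] \leq \delta^{-(\Delta - \eta)} \E[\one_\CA M_\delta] = \delta^{\eta - o(1)},
\]
which is summable along $\delta = 2^{-k}$. Borel--Cantelli then gives that almost surely on $\CA$, $M_{2^{-k}} < 2^{-k(\Delta - \eta)}$ for all sufficiently large $k$, so $\liminf_{k \to \infty} \log M_{2^{-k}} / \log 2^{-k} \geq \Delta - \eta$; sending $\eta \to 0$ and combining with step (i) gives the desired analog of~\eqref{eqn::loglimitdiamtofourth}. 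The main technical obstacle will be justifying the absolute continuity cleanly in step (ii), especially in controlling chunks $\eta'([b(z) - \delta, b(z) + \delta])$ for $z \in S$ whose index $b(z)$ is near the boundary of $[t_1, t_2]$, since Proposition~\ref{prop::fourthpowerprop} is phrased for a GFF paired with an independently sampled full-plane $\wh \eta'$ while our $\eta'$ interacts with $\CF$ via the welding. I would handle this by enlarging $S$ to $S'$ and strengthening $\CA$ to also demand that such boundary chunks remain in $S'$; Proposition~\ref{prop::st1t2rstablehaspositiveprob} extends to the strengthened event by the same argument. The higher-moment extension of Proposition~\ref{prop::fourthpowerprop} is a minor but real additional ingredient, which follows by rerunning the KPZ argument of~\cite{ds2011kpz} with $x = 2q$ in place of $x = 2$.
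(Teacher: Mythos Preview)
Your steps (i) and (iii) are essentially correct and match the paper's reasoning; step (iii) in fact makes explicit a Markov--Borel--Cantelli passage that the paper leaves implicit.

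The divergence is in step (ii), and here you are working harder than necessary. The paper's key observation is that on $\CA(S,t_1,t_2,r)$ one has $\eta'([t_1,t_2]) \subseteq S$ and hence $\mu_h(S) \geq \mu_h(\eta'([t_1,t_2])) = t_2 - t_1$ \emph{deterministically}. Since the weighted law $\p^*$ of Proposition~\ref{prop::fourthpowerprop} satisfies $d\p^*/d\p = c_1 \mu_h(S)$, this gives $\one_E \leq \bigl(c_1(t_2-t_1)\bigr)^{-1} (d\p^*/d\p)\one_E$ on the event $E = \CA \cap \{z \in \eta'([t_1,t_2])\}$, and therefore
\[
\E\bigl[\diam(\CN_j)^4 \one_E\bigr] \leq \frac{1}{c_1(t_2-t_1)} \E^*\bigl[\epsilon^4\bigr].
\]
This transfers the bound from Proposition~\ref{prop::fourthpowerprop} directly, with no Cameron--Martin, no H\"older, and no higher-moment extension. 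The only remaining adjustment is a bounded rescaling (by at most a constant times $1/r$) to pass to the outer-conformal-radius normalization, which is harmless since $\diam(\eta'([t_1,t_2])) \geq r$ on $\CA$.

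Your Cameron--Martin route can likely be pushed through, but as written it has loose ends that the simpler argument avoids. First, the smooth canonical description is obtained via a \emph{random rescaling} (not merely an additive constant), so writing $h|_{S'}$ as a GFF plus a deterministic smooth shift plus a random constant is not quite accurate, and the claim that the Radon--Nikodym derivative lies in every $L^p$ needs exponential (not polynomial) moments of whatever random correction survives. Second, you need the higher-moment variant of Proposition~\ref{prop::fourthpowerprop} as an additional ingredient, whereas the paper needs only the proposition as stated. Third, the locality of $\epsilon_z$ and of the event $\CA$ in $(h,\eta')|_{S'}$ is delicate because the $\mu_h$-time parameterization of $\eta'$ depends on $h$ globally; you note this but your proposed fix (strengthening $\CA$) would require rechecking the positive-probability argument. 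None of this is fatal, but it is all avoided by the one-line deterministic lower bound $\mu_h(S) \geq t_2 - t_1$ on $\CA$.
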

\begin{proof}
In this proof, will be considering two different laws:
\begin{enumerate}
\item\label{it:trip_law1} The law on triples $(h,\eta',z)$ where $h$ is a GFF on $\C$ with the additive constant fixed so that $h_1(0) = 0$ (i.e., its average on $\partial \D$ is equal to $0$) weighted by $\mu_h(S)$ and $z$ is picked from $\mu_h$.  We will denote the corresponding probability and expectation by $\p^*$ and $\E^*$.  This is the setting of Proposition~\ref{prop::fourthpowerprop}.
\item\label{it:trip_law2} The law on triples $(h,\eta',z)$ where we have \emph{not} weighted the law of $h$ by $\mu_h(S)$.  We will denote the corresponding probability and expectation by $\p$ and $\E$.
\end{enumerate}

Here, we will use that the smooth canonical description of a $\gamma$-quantum cone restricted to $B(0,1/2)$ looks like a whole-plane GFF plus $-\gamma \log|\cdot|$ inside of $B(0,1/2)$.

Suppose that we have a triple $(h,\eta',z)$ picked from $\p^*$ as in~\ref{it:trip_law1} above.  The bound~\eqref{eqn::fourthpowerkpz} in Proposition~\ref{prop::fourthpowerprop} bounds $\E^*[\epsilon^4]$.  Let $E$ be the event that both $\CA(S,t_1,t_2,r)$ and $z \in \eta'([t_1,t_2])$ hold.  Since $\epsilon^4$ is a.s.\ positive, we also get an upper bound on $\E^*[\epsilon^4 \one_E]$.

On $E$, we have that $\mu_h(\eta'([t_1,t_2])) = t_2 - t_1 \leq \mu_h(S)$ since $\eta'([t_1,t_2]) \subseteq S$.  Consequently, we have that
\[ \frac{d\p^*}{d\p} \one_E = c_1 \mu_h(S) \one_E \geq c_1 (t_2-t_1) \one_E \]
where $c_1 \in (0,\infty)$ is a normalizing constant.  Rearranging, we have that
\begin{equation}
\label{eqn::rn_ubd}
\one_E \leq \left(\frac{1}{c_1(t_2-t_1)}\right) \frac{d\p^*}{d\p} \one_E.
\end{equation}
In particular, when we integrate a non-negative random variable on~$E$ we can replace~$\E$ with~$\E^*$ and only lose a constant factor.

On $E$, let $j$ be such that $\CN_j$ contains $z$.  Note that $\CN_j$ is then uniform among the surfaces parameterized by $\delta$-length intervals of $[t_1,t_2]$.  With $b$ the hitting time of~$z$ by~$\eta'$, note also that $\CN_j \subseteq \eta'([b-\delta,b+\delta])$.  Combining everything, we see that there exists a constant $c_2 > 0$ depending only on $t_1$, $t_2$, and $S$ such that
\begin{align*}
          \E[\diam(\CN_j)^4 \one_E]
&\leq \E[ \diam(\eta'([b-\delta,b+\delta]))^4 \one_E]
 \leq c_2 \E^*[\epsilon^4] \quad\text{(by \eqref{eqn::rn_ubd})}.
\end{align*}
This does not quite complete the proof yet because in~\eqref{eqn::loglimitdiamtofourth}, we have normalized $\eta'$ so that $\eta'([-1,1])$ has outer conformal radius equal to $1$.  In the present context, we note that the scaling factor necessary to normalize $\eta'([t_1,t_2])$ to have outer conformal radius $1$ on $E$ is at most a constant times $1/r$ since $\diam(\eta'([t_1,t_2])) \geq r$ on $E$.  Therefore the result follows from~\eqref{eqn::fourthpowerkpz}.
\end{proof}

\begin{proof}[Proof of Lemma~\ref{lem::kpzlem}]
Fix $t_2 > t_1 > 0$.  Since the pair which consists of the quantum cone and $\eta'$ is invariant under scaling and also invariant under the operation of mapping $\eta'(t)$ to $0$, we may consider the map $\psi$ that sends the pair $(h,\eta')$ to the rescaled and reparameterized configuration obtained by applying the affine map of time that sends the pair of times $(t_1,t_2)$ to $(-1,1)$.  Thus, it follows from Proposition~\ref{prop::stablerestrictionfourthpower} that the bound~\eqref{eqn::loglimitdiamtofourth} holds for the $(h,\eta')$ configuration on the event that its $\psi$ preimage belongs to $\CA(S, t_1, t_2, r)$.  We emphasize that (as mentioned in Remark~\ref{rem::stable_measurable}) this event is $\CF$-measurable.

Fix $p_0 \in (0,1)$.  To finish the proof, it suffices to show that there exists $t_1,t_2,r$ and $S$ with $\p[\CA(S,t_1,t_2,r)] \geq p_0$.  One can see that there has to be such $t_1$, $t_2$, $r$, and $S$ using the following variant of the argument used to prove Proposition~\ref{prop::st1t2rstablehaspositiveprob}.  Using the continuity of $\eta'$ and the argument used to prove Proposition~\ref{prop::st1t2rstablehaspositiveprob} it follows that we can pick $t_1,t_2,r > 0$ and take $S = \{ z \in \C : \epsilon \leq |z| \leq \tfrac{1}{2},\ \arg(z) \in [0,2\pi-\epsilon]\}$ with $\epsilon > 0$ very small such that the following are true:
\begin{enumerate}
\item The probability that $\eta'(t_1) \in S$ and $\dist(\eta'(t_1),\partial S) \geq \epsilon$ is at least $\tfrac{1}{2}+\tfrac{p_0}{2}$.
\item Given that $\eta'(t_1) \in S$ and $\dist(\eta'(t_1), \partial S) \geq \epsilon$, we have that the conditional probability of $\CA(S,t_1,t_2,r)$ is at least $\tfrac{1}{2}+\tfrac{p_0}{2}$.
\end{enumerate}
Therefore $\p[\CA(S,t_1,t_2,r)] \geq p_0$, which proves the claim.
\end{proof}

\section{Weldings of forested wedges and duality}
\label{sec::duality}

This section contains the proofs of our main results about matings of forested wedges and forested lines, namely Theorems~\ref{thm::gluingtwoforestedlines}--\ref{thm::quantum_natural_zip_unzip_rough_statement} (Section~\ref{subsec::bm_sle}) but we begin with some more general discussion about trees of disks and trees of spheres (Section~\ref{subsec::duality}).

\subsection{Bottlenecks, baby universes and Liouville duality}
\label{subsec::duality}

Here we remark that the idea of producing ``forested wedges'' and ``forested lines'' as trees of quantum disks is also not without precedent.  In the physics literature, such objects have been heuristically described in various ways and are known as ``Liouville quantum gravity with parameter $\gamma'>2$'' where $\gamma' = 4/\gamma$.  (This is related to the fact that they encode $\SLE_{\kappa'}$ paths where $\kappa' = 16/\kappa$.)  They are expected to be scaling limits of random simply connected maps with a critical number of domains cut off
by small bottlenecks, as first constructed with discrete matrix models in  \cite{MR1057913,MR1186338,MR1214333,MR1293688,MR1279108}.  (See also \cite{MR1175135}.)

The corresponding Liouville measure was first heuristically considered for $\gamma'>2$ in  \cite{kleb1995touching_surfaces,kleb1995non_perturb,kleb1996wormholes}, in the so-called 
``other gravitational dressing'' of the Liouville potential,  
 or ``dual branch of gravity,'' leading in particular to a {\it dual} version of the KPZ relation, extended to $\gamma'>2$.  However, the limit of the regularized Liouville bulk measure~\eqref{e.mudef} (and of the boundary one~\eqref{e.nudef}) actually {\it vanishes} for $\gamma'\geq 2$:  $\lim_{\eps \to 0} \eps^{\gamma'^{\,2}/2} e^{\gamma' h_\eps(z)}dz=0$. This is a quite general phenomenon, first observed by Kahane  in the eighties \cite{MR829798} in the case of the so-called {\it Gaussian multiplicative chaos}, inspired by  Mandelbrot's cascade model \cite{0227.76081}.

For the self-dual critical value, $\gamma=\gamma'=2$, the Liouville measure was recently constructed and shown to be non-atomic using the so-called derivative martingale \cite{DRSV1,MR3215583}.  For $\gamma' >2$, however, a precise mathematical description of Liouville duality requires additional probabilistic machinery, which we now describe. 
 
 We first recall that a forested wedge may be obtained by beginning with the tree of circles described by Figure~\ref{levytreegluing} (if one cuts out the gray shaded regions) and then gluing the boundary of an independent quantum disk (with the given boundary length) onto each of these circles.  That procedure produces something like a tree of quantum disks.

We next remark that there is a variant of this procedure that produces a ``tree'' of spheres (instead of a tree of disks).  This tree is constructed as a quotient of the tree of circles described by Figure~\ref{levytreegluing} (ignoring the gray fillings).  To explain how this works, let us note that we have already observed that one can put an equivalence relation on a circle in such a way that the quotient of the circle w.r.t.\ this equivalence relation is a topological sphere --- and the map from the original circle to this quotient space is a space-filling path on that sphere.  Precisely, given a parameterized circle of length $T$, one can map it into the peanosphere described by an excursion of a correlated Brownian motion into the interior of a quadrant (starting and ending at the corner of the quadrant), as in Figure~\ref{fig::lamination}, and this sphere may be understood as the quotient of the length $T$ circle described by the equivalences induced by the pair of Brownian motions.  If one applies a similar procedure to {\em each} of the circles in Figure~\ref{levytreegluing} (ignoring the gray fillings) then every one of the circles becomes a sphere, and the quotient of the entire tree of circles is a``tree of quantum spheres''  rather than a``tree of quantum disks.''  The individual branches of (spheres rooted at a given sphere) are sometimes called (in the physics literature) ``baby universes''
of \textit{dual} parameter $\gamma=4/\gamma',\,\gamma <2< \gamma'$.

The paragraphs above (together with related discussion in introduction) comprise the first mathematically complete description of these trees of disks and spheres.  However, some closely related objects have been addressed mathematically (by two of the current authors   \cite{ds2009qg_prl,2008ExactMethodsBD}, among others, e.g., \cite{bjrv2013super_critical}), as we will now explain.  The ``tree of quantum spheres'' discussed above corresponds to what is described in the physics literature as a ``pinched surface.''  It is often natural to single out one of the spheres in this tree and call it ``principal bubble.'' There are a countable number of ``pinch points'' on the principle bubble, i.e., vertices whose removal would disconnect the principal bubble from a ``branch'' of the tree of spheres. One obtains an atomic measure on the principle bubble by assigning to each of these pinch points an atom whose mass is the length of the original L\'evy excursion corresponding to that branch.  (This is one measure of the ``size'' of the tree branch.) If the principal bubble is conformally parameterized by the Riemann sphere $\C \cup \{ \infty \}$ then this procedure induces an atomic measure on $\C$.

Under this perspective, one way to rigorously construct  the singular quantum measures $\mu_{\gamma'}$ with $\gamma' > 2$ (without developing the entire trees of disks or spheres, as we do here) has been presented in  \cite{ds2009qg_prl,2008ExactMethodsBD}.  First, we note that the principal bubble comes endowed with a measure $\mu_{\gamma}$, $\gamma=4/\gamma' <2$, and if the principal bubble is parameterized by $\C \cup \{\infty \}$ we may interpret $\mu_\gamma$ as a measure on $\C$.  Then, given $\mu_\gamma$, we produce an atomic measure $\mu_{\gamma'}$ on $\C$ by choosing the pinch points $\{z_i : i \in \N\}$ at which finite amounts of  quantum area $\mu_{\gamma'}(z_i)$ are to be \textit{localized}. The pair $\{(\mu_{\gamma'}(z_i), z_i) : i \in \N\}$ is distributed as a Poisson point process (\ppp) $\mathcal N_{\gamma'}(du,dz)$ on $\R_+ \times \C$, with intensity measure $\vartheta_\theta \otimes \mu_{\gamma}$, where $\vartheta_\theta(du) := u^{-1-\theta}du$ and $du$ denotes Lebesgue measure on $\R_+$, with $\theta:=4/\gamma'^{\,2} \in (0,1)$. The quantum measure for $\gamma'>2$ is then {\it purely atomic}, 
\begin{equation*}\mu_{\gamma'}(dz):=\int_0^{\infty}u\, \mathcal N_{\gamma'} (du,dz),\,\,\, \gamma' >2.
\end{equation*}
A slightly different, but equivalent construction was proposed  in the context of Gaussian multiplicative chaos  \cite{bjrv2013super_critical}.

Section~\ref{subsec::bm_sle} will add more detail to the explanation, provided in Section~\ref{subsec::matingsandloops}, of the fact that cutting an appropriate quantum wedge with a counterflow line produces a L\'evy tree of quantum disks.  We will see that this tree can be constructed from a totally asymmetric $\tfrac{\kappa'}{4}$-stable process.  (The exponent $\tfrac{\kappa'}{4}$ was mentioned but not derived in Section~\ref{subsec::matingsandloops}.)  This will imply that the lengths of the left and right boundaries of $\eta'$ evolve as independent totally asymmetric $\tfrac{\kappa'}{4}$-stable processes.  
We will use this machinery to prove several of our main results about gluings of forested wedges: namely, Theorems~\ref{thm::gluingtwoforestedlines}--\ref{thm::quantum_natural_zip_unzip_rough_statement}.

As mentioned above, forested wedges (along with the closely related trees of quantum spheres constructed there) correspond to the ``dual quantum gravity'' surfaces that have been constructed non-rigorously in the physics literature.  Section~\ref{subsec::kpz_duality} will explain how these objects are related to certain random atomic measures that have been mathematically constructed elsewhere (see \cite{bjrv2013super_critical}, as well as related work in \cite{ds2009qg_prl,2008ExactMethodsBD}).

\subsection{Brownian motion and $\SLE$}
\label{subsec::bm_sle}

We are now going to recall how to construct a L\'evy tree to encode the genealogy structure of the quantum disks which are cut out by an $\SLE_{\kappa'}$ process for $\kappa' \in (4,8)$.  In order to do so, we first need to derive the form of the process which describes the time-evolution of the left and right boundary lengths of such an object when drawn on top of a certain type of quantum wedge.

\begin{theorem}
\label{thm::sle_exploration_boundary_length}
Fix $\kappa' \in (4,8)$.  Suppose that $\eta'$ is an $\SLE_{\kappa'}$ process drawn on top of an independent quantum wedge~$\CW$ of weight $\tfrac{3\gamma^2}{2} - 2$.  Let $\qnt_u$ (resp.\ $(f_t)$) be the quantum natural time (resp.\ centered Loewner flow) for $\eta'$.  Let $A_t$ (resp.\ $B_t$) be the leftmost (resp.\ rightmost) point on $\R$ hit by $\eta'|_{[0,t]}$ and let $X_u$ (resp.\ $Y_u$) be the difference of the $\gamma$-LQG length of the segment of the outer boundary of $\eta'([0,\qnt_u])$ which is to the left (resp.\ right) of $\eta'(\qnt_u)$ minus the $\gamma$-LQG length of $(A_{\qnt_u},0]$ (resp.\ $[0,B_{\qnt_u})$).  Then $(X_u,Y_u)$ evolves as a pair of independent totally asymmetric $\tfrac{\kappa'}{4}$-stable processes.
\end{theorem}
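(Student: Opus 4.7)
The plan is to identify $X_u$ and $Y_u$ as the intrinsic stable L\'evy processes that encode the two forested wedges produced when $\eta'$ cuts the ambient wedge, using the main gluing theorem already established in the paper. First I would apply Theorem~\ref{thm::sle_kp_on_wedge} with $\rho_1 = \rho_2 = 0$, for which $W_1 = W_2 = \gamma^2 - 2$ and $W = W_1 + W_2 + 2 - \tfrac{\gamma^2}{2} = \tfrac{3\gamma^2}{2} - 2$ matches exactly the given wedge. Since $\gamma \in (\sqrt{2},2)$ (equivalently $\kappa' \in (4,8)$), the theorem supplies us with independent forested wedges $\wt{\CW}_1$ and $\wt{\CW}_2$ of weight $\gamma^2 - 2$ consisting of those components of $\h \setminus \eta'$ whose boundaries are drawn by the left and right sides of $\eta'$ respectively. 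The independence of $\wt{\CW}_1$ and $\wt{\CW}_2$ will give the asserted independence of $X$ and $Y$; I focus on the left side from here on.

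Next I would interpret $X_u$ as the boundary-length recording process of the forested wedge $\wt{\CW}_1$. Writing $\psi_u = f_{\qnt_u}^{-1} \colon \h \to \h \setminus \eta'([0,\qnt_u])$, the image of $\R_-$ under $\psi_u$ is the left outer boundary of $\eta'([0,\qnt_u])$ concatenated with $(-\infty, A_{\qnt_u}]$, and quantum length is preserved by $\psi_u$ on each of these arcs. A direct bookkeeping of the $\gamma$-LQG length of $\R_-$ before and after the zipping by QNT $u$ shows that $X_u$ is exactly the ``net change in the $\gamma$-LQG length of $\R_-$'' under this zipping; in particular, this is the intrinsic forested-line encoding of $\wt{\CW}_1$ in the sense of Figure~\ref{fig::fig11b}, Corollary~\ref{cor::stablelength}, and the construction of forested wedges in Section~\ref{subsec::matingsandloops}.

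Having made this identification, I would establish that $X$ is a L\'evy process and pin down its law. Stationarity of increments is immediate from Theorem~\ref{thm::sle_kp_quantum_local_typical}, which says that the joint law of $(h,\eta')$ is invariant under zipping $\eta'$ forward by any fixed $u_0$ units of quantum natural time and applying the coordinate change~\eqref{eqn::Qmap}: this takes $u \mapsto X_u$ to $u \mapsto X_{u+u_0} - X_{u_0}$. Independence of increments follows by combining this with the Markov property of the QNT evolution (the forested wedge beyond the current tip is conditionally independent of the portion already explored). To read off the jumps, the jump of $X$ at a QNT time corresponds to $\eta'$ pinching off a bubble on its left, with jump size equal to the $\gamma$-LQG boundary length of that bubble; Theorem~\ref{thm::kappa_prime_bubbles} together with Proposition~\ref{prop::unit_area_bl_construction} (or Proposition~\ref{prop::actual_quantum_area_general}) identifies the bubbles as a \ppp\ with intensity proportional to $du \otimes t^{-\kappa'/4-1}\,dt$, which is exactly the L\'evy measure of a totally asymmetric $\kappa'/4$-stable process. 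Since $\kappa'/4 \in (1,2)$, the scaling invariance of the wedge under multiplying quantum area by a constant (Proposition~\ref{prop::quantum_wedge_properties}) combined with the quadratic area-to-length scaling forces any admissible Brownian component to vanish, and we conclude $X$ is totally asymmetric $\kappa'/4$-stable. The argument for $Y$ is identical, and independence has already been arranged in step one.

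Finally, the alternative formulation with $h = \wt{h} + \tfrac{\gamma^2 - 8}{2\gamma}\log|\cdot|$ satisfies exactly the same QNT invariance by Theorem~\ref{thm::sle_kp_quantum_local_typical}; all of the above ingredients (invariance under zipping, Markov property, Poissonian structure of bubbles) carry through verbatim, or alternatively can be deduced from the wedge version by the limiting argument in part (ii) of Proposition~\ref{prop::quantum_wedge_properties}. The main obstacle I expect is the careful execution of step two: rigorously matching $X_u$ to the intrinsic encoding of $\wt{\CW}_1$ requires tracking, at each time a bubble is pinched off, how much of the outer left boundary of $\eta'([0,\qnt_u])$ is lost and how much of the ``hit'' segment $(A_{\qnt_u}, 0]$ simultaneously changes, and verifying that the net effect is precisely a pure jump of size equal to the bubble's quantum boundary length. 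The ``cancellation'' that removes the continuous variation of each of the two terms in the definition of $X_u$ is what ultimately makes $X_u$ a L\'evy process in QNT, and making this cancellation precise --- together with ruling out any residual Brownian component via scaling --- is the technical heart of the argument.
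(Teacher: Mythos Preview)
Your plan is essentially the paper's own route: identify $X_u$ and $Y_u$ with the stable-process encodings of the two independent forested wedges $\wt\CW_1,\wt\CW_2$ produced by Theorem~\ref{thm::sle_kp_on_wedge}, and then read off that each is a totally asymmetric $\tfrac{\kappa'}{4}$-stable process (the paper's proof is literally ``this is Corollary~\ref{cor::stablelength}; rescale for the free-boundary GFF case'').  The stationarity and independence inputs you cite (Theorem~\ref{thm::sle_kp_quantum_local_typical}, Theorem~\ref{thm::sle_kp_on_wedge}) are the right ones.

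The gap is in your direct computation of the L\'evy measure.  Theorem~\ref{thm::kappa_prime_bubbles} and Proposition~\ref{prop::unit_area_bl_construction} describe only the components of $\h\setminus\eta'$ whose boundary lies \emph{entirely} in $\eta'$; these are the forest disks of $\wt\CW_1$.  But $X_u$ also jumps when $\eta'$ hits $\R_-$ and pinches off a bead of the weight-$(\gamma^2-2)$ spine of $\wt\CW_1$ --- in the picture of the third row of Figure~\ref{fig::fig11b}, every disk (forest disk \emph{and} wedge bead) is a jump.  Those boundary-touching bubbles are not in the \ppp\ of Theorem~\ref{thm::kappa_prime_bubbles}, so you have only identified a sub-\ppp\ of the jump process.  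The paper closes this differently: it first pins down the $\tfrac{\kappa'}{4}$ index for the forested \emph{line} via the Brownian cone-time analysis of Proposition~\ref{prop::bm_pinch_times} (this is the actual content behind Proposition~\ref{prop::iidlevy}), where there are no boundary-touching bubbles, and then the ``zoom in at a typical QNT time'' argument surrounding Corollary~\ref{cor::stablelength} transports the index to the forested-wedge encoding.  Your scaling argument can be upgraded to do the same job without Brownian motion: once $X_u$ is known to be a L\'evy process, invariance of the wedge law under adding a constant (Proposition~\ref{prop::quantum_wedge_properties}) forces $X$ to be self-similar with index $4/\kappa'$, hence strictly $\tfrac{\kappa'}{4}$-stable; the partial L\'evy measure from Proposition~\ref{prop::unit_area_bl_construction} then merely confirms rather than supplies the index.
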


Before we start to give the proof of Theorem~\ref{thm::sle_exploration_boundary_length}, we first give the following corollary which gives the time-evolution of the boundary length of the unbounded component when one explores a weight $\tfrac{\gamma^2}{2}$ quantum cone with a whole-plane $\SLE_{\kappa'}$ process, parameterized by quantum natural time.

\begin{corollary}
\label{cor::radial_boundary_length}
Fix $\kappa' \in (4,8)$ and suppose that $\CC = (\C,h,0,\infty)$ is a quantum cone of weight~$\tfrac{\gamma^2}{2}$.  Let~$\eta'$ be a whole-plane $\SLE_{\kappa'}$ process from~$0$ to~$\infty$ which is independent of~$h$ and let~$\qnt_u$ be the quantum natural time for~$\eta'$.  Then the $\gamma$-LQG boundary length of the boundary of the unbounded component of $\C\setminus \eta'([0,\qnt_u])$ evolves in $u$ as a totally asymmetric $\tfrac{\kappa'}{4}$-stable process conditioned to be non-negative.
\end{corollary}
\begin{proof}
Theorem~\ref{thm::sle_exploration_boundary_length} implies that if we perform a chordal $\SLE_{\kappa'}$ exploration on top of a weight $\tfrac{3\gamma^2}{2}-2$ quantum wedge with the quantum natural time parameterization then the change in the left and right boundary lengths evolve as independent totally asymmetric $\tfrac{\kappa'}{4}$-stable processes.  By taking the sum, this implies that the change in the total boundary length evolves as a totally asymmetric $\tfrac{\kappa'}{4}$-stable process.  This, in turn, implies that if we perform reverse rather than forward $\SLE_{\kappa'}$ with the quantum natural time parameterization, then the change in the total boundary length evolves as a totally asymmetric $\tfrac{\kappa'}{4}$-stable process (but now with only positive rather than only negative jumps).  In the setting of a weight $\tfrac{3\gamma^2}{2}-2$ quantum wedge, we know that the evolution of the boundary length is determined by the bubbles cut out by the $\SLE_{\kappa'}$ (since the bubbles determine the jumps and the jumps determine the boundary length process since it is a L\'evy process without a Gaussian part).  By the local absolute continuity of the behavior of a whole-plane $\SLE_{\kappa'}$ on top of an independent weight $\tfrac{\gamma^2}{2}$ quantum cone with respect to that of a chordal $\SLE_{\kappa'}$ on top of a weight $\tfrac{3\gamma^2}{2}-2$ quantum wedge, the change in the boundary length of the former is determined from the bubbles in the same manner as the latter.  By Theorem~\ref{thm::sle_kp_quantum_local_typical}, the structure of the bubbles cut out by a whole-plane $\SLE_{\kappa'}$ in the setting described in the statement of the corollary is the same as the structure of the bubbles cut out by a chordal $\SLE_{\kappa'}$.  Therefore, in the setting of the last part of Theorem~\ref{thm::kappa_prime_bubbles} the length of $\partial \D$ evolves as a totally asymmetric $\tfrac{\kappa'}{4}$-stable process when one runs a reverse $\SLE_{\kappa'}$ radial Loewner flow parameterized by a quantum natural time.  The result for whole-plane $\SLE_{\kappa'}$ on a weight $\tfrac{\gamma^2}{2}$ quantum cone follows by combining this with the time-reversal result from \cite[Theorem~18, Chapter~7]{bertoin96levy}. 
\end{proof}

As mentioned in Section~\ref{subsubsec::discrete_intuition}, Theorem~\ref{thm::sle_exploration_boundary_length} and Corollary~\ref{cor::radial_boundary_length} are consistent with other conjectures and results in the literature \cite{angel2003growth,krikun2005uipq,legall2014icm} in the context of random planar maps and the Brownian plane.

We will derive Theorem~\ref{thm::sle_exploration_boundary_length} from the results of Section~\ref{sec::brownian_boundary_length} and a general result about planar Brownian motion (Proposition~\ref{prop::bm_pinch_times} below).

Fix a value of $\theta \in [\tfrac{\pi}{2},\pi)$ and let  $p = -\cos(\theta)/(1-\cos(\theta)) \in [0,\tfrac{1}{2})$ (recall~\eqref{eqn::p_in_terms_of_theta}).  Suppose that $Z =(L,R)$ is a planar Brownian motion with $\var(L_1) = \var(R_1) = \tfrac{1}{2}-\tfrac{p}{2}$ and $\cov(L_1,R_1) = \tfrac{p}{2}$.  We say that a time $s$ is an ancestor of a time $t$ if for all $r \in (s,t]$ we have that $L_r > L_s$ and $R_r > R_s$.  If a time $t$ has an ancestor $s$, then we will also refer to $t$ as being {\bf pinched}.  Note that if a time $t$ is pinched, then the ancestor time $s$ is a $\tfrac{\pi}{2}$-cone time for $(L,R)$ as defined in Section~\ref{sec::brownian_boundary_length} just before Lemma~\ref{lem::brownian_covariance}.  If $t$ is not pinched, then we say that $t$ is {\bf ancestor free}.  Note that the set of times which have an ancestor (resp.\ are ancestor free) is open (resp.\ closed) in $[0,\infty)$.  In particular, we can express the times with an ancestor as a countable disjoint union of open intervals $I_j$; we will refer to such an interval as a pinched interval.

\begin{proposition}
\label{prop::bm_pinch_times}
Suppose that $Z$ is as described just above.  There exists a non-decreasing RCLL process $\qlt$ which is adapted to the filtration generated by $Z$ and is a.s.\ constant on the pinched intervals such that the following is true.  Let $T_u = \inf\{t \geq 0 : \qlt_t > u\}$ be the right-continuous inverse of $\qlt$.  Then $L_{T_u}$ and $R_{T_u}$ are independent, totally asymmetric $\tfrac{\pi}{\theta}$-stable processes.
\end{proposition}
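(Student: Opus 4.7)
My plan is to exploit the regenerative and scaling structure of the ancestor-free set $\mathcal{A} := \{t \geq 0 : t \text{ has no ancestor}\}$, and then to identify the stable index via the classical cone exit-time asymptotics for planar Brownian motion. The existence of the local time $\ell$ and its inverse will come essentially for free from self-similarity; the substantive content will be pinning down that the subordinator index is $\alpha = \pi/(2\theta)$ (so that the claimed stable index is $2\alpha = \pi/\theta$).

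First I would verify that $\mathcal{A}$ is a regenerative set in the sense of Maisonneuve. The key geometric observation is that if $s \in \mathcal{A}$, then no $t < s$ can be an ancestor of any $u > s$: otherwise $L_r > L_t$ and $R_r > R_t$ would hold for all $r \in (t,u]$ and in particular for all $r \in (t,s]$, forcing $t \prec s$ and contradicting $s \in \mathcal{A}$. Hence, conditional on $s \in \mathcal{A}$, membership of $s+r$ in $\mathcal{A}$ is determined by the shifted process $(Z_{s+u}-Z_s)_{u \geq 0}$, which is an independent fresh copy of $Z$ by the strong Markov property. Combined with Brownian scale invariance (under which $\mathcal{A}$ maps to $c^{-1}\mathcal{A}$), the set $\mathcal{A}$ is a self-similar regenerative set, hence the closed range of an $\alpha$-stable subordinator $T_u$ for some $\alpha \in (0,1]$. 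I would take $\ell_t$ to be the corresponding local time; it is automatically non-decreasing, RCLL, adapted (since $\mathcal{A} \cap [0,t]$ is a functional of $Z|_{[0,t]}$) and constant on pinched intervals.

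Next I would analyze the jump structure of $\xi_u := (L_{T_u},R_{T_u})$. By regenerativity plus the strong Markov property at $T_u$, the process $\xi$ is a two-dimensional L\'evy process. A jump of $T$ at time $u$ corresponds to a pinched interval $(T_{u-},T_u)$ during which $Z$ traces an excursion into the closed quadrant $Z_{T_{u-}}+[0,\infty)^2$ starting at the corner and ending on one of its two sides; consequently exactly one of $L_{T_u}-L_{T_{u-}}$ and $R_{T_u}-R_{T_{u-}}$ is strictly positive and the other vanishes. The left-right symmetry of the covariance matrix makes the excursion measure of $Z$ away from $\mathcal{A}$ invariant under the involution swapping $L$ and $R$, so the labeling of each pinched interval as ``$L$-type'' or ``$R$-type'' is a fair coin flip that is independent of the excursion's shape and i.i.d.\ across excursions. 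The coloring theorem for Poisson point processes then yields that $L_T$ and $R_T$ are \emph{independent} totally asymmetric stable processes with only positive jumps. Moreover, the scaling identity $Z^{(c)}_t := c^{-1/2} Z_{ct} \stackrel{d}{=} Z_t$ combines with the relation $T^{(c)}_u = c^{-1}T_{c^\alpha u}$ (forced by the $\alpha$-self-similarity of the local time) to yield $Z_{T_{c^\alpha u}} \stackrel{d}{=} c^{1/2} Z_{T_u}$, which uniquely fixes the stable index of each coordinate to be $\beta = 2\alpha$.

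The main obstacle will be identifying $\alpha = \pi/(2\theta)$. My plan is to compute the tail of the L\'evy measure of $T$ directly, since for a stable subordinator of index $\alpha$ this tail is $t^{-\alpha}$ up to constants. Applying the linear transformation $\Lambda$ from Lemma~\ref{lem::brownian_covariance} that standardizes the covariance of $(L,R)$, each pinched excursion becomes an excursion of a standard planar Brownian motion from the apex of a Euclidean cone of opening angle $\theta$. The classical cone exit-time asymptotic $\p_z[\tau > t] \asymp (|z|/\sqrt{t})^{\pi/\theta}$ for small $z$ inside a $\theta$-cone, combined with the standard excursion-measure normalization at the apex (which extracts the leading $|z|^{\pi/\theta}$ factor as $|z|$ shrinks to $0$), should give the excursion-measure tail $\nu_T(\{\tau > t\}) \asymp t^{-\pi/(2\theta)}$, whence $\alpha = \pi/(2\theta)$ and $\beta = \pi/\theta$. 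Two sanity checks give me confidence in the plan: at $\theta = \pi/2$ one gets $\alpha = 1$, so $T_u = u$, $\mathcal{A} = [0,\infty)$, and $L_T, R_T$ reduce to the two independent Brownian motions (stable of index $\beta = 2$); and as $\theta \to \pi$ the index $\beta \to 1$, in agreement with the known boundary-length process for random planar maps at $\kappa' = 8$.
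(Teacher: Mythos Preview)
Your outline is essentially sound and would lead to a valid proof, but the route is genuinely different from the paper's, and the two approaches have different leverage points.

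\textbf{Comparison.} The paper transforms $Z$ to a standard planar Brownian motion $\wt Z$ (so that the quadrant becomes the cone $\W_\theta$), then for each $\epsilon>0$ isolates those cone excursions whose \emph{spatial displacement} exceeds $\epsilon$, and computes the law of the exit displacement directly: by conformal invariance under $z\mapsto z^{\pi/\theta}$ and the explicit Poisson kernel on $\h$, the displacement density is seen to converge, as $\epsilon\to 0$, to $\varsigma_\theta(du)=u^{-1-\pi/\theta}\,du$. The local time is then built from this \ppp\ of displacements, and the stable index $\pi/\theta$ is read off from the L\'evy measure. Your approach instead uses the regenerative/self-similar structure of $\mathcal A$ to obtain an $\alpha$-stable subordinator $T$ for \emph{some} $\alpha$, deduces the spatial index from $\beta=2\alpha$ via Brownian scaling, and then identifies $\alpha$ from the \emph{temporal} tail of the cone excursion measure. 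The paper's approach is more self-contained (no external excursion-theory input is needed once the Poisson kernel is written down), while yours is more structural and makes the role of the subordinator explicit.

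\textbf{Where your argument needs care.} The step you flag as the main obstacle is indeed the one requiring work. You want $\nu_T(\tau>t)\asymp t^{-\pi/(2\theta)}$, and you propose to get it from $\p_z[\tau>t]\asymp(|z|/\sqrt t)^{\pi/\theta}$ by ``normalizing at the apex''. This is correct in spirit, but you must justify that the It\^o--Maisonneuve excursion measure attached to the regenerative set $\mathcal A$ coincides (up to a constant) with the cone excursion measure obtained as the limit $\lim_{r\to 0} r^{-\pi/\theta}\p_{z_r}$; this is not automatic from regenerative-set abstractions alone and typically requires either an explicit entrance-law computation (as in Shimura's or Le Gall's work on cone excursions) or an argument that the self-similar excursion measure is unique up to scale. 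Once that identification is made, your tail computation and the scaling relation $\beta=2\alpha$ do the job. A small correction: in your second sanity check, $\theta\to\pi$ corresponds to $\kappa\to 4$ and hence $\kappa'\to 4$, not $\kappa'=8$; the case $\kappa'=8$ is your first check $\theta=\pi/2$.
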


The process $\qlt$ constructed in Proposition~\ref{prop::bm_pinch_times} should be thought of as the local time for $Z$ in the ancestor free times.  We note that Proposition~\ref{prop::bm_pinch_times} implies Proposition~\ref{prop::iidlevy}.

\begin{proof}[Proof of Proposition~\ref{prop::bm_pinch_times}]
For each $\alpha \in [0,2\pi)$, we let $\W_\alpha = \{ z \in \C : \arg(z) \in [0,\alpha]\}$ be the Euclidean wedge in $\C$ of opening angle $\alpha$ centered at~$0$.  Note that a time $t$ is pinched if and only if there exists $s \in (0,t]$ such that $Z_r \in \W_{\pi/2} + Z_s$ for all $r \in (s,t]$.

It will be useful for us now to perform a change of coordinates.  Let
\[  \wt{Z} =  \frac{1}{\cos(\theta/2)} \begin{pmatrix} 1 & \cos(\theta) \\ 0 & \sin(\theta) \end{pmatrix} Z.\]
Then $\wt{Z}$ is a standard two-dimensional Brownian motion.  Moreover, $t$ is a pinched time for $Z$ if and only if there exists $s \in (0,t]$ such that $\wt{Z}_r \in \W_\theta + \wt{Z}_s$ for all $r \in (s,t]$.

We are now going to identify the Poissonian structure of the pinched excursions.  Fix $\epsilon > 0$ and then inductively define times as follows.  We let
\[ \tau_{1,\epsilon} = \inf\left\{s \geq 0 : \exists t \geq s : \wt{Z}_u \in \W_\theta  + \wt{Z}_s \quad \forall u \in [s,t] \quad\text{and}\quad |\wt{Z}_t - \wt{Z}_s|^{\pi/\theta} \geq \epsilon \right\}\]
and
\[ \sigma_{1,\epsilon}= \inf\left\{t \geq \tau_{1,\epsilon} : \wt{Z}_t \notin \W_\theta + \wt{Z}_{\tau_{1,\epsilon}} \right\}.\]
Given that $\tau_{1,\epsilon},\sigma_{1,\epsilon},\ldots,\tau_{k,\epsilon},\sigma_{k,\epsilon}$ have been defined for some $k \in \N$ we let
\[ \tau_{k+1,\epsilon}= \inf\left\{s \geq \sigma_{k,\epsilon} : \exists t \geq s : \wt{Z}_u \in \W_\theta  + \wt{Z}_s \quad \forall u \in [s,t] \quad\text{and}\quad |\wt{Z}_t - \wt{Z}_s|^{\pi/\theta} \geq \epsilon \right\}\]
and
\[ \sigma_{k+1,\epsilon}= \inf \left\{t \geq \tau_{k+1,\epsilon} : \wt{Z}_t \notin \W_\theta + \wt{Z}_{\tau_{k+1,\epsilon}} \right\}.\]
We note that the $\tau_{k,\epsilon}$ are not stopping times but the $\sigma_{k,\epsilon}$ are stopping times.

Let 
\[ \Delta_{j,\epsilon} = |\wt{Z}_{\sigma_{j,\epsilon}} - \wt{Z}_{\tau_{j,\epsilon}}| = \frac{1}{\cos(\theta/2)}|Z_{\sigma_{j,\epsilon}} - Z_{\tau_{j,\epsilon}}|.\]  We also let $\xi_{j,\epsilon} = 1$ if $\wt{Z}_{\sigma_{j,\epsilon}}$ is contained in the horizontal component of $\partial \W_\theta \setminus \{0\} + \wt{Z}_{\tau_{j,\epsilon}}$, otherwise we let $\xi_{j,\epsilon} = -1$.  Equivalently, $\xi_{j,\epsilon} = 1$ (resp.\ $\xi_{j,\epsilon} = -1$) if $Z_{\sigma_{j,\epsilon}} - Z_{\tau_{j,\epsilon}}$ has zero imaginary (resp.\ real) part.  With $\Sigma_{j,\epsilon} = \xi_{j,\epsilon} \Delta_{j,\epsilon}$, the signed jump sizes $(\Sigma_{j,\epsilon})$ are i.i.d.\ by the strong Markov property for $Z$.  (The law of the lengths of the excursions is described, for example, in \cite{spi1958bm}.)

We can describe the law of the $\Sigma_{j,\epsilon}$ as follows.  For each $k \geq 1$ we let
\[ \zeta_{k,\epsilon} = \inf\left\{t \geq \tau_{k,\epsilon} : |\wt{Z}_t - \wt{Z}_{\tau_{k,\epsilon}}|^{\pi/\theta} \geq \epsilon \right\}.\]
Let $\mu$ denote the law of $\arg\big((\wt{Z}_{\zeta_{k,\epsilon}} - \wt{Z}_{\tau_{k,\epsilon}})^{\pi/\theta}\big)$.  Then $\mu$ is supported on $(0,\pi)$.  By the scale-invariance of Brownian motion, we have that $\mu$ does not depend on $\epsilon$.  Recall that the Poisson kernel on $\h$ is given by $P_y(x) = y / (\pi(x^2+y^2))$.  Thus if $\theta = \pi$, the law of $\Sigma_{j,\epsilon}$ has density with respect to Lebesgue measure on $\R$ which is proportional to
\[ \left(\int_0^\pi \frac{\im(e^{i \psi}) }{(u-\epsilon \re(e^{i\psi}))^2 + \epsilon^2 \im(e^{i \psi})^2} d\mu(\psi)\right) du.\] 
For general values of $\theta$, we can use the conformal invariance of Brownian motion and the map $z \mapsto z^{\pi/\theta}$ to determine the law of the $\Sigma_{j,\epsilon}$.  In particular, the law of $\Sigma_{j,\epsilon}$ is proportional to
\begin{equation}
\label{eqn::bm_approx_levy_measure}
\varsigma_{\theta,\epsilon}(du) = \left(\int_0^\theta \frac{u^{\pi/\theta -1} \im(e^{i\psi})}{(u^{\pi/\theta} - \epsilon \re(e^{i \psi}))^2 + \epsilon^2 \im(e^{i \psi})^2} d\mu(\psi)\right) du.
\end{equation}
Let
\begin{equation}
\label{eqn::bm_levy_measure}
\varsigma_{\theta}(du) = u^{-1-\pi/\theta} du.
\end{equation}
For each $u_0 > 0$ we note that both $\varsigma_{\theta,\epsilon}$ and $\varsigma_\theta$ give finite mass to $A_{u_0} = \{u \in \R : |u| \geq u_0\}$.  Since $\mu(\{0\}) = \mu(\{\pi\}) = 0$, it follows that for each $u_0 > 0$ the conditional law of $\varsigma_{\theta,\epsilon}$ given $A_{u_0}$ (i.e., the restriction of $\varsigma_{\theta,\epsilon}$ to $A_{u_0}$ normalized to be a probability measure) converges in total variation as $\epsilon \to 0$ to the conditional law of $\varsigma_\theta$ given $A_{u_0}$.

Let $\Lambda$ be a \ppp\ on $\R_+ \times \R$ with intensity measure $du \otimes \varsigma_\theta$ where $du$ denotes Lebesgue measure on $\R_+$.  It therefore follows that for each $u_0 > 0$, we can couple together the pinched excursions of $Z$ with displacement at least $u_0$ with the elements $(u,\Sigma) \in \Lambda$ with $|\Sigma| \geq u_0$ to be the same.  Since this holds for each $u_0 > 0$, we can identify the pinched excursions of $Z$ with the elements of~$\Lambda$.

If $(u,\Sigma) \in \Lambda$, then $u$ gives the value of $\qlt$ in the pinched interval corresponding to~$\Sigma$ and it is clear that we can extend $\qlt$ to be RCLL on $\R_+$.  It follows from the law of large numbers argument used to prove \cite[Proposition~19.12]{KAL_FOUND} that $\qlt$ is in fact adapted to the filtration generated by $Z$.  In particular, $\qlt$ is construct as an a.s.\ limit.

Recall that $(u,\Sigma) \in \Lambda$ corresponds to a jump of size $|\Sigma|$ in $L$ (resp.\ $R$) if $\Sigma > 0$ (resp.\ $\Sigma < 0$).  Let $T_u$ be the right-continuous inverse of $\qlt$ as in the statement of the proposition.  Then $L_{T_u}$ and $R_{T_u}$ are L\'evy processes with the same law.  They are both totally asymmetric $\tfrac{\pi}{\theta}$-stable since they do not have a Gaussian part, only have negative jumps, and the L\'evy measure for the jump part of each is given by restricting $\varsigma_\theta$ to $\R_-$.  Moreover, $L_{T_u}$ and $R_{T_u}$ are independent since their jump parts are independent.
\end{proof}

Recall from Section~\ref{sec::brownian_boundary_length} that we can think of $(L,R)$ as being the two-dimensional Brownian motion which encodes the change in the boundary lengths of the left and right sides of an independent space-filling $\SLE_{\kappa'}$ process $\eta'$ drawn on top of a $\gamma$-quantum cone with $\gamma=4/\sqrt{\kappa'}$.  In this context, the pinched excursions come with a notion of ``area'' (namely, the length of the excursion) and ``boundary length'' (the displacement of whichever of $L$ and $R$ is not at the same place at the two interval endpoints).  The ``boundary length'' is precisely the size of the jump of the stable process from Proposition~\ref{prop::bm_pinch_times}.  Using the notion of a L\'evy tree, there is also a partial order on these jumps corresponding to the ancestor/descendant relationship.  We want to use this to put a tree structure on the bubbles cut off by an $\SLE_{\kappa'}$ process.  To do so, we first need to relate the range of an $\SLE_{\kappa'}$ process with the ancestor free times of $Z$.

Consider the weight $2-\gamma^2/2$ (thin) quantum wedge which is parameterized by $\eta'([0,\infty))$.  By the definition of space-filling $\SLE_{\kappa'}$, this is the region between the flow lines of the whole-plane GFF used to generate $\eta'$ with angles $-\tfrac{\pi}{2}$ and $\tfrac{\pi}{2}$ starting from $0$.  We then let $\wt{\eta}'$ be the counterflow line from $\infty$.  By \cite[Theorem~1.14]{ms2013imag4}, the conditional law of $\wt{\eta}'$ given $\eta'([0,\infty))$ (i.e., the outer boundary of $\wt{\eta}'$) is independently that of an $\SLE_{\kappa'}(\tfrac{\kappa'}{2}-4;\tfrac{\kappa'}{2}-4)$ in each of the bubbles of $\eta'([0,\infty))$.

\begin{lemma}
\label{lem::exposed_cfl_relationship}
Let $\wt{\eta}'$ be the counterflow line from $\infty$ to~$0$ which travels through the thin wedge which is parameterized by $\eta'([0,\infty))$ as described above.  Then a point of $\eta'([0,\infty))$ is in the range of $\wt{\eta}'$ if and only if it is visited by $\eta'$ at an ancestor free time for $Z$.
\end{lemma}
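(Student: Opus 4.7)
The strategy is to set up a correspondence between the maximal pinched intervals of $Z=(L,R)$ on $[0,\infty)$ and the bubbles cut off by $\wt{\eta}'$, from which both directions of the lemma follow immediately. The starting point is the decomposition of $\eta'|_{[0,\infty)}$ extracted from the construction of space-filling $\SLE_{\kappa'}$ in \cite{ms2013imag4} together with \cite[Theorem~1.14]{ms2013imag4}: $\wt{\eta}'$ traces the counterflow line from $\infty$ to $0$ in the thin wedge $\eta'([0,\infty))$, carving out a countable collection of bubbles $(U_i)$; inside each bead of the thin wedge, $\wt{\eta}'$ is an $\SLE_{\kappa'}(\tfrac{\kappa'}{2}-4;\tfrac{\kappa'}{2}-4)$ process, and $\eta'$ fills each $U_i$ by an independent chordal space-filling $\SLE_{\kappa'}$ executed over a contiguous time interval $(s_i,s_i')$ with $\eta'(s_i)=\eta'(s_i')$ equal to the pinch point at which $\wt{\eta}'$ meets itself to create $U_i$.

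First I would show that each $(s_i,s_i')$ is a maximal pinched interval for $Z$. For $t\in(s_i,s_i')$ the path $\eta'$ lies in $\ol{U_i}$ and the left (resp.\ right) boundary of $\eta'((-\infty,t])$ extends the left (resp.\ right) boundary at time $s_i$ by an arc of strictly positive $\gamma$-LQG length lying on $\partial U_i$. By Theorem~\ref{thm::quantum_cone_bm_rough_statement}, $L$ and $R$ record exactly these boundary-length increments, so $L_t>L_{s_i}$ and $R_t>R_{s_i}$ on $(s_i,s_i')$, i.e.\ $s_i\prec t$. At $t=s_i'$ the bubble is entirely filled and the pinch closes up, so the left and right outer boundaries of $\eta'((-\infty,s_i'])$ coincide with those of $\eta'((-\infty,s_i])$ (each is a flow line from the same pinch point with the same boundary data), giving $L_{s_i'}=L_{s_i}$ and $R_{s_i'}=R_{s_i}$ and hence maximality.

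For the converse I would show that every maximal pinched interval $(s_0,s_0')$ arises this way. The equalities $L_{s_0'}=L_{s_0}$ and $R_{s_0'}=R_{s_0}$ combined with the strict inequalities on $(s_0,s_0')$ force $\eta'([s_0,s_0'])$ to be a closed region of positive $\gamma$-LQG area whose boundary consists of two $\eta'$-arcs meeting only at the single point $\eta'(s_0)=\eta'(s_0')$. Such a region sits inside a single complementary component of $\wt{\eta}'$ and, being a topological disk filled contiguously by $\eta'$ between two visits to its unique pinch, must coincide with some $\ol{U_i}$, yielding $(s_0,s_0')=(s_i,s_i')$. Combining the two directions, $[0,\infty)\setminus\bigsqcup_i(s_i,s_i')$ is simultaneously the set of ancestor-free times and the set of times at which $\eta'\in\wt{\eta}'$, which is the lemma (applied on the level of visit times and then transferred to points, using that each point on $\wt{\eta}'$ is visited by $\eta'$ at a non-empty set of times).

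The hard part will be the converse direction, specifically verifying that the boundary-length bookkeeping closes up correctly, i.e.\ that the simultaneous return $L_{s_i'}=L_{s_i}$, $R_{s_i'}=R_{s_i}$ genuinely holds and genuinely fails at every intermediate $t$. This rests on the two sides of $\partial U_i$ having matching $\gamma$-LQG length, which follows from Theorem~\ref{thm::zip_up_wedge_rough_statement} applied to the welding of $\wt{\eta}'|_{U_i}$ inside the surrounding bead together with the conditional law of $\wt{\eta}'$ in each bead recalled above. One also has to handle the measure-zero set of double/triple points of $\eta'$ where the identification ``point on $\wt{\eta}'$ $\iff$ ancestor-free visit time'' must be verified separately on each visit.
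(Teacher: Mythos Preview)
Your argument rests on the claim that $L_{s_i'}=L_{s_i}$ \emph{and} $R_{s_i'}=R_{s_i}$, and this is the step that fails. The Brownian bookkeeping established elsewhere in the paper gives the opposite: each maximal pinched interval is a cone excursion of $(L,R)$ that begins at the corner $(L_{s_i},R_{s_i})$ of the quadrant and exits on \emph{one of the two sides}, not back at the corner (see Figure~\ref{fig::cone_time} and the discussion preceding Proposition~\ref{prop::iidlevy}; this is also made explicit in the displacement computation inside the proof of Proposition~\ref{prop::bm_pinch_times}). Thus exactly one of $L_{s_i'}=L_{s_i}$ or $R_{s_i'}=R_{s_i}$ holds, while the other has increased by the quantum boundary length of $U_i$ --- this increment is precisely the positive jump in the corresponding stable process $L_{T_u}$ or $R_{T_u}$. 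Your reasoning that the outer boundaries at $s_i$ and $s_i'$ are ``flow lines from the same pinch point with the same boundary data'' overlooks that the unexplored region at time $s_i$ still contains $U_i$ whereas at time $s_i'$ it does not, so one of the two outer boundary arcs has genuinely changed. The appeal to Theorem~\ref{thm::zip_up_wedge_rough_statement} does not repair this: that theorem matches the two sides of the welding interface, not two halves of $\partial U_i$. Since both directions of your argument hinge on the simultaneous return of $L$ and $R$, the proof does not go through as written.

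The paper's own proof sidesteps the bubble--interval bijection and its boundary-length accounting entirely. It works directly at the level of a single time $t$: if $\eta'(t)\notin\wt{\eta}'$, let $s$ be the first $\eta'$-visit to the pinch point of the bubble containing $\eta'(t)$ and read off $s\prec t$ from the construction of space-filling $\SLE_{\kappa'}$; conversely, if $t$ is pinched with left endpoint $s$ of its interval, then $s$ is a local cut time for $\eta'$, so the time-reversal of $\eta'$ disconnects $\eta'(t)$ from $\infty$, and hence $\wt{\eta}'$ cannot visit $\eta'(t)$.
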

\begin{proof}
We will prove the lemma by showing that a point $z \in \eta'([0,\infty))$ is not in the range of $\wt{\eta}'$ if and only if $t$ is a pinch time for $Z$.  (See Figure~\ref{fig::cone_time} and Figure~\ref{fig::ancestor_free} for an illustration.)

Suppose that $t$ is a time such that $\eta'(t)$ is not in the range of $\wt{\eta}'$.  Then $\eta'(t)$ is contained in a bubble $B$ which is pinched off by $\wt{\eta}'$.  Let $\sigma$ be the time that $\wt{\eta}'$ pinches off $B$ (i.e., disconnects $B$ from its target point) and let $s$ be the time that $\eta'$ first visits $\wt{\eta}'(\sigma)$.  Then it is easy to see from the definition of space-filling $\SLE_{\kappa'}$ that $L_t > L_s$ and $R_t > R_s$.  In fact, if $u \in (s,t)$ then $L_u > L_s$ and $R_u > R_s$.  That is, $t$ is a pinch time for $(L,R)$.

Conversely, suppose that $t$ is a pinch time for $Z$ and let $s$ be the left endpoint of the corresponding pinched interval.  Then $s$ is a local cut time for $\eta'$.  Consequently, the time-reversal of $\eta'$ will hit $\eta'(s)$ twice hence separate $\eta'(t)$ from $\infty$.  Therefore $\wt{\eta}'$ will not visit $\eta'(t)$.
\end{proof}

\begin{remark}
\label{rem::equivalence_of_disk_measures}
Observe that if draw an independent $\SLE_{\kappa'}(\tfrac{\kappa'}{2}-4;\tfrac{\kappa'}{2}-4)$ process on top of a wedge of weight $2-\tfrac{\gamma^2}{2}$ (more precisely, one independent process for each bubble of the wedge), then we have two equivalent Poissonian descriptions for the bubbles cut out of the wedge by the path.  The first is given in terms of the cone-excursions of a two-dimensional Brownian motion and the second is given in terms of the excursion measure of a Bessel process.  Indeed, this first description follows from Proposition~\ref{prop::bm_pinch_times} above and the second follows by combining Proposition~\ref{prop::slice_wedge_many_times} and Theorem~\ref{thm::sle_kp_quantum_local_typical}.
\end{remark}

Combining Lemma~\ref{lem::exposed_cfl_relationship} with Proposition~\ref{prop::bm_pinch_times} and Proposition~\ref{prop::unit_area_bl_construction} we get that gluing together two forested lines yields a $2-\tfrac{\gamma^2}{2}$ wedge.  We record this result in the following proposition.

\begin{proposition}
\label{prop::glue_forested_lines}
Suppose that~$\CW$ is a wedge of weight $2-\tfrac{\gamma^2}{2}$.  If we cut~$\CW$ with a concatenation~$\eta'$ of independent $\SLE_{\kappa'}(\tfrac{\kappa'}{2}-4;\tfrac{\kappa'}{2}-4)$ processes (one for each bead of~$\CW$), then the bubbles which are to the left (resp.\ right) of the path have the structure of two independent forested lines.  Moreover, the topology of the surface~$\CW$ decorated by~$\eta'$ is equivalent to the topological gluing of independent forested lines illustrated in Figure~\ref{fig::fig8}.  In particular, $\eta'$ is a continuous path when parameterized by the quantum natural time parameterization.
\end{proposition}

\begin{proof}
As explained above, Lemma~\ref{lem::exposed_cfl_relationship} and Proposition~\ref{prop::bm_pinch_times} (together with Theorem~\ref{thm::topologyiscorrect}) implies that the topology of the bubbles cut out by $\eta'$ on its left and right sides are given by independent forested lines.   Note that the right-continuous inverse constructed in Proposition~\ref{prop::bm_pinch_times} is monotone hence has at most countably many jumps.  By the definition of the quantum natural time parameterization, these jumps are the only possible source of discontinuity for $\eta'$ parameterized in this way.  By Theorem~\ref{thm::topologyiscorrect}, these jumps correspond to the macroscopic bubbles cut off by $\eta'$.  Since the beginning and ending point of such a bubble are (by definition) the same, it follows that $\eta'$ is continuous with respect to the quantum natural time parameterization.  It similarly follows from Lemma~\ref{lem::exposed_cfl_relationship},  Proposition~\ref{prop::bm_pinch_times}, and Theorem~\ref{thm::topologyiscorrect} that the topology of $\CW$ decorated by $\eta'$ is equivalent to the topological gluing of independent forested lines.
\end{proof}

Proposition~\ref{prop::glue_forested_lines} gives the first part of Theorem~\ref{thm::gluingtwoforestedlines}.  To finish proving Theorem~\ref{thm::gluingtwoforestedlines}, we need to show that the pair of forested lines a.s.\ determines the wedge of weight $2-\tfrac{\gamma^2}{2}$.  We are going to deduce this result from the main result of Section~\ref{sec::trees_determine_embedding}, which implies that the entire wedge of weight $2-\tfrac{\gamma^2}{2}$ which is parameterized by $\eta'([0,\infty))$ is a.s.\ determined by the pair $(L,R)$ of Brownian motions for $t \geq 0$.  In order to do so, we first need to collect the following observation.

\begin{proposition}
\label{prop::abc}
Let $A$, $B$ and $C$ be random variables defined on a common probability space.  Suppose that $A$ and $B$ together a.s.\ determine $C$.  (That is, suppose there exists a function $f$ such that $C = f(A,B)$ a.s.)  Suppose further that given $A$, the values $B$ and $C$ are conditionally independent.  Then $A$ a.s.\ determines $C$.
\end{proposition}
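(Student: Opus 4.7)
The plan is to exploit the measure-theoretic characterization of conditional independence in terms of conditional expectations. Recall that $B$ and $C$ are conditionally independent given $A$ if and only if for every bounded measurable function $g$ we have
\[ \E[g(C) \giv \sigma(A,B)] = \E[g(C) \giv \sigma(A)] \quad \text{a.s.} \]
I would begin by fixing an arbitrary bounded measurable $g$ and writing down this identity.

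Next I would use the hypothesis that $C = f(A,B)$ almost surely to observe that $g(C) = g(f(A,B))$ is $\sigma(A,B)$-measurable, so the left-hand side above reduces to $g(C)$ itself. Combining with the conditional independence identity yields $g(C) = \E[g(C) \giv \sigma(A)]$ a.s., which shows that $g(C)$ is $\sigma(A)$-measurable (up to a null set) for every bounded measurable $g$. Taking $g$ to range over a countable separating family (for instance, indicators of a countable generating $\pi$-system of the codomain $\sigma$-algebra, assuming the codomain is standard Borel; or bounded continuous functions if $C$ takes values in a Polish space) then forces $C$ itself to be $\sigma(A)$-measurable almost surely, which is exactly the conclusion that $A$ determines $C$.

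There is essentially no obstacle: the argument is a direct manipulation of the defining property of conditional independence. The only mild care point is the final step of upgrading ``$g(C)$ is $\sigma(A)$-measurable for every bounded measurable $g$'' to ``$C$ is $\sigma(A)$-measurable,'' which is standard but does require a mild regularity assumption on the space in which $C$ takes values. In the applications of interest (quantum surfaces, paths, Brownian motions — all valued in Polish spaces) this is automatic, so the proposition follows immediately with no further input.
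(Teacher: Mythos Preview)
Your argument is correct and is the standard way to establish this fact. The paper does not actually supply a proof of this proposition; it is introduced as an ``observation'' and stated without proof, presumably because the authors regard it as elementary. Your write-up fills that gap cleanly, and your remark about needing the codomain of $C$ to be standard Borel (or Polish) for the last step is the right caveat --- and, as you note, is automatic in the paper's applications.
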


We are now going to give the proofs of Theorems~\ref{thm::gluingtwoforestedlines}--\ref{thm::quantum_natural_zip_unzip_rough_statement}.

\begin{proof}[Proof of Theorem~\ref{thm::gluingtwoforestedlines}]
As we mentioned earlier, the first part of Theorem~\ref{thm::gluingtwoforestedlines} is given in Proposition~\ref{prop::glue_forested_lines}.  To finish the proof, we need to show that the pair of forested lines a.s.\ determine the zipping.  Let $A$ be the pair of forested lines, which we view as arising by cutting a weight $2-\tfrac{\gamma^2}{2}$ wedge with a concatenation of independent $\SLE_{\kappa'}(\tfrac{\kappa'}{2}-4;\tfrac{\kappa'}{2}-4)$ processes $\wt{\eta}'$.  As described above, we can view $\wt{\eta}'$ as corresponding to the set of ancestor free times of a space-filling $\SLE_{\kappa'}$ process $\eta'$.  Let $B$ be the countable collection of paths obtained by restricting $\eta'$ to each of the countably many disks of $A$.  Let $C$ be the entire quantum surface traced by $\eta'$.  It is not hard to see that $A$ and $B$ together determine the boundary length processes (i.e., the pair of Brownian motions) that determine $C$.  By construction, we have that $B$ and $C$ are conditionally independent given $A$.  The result then follows from Proposition~\ref{prop::abc}.
\end{proof}

\begin{proof}[Proof of Theorem~\ref{thm::sle_kp_on_wedge}]
The theorem in the case that $\kappa' \geq 8$ is a consequence of Proposition~\ref{prop::slice_wedge_many_times}, so we will focus on the case that $\kappa' \in (4,8)$ (hence $\gamma \in (\sqrt{2},2)$).

Theorem~\ref{thm::gluingtwoforestedlines} gives the result for forested lines, i.e.\ when $W_1 = W_2 = 0$.  We are now going to generalize the result to the case that $W_1,W_2 \geq 0$.  Fix $\rho_1,\rho_2 \geq \tfrac{\kappa'}{2}-4$ and let $W_i = \gamma^2 - 2 + \tfrac{\gamma^2}{4} \rho_i \geq 0$.  Suppose that $\CW = (\h,h,0,\infty)$ is a wedge of weight $W = W_1 + W_2 + 2-\tfrac{\gamma^2}{2}$.  Further, suppose that $\eta_1$ (resp.\ $\eta_2$) is the flow line starting from $0$ and targeted at $\infty$ of a GFF on $\h$ with boundary conditions given by~$0$ on~$\R_-$ and $(W-2)\lambda$ on~$\R_+$ with angle $\theta_1 = \tfrac{\lambda}{\chi}(1-W_1)$ (resp.\ $\theta_2 = -\tfrac{\lambda}{\chi}(1 - \tfrac{\gamma^2}{2}+W_1)$).  Proposition~\ref{prop::slice_wedge_many_times} implies that the region which is to the left (resp.\ right) of $\eta_1$ (resp.\ $\eta_2$), viewed as a quantum surface, is a wedge of weight~$W_1$ (resp.\ $W_2$).  Moreover, the region between~$\eta_1$ and~$\eta_2$, viewed as a quantum surface, is a wedge of weight $2-\tfrac{\gamma^2}{2}$ and these three wedges are independent.  The counterflow line $\eta'$ of this GFF plus $\pi(\tfrac{\gamma}{4} - \tfrac{W_1}{\gamma})$ from~$\infty$ is marginally an $\SLE_{\kappa'}(\rho_1;\rho_2)$ process and, by \cite[Proposition~7.30]{ms2012imag1}, the conditional law of $\eta'$ given $\eta_1$ and $\eta_2$ is independently that of an $\SLE_{\kappa'}(\tfrac{\kappa'}{2}-4;\tfrac{\kappa'}{2}-4)$ process in each of the bubbles between~$\eta_1$ and~$\eta_2$.  (The reason that we have to add an angle to the field in the definition of~$\eta'$ is that we have normalized the boundary data to be consistent with Proposition~\ref{prop::slice_wedge_many_times}.)  Theorem~\ref{thm::gluingtwoforestedlines} implies that these bubbles have the structure of independent forested lines and, moreover, these forested lines a.s.\ determine the middle wedge of weight $2-\tfrac{\gamma^2}{2}$.  Combining everything proves the result.
\end{proof}

\begin{proof}[Proof of Theorem~\ref{thm::quantum_cone_sle_kp}]
This follows by applying Proposition~\ref{prop::slice_cone_many_times} with a pair of GFF flow lines with an angle gap of $\pi$ and then applying Theorem~\ref{thm::gluingtwoforestedlines} to the resulting wedge of weight $2-\tfrac{\gamma^2}{2}$.
\end{proof}

\begin{proof}[Proof Theorem~\ref{thm::quantum_natural_zip_unzip_rough_statement}]
The first part of the result is a consequence of Theorem~\ref{thm::sle_kp_on_wedge}.  The second part of the theorem is given in Theorem~\ref{thm::sle_kp_quantum_local_typical}.
\end{proof}

\begin{proof}[Proof of Theorem~\ref{thm::sle_exploration_boundary_length}]
This result follows from Corollary~\ref{cor::stablelength}, whose derivation from Theorem~\ref{thm::quantum_natural_zip_unzip_rough_statement} is explained just after the statement of Theorem~\ref{thm::quantum_natural_zip_unzip_rough_statement}.
\end{proof}

\subsection{Surfaces with bottlenecks and atomic measures}
\label{subsec::kpz_duality}

The rigorous construction of the singular quantum measures $\mu_{\gamma'}$ with $\gamma' > 2$ uses the dual measure $\mu_{\gamma}$, $\gamma:=4/\gamma'<2$, and the additional randomness of a set of point masses $\{z_i : i \in \N\}$ such that $\{(\mu_{\gamma'}(z_i), z_i) : i \in \N\}$ is distributed as a \ppp  \ $\mathcal N_{\gamma'} (du,dz)$  on $\R_+ \times D$ with intensity measure $\vartheta_\theta \otimes \mu_{\gamma}$, where $\vartheta_\theta(du) := u^{-1-\theta}du$ and $du$ denotes Lebesgue measure on $\R_+$ and $\theta:=4/\gamma'^{\,2}= \gamma^2/4\in (0,1)$ \cite{ds2009qg_prl,2008ExactMethodsBD}. Each point $(u,z)$ represents an atom of size $u$ located at $z$, and the quantum measure for $\gamma'>2$ is then {\it purely atomic},
\begin{equation}
\label{e.dualmeasure}
\mu_{\gamma'}(dz):=\int_0^{\infty}u\, \mathcal N_{\gamma'} (du,dz),\quad \gamma' >2.
\end{equation}
In \cite{bjrv2013super_critical},  a slightly different approach was proposed, the so-called ``atomic Gaussian multiplicative chaos''.  After regularization, its limit coincides  in the GFF case with the present construction.

For any subdomain $U \subseteq D$, the  Laplace transforms of the  atomic $\gamma'$-quantum measure~\eqref{e.dualmeasure}
  and the dual $\gamma$-quantum measure~\eqref{e.mudef} then obey the {\it duality identity} \cite[Section~18.6.2]{2008ExactMethodsBD},
\begin{equation}
\label{Laplace}
\E\left[ \exp(-\phi\,\mu_{\gamma'}(U)) \right] = \E\left[\exp(\Gamma(-\theta)\phi^\theta\mu_{\gamma}(U)) \right],\quad \gamma\gamma'=4,\quad \forall \phi\geq 0.
\end{equation}
This is a L\'evy-Khintchine formula for an independently scattered stable process valid for all $\phi \in \R_+$, and where $\Gamma(-\theta)=-\Gamma(1-\theta)/\theta<0$  is the usual Euler $\Gamma$-function. 
It  can also be seen as the probabilistic formulation of the
 \textit{Legendre transform} relating the free energies of dual Liouville theories, as first observed in \cite{kleb1995non_perturb}.

The conjugate variable $\phi$ to the quantum area measure $\mu_{\gamma'}$ is often called ``cosmological constant'' in Liouville quantum gravity \cite{MR623209,Ginsparg-Moore,MR1320471,MR2073993}. The duality identity~\eqref{Laplace} implies that the dual cosmological constant of $\mu_{\gamma}$ is  $\phi':=-\Gamma(-\theta)\phi^\theta$; in other words, given $\mu_{\gamma}$, the ``typical size'' (say, the median size) of $\mu_{\gamma'}(U)$ is of order $\mu_{\gamma}(U)^{\gamma'^{\,2}/4}=\mu_{\gamma}(U)^{4/\gamma^2}$.
Consider indeed the conditional exponential expectation associated with~\eqref{Laplace},
\begin{equation*}
\label{Laplaceconditional}
\E\left[ \exp(-\phi \mu_{\gamma'}(U)) \giv \mu_{\gamma}(U)\right] = \exp\big(\Gamma(-\theta)\phi^\theta \mu_{\gamma}(U)\big).
\end{equation*}
This yields  the moment formula (see also \cite{bjrv2013super_critical}),
\[
 \E\left[\big(\mu_{\gamma'}(U)\big)^p \giv \mu_{\gamma}(U)\right]=\frac{\Gamma(1-\tfrac{p}{\theta}) \big(-\Gamma(-\theta) \big)^{p/\theta}}{\Gamma(1-p)}\big(\mu_{\gamma}(U)\big)^{p/\theta},\,\,\,0\leq p<\theta=\frac{4}{\gamma'^{\,2}} <1,
 \]
which precisely illustrates the announced scaling relation existing between dual Liouville measures.

We are now going to explain how to view these measures for $\gamma'^{\,2} \in (4,8)$ as corresponding to trees of surfaces using L\'evy trees.  We first need to recall a few facts about L\'evy processes from \cite{bertoin96levy}.  Fix $\alpha \in (1,2)$.  Suppose that $X$ is a totally asymmetric $\alpha$-stable process with $\alpha \in (1,2)$ and let $I_t = \inf_{0 \leq s \leq t} X_s$ be the running infimum of $X$.  Then $X_t - I_t$ is a Markov process \cite[Proposition~1, Chapter~6]{bertoin96levy} with a local time $\qlt_t$ at~$0$.  In fact, $\qlt_t = I_0-I_t$.  The right-continuous inverse $T_u = \inf\{t > 0 : \qlt_t > u\}$ of $\qlt_t$ is called the ladder time process.  By \cite[Lemma~1, Chapter~8]{bertoin96levy}, we have that $T_u$ is a stable subordinator of index $1/\alpha$.  This implies that if $X_0 = v$ for $v > 0$, $u < v$, and $\tau = \inf\{t \geq 0 : X_t \leq u\}$, then we can sample the lengths of the excursions of $X-I$ from~$0$ in $[0,\tau]$ as follows.  Let $\Lambda$ be a \ppp\ on $[u,v] \times \R_+$ with intensity measure given by $dt \otimes \vartheta_{\alpha^{-1}}$ where $dt$ denotes Lebesgue measure on $[u,v]$.  Then the elements of $\Lambda$ are in correspondence with the excursions of $X-I$ from~$0$.  In particular, $(w,\Sigma) \in \Lambda$ corresponds to an excursion of $X-I$ from~$0$ where the value of $I$ at the start and end of the excursion is given by $w$ and $\Sigma$ gives its length.

To give the L\'evy tree construction of the surfaces with $\gamma'^{\,2} \in (4,8)$, we now suppose that $X$ is a totally asymmetric $\tfrac{\gamma'^{\,2}}{4}$-stable process with $X_0 = u$ for some $u > 0$ and let $I$ be the running infimum of $X$.  Consider the tree of quantum surfaces which correspond to the L\'evy tree generated by $X$ (recall Figure~\ref{levytreegluing}).  In this construction, we associate with each jump of $X$ a conditionally independent $\gamma$-LQG sphere, $\gamma=4/\gamma'$, as in Definition~\ref{def::finite_volume_surfaces} with $\gamma$-LQG mass given by the size of the jump; we view the vertical segment from $0$ to $X_0$ as the first jump of $X$.  We think of the surface $S$ associated with the first jump as being the principle bubble of the $\gamma'$-LQG surface.  An excursion of $X-I$ from $0$ is naturally associated with a root location on the vertical segment from $0$ to $X_0$ given by the value that $X$ takes on at the start of the excursion (which is also the same as the value taken on by $I$ at this time).  We then define the mass of such a point to be the length of time that it takes for $X-I$ to complete the corresponding excursion.  This defines a purely atomic measure on the vertical segment from $0$ to $X_0$.

We can use this purely atomic measure to define a purely atomic measure on $S$ as follows.  Pick a point $z_0 \in S$ from its $\gamma$-LQG area measure and then let $\eta'$ be a space-filling $\SLE_{\kappa'}$ process on $S$ starting from $z_0$ which is otherwise sampled conditionally independently of $S$.  We then reparameterize $\eta'$ according to $\gamma$-LQG area.  Then the map which takes $t \in [0,X_0]$ to $\eta'$ given by $t \mapsto \eta'(t)$ induces a measure preserving transformation from the vertical segment from $0$ to $X_0$ onto $S$.  It follows from the discussion in the previous paragraph that the induced point measure on $S$ is equal in law to the $\gamma'$-LQG measure, $\gamma'^{\,2} \in (4,8)$, described at the beginning of this subsection.  We also note that the L\'evy tree structure is the same as the L\'evy tree structure we deduced for the surfaces cut out by an $\SLE_{\kappa'}$ process as described in Section~\ref{subsec::bm_sle}.

\section{Open Questions}
\label{sec::questions}

First let us recall a few problems that have already been posed in this paper.  In the beginning of Section~\ref{sec::introduction}, we asked the following.

\begin{problem}
Can one define a metric space structure on the peanosphere directly?
\end{problem}

\begin{problem}
\label{prob::components}
Fix $\kappa' \in (4,8)$.  Consider the random graph whose vertices are the components of the complement of an $\SLE_{\kappa'}$ curve, where two components are considered adjacent if their boundaries intersect.  Is this graph a.s.\ connected?  In other words, is it a.s.\ the case that for any two such components $U$ and $V$ there exists a finite sequence of adjacent components $U_0, U_1,\ldots,U_n$ such that $U_0 = U$ and $U_n = V$?
\end{problem}

\noindent{{\it Update.}  Question~\ref{prob::components} has been solved for $\kappa' \in (4,\kappa_0']$ where $\kappa_0' \approx 5.62$ in \cite{gp2018components}.}

Question~\ref{prob::components} is the $\SLE$ version of a well-known open question posed by Y.\ Peres for the connectivity of the graph structure of the components $\C \setminus B([0,1])$ where $B$ is a standard Brownian motion in $\C$.

One very interesting question is to strengthen the topology of convergence to the peanosphere for certain discrete models, in order to understand the conformal structure of the discrete embedding.  We remark that the number of ways to frame this problem is at least $kmn$ if one has
\begin{enumerate}
\item $k$ random planar map models (triangulations, quadrangulations, planar maps with unrestricted face sizes, etc.)
\item $m$ ways to put a statistical physics structure on the planar map (uniform spanning tree, Ising model, three state Potts model, critical percolation, double dimer model, etc.)
\item $n$ ways to embed a discrete graph ``conformally'' in the plane (circle packing, square tiling, discrete analytic mapping,  Riemann conformal mapping of surface obtained by gluing unit polygons, etc.)
\end{enumerate}
Solving even one of these formulations would be a significant breakthrough, and given the flexibility, one can reasonably argue that the variant that is easiest to solve is the most natural one to consider.  
A variant of that problem would be to show that a simple random walk on the graph of $\delta$-area regions of Figure~\ref{fig::necklacemap} (with two regions adjacent when the boundaries intersect) scales to two dimensional Brownian motion, up to a time change.  The correct time change should be the one which corresponds to Liouville Brownian motion, as constructed in \cite{ber2015lbm,grv2016lbm}.

\noindent{\it Update:} The convergence of the random walk on the graph of $\delta$-area regions to Brownian motion was established in \cite{gms2017tutte}, modulo time parameterization (see further applications to random walks on planar maps in \cite{gm2017spectral}).  The convergence was upgraded to include the time parameterization in \cite{bg2020convergence}.

Additional problems include the following:

\begin{problem}
Use the hamburger cheeseburger machinery to strengthen the topology of convergence in a more modest (but still interesting) way: namely, show that in FK-decorated RPM, the lengths of the $k$ largest loops (together with the adjacency graph on those loops, and the areas surrounded by the loops) scale to the corresponding continuum quantities defined for CLE-decorated LQG.
\end{problem}

\noindent{\it Update:} This question has been solved in \cite{gm2016topology}, building on the works \cite{gms2015cone_times,gs2015finite_volume,gs2015finite}.

\begin{problem}
In the peanosphere construction, compute the relationship between~$\kappa'$ and the Brownian correlation coefficient for $\kappa' > 8$.
\end{problem}

\noindent{\it Update:} This question has been solved in \cite{ghms2015covariance}.

\begin{problem}
Understand near critical FK-decorated RPM models and their scaling limits from the peanosphere perspective.
\end{problem}

We also direct the reader to the open questions sections of \cite{she2010zipper,ms2013qle} for additional questions.

\appendix

\section{Quantum disks and spheres as limits}
\label{app::disks_spheres}

We are now going to show that the unit boundary length quantum disk and the unit area quantum sphere as defined in Definition~\ref{def::finite_volume_surfaces} can be constructed using a limiting procedure.  Throughout, we let $\strip_+ = \R_+ \times [0,\pi]$ and $\cyl_+ = \R_+ \times [0,2\pi]$ with $\R_+ \times \{0\}$ and $\R_+ \times \{2\pi\}$ identified.

\subsection{Unit boundary length quantum disk}
\label{subapp::unit_boundary_length_disk}

We begin with the case of the unit boundary length quantum disk.

\begin{proposition}
\label{prop::unit_boundary_length_quantum_disk}
Fix a simply connected, bounded smooth domain $D \subseteq \h$ and suppose that $L = \partial D \cap \partial \h$ is a non-empty interval and $\wt{L} \subseteq L$ is a non-empty subinterval with $\dist(\partial D \setminus L, \wt{L}) > 0$.  Let $h$ be a GFF on $D$ with free boundary conditions on $L$ and zero boundary conditions on $\partial D \setminus L$.  Fix $C,\epsilon > 0$ and condition on $\{\sqrt{C} \leq \nu_h(\wt{L}) \leq \sqrt{C}(1 + \epsilon)\}$.  Let $\wh{h} = h - \gamma^{-1}\log C$ so that $1 \leq \nu_{\wh{h}}(\wt{L}) \leq 1+\epsilon$.   Then the law of $(D,\wh{h})$ (viewed as a quantum surface) converges weakly to that of the unit boundary length quantum disk as in Definition~\ref{def::finite_volume_surfaces} when we take a limit as $C \to \infty$ and then as $\epsilon \to 0$.  More precisely, suppose that $x \in \wt{L}$ is picked from $\nu_{\wh{h}}$ and let $\varphi \colon D \to \strip_+ - r$ be the unique conformal map which takes $x$ to $+\infty$ and the endpoints of $L$ to $-r$ and $-r+i\pi$ where the value of $r$ is chosen so that the $\gamma$-LQG length assigned by $\wh{h} \circ \varphi^{-1} + Q \log| (\varphi^{-1})'|$ to $\partial \strip_+ \setminus [0,i\pi]$ is equal to $1/2$.  Then the law of $\wh{h} \circ \varphi^{-1} + Q \log| (\varphi^{-1})'|$ converges weakly in the space of distributions as $C \to \infty$ and then as $\epsilon \to 0$ to that of a unit boundary length quantum disk embedded into $\strip$ so that the $\gamma$-LQG length of $\partial \strip_+ \setminus [0,i\pi]$ is $1/2$.
\end{proposition}

We emphasize that the limiting law in Proposition~\ref{prop::unit_boundary_length_quantum_disk} does not depend on $D$, $L$, or $\wt{L}$.

Recall that a quantum surface is defined modulo conformal transformation.  If we parameterize a quantum disk by $\strip$ with the marked points taken to be $\pm \infty$, then there is one extra degree of freedom in choosing the embedding (i.e., the horizontal translation of $\strip$).  In the statement of Proposition~\ref{prop::unit_boundary_length_quantum_disk}, we have chosen the horizontal translation so that the amount of $\gamma$-LQG length assigned to $\partial \strip_+ \setminus [0,i \pi]$ is $1/2$, but there are many other ways of fixing this horizontal translation.

The idea to prove Proposition~\ref{prop::unit_boundary_length_quantum_disk} is to consider the law of $h$ weighted by $\nu_h(\wt{L})$ (i.e., $\nu_h(\wt{L}) dh$).  As we will explain in Lemma~\ref{lem::weighted_quantum_surface_boundary}, sampling from this law is equivalent to first sampling from the (unweighted) law of $h$ and then adding to it a certain $\log$ singularity at a point chosen from a given measure on $\wt{L}$ which has a density with respect to Lebesgue measure.  We will then apply a change of coordinates from $D$ to $\strip_+$ with this extra marked point sent to $+\infty$ and the two endpoints of $L$ sent to $0$ and $i\pi$.  Let $\wt{h}$ be the resulting field on $\strip_+$.  We will argue that (Lemma~\ref{lem::tail_behavior}) conditioning the boundary length to be large is equivalent to conditioning the projection of $\wt{h}$ onto $\CH_1(\strip_+)$ from Lemma~\ref{lem::strip_spaces_orthogonal} to take on a large value and that this large value is realized along a line $u+[0,i\pi]$ with $u$ large.  We then find that (Lemma~\ref{lem::line_average_process_large}) if we horizontally translate by $-u$ so that this line segment is sent to $[0,i\pi]$ then the law of $\wt{h}(\cdot+u)$ which describes the surface converges to a limiting law as we take a limit as the boundary length tends to $\infty$ (while renormalizing).  A similar procedure will lead to the construction of the unit area quantum sphere (Proposition~\ref{prop::unit_area_quantum_sphere}).

We note that the considerations which will play a role in the proof of Proposition~\ref{prop::unit_boundary_length_quantum_disk} are closely related to the tail behavior of the LQG measure which has recently been studied in \cite{rv2019tail,wong2019tailprofile}.

\begin{lemma}
\label{lem::harmonic_strip_constant}
Suppose that $f \colon \strip_+ \to \R$ is a function which is harmonic in $\strip_+$ with Neumann boundary conditions on $\strip_+ \setminus [0,i\pi]$ (as well as at $+\infty$) and Dirichlet boundary conditions on $[0,i\pi]$.  Then the limit
\begin{equation}
\label{eqn::neumann_harmonic_limit}
\lim_{\re(z) \to \infty} f(z)
\end{equation}
exists and is given by the average $\ol{f}$ of $f$ on $[0,i\pi]$.  That is, for every $\epsilon > 0$ there exists $R \geq 0$ such that $\sup_{\re(z) \geq R} |f(z)-\ol{f}| \leq \epsilon$.
\end{lemma}
\begin{proof}
Let $F$ be the harmonic function which is defined on $\{z \in \C : \re(z) \geq 0\} = -i \h$ whose boundary conditions on the imaginary axis are given by first extending the boundary conditions of $f$ from $[0,\pi i]$ to $[0,2\pi i]$ by reflection and then to all of~$i \R$ periodically.  Then the restriction of $F$ to $\strip_+$ is equal to $f$.  From the explicit form of the Poisson kernel on $-i \h$, it is clear that $\lim_{\re(z) \to \infty} F(z)$ exists in the sense described in the lemma statement.
\end{proof}

\begin{lemma}
\label{lem::line_average_process_large}
Suppose that $h$ is a GFF on $\strip_+$ with free boundary conditions on $\partial \strip_+ \setminus [0,i\pi]$ and zero boundary conditions on $[0,i\pi]$.  Let $F$ be a function which is harmonic in $\strip_+$ with Dirichlet boundary conditions on $[0,i\pi]$ and Neumann boundary conditions on $\partial \strip_+ \setminus [0,i\pi] $.  (In particular, $F$ has Neumann boundary conditions at $+\infty$.)  Fix $a > 0$.  For each $u \geq 0$, let $X_u$ be given by the average of $h+F-a \re(z)$ on $u + [0,i\pi]$.  For each $C > 0$, let $\tau_C = \inf\{u \geq 0 : X_u \geq C\}$ and let $E_C = \{ \tau_C < \infty\}$.  On $E_C$, let $\wt{h}$ be the field which is given by precomposing $h+F-a\re(z)-C$ with the horizontal translation $z \mapsto z + \tau_C$.  Then the conditional law of $\wt{h}$ given $E_C$ converges weakly as $C \to \infty$.  With $\CH_1(\strip)$ and $\CH_2(\strip)$ as in Lemma~\ref{lem::strip_spaces_orthogonal}, the limit law can be sampled from by:
\begin{enumerate}[(i)]
\item\label{it::line_average_large_limit_law} Taking its projection $\wt{X}_u$ onto $\CH_1(\strip)$ to be given by $B_{2u} - a u$ for $u \geq 0$ where $B$ is a standard Brownian motion with $B_0 = 0$ and to be given by $\wh{B}_{-2u} + au$ for $u < 0$ where $\wh{B}$ is an independent standard Brownian motion with $\wh{B}_0 = 0$ conditioned so that $\wh{B}_{-2u} + au <0$ for all $u < 0$.  We fix the additive constant so that the projection exactly agrees with this process. 
\item\label{it::line_average_large_limit_law2} Independently sampling its projection onto $\CH_2(\strip)$ from the law of the corresponding projection of a free boundary GFF on $\strip$.  We fix the additive constant so that the average on $[0, \pi i]$ vanishes. 
\end{enumerate}
\end{lemma}
\begin{proof}
Let $\tau_C$ be as in the statement and let $Y$ have the law of $X$ given $E_C$.  To prove the result, it suffices to show that:
\begin{enumerate}[(I)]
\item\label{it::line_average_large_limit_suffice1} For each $N \geq 0$ we have that $\p[\tau_C \geq N \giv E_C] \to 1$ as $C \to \infty$ and
\item\label{it::line_average_large_limit_suffice2} The law of $Y - C$ with time translated by $\tau_C$ (so that it first hits $0$ at time $u=0$) converges to the limit described in~\eqref{it::line_average_large_limit_law} above.
\end{enumerate}

Indeed, suppose that $\wh{h}$ is a GFF on $\strip$ with free boundary conditions.  Let $\wh{\Fh}$ be the function which is harmonic in $\strip_+$ with boundary values given by those of $\wh{h}$ on $[0, i \pi]$ and with Neumann boundary conditions on $\partial \strip_+ \setminus [0,i \pi]$ so that $\wh{h} - \wh{\Fh}$ is a GFF on $\strip_+$ with the same boundary conditions as $h$.  We assume that $\wh{h}$, $h$ are coupled together so that $h = \wh{h} - \wh{\Fh}$.  Suppose that $\phi \in \CH_2(\strip)$.  Lemma~\ref{lem::harmonic_strip_constant} together with~\eqref{it::line_average_large_limit_suffice1} implies we a.s.\ have that $(F(\cdot+\tau_C)-a\re(\cdot + \tau_C)-C ,\phi)_\nabla \to 0$ as $C \to \infty$.  For the same reason, we a.s.\ have that $(\wh{\Fh}(\cdot+\tau_C),\phi) \to 0$ as $C \to \infty$.  Therefore the law of $(h(\cdot+\tau_C),\phi)_\nabla = (\wh{h}(\cdot+\tau_C) - \wh{\Fh}(\cdot+\tau_C),\phi)_\nabla$ converges weakly to the law of $(\wh{h},\phi)_\nabla$ as $C \to \infty$.  Combining gives that the law of the projection onto $\CH_2(\strip)$ of $h+F-a \re(\cdot) - C$ precomposed with $z \mapsto z + \tau_C$ converges weakly to the law of the corresponding projection of a free boundary GFF on $\strip$.

Note that since $F$ is harmonic on $\strip_+$ with Dirichlet boundary conditions on $[0,i\pi]$ and Neumann boundary conditions on $\partial \strip_+ \setminus [0,i\pi]$, it follows that its average on each vertical line $u + [0,i\pi]$ is the same.  It thus follows that $X$ is a Brownian motion with linear drift, so that the law of $Y$ is given by Lemma~\ref{lem::condition_bm_negative_drift_large}.  It is therefore clear from Lemma~\ref{lem::condition_bm_negative_drift_large} that both~\eqref{it::line_average_large_limit_suffice1} and~\eqref{it::line_average_large_limit_suffice2} hold.
\end{proof}

We now turn to prove a lemma which implies that conditioning the boundary length to be large is in a certain sense equivalent to conditioning the projection of the field onto $\CH_1(\strip)$ (with fixed additive constant) to take on a large value.  This will be convenient as it is easier to analyze the effect of the latter.

\begin{lemma}
\label{lem::tail_behavior}
Fix $a \in (0,2/\gamma)$ and suppose that $h = \wt{h} - a \re(z)$ where $\wt{h}$ is a GFF on $\strip_+$ with free boundary conditions on $\partial \strip_+ \setminus [0,i\pi]$ and Dirichlet boundary conditions on $[0,i\pi]$.  Let $X$ be the projection of $h$ onto $\CH_1(\strip_+)$ (with fixed additive constant) from Lemma~\ref{lem::strip_spaces_orthogonal}.  For each $C, \epsilon > 0$ and $\beta > 1$ we let
\begin{equation}
\label{eqn::ec_ecb}
\begin{split}
E_{C,\epsilon} &= \left\{C^{-1/2} \nu_h(\partial \strip_+ \setminus [0,i\pi]) \in [1,1+\epsilon] \right\} \quad\text{and}\\
E_{C,\beta}' &= \left\{ \sup_{u \geq 0} X_u  \geq \gamma^{-1} \log\left(\frac{C}{\beta}\right) \right\}.
\end{split}
\end{equation}
We have both
\begin{align}
 \p[ E_{C,\beta}' \giv E_{C,\epsilon}]& \to 1 \quad\text{as}\quad  \beta \to \infty \quad\text{uniformly in}\quad C \quad\text{and} \label{eqn::line_avg_high_given_avg_high}\\
  \p[E_{C,\epsilon} \giv E_{C,\beta}'] & > 0 \quad\text{uniformly in}\quad C \quad\text{for}\quad  \epsilon, \beta \quad\text{fixed}. \label{eqn::avg_high_given_line_avg_high}
\end{align}
The same likewise holds if we replace $\strip_+$ with $\cyl_+$, take $h$ to be a GFF on $\cyl_+$ with Dirichlet boundary conditions, and let $E_{C,\epsilon} = \{ C^{-1} \mu_h(\cyl_+) \in [1,1+\epsilon]\}$.
\end{lemma}

Before we give the proof of Lemma~\ref{lem::tail_behavior}, we will need to collect the following moment bound for Brownian motion with negative drift as well as a moment bound for the LQG measure.

\begin{lemma}
\label{lem:bm_moment_bound}
Suppose that $B$ is a standard Brownian motion and $a,c > 0$.  Let $M = \sup_{t \geq 0} e^{c(B_t - at)}$.  For every $p > 0$, there exists a constant $c_p > 0$ such that
\begin{equation}
\label{eqn::bm_exp_int_moment_bound}
\E \left[ \left(\int_0^\infty e^{c(B_t - at)} dt \right)^p \giv M \right] \leq c_p M^p.
\end{equation}
Similarly, if for each $j$ we let $j^*$ be the value of $t$ in $[j,j+1]$ at which $B_t - at$ attains its supremum, we have that
\begin{equation}
\label{eqn::bm_exp_sum_moment_bound}
\E \left[ \left(\sum_{j=0}^\infty e^{c(B_{j^*} - aj^*)} \right)^p \giv M \right] \leq c_p M^p.
\end{equation}
\end{lemma}
\begin{proof}
We are going to give the proof in the case of~\eqref{eqn::bm_exp_int_moment_bound}.  The proof of~\eqref{eqn::bm_exp_sum_moment_bound} is analogous.

We note that the conditional law of the process $B_t - a t$ given $M$ can be sampled from as follows (recall Lemma~\ref{lem::condition_bm_negative_drift_large} and Remark~\ref{rem::bessel_ito_excursion}).  Let $\tau$ be the time at which $B_t - at$ hits $c^{-1} \log M$.  In the interval $[0,\tau]$, it evolves as $\wh{B}_t + at$, where $\wh{B}$ is a standard Brownian motion, run until the first time it hits $c^{-1} \log M$.  In the interval $[\tau,\infty)$, it evolves as $c^{-1} \log M + \wt{B}_{t -\tau} - a(t-\tau)$ where~$\wt{B}$ is a standard Brownian motion conditioned so that $\wt{B}_t - at \leq 0$ for all $t$.  Together, this implies that $\int_0^\tau e^{c(B_t - at)} dt$ is stochastically dominated from above by $\int_\tau^\infty e^{c(B_t-a t)}$ since the process $t \mapsto B_{\tau-t} - a(\tau-t)$ is equal in distribution to the process $t \mapsto B_{t+\tau} - a(t+\tau)$ run up until the last time it hits $0$.  From the form of the conditional law, it is clear that it suffices to prove the result in the case that $M=1$.  That is, it suffices to show that
\[ \int_0^\infty e^{c (\wt{B}_t - a t)} dt \]
has finite moments of all orders.

We can bound this integral from above by $\sum_{k=0}^\infty 2^{-k} X_k$ where $X_k$ is the amount of time that $e^{c(\wt{B}_t - at)}$ spends in $(2^{-k-1},2^{-k}]$.  Assume that $p > 1$.  By Jensen's inequality, the $p$th power of the integral is thus at most a constant times $\sum_{k=0}^\infty 2^{-k} X_k^p$.  It therefore suffices to show that $\E[X_k^p]$ is uniformly bounded in $k$.  Note that $X_k$ is at most $\sum_j X_{j,k}$ where we let $X_{j,k}$ be the lengths of the successive excursions that $e^{c(\wt{B}_t - at)}$ makes from $[2^{-k-1},2^{-k}]$ to $\R \setminus [2^{-k-2},2^{-k+1}]$.  The strong Markov property implies that the number of such excursions that $e^{c(\wt{B}_t - a t)}$ makes is stochastically dominated by a geometric distribution.  Since $\wt{B}_t - at$ has a negative drift, the corresponding parameter is positive (uniformly in $k$).  Moreover, if $\sigma$ is a stopping time for $\wt{B}_t - at$ then on the event $e^{c(\wt{B}_\sigma-a \sigma)} \in [2^{-k-1},2^{-k}]$ the conditional probability that $e^{c(\wt{B}_{\sigma+t} - a(\sigma+t))} \geq 2^{-k-2}$ decays exponentially in $t$ (uniformly in $k$) due to the negative drift.  This implies that the length of such an excursion has an exponential tail.   Combining, we have shown that $X_k$ is stochastically dominated by $\sum_{i=1}^N A_i$ where $N$ is a geometric random variable (with parameter $q \in (0,1)$ uniform in $k$) and the $(A_i)$ are i.i.d.\ with an exponential tail (uniformly in $k$).  Fix $\alpha > 1$.  Using that $\sum_{i=1}^N A_i = \sum_{i=1}^\infty A_i \one_{\{N \geq i\}} \alpha^i \alpha^{-i}$, we have that
  \begin{align*}
    \E\left[  \left(\sum_{i=1}^N A_i \right)^p \right]
 &\leq (\alpha-1)^{1-p} \E\left[ \sum_{i=1}^\infty A_i^p \one_{\{N \geq i\}} \alpha^{pi} \alpha^{-i} \right] \quad\text{(Jensen's inequality)}\\
 &= (\alpha-1)^{1-p} \sum_{i=1}^\infty \big(\E[ A_i^{2p}]\big)^{1/2} \big(\p[ N \geq i]\big)^{1/2} \alpha^{(p-1) i} \quad\text{(Cauchy-Schwarz)}\\
 &\leq c_{\alpha,p} \sum_{i=1}^\infty q^{i/2} \alpha^{(p-1) i}.
  \end{align*}
  Here, $c_{\alpha,p} > 0$ is a constant depending only on $\alpha,p$.  This sum is finite provided we take $\alpha > 1$ sufficiently close to $1$ so that $q^{1/2} \alpha^{(p-1)} < 1$, which completes the proof of the result.
\end{proof}

\begin{lemma}
\label{lem::moment_bound}
Suppose that $h$ is a GFF on $\strip_+$ with free boundary conditions on $\partial \strip_+ \setminus [0,i\pi]$ and Dirichlet boundary conditions on $[0,i\pi]$, let $\wt{h}$ be the projection of $h$ onto $\CH_2(\strip_+)$, and let $\nu_{\wt{h}}$ be the $\gamma$-LQG boundary measure associated with $\wt{h}$.  For each $p \in (0,4/\gamma^2)$ there exists a constant $c_p < \infty$ such that
\begin{equation}
\label{eqn::moment_bound}
\E\left[ \big(\nu_{\wt{h}}( [u,u+1] \times \{0,\pi\} )\big)^p \right] < c_p  \quad\text{for all}\quad u \in \R_+.
\end{equation}
Suppose that $h$ is instead a GFF on $\cyl_+$ with Dirichlet boundary conditions, $\wt{h}$ is the projection of $h$ onto $\CH_2(\cyl_+)$, and let $\mu_{\wt{h}}$ be the $\gamma$-LQG area measure associated with $\wt{h}$.  For each $p \in (0,4/\gamma^2)$ there exists a constant $c_p < \infty$ such that
\begin{equation}
\label{eqn::moment_bound_area}
\E\left[ \big(\mu_{\wt{h}}( [u,u+1] \times [0,2\pi] )\big)^p \right] < c_p  \quad\text{for all}\quad u \in \R_+.
\end{equation}

\end{lemma}
\begin{proof}
Fix $R > 1$ and suppose that $h$ is a GFF on $B(0,R) \cap \h$ with Dirichlet boundary conditions on $\h \cap \partial B(0,R)$ and free boundary conditions on $[-R,R]$ and let $\wt{h}$ be the projection of $h$ onto $\CH_2(B(0,R) \cap \h)$.  By applying a change of coordinates, it suffices to show that $\nu_{\wt{h}}([-1,1])$ has a finite $p$th moment for each fixed $p \in (0,4/\gamma^2)$ which is uniform in $R$.  This, in turn, follows from \cite[Proposition~3.5]{rv2010revisited}.  The case of a GFF on $\cyl_+$  with Dirichlet boundary conditions is proved similarly.
\end{proof}

\begin{proof}[Proof of Lemma~\ref{lem::tail_behavior}]
We will first give the proof of~\eqref{eqn::line_avg_high_given_avg_high} and then turn to the proof of~\eqref{eqn::avg_high_given_line_avg_high}.
Note that $X_u = B_{2u} - au$ where $B$ is a standard Brownian motion.  Let
\[ M = \sup_{u \geq 0} e^{\gamma X_u/2}.\]
We claim that there exists constants $c_0, c_1 > 0$ so that
\begin{equation}
\label{eqn::ecbp}
c_0 \leq \frac{\p[M \geq C^{1/2}]}{C^{-a/\gamma}} \leq c_1.
\end{equation}
Indeed, this is a standard calculation for Brownian motion with drift (see, e.g., \cite[Chapter~3.5C]{ks91bm}).  Alternatively, this can be seen by using that $Z_t = e^{\gamma X_t/2}$ reparameterized according to its quadratic variation is a Bessel process of dimension $2-2 a/\gamma$ (Proposition~\ref{prop::bessel_exponential_bm}) and that $Z_t^{2-\delta}$ is a continuous local martingale (recall the proof of Lemma~\ref{lem::condition_bm_negative_drift_large}).  By~\eqref{eqn::avg_high_given_line_avg_high} (proved below) and~\eqref{eqn::ecbp}, it thus follows that for a constant $c_2 > 0$ we have
\begin{equation}
\label{eqn::e_c_eps_lbd}
\p[E_{C,\epsilon}] \geq c_2 C^{-a/\gamma}.
\end{equation}
Thus to prove~\eqref{eqn::line_avg_high_given_avg_high} it suffices to show that for each $\delta > 0$ there exists $\beta_0 > 0$ such that $\beta \geq \beta_0$ implies that $\p[E_{C,\epsilon} \cap (E_{C,\beta}')^c] \leq \delta C^{-a/\gamma}$.

Let $N = \nu_h(\partial \strip_+ \setminus [0, i\pi])$.  For each $j \in \N_0$, we define events
\[ Q_j = \left\{ \frac{N}{M} \geq \beta^{1/2} e^j \right\} \quad\text{and}\quad R_j = \left\{ \frac{C^{1/2}}{\beta^{1/2}} e^{-j-1} \leq M \leq \frac{C^{1/2}}{\beta^{1/2}} e^{-j} \right\}.\]

Let $\wt{h}$ be the projection of $h$ onto $\CH_2(\strip_+)$.  For each $j \geq 0$, we let $\wt{N}_j$ be the $\gamma$-LQG boundary length assigned to $[j,j+1] \times \{0,\pi\}$ by $\wt{h}$ and we let $j^*$ be the value of $u \in [j,j+1]$ which maximizes $B_{2u} - a u$.  We then have that,
\begin{align*}
      \frac{N}{M}
&\leq \frac{1}{M}  \sum_{j=0}^\infty e^{ (\gamma/2) (B_{2j^*} - aj^*)} \wt{N}_j
\end{align*} 
Applying Jensen's inequality, we thus have that if $p \geq 1$ then
\begin{align*}
\left(\frac{N}{M}\right)^p \leq  \left(\frac{1}{M} \sum_{j=0}^\infty e^{ (\gamma/2) (B_{2j^*} - aj^*)} \right)^p \sum_{j=0}^\infty p_j \wt{N_j}^p \quad\text{where}\quad p_j = \frac{e^{(\gamma/2) (B_{2j^*} - aj^*)}}{\sum_k e^{(\gamma/2) (B_{2k^*}-ak^*)}}.
\end{align*}
In particular, the $p_j$ are non-negative, sum to $1$, and are measurable with respect to $B$.
By Lemma~\ref{lem::moment_bound}, we have that $\E[ \wt{N}_j^p] < \infty$ uniformly in $j$ provided $p \in (0,4/\gamma^2)$.  Since the $\wt{N}_j$ are independent of $B$, it follows from Lemma~\ref{eqn::bm_exp_sum_moment_bound} that there exist constants $c_3,c_4 > 0$ so that
\begin{equation}
\label{eqn::b_a_bounds}
 c_3 \leq \E\left[ \left( \frac{N}{M} \right)^p \giv M \right] \leq c_4 \quad\text{and}\quad \E\left[ \left( \frac{N}{M} \right)^p \right] < \infty \quad\text{for all}\quad p \in [1,4/\gamma^2).
 \end{equation}
By another application of Jensen's inequality, the moments in~\eqref{eqn::b_a_bounds} are finite for all $p \in (0,4/\gamma^2)$.  We can thus dispense with the assumption that $p \geq 1$.  Fix $p \in (2 a/\gamma,4/\gamma^2)$.  Using~\eqref{eqn::b_a_bounds}, Markov's inequality for $N/M$ and~\eqref{eqn::ecbp} respectively imply that for a constant $c_5 > 0$ we have 
\[ \p[Q_j] \leq c_5 \beta^{-p/2} e^{-p j}, \quad \p[ Q_j \giv R_j] \leq c_5 \beta^{-p/2} e^{-p j}, \quad\text{and}\quad \p[R_j] \leq c_5 \beta^{a/\gamma} C^{-a/\gamma} e^{2 a j/\gamma}.\]

For constants $c_6, c_7 > 0$ we have that,
\begin{align*}
     \p[ N \geq C^{1/2},\ M \leq (C/\beta)^{1/2}]
&\leq \sum_{j=0}^\infty \p[ Q_j \cap R_j ] 
  \leq \sum_{j=0}^\infty \p[ Q_j \giv R_j] \p[R_j] \\
&\leq c_6 \beta^{a/\gamma} C^{-a/\gamma}\sum_{j=0}^{\infty} e^{2aj/\gamma} \times \beta^{-p/2} e^{-p j}  \\
&\leq c_7 \beta^{a/\gamma-p/2} C^{-a/\gamma} \quad\text{(since $p \in (2 a/\gamma,4/\gamma^2)$)}.
\end{align*}
Combining with~\eqref{eqn::e_c_eps_lbd} proves~\eqref{eqn::line_avg_high_given_avg_high}.

We now turn to prove~\eqref{eqn::avg_high_given_line_avg_high}.  In view of Lemma~\ref{lem::condition_bm_negative_drift_large}, it is obvious that there exists $p_0 > 0$ so that for all $\beta > 1$ and $C > 0$ we have that $\p[ \nu_h(\partial \strip_+ \setminus [0, i \pi]) \geq (C/\beta)^{1/2} \giv E_{C,\beta}'] \geq p_0$.  Moreover, \eqref{eqn::bm_exp_int_moment_bound} from Lemma~\ref{lem:bm_moment_bound} implies that for a constant $c_8 > 0$ (which does not depend on $C$ or $\beta$) we have that $\E[ \nu_h(\partial \strip_+ \setminus [0, i \pi]) \giv E_{C,\beta}'] \leq c_8 (C/\beta)^{1/2}$.  Combining this with Markov's inequality (and the above lower bound) implies that for a constant $c_9 > 1$ (which does not depend on $C$ or $\beta$) we have that $\p[ \nu_h(\partial \strip_+ \setminus [0, i \pi]) \in [(C/\beta)^{1/2}, c_9 (C/\beta)^{1/2}] \giv E_{C,\beta}' ] \geq p_0/2$.  We can cover $[(C/\beta)^{1/2}, c_9 (C/\beta)^{1/2}]$ by $n=\lceil 2(c_9-1) \epsilon^{-1} \rceil$ intervals $I_1,\ldots,I_n$ of length $(C/\beta)^{1/2}\epsilon/2$.  There must be one such interval, say $I_j$, so that $\p[ \nu_h(\partial \strip_+ \setminus [0, i \pi]) \in I_j \giv E_{C,\beta}'] \geq p_0/(4n)$.  Let $\phi$ be a $C_0^\infty(\strip_+)$ function which is equal to $\gamma^{-1} \log (\beta j^{-2})$ on $\re(z) \geq r_0 = \gamma^{-1} \log \beta$.  Note that the conditional probability that $\nu_h([0,r_0] \times \{0, \pi\}) \geq \epsilon (C/\beta)^{1/2}/10$ given $E_{C,\beta}'$ tends to $0$ as $C \to \infty$.  Let $\wh{h} = h + \phi$.  Then on $\nu_h(\partial \strip_+ \setminus [0, i \pi]) \in I_j$ and $\nu_h([0,r_0] \times \{0, \pi\}) \leq \epsilon (C/\beta)^{1/2}/10$, we have that $\nu_{\wh{h}}(\partial \strip_+ \setminus [0,i \pi]) \in [C^{1/2},C^{1/2} (1+\epsilon)]$.  This proves~\eqref{eqn::avg_high_given_line_avg_high} since $h$ and $\wh{h}$ are mutually absolutely continuous with Radon-Nikodym derivative given by $\exp( (h,\phi)_\nabla - \| \phi \|_\nabla^2/2)$ which has finite moments of all orders depending only on $\beta$.

Minor modifications to the above argument also give the result in the case that $h$ is a GFF with Dirichlet boundary conditions on $\cyl_+$.  In particular, in this case $X_u = B_u - au$ and $M = \sup_{u \geq 0} e^{\gamma X_u}$.
\end{proof}

We will now describe the law of a GFF $h$ weighted by its $\nu_h$ mass.  (Versions of this idea are also present in \cite[Section~6]{ds2011kpz}, \cite{she2010zipper}, and \cite[Section~4.4]{ms2013qle}.)

For a simply connected domain $D \subseteq \C$ and $z \in D$, we let $\confrad(z;D)$ denote the conformal radius of $z \in D$.  That is, $\confrad(z;D) = \varphi'(0)$ where $\varphi \colon \D \to D$ is the unique conformal transformation with $\varphi(0) = z$ and $\varphi'(0) > 0$.

\begin{lemma}
\label{lem::weighted_quantum_surface_boundary}
Suppose that $h$ is a GFF on a simply connected domain $D \subseteq \h$ and assume that $L = \partial D \cap \partial \h$  is a non-empty interval.  Let $dh$ denote the law of a GFF on $D$ with free boundary conditions on $L$ and Dirichlet boundary conditions on $\partial D \setminus L$.  Let $D^\dagger$ be the union of $D$, $\{\ol{z} : z \in D\}$, and the interior of $L$.  Suppose that $\wt{L} \subseteq L$ is a non-empty interval. A sample from the law $\nu_h(\wt{L}) dh$ can be produced by:
\begin{enumerate}
\item Sampling $h$ according to its (unweighted) law.
\item Picking $z_0 \in \wt{L}$ according to the measure $\confrad(z;D^{\dagger})^{\gamma^2/4}dz$ where $dz$ is Lebesgue measure on $\wt{L}$, independently of $h$ and then adding to $h$ the function $z \mapsto \tfrac{\gamma}{2} G(z_0,z)$ where $G$ is the Green's function on $D$ with Neumann (resp.\ Dirichlet) boundary conditions on $L$ (resp.\ $\partial D \setminus L$).
\end{enumerate}
\end{lemma}
\begin{proof}
We will prove the result using a standard Cameron-Martin (or Girsanov) style argument.  For each $\epsilon > 0$ and $z \in \ol{D}$, we let $h_\epsilon(z)$ be the average of $h$ on $D \cap \partial B(z,\epsilon)$.  Consider the law $\epsilon^{\gamma^2/4}\exp(\tfrac{\gamma}{2} h_\epsilon(u)) dh du$ where $du$ is Lebesgue measure on $\wt{L}$.  As explained in \cite[Section~6.2]{ds2011kpz}, we note that we can write $h$ as the even part of a GFF $h^\dagger$ on $D^\dagger$ on with Dirichlet boundary conditions.  This in particular means that for $u \in \wt{L}$ and $h_\epsilon^\dagger(u)$ the average of $h^\dagger$ on $\partial B(u,\epsilon)$ with $B(u,\epsilon) \subseteq D^\dagger$ we have that
\[ h_\epsilon(u) = \frac{1}{\sqrt{2}} \cdot 2 h_\epsilon^\dagger(u) = \sqrt{2} h_\epsilon^\dagger(u)\]
(see also \cite[Section~3.2]{she2010zipper}).  Consequently, using \cite[Proposition~3.2]{ds2011kpz} the marginal law of $u$ has density with respect to Lebesgue measure given by
\[ \epsilon^{\gamma^2/4} \int e^{\gamma h_\epsilon^\dagger(u) / \sqrt{2}} dh^\dagger = \confrad(u;D^\dagger)^{\gamma^2/4}.\]
Under this law, the conditional law of $h$ given $u$ is given by $\exp(\tfrac{\gamma}{2} h_\epsilon(u)) dh$.  For $z \in \ol{D}$ we let $\rho_\epsilon^z$ be the uniform measure on $D \cap \partial B(z,\epsilon)$ and let
\[ \xi_\epsilon^z(y) = -2\log \max(|z-y|,\epsilon) - \wt{G}_{z,\epsilon}(y)\]
where $\wt{G}_{z,\epsilon}$ is given by the function which is harmonic in $D$ with Neumann boundary conditions on $L$ and with Dirichlet boundary values given by $y \mapsto -2\log \max(|z-y|,\epsilon)$ on $\partial D \setminus L$.  For $u \in \wt{L}$, we note that $\Delta \xi_\epsilon^z(u) = -2\pi \rho_\epsilon^z(u)$ for all $\epsilon > 0$ sufficiently small so that $\partial B(u,\epsilon) \cap \partial D \subseteq L$.  Moreover, we have that
\begin{align*}
      h_\epsilon(u)
&= \int_D h(y) \rho_\epsilon^u(y) dy
 =  -\frac{1}{2\pi} \int_D h(y) \Delta \xi_\epsilon^u(y) dy\\
&= \frac{1}{2\pi} \int_D \nabla h(y) \cdot \nabla \xi_\epsilon^u(y) dy
 = (h, \xi_\epsilon^u)_\nabla
\end{align*}
for all $\epsilon > 0$ sufficiently small.  Now, the marginal law of $h$ under the weighted law $\exp(\tfrac{\gamma}{2} (h,\xi_\epsilon^u)_\nabla) dh$ can be sampled from by sampling $h$ according to its unweighted law and then adding to it $\tfrac{\gamma}{2} \xi_\epsilon^u$.  The result follows because $\xi_\epsilon^u \to G(u,\cdot)$ for $u \in \wt{L}$ and $\epsilon^{\gamma^2/4} \exp( \tfrac{\gamma}{2} h_\epsilon(u))du \to \nu_h$ as $\epsilon \to 0$.
\end{proof}

\begin{proof}[Proof of Proposition~\ref{prop::unit_boundary_length_quantum_disk}]
Let $G$ be the Green's function on $D$ with Neumann (resp.\ Dirichlet) boundary conditions on $L$ (resp.\ $\partial D \setminus L$).    Let $x_0$ (resp.\ $y_0$) be the two endpoints of $L$ and fix $u_0 \in (x_0,y_0)$.  Let $\psi \colon D \to \strip_+$ be the unique conformal transformation with $\psi(u_0) = +\infty$, $\psi(x_0) = 0$, and $\psi(y_0) = \pi i$.  Then $h \circ \psi^{-1} + Q\log|(\psi^{-1})'|$ can be written as $\wt{h} + F_{u_0}  +(\gamma- Q) \re(z)$ where $\wt{h}$ and $F_{u_0}$ are as in the statement of Lemma~\ref{lem::line_average_process_large}.  Indeed, the terms $F_{u_0}$ and $-Q \re(z)$ come from the coordinate change term $Q\log|(\psi^{-1})'|$ and the term $\gamma \re(z)$ comes because $G(u_0,\cdot)$ behaves like $-2\log|u_0-\cdot|$ near $u_0$.

We are going to prove that the quantum surface described by the field $\wt{h} + F_{u_0}  +(\gamma- Q) \re(z)$ conditioned on $E_{C,\epsilon}$ converges as $C \to \infty$ and then $\epsilon \to 0$ to the unit boundary length quantum disk.  We will then explain at the end of the proof how to deduce the statement of the proposition from this result.

Let $\wt{X}_u$ be the projection of $\wt{h} + F_{u_0} + (\gamma- Q) \re(z)$ onto $\CH_1(\strip_+)$.  For $C,\epsilon > 0$ and $\beta>1$, we let $E_{C,\epsilon}$ and $E_{C,\beta}'$ be as in Lemma~\ref{lem::tail_behavior} (note that $Q-\gamma = 2/\gamma-\gamma/2 \in (0,2/\gamma)$).  Note that $\wt{h} + F_{u_0}$ is a GFF on $\strip_+$ with Dirichlet boundary conditions on $[0, i \pi]$ and free boundary conditions on $\partial \strip_+ \setminus [0,i \pi]$.  Therefore we can apply Lemma~\ref{lem::tail_behavior} to the field $(\wt{h} + F_{u_0}) + (\gamma-Q) \re(z)$.  Note that~\eqref{eqn::line_avg_high_given_avg_high} implies that for each $\delta > 0$ there exists $\beta_0$ such that $\beta \geq \beta_0$ implies that the total variation distance between the law of $\wt{h} - \gamma^{-1} \log C$ given $E_{C,\epsilon}$ and $\wt{h} - \gamma^{-1} \log C$ given $E_{C,\beta}' \cap E_{C,\epsilon}$ is at most $\delta > 0$.  Thus by~\eqref{eqn::avg_high_given_line_avg_high}, to prove the existence of the limit of the law of $\wt{h} - \gamma^{-1} \log C$ given $E_{C,\epsilon}$ as $C \to \infty$ it suffices to prove the existence of the limit of the law of $\wt{h} - \gamma^{-1}\log C$ given $E_{C,\beta}'$ as $C \to \infty$ for each $\beta > 1$.  Lemma~\ref{lem::line_average_process_large} implies that the law of $\wt{h} - \gamma^{-1} \log C$ given $E_{C,\beta}'$ with the horizontal translation fixed so that $\wt{X}$ first hits $\gamma^{-1}\log(C/\beta)$ at $u=0$ converges to a limit as $C \to \infty$ which does not depend on our initial choices.

Lemma~\ref{lem::line_average_process_large} implies that the limiting process is given by $B_{2u} + (\gamma-Q) u$ for $u \geq 0$ and by $\wh{B}_{-2u} - (\gamma-Q) u$ for $u \leq 0$ where $B,\wh{B}$ are independent standard Brownian motions and $\wh{B}$ is conditioned so that $\wh{B}_{-2u} - (\gamma-Q) u < 0$ for all $u < 0$.  We will now describe this process in terms of a Bessel process.  Note that the process $\exp( (\gamma/2)(B_{2u} + (\gamma-Q)u))$ is equal in distribution to $\exp( B_{\gamma^2 u/2} + (\gamma/2)(\gamma-Q)u)$.  If we then perform a time-change by replacing $u$ with $2u/\gamma^2$, this process becomes $\exp(B_u + (1/\gamma)(\gamma-Q) u)$.  Note that $(1/\gamma)(\gamma-Q) = 1/2 - 2/\gamma^2$.  Therefore Proposition~\ref{prop::bessel_exponential_bm} implies that it corresponds to a Bessel process of dimension $3-4/\gamma^2$ in exactly the same way as the projection of a sample from $\diskmeasure$ (recall that it is defined in Definition~\ref{def::finite_volume_surfaces}) onto $\CH_1(\strip)$ (with additive constant fixed) corresponds to a Bessel process of dimension $3-4/\gamma^2$.  The same also applies to $\wh{B}_{-2u} - (\gamma - Q) u$ for $u \leq 0$ (recall Lemma~\ref{lem::condition_bm_negative_drift_large} for further explanation of this point).
  
Altogether, we have shown that follows that the limiting law as $C \to \infty$ for $\wt{h} - \gamma^{-1} \log C$ conditioned on $E_{C,\epsilon} \cap E_{C,\beta}'$ is given by $\diskmeasure$ conditioned on both
\begin{enumerate}
\item The supremum of the projection onto $\CH_1(\strip)$ (with fixed additive constant) is at least $-\gamma^{-1} \log \beta$ and
\item The $\gamma$-LQG boundary length is between $1$ and $1+\epsilon$.
\end{enumerate}
(Note that this describes a probability measure since this set has positive and finite $\diskmeasure$-mass.)  The result thus follows by sending $\beta \to \infty$ and $\epsilon \to 0$.  (Recall the discussion just after Definition~\ref{def::finite_volume_surfaces} where it is explained why the limit as $\epsilon \to 0$ exists.)

We are now going to explain how to deduce the statement of the proposition from the above.  It suffices to prove the result with $h$ sampled from the law $\nu_h(\wt{L}) dh$ as in Lemma~\ref{lem::weighted_quantum_surface_boundary} in place of the unweighted law $dh$.  By Lemma~\ref{lem::weighted_quantum_surface_boundary}, we know that we can produce a sample of $h$ from $\nu_h(\wt{L}) dh$ by first sampling $u_0 \in \wt{L}$ from the measure $\confrad(z;D^\dagger)^{\gamma^2/4} dz$ where $dz$ is Lebesgue measure on $\wt{L}$ (as in the statement of Lemma~\ref{lem::weighted_quantum_surface_boundary}).  We then sample $\wh{h}$ according to the law $dh$ independently of $u_0$ and then take $h = \wh{h} + \tfrac{\gamma}{2} G(u_0,\cdot)$.  If we want to produce a sample from $\nu_h(\wt{L}) dh$ conditioned on $C^{-1/2} \nu_h(\wt{L}) \in [1,1+\epsilon]$, then we sample $u_0$ from its new marginal law and then take $h = \wh{h} + \tfrac{\gamma}{2} G(u_0,\cdot)$ where $\wt{h}$ has law $dh$ and we have conditioned on $C^{-1/2} \nu_h(\wt{L}) \in [1,1+\epsilon]$.  The result then follows since we have assumed that $\dist(\wt{L}, \partial D \setminus \partial \h) > 0$ so that $\dist(u_0, \partial D \setminus \partial \h)$ is bounded from below (i.e., $u_0$ is never arbitrarily close to where the boundary conditions for $h$ are not free).
\end{proof}

We are now going to deduce from the proof of Proposition~\ref{prop::unit_boundary_length_quantum_disk} a certain symmetry property for the unit boundary length quantum disk which is not immediately obvious from Definition~\ref{def::finite_volume_surfaces}.

\begin{proposition}
\label{prop::unit_boundary_length_disk_points_uniformly_random}
Suppose that $(\strip,h)$ has the law of a unit boundary length quantum disk with the embedding into $\strip$ as described in Definition~\ref{def::finite_volume_surfaces}.  Conditional on the quantum surface $(\strip,h)$, the points which correspond to $-\infty$ and $+\infty$ are uniformly and independently distributed according to the quantum boundary measure.  That is, if $x,y \in \partial \strip$ are chosen independently from $\nu_h$ and $\varphi \colon \strip \to \strip$ is a conformal map with $\varphi(x) = +\infty$ and $\varphi(y) = -\infty$, then $h \circ \varphi^{-1} + Q\log|(\varphi^{-1})'|$ has the same law as $h$ (modulo a horizontal translation of~$\strip$).
\end{proposition}
\begin{proof}
Recall from the proof of Proposition~\ref{prop::unit_boundary_length_quantum_disk} completed just above that the point at $+\infty$ in the limiting construction of the unit boundary length quantum disk was given by the image of a point chosen from the quantum boundary measure.  This implies that, for the unit boundary length quantum disk, the conditional law of the point which corresponds to $+\infty$ is uniformly distributed according to the quantum boundary measure.  That is, if we pick $x \in \partial \strip$ according to $\nu_h$ and let $\varphi \colon \strip \to \strip$ be a conformal transformation which fixes $-\infty$ and with $\varphi(x) = +\infty$, then we have that $h \circ \varphi^{-1} + Q\log|(\varphi^{-1})'|$ has the same law as $h$ (modulo a horizontal translation of~$\strip$).  The proposition follows as the law of $h$ is clearly also invariant (modulo a horizontal translation of~$\strip$) under the operation of swapping $\pm \infty$.
\end{proof}

We are now going to explain a variant of the proof of Proposition~\ref{prop::unit_boundary_length_quantum_disk} which yields the following useful resampling property for the unit boundary length quantum disk.

\begin{figure}
\begin{center}
\includegraphics[scale=0.85]{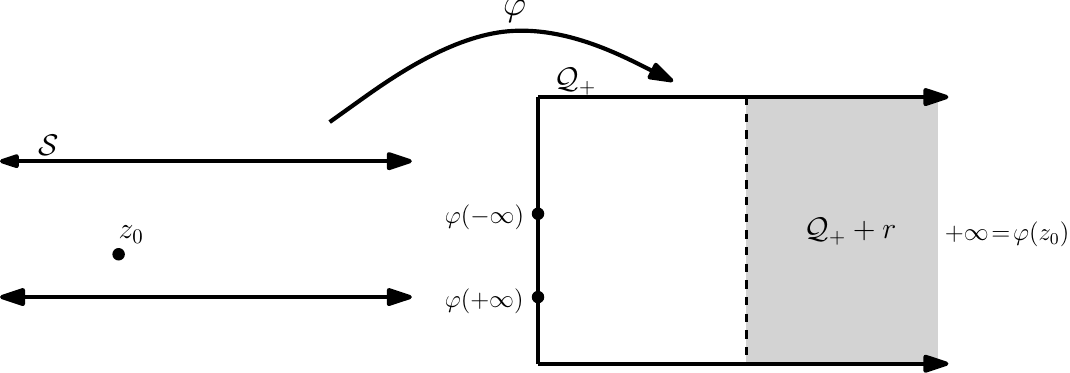}
\end{center}
\caption{\label{fig::unit_boundary_length_disk_resampling} Shown on the left is a unit boundary length quantum disk parameterized by $\strip$, $z_0$ a point picked in $\strip$ uniformly from the quantum measure, and $\varphi \colon \strip \to \cyl_+$ the unique conformal map with $\varphi(z_0) = +\infty$ and $\varphi'(z_0) > 0$.  Proposition~\ref{prop::unit_boundary_length_disk_resampling} implies that the conditional law of the surface on the right given the field values in $\cyl_+ + r$, $r > 0$ fixed, is given by that of a GFF in $[0,r] \times [0,2\pi] \subseteq \cyl_+$ with Dirichlet (resp.\ free) boundary conditions on $\partial \cyl_+ + r$ (resp.\ $\partial \cyl_+$) conditioned on $\partial \cyl_+$ having quantum boundary length equal to $1$.}
\end{figure}

\begin{proposition}
\label{prop::unit_boundary_length_disk_resampling}
Suppose that $D \subseteq \C$ is a simply connected domain and $h$ is such that $(D,h)$ has the law of a unit boundary length quantum disk weighted by its quantum area.  Pick $z \in D$ uniformly at random from the quantum measure associated with $h$, let $\varphi \colon D \to \cyl_+$ be the unique conformal transformation with $\varphi(z) = +\infty$ and $\varphi'(z) > 0$, and let $\wt{h} = h \circ \varphi^{-1} + Q \log|(\varphi^{-1})'|$.  For each $r > 0$, let $\CF_r$ be the $\sigma$-algebra generated by the restriction of $h$ to $\cyl_+ + r$.  Then the conditional law of $h$ given $\CF_r$ is that of a GFF on $[0,r] \times [0,2\pi] \subseteq \cyl_+$ with free boundary conditions on $\partial \cyl_+$ and Dirichlet boundary conditions given by those of $h$ on $\partial (\cyl_+ + r)$ conditioned on the quantum boundary length of $\partial \cyl_+$ being equal to $1$.
\end{proposition}

See Figure~\ref{fig::unit_boundary_length_disk_resampling} for an illustration of the setup of Proposition~\ref{prop::unit_boundary_length_disk_resampling}.  To give the proof of Proposition~\ref{prop::unit_boundary_length_disk_resampling}, we need the following analog of Lemma~\ref{lem::weighted_quantum_surface_boundary} (see also \cite[Section~3.3]{ds2011kpz}).  This lemma will also be useful to us later when we construct the unit area quantum sphere as a limit in Appendix~\ref{subapp::unit_area_sphere} below.

\begin{lemma}
\label{lem::weighted_quantum_surface}
Suppose that $h$ is a GFF on a bounded, simply connected domain $D$ with Dirichlet boundary conditions.  Fix $U \subseteq D$ open.  Let $dh$ denote the law of $h$ and consider the law $\mu_h(U) dh$.  A sample from the law $\mu_h(D) dh$ can be produced by:
\begin{enumerate}
\item Sampling $h$ according to its (unweighted) law.
\item Picking $z_0 \in U$ according to the measure $\confrad(z;D)^{\gamma^2/2} dz$ where $dz$ denotes Lebesgue measure on $U$ independently of $h$ and then adding to $h$ the function $z \mapsto \gamma G(z_0,z)$ where $G$ is the Green's function on $D$ with Dirichlet boundary conditions.
\end{enumerate}
The same likewise holds if $\partial D = \partial^{\mathrm F} \cup \partial^{\mathrm D}$ is a disjoint union with $\partial^{\mathrm F} , \partial^{\mathrm D} \neq \emptyset$ and $h$ is a GFF on $D$ with Dirichlet (resp.\ free) boundary conditions on $\partial^{\mathrm D}$ (resp.\ $\partial^{\mathrm F}$) where we take $G$ to be the Green's function on $D$ with Dirichlet (resp.\ Neumann) boundary conditions on $\partial^{\mathrm D}$ (resp.\ $\partial^{\mathrm F}$) and $C(z;D)$ is the density for the law of $z_0 \in D$ is replaced by the function $\lim_{y \to z} \exp(G(y,z) + \log|y-z|)$.  (In this case, $\mu_h(D) dh$ may be an infinite measure).

Finally, if $h$ is a whole-plane GFF with the additive constant fixed so that its average on $\partial \D$ is equal to $0$ then a sample from the law of $\mu_h(\D) dh$ can be produced by sampling $h$ according to its unweighted law, then picking $z_0 \in \D$ according to Lebesgue measure, then adding to $h$ the function $z \mapsto -\gamma\log|z-z_0| + \gamma \log\max(|z|,1)$, and then finally taking the additive constant so that the average of the resulting field on $\partial \D$ is equal to $0$.
\end{lemma}
\begin{proof}
As in the case of Lemma~\ref{lem::weighted_quantum_surface_boundary}, we will prove the result using a standard Cameron-Martin (or Girsanov) style argument.  We are going to give the proof in the case of Dirichlet boundary conditions; the proof in the case of mixed boundary conditions or the whole-plane GFF follows from the same argument.

For each $\epsilon > 0$, we let $h_\epsilon$ be the circle average process associated with~$h$.   Consider the measure $\epsilon^{\gamma^2/2}\exp(\gamma h_\epsilon(u)) dh du$ where $du$ is Lebesgue measure on $U$.  By \cite[Proposition~3.2]{ds2011kpz}, we note that the density of the marginal law of $u$ with respect to Lebesgue measure is given by 
\[ \epsilon^{\gamma^2/2} \int e^{\gamma h_\epsilon(u)} dh = \confrad(u;D)^{\gamma^2/2}\]
provided $B(u,\epsilon) \subseteq D$.  Moreover, the law of $h$ under this weighting converges as $\epsilon \to 0$ to the same law as $\mu_h(U) dh$.  Using the former law, the conditional law of $h$ given $u$ is given by $\exp(\gamma h_\epsilon(u)) dh$.  Let $\rho_\epsilon^z$ be the uniform measure on $\partial B(z,\epsilon)$ and let
\[ \xi_\epsilon^z(y) = -\log \max(|z-y|,\epsilon) - \wt{G}_{z,\epsilon}(y)\]
where $\wt{G}_{z,\epsilon}$ is given by the harmonic extension of $y \mapsto -\log \max(|z-y|,\epsilon)$ from $\partial D$ to $D$.  Note that $\Delta \xi_\epsilon^z(y) = -2\pi \rho_\epsilon^z(y)$.  Moreover, we have that
\begin{align*}
      h_\epsilon(u)
&= \int_D h(y) \rho_\epsilon^u(y) dy
 =  -\frac{1}{2\pi} \int_D h(y) \Delta \xi_\epsilon^u(y) dy\\
&= \frac{1}{2\pi} \int_D \nabla h(y) \cdot \nabla \xi_\epsilon^u(y) dy
 = (h, \xi_\epsilon^u)_\nabla.
\end{align*}
Now, the marginal law of $h$ under the weighting $\exp(\gamma (h,\xi_\epsilon^u)_\nabla) dh$ is given by taking the law of $h$ and then adding to it $\gamma \xi_\epsilon^u$.  The result follows because $\xi_\epsilon^u \to G(u,\cdot)$ as $\epsilon \to 0$.
\end{proof}

\begin{proof}[Proof of Proposition~\ref{prop::unit_boundary_length_disk_resampling}]
We are going to deduce the result from the proof of Proposition~\ref{prop::unit_boundary_length_quantum_disk}.  In particular, in the proof of Proposition~\ref{prop::unit_boundary_length_quantum_disk} we showed that the unit boundary length quantum disk arises by starting with a GFF with mixed boundary conditions, conditioning the boundary length to be large, and then mapping to $\strip_+$ with a typical point chosen from the boundary measure taken to $+\infty$.  We will use that this GFF satisfies its usual Markov property to deduce the claimed resampling property for the limiting unit boundary length quantum disk.

We first suppose that $(D,h)$ is as in the statement of Proposition~\ref{prop::unit_boundary_length_quantum_disk}.  Let $G$ be the Green's function for $\Delta $ on $D$ with Dirichlet (resp.\ Neumann) boundary conditions on $\partial D \setminus L$ (resp.\ $L$).  By Lemma~\ref{lem::weighted_quantum_surface_boundary}, we know that we can produce a sample from the law of $\nu_h(\wt{L}) dh$ by first producing a sample of $\wh{h}$ from the law $dh$ and then taking $\wt{h} = \wh{h} + \tfrac{\gamma}{2} G(u_0,\cdot)$ where $u_0$ is picked from the measure $\confrad(z;D^\dagger)^{\gamma^2/4}dz$, $dz$ Lebesgue measure on $\wt{L}$, and $D^\dagger$ is the union of $D$, $\{\ol{z} : z \in D\}$, and the interior of $\wt{L}$.  Fix a neighborhood $U \subseteq \wt{L}$ of $u_0$.  Lemma~\ref{lem::weighted_quantum_surface} implies that we can produce a sample from the conditional law of $\wt{h}$ given its values on $U$ and weighted by $\mu_{\wt{h}}(\wt{U})$ by first picking $z_0 \in \wt{U}$ according to the measure $(f(z))^{\gamma^2/2} dz$ and then adding $\gamma \wt{G}(z_0,z)$ to the field where $\wt{G}$ is the Green's function on $D$ with Dirichlet boundary values on $(\partial D \setminus L) \cup U$ and Neumann boundary conditions on $L \setminus U$ and $f(z) = \lim_{y \to z} \exp(\wt{G}(y,z) + \log|y-z|)$.

Let $x,y$ be the two endpoints of $L$ and let $\varphi$ be the unique conformal map $D \to \strip_+$ which takes $x$ to $0$, $y$ to $i \pi$, and $u_0$ to $+\infty$.  We then let $r_0$ be such that the real part of $\varphi(z_0)+r_0$ is equal to $0$ and then let $\wt{\varphi} = \varphi+r_0$ and $\breve{h} = \wt{h} \circ \wt{\varphi}^{-1} + Q \log|(\wt{\varphi}^{-1})'|$.

It follows from the proof of Proposition~\ref{prop::unit_boundary_length_quantum_disk} that the limit of the law of $(\strip_+ - r_0,\wt{h} - (\log C)/\gamma)$ conditioned on $1 \leq \nu_{\wt{h}}(\partial \strip_+ - r_0) \leq (1+\epsilon)$ converges when we take a limit as $C \to \infty$ and then $\epsilon \to 0$ to the law of the unit boundary length quantum disk.  Thus the above tells us how to resample the law of the unit boundary length quantum disk weighted by its quantum area once we have picked a uniformly random point from the quantum area measure and we have fixed its values in a neighborhood $U$ of $\pm \infty$ in $\partial \strip$.  Namely, the conditional law is given by that of a GFF in $\strip$ with free boundary conditions along $\partial \strip \setminus U$ and Dirichlet boundary conditions along $\partial U$ plus $\gamma G(w_0,\cdot)$ where $G$ is the Green's function on $\strip$ with Neumann (resp.\ Dirichlet) boundary conditions on $\partial \strip \setminus U$ (resp.\ $\partial U \cap \strip$) conditioned on the total boundary length being equal to $1$.

Suppose that $(\strip,h)$ has the law of a unit boundary length quantum disk weighted by its quantum area, $w_0$ is chosen uniformly from the quantum area measure, and that $\varphi \colon \strip \to \cyl_+$ is the unique conformal map with $\varphi(w_0) = +\infty$ and $\varphi'(w_0) > 0$.  Then with $\wt{h} = h \circ \varphi^{-1} + Q \log|(\varphi^{-1})'|$ we have that $(\cyl_+,\wt{h})$ has the law of a unit boundary length quantum disk weighted by its quantum area.  Suppose that $a_0,b_0 \in \partial \cyl_+$ are picked uniformly and independently from the quantum boundary measure.  Proposition~\ref{prop::unit_boundary_length_disk_points_uniformly_random} combined with the above implies that if $r > 0$ is fixed, then the conditional law of $\wt{h}$ in $[0,r] \times [0,2\pi]$ given its values in $\cyl_+ + r$ and in a neighborhood $U$ of $a_0,b_0$ in $\partial \cyl_+$ is that of a GFF with free (resp.\ Dirichlet) boundary conditions on $\partial \cyl_+ \setminus U$ (resp.\ $U \cup \partial (\cyl_+ +r)$) conditioned on the boundary length of $\partial \cyl_+$ being equal to $1$.  The result follows by taking a limit as the size of $U$ tends to $0$.
\end{proof}

\subsection{Unit area quantum sphere}
\label{subapp::unit_area_sphere}

We now turn to the case of the unit area quantum sphere.

\begin{proposition}
\label{prop::unit_area_quantum_sphere}
Fix a smooth, bounded domain $D$.  Let $h$ be a GFF on $D$ with zero boundary conditions.  Let $U \subseteq D$ be non-empty, open, and satisfy $\dist(\partial U, \partial D) > 0$.  Fix $C,\epsilon > 0$ and condition on $\{C \leq \mu_h(U) \leq C(1+\epsilon)\}$.  Let $\wh{h} = h - \gamma^{-1} \log C$ so that $1 \leq \mu_{\wh{h}}(U) \leq 1+\epsilon$.  Then the law of $(D,\wh{h})$ (viewed as a quantum surface) converges weakly to that of the unit area quantum sphere as in Definition~\ref{def::finite_volume_surfaces} when we take a limit as $C \to \infty$ and then $\epsilon \to 0$.  More precisely, suppose that $z \in U$ is picked from $\mu_{\wh{h}}$ and let $\varphi \colon D \to \cyl_+ - r$ be the unique conformal map which takes $z$ to $+\infty$ with $\varphi'(z) > 0$ where the value of $r$ is chosen so that the $\gamma$-LQG mass assigned by $\wh{h} \circ \varphi^{-1} + Q \log| (\varphi^{-1})'|$ to $\cyl_+$ is equal to $1/2$.  Then the law of $\wh{h} \circ \varphi^{-1} + Q \log| (\varphi^{-1})'|$ converges weakly in the space of distributions as $C \to \infty$ and $\epsilon \to 0$ to that of a unit area quantum sphere embedded into $\cyl$ so that the $\gamma$-LQG mass of $\cyl_+$ is $1/2$.
\end{proposition}

We emphasize that the limit in Proposition~\ref{prop::unit_area_quantum_sphere} does not depend on the choice of $D$ or $U$.

Before proving Proposition~\ref{prop::unit_area_quantum_sphere}, we need to collect the following analog of Lemma~\ref{lem::line_average_process_large}.

\begin{lemma}
\label{lem::circle_average_process_large}
Suppose that $h$ is a GFF on $\cyl_+$ with zero boundary conditions and let $F$ be a function which is harmonic on $\cyl_+$.  Fix $a > 0$ and for each $u > 0$ we let $X_u$ be the average of $h+F-a\re(z)$ on $u + [0,2\pi i]$.  For each $C > 0$, let $\tau_C = \inf\{u \geq 0 : X_u \geq C\}$ and let $E_C = \{\tau_C < \infty\}$.  On $E_C$, we let $\wt{h}$ be given by precomposing $h+F - a \re(z)-C$ with the horizontal translation $z \mapsto z+\tau_C$.  Then the conditional law of $\wt{h}$ given $E_C$ converges as $C \to \infty$.  Moreover, the limit law can be sampled from by:
\begin{enumerate}[(i)]
\item \label{it::circle_average_large_limit_law}
Taking its projection $\wt{X}_u$ onto $\CH_1(\cyl)$ to be given by $B_{u} - a u$ for $u \geq 0$ where $B$ is a standard Brownian motion with $B_0 = 0$ and to be given by $\wh{B}_{-u} + au$ for $u < 0$ where $\wh{B}$ is a standard Brownian motion with $\wh{B}_0 = 0$ conditioned so that $\wh{B}_{-u} + au <0$ for all $u < 0$.  We fix the additive constant so that its average on $[0, 2\pi i]$ vanishes.
\item \label{it::circle_average_large_limit_law2}
Independently sampling its projection onto $\CH_2(\cyl)$ according to the law of the corresponding projection of the GFF on $\cyl$.  We fix the additive constant so that its average on $[0, 2\pi i]$ vanishes.
\end{enumerate}
\end{lemma}
\begin{proof}
We omit the details since it is very similar to the proof of Lemma~\ref{lem::line_average_process_large}.
\end{proof}

\begin{proof}[Proof of Proposition~\ref{prop::unit_area_quantum_sphere}]
The proof is similar to the proof of Proposition~\ref{prop::unit_boundary_length_quantum_disk}.  We consider the law $\mu_h(U) dh$ in place of the law $h$.

By Lemma~\ref{lem::weighted_quantum_surface}, we can produce a sample of $h$ from $\mu_h(U) dh$ by first sampling $\wh{h}$ according to the law of $dh$ and then taking $h = \wh{h} + \gamma G(z_0,\cdot)$ where $z_0 \in U$ is chosen the measure with density with respect to Lebesgue measure as described in Lemma~\ref{lem::weighted_quantum_surface} independently of $\wh{h}$.  We let $\psi \colon D \to \cyl_+$ be the unique conformal transformation with $\psi(z_0) = +\infty$ and $\psi'(z_0) > 0$.  Consider the field $\wt{h} = h \circ \psi^{-1} + Q\log|(\psi^{-1})'|$ on $\cyl_+$.  Note that $\wt{h}$ is given by a GFF on $\cyl_+$ with Dirichlet boundary conditions plus the function $(\gamma-Q)\re(z)$.  We let $\wt{X}_u$ be the projection of $\wt{h}$ onto $\CH_1(\cyl_+)$ (with fixed additive constant) as in Lemma~\ref{lem::strip_spaces_orthogonal}.  As in the proof of Proposition~\ref{prop::unit_boundary_length_quantum_disk}, for $C,\epsilon > 0$ and $\beta > 1$ we define events
\begin{equation}
\begin{split}
 E_{C,\epsilon} &= \left\{C^{-1}  \mu_{\wt{h}}(\cyl_+) \in [1,1+\epsilon] \right\} \quad\text{and}\\
  E_{C,\beta}' &= \left\{ \sup_{u \geq 0} \wt{X}_u \geq \gamma^{-1} \log\left(\frac{C}{\beta}\right) \right\}.
 \end{split}
\end{equation}
Therefore the result follows by arguing as in Proposition~\ref{prop::unit_boundary_length_quantum_disk} using Lemma~\ref{lem::circle_average_process_large} in place of Lemma~\ref{lem::line_average_process_large}.
\end{proof}

We finish this section with the following analog of Proposition~\ref{prop::unit_boundary_length_disk_points_uniformly_random} for the unit area sphere.

\begin{proposition}
\label{prop::unit_area_sphere_points_random}
Suppose that $(\cyl,h)$ is a unit area quantum sphere embedded into $\cyl$ as in Definition~\ref{def::finite_volume_surfaces}.  Conditional on the quantum surface $(\cyl,h)$, the points which correspond to $-\infty$ and $+\infty$ are distributed independently and uniformly from the quantum area measure.  That is, if $x,y \in \cyl$ are chosen independently from $\mu_h$ and $\varphi \colon \cyl \to \cyl$ is a conformal map with $\varphi(x) = +\infty$ and $\varphi(y) = -\infty$, then $h \circ \varphi^{-1} + Q\log|(\varphi^{-1})'|$ has the same law as $h$ modulo a horizontal translation of $\cyl$.
\end{proposition}
\begin{proof}
This follows from the same argument used to prove Proposition~\ref{prop::unit_boundary_length_disk_points_uniformly_random}.
\end{proof}

\begin{proposition}
\label{prop::unit_area_sphere_resampling}
Suppose that $(\cyl,h)$ has the law of the unit area sphere embedded into $\cyl$ as in Definition~\ref{def::finite_volume_surfaces}.  Assume that $z \in \cyl$ is picked uniformly from the quantum measure and let $\wt{h} = h(\cdot+z)$ (with translation in the cylinder defined by taking the imaginary coordinate modulo $2\pi$).  Then $(\cyl,\wt{h})$ has the law of the unit area sphere with a fixed embedding because we have chosen the locations of three points.  Fix a bounded domain $U \subseteq \cyl$ containing $0$.  Let $G$ be the Green's function on $U$ with Dirichlet boundary conditions.  Then the conditional law of $h$ in $U$ given its values in $\cyl \setminus U$ is that of a GFF in $U$ with the given boundary conditions on $\partial U$ plus $\gamma G(0,\cdot)$ conditioned so that the total area is equal to $1$.
\end{proposition}
\begin{proof}
This follows from the same argument used to prove Proposition~\ref{prop::unit_boundary_length_disk_resampling}.
\end{proof}

We remark that a statement similar to Proposition~\ref{prop::unit_area_sphere_resampling} also appears in \cite{twoperspectives}, which proves the equivalence of the unit area quantum sphere with another construction given in \cite{lqg_sphere}. The follow up paper \cite{quantum_spheres} to the current paper also contains more detail about obtaining unit area spheres as matings of finite trees.

\section{KPZ interpretation}
\label{app::kpz}

\subsection{Scaling exponents and KPZ}
\label{subapp::kpz}

Historically, one of the major goals of the conformal field theory and quantum gravity literature has been an understanding of the scaling dimensions and exponents associated to random fractal sets.  For example, one might ask for the dimension of the set of double points of an $\SLE_{\kappa'}$ process or the dimension of the intersection of two GFF flow lines of different angles.  Similarly, one might ask for the scaling exponent associated to the probability that $n$ independent planar Brownian motions reach a given ball of radius $\epsilon$ without intersecting each other.

All of these problems now have {\em rigorous} solutions in the mathematics literature.  Additionally, {\em heuristic} solutions to most of these problems have appeared (often decades earlier) in the physics literature.  We will present citations and review both literatures in this section.

Despite the existence of rigorous solutions, the physics arguments still have a good deal of appeal.  In many cases they provide the quickest and most satisfying explanation of {\em why} certain dimensions and exponents are what they are.  This is in particular true of the quantum gravity arguments developed by the first co-author  that make use of constructions that are similar to the ``wedge weldings'' that appear in the current paper, along with the so-called KPZ formula (as briefly explained in Section~\ref{subsec::welding}).

In this section
we will revisit some of these heuristic arguments (specifically, the ones that make use of KPZ and welding ideas --- we will not address the heuristic arguments based on so-called Coulomb gas techniques) and explain how they fit into the context of the current paper.  We will explain how (in light of the results of the current paper) several of these arguments are now quite close to being proofs (although turning them into {\em actual} proofs would still require some elaboration).

The key idea is to show that the local behavior at a ``quantum typical'' point in a random fractal $X$ looks like a particular type of $\alpha$-quantum cone (bulk case) or $\alpha$-quantum wedge (boundary case).  By quantum typical, we mean a point chosen from the measure on $X$ constructed by restricting the $\gamma$-LQG measure to the union of the set of balls of quantum mass $\delta$ that intersect $X$, followed by appropriate $\delta \to 0$ rescaling so that the limit is non-trivial.  (We will not actually construct these measures in every case here; see \cite{DS3} for one approach to constructing $\SLE$ quantum measures.) 
 As explained in \cite[Equation~(63)]{ds2011kpz},  a quantum typical point  in a fractal $X$  of quantum scaling exponent $\Delta$ is a point near which the field is {\em $\alpha$-thick} where  $\alpha = \gamma(1 - \Delta)$.  This means that the quantum surface near that point looks locally like an $\alpha$-quantum cone (bulk case) or wedge (boundary case) \cite{MR2642894}.  If we have some means of computing $\alpha$, then we obtain $\Delta = 1-\alpha/\gamma$.

The results of this paper allow us to obtain $\alpha$ once we know the corresponding weight $W$.  In some cases, one can argue that near a quantum typical point, one sees a welding of a number of different types of wedges, and by summing these weights, we then obtain $W$, which allows us to compute $\alpha$ and then $\Delta$.  The Euclidean counterparts of the quantum exponents $\Delta$ are then obtained from the celebrated Knizhnik-Polyakov-Zamolodchikov (KPZ) relation \cite{MR947880,MR1005268,MR981529,ds2011kpz,PSS:8474530,MR3215583}.  It relates the quantum and Euclidean (expectation) scaling exponents~$\Delta$ and~$x$ of a random fractal~$X$, sampled independently of a $\gamma$-LQG surface, as
\begin{equation*}
 x = \frac{\gamma^2}{4} \Delta^2 + \left(1 - \frac{\gamma^2}{4} \right) \Delta.
\end{equation*}
(See \cite[Theorem~1.4]{ds2011kpz} for the bulk version and \cite[Theorem~6.1]{ds2011kpz} for the boundary version.) This, in turn, should give the Euclidean dimension of $X$ as $2-2x$ (resp.\ $1-x$) in the  bulk  (resp. on the boundary).

The original (heuristic) formulation of the KPZ relation dealt with the so-called gravitational dressing of primary operators coupled to gravity in conformal field theory, whereas the recent probabilist approach showed it to hold for any random fractal sampled independently of the GFF. A construction of quantum measures on fractals in the spirit of \cite{DS3} could help bridge the gap between these two approaches.

\subsection{KPZ overview}

Suppose that $D \subseteq \C$ is a domain with smooth boundary and that $X \subseteq \ol{D}$ is a random fractal which is independent of a $\gamma$-LQG with $\gamma \in (0,2)$ surface described by $h$.  Let $B_\epsilon(X)$ (resp.\ $B^\delta(X)$) be the $\epsilon$-Euclidean neighborhood (resp.\ $\delta$-quantum neighborhood) of $X$ as defined in \cite[Section~1.3]{ds2011kpz}.  Recall that the quantum and Euclidean expectation scaling exponents $\Delta$ and $x$ are respectively given by:
\[\Delta = \lim_{\delta \to 0} \frac{\log \E[\mu_h(B^\delta(X))]}{\log \delta} \quad\text{and}\quad  x = \lim_{\epsilon \to 0} \frac{\log \E[\mu_0(B_\epsilon(X))]}{\log \epsilon^2}\]
where $\mu_0$ denotes Lebesgue measure.  The boundary exponents are defined analogously if $X \subseteq \partial D$ except with $\nu_h$ in place of $\mu_h$ and $\epsilon$ in place of $\epsilon^2$ (see \cite[Section~6.3]{ds2011kpz}).  If $X \subseteq D$ (resp.\ $X \subseteq \partial D$), then $2-2x$ (resp.\ $1-x$) gives the Euclidean expectation dimension of~$X$.  The KPZ formula, which relates $\Delta$ and $x$, is given by:
\begin{equation}
\label{eqn::kpz}
 x = \frac{\gamma^2}{4} \Delta^2 + \left(1 - \frac{\gamma^2}{4} \right) \Delta.
\end{equation}
(See \cite[Theorem~1.4]{ds2011kpz} for the bulk version and \cite[Theorem~6.1]{ds2011kpz} for the boundary version.  See also \cite{PSS:8474530,aru2015kpz,bgr2016kpz,ghm2015kpz} for other versions of the KPZ relation.)

Following the discussion begun in Appendix~\ref{subapp::kpz}, we are now going to use~\eqref{eqn::kpz} and the quantum wedge and cone theory developed earlier in this article to give a heuristic derivation of both the quantum and Euclidean dimensions of a number of random fractals which arise in $\SLE$ and related discrete models.  As we mentioned in Appendix~\ref{subapp::kpz}, we believe that with some additional work these arguments can be converted into rigorous proofs.

\subsection{General principles}

\begin{figure}[ht!]
\begin{center}
\subfloat[\label{subfig::sle_kp_wedges}]{\includegraphics[scale=0.81,page=4]{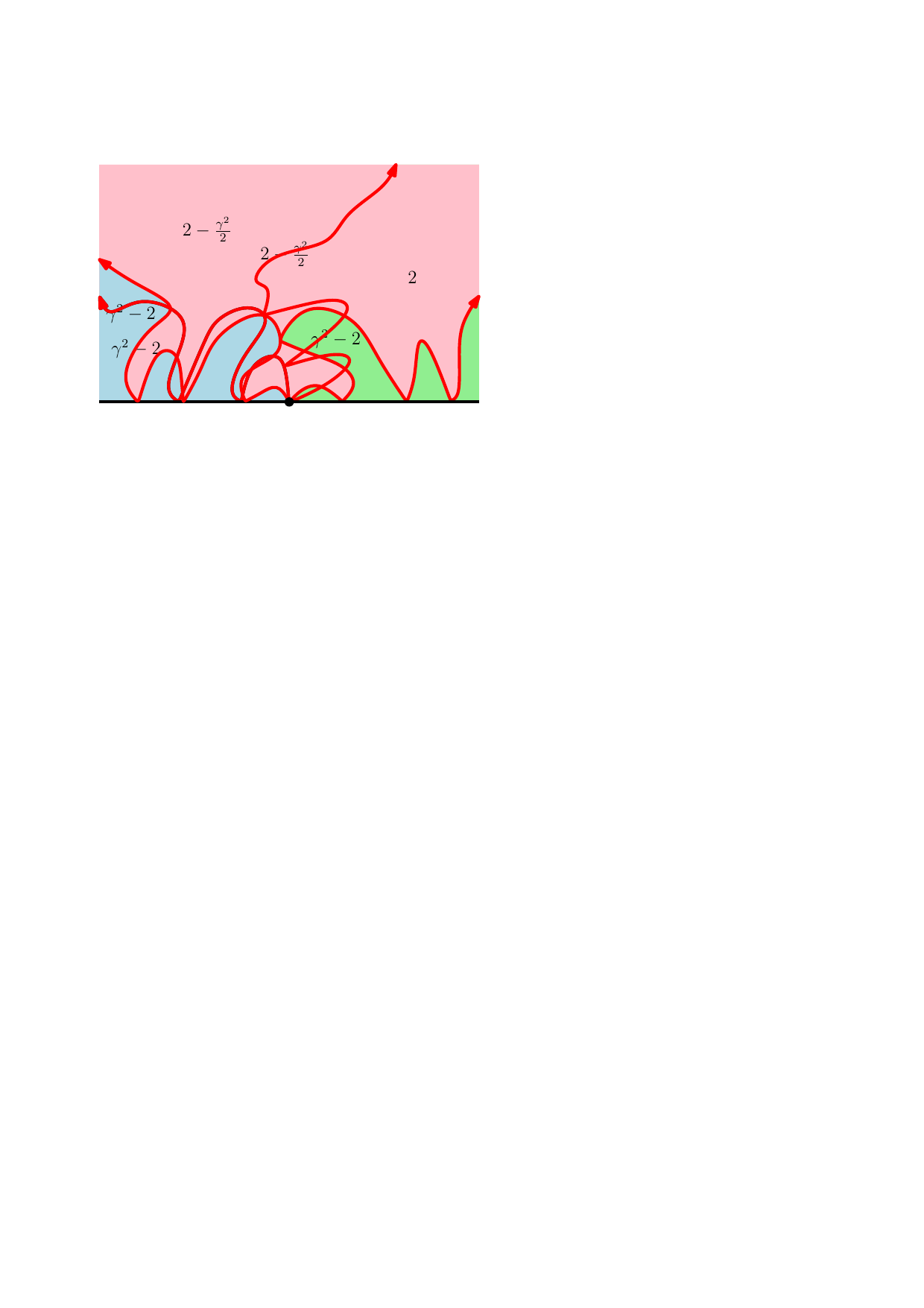}}
\hspace{0.00\textwidth}
\subfloat[\label{subfig::sle_kp_non_intersection_wedges}]{\includegraphics[scale=0.81,page=2]{figures/sle_kp_wedges}}
\end{center}
\caption{\label{fig::sle_kp_wedges} Fix $\kappa' \in (4,8)$.  {\bf Left:} an $\SLE_{\kappa'}$ process (red) divides a quantum wedge of weight $\tfrac{3\gamma^2}{2} - 2$ into three independent wedges of respective weights $\gamma^2-2$ (blue), $2-\tfrac{\gamma^2}{2}$ (red), and $\gamma^2-2$ (green) corresponding to those regions which are to the left of the path, surrounded by the path, and to the right of the path.  {\bf Right:} an $\SLE_{\kappa'}$ process (red) \emph{conditioned} not to hit $(0,\infty)$ divides a quantum wedge of weight $\tfrac{\gamma^2}{2}+2$ into three independent wedges of weight $\gamma^2-2$ (blue), $2-\tfrac{\gamma^2}{2}$ (red), and $2$ (green) corresponding to those regions which are to the left of the path, surrounded by the path, and to the right of the path.}
\end{figure}

The general strategy is the following:
\begin{itemize}
\item We will (using certain heuristics) argue that the local behavior at a ``quantum typical'' point in $X$ looks like a particular type of $\alpha$-quantum cone (bulk case) or $\alpha$-quantum wedge (boundary case).  By quantum typical, we mean a point chosen from the measure on $X$ constructed by restricting the $\gamma$-LQG measure to $B^\delta(X)$ and then taking a limit of these measures as $\delta \to 0$ with appropriate rescaling so that the limit is non-trivial.  (We will not actually construct this measure in each case; see \cite{DS3} for $\SLE$ quantum measures, in particular the $\SLE$ quantum length or natural parameterization.)
\item It is explained in \cite[Equation~(63)]{ds2011kpz} that a quantum typical point $z_0$ in $X$ is $\gamma - \gamma \Delta$ thick for $h$; this means that the circle average process of $h$ centered at $z_0$ has growth $(\gamma-\gamma \Delta)\log \epsilon^{-1}$ as $\epsilon \to 0$.  The circle average process associated with an $\alpha$-quantum cone or an $\alpha$-quantum wedge centered at the origin grows like $\alpha \log \epsilon^{-1}$ as $\epsilon \to 0$ and this allows us to match $\alpha$ and~$\Delta$.  That is, $\alpha = \gamma (1 - \Delta)$, hence $\Delta = 1-\tfrac{\alpha}{\gamma}$. 
\item We can determine $x$ from $\Delta$ using~\eqref{eqn::kpz}.  This, in turn, should give that the Euclidean dimension of $X$ is $2-2x$ (resp.\ $1-x$) if $X \subseteq D$ (resp.\ $X \subseteq \partial D$).
\end{itemize}

We emphasize again that the presentation in this section will be heuristic, and not fully detailed.  In this section, we are going to consider two different settings depending on whether $\kappa=\gamma^2 \in (0,4)$ or $\kappa' =\frac{16}{\gamma^2} \in (4,8)$.

In the former setting, there are a few wedge types which occur naturally: 

\begin{itemize}
\item Wedge of weight $2$, which is the weight of an $\gamma$-quantum wedge  obtained by zooming in at a ``typical boundary measure'' point, as mentioned in Section~\ref{subsec::surfaces}. In the simplest setting of Theorem~\ref{thm::welding}, gluing two weight $2$ wedges yields a wedge of weight $4$ decorated by an independent chordal $\SLE_\kappa$ process from $0$ to $\infty$ \cite{she2010zipper}. 
 \item Wedges of weight $W_i=2 + \rho_i, i=1,2$, obtained by cutting out from a quantum wedge of weight $W=W_1+W_2$, an independent $\SLE_\kappa(\rho_1;\rho_2)$ process from $0$ to $\infty$ with force points located at $0^-$ and $0^+$. (See Theorem~\ref{thm::welding}.)  
  \item Wedge of weight $W=2 + \rho$, obtained by cutting out from a quantum cone $\CC = (\C,h,0,\infty)$ of same weight $W$, an independent  whole-plane $\SLE_\kappa(\rho)$ process $\eta$  starting from~$0$. (See Theorem~\ref{thm::zip_up_wedge_rough_statement}.)
   \item Wedge of weight $W=2 + \rho$,  corresponding to the region cut off by a boundary-intersecting $\SLE_{\kappa}(\rho)$ process before or after hitting a typical boundary intersection point.
  \end{itemize}

In the latter setting, we fix $\kappa' \in (4,8)$ and suppose that $\eta'$ is an $\SLE_{\kappa'}$ process in $\h$ from $0$ to $\infty$.  Fix a quantum wedge $\CW$ parameterized by $\h$ which is independent of $\eta'$.  In the computations that follow, there will be three different types of wedges that will be important for us to consider.  These correspond to:
\begin{itemize}
\item The region surrounded by $\eta'$,
\item The region which is to the left or to the right of $\eta'$,
\item The region which is to the right of $\eta'$ when it is conditioned not to hit $(0,\infty)$.
\end{itemize}
In view of Theorem~\ref{thm::sle_kp_on_wedge}, it is natural in the first two cases to take $\CW$ to have weight $\tfrac{3\gamma^2}{2}-2$.  Theorem~\ref{thm::sle_kp_on_wedge} implies that the regions which are to the left and to the right of $\eta'$ correspond to wedges of weight $\gamma^2-2$ and the set of points surrounded by $\eta'$ corresponds to a wedge of weight $2-\tfrac{\gamma^2}{2}$.  Moreover, these three wedges are independent.  (See left side of Figure~\ref{fig::sle_kp_wedges}.)

Recall that the law of $\eta'$ conditioned not to hit $(0,\infty)$ is that of an $\SLE_{\kappa'}(\kappa'-4)$ process \cite{dub2005mg_duality,ms2012imag2}.  By Theorem~\ref{thm::sle_kp_on_wedge}, it is natural to take $\CW$ to be a wedge of weight $\tfrac{\gamma^2}{2}+2$ and it follows that the region of~$\h$ which is to the right of $\eta'$ corresponds to a wedge of weight $2$.\footnote{This is also natural because the dimension of the Bessel process which corresponds to a wedge of weight $\gamma^2-2$ is $3-\tfrac{4}{\gamma^2}$ (recall Table~\ref{tab::wedge_parameterization}).  Conditioning this Bessel process not to hit~$0$ corresponds to conditioning the wedge (together with its prime-end boundary) to be homeomorphic to $\ol{\h}$.  The conditioned Bessel process has dimension $4-(3-\tfrac{4}{\gamma^2}) = 1+\tfrac{4}{\gamma^2}$ and this is the Bessel dimension which corresponds to a weight $2$ wedge.}  As in the unconditioned case, the region surrounded by $\eta'$ itself is a wedge of weight $2-\tfrac{\gamma^2}{2}$ and the region which is to the left of $\eta'$ to a wedge of weight $\gamma^2-2$.  Moreover, these three wedges are independent.  (See right side of Figure~\ref{fig::sle_kp_wedges}.)

We can now glue together wedges of these three types in various ways to form other types of surfaces.  In particular,
\begin{itemize}
\item A wedge of weight $2-\tfrac{\gamma^2}{2}$ corresponds to an $\SLE_{\kappa'}$ process,
\item Gluing a wedge of weight $2$ between two wedges of weight $2-\tfrac{\gamma^2}{2}$ corresponds to conditioning the two $\SLE_{\kappa'}$ processes not to intersect, and
\item Gluing a wedge of weight $\gamma^2-2$ in between two wedges of weight $2-\tfrac{\gamma^2}{2}$ corresponds to not performing any conditioning (i.e., interpreting the paths as though they were $\SLE_{\kappa'}$ interfaces arising from the same statistical physics model, which reflect off one another but do not overlap).
\end{itemize}

For example, consider the surface which results when gluing together independent wedges $\CW_1,\ldots, \CW_5$ with respective weights $\gamma^2-2$, $2-\tfrac{\gamma^2}{2}$, $2$, $2-\tfrac{\gamma^2}{2}$, and $\gamma^2-2$.  This corresponds to having two $\SLE_{\kappa'}$ paths $\eta_1'$ and $\eta_2'$ (represented respectively by $\CW_2$ and $\CW_4$) in $\h$ from $0$ to $\infty$ with $\eta_1'$ to the left of and conditioned not to hit $\eta_2'$.
\begin{remark}\label{rk:dualexponents}
Besides the quantum scaling exponents $\Delta$, the Liouville quantum gravity literature discusses so-called {\it dual} quantum scaling exponents $\wt\Delta$ \cite{kleb1995touching_surfaces,kleb1995non_perturb,kleb1996wormholes,dup2004qg,LH2005,ds2009qg_prl,2008ExactMethodsBD,bjrv2013super_critical}, defined via the duality relation 
\begin{equation}\label{def::dualdim}
\alpha=\gamma(1-\Delta)=\gamma'(1-\wt \Delta),
\end{equation}
where $\gamma':=\sqrt{\kappa'}=4/\sqrt{\kappa}=4/\gamma$ is the dual $\gamma'>2$ of the usual LQG parameter $\gamma<2$. 
(We then have $\wt\Delta=1-\frac{\gamma^2}{4}+\frac{\gamma^2}{4}\Delta$.)  This remark will provide an extremely rough sense of the meaning of these exponents, and related formulas that have been derived in the physics literature. As argued above, the local behavior at a ``quantum typical'' point in a random fractal $X$ of exponent $\Delta$, where the measure on $X$ is constructed by using the $\gamma$-LQG measure, looks like a particular type of $\alpha$-quantum cone (bulk case) or $\alpha$-quantum wedge (boundary case).  In a certain sense, the dual scaling exponents $\wt\Delta$ should correspond to the same value of $\alpha$, but should arise when one chooses a typical point (conditioned to belong to $X$) using the atomic quantum measure $\mu_{\gamma'}$ introduced in Section~\ref{subsec::duality}, 
 instead of the standard Liouville measure $\mu=\mu_\gamma$ \eqref{e.mudef} (or the boundary atomic measure corresponding to the quantum natural time, instead of the standard boundary LQG measure $\nu=\nu_\gamma$ \eqref{e.nudef}), as discussed in~\eqref{e.dualmeasure} in Section~\ref{subsec::kpz_duality}. The dual quantum exponents obey a dual version of the KPZ formula, which relates the triple $\gamma'$, $\wt\Delta$ and $x$ \cite{kleb1995touching_surfaces,dup2004qg,ds2009qg_prl,2008ExactMethodsBD}, and was proved rigorously in the context of Gaussian multiplicative chaos \cite{bjrv2013super_critical},
\begin{equation}
\label{eqn::kpzdual}
 x = \frac{\gamma'^{\,2}}{4} \wt\Delta^2 + \left(1 - \frac{\gamma'^{\,2}}{4} \right) \wt\Delta;
\end{equation} 
standard and dual quantum exponents are then such that the identity $x=\Delta\wt\Delta$ holds.
In the wedge parameterization, Table \ref{tab::wedge_parameterization}, the relation $W=2+\gamma^2\Delta$ becomes in terms 
of the dual quantum exponent $\wt\Delta$,
\begin{equation}\label{weightdual}
W=\gamma^2-2+4\wt\Delta.
\end{equation}

We can then generalize to $\SLE_{\kappa'}$ processes, the Remark \ref{rem::linearity_of_wedges}, made in the case 
of $\SLE_\kappa$, that the additivity of wedge weights under gluing should correspond to the additivity of certain non-intersection (boundary) quantum exponents. Indeed, we have seen above that gluing a wedge of weight $\gamma^2-2$ in between the boundary and a wedge (of weight $2-\tfrac{\gamma^2}{2}$) corresponding to an $\SLE_{\kappa'}$ process (see Fig. \ref{fig::sle_kp_wedges}), or between two wedges  corresponding to two $\SLE_{\kappa'}$ processes, corresponds to not performing any conditioning between the boundary and the  $\SLE_{\kappa'}$ path, or between the two paths. The addititivity of wedge weights in Theorem~\ref{thm::sle_kp_on_wedge} therefore implies that $W-(\gamma^2-2)$ should be a linear function of the number of paths.   The relation \eqref{weightdual} then implies a similar additivity of dual quantum boundary exponents with respect to the set of paths \cite{dup2004qg,LH2005}. An illustration of this fact will appear in Appendix~\ref{subapp::SLEdoublepointdimension} when computing the $\SLE_{\kappa'}$ double-point dimension and the generalization thereof. 
\end{remark}
  
\subsection{Non-intersecting paths and cut point dimension}
\begin{figure}[ht!]
\begin{center}
\label{fig::nsle_k_wedges}
\includegraphics[scale=0.85]{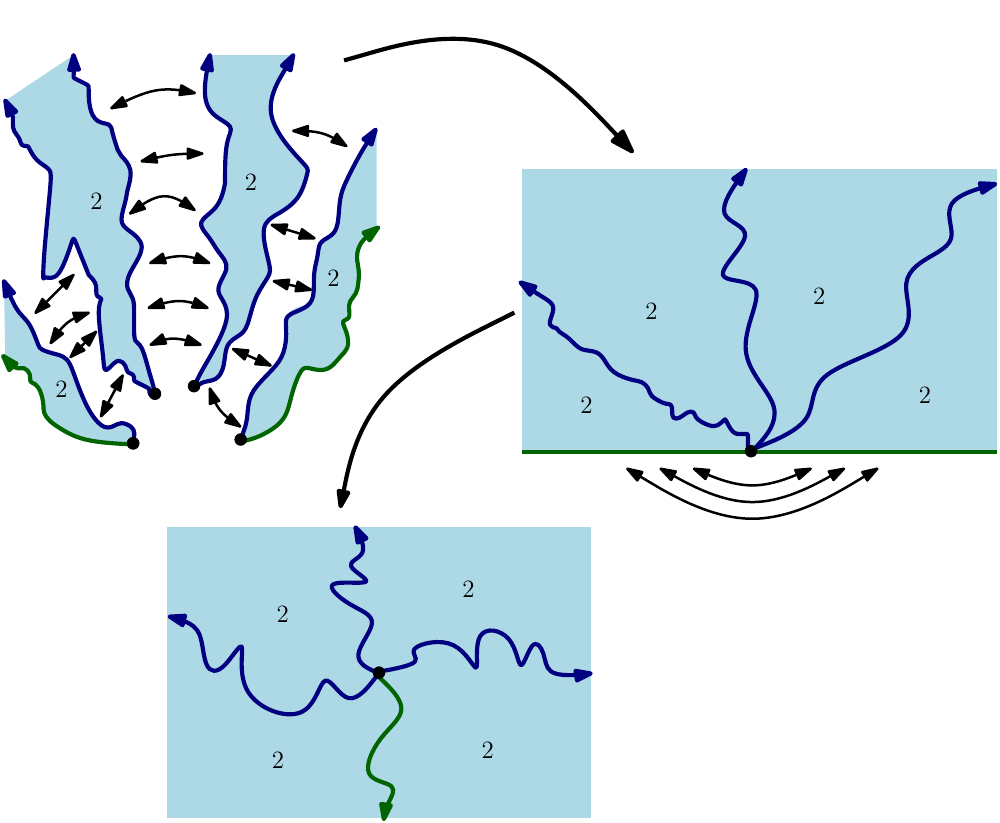}
\end{center}
\caption{\label{fig::nsle_k_wedges} {\bf Top:} Conformal welding according to their $\gamma$-LQG boundary length of $n(=4)$ independent identical quantum wedges of weight $2$, followed by a conformal mapping to $\H$. The resulting $n-1$ interfaces, $\eta_i$, $i=1,\cdots, n-1$, are coupled $\SLE_\kappa(\rho_i,\wt{\rho}_i)$ processes, with $\kappa=\gamma^2\in (0,4)$, and $\rho_i=2i-2,\wt{\rho}_i=2(n-i)-2$.  {\bf Bottom:} Conformally welding the left and right boundaries ($\R_-$ and $\R_+$) of the resulting wedge $\qwedge$ according to quantum length yields a quantum cone decorated with an independent whole-plane $\SLE_\kappa(\rho)$, with $\rho=2(n-1)$. Its conditional law, given the other $n-1$ paths, is that of a chordal $\SLE_\kappa$ process in the sector between $\eta_{n-1}$ and $\eta_1$. By symmetry, this also holds  for any path given the others, resulting in a non-intersecting $n$-star configuration.}
\end{figure}
Consider first the simplest case of a quantum wedge $\qwedge$ of weight $W=2n$, made by gluing together $n$ independent quantum wedges $\qwedge_i$, $i=1,\cdots,n$, each of weight~$2$. By the remarks above, and by Theorem~\ref{thm::welding} and Proposition~\ref{prop::slice_wedge_many_times}
, the resulting $n-1$ interfaces are represented by a collection of paths $\eta_1,\ldots,\eta_{n-1}$ where the marginal law of $\eta_i$ for each $i = 1,\ldots,n-1$ is that of an $\SLE_\kappa(2i-2; 2(n-i)-2)$ process and the conditional law of $\eta_i$ given $\eta_j$ for $j \neq i$ is that of an $\SLE_\kappa$ process, $\kappa=\gamma^2 \in (0,4)$, in the component of $\h \setminus \cup_{j \neq i} \eta_j$ between $\eta_{i-1}$ and $\eta_{i+1}$ with the convention that $\eta_0 = \R_-$ and $\eta_n = \R_+$.  See Figure~\ref{fig::nsle_k_wedges} for an illustration.  Then $\qwedge$ is an $\alpha$-quantum wedge with $\alpha=\gamma-\frac{2}{\gamma}(n-1)\leq Q$, corresponding to a boundary quantum scaling exponent 
\begin{equation}
\label{DeltanB}
\Delta=1-\frac{\alpha}{\gamma}=\frac{2}{\gamma^2}(n-1)=\frac{2}{\kappa}(n-1).
\end{equation}
Observe here the additivity of boundary quantum exponents, as advocated in \cite{MR1964687,dup2004qg}, with a contribution $2/\kappa$ for each of the $n-1$ mutually-avoiding paths. 

By Theorem~\ref{thm::zip_up_wedge_rough_statement}, zipping-up the left and right sides of the thick wedge $\qwedge$ gives an $\alpha'$-quantum cone decorated with an independent whole-plane $\SLE_\kappa(\rho)$, with from~\eqref{eqn::cone_wedge_value}, $\alpha'=\frac{\alpha}{2}+\frac{1}{\gamma}$, and from~\eqref{eqn::cone_rho} $\rho=2+\gamma^2-2\alpha'\gamma=\gamma^2-\alpha\gamma=2(n-1)$.  The conditional law of the path which corresponds to the zipped up boundary given the other paths is that of a chordal $\SLE_\kappa$ process (see Proposition~\ref{prop::slice_cone_many_times}).  In particular, this path does not intersect the other $n-1$ paths originally drawn on the quantum wedge $\qwedge$ (see also \cite[Section~10.5]{LH2005}); this yields a collection of $n$ non-intersecting whole-plane simple $\SLE_\kappa(\rho)$ processes, with $\kappa \in (0,4)$.  The corresponding bulk quantum scaling exponent is
\begin{equation}
\label{Deltan}
\Delta'=1-\frac{\alpha'}{\gamma}=\frac{1}{2\kappa}\left(2n+\kappa-4\right).
\end{equation}
Both $\Delta$~\eqref{DeltanB} and $\Delta'$~\eqref{Deltan} coincide (see, e.g., \cite[Equation (11.36)]{dup2004qg}) with the quantum exponents obtained by random matrix techniques for multiple paths in the $O(N)$ critical model on a random lattice \cite{1988PhRvL..61.1433D,1989MPLA....4..217K,1990NuPhB.340..491D,MR1964687,dup2004qg,LH2005}, with the conjectural identification $N=-2\cos (4\pi/\kappa), N\in [-2,2], \kappa\in [2,4]$ \cite{MR1964687,kg2004guide_to_sle}.

The corresponding Euclidean scaling exponents, i.e., conformal weights, are obtained by using the KPZ relation~\eqref{eqn::kpz}, to get 
\begin{equation}
\label{xn}
x=\frac{n}{2\kappa}(2n+4-\kappa) \quad\text{and}\quad x'=\frac{1}{16\kappa}\big( 4n^2-(4-\kappa )^2\big),
\end{equation} for, respectively,   the boundary and bulk exponents of $n$ non-intersecting  $\SLE_\kappa$ paths with $\kappa < 4$.  They agree with the corresponding exponents predicted for the critical $O(N)$ model or the tricritical Potts model; see \cite{1984NuPhB.240..514C,PhysRevLett.57.3179,0305-4470-26-16-004} and \cite{PhysRevLett.49.1062,Nien84,DG1987,0305-4470-19-13-009,dupJSP97,PhysRevLett.61.138}.
\begin{figure}[ht!]
\begin{center}
\subfloat[\label{subfig::one_path_non_intersection}]{\includegraphics[scale=0.81,page=2]{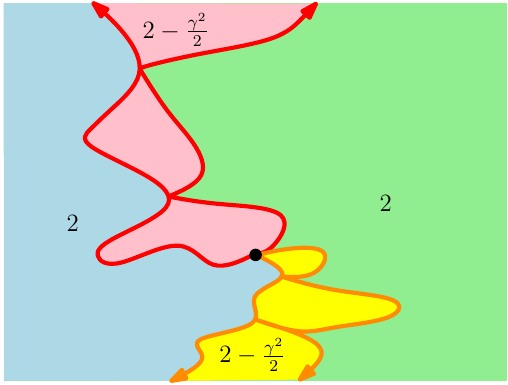}}
\hspace{0.00\textwidth}
\subfloat[\label{subfig::two_path_non_intersection}]{\includegraphics[scale=0.81,page=1]{figures/sle_kp_non_intersecting}}
\end{center}
\caption{{\bf Left:} quantum cone formed by gluing a weight $2-\tfrac{\gamma^2}{2}$ wedge with a weight $2$ wedge.  The wedge of weight $2-\tfrac{\gamma^2}{2}$ corresponds to the range of a whole-plane $\SLE_{\kappa'}$ process $\eta'$ conditioned on the event that it does not separate the origin from $\infty$.  (The conditional law of $\eta'$ is that of a whole-plane $\SLE_{\kappa'}(\kappa'-4)$ process.)  {\bf Right:} quantum cone formed by gluing two weight $2-\tfrac{\gamma^2}{2}$ wedges with two weight $2$ wedges, alternating between the two types.  The two wedges of weight $2-\tfrac{\gamma^2}{2}$ correspond to two whole-plane $\SLE_{\kappa'}$ processes starting from the origin conditioned not to intersect each other.  This describes the local behavior of an $\SLE_{\kappa'}$ process near a quantum typical cut point.}
\end{figure}

We are now going to explain how to derive the Brownian intersection exponents and the cut point dimension for $\SLE_{\kappa'}, \kappa'\in (4,8)$.

Consider a wedge of weight $\CW$ which is given by gluing together independent wedges with weights $2-\tfrac{\gamma^2}{2}$ and $2$.  Then the weight of $\CW$ is $4-\tfrac{\gamma^2}{2}$.  That is, $\CW$ is an $\alpha$-quantum wedge with $\alpha = \tfrac{3 \gamma}{2} - \tfrac{2}{\gamma}$.  Let
\[ \alpha' = \frac{\alpha}{2} + \frac{1}{\gamma} = \frac{3 \gamma}{4}.\]
Theorem~\ref{thm::zip_up_wedge_rough_statement} (recall also~\eqref{eqn::cone_rho} and~\eqref{eqn::cone_wedge_value}) implies that if we zip up the left and right sides of $\CW$, then we get an $\alpha'$-quantum cone $\CC = (\C,h,0,\infty)$ decorated with an independent whole-plane $\SLE_{\kappa'}(\kappa'-4)$ process $\eta'$ from $0$ to $\infty$ (see also Theorem~\ref{thm::quantum_cone_sle_kp}; equivalently, $\eta'$ is a whole-plane $\SLE_{\kappa'}$ process conditioned not to disconnect $0$ from $\infty$).  See Figure~\ref{subfig::one_path_non_intersection} for an illustration.

Suppose that $\kappa'=6$ and imagine that $B$ is a standard $2$-dimensional Brownian motion drawn on top of an independent $\sqrt{8/3}$-LQG surface described by some $h$.  Then we can view $(\CC,\eta')$ as describing the local picture of the set $X$ consisting of those points $z$ where $B$ gets very close to and then travels macroscopically far away from without separating $z$ from $\infty$.  Let $\Delta$ denote the quantum scaling associated with this set.  As explained earlier, a quantum typical point in $X$ should be $\gamma-\gamma \Delta$ thick for $h$.  That is, we should have that $\alpha' = \gamma-\gamma \Delta$.  This gives that $\Delta = \tfrac{1}{4}$.  Plugging this into~\eqref{eqn::kpz} gives that the Euclidean scaling of $X$ is $x = \tfrac{1}{8}$ so that the Euclidean dimension of $X$ should be $\tfrac{7}{4}$, in agreement with that of pioneer points of Brownian motion \cite{zbMATH01690755}.

We can generalize this further by considering additional paths.  Suppose now that we have $n$ paths $\eta_1',\ldots,\eta_n'$ in $\h$ from $0$ to $\infty$ where each $\eta_j'$ lies to the right of $\eta_{j-1}'$ and to the left of $\eta_{j+1}'$ and that the conditional law of $\eta_j'$ given $\eta_{j-1}'$ and~$\eta_{j+1}'$ is that of an $\SLE_{\kappa'}$ conditioned not to hit~$\eta_{j-1}'$ and~$\eta_{j+1}'$.  (Equivalently, the conditional law is that of an $\SLE_{\kappa'}(\kappa'-4;\kappa'-4)$ process \cite{dub2005mg_duality,ms2012imag2}.)  If we draw these paths on top of an independent wedge $\CW$ of weight $W=(4-\tfrac{\gamma^2}{2})n$, then their left and right boundaries divide $\CW$ into independent wedges $\CW_1,\CV_1,\ldots,\CW_n,\CV_n$ where each~$\CW_i$ has weight $2-\tfrac{\gamma^2}{2}$ and each~$\CV_j$ has weight~$2$.

Welding together the left and right sides of $\CW$ yields an $\alpha$-quantum cone, where by~\eqref{eqn::quantum_cone_weight} 
\[ \alpha =Q-\frac{W}{2\gamma}= Q-\left(\frac{2}{\gamma}-\frac{\gamma}{4}\right)n
,\]
decorated by paths $\wt{\eta}_1',\ldots,\wt{\eta}_n'$ which can be viewed as $n$ whole-plane $\SLE_{\kappa'}$ processes starting from the origin conditioned not to intersect.  See Figure~\ref{subfig::two_path_non_intersection} for an illustration.  
Let $\Delta$ be so that $\alpha = \gamma - \gamma \Delta$.  That is, 
\[ \Delta = 1-\frac{\alpha}{\gamma} = \frac{1}{2}-\frac{2}{\gamma^2}+\left(\frac{2}{\gamma^2}-\frac{1}{4}\right)n
.\]
This exponent matches the multiple disconnection exponent of $n$ $\SLE_{\kappa'}$ paths,  obtained by using the quantum gravity methods of \cite[Section 12.3]{dup2004qg}.
In the case that $\gamma=\sqrt{8/3}$, this gives that
\[ \Delta = \frac{1}{2}\left(n - \frac{1}{2} \right)\]
so that by~\eqref{eqn::kpz} we have that
\[ x = \frac{1}{24}\left(4 n^2 - 1\right).\]
This case has the interpretation of describing $n$ whole-plane $\SLE_6$ processes conditioned not to intersect (or $n$ non-intersecting  clusters or $2n$ path crossings in percolation \cite{PhysRevLett.58.2325,PhysRevLett.82.3940,PhysRevLett.83.1359}).  By the relationship between $\SLE_6$ and Brownian motion, it in turn has the interpretation as describing $n$ Brownian motions starting near the origin conditioned not to intersect.  In particular, the probability that $n$ Brownian motions starting from distance $\epsilon$ from the origin make it macroscopically far from the origin without intersecting should be proportional to $\epsilon^{2x}$.  These are the Brownian intersection exponents derived in \cite{dk1998rwks} and later in \cite{dup1998rwk} using quantum gravity methods.  The values of the Brownian intersection exponents were determined rigorously by Lawler, Schramm, and Werner in \cite{lsw2001intersection1,lsw2001intersection2,lsw2002intersection3}.

These exponents for other values of $\gamma \in (\sqrt{2},2)$ also have an interesting interpretation.  In particular, for $n=2$ and $\gamma = \sqrt{16/\kappa'}$, $\Delta$ and $x$ have the interpretation of giving the quantum and Euclidean scaling exponents for the cut points of an $\SLE_{\kappa'}$ process (Figure~\ref{subfig::two_path_non_intersection}).  This translates into a prediction of the dimension of the cut points given by
\begin{equation}
\label{eqn::cut_point_dimension}
3-\frac{3\kappa'}{8}.
\end{equation}
This matches the prediction for the $n=2$ $\SLE$ disconnection exponent  \cite[Equations~(12.48),~(12.50)]{dup2004qg} as well as the  rigorous derivation of this dimension given in \cite[Theorem~1.2]{mw2013intersections}.

\subsection{$\SLE_{\kappa'}$ double point dimension}
\label{subapp::SLEdoublepointdimension}

\begin{figure}
\begin{center}
\includegraphics[scale=0.85]{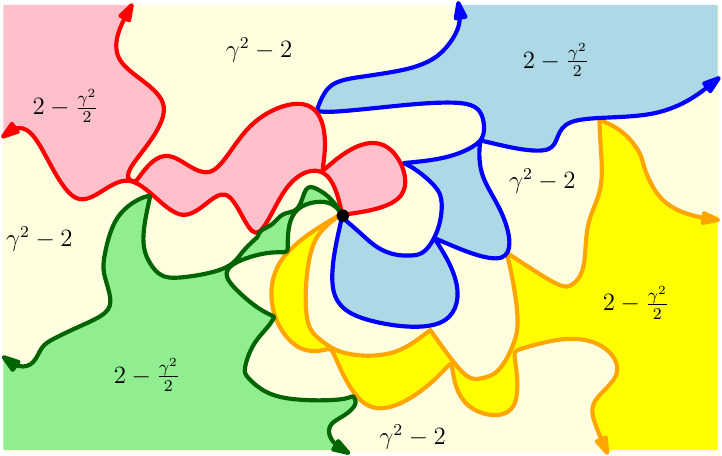}
\end{center}
\caption{\label{fig::sle_double_point} The configuration of paths which at a quantum typical double point $z_0$ for an $\SLE_{\kappa'}$ process $\eta'$ with $\kappa' \in (4,8)$ can be described by the quantum cone which arises by welding together four wedges of weight $2-\tfrac{\gamma^2}{2}$ and four wedges of weight $\gamma^2-2$ where the wedges of different types alternate as shown.  The boundaries of the $2-\tfrac{\gamma^2}{2}$ wedges are drawn by $\eta'$ either in counterclockwise or clockwise order depending on whether $z_0$ is on the left or right side of $\eta'$, respectively.}
\end{figure}

In order for a point $z_0$ to a be a double point for an $\SLE_{\kappa'}$ process $\eta'$, it has to be that $\eta'$ hits $z_0$ and then gets macroscopically far away from $z_0$ before returning.  Therefore we can represent the local behavior of $\eta'$ near a quantum typical double point by gluing together eight wedges of alternating weight $2-\tfrac{\gamma^2}{2}$ and $\gamma^2-2$.  The former correspond to the two segments of $\eta'$ that approach $z_0$ and the two segments of $\eta'$ that leave from $z_0$ (for a total of four).  The latter correspond to the regions between these four segments.  The reason that it is natural to expect these latter regions to be $\gamma^2-2$ wedges is that if one conditions on three of the four segments of $\eta'$, then the conditional law of the fourth should be an $\SLE_{\kappa'}$ (no extra $\rho$ values).  See Figure~\ref{fig::sle_double_point} for an illustration.

By Proposition~\ref{prop::slice_cone_many_times}, gluing together these eight wedges yields a $\big(\tfrac{2}{\gamma} - \tfrac{\gamma}{2}\big)$-quantum cone.  This leads to the following predictions of the quantum and Euclidean scaling exponents
\begin{equation}
\label{eqn::double_point_exponents}
\Delta = \frac{3}{2} - \frac{\kappa'}{8} \quad\text{and}\quad x = \frac{1}{2} + \frac{3}{\kappa'} - \frac{\kappa'}{16}.
\end{equation}
This, in turn, leads to the prediction that the Euclidean dimension of the double points should be
\begin{equation}
\label{eqn::double_point_dimension}
2 - \frac{(12-\kappa')(4+\kappa')}{8\kappa'}.
\end{equation}
This matches the heuristic derivation given by in \cite{PhysRevLett.58.2325,dupJSP97} in the context of contours of the FK model and the rigorous derivation given in \cite[Theorem~1.1]{mw2013intersections}.

This can be generalized to  the local behavior of $\eta'$ near a quantum typical multiple point of order $n$, by gluing together $2n$ wedges of alternating weight $2-\frac{\gamma^2}{2}$ and $\gamma^2-2$, yielding a quantum wedge of total weight $W=\tfrac{n\gamma^2}{2}$. Zipping up its two sides, one gets by Proposition~\ref{prop::slice_cone_many_times} an  $\alpha$-quantum cone  with $\alpha= Q-\frac{W}{2\gamma}$, and a quantum scaling dimension
\begin{equation}
\label{DeltanD}
\Delta=1-\frac{\alpha}{\gamma}=\frac{1}{2}+\frac{n}{4}-\frac{\kappa'}{8}.
\end{equation}
For describing the local behavior near a boundary quantum typical multiple point of order $n$, one glues together in an alternating way $n+1$ quantum wedges of weight $\gamma^2-2$ with the $n$ quantum wedges of weight $2-\tfrac{\gamma^2}{2}$ drawn by $\eta'$, yielding an  $\alpha'$-quantum wedge of total weight $W'=(n+2)\frac{\gamma^2}{2}-2$, with $\alpha'=\frac{4}{\gamma}-n\frac{\gamma}{2}$, and  a boundary  quantum exponent
 \begin{equation}
 \label{DeltanBD}
 \Delta'=1-\frac{\alpha'}{\gamma}=1+\frac{n}{2}-\frac{\kappa'}{4}.
 \end{equation} 
 Equations~\eqref{DeltanD} and~\eqref{DeltanBD} match the  exponents of multiple non-simple $\SLE_{\kappa'}$ paths   obtained by quantum gravity methods in \cite[Equation (11.37)]{dup2004qg}, together with the boundary-bulk quantum rule for non-simple paths, $\Delta'=2\Delta$. Notice that the {\it dual} dimension \eqref{def::dualdim} associated with $\Delta'$ \eqref{DeltanBD}, is $\wt\Delta'=\frac{2}{\kappa'}n$, hence linear in $n$, in agreement with Remark \ref{rk:dualexponents} above, and is the analog of \eqref{DeltanB} with $\kappa$ simply replaced by $\kappa'$ \cite{dup2004qg}. These results also  match the multiple path exponents obtained by random matrix techniques for the so-called dense phase of the $O(N)$ model on a random lattice  \cite{1988PhRvL..61.1433D,1989MPLA....4..217K,1990NuPhB.340..491D,Kazakov1992520,dup2004qg}.

The corresponding boundary and bulk Euclidean exponents of $n$ non-intersecting  $\SLE_{\kappa'}$ paths, with $\kappa' \geq 4$, are obtained by using the KPZ relation~\eqref{eqn::kpz}, which yields the same results as in~\eqref{xn}, with $\kappa$ there simply replaced by $\kappa'$.  They agree with the corresponding exponents predicted for the dense phase critical $O(N)$ model or, equivalently,  for the critical Potts model \cite{PhysRevB.27.1674,Nien84,DG1987,0305-4470-19-16-011,Duplantier1987291,PhysRevLett.58.2325,dupJSP97,PhysRevLett.61.138,0305-4470-25-4-009,0305-4470-26-16-004}.
 
\subsection{Boundary intersection dimension for $\SLE_\kappa(\rho)$}

\begin{figure}
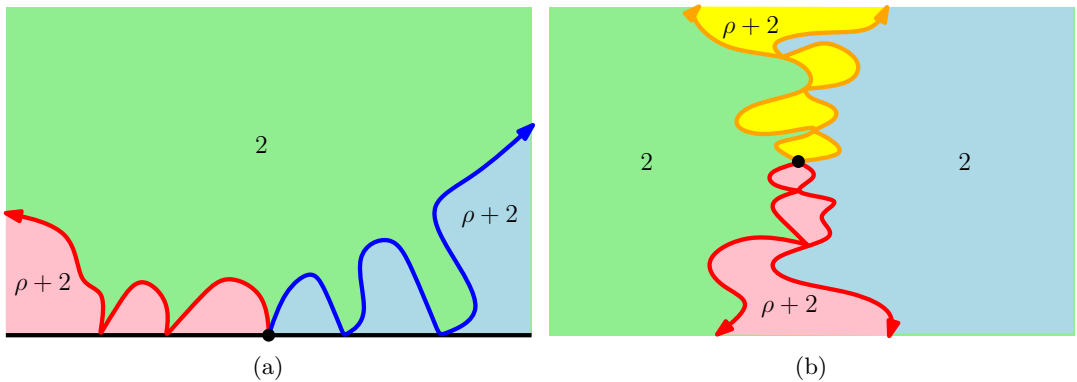

\begin{center}
\subfloat[\label{subfig::sle_kr_boundary}]{\includegraphics[scale=0.81,page=3]{figures/sle_kp_wedges}}
\hspace{0.00\textwidth}
\subfloat[\label{subfig::gff_flow_line_intersection}]{\includegraphics[scale=0.81,page=5]{figures/sle_kp_wedges}}
\end{center}
\caption{{\bf Left:} a typical boundary intersection point $z_0$ for a boundary-intersecting $\SLE_{\kappa}(\rho)$ process looks like the welding of wedges of weight~$\rho+2$,~$2$, and~$\rho+2$.  The first and third wedges correspond to the region cut off by the path before and after hitting $z_0$ and the weight $2$ wedge corresponds to the region which is not cut off by the path.  {\bf Right:} a typical intersection point for two GFF flow lines with relative angle $\theta$ looks like the quantum cone which arises by welding together wedges of weights $\rho+2$, $2$, $\rho+2$, and $2$ where $\rho = \theta \chi/\lambda -2$.}
\end{figure}

Suppose that $\eta$ is a boundary intersecting $\SLE_\kappa(\rho)$ process with its force point immediately to the right of its seed.  We know from Theorem~\ref{thm::welding} that if we draw~$\eta$ on top of an independent wedge $\CW$ of weight $\rho+4$ then $\eta$ divides $\CW$ into independent wedges of weight $2$ and $\rho+2$.  From Section~\ref{subsec::quantum_typical_zip_unzip}, we also know that the joint law of $(\CW,\eta)$ is invariant under the operation of cutting along $\eta$ until reaching a ``quantum typical'' intersection point of $\eta$ with $\partial \h$.  Hence, the law of the part of $\CW$ which is to the right of $\eta$ at a quantum typical intersection point should be a wedge of weight $\rho+2$.  By symmetry, such a wedge should also describe the local behavior of the surface to the left of $\eta$ before a quantum typical intersection point.  Therefore, gluing together three independent wedges with respective weights $\rho+2$, $2$, and $\rho+2$ should describe the local behavior of $\CW$ near a quantum typical intersection point of $\eta$ with $\partial \h$.  (See Figure~\ref{subfig::sle_kr_boundary} for an illustration.)

This leads to the prediction that the (boundary) quantum and Euclidean scaling exponents for $\eta \cap \partial \h$ are respectively given by:
\begin{equation}
\label{eqn::slekr_boundary_intersection_exponents}
\Delta = \frac{2(2+\rho)}{\kappa} \quad\text{and}\quad x = \frac{(2+\rho)(2(4+\rho) - \kappa)}{2\kappa}.
\end{equation}
Comparing this expression for $\Delta$ to~\eqref{DeltanB} shows that  $\rho$ can be interpreted as a number of equivalent  $\SLE_\kappa$ paths, in agreement with the quantum gravity equivalence predicted in~\cite[Section~10.4]{LH2005}. This, in turn, leads to a prediction of the dimension of $\eta \cap \partial \h$ given by
\begin{equation}
\label{eqn::slekr_boundary_intersection}
\frac{(4+\rho)(\kappa-2(2+\rho))}{2\kappa}.
\end{equation}
This prediction matches the a.s.\ Hausdorff dimension for $\eta \cap \partial \h$ which was rigorously derived in \cite[Theorem~1.6]{mw2013intersections}.  By $\SLE$ duality, for $\kappa \in (2,4)$ and $\rho = \kappa-4$ this gives the dimension $2-\frac{8}{\kappa'}$ of the intersection of an $\SLE_{\kappa'}$ process, $\kappa'=\tfrac{16}{\kappa}$, with $\partial \h$, predicted in \cite{PhysRevLett.57.3179,Duplantier198971}, and  first derived in \cite{she_alberts2008boundary}.

We note that~\eqref{eqn::slekr_boundary_intersection} (while implicit in \cite[Section~10.4]{LH2005}) was derived previously in the physics literature.  See, for example, \cite{djs2009on} as well as the longer version \cite{djs2010on}.

\subsection{Intersection dimension of GFF flow lines}

Suppose that we have $\SLE_\kappa$ flow lines $\eta_1$, $\eta_2$ of a GFF on $\h$ with angles $\theta_1 < \theta_2$ with angle difference $\theta = \theta_2 - \theta_1 \in (0,\pi \kappa/(4-\kappa))$; recall that this is the range of angle differences that allows two such paths to intersect and bounce off each other \cite[Theorem~1.5]{ms2012imag1}.  If we conformally map away the region which is to the right of $\eta_1$, then a quantum typical intersection point $z_0$ of $\eta_1$ and $\eta_2$ should be mapped to a quantum typical intersection point of the image of $\eta_2$ with the domain boundary.  Let $\rho = \theta \chi/\lambda-2$ where $\lambda = \tfrac{\pi}{\sqrt{\kappa}}$ and $\chi = \tfrac{2}{\sqrt{\kappa}} - \tfrac{\sqrt{\kappa}}{2}$.  By our heuristic for the intersection of $\SLE_\kappa(\rho)$ with~$\partial \h$, this suggests that the local behavior of the surface near~$z_0$ and to the left of~$\eta_1$ should look like a gluing of three independent wedges with respective weights $\rho+2$, $2$, and $\rho+2$.  Moreover, these wedges should be independent of the local behavior of the surface to the right of $\eta_1$.  By symmetry, this region should look like another independent wedge of weight $2$.  Therefore the whole picture should look like two weight $\rho+2$ wedges glued with two weight $2$ wedges where the different types alternate.  (See Figure~\ref{subfig::gff_flow_line_intersection} for an illustration.)

This leads to the prediction that the quantum and Euclidean scaling exponents are given by
\begin{equation}
\label{eqn::gff_flow_line_exponents}
 \Delta = \frac{4+\kappa+2\rho}{2\kappa} \quad\text{and}\quad x = \frac{1}{4\kappa}\left(\rho + \frac{\kappa}{2} +2\right)\left(\rho - \frac{\kappa}{2}+6\right).
\end{equation}
This leads to the prediction that the dimension of $\eta_1 \cap \eta_2 \cap \h$ is given by
\begin{equation}
\label{eqn::gff_flow_line_dimension}
2 - \frac{1}{2\kappa}\left(\rho + \frac{\kappa}{2} +2\right)\left(\rho - \frac{\kappa}{2}+6\right).
\end{equation}
This matches the rigorous derivation of the a.s.\ Hausdorff dimension of $\eta_1 \cap \eta_2 \cap \h$ given in \cite[Theorem~1.5]{mw2013intersections}.

\bibliographystyle{abbrv}
\addcontentsline{toc}{section}{References}
\bibliography{welding}

\bigskip

\filbreak
\begingroup
\small
\parindent=0pt

\bigskip
\vtop{
\hsize=5.3in
Institut de Physique Th\'eorique\\
Universit\'e Paris-Saclay, CEA, CNRS\\
CEA/Saclay\\
91191 Gif-sur-Yvette Cedex\\
FRANCE}

\bigskip
\vtop{
\hsize=5.3in
Statistical Laboratory\\
Center for Mathematical Sciences\\
University of Cambridge\\
Cambridge CB3 0WB\\
UK}

\bigskip
\vtop{
\hsize=5.3in
Department of Mathematics\\
Massachusetts Institute of Technology\\
Cambridge, MA\\
USA } \endgroup \filbreak

\end{document}